\newcommand{\Ab}	{\operatorname{Ab}}
\newcommand{\Alg}	{\operatorname{Alg}}
\newcommand{\AugAlg}	{\operatorname{AugAlg}}
\newcommand{\Aut}       {\operatorname{Aut}}
\newcommand{\CSys}	{\operatorname{CSys}}
\newcommand{\MCSys}	{\operatorname{MCSys}}
\newcommand{\Green}	{\operatorname{Green}}
\newcommand{\Groups}	{\operatorname{Groups}}
\newcommand{\Hom}	{\operatorname{Hom}}
\newcommand{\Inj}	{\operatorname{Inj}}
\newcommand{\uHom}	{\underline{\operatorname{Hom}}}
\newcommand{\MP}	{\operatorname{MP}}
\newcommand{\Mackey}    {\operatorname{Mackey}}
\newcommand{\Map}       {\operatorname{Map}}
\newcommand{\Mod}       {\operatorname{Mod}}
\newcommand{\NMod}      {\operatorname{NMod}}
\newcommand{\Orb}	{\operatorname{Orb}}
\newcommand{\Orbt}	{\operatorname{Orb}^\times}
\newcommand{\Rings}	{\operatorname{Rings}}
\newcommand{\Sec}       {\operatorname{Sec}}
\newcommand{\Semigroups}{\operatorname{Semigroups}}
\newcommand{\Semirings}	{\operatorname{Semirings}}
\newcommand{\Sets}	{\operatorname{Sets}}
\newcommand{\Spaces}	{\operatorname{Spaces}}
\newcommand{\Sub}	{\operatorname{Sub}}
\newcommand{\TP}	{\operatorname{TP}}
\newcommand{\Tambara}   {\operatorname{Tambara}}
\newcommand{\Trans}	{\operatorname{Trans}}
\newcommand{\eqker}     {\operatorname{eqker}}
\newcommand{\nrm}	{\operatorname{norm}}
\newcommand{\res}	{\operatorname{res}}
\newcommand{\stab}	{\operatorname{stab}}
\newcommand{\sub}	{\operatorname{sub}}
\newcommand{\trc}	{\operatorname{trace}}
\newcommand{\coind}	{\operatorname{coind}}
\newcommand{\coord}	{\operatorname{coord}}
\newcommand{\ind}       {\operatorname{ind}}
\newcommand{\img}       {\operatorname{img}}
\newcommand{\cok}       {\operatorname{cok}}
\newcommand{\orb}       {\operatorname{orb}}
\newcommand{\opp}	{\operatorname{op}}
\newcommand{\CA}        {{\mathcal{A}}}
\newcommand{\CB}        {{\mathcal{B}}}
\newcommand{\CC}        {{\mathcal{C}}}
\newcommand{\CD}        {{\mathcal{D}}}
\newcommand{\CE}        {{\mathcal{E}}}
\newcommand{\CF}        {{\mathcal{F}}}
\newcommand{\CJ}        {{\mathcal{J}}}
\newcommand{\CL}        {{\mathcal{L}}}
\newcommand{\CM}        {{\mathcal{M}}}
\newcommand{\CO}        {{\mathcal{O}}}
\newcommand{\CP}        {{\mathcal{P}}}
\newcommand{\CR}        {{\mathcal{R}}}
\newcommand{\CS}        {{\mathcal{S}}}
\newcommand{\CU}        {{\mathcal{U}}}
\newcommand{\CLt}       {\mathcal{L}^\tm}
\newcommand{\bCA}	{\overline{\mathcal{A}}}
\newcommand{\bCG}	{\overline{\mathcal{G}}}
\newcommand{\bCU}	{\overline{\mathcal{U}}}
\newcommand{\hA}	{\widehat{A}}
\newcommand{\hS}	{\widehat{S}}
\newcommand{\tA}        {\widetilde{A}}
\newcommand{\tB}        {\widetilde{B}}
\newcommand{\tC}        {\widetilde{C}}
\newcommand{\tT}        {\widetilde{T}}
\newcommand{\tU}        {\widetilde{U}}
\newcommand{\tW}	{\widetilde{W}}
\newcommand{\tX}	{\widetilde{X}}
\newcommand{\tY}	{\widetilde{Y}}
\newcommand{\tb}        {\tilde{b}}
\newcommand{\tf}        {\tilde{f}}
\newcommand{\tg}	{\tilde{g}}
\newcommand{\tp}	{\tilde{p}}
\newcommand{\tq}	{\tilde{q}}
\newcommand{\tr}	{\tilde{r}}
\newcommand{\ts}	{\tilde{s}}
\newcommand{\tu}	{\tilde{u}}
\newcommand{\tx}        {\tilde{x}}
\newcommand{\ty}	{\tilde{y}}
\newcommand{\al}        {\alpha}
\newcommand{\bt}        {\beta} 
\newcommand{\gm}        {\gamma}
\newcommand{\dl}        {\delta}
\newcommand{\ep}        {\epsilon}
\newcommand{\zt}        {\zeta}
\newcommand{\tht}       {\theta}
\newcommand{\lm}        {\lambda}
\newcommand{\sg}        {\sigma}
\newcommand{\om}        {\omega}
\newcommand{\Dl}        {\Delta}
\newcommand{\Lm}        {\Lambda}
\newcommand{\Sg}        {\Sigma}
\newcommand{\Tht}       {\Theta}
\newcommand{\N}         {{\mathbb{N}}}
\newcommand{\Z}         {{\mathbb{Z}}}
\newcommand{\Q}         {{\mathbb{Q}}}
\newcommand{\R}         {{\mathbb{R}}}
\newcommand{\C}         {{\mathbb{C}}}
\newcommand{\invlim} {\operatornamewithlimits{\underset{\longleftarrow}{lim}}}
\newcommand{\mxi}       {\mathfrak{m}}
\newcommand{\sse}       {\subseteq}
\newcommand{\Sgi}	{\Sigma^\infty}
\newcommand{\Smash}     {\wedge}
\newcommand{\bigSmash}  {\bigwedge}
\newcommand{\bigWedge}  {\bigvee}
\newcommand{\ot}        {\otimes}
\newcommand{\st}        {\;|\;}
\newcommand{\tm}        {\times}
\newcommand{\btm}       {\boxtimes}
\newcommand{\xla}       {\xleftarrow}
\newcommand{\xra}       {\xrightarrow}
\newcommand{\sm}        {\setminus}
\newcommand{\ov}[1]     {\overline{#1}}
\newcommand{\un}[1]     {\underline{#1}}
\newcommand{\colim}  {\operatornamewithlimits{\underset{\longrightarrow}{lim}}}
\newcommand{\ip}[1]     {\langle #1\rangle}
\renewcommand{\ss}      {\scriptstyle}
\renewcommand{\:}{\colon}
\newtheorem{theorem}{Theorem}[section]
\newtheorem{lemma}[theorem]{Lemma}
\newtheorem{proposition}[theorem]{Proposition}
\newtheorem{corollary}[theorem]{Corollary}
\theoremstyle{definition}
\newtheorem{remark}[theorem]{Remark}
\newtheorem{definition}[theorem]{Definition}
\newtheorem{example}[theorem]{Example}
\newtheorem{construction}[theorem]{Construction}
\begin{document}
\title{Tambara functors}
\author{N.~P.~Strickland}
\address{
School of Mathematics and Statistics\\
University of Sheffield\\
Sheffield S3 7RH\\
UK
}
\email{N.P.Strickland@sheffield.ac.uk}

\date{\today}
\bibliographystyle{abbrv}

\subjclass{19A22,20C99,20J06}

\begin{abstract}
 We survey and extend the theory of Tambara functors.  These are algebraic
 structures similar to Mackey functors, but with multiplicative norm maps
 as well as additive transfer maps, and a rule governing their interaction
 that is most easily formulated in an abstract categorical framework.  
 Examples include Burnside rings, representation rings, and homotopy groups
 of equivariant $E_\infty$ ring spectra in stable homotopy theory.  Some
 other examples are related to Witt rings in the sense of Dress and
 Siebeneicher.
\end{abstract}

\maketitle 

\tableofcontents



\section{Introduction}

In this memoir we survey the theory of Tambara functors.  

We first give some motivation for these objects.

One way to think about the definition of Mackey functors (for a finite
group $G$) is as follows.  Take the Lawvere theory $\ov{\CA}$ for
(commutative) semigroups, categorify it, make it $G$-equivariant,
decategorify, and then take the category of models for the resulting
(multisorted) theory.  This category is just the category of
semigroup-valued Mackey functors, so it contains the more usual
category of group-valued Mackey functors.  All this will be explained
in more detail below.

It is well-known that for any $G$-spectrum $R$ (in the sense of
equivariant stable homotopy theory) one has a Mackey functor
$\pi^G_0R$.  If $R$ is a $G$-ring spectrum in the naive homotopical
sense, then $\pi^G_0R$ has a certain multiplicative structure; Mackey
functors with this structure are called Green rings.  If $R$ has an
equivariant $E_\infty$ structure then much more is true.  In
particular, there are multiplicative transfer maps, similar to the
Evans norm map in ordinary group cohomology or tensor induction maps
in representation theory.  These interact in a complicated way with
additive transfers and with restriction maps.  The purpose of Tambara
functors is to encapsulate these relationships.

The category of Tambara functors can be defined along the same lines
as suggested above for Mackey functors.  One simply starts with the
Lawvere theory $\bCU$ for semirings instead of the theory $\bCA$ for
semigroups. 

Another major motivation for studying Tambara functors is the light
that they shed on the theory of Witt vectors.  There is a
straightforward functor from $C_n$-Tambara functors to rings, whose
left adjoint can be expressed in terms of Witt vectors of length $d$
for all divisors $d$ of $n$.  For more general groups $G$, there is a
similar relationship with the generalised Witt vector rings of Dress
and Siebeneicher~\cite{drsi:brp}.  

Tambara functors were originally introduced by Tambara, who called them
TNR-functors~\cite{ta:mt}.  The relationship with Witt vectors was
first noticed by Brun~\cite{br:wvt}, which inspired a beautiful
reinterpretation of Witt theory by Elliott~\cite{el:bri}.  This in
turn inspired the work leading to this memoir.  During the long
gestation of this work, a number of papers and preprints by Nakaoka
have appeared~\cites{na:btt,na:fsm,na:gdc,na:itf,na:tfp,na:tmf},
covering some of the same ideas; we will give detailed references in
the main text.

\section{The theory of semigroups}
\label{sec-semigroups} 

In this section we review the definition of semigroups, in a form that
generalises easily to define Mackey functors and Tambara functors.
For us, a \emph{semigroup} will mean a set with a commutative and
associative binary operation (denoted by $+$) and an identity element
(denoted by $0$).  Mackey functors are usually defined in terms of
abelian groups, but we will find it convenient to give a slightly more
general version without additive inverses.

Any finite set $X$ gives a functor $M\mapsto M^X=\Map(X,M)$ from
semigroups to sets.  Let $\bCA(X,Y)$ be the set of natural functions 
$f\:M^X\to M^Y$.  (We do not assume \emph{a priori} that $f$ is a
homomorphism of semigroups, but it will turn out that this holds
automatically.)  This gives us a category $\bCA$, whose objects are
finite sets.  

By a \emph{span} from $X$ to $Y$ we mean a diagram
$\om=(X\xla{p}A\xra{q}Y)$, where $A$ is another finite set.  By an
\emph{isomorphism} from $\om$ to another span
$\om'=(X\xla{p'}A'\xra{q'}Y)$ we mean a bijection $f\:A\to A'$ with
$p'f=p$ and $q'f=q$.  This gives a groupoid $\CA(X,Y)$ of spans from
$X$ to $Y$.

For any span $\om$ as above, we have a map $f_\om\:M^X\to M^Y$ given
by 
\[ f_\om(m)(y) = \sum_{a\in q^{-1}\{y\}} m(p(a)). \]
It is easy to see that this is natural, and depends only on the
isomorphism class of $\om$.  

Alternatively, we can analyse $\bCA(X,Y)$ using the Yoneda lemma.
The semigroup $\N^X$ represents the functor $M\mapsto M^X$, and
it follows that
\[ \bCA(X,Y) = \Semigroups(\N^Y,\N^X) = (\N^X)^Y = \N^{X\tm Y}.
\]
Explicitly, for any map $a\:X\tm Y\to\N$ we have a natural map
$g_a\:M^X\to M^Y$ given by
\[ g_a(m)(y) = \sum_{x\in X} a(x,y) m(x). \]
For a span $\om$ as above, we find that $f_\om=g_{c(\om)}$, where 
\[ c(\om)(x,y) = |\{a\in A\st p(a)=x \text{ and } q(a)=y\}|. \]
From this it is straightforward to check that the construction 
$\om\to f_\om$ gives a bijection from the set $\pi_0\CA(X,Y)$ (of
isomorphism classes of spans) to $\bCA(X,Y)$.  We also see (as
promised) that all natural maps $M^X\to M^Y$ are automatically
homomorphisms.  This means that all the morphism sets $\bCA(X,Y)$ are
semigroups in a natural way, and that composition is bilinear.

Now suppose we have two spans
\begin{align*}
 \om_0 &= (X_0 \xla{p_0} X_{01} \xra{q_0} X_1) \\
 \om_1 &= (X_1 \xla{p_1} X_{12} \xra{q_1} X_2).
\end{align*}
The above analysis implies that the composite 
\[ M^{X_0} \xra{f_{\om_0}} M^{X_1}
           \xra{f_{\om_1}} M^{X_2}
\]
must arise from a span from $X_0$ to $X_2$, which is unique up to
isomorphism.  To find the required span, we let $X_{02}$ denote the
pullback of the maps $X_{01}\xra{q_0}X_1\xla{p_1}X_{12}$, so we have a
diagram 
\[ \xymatrix{
 & & X_{02} \ar[dl] \ar[dr] \\
 & X_{01} \ar[dl] \ar[dr] & & 
   X_{12} \ar[dl] \ar[dr] \\
 X_0 & & X_1 & & X_2
} \]
in which the middle square is cartesian.  We define $\om_1\circ\om_0$
to be the resulting span
\[ \om_1\circ\om_0 = (X_0 \xla{} X_{02} \xra{} X_2). \] It is not hard
to check that $f_{\om_1\circ\om_0}=f_{\om_{12}}f_{\om_{01}}$ as
required.

One way to encapsulate the properties of this construction is to say
that we have a bicategory (as defined at~\cite{nlab:bc} for example),
where the $0$-cells are finite sets, the $1$-cells are spans, and the
$2$-cells are isomorphisms of spans.  To produce the required
associativity isomorphisms, we need to contemplate diagrams of the
form 
\[ \xymatrix{
 & & & X_{03} \ar[dl] \ar[dr] \\
 & & X_{02} \ar[dl] \ar[dr] & &
     X_{13} \ar[dl] \ar[dr] \\
 & X_{01} \ar[dl] \ar[dr] & &
   X_{12} \ar[dl] \ar[dr] & &
   X_{23} \ar[dl] \ar[dr] \\
 X_0 & & X_1 & & X_2 & & X_3
} \]
in which all the squares are cartesian.  To prove the pentagonal
coherence identities for these isomorphisms, we need to consider
diagrams of the form
\[ \xymatrix{
 & & & & X_{04} \ar[dl] \ar[dr] \\
 & & & X_{03} \ar[dl] \ar[dr] & &
       X_{14} \ar[dl] \ar[dr] \\
 & & X_{02} \ar[dl] \ar[dr] & &
     X_{13} \ar[dl] \ar[dr] & &
     X_{24} \ar[dl] \ar[dr] \\
 & X_{01} \ar[dl] \ar[dr] & &
   X_{12} \ar[dl] \ar[dr] & &
   X_{23} \ar[dl] \ar[dr] & &
   X_{34} \ar[dl] \ar[dr] \\
 X_0 & & X_1 & & X_2 & & X_3 & & X_4
} \]
in which all the squares are again cartesian.

Another approach is to let $\CA_k$ denote the set of all diagrams of
this general type, involving sets $X_{ij}$ for $0\leq i\leq j\leq k$.
To be set-theoretically respectable we should fix an infinite set $U$,
and insist that all the sets $X_{ij}$ should be subsets of $U$.  One
can then introduce face and degeneracy operators to make
$\CA_\bullet$ into a simplicial set, and then check that this is an
$\infty$-category in the sense of Lurie \cite{lu:htt} (or in the
language of Joyal~\cite{jo:qck}, a quasicategory).  This idea has
been developed extensively in work of Cranch \cite{cr:ati}.  

It will be convenient to introduce slightly different notation as
follows.  Given a map $f\:X\to Y$, we put
\begin{align*}
       T_f &= (X\xla{1}X\xra{f}Y) \in \CA(X,Y) \\
 f^* = R_f &= (Y\xla{f}X\xra{1}X) \in \CA(Y,X).
\end{align*}
We also write $T_f$ for the corresponding isomorphism class in
$\bCA(X,Y)$, or the resulting operation $M^X\to M^Y$, and similarly
for $R_f$.  
\begin{itemize}
 \item[(a)] For all $X\xra{f}Y\xra{g}Z$ we have $T_{gf}=T_gT_f$.  More
  precisely, we see by considering the diagram 
  \[ \xymatrix{
   & & X \ar[dl]_1 \ar[dr]^f \\
   & X \ar[dl]_1 \ar[dr]^f & & Y \ar[dl]_1 \ar[dr]^g \\
   X & & Y & & Z
  } \] 
  that the spans $T_{gf}$ and $T_gT_f$ are canonically isomorphic in
  $\CA(X,Z)$, so $T_{gf}=T_gT_f$ in $\bCA(X,Z)$ or $\Map(M^X,M^Z)$.
 \item[(b)] In the same sense, we have $R_{gf}=R_fR_g$, or
  equivalently $(gf)^*=f^*g^*$.
 \item[(c)] Any span $(X\xla{p}A\xra{q}Y)$ can be expressed in
  $\bCA(X,Y)$ as $T_qR_p$, as we see by considering the diagram 
  \[ \xymatrix{
   & & A \ar[dl]_1 \ar[dr]^1 \\
   & A \ar[dl]_p \ar[dr]_1 & & A \ar[dl]^1 \ar[dr]^q \\
   X & & A & & Y
  } \] 
 \item[(d)] For any cartesian square
  \[ \xymatrix{
   W \ar[r]^f \ar[d]_g & X \ar[d]^h \\
   Y \ar[r]_k & Z
  } \]
  we have $T_gR_f=R_kT_h\in\bCA(X,Y)$.
 \item[(e)] For any bijection $f\:X\to Y$ we have $T_f=R_f^{-1}$.
  This can be seen as a special case of~(d), for the diagram
  \[ \xymatrix{
   X \ar[r]^1 \ar[d]_1 & X \ar[d]^f \\ X \ar[r]_f & Y.
  } \]
\end{itemize}

Next, consider the canonical inclusions
\[ X \xra{i} X\amalg Y \xla{j} Y. \]
These give a diagram as follows in $\bCA$:
\[ X \xla{R_i} X\amalg Y \xra{R_j} Y. \]
We claim that this is a product diagram, or in other words that for
every diagram
\[ X \xla{\al} U \xra{\bt} Y \]
there is a unique morphism $\phi\in\bCA(U,X\amalg Y)$ with
$R_i\phi=\al$ and $R_j\phi=\bt$.  Indeed, we can represent $\al$ and
$\bt$ by spans as follows:
\begin{align*}
 \al &= [U\xla{p}A\xra{q}X] \\
 \bt &= [U\xla{r}B\xra{s}Y].
\end{align*}
Let $(p,r)\:A\amalg B\to U$ be the map given by $p$ on $A$ and by $r$
on $B$.  We then find that the morphism
\[ \phi = [U \xla{(p,r)} A\amalg B \xra{q\amalg s} X\amalg Y] \]
has the required property.  We also note that the maps 
\[ M^X \xla{R_i} M^{X\amalg Y} \xra{R_j} M^Y \]
are just the obvious projections, so they again give a product diagram
(in either the category of sets, or the category of semigroups).

A similar argument shows that the maps $X\xra{T_i}X\amalg Y\xla{T_j}Y$
give a coproduct diagram.  Similarly, the empty set is both initial
and terminal in $\bCA$.  Indeed, if we let $z\:\emptyset\to X$ be the
inclusion, then $\bCA(\emptyset,X)=\{T_z\}$ and
$\bCA(X,\emptyset)=\{R_z\}$.  Using this, we see that $\bCA$ is a
semiadditive category, as discussed in
Appendix~\ref{apx-semiadditive}.  (The arguments are essentially the
same as in the more familiar additive case, discussed
in~\cite{ma:cwm}*{Chapter VIII} for example).  In terms of spans, the
addition is 
\[ [X\xla{p}A\xra{q}Y] + [X\xla{p'}A'\xra{q'}Y] = 
    [X \xla{(p,p')} A\amalg A' \xra{(q,q')} X'].
\]
Alternatively, we can regard each object $X\in\bCA$ as a semigroup
object: the unit is $T_z\:\emptyset\to X$, and the addition is
$T_s\:X\amalg X\to X$, where $s$ is the function $X\amalg X\to X$
given by the identity on both copies of $X$.
 
We can recover the category of semigroups from $\bCA$, as follows:
\begin{proposition}\label{prop-semigroup-theory}
 The category of semigroups is equivalent to the category of
 product-preserving functors from $\bCA$ to the category of sets.
\end{proposition}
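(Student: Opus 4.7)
The plan is to construct functors in both directions and verify they are mutually quasi-inverse equivalences. In one direction, define $\Phi$ from $\Semigroups$ to the category of product-preserving functors $\bCA \to \Sets$ by sending a semigroup $M$ to the functor $F_M$ with $F_M(X) = M^X$; the action on morphisms $\phi \in \bCA(X,Y)$ is given by the tautological identification of $\bCA(X,Y)$ with the set of natural maps $M^X \to M^Y$, which is built into the very definition of $\bCA$. Product preservation is immediate from $M^{X \amalg Y} = M^X \tm M^Y$, the projections being realised by $F_M(R_i)$ and $F_M(R_j)$ as in the product diagram discussion above.

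In the reverse direction, define $\Psi$ by $\Psi(F) = F(1)$, where $1$ is a chosen one-point set. The semigroup structure on $F(1)$ is obtained by applying $F$ to the canonical commutative semigroup object structure on $1 \in \bCA$ noted just before the proposition: the unit is $F(T_z) \colon F(\emptyset) \to F(1)$, with $F(\emptyset)$ a singleton because $\emptyset$ is terminal in $\bCA$ and $F$ preserves finite products; the addition is the composite $F(1) \tm F(1) \cong F(1 \amalg 1) \xra{F(T_s)} F(1)$, where $s \colon 1 \amalg 1 \to 1$ is the fold. Associativity, commutativity, and the unit axiom follow from the corresponding identities holding for this semigroup object structure in $\bCA$, since product-preserving functors transport semigroup objects to semigroups.

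That $\Psi \circ \Phi$ is naturally isomorphic to the identity on $\Semigroups$ is routine: $\Psi(\Phi(M)) = M^1 = M$ as a set, and applying the explicit formula for $f_\om$ to the span $T_s$ confirms that the induced addition agrees with the original one. The substantive direction is $\Phi \circ \Psi$. Given $F$ product-preserving with $M = F(1)$, the decomposition $X = \coprod_{x \in X} \{x\}$ combined with product preservation yields a canonical bijection $\eta_X \colon F(X) \to M^X$.

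The main task, and the principal obstacle, is to check that $\eta$ is natural with respect to every morphism $\phi \in \bCA(X,Y)$. By property (c), every such $\phi$ factors as $T_q R_p$, so it suffices to verify naturality separately for morphisms of the form $R_f$ and $T_f$. The case $R_f$ is essentially formal: product preservation forces $F(R_f)$ to act as precomposition with $f$ on $M^X$. The case $T_f$ is the genuinely non-trivial point, since one must recognise $F(T_f) \colon M^X \to M^Y$ as the sum-over-fibres map. This is achieved by decomposing $T_f$ using the fold maps fibre by fibre, reducing naturality to the fact that the addition on $M = F(1)$ was defined precisely as $F(T_s)$; the cartesian-square relation (d) ensures the compatibility across different fibres. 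Once this naturality is established, $\Phi$ and $\Psi$ are mutually inverse equivalences.
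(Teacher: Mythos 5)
Your argument is correct and takes essentially the same approach as the paper: construct $M \mapsto (X \mapsto M^X)$ and $F \mapsto F(1)$, extract the semigroup structure on $F(1)$ from the semigroup object structure on $1 \in \bCA$, and verify the two are inverse. You supply more detail than the paper on the naturality check for $\Phi\Psi \simeq 1$ (decomposing $\phi = T_q R_p$ and using the cartesian-square relation~(d) for the $T_f$ case), which the paper simply leaves to the reader, and you streamline the verification of the semigroup axioms by invoking the general principle that product-preserving functors transport semigroup objects to semigroups rather than chasing the explicit associativity square.
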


This should really be seen as a basic part of the setup of algebraic
theories in the sense of Lawvere~\citelist{\cite{bo:hcaii}*{Chapter 3}
 \cite{nlab:lt}}.  In Lawvere's terminology, the category $\bCA$ is
the theory of semigroups.

\begin{proof}
 If $A$ is a semigroup then for each finite set $X$ we have a set
 $A^X$, and for each element $f\in\bCA(X,Y)$ we have (tautologically)
 a map $f_A\:A^X\to A^Y$.  Moreover, these compose in the obvious way,
 so we have a functor $E_A\:\bCA\to\Sets$ sending $X$ to $A^X$.  As
 $X\amalg Y$ is the product of $X$ and $Y$ in $\bCA$ and
 $A^{X\amalg Y}=A^X\tm A^Y$ we see that $E_A$ preserves products.

 For the opposite construction, it will be convenient to identify
 numbers with sets by the usual rule $0=\emptyset$, $1=\{0\}$,
 $2=\{0,1\}$ and so on.  For any semigroup $U$ we have natural maps
 $U^0\xra{\eta_U}U^1\xla{\sg_U}U^2$ given by $\eta()=0$ and
 $\sg(u_0,u_1)=u_0+u_1$.  These can be regarded as morphisms
 $0\xra{\eta}1\xla{\sg}2$ in $\bCA$.

 Now suppose we start with a product-preserving functor
 $F\:\bCA\to\Sets$.  Put $A=F(1)$.  Any finite set $X$ can be thought
 of as $\coprod_{x\in X}1$, and disjoint unions are products in
 $\bCA$, so the natural map $F(X)\to\prod_{x\in X}F(1)=A^X$ is
 bijective.  Thus, the maps $F(0)\xra{F(\eta)}F(1)\xla{F(\sg)}F(2)$
 give maps $1\xra{0}A\xla{+}A^2$.  We claim that these make $A$ into a
 semigroup.  We will prove the associativity axiom as an example.  All
 semigroups are associative, so the two natural operations
 $(u,v,w)\mapsto\sg(\sg(u,v),w)$ and $(u,v,w)\mapsto\sg(u,\sg(v,w))$
 are the same, so the following diagram commutes in $\bCA$:
 \[ \xymatrix{
  3 \ar[r]^{\sg\tm 1} \ar[d]_{1\tm\sg} & 2 \ar[d]^\sg \\
  2 \ar[r]_\sg & 1.
 } \]
 We can apply $F$ to this and use the natural isomorphisms
 $F(k)\to F(1)^k=A^k$ to obtain a commutative square
 \[ \xymatrix{
  A^3 \ar[r]^{(+)\tm 1} \ar[d]_{1\tm(+)} & A^2 \ar[d]^+ \\
  A^2 \ar[r]_+ & A.
 } \]
 This proves that addition is associative in $A$, and the other
 semigroup axioms can be handled in the same way.  

 We leave it to the reader to check that these two constructions give
 an inverse pair of functors between the relevant categories.
\end{proof}

We next discuss quotients of semigroups.  This is a little more subtle
that the corresponding construction with groups, but the relevant
concepts are a standard part of universal algebra.
\begin{definition}\label{defn-semigroup-congruence}
 Let $M$ be a semigroup.  A \emph{congruence} on $M$ is a subset
 $E\sse M\tm M$ that is both a subsemigroup of $M\tm M$ and an
 equivalence relation on $M$.
\end{definition}
\begin{proposition}\label{prop-semigroup-cong}\ \\
 \begin{itemize}
  \item[(a)] For any morphism $\phi\:M\to M'$ of semigroups there
   is a congruence $\eqker(\phi)$ on $M$ given by 
   \[ \eqker(\phi) =
       \{(a,b)\in M\tm M \st \phi(a)=\phi(b)\in M'\}.
   \]
  \item[(b)] For any congruence $E$ on $M$ there is a unique way to
   make the quotient set $M/E$ into a semigroup
   such that the projection $\pi\:M\to M/E$ is a semigroup morphism.
  \item[(c)] If $E\leq\eqker(\phi)$ then there is a unique semigroup 
   morphism $\ov{\phi}\:M/E\to M'$ with $\phi=\ov{\phi}\pi$, but if
   $E\not\leq\eqker(\phi)$ then there is no such morphism.
  \item[(d)] The diagonal $\Dl=\{(m,m)\st m\in M\}$ is a congruence
   with $M/\Dl=M$.  The whole set $M\tm M$ is a congruence with
   $M/(M\tm M)=0$.
  \item[(e)] The intersection of any family of congruences is a
   congruence.  (In particular, the intersection of the empty family
   is $M\tm M$.)
  \item[(f)] For any subset $F\sse M\tm M$, there is a smallest
   congruence containing $F$, namely the intersection of the family of
   all congruences that contain $F$.  
 \end{itemize}
\end{proposition}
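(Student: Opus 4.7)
The plan is to verify each of the six parts in sequence, using only the definition of congruence (both a subsemigroup of $M\tm M$ and an equivalence relation on $M$) together with elementary set-theoretic manipulation. Nothing deep is involved; the main point of care is in parts (b) and (c), which amount to the universal algebra version of the first isomorphism theorem.

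For~(a), observe that $\eqker(\phi)$ is visibly an equivalence relation because equality in $M'$ is one. To see that it is a subsemigroup of $M\tm M$, let $(a,b),(a',b')\in\eqker(\phi)$; then $\phi(a+a')=\phi(a)+\phi(a')=\phi(b)+\phi(b')=\phi(b+b')$, so $(a,b)+(a',b')\in\eqker(\phi)$. For~(b), the uniqueness assertion forces the definition $[a]+[b]:=[a+b]$, and well-definedness reduces exactly to the statement that $E$ is a subsemigroup: if $(a,a'),(b,b')\in E$ then $(a+b,a'+b')=(a,a')+(b,b')\in E$. The semigroup axioms for $M/E$ then follow from those of $M$, and $\pi$ is a homomorphism tautologically. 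For~(c), a map $\ov\phi$ with $\ov\phi\pi=\phi$ must satisfy $\ov\phi([m])=\phi(m)$, which is a well-defined function on $M/E$ precisely when $(a,b)\in E$ implies $\phi(a)=\phi(b)$, i.e.\ when $E\leq\eqker(\phi)$; it is then automatically a semigroup homomorphism because $\phi$ and $\pi$ are.

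Part~(d) is a direct check: $\Dl$ is the equality relation, which is certainly a subsemigroup (the diagonal map $M\to M\tm M$ is a homomorphism) and an equivalence relation; the quotient just renames elements. For $M\tm M$ itself, the subsemigroup condition is trivial, the equivalence relation is the indiscrete one, and the quotient is a one-point semigroup, which is the zero object. For~(e), an arbitrary intersection of subsemigroups of $M\tm M$ is again a subsemigroup (since containment is preserved by addition), and an arbitrary intersection of equivalence relations on $M$ is again an equivalence relation (reflexivity, symmetry, transitivity each pass to intersections); the parenthetical about the empty intersection is the standard convention, and~(d) identifies it with $M\tm M$. Finally~(f) is immediate from~(e): the family of congruences containing $F$ is nonempty by~(d), and its intersection is a congruence by~(e), hence the smallest one containing $F$.

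The only step that requires any real thought is the well-definedness of addition in~(b), which is where the subsemigroup clause in the definition of a congruence is used in an essential way; every other assertion is a matter of unwinding definitions.
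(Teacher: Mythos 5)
Your verification is correct and complete. The paper itself gives no proof for this proposition — it is explicitly left to the reader — so there is nothing to compare against; your argument is exactly the standard universal-algebra verification one would expect, and you correctly identify the one place where the subsemigroup clause of the congruence definition does real work, namely well-definedness of addition on $M/E$ in part~(b).
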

\begin{proof}
 Left to the reader.
\end{proof}

\begin{remark}\label{rem-congruences-subgroups}
 If $M$ has additive inverses and so is actually a group, then it is
 easy to check that every congruence has the form 
 \[ E_N = \{(a,b)\in M\tm M \st a-b\in N\} \]
 for some subgroup $N\leq M$, and thus that congruences biject with
 subgroups. 
\end{remark}
\begin{remark}\label{rem-smallest-congruence}
 Let $F$ be a subset of $M\tm M$.  Put 
 \begin{align*}
  E(0) &= \{(a,b)\in M\tm M\st a=b \text{ or }
               (a,b)\in F \text{ or } (b,a) \in F\} \\
  E(2k+1) &=
   \{(a+a',b+b')\st (a,b)\in E(2k) \text{ and } (a',b')\in E(2k) \}\\
  E(2k+2) &= \{(a,b)\st \text{ there exists $u\in M$ with 
     $(a,u)\in E(2k+1)$ and $(u,b)\in E(2k+1)$}\} \\
  E &= \bigcup_nE(n).
 \end{align*}
 Then one can check that $E$ is the smallest congruence containing
 $F$, as in part~(f) of the above proposition.  We suspect that there
 is no substantially simpler construction.  In particular, we suspect
 that it is necessary to alternate infinitely many times between
 operations designed to make $E$ a subsemigroup and operations
 designed to make $E$ transitive.  However, we have not constructed
 explicit examples to support this.
\end{remark}

\begin{definition}\label{defn-quotient-semigroup}
 For any subsemigroup $N\leq M$ we let $E_N$ denote the smallest
 congruence containing $N\tm 0$, and we write $M/N$ for $M/E_N$.
\end{definition}

\begin{definition}\label{defn-coinvariant-semigroup}
 Let $M$ be any semigroup with an action of a finite group $G$.  We
 put $F=\{(m,gm)\st m\in M\text{ and } g\in G\}$, and let $E$ be the
 smallest congruence containing $F$.  We then put $M_G=M/E$, and call
 this the \emph{coinvariant semigroup} for the action.
\end{definition}

\begin{remark}\label{rem-semigroup-factor}
 It is clear by construction that a homomorphism $\phi\:M\to M'$
 factors through $M/N$ iff $\eqker(\phi)\supseteq N\tm 0$ iff
 $\phi(N)=0$ (and the factorisation is unique if it exists).
 Similarly, $\phi$ factors through $M_G$ iff $\phi(gm)=\phi(m)$ for
 all $g\in G$ and $m\in M$.
\end{remark}

\begin{remark}\label{rem-funny-quotient}
 It is possible to have $M/N=0$ even when $N\neq M$.  For example, for
 any $a\in\N$ the set 
 \[ U_a = \{n\in\N\st n=0 \text{ or } n\geq a\} \]
 is a subsemigroup of $\N$.  In $\N/U_a$ we have $0\sim a$ so
 $k\sim k+a$ for all $k\in\N$, but also $k+a\in U_a$ so $k\sim 0$.
 This shows that $\N/U_a=0$.  However, the inclusion $U_a\to\N$ is not
 an epimorphism in the category of semigroups.  To see this, let $E_0$
 denote the set of pairs $((i,j),(k,l))\in\N^2\tm\N^2$ such that
 $\max(i,j)\geq a$ and $\max(k,l)\geq a$ and $i+j=k+l$.  One can check
 that $E=\Dl\cup E_0$ is a congruence on $\N^2$; let
 $\pi\:\N^2\to Q=\N^2/E$ be the resulting quotient morphism.  We have
 morphisms $\al,\bt\:N\to Q$ given by $\al(n)=\pi(n,0)$ and
 $\bt(n)=\pi(0,n)$.  These agree on $U_a$, but not on all of $\N$, as
 required. 
\end{remark}

\section{Mackey functors}
\label{sec-mackey}

Let $G$ be a finite group.  Everything in the previous section can be
done $G$-equivariantly.  Explicitly, we introduce a bicategory
$\CA_G$, where the $0$-cells are finite $G$-sets, the $1$-cells are
diagrams $(X\xla{p}A\xra{q}Y)$ of finite $G$-sets, and the $2$-cells
are the evident equivariant isomorphisms.  We can then decategorify
this: we form a category $\bCA_G$ whose objects are finite $G$-sets,
with morphisms $\bCA_G(X,Y)=\pi_0\CA_G(X,Y)$.  For us, a \emph{Mackey
 functor} will mean a product-preserving functor from $\bCA_G$ to
$\Sets$ (these are semi-Mackey functors in the terminology of
Nakaoka).  We write $\Mackey_G$ for the category of Mackey functors.

As before, the inclusions $X\xra{i}X\amalg Y\xla{j}Y$ give a product
diagram $X\xla{R_i}X\amalg Y\xra{R_j}Y$ in $\bCA_G$, and also a
coproduct diagram $X\xra{T_i}X\amalg Y\xla{T_j}Y$, and the empty set
is both initial and terminal.  This makes $\bCA_G$ a semiadditive
category.  If $M$ is a Mackey functor we thus have 
$M(X\amalg Y)\simeq M(X)\tm M(Y)$, and we can apply $M$ to the
semigroup structure maps $\emptyset\xra{T_z}X\xla{T_s}X\amalg X$ to
make $M(X)$ into a semigroup in a natural way.

\begin{example}\label{eg-MapGXA}
 Let $A$ be a semigroup with an action of $G$.  We would like to
 define a Mackey functor $cA$ by $cA(X)=\Map_G(X,A)$ (so
 $cA(G/H)=A^H$).  Given any equivariant span
 $\om=(X\xla{p}A\xra{q}Y)$, we define $f_\om\:\Map(X,A)\to\Map(Y,A)$
 by the usual rule
 \[ f_\om(u)(y) = \sum_{q(a)=y} u(p(a)). \]
 This commutes with the $G$-action and so restricts to give a map
 $f_\om\:cA(X)\to cA(Y)$.  This depends only on the equivariant
 isomorphism class of $\om$ and is compatible with composition of
 spans, so the construction $[\om]\mapsto f_\om$ makes $cA$ into a
 Mackey functor as desired.

 If $G$ acts trivially on $A$ we just have $cA(X)=\Map(X/G,A)$.  In
 particular, we can identify $cA(G/H)$ with $A$, and for any
 $f\:G/K\to G/H$ the resulting map $R_f\:cA(G/H)\to cA(G/K)$ is just the
 identity $A\to A$.  For this reason, Mackey functors of this type are
 called \emph{constant} Mackey functors.  However, this name is
 somewhat misleading because the map $T_f\:cA(G/K)\to cA(G/H)$ is
 $|H|/|K|$ times the identity on $A$, rather than the identity
 itself.  (Note here that the existence of $f$ means that $K$ is
 conjugate to a subgroup of $H$, so $|H|/|K|$ is an integer.) 
\end{example}
\begin{example}\label{eg-MapXGB}
 Now let $B$ be a semigroup without $G$-action, and define
 $N(X)=\Map(X^G,B)$ (where $X^G$ is the subset of $G$-fixed points in
 $X$).  The construction $X\mapsto X^G$ preserves the pullbacks used
 to define composition of spans, and it also preserves disjoint
 unions, so it gives a product-preserving functor $\bCA_G\to\bCA$.
 Given this, there is an evident way to regard $N$ as a Mackey
 functor.  Explicitly, for any equivariant span
 $\om=(X\xla{p}A\xra{q}Y)$, we define $f_\om\:N(X)\to N(Y)$ by 
 \[ f_\om(u)(y) = \sum_{a\in A^G,\;q(a)=y} u(p(a)). \]
\end{example}
\begin{example}\label{eg-HU}
 For any finite $G$-set $U$ we have a representable Mackey functor
 $H_U\:\bCA_G\to\Sets$ given by $H_U(X)=\bCA_G(U,X)$.  We also write
 $A(X)$ for $H_1(X)$, which is the set of isomorphism classes of spans
 $(1\xla{p}T\xra{q}X)$.  It is clear here that $p$ gives no
 information, so $A(X)$ is the set of isomorphism classes of finite
 $G$-sets equipped with a map to $X$, which is known as the
 \emph{Burnside semigroup} of $X$.
\end{example}
\begin{example}\label{eg-pi-zero}
 Let $E$ be a $G$-equivariant spectrum in the sense of stable homotopy
 theory.  For any finite $G$-set $X$ we have another $G$-spectrum
 $\Sgi_G(X_+)$, and we write $\pi^G_0(E)(X)$ for the set of homotopy
 classes of maps $\Sgi_G(X_+)\to E$.  It is well known that
 $\pi^G_0(E)$ is a Mackey functor in a natural way.  We will give a
 proof in Section~\ref{sec-spectrum}, in a form that is convenient for
 generalisation to the Tambara framework.
\end{example}

\begin{proposition}\label{prop-mackey-orbits}
 Let $\ov{\CA\CO}_G$ denote the full subcategory of $\bCA_G$ whose
 objects are the orbits $G/H$ for all subgroups $H\leq G$.  Then
 $\Mackey_G$ is equivalent to the category of preadditive functors
 from $\ov{\CA\CO}_G$ to the category of semigroups.
\end{proposition}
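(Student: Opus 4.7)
The plan is to run the standard restriction/extension equivalence, using that $\bCA_G$ is semiadditive and that every finite $G$-set decomposes up to isomorphism as a disjoint union of orbits, which in a semiadditive category is simultaneously a product and a coproduct.

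First I would define the restriction functor $r$. Given a Mackey functor $M\:\bCA_G\to\Sets$, each orbit $G/H$ carries an internal commutative semigroup structure in $\bCA_G$ (unit $T_z\:\emptyset\to G/H$, addition $T_s\:G/H\amalg G/H\to G/H$), and since $M$ preserves products, $M(G/H)$ inherits a commutative semigroup structure. The semiadditive enrichment also equips each hom-set $\bCA_G(G/H,G/K)$ with a semigroup structure, and product-preservation forces the induced map to land in semigroup homomorphisms and to respect this addition. Hence $r(M)=M|_{\ov{\CA\CO}_G}$ is a preadditive functor to $\Semigroups$.

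For the inverse, given a preadditive functor $F\:\ov{\CA\CO}_G\to\Semigroups$, I would construct an extension $\tilde F\:\bCA_G\to\Sets$ as follows. For each finite $G$-set $X$ fix an orbit decomposition $X\simeq\coprod_{i\in I}G/H_i$ and set $\tilde F(X)=\prod_{i\in I}F(G/H_i)$. For a morphism $\phi\in\bCA_G(X,Y)$ with $Y\simeq\coprod_{j\in J}G/K_j$, I use the biproduct calculus of Appendix~\ref{apx-semiadditive} to decompose $\phi$ uniquely as a matrix of components $\phi_{ji}\in\bCA_G(G/H_i,G/K_j)$, obtained by pre-composing $\phi$ with the coproduct inclusions $G/H_i\to X$ and post-composing with the product projections $Y\to G/K_j$, and then define $\tilde F(\phi)$ by the matching matrix of semigroup homomorphisms $F(\phi_{ji})$, assembled using the semigroup structure on $\prod_j F(G/K_j)$.

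The main obstacle is verifying that $\tilde F$ is functorial on all of $\bCA_G$: the composite of two spans, expanded by matrix multiplication of components, must agree with the matrix product of their $F$-images. This is precisely where preadditivity of $F$ enters, since the relevant matrix-multiplication identity in $\bCA_G$ combines composition of orbit-to-orbit spans with their semigroup addition, which $F$ must preserve in order to land in $\Semigroups$. Once functoriality is in hand, product preservation of $\tilde F$ is immediate from the definition, the composite $r\tilde F$ returns $F$ up to canonical isomorphism, and the comparison map $\widetilde{rM}(X)=\prod_i M(G/H_i)\to M(X)$ is a natural isomorphism because $M$ already preserves the product decomposition $X\simeq\coprod_i G/H_i$; naturality with respect to general spans then reduces to the orbit-to-orbit case by the same matrix calculus.
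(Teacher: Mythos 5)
Your proof is correct and follows the standard restriction/extension strategy, but it differs in implementation from the paper's own argument. The paper disposes of this in two lines by citing the general semiadditive lemma, Proposition~\ref{prop-res-equiv} in Appendix~\ref{apx-semiadditive}: once one knows that every object of $\bCA_G$ is a finite coproduct of orbits, the equivalence follows formally. Inside the appendix, the inverse functor is built abstractly by setting $M(a)=\CM'(\CA_0)(K_a,N_0)$ where $K_a=\CA(a,-)|_{\CA_0}$, i.e.\ a representable-functor construction that avoids choosing explicit decompositions. You instead unwind this to a hands-on matrix calculus: choose an orbit decomposition of each $X$, define $\tilde F(X)$ as the corresponding product, and extend to morphisms by the matrix of orbit-to-orbit components $\phi_{ji}=p_j\circ\phi\circ i_i$, with preadditivity of $F$ entering to make composition match matrix multiplication. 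Both approaches work; yours is more concrete and shows explicitly where each hypothesis is used, while the paper's is shorter and modular (and reusable for the Tambara analogue).

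Two small points to tighten. First, the step ``$M(G/H)$ inherits a commutative semigroup structure'' and ``product-preservation forces the induced maps to be homomorphisms'' is itself the content of the appendix result identifying $\CM(\CA)$ with $\CM'(\CA)$; it is worth saying so rather than taking it for granted. Second, ``product preservation of $\tilde F$ is immediate from the definition'' is slightly too quick: because you fix a decomposition of each object independently, the chosen decomposition of $X\amalg Y$ need not literally be the disjoint union of those of $X$ and $Y$, so $\tilde F$ only preserves products up to a canonical natural isomorphism. That suffices for the equivalence, but the coherence of those isomorphisms should be noted rather than elided.
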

\begin{proof}
 As every finite $G$-set is a disjoint union of orbits, we see that
 every object in $\bCA_G$ can be expressed as a product (or
 equivalently, a coproduct) of objects in $\ov{\CA\CO}_G$.  The claim
 therefore follows from Proposition~\ref{prop-res-equiv} (in
 Appendix~\ref{apx-semiadditive}). 
\end{proof}
\begin{remark}
 Using this idea, we can also regard $\Mackey_G$ as the category of
 algebras for a coloured or multisorted Lawvere theory, with one
 colour for each subgroup of $G$.  (Such theories are discussed
 in~\cite{bovo:hia}*{Chapter II}, for example).
\end{remark}

From either of the above points of view, it is important to understand
$\bCA_G(G/H,G/K)$.  There are three basic kinds of elements:
\begin{itemize}
 \item[(a)] Suppose that $K\leq H\leq G$, so there is an evident
  projection $p\:G/K\to G/H$.  We put 
  \begin{align*}
   T_K^H &= T_p \in \bCA_G(G/K,G/H) \\
   R_K^H &= R_p \in \bCA_G(G/H,G/K).
  \end{align*}
 \item[(b)] For any $g\in G$ and $H\leq G$ we define
  $c^H_g\:G/H\to G/gHg^{-1}$ by 
  \[ c^H_g(xH)=xHg^{-1}=xg^{-1}(gHg^{-1}). \]
  We also define
  $C^H_g=T_{c^H_g}=R_{c^H_g}^{-1}\in\bCA_G(G/H,G/gHg^{-1})$.  
\end{itemize}

Any element of $\bCA_G(G/H,G/K)$ can be represented by a span
$\om=(G/H\xla{p}A\xra{q}G/K)$.  We can decompose $A$ as a disjoint
union of orbits, each orbit gives a span from $G/H$ to $G/K$, and
$\om$ is the sum of these terms.  We can therefore focus on the case
where $A$ itself is an orbit.  As $G$ acts transitively on $G/H$, the
map $p$ is necessarily surjective, so we can choose $a\in A$ with
$p(a)=H$.  This identifies $A$ with $G/L$ for some subgroup $L\leq H$,
and $p$ with the canonical projection $G/L\to G/H$.  As $q$ is also
surjective we will have $q(xL)=K$ for some $x\in G$.  As the coset
$xL\in G/L$ is fixed by the subgroup $xLx^{-1}$ and $q$ is
equivariant, we must have $xLx^{-1}\leq K$.  We therefore have maps
\[ G/H \xra{R_L^H} G/L \xra{C^L_x} G/xLx^{-1}
    \xra{T^K_{xLx^{-1}}} G/K 
\]
in $\bCA_G$, and it is straightforward to check that the composite is
$\om$.  This allows us to express an arbitrary element of
$\bCA_G(G/H,G/K)$ as a sum of composites of our basic operators.

These satisfy relations as follows:
\begin{itemize}
 \item[(a)] For $L\leq K\leq H\leq G$ it is clear that
  $T_K^HT_L^K=T_L^H\:G/L\to G/H$ and
  $R^K_LR^H_K=R^H_L\:G/H\to G/L$.
 \item[(b)] It is also clear that
  $C^{gHg^{-1}}_fC^H_g=C^H_{fg}\:G/H\to G/fgH(fg)^{-1}$.
 \item[(c)] By regarding $C^H_g$ as $T_{c^H_g}$ we see that the left
  hand square below commutes.  By regarding it as $R_{c^H_g}^{-1}$, we
  see that the right hand square commutes.
  \[ \xymatrix{
   G/K \ar[r]^{C^K_g} \ar[d]_{T^H_K} & 
   G/gKg^{-1} \ar[d]^{T^{gHg^{-1}}_{gKg^{-1}}} \\
   G/H \ar[r]_{C^H_g} &
   G/gHg^{-1}
   } \hspace{5em}
   \xymatrix{
   G/K \ar[r]^{C^K_g} & 
   G/gKg^{-1}  \\
   G/H \ar[r]_{C^H_g} \ar[u]^{R^H_K} &
   G/gHg^{-1} \ar[u]_{R^{gHg^{-1}}_{gKg^{-1}}}.
   } 
  \]
 \item[(d)] Now consider a composite $G/H\xra{T^H_L}G/L\xra{R^K_L}G/K$
  (where $H,K\leq L$).  Let $A$ denote the pullback
  $(G/H)\tm_{G/L}(G/K)$, so $R^K_LT^H_L$ is represented by an evident
  span $(G/H\xla{p}A\xra{q}G/K)$, so it can be written as a sum of
  terms indexed by the $G$-orbits in $A$.  We can let $K\tm H$ act on
  $L$ by $(k,h).l=klh^{-1}$, and thus decompose $L$ as
  $\coprod_{t\in T}KtH$ for some subset $T\sse L$.  For each $t\in T$
  we have a point $\phi(t)=(H,t^{-1}K)\in A$, whose isotropy group is
  $M_t=H\cap tKt^{-1}$.  We find that each orbit in $A$ contains
  precisely one of the points $\phi(t)$, and that $p(\phi(t))=H$ and
  $q(t\,\phi(t))=K$.  Note that the conjugate $M'_t=tM_tt^{-1}$ is
  $tHt^{-1}\cap K$, and in particular is contained in $K$.  From this
  we deduce the double coset formula: $R^K_LT^H_L$ is the sum over
  $t\in T$ of the composites
  \[ G/H \xra{R^H_{M_t}} G/M_t \xra{C^{M_t}_t} G/M'_t
      \xra{T^K_{M'_T}} G/K.
  \]
\end{itemize}

\begin{construction}\label{cons-CA-times}
 Given spans
 \begin{align*}
  \al &= (X \xla{p} A \xra{q} Y) \in \CA_G(X,Y) \\
  \al' &= (X' \xla{p'} A' \xra{q'} Y') \in \CA_G(X',Y')
 \end{align*}
 we write $\al\tm\al'$ for the product span
 \[ \al\tm\al' = (
     X\tm X' \xla{p\tm p'} A\tm A' \xra{q\tm q'} Y\tm Y'
    ) \in \CA_G(X\tm X',\;Y\tm Y').
 \]
 As composition of spans is defined using pullbacks, and pullbacks
 commute with products, we see that there are natural isomorphisms
 \[ (\bt\tm\bt')\circ(\al\tm\al') \simeq
      (\bt\circ\al)\tm(\bt'\circ\al').
 \]
 We can thus define a functor $\tm\:\bCA_G\tm\bCA_G\to\bCA_G$ by
 $(X,X')\mapsto X\tm X'$ on objects, and
 $([\al],[\al'])\mapsto[\al\tm\al']$ on morphisms.  This gives a
 symmetric monoidal structure on the category $\bCA_G$.
\end{construction}

\begin{remark}\label{rem-CA-times}
 It is clear that for any maps $f\:X\to Y$ and $f'\:X'\to Y'$ of
 finite $G$-sets, we have $R_f\tm R_{f'}=R_{f\tm f'}$ and
 $T_f\tm T_{f'}=T_{f\tm f'}$.  Using the natural isomorphisms
 \[ (A\amalg B)\tm(C\amalg D)\simeq
     (A\tm C)\amalg(A\tm D)\amalg(B\tm C)\amalg(B\tm D)
 \]
 we also see that for $u,v\in\bCA_G(X,Y)$ and $u',v'\in\bCA_G(X',Y')$
 we have
 \[ (u+v)\tm(u'+v') = (u\tm u')+(u\tm v')+(v\tm u')+(v\tm v'), \]
 so the product functor is bilinear.
\end{remark}

\begin{definition}\label{defn-box-mackey}
 Given Mackey functors $M$ and $N$, we write $M\btm N$ for the Day
 tensor product of $M$ and $N$.  This was originally defined
 in~\cite{da:ccf}, and the construction is reviewed in
 Appendix~\ref{apx-semiadditive}.  It is characterised by the fact
 that morphisms $M\btm N\to P$ biject with natural maps
 $M(X)\tm N(Y)\to P(X\tm Y)$ for $(X,Y)\in\bCA_G^2$.  This gives a
 biadditive symmetric monoidal structure on $\Mackey_G$.

 There is a tautological map $M(X)\tm N(Y)\to(M\btm N)(X\tm Y)$.  We
 write $m\btm n\in (M\btm N)(X\tm Y)$ for the image of a pair
 $(m,n)\in M(X)\tm N(Y)$ under this map.
\end{definition}

\begin{remark}\label{rem-box-kan}
 The functor $M\btm N\:\bCA_G\to\Sets$ is defined as a left Kan
 extension, as explained in Appendix~\ref{apx-semiadditive}.  There is
 a subtlety that becomes important in the Tambara context.  The box
 product would usually be defined as a Kan extension of a certain
 functor taking values in semigroups, but it is shown in the appendix
 that we can form the Kan extension in the category of sets instead
 and it automatically has the required semigroup structure (which is
 unusual for colimit constructions).

 By unwrapping the usual construction of the Kan extension as a
 colimit over a comma category, we obtain the following description.
 Given a span $\al\in\CA(X,Y)$, we will write $f_\al$ for the
 resulting map $M(X)\to M(Y)$.
 \begin{itemize}
  \item[(a)] Every element of $(M\btm N)(X)$ has the form
   $f_\al(m\btm n)$ for some span $\al\in\CA_G(U\tm V,X)$
   and some elements $m\in M(U)$ and $n\in N(V)$.
  \item[(b)] Suppose we have spans $\lm\in\CA_G(U',U)$ and
   $\mu\in\CA_G(V',V)$.  Then for any $m'\in M(U')$
   and $n'\in N(V')$ we have 
   \[ f_{\al\circ(\lm\tm\mu)}(m'\btm n') =
       f_\al(f_\lm(m')\btm f_\mu(n')).
   \]
  \item[(c)] All identities between elements of type~(a) can be
   deduced by chaining together identities of type~(b).
 \end{itemize}
\end{remark}

The above description is somewhat cumbersome; we now outline a
slightly different description that is easier to use.

\begin{definition}
 Given $m\in M(X)$ and $n\in N(X)$ we put $m\ot n=R_\dl(m\btm n)$,
 where $\dl\:X\to X\tm X$ is the diagonal map.
\end{definition}

The operation $\btm$ on element can be expressed in terms of the
operation $\ot$ as follows: 
\begin{lemma}
 Let $X\xla{p}X\tm Y\xra{q}Y$ be the projections.  Then for
 $m\in M(X)$ and $n\in N(Y)$ we have $m\btm n=(R_p(m))\ot(R_q(n))$.
\end{lemma}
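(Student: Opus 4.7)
The plan is to unfold the definition of $\ot$ and then use the naturality property for $\btm$ recorded in part~(b) of Remark~\ref{rem-box-kan}, together with the basic compositional identities for restriction maps.

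First I would expand the right hand side: by definition
\[
R_p(m)\ot R_q(n) \;=\; R_{\dl}\bigl(R_p(m)\btm R_q(n)\bigr),
\]
where $\dl\:X\tm Y\to(X\tm Y)\tm(X\tm Y)$ is the diagonal. The element $R_p(m)\btm R_q(n)$ lives in $(M\btm N)((X\tm Y)\tm(X\tm Y))$, and the map $R_\dl$ pulls it back to $(M\btm N)(X\tm Y)$.

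Next I would apply Remark~\ref{rem-box-kan}(b) with $\al$ equal to the identity span on $(X\tm Y)\tm(X\tm Y)$ and with $\lm=R_p\in\bCA_G(X,X\tm Y)$, $\mu=R_q\in\bCA_G(Y,X\tm Y)$, and with $m'=m$, $n'=n$. This yields
\[
R_p(m)\btm R_q(n) \;=\; f_{R_p\tm R_q}(m\btm n).
\]
By Remark~\ref{rem-CA-times} we have $R_p\tm R_q=R_{p\tm q}$, where on the right $p\tm q$ is the map $(X\tm Y)\tm(X\tm Y)\to X\tm Y$ sending $((x_1,y_1),(x_2,y_2))$ to $(x_1,y_2)$. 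Consequently
\[
R_p(m)\ot R_q(n) \;=\; R_\dl\bigl(R_{p\tm q}(m\btm n)\bigr) \;=\; R_{(p\tm q)\circ\dl}(m\btm n),
\]
using the functoriality identity $R_{gf}=R_f R_g$ from item~(b) of the list of basic relations.

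Finally I would observe that $(p\tm q)\circ\dl$ is the identity on $X\tm Y$: indeed, for $(x,y)\in X\tm Y$, $\dl(x,y)=((x,y),(x,y))$ and $(p\tm q)$ then returns $(p(x,y),q(x,y))=(x,y)$. Hence $R_{(p\tm q)\circ\dl}=R_{1_{X\tm Y}}$, which acts as the identity on $(M\btm N)(X\tm Y)$, and we conclude $R_p(m)\ot R_q(n)=m\btm n$. There is no substantial obstacle here; the only point requiring care is matching the two ways of writing $R_p\tm R_q$ as a single restriction map $R_{p\tm q}$, which is dispensed with by Remark~\ref{rem-CA-times}.
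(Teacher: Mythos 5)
Your proof is correct and follows essentially the same route as the paper's: expand $\ot$ via the diagonal, use the naturality of the $\btm$ pairing to rewrite $R_p(m)\btm R_q(n)$ as $R_{p\tm q}(m\btm n)$, and observe that $(p\tm q)\circ\dl=1_{X\tm Y}$. The only difference is that the paper simply invokes ``the naturality properties of the $\btm$ pairing'' at the middle step, whereas you spell out how this follows from Remark~\ref{rem-box-kan}(b) and Remark~\ref{rem-CA-times}, which is a harmless elaboration.
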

\begin{proof}
 By definition we have 
 \[ (R_p(m))\ot(R_q(n)) = R_\dl(R_p(m)\btm R_q(n)). \]
 By the naturality properties of the $\btm$ pairing we also have 
 \[ R_p(m)\btm R_q(n) = R_{p\tm q}(m\btm n). \]
 Note also that $(p\tm q)\circ\dl\:X\tm Y\to X\tm Y$ is just the
 identity, so $R_\dl R_{p\tm q}$ is the identity on
 $(M\btm N)(X\tm Y)$.  The claim follows by combining these facts.
\end{proof}

We also have a Frobenius reciprocity formula as follows:
\begin{lemma}\label{lem-frobenius}
 Suppose we have a map $f\:W\to X$ of finite $G$-sets.
 \begin{itemize}
  \item[(a)] For all $m'\in M(W)$ and $n\in N(X)$ we have
   $T_f(m'\ot R_f(n))=T_f(m')\ot n\in (M\btm N)(X)$.
  \item[(b)] For all $m\in M(X)$ and $n'\in N(W)$ we have
   $T_f(R_f(m)\ot n')=m\ot T_f(n')\in (M\btm N)(X)$.
 \end{itemize}
\end{lemma}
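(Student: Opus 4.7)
The plan is to reduce both sides of each identity to applications of operations from $\bCA_G$ on the external product $m'\btm n$, and then invoke the base-change rule for cartesian squares (property~(d) from Section~\ref{sec-semigroups}) applied to the graph of $f$.

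For part~(a), I would first unfold the definition of $\ot$ and use the naturality of $\btm$ under restriction (which is a special case of the identity $f_{\lm\tm\mu}(m'\btm n') = f_\lm(m')\btm f_\mu(n')$ from Remark~\ref{rem-box-kan}) to write
\[ m'\ot R_f(n) = R_{\dl_W}(m'\btm R_f(n)) = R_{\dl_W}R_{1_W\tm f}(m'\btm n) = R_{\Gm_f}(m'\btm n), \]
where $\Gm_f=(1_W,f)\:W\to W\tm X$ is the graph of $f$. Symmetrically, using naturality of $\btm$ under transfer,
\[ T_f(m')\ot n = R_{\dl_X}(T_f(m')\btm n) = R_{\dl_X}T_{f\tm 1_X}(m'\btm n). \]
So the statement of~(a) reduces to the identity $T_fR_{\Gm_f} = R_{\dl_X}T_{f\tm 1_X}$ in $\bCA_G(W\tm X,X)$, applied to the element $m'\btm n$.

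The key observation is then that the square
\[ \xymatrix{
 W \ar[r]^-{\Gm_f} \ar[d]_f & W\tm X \ar[d]^{f\tm 1_X} \\
 X \ar[r]_-{\dl_X} & X\tm X
}\]
is cartesian: the pullback of $\dl_X$ and $f\tm 1_X$ consists of pairs $(w,x)$ with $f(w)=x$, which is precisely $W$ embedded by $\Gm_f$. Property~(d) from Section~\ref{sec-semigroups} then gives exactly $T_fR_{\Gm_f} = R_{\dl_X}T_{f\tm 1_X}$, completing part~(a). Part~(b) is entirely analogous, replacing the graph of $f$ by the swapped graph $(f,1_W)\:W\to X\tm W$ and using the cartesian square with $f\tm 1$ replaced by $1_X\tm f$.

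I do not expect a serious obstacle here; the only point requiring a small check is the naturality of $\btm$ with respect to both $R$ and $T$ in each slot, which is immediate from Remark~\ref{rem-box-kan} since identity spans compose trivially with $\lm\tm\mu$. The whole argument is just repackaging the base-change identity along the graph of $f$.
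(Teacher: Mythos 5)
Your proof is correct and follows essentially the same route as the paper. The paper factors the top arrow of the cartesian square as $(1_W\tm f)\circ\dl_W$, whereas you name it $\Gm_f$; this is a purely notational difference, and both arguments come down to the identical cartesian square and the identical naturality facts $R_{1\tm f}(m'\btm n)=m'\btm R_f(n)$ and $T_{f\tm 1}(m'\btm n)=T_f(m')\btm n$.
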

\begin{proof}
 For part~(a), it is straightforward to check that the square
 \[ \xymatrix{
  W \ar[r]^\dl \ar[d]_f & 
  W\tm W \ar[r]^{1\tm f} &
  W\tm X \ar[d]^{f\tm 1} \\
  X \ar[rr]_\dl & & X\tm X
 } \]
 is cartesian, so we have 
 \[ T_f R_\dl R_{1\tm f} = R_\dl T_{f\tm 1} \: 
     (M\btm N)(W\tm X) \to (M\btm N)(X).
 \]
 We now apply this to the element $m'\btm n\in(M\btm N)(W\tm X)$.  By
 the naturality properties of the $\btm$ pairing we have
 $R_{1\tm f}(m'\btm n)=m'\btm R_f(n)$ and
 $T_{f\tm 1}(m'\btm n)=T_f(m')\btm n$.  We therefore get
 $T_fR_\dl(m'\btm R_f(n))=R_\dl(T_f(m')\btm n)$, or in other words
 $T_f(m'\ot R_f(n))=T_f(m')\ot n$.  The proof for~(b) is similar.
\end{proof}

This can be sharpened as follows.

\begin{proposition}\label{prop-box-equiv}
 Fix Mackey functors $M$ and $N$ and a finite $G$-set $X$.  Let $\CE$
 be the set of quadruples $(U,p,m,n)$ where $p\:U\to X$ is an
 equivariant map, and $(m,n)\in M(U)\tm N(U)$.  Let $E$ be the
 smallest equivalence relation on $\CE$ such that 
 \begin{itemize}
  \item[(a)] For all $U'\xra{r}U\xra{q}X$ and $(m',n)\in M(U')\tm N(U)$
   we have 
   \[ (U',qr,m',R_r(n)) E (V,q,T_r(m'),n) \]
  \item[(b)] For all $U'\xra{r}U\xra{q}X$ and $(m,n')\in M(U)\tm N(U')$
   we have 
   \[ (U,qr,R_r(m),n') E (V,q,m,T_r(n')). \]
 \end{itemize}
 Define $\ep\:\CE\to(M\btm N)(X)$ by $\ep(U,p,m,n)=T_p(m\ot n)$.  Then
 $\ep$ induces a bijection $\CE/E\to(M\btm N)(X)$.
\end{proposition}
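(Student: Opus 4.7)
The plan is to show $\ep$ descends to a well-defined map $\bar\ep\colon\CE/E\to(M\btm N)(X)$, prove surjectivity directly, and prove injectivity by constructing an inverse via the universal property of $\btm$.

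Descent follows by checking that $\ep$ respects the generating relations. For (a) applied to $U'\xra{r}U\xra{q}X$ with $(m',n)\in M(U')\tm N(U)$, Frobenius reciprocity (Lemma~\ref{lem-frobenius}(a)) gives
\[ \ep(U',qr,m',R_r(n)) = T_qT_r(m'\ot R_r(n)) = T_q(T_r(m')\ot n) = \ep(U,q,T_r(m'),n), \]
and (b) is symmetric. For surjectivity, Remark~\ref{rem-box-kan}(a) writes any element of $(M\btm N)(X)$ as $T_qR_p(m\btm n)$ for some span $(U\tm V\xla{p}A\xra{q}X)$. Using $m\btm n = R_{\pi_U}(m)\ot R_{\pi_V}(n)$ (the preceding lemma) and the fact that $R_f$ commutes with $\ot$ (immediate from $m\ot n=R_\dl(m\btm n)$ and naturality of $\btm$), this rewrites as $T_q(R_{\pi_U p}(m)\ot R_{\pi_V p}(n)) = \ep(A,q,R_{\pi_U p}(m),R_{\pi_V p}(n))$.

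For injectivity, I would endow $P(X):=\CE(X)/E$ (where $\CE(X)$ consists of quadruples with target $X$) with a Mackey functor structure: $T_f$ post-composes with $f$, and given $f\colon X\to Y$, $R_f$ pulls back along $f$, that is, $R_f[(U,p,m,n)]=[(\tilde U,\tilde p,R_{\tilde f}m,R_{\tilde f}n)]$ where $\tilde U=U\tm_Y X$ with projections $\tilde f\colon\tilde U\to U$ and $\tilde p\colon\tilde U\to X$; the semigroup structure on $P(X)$ is induced by $T$ on the fold map $X\amalg X\to X$. A natural bilinear pairing $\eta\colon M(U)\tm N(V)\to P(U\tm V)$ is given by $\eta(m,n)=[(U\tm V,\mathrm{id}_{U\tm V},R_{\pi_U}m,R_{\pi_V}n)]$, and the universal property of $M\btm N$ (Definition~\ref{defn-box-mackey}) yields a Mackey morphism $\Phi\colon M\btm N\to P$ with $\Phi(m\btm n)=\eta(m,n)$. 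One verifies that $\bar\ep\colon P\to M\btm N$ is itself a Mackey morphism (the nontrivial case $R_f\bar\ep=\bar\ep R_f$ uses base change $R_fT_p=T_{\tilde p}R_{\tilde f}$ plus naturality of $\ot$), so $\bar\ep\Phi=\mathrm{id}$ by uniqueness in the universal property. For the other composite,
\[ \Phi\bar\ep[(U,p,m,n)] = T_pR_\dl\eta(m,n) = T_p[(U,\mathrm{id}_U,m,n)] = [(U,p,m,n)], \]
since the pullback of $\mathrm{id}_{U\tm U}$ along $\dl\colon U\to U\tm U$ is $U$ itself.

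The main obstacle is showing that $P$ is genuinely a Mackey functor: the operations $R_f$, $T_f$, and addition must all respect $E$, and the cartesian base change identity must hold at the level of classes. The critical computation that $R_f$ preserves relation~(a) uses the pullback pasting lemma: if $\tilde U'$ and $\tilde U$ are the pullbacks of $qr$ and $q$ along $f$, then the induced map $r'\colon\tilde U'\to\tilde U$ sits in a cartesian square above $r$, and the base change identity $R_{\tilde f}T_r=T_{r'}R_{\bar f}$ converts the $R_f$-image of one side of generator~(a) into an instance of (a) applied to $r'$. Bilinearity of $\eta$, naturality of $\Phi$, and the remaining Mackey axioms for $P$ are routine diagram chases of the same flavour.
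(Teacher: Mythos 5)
Your proposal is correct, but it takes a genuinely different route from the one in the paper. The paper proves the result by a direct comparison with the abstract description of the Day convolution as a colimit over a comma category: it introduces the set $\CF$ of systems $(U,V,W,i,j,p,m,n)$ from Remark~\ref{rem-box-kan}, uses the known bijection $\ov{\xi}\colon\CF/F\to(M\btm N)(X)$, and constructs mutually inverse maps $\ov{\phi}\colon\CF/F\to\CE/E$ and $\ov{\psi}\colon\CE/E\to\CF/F$; the heart of the argument is a fairly heavy diagram chase through a pair of pullback squares to show $\phi$ descends. You instead make the assignment $X\mapsto\CE(X)/E$ into a Mackey functor $P$ (with $T_f$ by postcomposition, $R_f$ by pullback, and the base-change axiom verified by the pasting lemma), construct a natural pairing $\eta\colon M(U)\tm N(V)\to P(U\tm V)$, invoke the universal property of $\btm$ from Definition~\ref{defn-box-mackey} to get $\Phi\colon M\btm N\to P$, and show $\Phi$ and $\ov{\ep}$ are mutually inverse Mackey morphisms. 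This is a more conceptual injectivity argument; it trades the paper's hands-on pullback computation for the work of verifying the Mackey axioms on $P$ (in particular well-definedness of $R_f$ on classes, which you correctly identify and resolve via base change for relation (a), and product-preservation $P(X\amalg Y)\cong P(X)\tm P(Y)$, which you leave implicit but which does hold because the generators of $E$ split along the decomposition of $U$). Both routes are sound; yours is arguably cleaner at the cost of front-loading the verification that $P$ is a Mackey functor, and it yields the extra information that $\ov{\ep}$ is an isomorphism of Mackey functors, not just a bijection at each $X$.
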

\begin{proof}
 First, it is clear from Lemma~\ref{lem-frobenius} that $\ep$ respects
 the equivalence relation $E$ and so induces a map
 $\CE/E\to(M\btm N)(X)$.  Next, let $\CF$ be the set of systems
 \[ y = (U,V,W,i,j,p,m,n) \]
 where the first six entries give an equivariant span diagram
 $\al=(V\tm W\xla{(i,j)}U\xra{p}X)$ and $(m,n)\in M(V)\tm N(W)$.
 Let $F$ be the equivalence relation on $\CF$ that is implicit in
 Remark~\ref{rem-box-kan}.  In more detail, suppose we have spans
 $\lm=(V'\xla{q}A\xra{r}V)$ and $\mu=(W'\xla{s}B\xra{t}W)$ and that
 \[ \al\circ(\lm\tm\mu)=(V'\tm W' \xla{(i',j')} U' \xra{p'} X). \]
 Suppose we also have an element $(m',n')\in M(V')\tm N(W')$, and thus
 elements 
 \begin{align*}
  z  &= (U,V,W,i,j,p,T_rR_q(m'),T_tR_s(n')) \\
  z' &= (U',V',W',i',j',p',m',n')
 \end{align*}
 in $\CF$; then $zFz'$, and $F$ is the smallest equivalence relation
 with this property.

 We have a map $\xi\:\CF\to (M\btm N)(X)$ given by
 \[ \xi(U,V,W,i,j,p,m,n) = T_pR_{(i,j)}(m\btm n), \]
 and Remark~\ref{rem-box-kan} tells us that this induces a bijection
 $\ov{\xi}\:\CF/F\to(M\btm N)(X)$.

 Now define $\phi\:\CF\to\CE$ by 
 \[ \phi(U,V,W,i,j,p,m,n) = (U,p,R_i(m),R_j(n)). \]
 We can write $(i,j)$ as $(i\tm j)\circ\dl$ so
 \[ R_{(i,j)}(m\btm n) = R_\dl R_{i\tm j}(m\btm n) = 
     R_\dl(R_i(m)\btm R_j(n)) = R_i(m)\ot R_j(n).
 \]
 Using this we see that $\ep\phi=\xi$ (and it follows that $\ep$ is
 surjective).  

 We next claim that $\phi$ induces a map $\CF/F\to\CE/E$.  It will be
 enough to show that in the notation used to introduce the relation
 $F$, we have $\phi(z)E\phi(z')$.  Here the definition of $z'$
 involves the composite $\al\circ(\lm\tm\mu)$.  To analyse this, we
 first construct pullback squares as follows:
 \[ \xymatrix{
  U' \ar[r]^{t'} \ar[d]_{r'} & 
  \tA \ar[r]^{i^*} \ar[d]^{r^*} &
  A \ar[d]^r \\
  \tB \ar[r]_{t^*} \ar[d]_{j^*} &
  U \ar[d]^j \ar[r]_i & 
  V \\
  B \ar[r]_t &
  W
 } \]
 We then put 
 \begin{align*}
  k  &= r^*t'=t^*r' \: U' \to U \\
  p' &= pk=pr^*t'=pt^*r' \: U' \to X \\
  i' &= qi^*t' \: U' \to V'\\
  j' &= sj^*r' \: U' \to W'.
 \end{align*}
 This gives a diagram
 \[ \xymatrix{
  & &
  U'
   \ar[dl]^{(i^*t',j^*r')}
   \ar@/_4ex/[ddll]_{(i',j')}
   \ar[dr]_k
   \ar@/^4ex/[ddrr]^{p'} \\
  & 
  A\tm B
   \ar[dl]^{q\tm s} 
   \ar[dr]_{r\tm t} & &
  U 
   \ar[dl]^{(i,j)} 
   \ar[dr]_p \\
  V'\tm W' & &
  V\tm W & & 
  X
 } \]
 One can check that the middle square is a pullback, so the diagram
 exhibits the span 
 \[ V'\tm W'\xla{(i',j')}U'\xra{p'}X \]
 as the composite $\al\circ(\lm\tm\mu)$, so our notation is consistent
 with that used previously.  We now have
 \[ \phi(z') = (U',p',R_{i'}(m'),R_{j'}(n')) = 
     (U',pr^*t',R_{qi^*t'}(m'),R_{sj^*r'}(n')) =
     (U',pr^*t',R_{t'}R_{qi^*}(m'),R_{sj^*r'}(n')).
 \]
 Now put $y_1=(\tA,pr^*,R_{qi^*}(m'),T_{t'}R_{sj^*r'}(n'))$.  By
 clause~(b) in the definition of the relation $E$, we have
 $\phi(z')Ey_1$.   On the other hand, because the square defining $U'$
 is a pullback, we have $T_{t'}R_{r'}=R_{r^*}T_{t^*}$, so 
 $y_1=(\tA,pr^*,R_{qi^*}(m'),R_{r^*}T_{t^*}R_{sj^*}(n'))$.  Using this
 description together with clause~(a), we get $y_1Ey_2$, where 
 $y_2=(U,p,T_{r^*}R_{qi^*}(m'),T_{t^*}R_{sj^*}(n'))$.  Next, the
 square defining $\tA$ is also a pullback, so $T_{r^*}R_{i^*}=R_iT_r$.
 Using this and the corresponding fact for $\tB$ we obtain
 $y_2=(U,p,R_iT_rR_q(m'),R_jT_tR_s(n'))$.  Now inspection of the
 definitions shows that $y_2=\phi(z)$, so $\phi(z)=\phi(z')$ as
 required.  There is thus an induced operation
 $\ov{\phi}\:\CF/F\to\CE/E$ with $\ov{\ep}\ov{\phi}=\ov{\xi}$ as
 claimed.  As $\ov{\xi}$ is bijective we see that $\ov{\phi}$ is
 injective and $\ov{\ep}$ is surjective.

 In the opposite direction, we define $\psi\:\CE\to\CF$ by 
 \[ \psi(U,p,m,n) = (U,U,U,1,1,p,m,n). \]
 As 
 \[ \ep(U,p,m,n)=T_pR_\dl(m\btm n) = T_pR_{(1,1)}(m\btm n) \]
 we see that $\xi\psi=\ep$.  It is also clear that
 $\phi\psi=1\:\CE\to\CE$.  Now put
 $\ov{\psi}=\ov{\xi}^{-1}\ov{\ep}\:\CE/E\to\CF/F$.  Using the
 bijectivity of $\ov{\xi}$, we find that the following diagram
 commutes:
 \[ \xymatrix{
  \CE \ar@{->>}[r] \ar@{ >->}[d]_{\psi} &
  \CE/E \ar@{->>}[r]^(0.35){\ov{\ep}} \ar@{ >->}[d]_{\ov{\psi}} &
  (M\btm N)(X) \ar@{=}[d] \\
  \CF \ar@{->>}[r] \ar@{->>}[d]_{\phi} &
  \CF/F \ar@{->}[r]^(0.35){\ov{\xi}}_(0.35)\simeq \ar@{->>}[d]_{\ov{\phi}} &
  (M\btm N)(X) \ar@{=}[d] \\
  \CE \ar@{->>}[r]  &
  \CE/E \ar@{->>}[r]_(0.35){\ov{\ep}}  &
  (M\btm N)(X)
 } \]

 We now claim that for any element $y=(U,V,W,i,j,p,m,n)$ we have
 $yF\psi(\phi(y))$.  Indeed, $\psi(\phi(y))$ is the system
 $(U,U,U,1,1,p,R_i(m),R_j(n))$.  To show that this is $F$-related to
 $y$ it will suffice to exhibit spans $\lm\in\CA_G(V,U)$ and
 $\mu\in\CA_G(W,U)$ with 
 \begin{align*}
  T_p\circ R_{(1,1)}\circ(\lm\tm\mu) &= T_pR_{(i,j)} \\
  f_\lm(m) &= R_i(m) \\
  f_\mu(n) &= R_j(n).
 \end{align*}
 Clearly we can just take $\lm=R_i$ and $\mu=R_j$.  It follows that
 $\ov{\phi}$ and $\ov{\psi}$ are inverse to each other, as required.
\end{proof}

We next investigate the relationship between $\Mackey_G$ and the more
obvious category of semigroups with a $G$-action.

\begin{definition}\label{defn-omega-mackey}
 We write $\Semigroups_G$ for the category of semigroups with an
 action of $G$.  We note that for each $g\in G$ we have a $G$-set
 automorphism $\rho(g)\:G\to G$ given by $\rho(g)(x)=xg^{-1}$.  Thus,
 for any Mackey functor $M$, we have semigroup maps
 $T_{\rho(g)}=R_{\rho(g)}^{-1}\:M(G)\to M(G)$, which we can use to
 give a $G$-action on $M(G)$.  We write $\om$ for the resulting
 functor $\Mackey_G\to\Semigroups_G$.
\end{definition}

For later use, it will be helpful to understand the left and right
adjoints of $\om$.  One possible approach is as follows: we let
$\ov{\CF\CA}_G$ denote the full subcategory of $\bCA_G$ whose objects
are the free finite $G$-sets.  Note that any $G$-set that admits an
equivariant map to a free $G$-set is itself automatically free.  Thus,
if $X$ and $Y$ are free and $(X\xla{}A\xra{}Y)$ is a span diagram then
$A$ is free as well.  Moreover, we have 
$\Map_G(G,G)=\Aut_G(G,G)\simeq G$.  Given these facts, one can check
that $\Semigroups_G$ is equivalent to the category of
product-preserving functors from $\ov{\CF\CA}_G$ to sets.  From this
point of view (which is used in~\cite{br:wvt}), the functor $\om$
becomes the restriction functor associated to the inclusion
$\ov{\CF\CA}_G\to\bCA_G$ of coloured theories, and there is a general
theory giving left adjoints for such functors.  However, we prefer to
give more direct and explicit constructions.

\begin{proposition}\label{prop-om-c}
 If we define $cA(X)=\Map_G(X,A)$ as in Example~\ref{eg-MapGXA}, we
 get a functor $c\:\Semigroups_G\to\Mackey_G$ that is right adjoint to
 $\om$.  Moreover, the counit $\ep\:\om cA\to A$ is an isomorphism, so
 $c$ is a full and faithful embedding.
\end{proposition}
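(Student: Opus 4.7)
The plan is to construct a natural bijection
\[
\Mackey_G(M, cA) \xra{\sim} \Semigroups_G(\om M, A)
\]
and then identify its specialisation at $M = cA$ with the counit, thereby verifying both the adjunction and that the counit is an isomorphism.

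In the forward direction, I send $\al\:M\to cA$ to the composite $\om M = M(G) \xra{\al_G} \Map_G(G,A) \xra{\mathrm{ev}_e} A$, where $\mathrm{ev}_e(\phi)=\phi(e)$; this is a semigroup map and is $G$-equivariant by inspection. Specialising at $M = cA$ and $\al = \mathrm{id}$ yields precisely $\mathrm{ev}_e\:\Map_G(G, A) \to A$, which is bijective with inverse $a \mapsto (k \mapsto ka)$, so the counit is an isomorphism in $\Semigroups_G$.

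In the reverse direction, given $\bt\:\om M\to A$, I define $\al\:M\to cA$ by $\al_X(m)(x)=\bt(R_{g_x}(m))$, where $g_x\:G\to X$ is the equivariant map $k\mapsto kx$. The routine verifications are that $\al_X(m)$ is $G$-equivariant (using $g_{hx}=g_x\circ\rho(h^{-1})$, property~(e) to rewrite $R_{\rho(h^{-1})}=T_{\rho(h)}$, and the $G$-equivariance of $\bt$) and that $\al$ is natural for restriction maps (immediate from $g_{f(x)}=f\circ g_x$ and property~(b)). The main obstacle is naturality with respect to transfer maps $T_f$ for $f\:X\to Y$. I would handle this by applying the base-change identity~(d) to the cartesian square formed over $g_y\:G\to Y$ and $f$: the pullback $G\tm_Y X$ decomposes as a disjoint union of copies of $G$ indexed by the set $f^{-1}(y)$ (each orbit contains a unique point of the form $(e,x)$ with $x\in f^{-1}(y)$, and is free), with projection to $G$ the fold map and projection to $X$ the coproduct of the various $g_x$. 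Identity~(d) then yields
\[
R_{g_y} T_f(m) \;=\; \sum_{x\in f^{-1}(y)} R_{g_x}(m)
\]
in $M(G)$, and applying the additive map $\bt$ gives the desired identity $\al_Y(T_f m)(y) = \sum_{f(x)=y}\al_X(m)(x) = (cA(T_f)\al_X)(m)(y)$.

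To finish, the two constructions are mutually inverse: starting from $\bt$ and going back yields $m\mapsto\bt(R_{g_e}(m))=\bt(m)$ since $g_e=\mathrm{id}_G$; starting from $\al$ and going back, naturality of $\al$ gives $\al_G(R_{g_x}(m))(e)=\al_X(m)(g_x(e))=\al_X(m)(x)$. Naturality of the bijection in both $M$ and $A$ is transparent from the formulas. Full-faithfulness of $c$ then follows formally from the counit being an isomorphism.
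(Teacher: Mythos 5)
Your proof is correct and takes essentially the same approach as the paper. Your $g_x$ is the paper's $\hat{x}$, your reverse map $\bt\mapsto\al$ is precisely $c(\bt)\circ\eta$ for the paper's unit $\eta(m)(x)=R_{\hat{x}}(m)$, your forward map is $\ep\circ\om(-)$ for the paper's counit $\ep(u)=u(1)$, and the key technical step (the base-change computation $R_{g_y}T_f=\sum_{f(x)=y}R_{g_x}$ over the cartesian square $G\tm_Y X\to G$) is exactly the pullback argument the paper uses to show $\eta$ commutes with transfers; you simply package the adjunction as a hom-set bijection rather than via unit, counit, and triangular identities.
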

\begin{proof}
 We define a semigroup isomorphism
 \[ \ep\:\om cA=cN(G)=\Map_G(G,A) \to A \]
 by $\ep(u)=u(1)$.  In the opposite direction, we define
 $\eta\:M\to c\om(M)$ as follows.  Consider a finite $G$-set $X$ and a
 point $x\in X$.  This gives an equivariant map $\hat{x}\:G\to X$ by
 $\hat{x}(g)=gx$, and this in turn gives a map
 $R_{\hat{x}}\:M(X)\to M(G)=\om M$.  We can thus define
 \[ \eta\:M(X) \to \Map(X,M(G)) \]
 by $\eta(m)(x)=R_{\hat{x}}(m)$.  We have
 $\widehat{gx}=\hat{x}\circ\rho(g)^{-1}$, and using this we see that
 $\eta$ lands in $\Map_G(X,M(G))=c\om(M)(X)$.  Now consider a map
 $f\:X\to Y$ of finite $G$-sets.  We claim that the following diagram
 commutes: 
 \[ \xymatrix{
  M(X) \ar[r]^{T_f} \ar[d]_{\eta} &
  M(Y) \ar[r]^{R_f} \ar[d]_{\eta} &
  M(X)              \ar[d]^{\eta} \\
  c\om M(X) \ar[r]_{T_f} &
  c\om M(Y) \ar[r]_{R_f} &
  c\om M(X).
 } \]
 For the right-hand square, we have
 \[ (\eta R_f(n))(x) = R_{\hat{x}}R_f(n) = R_{f\hat{x}}(n) =
    R_{\widehat{f(x)}}(n) = \eta(n)(f(x)) = (R_f\eta(n))(x).
 \]
 Now consider the left hand square.  For any point $y\in Y$, we have a
 pullback square 
 \[ \xymatrix{
  G\tm f^{-1}\{y\} \ar[r]^(0.64){\text{proj}} \ar[d]_p &
  G \ar[d]^{\hat{y}} \\
  X \ar[r]_f & Y,
 } \]
 where $p(g,x)=gx=\hat{x}(g)$.  It follows that for $m\in M(X)$ we
 have 
 \[ R_{\hat{y}}T_f(m)=T_{\text{proj}}R_p(m) = 
     \sum_{x\in f^{-1}\{y\}} R_{\hat{x}}(m),
 \]
 and it follows that the left square commutes.  This means that $\eta$
 is a morphism of Mackey functors.  Next, we claim that the standard
 triangular diagrams 
 \[ \xymatrix{
  \om M \ar[r]^{\om \eta_M} \ar@{=}[dr] &
  \om c\om M \ar[d]^{\ep_{\om M}} &&
  cN \ar[r]^{\eta_{cN}} \ar@{=}[dr] &
  c\om c N \ar[d]^{c\ep_N} \\
  & \om M &&& cN 
 } \]
 commute.  This can be proved by unwinding the definitions, and is
 left to the reader.  Thus, $\eta$ and $\ep$ are the unit and counit
 of an adjunction, as claimed.

 We have seen already that $\ep\:\om cA\to A$ is an isomorphism.
 Together with the adjunction this gives 
 \[ \Mackey_G(cA',cA) \simeq \Semigroups_G(A',\om cA) \simeq 
     \Semigroups_G(A',A).
 \]
 It is standard and straightforward that the isomorphism arising here
 is inverse to the map induced by $c$, and we conclude that $c$ is
 full and faithful.
\end{proof}

\begin{definition}\label{defn-coconstant}
 Let $A$ be a semigroup with an action of $G$.  For any finite $G$-set
 $X$ we let $G$ act on $\Map(X,A)$ by $(gu)(x)=g.u(g^{-1}x)$ as usual,
 and using this we can construct a coinvariant quotient $\Map(X,A)_G$
 as in Definition~\ref{defn-coinvariant-semigroup}.  We define
 $dA(X)=\Map(X,A)_G$.  Given any equivariant span
 $\om=(X\xla{p}A\xra{q}Y)$, we define 
 $f_\om\:\Map(X,A)\to\Map(Y,A)$ by the usual rule
 \[ f_\om(u)(y) = \sum_{q(a)=y} u(p(a)). \]
 This commutes with the $G$-action and so induces a map 
 $f_\om\:dA(X)\to dA(Y)$ of coinvariant quotients.  It is clear that
 this makes $dA$ into a Mackey functor.
\end{definition}

\begin{remark}\label{rem-dA-fixed}
 Let $\pi\:X\to X/G$ denote the obvious quotient map, which gives a
 map $T_\pi\:\Map(X,A)\to\Map(X/G,A)$.  If $G$ acts trivially on $A$
 then it is not hard to check that $T_\pi$ induces an isomorphism
 $dA(X)=\Map(X,A)_G\to\Map(X/G,A)$.  However, if $G$ acts nontrivially
 on $A$ then $T_\pi$ does not interact well with the actions.
\end{remark}
\begin{remark}\label{rem-dA-orbit}
 One can also check that $dA(G/H)\simeq A_H$.  More generally, any
 finite $G$-set $X$ can be written in the form
 $X\simeq\coprod_{i=1}^rG/H_i$, and then we have
 $dA(X)\simeq\bigoplus_iA_{H_i}$.  
\end{remark}

\begin{proposition}\label{prop-d-om}
 The functor $d\:\Semigroups_G\to\Mackey_G$ is left adjoint to $\om$.
 Moreover, the unit map $\eta\:A\to\om dA$ is an isomorphism, so $d$
 is a full and faithful embedding.
\end{proposition}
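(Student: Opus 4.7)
The plan is to follow the same template as Proposition~\ref{prop-om-c}, constructing explicit unit and counit maps and verifying the triangle identities; showing the unit is an isomorphism will then yield full faithfulness of~$d$.

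For the unit $\eta_A \colon A \to \omega dA = dA(G) = \Map(G,A)_G$, I define $\eta_A(a) = [\hat a]$, where $\hat a \colon G \to A$ takes $1$ to $a$ and all other elements to $0$. That $\eta_A$ is a semigroup map is immediate from pointwise addition. For $G$-equivariance, the formula $T_{\rho(g)}(u)(y) = u(yg)$ shows that $T_{\rho(g)}[\hat a]$ is represented by the function supported at $g^{-1}$ with value $a$; applying the internal $G$-action on $\Map(G,A)$ with element $g$ to this representative produces $\widehat{ga}$, so $\eta_A$ is $G$-equivariant. That $\eta_A$ is bijective is the case $H = \{e\}$ of Remark~\ref{rem-dA-orbit}.

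For the counit, I set
\[ \ep_{M, X}([u]) = \sum_{x \in X} T_{\hat x}(u(x)) \in M(X), \]
where $\hat x \colon G \to X$ sends $g$ to $gx$. Invariance under the internal $G$-action on $\Map(X,M(G))$ is proved by substituting $x = gx'$ and using the identity $\hat x = \hat{x'} \circ \rho(g^{-1})$ to obtain $T_{\hat x}\, T_{\rho(g)} = T_{\hat{x'}}$, so the sum is reindexed to itself. Naturality with respect to a transfer $T_f$ is routine, via $f \circ \hat x = \widehat{f(x)}$. Naturality with respect to a restriction $R_f$ (for $f \colon X \to Y$) is the main calculation: I apply the Mackey pullback property~(d) to the square with $\hat y \colon G \to Y$ on one side and $f$ on the other, and identify the pullback $X \times_Y G$ as $\coprod_{x \in f^{-1}\{y\}} G$ mapped into $X$ by the various $\hat x$, which lets $R_f \circ T_{\hat y}$ match the required transfer--restriction combination.

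The triangle identity $(\omega \ep) \circ (\eta \omega) = 1_{\omega M}$ is immediate at $X = G$, since $\ep_{M, G}([\hat m]) = T_{1_G}(m) = m$ for $m \in M(G)$. The other identity $(\ep d) \circ (d\eta) = 1_{dA}$ can be verified one orbit $G/H$ at a time via Proposition~\ref{prop-mackey-orbits}, after identifying $dA(G/H) = A_H$ with the quotient map realized by $T_{\pi_H} \colon dA(G) \to dA(G/H)$ (a short check analogous to Remark~\ref{rem-dA-orbit}). Full faithfulness of~$d$ then follows from the unit being an isomorphism, exactly as in the proof of Proposition~\ref{prop-om-c}. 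The main obstacle I anticipate is the restriction-naturality of $\ep$, which requires a careful pullback calculation to keep the transfer and restriction conventions consistent.
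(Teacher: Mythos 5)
Your proposal follows the paper's proof very closely: the unit $\eta_A(a)=[\hat a]$ is exactly the paper's composite $\pi\zt$, the counit $\ep_0(u)=\sum_x T_{\hat x}(u(x))$ is identical, and the verifications of $G$-invariance of $\ep_0$ and of naturality with respect to $T_f$ and $R_f$ (including the pullback square $G\times f^{-1}\{y\}$) are the same calculations the paper carries out. The one place you diverge is in establishing that $\eta_A$ is an isomorphism. You cite Remark~\ref{rem-dA-orbit} (with $H=\{e\}$); but that remark, which the paper states without proof, asserts only the existence of an isomorphism $dA(G)\simeq A$, not that your particular map $\eta_A$ is one. The paper instead constructs an explicit right inverse $\sg(u)=\sum_{x\in G}x.u(x^{-1})$ to $\zt$, checks that $\sg$ is $G$-invariant so that it descends to $\ov{\sg}$ on the coinvariant quotient, and verifies both $\ov{\sg}\eta=1$ and $\eta\ov{\sg}=1$. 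This care is worthwhile here: coinvariants of semigroups are quotients by a congruence built by iterating closure conditions, so showing a specific map invertible is more delicate than identifying the abstract isomorphism type. To close the small gap in your argument, either prove Remark~\ref{rem-dA-orbit} and verify that the isomorphism it supplies is $\eta_A$, or simply write out $\sg$ and $\ov{\sg}$ as the paper does.
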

\begin{proof}
 First, we define maps 
 \[ A \xra{\zt} \Map(G,A) \xra{\sg} A \]
 by $\sg(u)=\sum_{x\in G} x.u(x^{-1})$ and 
 \[ \zt(a)(x) = \begin{cases} 
      a & \text{ if } x=1 \\
      0 & \text{ otherwise. }
    \end{cases}
 \]
 It is clear that $\sg\zt=1$.  Next, recall that the standard action
 of $G$ on $\Map(X,A)$ is $(gu)(x)=g.u(g^{-1}x)$.  Applying this in
 the case $X=G$ we get
 \[ \sg(gu) = \sum_{x\in G} xg.u(g^{-1}x^{-1}) =
     \sum_{y\in G} y.u(y^{-1}) = \sg(u).
 \]
 It follows that there is a unique map 
 \[ \ov{\sg}\:\Map(G,A)_G = dA(G) = \om d(A)\to A \]
 satisfying $\ov{\sg}\pi=\sg$ (where $\pi\:\Map(G,A)\to\Map(G,A)_G$ is
 the usual quotient map).  We write 
 \[ \eta=\pi\zt\:A\to \Map(G,A)_G = dA(G)=\om dA, \]
 so $\ov{\sg}\eta=1$.  Next, note that $g.\zt(a)$ is the map $G\to A$
 sending $g$ to $ga$ and all other points to zero.  It follows that an
 arbitrary element $u\in\Map(G,A)$ can be written as
 $u=\sum_g g.\zt(g^{-1}u(g))$.  As $\pi(g.v)=\pi(v)$ it follows that
 \[ \pi(u)=\sum_g\pi(\zt(g^{-1}u(g)))=
     \eta\left(\sum_g g^{-1}u(g)\right)=\eta\sg(u)=
     \eta\ov{\sg}\pi(u).
 \]
 This implies that $\eta\ov{\sg}=1$, so $\ov{\sg}$ is an inverse for
 $\eta$ and $\eta$ is an isomorphism.

 Next, for any $x\in X$ we have a map $\hat{x}\:G\to X$ sending $g$ to
 $gx$, so we can define 
 \[ \ep_0 \: \Map(X,M(G)) \to M(X) \]
 by $\ep_0(u)=\sum_{x\in X}T_{\hat{x}}(u(x))$.  After recalling that
 $G$ acts on $M(G)$ via the maps $T_{\rho(g)}$ and that
 $\hat{x}\circ\rho(g)=\widehat{g^{-1}x}$ we find that
 $\ep_0(g.u)=\ep_0(g)$, so there is a unique homomorphism
 $\ep\:d\om M(X)=\Map(X,M(G))_G\to M(X)$ with $\ep\pi=\ep_0$.  

 Now consider a map $f\:X\to Y$ of finite $G$-sets.  We claim that the
 following diagram commutes:
 \[ \xymatrix{
  d\om M(X) \ar[r]^{T_f} \ar[d]_\ep &
  d\om M(Y) \ar[r]^{R_f} \ar[d]_\ep &
  d\om M(X)              \ar[d]^\ep \\
  M(X) \ar[r]_{T_f} &
  M(Y) \ar[r]_{R_f} &
  M(X). 
 } \]
 It will clearly suffice to show that the related diagram
 \[ \xymatrix{
  \Map(X,M(G)) \ar[r]^{T_f} \ar[d]_{\ep_0} &
  \Map(Y,M(G)) \ar[r]^{R_f} \ar[d]_{\ep_0} &
  \Map(X,M(G))              \ar[d]^{\ep_0} \\
  M(X) \ar[r]_{T_f} &
  M(Y) \ar[r]_{R_f} &
  M(X)
 } \]
 commutes.  For the left square we note that
 $(T_fu)(y)=\sum_{f(x)=y}u(x)$ and $\widehat{f(x)}=f\circ\hat{x}$ so 
 \begin{align*}
  \ep_0T_f(u) &= \sum_{y\in Y} T_{\hat{y}}((T_fu)(y)) 
    = \sum_{y\in Y} T_{\hat{y}}\left(\sum_{f(x)=y} u(x)\right) \\
   &= \sum_{x\in X} T_{\widehat{f(x)}}(u(x)) 
    = T_f\left(\sum_{x\in X} T_{\hat{x}}(u(x))\right) 
    = T_f\ep_0(u).
 \end{align*}
 For the right square, we recall that there is a cartesian square
 \[ \xymatrix{
  G\tm f^{-1}\{y\} \ar[r]^(0.64){\text{proj}} \ar[d]_p &
  G \ar[d]^{\hat{y}} \\
  X \ar[r]_f & Y,
 } \]
 giving $R_fT_{\hat{y}}=\sum_{f(x)=y}T_{\hat{x}}\:M(G)\to M(X)$.
 Using this we get 
 \[ R_f\ep_0(u) = \sum_{y\in Y}R_fT_{\hat{y}}u(y) = 
     \sum_{y\in Y}\sum_{x\in f^{-1}\{y\}} T_{\hat{x}}u(y) =
     \sum_{x\in X}T_{\hat{x}}u(f(x)) = \ep_0R_f(u).
 \]
 This proves that $\ep$ is a morphism of Mackey functors.  We again
 leave the reader to check the triangular diagrams 
 \[ \xymatrix{
  dA \ar[r]^{d\eta_A} \ar@{=}[dr] &
  d\om dA \ar[d]^{\ep_{dA}} & &
  \om M \ar[r]^{\eta_{\om M}} \ar@{=}[dr] &
  \om d\om M \ar[d]^{\om\ep_M} \\
  & dA &&& \om M  
 }\]
 showing that we have an adjunction.  We saw earlier that
 $\eta\:A\to\om dA$ is an isomorphism, so we have 
 \[ \Mackey_G(dA',dA) \simeq \Semigroups_A(A',\om dA)
     \simeq \Semigroups_G(A',A),
 \]
 showing that $d$ is full and faithful.
\end{proof}

\section{Mackey functors for the group of order two}
\label{sec-two-mackey}

By way of example, we will study the case where $G=\{1,\chi\}$ with
$\chi^2=1$.  The simplest (and most standard) approach would be to use
Proposition~\ref{prop-mackey-orbits}.  Although we will mention this
in passing, we will mostly focus on a different construction which
generalises more easily to the nonadditive context of Tambara
functors. 

When $G$ acts on a set $X$, we will usually write $\ov{x}$ for
$\chi.x$.

\begin{definition}\label{defn-MP}
 A \emph{Mackey pair} consists of semigroups $A$ and $B$, together
 with an action of $G$ on $A$ by semigroup maps, and semigroup maps
 $A\xra{\trc}B\xra{\res}A$ satisfying 
 \begin{align*}
  \trc(0)          &= 0  \\
  \trc(a_0+a_1)    &= \trc(a_0)+\trc(a_1) \\
  \trc(\ov{a})     &= \trc(a) \\
  \ov{\res(b)}     &= \res(b) \\
  \res(\trc(a))    &= a+\ov{a}.
 \end{align*}
 We write $\MP$ for the category of Mackey pairs.
\end{definition}

\begin{construction}\label{cons-mackey-MP}
 Given a $G$-Mackey functor $M$, put $A=M(G/1)=M(G)$ and
 $B=M(G/G)=M(1)$ (so both of these are semigroups).  Next, as $G$ is
 commutative we see that $\chi\:G\to G$ is a $G$-map, with
 $\chi=\chi^{-1}$, so $T_\chi=R_\chi\:A\to A$.  We use this map
 to define an action of $G$ on $A$ by semigroup maps.

 The projection $\ep\:G\to 1$ gives maps
 \begin{align*}
  \res=\ep^* & \: B \to A \\
  \trc=T_\ep & \: A \to B.
 \end{align*} 
\end{construction}

We will prove that this gives an equivalence between Mackey functors
and Mackey pairs.  The first thing to check is that we at least have
a functor.

\begin{proposition}\label{prop-MP-functor}
 The above construction gives a faithful functor $F\:\Mackey_G\to\MP$.
\end{proposition}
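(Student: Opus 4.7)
The plan is to verify the five identities in Definition~\ref{defn-MP} for the pair $(A,B)=(M(G),M(1))$ with the structure given in Construction~\ref{cons-mackey-MP}, then check that morphisms of Mackey functors restrict to morphisms of Mackey pairs, and finally deduce faithfulness from the fact that every finite $G$-set is a sum of copies of $G$ and $1$.

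First I would verify the axioms. That $\trc=T_\ep$ and $\res=R_\ep$ are semigroup homomorphisms follows from the semiadditivity of $\bCA_G$ together with the hypothesis that $M$ preserves products: in a semiadditive category every morphism is automatically a homomorphism with respect to the induced monoid structure on objects, so applying $M$ to $T_\ep$ or $R_\ep$ gives a map compatible with the fold $T_s\:X\amalg X\to X$ that defines the addition. The identity $\trc(\bar a)=\trc(a)$ reduces to $T_\ep T_\chi=T_{\ep\chi}=T_\ep$ via property~(a) in the list following Proposition~\ref{prop-semigroup-theory} (applied $G$-equivariantly), and dually $\ov{\res(b)}=\res(b)$ reduces to $R_\chi R_\ep=R_{\ep\chi}=R_\ep$ via property~(b). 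The key nontrivial identity is the double coset relation $\res\circ\trc(a)=a+\bar a$: I would apply property~(d) to the cartesian square
\[ \xymatrix{
 G\tm G \ar[r]^{q} \ar[d]_p & G \ar[d]^\ep \\
 G \ar[r]_\ep & 1
} \]
where $G\tm G$ carries the diagonal $G$-action. This action has two orbits, the diagonal and the anti-diagonal, each isomorphic to $G$; on the diagonal both projections are the identity, while on the anti-diagonal one projection is the identity and the other is $\chi$. Thus $R_\ep T_\ep=T_pR_q$ decomposes as $\mathrm{id}+R_\chi$, and since $R_\chi=T_\chi$ (by property~(e), as $\chi=\chi^{-1}$), we get $\res\trc(a)=a+T_\chi(a)=a+\bar a$, as required.

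For functoriality, a natural transformation $\phi\:M\to N$ gives maps $\phi_G\:M(G)\to N(G)$ and $\phi_1\:M(1)\to N(1)$; naturality with respect to $T_\ep$, $R_\ep$, and $T_\chi$ shows these commute with $\trc$, $\res$, and the $G$-action, hence form a morphism in $\MP$. This clearly respects composition and identities.

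Finally, for faithfulness, suppose $F(\phi)=F(\psi)$, so $\phi_G=\psi_G$ and $\phi_1=\psi_1$. Every finite $G$-set $X$ decomposes as $X\simeq G^{\amalg a}\amalg 1^{\amalg b}$; since $M$ and $N$ preserve products (equivalently coproducts, by semiadditivity), $M(X)$ and $N(X)$ are canonically identified with $M(G)^a\tm M(1)^b$ and $N(G)^a\tm N(1)^b$, and a natural transformation between product-preserving functors is determined by its components on the generating objects $G$ and $1$. Hence $\phi_X=\psi_X$ for all $X$. The main obstacle, such as it is, is the computation of $\res\circ\trc$ via the pullback, since this requires identifying the orbit structure of $G\tm G$ correctly; all remaining steps are formal consequences of the setup.
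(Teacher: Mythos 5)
Your proof is correct and follows essentially the same route as the paper: you verify the Mackey pair axioms, using the pullback of $\ep$ against $\ep$ and the Mackey axiom for the key identity $\res\circ\trc(a)=a+\bar a$, and deduce faithfulness from the decomposition of $G$-sets into copies of $G$ and $1$. The only cosmetic difference is that you exhibit the pullback as $G\tm G$ and then split it into orbits, whereas the paper presents it directly as $G\amalg G$ via the fold map and $1\amalg\chi$; the two descriptions are identical.
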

\begin{proof}
 We first need to check that the construction gives a Mackey pair.  We
 have already seen that $A$ and $B$ have natural semigroup structures
 such that $\res$ and $\trc$ are semigroup homomorphisms.  By applying
 $M$ to the identity $\ep\chi=\ep$ we see that $\ov{\res(b)}=\res(b)$,
 so the image of $\res$ lies in the subsemigroup
 $A^G=\{a\in A\st\ov{a}=a\}$.  Similarly, we can apply $T$ to the
 identity $\ep=\ep\chi$ to see that $\trc(a)=\trc(\ov{a})$.  Next, it
 is straightforward to check that the diagram
 \[ \xymatrix{
  G\amalg G \ar[d]_s \ar[r]^{1\amalg\chi} &
  G\amalg G \ar[r]^s &
  G \ar[d]^\ep \\
  G \ar[rr]_\ep && 1
 } \]
 is a pullback.  By the Mackey property, it follows that
 \[ R_\ep T_\ep = T_s T_{1\amalg\chi} R_s \: M(G)\to M(G), \]
 or in other words that $\res(\trc(a))=a+\ov{a}$ for all $a\in A$.
 There is thus an evident way to make $F$ into a 
 functor $\Mackey_G\to\MP$.

 To check that $F$ is faithful, suppose we have morphisms
 $\phi,\psi\:M\to M'$ with $M(\phi)=M(\psi)$, or equivalently
 $\phi_1=\psi_1$ and $\phi_G=\psi_G$.  Let $X$ be an arbitrary finite
 $G$-set.  Let $n$ be the number of free orbits, and let $m$ be the
 number of fixed points, so $X$ is isomorphic to the disjoint union of
 $n$ copies of $G$ and $m$ copies of $1$.  This gives commutative
 diagrams 
 \[ \xymatrix{
  M(X) \ar[d]_{\phi_X} \ar[r]^\simeq &
  A^n\tm B^m \ar[d]^{\phi_G^n\tm\phi_1^m} & & 
  M(X) \ar[d]_{\psi_X} \ar[r]^\simeq &
  A^n\tm B^m \ar[d]^{\psi_G^n\tm\psi_1^m} \\
  M'(X) \ar[r]_\simeq & 
  (A')^n\tm (B')^m & & 
  M'(X) \ar[r]_\simeq & 
  (A')^n\tm (B')^m.  
 } \]
 As $\phi_G=\psi_G$ and $\phi_1=\psi_1$ we deduce that
 $\phi_X=\psi_X$, as required.
\end{proof}

It is now not hard to describe the structure of the semigroups
$\bCA_G(G/H,G/K)$ for $H,K\in\{1,G\}$ and then use
Proposition~\ref{prop-res-equiv} to see that $F$ is an equivalence.
However, we will construct the inverse functor in a more explicit
way. 

\begin{construction}\label{cons-EP}
 Let $P=(A,B)$ be a Mackey pair.  For any finite $G$-set $X$, we
 put 
 \[ EP(X) =
     \{(u,v)\in\Map_G(X,A)\tm\Map(X^G,B)\st 
         u(x)=\res(v(x)) \text{ for all } x\in X^G\},
 \]
 so we have a cartesian square
 \[ \xymatrix{
  EP(X) \ar[d] \ar[r] & \Map(X^G,B) \ar[d]^{\res_*} \\
  \Map_G(X,A) \ar[r]_\rho & \Map(X^G,A^G).
 } \]
 Now suppose we have a $G$-equivariant map $f\:X\to Y$.  We define
 $R_f=f^*\:EP(Y)\to EP(X)$ by $R_f(m,n)=(m\circ f,n\circ f^G)$ (where
 $f^G\:X^G\to Y^G$ is just the restriction of $f$).
\end{construction}

After some further discussion and examples we will define maps 
$T_f\:EP(X)\to EP(Y)$ which will make $EP$ into a Mackey functor.  
First, however, we mention an approach that does \emph{not} work.  
The constructions $X\mapsto\Map_G(X,A)$ and $X\mapsto\Map(X^G,A^G)$
and $X\mapsto\Map(X^G,B)$ define Mackey functors as in
Examples~\ref{eg-MapGXA} and~\ref{eg-MapXGB}.  However, the
restriction map $\rho\:\Map_G(X,A)\to\Map(X^G,A^G)$ is not a morphism
of Mackey functors, so we do not have a pullback square in $\Mackey_G$
as one might naively expect.

\begin{example}\label{eg-EP-free}
 Suppose that $G$ acts freely on $X$, so $X$ is a disjoint union of
 $n$ copies of $G$ say.  Then $X^G=\emptyset$, so $\Map(X^G,A^G)$ and
 $\Map(X^G,B)$ are singletons, so $EP(X)=\Map_G(X,A)\simeq A^n$.  More
 precisely, the defining pullback square for $EP(X)$ has the form
 \[ \xymatrix{
  EP(X) \ar[d]_\simeq \ar[r] & 1 \ar[d]^{\res_*} \\
  \Map_G(X,A) \ar[r]_\rho & 1.
 } \]
 In particular, for the case $X=G$ we have $EP(G)=A$.
\end{example}

\begin{example}\label{eg-EP-fixed}
 Now suppose instead that $G$ acts trivially on $X$ and $|X|=k$.  Then
 $X=X^G$ and $\Map_G(X,A)=\Map(X,A^G)=\Map(X^G,A^G)$.  It follows that
 $EP(X)=\Map(X,B)=B^k$.  More precisely, the defining pullback square
 for $EP(X)$ has the form 
 \[ \xymatrix{
  EP(X) \ar[d] \ar[r]^\simeq & \Map(X,B) \ar[d]^{\res_*} \\
  \Map(X,A^G) \ar[r]_= & \Map(X,A^G).
 } \]
 In particular, for the case $X=1$ we have $EP(1)=B$.
\end{example}

\begin{construction}\label{cons-EP-transfer}
 Suppose we have a $G$-equivariant map $f\:X\to Y$.  Note that
 $f^{-1}(Y^G)$ will contain $X^G$, and possibly some free orbits as
 well.  If we choose a point in each such free orbit, we get a
 decomposition $f^{-1}(Y^G)=X^G\amalg X_1\amalg\ov{X_1}$ say.

 We define $T_f\:EP(X)\to EP(Y)$ by $T_f(u,v)=(m,n)$, where 
 \begin{align*}
  m(y) &= \sum_{x\in f^{-1}\{y\}} u(x) \\
  n(y) &= \sum_{x_0\in X^G\cap f^{-1}\{y\}} v(x_0) + 
           \sum_{x_1\in X_1\cap f^{-1}\{y\}} \trc(u(x_1)).
 \end{align*}
 To see that this does indeed define an element of $EP(Y)$, observe
 that when $x_1\in X_1$ we have $\ov{u(x_1)}=u(\ov{x_1})$, so when
 $y\in Y^G$ we have 
 \begin{align*}
  \res(n(y))
    &= \sum_{x_0\in X^G\cap f^{-1}\{y\}} \res(v(x_0)) + 
       \sum_{x_1\in X_1\cap f^{-1}\{y\}} \res(\trc(u(x_1))) \\
    &= \sum_{x_0\in X^G\cap f^{-1}\{y\}} u(x_0) + 
       \sum_{x_1\in X_1\cap f^{-1}\{y\}} (u(x_1)+\ov{u(x_1)}) \\
    &= \sum_{x_0\in X^G\cap f^{-1}\{y\}} u(x_0) + 
       \sum_{x_1\in X_1\cap f^{-1}\{y\}} u(x_1) + 
       \sum_{\ov{x_1}\in \ov{X_1}\cap f^{-1}\{y\}} u(\ov{x_1}) \\
    &= m(y).
 \end{align*}
 Using the identity
 $\trc(u(\ov{x_1}))=\trc(\ov{u(x_1)})=\trc(u(x_1))$ we also see that
 the construction is independent of the choice of $X_1$.
\end{construction}

\begin{proposition}\label{prop-EP-mackey}
 The above definitions make $EP$ into a Mackey functor.
\end{proposition}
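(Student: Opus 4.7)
The plan is to verify three ingredients that will together make $EP$ into a product-preserving functor $\bCA_G\to\Sets$: (i) functoriality $T_{gf}=T_gT_f$ and $R_{gf}=R_fR_g$; (ii) the Mackey pullback identity $T_gR_f=R_kT_h$ for every cartesian square; and (iii) preservation of disjoint unions. Combined with the decomposition $[X\xla{p}A\xra{q}Y]=T_qR_p$ (relation~(c) in the excerpt), the pullback description of span composition, and the fact that bijections $\phi$ satisfy $T_\phi=R_{\phi^{-1}}$ (used for well-definedness on isomorphism classes of spans), these assemble $EP$ into a Mackey functor.

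Functoriality of $R$ is immediate from precomposition. For $T_{gf}=T_gT_f$ with $X\xra{f}Y\xra{g}Z$, the $\Map_G(-,A)$-component is handled by reindexing $(gf)^{-1}\{z\}=\bigsqcup_{y\in g^{-1}\{z\}}f^{-1}\{y\}$. For the $\Map((-)^G,B)$-component, the free orbits of $X$ inside $(gf)^{-1}(Z^G)$ split, for each $z\in Z^G$, into (a) those lying in $f^{-1}(Y^G)$, enumerated by $X_1\cap(gf)^{-1}\{z\}$ for the chosen $X_1\sse f^{-1}(Y^G)$, and (b) those whose $f$-image is a free orbit of $Y$ contained in $g^{-1}(Z^G)$, enumerated by $f^{-1}(Y_1)\cap(gf)^{-1}\{z\}$ for the chosen $Y_1\sse g^{-1}(Z^G)$. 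The linearity $\trc(a+b)=\trc(a)+\trc(b)$, together with $\trc(\ov{a})=\trc(a)$ (guaranteeing independence from the choice of $Y_1$), then give the desired identity term by term. Product preservation is immediate from $(X\amalg Y)^G=X^G\amalg Y^G$ and the analogous splittings of $\Map_G(-,A)$ and $\Map(-,B)$, which split the defining pullback square for $EP(X\amalg Y)$ into the two pullback squares for $EP(X)$ and $EP(Y)$.

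The technical heart is the Mackey identity for a cartesian square with $W=Y\tm_ZX$ and equivariant maps $f\:W\to X$, $g\:W\to Y$, $h\:X\to Z$, $k\:Y\to Z$. On the $\Map_G(-,A)$-component, $T_gR_f=R_kT_h$ reduces to the standard fibrewise bijection $g^{-1}\{y\}\xra{f}h^{-1}\{k(y)\}$. On the fixed-point component at $y\in Y^G$, equivariance gives $W^G=Y^G\tm_{Z^G}X^G$, and every free orbit of $W$ inside $g^{-1}(Y^G)$ has the form $\{(y,x),(y,\ov{x})\}$ with $y\in Y^G$ and $x$ free in $X$. The key step is the compatible choice: given a preassigned $X_1\sse h^{-1}(Z^G)$ for $T_h$, we take
\[ W_1 := \{(y,x_1)\st y\in Y^G,\,x_1\in X_1,\,h(x_1)=k(y)\} \]
as the set of free-orbit representatives for $T_g$. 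The projection $(y,x)\mapsto x$ then restricts to bijections $W^G\cap g^{-1}\{y\}\simeq X^G\cap h^{-1}\{k(y)\}$ and $W_1\cap g^{-1}\{y\}\simeq X_1\cap h^{-1}\{k(y)\}$, making the two sides of the Mackey formula agree summand by summand. Once this is in hand, composition of spans $(Y\xla{p_1}A_1\xra{q_1}Z)\circ(X\xla{p_0}A_0\xra{q_0}Y)$ matches composition of operators: the middle product $T_{\tq}R_{\tp}$ arising from the pullback $A_{02}$ equals $R_{p_1}T_{q_0}$ by the Mackey identity, and the outer factors combine to give $T_{q_1\tq}R_{p_0\tp}$, matching the composite span and completing the verification that $EP$ is a Mackey functor.
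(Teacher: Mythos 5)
Your proposal is correct and follows essentially the same strategy as the paper's proof: verifying $T_{gf}=T_gT_f$ and the Mackey pullback identity $T_gR_f=R_kT_h$ by matching the $B$-valued components term by term, using compatible choices of free-orbit representatives. The only presentational difference is the direction of the compatible choice in the Mackey identity --- you construct $W_1$ from a preassigned $X_1\sse h^{-1}(Z^G)$, whereas the paper chooses $W_1\sse g^{-1}\{y\}$ first and takes $X_i=f(W_i)$; the two are equivalent by the cartesian property, and your version makes the final assembly (decomposing spans as $T_qR_p$ and invoking the Mackey identity for span composition) slightly more explicit than the paper's closing sentence.
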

\begin{proof}
 Suppose we have maps $X\xra{f}Y\xra{g}Z$.  It is clear that
 $R_{gf}=R_fR_g$.  We must check that we also have $T_{gf}=T_gT_f$.
 Consider a pair $(u,v)\in EP(X)$, so $T_f(u,v)=(m,n)$ and
 $T_g(m,n)=(p,q)$ say.  For the first component, we have 
 \begin{align*}
  m(y) &= \sum_{x\in f^{-1}\{y\}} u(x) \\
  p(z) &= \sum_{y\in g^{-1}\{z\}} m(y)
        = \sum_{y\in g^{-1}\{z\}} \sum_{x\in f^{-1}\{y\}} u(x) \\
       &= \sum_{x\in (gf)^{-1}\{z\}} u(x),
 \end{align*}
 which is the same as the first component in $T_{gf}(u,v)$.  The
 second component requires more work.  Fix a point $z\in Z^G$. Put
 $Y_0=Y^G\cap g^{-1}\{z\}$ and choose $Y_2\sse Y$ such that
 $g^{-1}\{z\}=Y_0\amalg Y_2\amalg\ov{Y_2}$.  Then put
 $Z_0=X^G\cap f^{-1}(Y_0)$, and choose $X_1\sse X$ such that
 $f^{-1}(Y_0)=X_0\amalg X_1\amalg\ov{X_1}$.  Put $X_2=f^{-1}(Y_2)$ and
 observe that $\ov{X_2}=f^{-1}(\ov{Y_2})$ and so 
 \[ (gf)^{-1}\{z\} = X_0\amalg X_1 \amalg X_2
                         \amalg\ov{X_1}\amalg\ov{X_2}.
 \]
 \[ \xymatrix{
  X_0 \ar[drr]^f \\
  X_1 \amalg \ov{X}_1 \ar[rr]_f && Y_0 \ar[drr]^g \\
  X_2 \amalg \ov{X}_2 \ar[rr]_f && Y_2 \amalg \ov{Y}_2 \ar[rr]_g && \{z\}
 } \]

 Now, for $y_0\in Y_0$ we have 
 \[ n(y_0) = \sum_{x_0\in X_0\cap f^{-1}\{y_0\}} v(x_0) +
             \sum_{x_1\in X_1\cap f^{-1}\{y_0\}} \trc(u(x_1)).
 \]
 It follows that
 \begin{align*}
  q(z) &= \sum_{y_0\in Y_0} n(y_0) + \sum_{y_2\in Y_2}\trc(m(y_2)) \\
       &= \sum_{x_0\in X_0} v(x_0) + 
          \sum_{x_1\in X_1} \trc(u(x_1)) + 
          \sum_{x_2\in X_2} \trc(u(x_2)),
 \end{align*}
 which is the same as the second component in $T_{gf}(u,v)(z)$, as
 required.

 Now suppose we have a cartesian square
 \[ \xymatrix{
   W \ar[r]^f \ar[d]_g & X \ar[d]^h \\
   Y \ar[r]_k & Z
 } \]
 We claim that $T_gR_f=R_kT_h\:EP(X)\to EP(Y)$.  To see this, consider
 a pair $(u,v)\in EP(X)$, so $T_h(u,v)=(p,q)$ say, and
 $R_kT_h(u,v)=(p\circ k,q\circ k^G)$.  The cartesian property means
 that $f$ induces a bijection $g^{-1}\{y\}\to h^{-1}\{k(y)\}$ for
 all $y\in Y$.  This means that 
 \[ p(k(y)) = \sum_{x\in h^{-1}\{k(y)\}} u(x) 
            = \sum_{w\in g^{-1}\{y\}} u(f(w)),
 \]
 so $p\circ k$ is also the first component in $T_gR_f(u,v)$.  Next,
 consider a point $y\in Y^G$.  Put $W_0=W^G\cap g^{-1}\{y\}$ and
 choose $W_1\sse W$ such that
 $g^{-1}\{y\}=W_0\amalg W_1\amalg\ov{W_1}$.  Put $X_i=f(W_i)$; the
 cartesian property means that $f$ induces bijections $W_i\to X_i$,
 and that $h^{-1}\{k(y)\}=X_0\amalg X_1\amalg\ov{X_1}$.  We thus have 
 \begin{align*}
  q(k(y)) &= \sum_{x_0\in X_0} v(x_0) +
              \sum_{x_1\in X_1} \trc(u(x_1)) \\
          &= \sum_{w_0\in W_0} v(f(w_0)) + 
              \sum_{w_1\in W_1} \trc(u(f(w_1))), 
 \end{align*}
 so $q\circ k$ is also the second component in $T_gR_f(u,v)$.  This
 proves that $T_gR_f=R_kT_h$, so we have a Mackey functor as claimed.
\end{proof}

\begin{theorem}\label{thm-MP}
 The functor $F\:\Mackey_G\to\MP$ is an equivalence, with inverse
 given by $E$.
\end{theorem}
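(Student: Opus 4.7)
The plan is to extend $E$ to a functor and exhibit natural isomorphisms $FE\cong 1_{\MP}$ and $\eta\colon 1_{\Mackey_G}\xrightarrow{\simeq}EF$, thereby showing $E$ and $F$ are mutually inverse equivalences.  Extending $E$ to morphisms is routine: a morphism $(\phi_A,\phi_B)\colon P\to P'$ of Mackey pairs acts on $EP(X)$ by $(u,v)\mapsto(\phi_A u,\phi_B v)$, with the compatibility condition preserved because $\phi$ commutes with $\res$, and compatibility with both $R_f$ and the transfer $T_f$ of Construction~\ref{cons-EP-transfer} following from $\phi$ being a semigroup morphism commuting with $\trc$.

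For $FE(P)\cong P$, Examples~\ref{eg-EP-free} and~\ref{eg-EP-fixed} already give semigroup identifications $EP(G)\cong A$ (via $u\mapsto u(1)$) and $EP(1)\cong B$ (via $(m,n)\mapsto n(*)$).  A direct unwinding of the definitions verifies that the $G$-action on $EP(G)$ induced by $T_{\rho(\chi)}$, the transfer $T_\ep\colon EP(G)\to EP(1)$, and the restriction $R_\ep\colon EP(1)\to EP(G)$ recover the action on $A$, the map $\trc$, and the map $\res$ respectively: for instance, $T_\ep$ applied to $u\in\Map_G(G,A)$ with $a=u(1)$ produces the pair $(m,n)\in EP(1)$ with $n(*)=\trc(a)$ and $m(*)=a+\bar{a}$, whose compatibility $m(*)=\res(n(*))$ is precisely the Mackey pair axiom $\res(\trc(a))=a+\bar a$.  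Naturality in $P$ is immediate from the definition of $E$ on morphisms.

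In the other direction, given a Mackey functor $M$ with $P=FM=(M(G),M(1))$, I define
\[ \eta_X\colon M(X)\to EP(X),\qquad m\mapsto\bigl(x\mapsto R_{\hat x}(m),\;x_0\mapsto R_{\iota_{x_0}}(m)\bigr), \]
where $\hat x\colon G\to X$ is $g\mapsto gx$ and $\iota_{x_0}\colon 1\to X$ picks out $x_0\in X^G$.  Well-definedness uses the factorisation $\hat x=\iota_x\circ\ep$ for $x\in X^G$ (giving $R_{\hat x}(m)=\res(R_{\iota_x}(m))$) together with the equivariance identity $\widehat{gx}=\hat x\circ\rho(g)^{-1}$ already employed in Proposition~\ref{prop-om-c}.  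Bijectivity is immediate from the semiadditive decomposition $X\cong X^G\amalg X_{\mathrm{free}}$: both $M$ and $EP$, being product-preserving, factor $M(X)$ and $EP(X)$ as $B^{|X^G|}\times A^{\#\text{orbits}}$, with $\eta_X$ matching the two projections coordinatewise.

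The main obstacle, and the only substantive calculation, is verifying that $\eta$ commutes with the transfer maps $T_f$; compatibility with the $R_f$ is immediate from $\widehat{f(x)}=f\circ\hat x$ and $\iota_{f(x)}=f\circ\iota_x$.  Explicitly, one must establish, for every equivariant $f\colon X\to Y$, the formulae
\[ R_{\hat y}(T_f m)=\sum_{x\in f^{-1}\{y\}}R_{\hat x}(m), \]
\[ R_{\iota_y}(T_f m)=\sum_{x_0\in X^G\cap f^{-1}\{y\}}R_{\iota_{x_0}}(m)+\sum_{x_1\in X_1\cap f^{-1}\{y\}}T_\ep R_{\hat{x_1}}(m), \]
each a direct application of the Mackey pullback property to the pullback of $f$ along $\hat y$ or $\iota_y$ respectively.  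The first pullback is the free $G$-set $G\times f^{-1}\{y\}$ (with $G$ acting only on the first factor), which splits as a coproduct of copies of $G$ indexed by $f^{-1}\{y\}$; the second pullback is $f^{-1}\{y\}$ itself, and its canonical decomposition into fixed points and chosen free-orbit representatives $X_1$ mirrors exactly the decomposition used in Construction~\ref{cons-EP-transfer}, reproducing the formula above.
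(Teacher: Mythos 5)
Your proposal is correct and follows essentially the same route as the paper's proof: you define the unit $\eta_X$ using the maps $R_{\hat x}$ and $R_{\breve x}$ (your $R_{\iota_{x_0}}$), verify it lands in the pullback $EP(X)$ via the factorisation $\hat x = \breve x\circ\ep$, and conclude bijectivity from the product-preserving property applied to the decomposition of $X$ into orbits. The only difference is one of emphasis: where the paper asserts without elaboration that the unit is a morphism of Mackey functors, you spell out the two Mackey pullback identities that implement the $T_f$-compatibility and match them against the formula in Construction~\ref{cons-EP-transfer}, which is a worthwhile expansion of the step the paper leaves implicit.
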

\begin{proof}
 There is an evident way to define $E$ on morphisms, giving a
 functor $E\:\MP\to\Mackey_G$.  It is clear from
 Examples~\ref{eg-EP-free} and~\ref{eg-EP-fixed} that
 $FEP=(EP(G),EP(1))=(A,B)=P$, so $FE=1$.  

 In the opposite direction, suppose we start with a Mackey functor $M$
 and put $A=M(G)$ and $B=M(1)$, so $FM=(A,B)$.  Consider an arbitrary
 finite $G$-set $X$.  For $x\in X$ we have a map $\hat{x}\:G\to X$ given
 by $\hat{x}(1)=x$ and $\hat{x}(\chi)=\ov{x}$.  This gives a map
 $R_{\hat{x}}\:M(X)\to M(G)=A$, and by combining these we get a map
 $\al\:M(X)\to\Map(X,A)$.  It is straightforward to check that this
 actually lands in $\Map_G(X,A)$.  On the other hand, for $x\in X^G$
 we have a map $\breve{x}\:1\to X$ sending $0$ to $x$, and thus
 $R_{\breve{x}}\:M(X)\to M(1)=B$.  By combining these we get a map
 $\bt\:M(X)\to\Map(X^G,B)$.  Using $\hat{x}=\breve{x}\ep$ we see that the
 square
 \[ \xymatrix{
  M(X) \ar[d]_\al \ar[r]^\bt & \Map(X^G,B) \ar[d] \\
  \Map_G(X,A) \ar[r] & \Map(X^G,A^G) 
 } \]  
 commutes, so we have a natural map from $M(X)$ to the pullback
 $EP(X)=EFM(X)$.  This is a morphism of Mackey functors, which is an
 isomorphism for $X=G$ or $X=1$.  As Mackey functors convert disjoint
 unions to products, we see that $M\simeq EFM$ as required.  
\end{proof}

\section{The theory of semirings}
\label{sec-semirings}

We now repeat much of section~\ref{sec-semigroups}, for semirings
rather than semigroups.  For us, a \emph{semiring} will mean a
semigroup with a second commutative, associative and unital binary
operation (written as multiplication) that distributes over addition.
In particular, we assume that $0a=0$ for all $a$ (which is the
distributivity rule for the sum of no terms).

\begin{example}\label{eg-semirings}
 \begin{itemize}
  \item[(a)] Any commutative ring is of course also a semiring.
  \item[(b)] $\N$ is a semiring under the usual operations.  We can
   also define polynomial semirings $\N[t]$ or $\N[t_1,\dotsc,t_r]$ in
   an evident way.
  \item[(c)] For any semigroup $A$ we have a semigroup semiring
   $\N[A]$.  This is freely generated as a semigroup by elements $[a]$
   for all $a\in A$, with the multiplication rule $[a][b]=[a+b]$.  The
   polynomial semiring $\N[t_1,\dotsc,t_r]$ can be identified with
   $\N[\N^r]$.
  \item[(d)] Let $G$ be a finite group, and let $A(G)$ denote the
   set of isomorphism classes of finite $G$-sets.  This is a semiring
   under the operations $[X]+[Y]=[X\amalg Y]$ and $[X][Y]=[X\tm Y]$.
   We call this the \emph{Burnside semiring} of $G$.
  \item[(e)] Let $G$ be a finite group, and let $R(G)$ denote the
   set of isomorphism classes of complex representations of $G$.  This
   is a semiring under the operations $[V]+[W]=[V\oplus W]$ and
   $[V][W]=[V\ot W]$.  We call this the \emph{representation semiring}
   of $G$.
  \item[(f)] Consider a set $E$ consisting of elements $\ep^n$ for all
   $n\in\Z$, together with two more elements $0$ and $\al$.  We write
   $1$ for $\ep^0$.  We can make this a semiring by the following
   rules 
   \[ \begin{array}{|c|c|c|c|c|} \hline
       +     & \al & \ep^n           & 0     \\ \hline
       \al   & \al & \al             & \al   \\ \hline
       \ep^m & \al & \ep^{\min(n,m)} & \ep^m \\ \hline
       0     & \al & \ep^n           & 0     \\ \hline
      \end{array}
      \hspace{6em}
      \begin{array}{|c|c|c|c|c|} \hline
       \cdot & \al & \ep^n           & 0     \\ \hline
       \al   & \al & \al             & 0     \\ \hline
       \ep^m & \al & \ep^{n+m}       & 0     \\ \hline
       0     & 0   & 0               & 0     \\ \hline
      \end{array}
   \]
   Alternatively, we can regard $\al$ as $\ep^{-\infty}$ and $0$ as
   $\ep^\infty$ and then we have $\ep^n+\ep^m=\ep^{\min(n,m)}$ for
   all $n$ and $m$, and also $\ep^n\ep^m=\ep^{n+m}$ provided that we
   interpret $\infty+(-\infty)$ as $\infty$.  The real point about
   this example is as follows.  Let $C_*$ be a graded vector space
   over $\Q$, and put $B(C_*)=\{n\st C_n\neq 0\}$ and
   $\bt(C_*)=\ep^{\inf(B(C_*))}\in E$ (with the convention
   $\inf(\emptyset)=\infty$).  We then have
   $\bt(C_*\oplus D_*)=\bt(C_*)+\bt(D_*)$ and
   $\bt(C_*\ot D_*)=\bt(C_*)\bt(D_*)$.  For more general graded
   abelian groups we still have $\bt(C_*\oplus D_*)=\bt(C_*)+\bt(D_*)$
   and $\bt(C_*\ot D_*)\leq\bt(C_*)\bt(D_*)$, but the inequality can
   be strict if $C_*$ and $D_*$ have torsion at different primes.
 \end{itemize}
\end{example}

Any finite set $X$ gives a functor $R\mapsto R^X=\Map(X,R)$ from
semirings to sets.  Let $\bCU(X,Y)$ be the set of natural maps 
$R^X\to R^Y$.  

\begin{remark}
 In contrast with the case of semigroups, there are many natural
 maps $R^X\to R^Y$ that are not semiring homomorphisms, for example
 the map $f\:R^2\to R^2$ given by $f(x,y)=(y^2+1,x^2+1)$.
\end{remark}

\begin{definition}\label{defn-TNR}
 For any function $f\:X\to Y$ we define three different operations as
 follows: 
 \begin{itemize}
  \item[(a)] We have a map $T_f\:R^X\to R^Y$ given by
   $T_f(r)(y)=\sum_{f(x)=y}r(x)$.
  \item[(b)] We have another map $N_f\:R^X\to R^Y$ given by
   $N_f(r)(y)=\prod_{f(x)=y}r(x)$.
  \item[(c)] We have a map $R_f=f^*\:R^Y\to R^X$ given by
   $R_f(r)(x)=r(f(x))$, or equivalently $f^*(r)=r\circ f$.
 \end{itemize}
\end{definition}

Note that $R_f$ and $T_f$ only use the additive semigroup structure,
so their properties and interactions are the same as in
Section~\ref{sec-semigroups}.  Similarly, $R_f$ and $N_f$ only use the
multiplicative semigroup structure, so their properties and
interactions are the same as in Section~\ref{sec-semigroups} up to a
slight change of notation.  In more detail:
\begin{itemize}
 \item[(a)] For all $X\xra{f}Y\xra{g}Z$ we have $T_{gf}=T_gT_f$ and
  $N_{gf}=N_gN_f$ and $R_{gf}=R_fR_g$.
 \item[(b)] For any cartesian square
  \[ \xymatrix{
   W \ar[r]^f \ar[d]_g & X \ar[d]^h \\
   Y \ar[r]_k & Z
  } \]
  we have $T_fR_g=R_kT_h$ and $N_fR_g=R_kN_h$.
 \item[(c)] As a special case of~(b), for any bijection $f\:X\to Y$ we
  have $N_f=T_f=R_f^{-1}$.
\end{itemize}
This just leaves the problem of understanding how maps of the form
$N_f$ interact with maps of the form $T_g$, which comes down to
exploring the combinatorics of expanding products of sums as sums of
products. 

First, however, we will study the set $\bCU(X,Y)$ of all natural
operations.  By the Yoneda Lemma, we have
\[ \bCU(X,Y) = \Map(Y,\N[t_x\st x\in X]) = 
  \text{Semirings}(\N[t_y\st y\in Y],\N[t_x\st x\in X]).
\] 
Now suppose we have a decomposition $Y=\coprod_{a\in A}Y_a$.  The
inclusions $i_a\:Y_a\to Y$ give morphisms $R_{i_a}\:Y\to Y_a$ in
$\bCU$, which induce maps $(R_{i_a})_*\:\bCU(X,Y)\to\bCU(X,Y_a)$,
which we can combine to give a single natural map
\[ \bCU(X,Y) \to \prod_{a\in A} \bCU(X,Y_a). \]
Using the description $\bCU(X,Y)=\Map(Y,\N[t_x\st x\in X])$ we see
that this map is a natural isomorphism.  This means that $Y$ is the
categorical product in $\bCU$ of the objects $Y_a$.  

\begin{proposition}\label{prop-semiring-theory}
 The category of semirings is equivalent to the category of
 product-preserving functors from $\bCU$ to the category of sets.
\end{proposition}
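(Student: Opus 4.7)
The plan is to follow the template of Proposition~\ref{prop-semigroup-theory} essentially verbatim, with the category $\bCU$ in place of $\bCA$ and with additional structure maps accounting for multiplication. The bulk of the work has been absorbed into the definition of $\bCU$ as the category of natural operations on the functors $R\mapsto R^X$ from semirings to sets, so the remainder is formal.

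In one direction, given a semiring $R$, I would define $E_R\:\bCU\to\Sets$ by $E_R(X)=R^X$, sending $f\in\bCU(X,Y)$ to its component at $R$ (which is well-defined because $f$ is literally such a natural operation). Functoriality is tautological, and product-preservation was verified in the paragraph immediately preceding the statement, where it was observed that a decomposition $Y=\coprod_{a\in A}Y_a$ exhibits $Y$ as the product of the $Y_a$ in $\bCU$.

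In the opposite direction, given a product-preserving $F\:\bCU\to\Sets$, I would put $R=F(1)$ and use product-preservation to obtain natural isomorphisms $F(X)\to F(1)^X=R^X$ for every finite set $X$. The semiring structure on $R$ then arises by applying $F$ to four structural morphisms in $\bCU$: the additive unit $T_z\:0\to 1$, the addition $T_s\:2\to 1$, the multiplicative unit $N_z\:0\to 1$, and the multiplication $N_s\:2\to 1$, where $z\:\emptyset\to 1$ and $s\:2\to 1$ are the unique maps. To verify each semiring axiom — commutativity, associativity and unitality of the two operations, distributivity, and the absorption rule $0a=0$ — I would argue exactly as for associativity in Proposition~\ref{prop-semigroup-theory}: each axiom is a universally valid identity between natural operations $R^k\to R$, hence the two sides define the same element of $\bCU(k,1)$, so the relevant diagram commutes in $\bCU$ and is carried by $F$ to a commuting diagram in $\Sets$.

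The step requiring the most care is distributivity, since it is the new ingredient beyond the semigroup case and encodes the interplay between $T_g$ and $N_f$ operations that the rest of the paper studies combinatorially. At this abstract level, however, no explicit computation is required: by construction $\bCU$ already contains every natural identity. It remains only to check that the two constructions are mutually inverse up to natural isomorphism, which is routine — $E_R$ recovers the original semiring structure on $R$ tautologically, and the product-preservation isomorphisms $F(X)\simeq R^X$ assemble into a natural isomorphism $F\simeq E_{F(1)}$ of functors $\bCU\to\Sets$.
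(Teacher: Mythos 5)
Your proposal is correct and follows exactly the route the paper intends: the paper's own proof is a one-line deferral to the argument for Proposition~\ref{prop-semigroup-theory}, and you have carried out that adaptation faithfully, correctly identifying the four structural morphisms $T_z$, $T_s$, $N_z$, $N_s$ and observing that every semiring axiom (including distributivity) holds in $\bCU$ by construction, so that $F$ transports it automatically.
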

\begin{proof}
 Essentially the same as for Proposition~\ref{prop-semigroup-theory}.
\end{proof}

The description $\bCU(X,Y)=\Map(Y,\N[t_x\st x\in X])$ gives a
canonical semiring structure on the set $\bCU(X,Y)$.  It is generated
by elements $t_x$ and $e_y$ (corresponding to the natural maps
$t_x(f)(z)=f(x)$ and $e_y(f)(z)=\dl_{yz}$) subject only to the
relations $e_ye_z=\dl_{yz}e_z$ and $\sum_ye_y=1$.  This semiring
structure behaves well with respect to functions $X\to X'$, but not
with respect to arbitrary morphisms $X\to X'$ in $\bCU$.  Thus, we
cannot say that $Y$ is a semiring object in $\bCU$.

The idea of Tambara theory is to introduce a bicategory $\CU$ whose
$0$-cells are finite sets, such that $\CU(X,Y)$ is a groupoid with
$\pi_0\CU(X,Y)=\bCU(X,Y)$.

Specifically, we take $\CU(X,Y)$ to be the category of diagrams of
the form
\[ X \xla{p} A \xra{q} B \xra{r} Y, \]
where $A$ and $B$ are finite sets.  We call these diagrams
\emph{bispans}.  The morphisms are commutative diagrams 
\[ \xymatrix{
     X \ar@{=}[d] & 
     A \ar[l] \ar[r] \ar[d]_{\al} & 
     B \ar[r] \ar[d]^{\bt} & 
     Y \ar@{=}[d] \\
     X & 
     A' \ar[l] \ar[r] &
     B' \ar[r] &
     Y
   }
\]
in which $\al$ and $\bt$ are bijections.  Given a bispan
\[ \om = (X\xla{p}A\xra{q}B\xra{r}Y) \in \CU(X,Y) \]
as above, we define $f_\om=T_rN_qR_p\:R^X\to R^Y$, or more explicitly 
\[ f_\om(s)(y) =
    \sum_{b\in r^{-1}\{y\}} \prod_{a\in q^{-1}\{b\}} s(p(a)).
\]
Alternatively, in terms of our generators $t_x$ and $e_y$ for the
semiring $\bCU(X,Y)$, we have
\[ f_\om = \sum_y e_y 
    \sum_{b\in r^{-1}\{y\}} \prod_{a\in q^{-1}\{b\}} t_{p(a)} =
     \sum_{b\in B} e_{r(b)} \prod_{a\in q^{-1}\{b\}} t_{p(a)}.
\]

One can check that this construction gives a bijection
$\pi_0\CU(X,Y)\to\bCU(X,Y)$.  Indeed, a typical monomial in
$\bCU(X,Y)$ has the form $e_y\prod_xt_x^{n(x)}$ for some $y\in Y$ and
$n\in\N^X$.  Thus a typical element of $\bCU(X,Y)$ has the form
\[ g = \sum_{(y,n)} m(y,n) e_y \prod_x t_x^{n(x)} \]
for some map $m\:Y\tm\N^X\to\N$ of finite support.  Now put 
\begin{align*}
 A &= \{(y,n,i,x,j) \in Y\tm\N^X\tm\N\tm X\tm\N \st 
          i < m(y,n),\; j < n(x) \} \\
 B &= \{(y,n,i)\in Y\tm\N^X\tm\N \st i<m(y,n)\} \\
 p(y,n,i,x,j) &= x \\
 q(y,n,i,x,j) &= (y,n,i) \\
 r(y,n,i) &= y.
\end{align*}
This gives a bispan $\om=(X\xla{p}A\xra{q}B\xra{r}Y)$ with $f_\om=g$,
and it is not hard to check that any other bispan with this property
is isomorphic to $\om$.

In particular, if we have maps $X\xra{f}Y\xra{g}Z$, the operation
$N_gT_f\:R^X\to R^Z$ must come from a bispan.  We can construct one as
follows.
\begin{definition}\label{defn-distributor}
 For any maps $X\xra{g}Y\xra{h}Z$ we define a bispan 
 \[ \Dl(g,h) = (X\xla{p}A\xra{q}B\xra{r}Z) \]
 by 
 \begin{align*}
  B &=
   \{(z,s)\st z\in Z,\;s\:h^{-1}\{z\}\to X,\;gs=1_{h^{-1}\{z\}}\} \\
  A &= Y\tm_ZB = \{(y,s)\st y\in Y,\;s\:h^{-1}\{h(y)\}\to X,\;gs=1_{h^{-1}\{h(y)\}}\} \\
  p(y,s) &= s(y) \\
  q(y,s) &= (h(y),s) \\
  r(z,s) &= z.
 \end{align*}
 We call this the \emph{distributor} for $(g,h)$.
\end{definition}

\begin{proposition}\label{prop-NT}
 In the above context, we have $N_hT_g=f_{\Dl(g,h)}=T_rN_qR_p$.
\end{proposition}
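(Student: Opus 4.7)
The plan is to verify the identity pointwise: evaluate both sides of $N_hT_g = T_rN_qR_p$ on an arbitrary element $u \in R^X$ and an arbitrary $z \in Z$, and exhibit both as the same sum indexed by sections of $g$ over the fiber $h^{-1}\{z\}$. The role of the distributor is precisely to package the combinatorial identity that arises when distributivity is used to expand a product of sums as a sum of products.

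First I would compute the left hand side. By the definitions of $N_h$ and $T_g$,
\[
 (N_h T_g u)(z) = \prod_{y \in h^{-1}\{z\}} (T_g u)(y)
     = \prod_{y \in h^{-1}\{z\}} \sum_{x \in g^{-1}\{y\}} u(x).
\]
Now I would apply the distributive law in the semiring $R$ (iterated once for each $y \in h^{-1}\{z\}$): expanding this product of sums produces one term for each way of choosing, for every $y \in h^{-1}\{z\}$, an element $x_y \in g^{-1}\{y\}$. Such a choice is exactly a function $s \: h^{-1}\{z\} \to X$ with $gs = 1_{h^{-1}\{z\}}$, i.e.\ a section of $g$ over the fiber. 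Hence
\[
 (N_h T_g u)(z) = \sum_{\substack{s\:h^{-1}\{z\}\to X \\ gs = 1}} \;\prod_{y \in h^{-1}\{z\}} u(s(y)).
\]

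Next I would compute the right hand side using the explicit description of $\Delta(g,h)$. By the formula for $f_\om$ recalled just before the statement,
\[
 (T_r N_q R_p u)(z) = \sum_{b \in r^{-1}\{z\}} \;\prod_{a \in q^{-1}\{b\}} u(p(a)).
\]
The definition of $r$ gives $r^{-1}\{z\} = \{(z,s) \st s\:h^{-1}\{z\}\to X,\ gs = 1\}$, matching precisely the index set appearing on the left. For a fixed $b = (z,s)$, the definition of $q$ gives $q^{-1}\{b\} = \{(y,s) \st y \in h^{-1}\{z\}\}$, which bijects with $h^{-1}\{z\}$ via $(y,s)\mapsto y$, and $p(y,s) = s(y)$. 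Substituting, the right hand side becomes the same double sum as above.

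The main obstacle is simply notational: one must be careful to unpack the fiber product $A = Y\times_Z B$ and to recognize the set indexing the outer sum as the set of sections of $g$ over $h^{-1}\{z\}$. Once this bookkeeping is done, the proof reduces to iterated distributivity in $R$, and the two expressions are manifestly equal. Since this holds for all $u$ and all $z$, we conclude $N_hT_g = f_{\Delta(g,h)}$.
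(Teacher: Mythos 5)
Your proof is correct and is essentially the same as the paper's: both expand the product of sums $\prod_{y\in h^{-1}\{z\}}\sum_{x\in g^{-1}\{y\}}u(x)$ by distributivity, indexing the resulting sum by sections $s$ of $g$ over the fiber, and then match this term-by-term against the formula for $T_rN_qR_p$. The only cosmetic difference is that the paper formally reduces to the case $|Z|=1$ before expanding, while you carry out the identical computation fiberwise over each $z\in Z$.
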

\begin{proof}
 First note that all the sets $X$, $Y$, $A$ and $B$ have compatible
 maps to $Z$, and everything happens independently over the different
 points of $Z$.  Because of this, we can easily reduce to the special
 case where $Z$ is a single point.  In that context we just have
 $B=\{s\:Y\to X\st gs=1_Y\}$ and $A=Y\tm B$.  Now put
 $X_y=g^{-1}\{y\}\sse X$, so the requirement $gs=1_Y$ just means that
 $s(y)\in X_y$ for all $y$.  We thus have $B=\prod_yX_y$ and
 $A=Y\tm\prod_yX_y$.  Now consider an element $u\in R^X$.  The image
 $N_hT_g(u)\in R$ is given by $\prod_y\sum_{x\in X_y}u(x)$.  If we
 expand this out in the obvious way we get
 $\sum_{s\in B}\prod_{y\in Y}u(s(y))$, and by unwinding the
 definitions we see that this is $T_rN_qR_p(u)$, as required.
\end{proof}

For another perspective on the above construction, define $m\:A\to Y$
by $m(y,s)=y$.  We then have a diagram $D$ as follows, in which the
right hand square is cartesian.
\[ \xymatrix{
 & A \ar[r]^q \ar[d]_m \ar[dl]_p & B \ar[d]^r \\
 X \ar[r]_g & Y \ar[r]_h & Z.
} \]
Let $\CD(g,h)$ denote the category of all diagrams $D'$ like
\[ \xymatrix{
 & A' \ar[r]^{q'} \ar[d]_{m'} \ar[dl]_{p'} & B' \ar[d]^{r'} \\
 X \ar[r]_g & Y \ar[r]_h & Z,
} \]
where the bottom row is the same as before, and the right hand square
is cartesian.  A morphism from $D'$ to $D''$ consists of maps 
$A'\to A''$ and $B'\to B''$ with the evident commutativity
properties.  The following result gives a more abstract
characterisation of $D$.

\begin{proposition}\label{prop-cromulent}
 The object $D$ is terminal in $\CD(g,h)$.
\end{proposition}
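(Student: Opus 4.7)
The plan is to exhibit, for each object $D'$ of $\CD(g,h)$, a unique morphism $D' \to D$. Write $D'$ as consisting of sets $A'$, $B'$ together with maps $p'\: A' \to X$, $q'\: A' \to B'$, $m'\: A' \to Y$, $r'\: B' \to Z$ with $g \circ p' = m'$ and with the right square cartesian. The cartesian condition identifies $A'$ with $Y \times_Z B'$ via $(m', q')$, so for each $b' \in B'$ the map $q'$ restricts to a bijection $(q')^{-1}\{b'\} \to h^{-1}\{r'(b')\}$, and composing its inverse with $p'$ yields a function $s_{b'}\: h^{-1}\{r'(b')\} \to X$. The identity $g \circ p' = m'$ forces $g \circ s_{b'} = 1$, so $(r'(b'), s_{b'})$ is an element of $B$.

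Using this, I would define $\psi\: B' \to B$ by $\psi(b') = (r'(b'), s_{b'})$, and $\phi\: A' \to A = Y \times_Z B$ by $\phi(a') = (m'(a'), s_{q'(a')})$. One checks that $(\phi, \psi)$ is a morphism in $\CD(g,h)$ by verifying $p \circ \phi = p'$, $r \circ \psi = r'$, $q \circ \phi = \psi \circ q'$, and $m \circ \phi = m'$, each of which follows by direct unwinding of the formulas for $p$, $q$, $r$, $m$ in Definition~\ref{defn-distributor}.

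For uniqueness, suppose $(\phi', \psi')\: D' \to D$ is any morphism. The identity $r \circ \psi' = r'$ forces the first coordinate of $\psi'(b') \in B$ to be $r'(b')$; write $\sg_{b'}$ for its second coordinate. The identities $q \circ \phi' = \psi' \circ q'$ and $m \circ \phi' = m'$ force $\phi'(a') = (m'(a'), \sg_{q'(a')})$, and then $p \circ \phi' = p'$ yields $\sg_{q'(a')}(m'(a')) = p'(a')$ for every $a' \in A'$. As $a'$ ranges over the fibre $(q')^{-1}\{b'\}$, its image $m'(a')$ ranges over all of $h^{-1}\{r'(b')\}$, so this pins down $\sg_{b'}$ as $s_{b'}$, giving $(\phi', \psi') = (\phi, \psi)$. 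The construction is dictated almost entirely by the cartesian condition; the only care needed is to keep track of which compatibility determines which coordinate.
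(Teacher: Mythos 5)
Your proof is correct and follows essentially the same route as the paper's: in both cases the key step is to use the cartesian condition on $D'$ to identify each fibre $(q')^{-1}\{b'\}$ with $h^{-1}\{r'(b')\}$ and thereby produce the section $s_{b'}$, from which $\psi$ and $\phi$ are read off, and uniqueness is pinned down by the same chain of equations.
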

\begin{proof}
 Consider another object $D'$ as above.  Let $b'$ be an element of
 $B'$.  Put $z=r'(b')\in Z$ and $Y_1=h^{-1}\{z\}\sse Y$ and
 $A_1=(q')^{-1}\{b'\}\sse A$.  As the square in $D'$ is assumed to be
 cartesian, the map $m'\:A'\to Y$ must restrict to give a bijection
 $m_1\:A_1\to Y_1$.  We thus have a map $s=p'm_1^{-1}\:Y_1\to X$.  As
 $D'$ commutes we have $gp'=m'$ and so $gs=1|_{Y_1}$, which means that
 $(z,s)\in B$.  We can thus define $\bt\:B'\to B$ by $\bt(b')=(z,s)$.
 Next, given $a'\in A'$ we can put $b'=q'(a')\in B'$ and then define
 $z$, $Y_1$, $A_1$, $m_1$ and $s$ as before.  We then have $a'\in A_1$
 so $m_1(a')$ is defined and is an element of $Y_1$.  We thus have a
 point $\al(a')=(m_1(a'),z,s)\in A$.  This construction gives a map
 $\al\:A'\to A$, and it is straightforward to check the equations
 \[ m\al=m' \hspace{5em}
    p\al=p' \hspace{5em}
    q\al=\bt q' \hspace{5em}
    r\bt=r'.
 \]
 This means that the pair $(\al,\bt)$ gives a morphism $D'\to D$.
 Suppose we have another morphism, say $(\al^*,\bt^*)$.  Consider a
 point $b'\in B'$, and put $(z,s)=\bt^*(b')\in B$.  As $r\bt^*=r'$ we
 see that $z=r'(b')$.  Now consider a point $y\in h^{-1}\{z\}$, so
 $s(y)\in X$.  By the pullback property for $D'$, there is a unique
 element $a'\in(q')^{-1}\{b'\}$ with $m'(a')=y$.  The point
 $\al^*(a')\in A$ has $q(\al^*(a'))=\bt^*(q'(a'))=\bt^*(b')=(z,s)$ and
 $m(\al^*(a'))=m'(a')=y$ so we must have $\al^*(a')=(y,z,s)$.  This
 means that $s(y)=p(y,z,s)=p(\al^*(a'))=p'(a')=p'm_1^{-1}(y)$.  We now
 see that $s=p'm_1^{-1}$ as before, so $\bt^*=\bt$.  Given this, it is
 also clear from the above equations that $\al^*=\al$.  
\end{proof}

It is instructive to recover the distributivity axiom
$a_0(a_1+a_2)=a_0a_1+a_0a_2$ directly from the above construction.
Let $S\:\bCU\to\Sets$ be a product-preserving functor, so we can
identify $S(n)=S(\{0,1,\dotsc,n-1\})$ with $S(1)^n$.  The map
$z\:0\to 1$ gives elements $0=T_z()\in S(1)$ and $1=N_z()\in S(1)$,
and the map $s\:2\to 1$ gives binary operations $a+b=T_s(a,b)$ and
$ab=N_s(a,b)$.  Proposition~\ref{prop-semiring-theory} shows
abstractly that this gives a semiring structure.  For a direct check
of distributivity, we define maps 
\[ \xymatrix{
  3 \ar[r]^f & 2 \ar[r]^g & 1 \\
  4 \ar[u]^{p'} \ar[rr]_{q'} && 2 \ar[u]_{r'}
} \]
as follows:
\begin{center}
 \begin{tikzpicture}[scale=2,rounded corners]
  \fill[blue!20] (-0.1,-0.1) rectangle (0.4,0.4);
  \fill[blue!20] (-0.1,1.7)  rectangle (0.3,2.1);
  \fill[blue!20] (1.9,1.7)   rectangle (2.1,2.1);
  \fill[blue!20] (3.9,1.9)   rectangle (4.1,2.1);
  \fill[blue!20] (3.8,-0.1)  rectangle (4.1,0.2);

  \draw (0.0,0.0) node{$\ss 0$};
  \draw (0.1,0.1) node{$\ss 1$};
  \draw (0.2,0.2) node{$\ss 2$};
  \draw (0.3,0.3) node{$\ss 3$};

  \draw (4.0,0.0) node{$\ss 0$};
  \draw (3.9,0.1) node{$\ss 1$};

  \draw (0.0,2.0) node{$\ss 0$};
  \draw (0.1,1.9) node{$\ss 1$};
  \draw (0.2,1.8) node{$\ss 2$};

  \draw (2.0,2.0) node{$\ss 0$};
  \draw (2.0,1.8) node{$\ss 1$};

  \draw (4.0,2.0) node{$\ss 0$};

  \draw[->] (0.0,0.1) -- (0.0,1.9);
  \draw[->] (0.1,0.2) -- (0.1,1.8);
  \draw (0.2,0.3) -- (0.2,0.9) -- (0.12,0.98);
  \draw (0.08,1.02) -- (0.0,1.1) -- (0.0,1.8);
  \draw[->] (0.3,0.4) -- (0.3,1.1) -- (0.2,1.2) -- (0.2,1.7);

  \draw[->] (0.1,0.0) -- (3.9,0.0);
  \draw (0.2,0.1) -- (0.5,0.1) -- (0.5,0.0) -- (1.0,0.0);
  \draw[->] (0.3,0.2) -- (0.6,0.2) -- (0.6,0.1) -- (3.8,0.1);
  \draw (0.4,0.3) -- (0.7,0.3) -- (0.7,0.1) -- (2.0,0.1);

  \draw[->] (0.1,2.0) -- (1.9,2.0);
  \draw (0.2,1.9) -- (0.4,1.9) -- (0.4,1.8) -- (0.8,1.8);
  \draw[->] (0.3,1.8) -- (1.9,1.8);

  \draw[->] (2.1,2.0) -- (3.9,2.0);
  \draw (2.1,1.8) -- (2.4,1.8) -- (2.4,2.0) -- (3.0,2.0);

  \draw[->] (4.0,0.1) -- (4.0,1.9);
  \draw (3.9,0.2) -- (3.9,0.4) -- (4.0,0.4) -- (4.0,1.0);

  \draw (1.0,2.2) node{$f$}; 
  \draw (3.0,2.2) node{$g$};

  \draw (-0.2,1.0) node{$p'$}; 
  \draw (2.0,-0.2) node{$q'$}; 
  \draw (4.2,1.0)  node{$r'$}; 

 \end{tikzpicture}
\end{center}
The operation $N_gT_f$ is $(a_0,a_1,a_2)\mapsto a_0(a_1+a_2)$, whereas
the operation $T_{r'}N_{q'}R_{p'}$ is 
$(a_0,a_1,a_2)\mapsto a_0a_1+a_0a_2$.  Suppose instead that we define
a bispan $\om=\Dl(f,g)=(3\xla{p}A\xra{q}B\xra{r}1)$ as in
Definition~\ref{defn-distributor}.  From the definitions we have 
\[ B = \{(0,s)\st s\:2\to 3,\quad fs=1_2\}. \]
If we define maps $s_0,s_1\:2\to 3$ by 
\begin{align*}
 s_0(0) &= 0 & s_0(1) &= 1 \\
 s_1(0) &= 0 & s_1(1) &= 2 
\end{align*}
then $B=\{(0,s_0),\;(0,s_1)\}$.  We then have 
\[ A = 2\tm_1B = \{(0,0,s_0),\;(1,0,s_0),\;(0,0,s_1),\;(1,0,s_1)\}. \]
It follows that we have a commutative diagram
\[ \xymatrix{
 3 \ar@{=}[d] &
 4 \ar[l]_{p'} \ar[r]^{q'} \ar[d]_\al^\simeq &
 2 \ar[r]^{q'} \ar[d]^\bt_\simeq &
 1 \ar@{=}[d] \\
 3 &
 A \ar[l]^p \ar[r]_q &
 B \ar[r]_r & 
 1
} \]
where 
\begin{align*}
 \al(0) &= (0,0,s_0) & 
 \al(1) &= (1,0,s_0) &
 \al(2) &= (0,0,s_1) &
 \al(3) &= (1,0,s_1) \\
 \bt(0) &= (0,s_0) & & 
 \bt(1) &= (0,s_1).
\end{align*}
This gives $N_{r'}T_{q'}R_{p'}=N_rT_qR_p=N_gT_f$ as expected.

We now discuss composition of more general bispans.

\begin{definition}\label{defn-bispan-circ}
 Consider a pair of bispans 
 \begin{align*}
  \om_0 &= (X_0\xla{p_0}A_0\xra{q_0}B_0\xra{r_0}X_1) \in \CU(X_0,X_1) \\
  \om_1 &= (X_1\xla{p_1}A_1\xra{q_1}B_1\xra{r_1}X_2) \in \CU(X_1,X_2).
 \end{align*}
 We define a bispan 
 \[ \om_1\circ\om_0 = (X_0\xla{p}A\xra{q}B\xra{r}X_2) \in \CU(X_0,X_2)
 \] 
 as follows:
 \begin{align*}
  A &= \{(a_0,a_1,s) \st 
         s\:q_1^{-1}\{q_1(a_1)\}\to B_0,\;r_0s=p_1,\;
         a_0\in q_0^{-1}\{s(a_1)\}\} \\
  B &= \{(b_1,s) \st 
          s\:q_1^{-1}\{b_1\}\to B_0,\;\;r_0s=p_1\} \\
  p(a_0,a_1,s) &= p_0(a_0) \\
  q(a_0,a_1,s) &= (q_1(a_1),s) \\
  r(b_1,s)     &= r_1(b_1).
 \end{align*}
 (Here it is implicit that $a_0\in A_0$, $a_1\in A_1$ and $b_1\in B_1$.)
\end{definition}

\begin{remark}\label{rem-circ-diagram}
 It is sometimes useful to consider the enlarged diagram
 \[ \xymatrix{ & &
  A \ar[r]_{q'} \ar[dl]^{p'} \ar@/^4ex/[rr]^q \ar@/_4ex/[ddll]_p &
  W \ar[dl]^{p''} \ar[dr]_{r''} \ar[r]_{q''} &
  B \ar[dr]_{r'} \ar@/^4ex/[ddrr]^r \\ &
  A_0 \ar[dl]^{p_0} \ar[r]_{q_0} & 
  B_0 \ar[dr]_{r_0} & & 
  A_1 \ar[dl]^{p_1} \ar[r]_{q_1} &
  B_1 \ar[dr]_{r_1} \\
  X_0 & & & 
  X_1 & & &
  X_2  
 } \]
 where 
 \begin{align*}
  A &= \{(a_0,a_1,s) \st 
         s\:q_1^{-1}\{q_1(a_1)\}\to B_0,\;r_0s=p_1,\;
         a_0\in q_0^{-1}\{s(a_1)\}\} \\
  W &= \{(a_1,s) \st 
          s\:q_1^{-1}\{q(a_1)\}\to B_0,\;\;r_0s=p_1\} \\
  B &= \{(b_1,s) \st 
          s\:q_1^{-1}\{b_1\}\to B_0,\;\;r_0s=p_1\} 
 \end{align*}
 \begin{align*}
  p(a_0,a_1,s)  &= p_0(a_0) &
  p'(a_0,a_1,s) &= a_0 &
  p''(a_1,s)    &= s(a_1) \\
  q(a_0,a_1,s)  &= (q_1(a_1),s) &
  q'(a_0,a_1,s) &= (a_1,s) &
  q''(a_1,s)    &= (q_1(a_1),s) \\
  r(b_1,s)      &= r_1(b_1) &
  r'(b_1,s)     &= b_1 &
  r''(a_1,s)    &= a_1
 \end{align*}
 One can check that everything commutes and that the two rhombi are
 cartesian (but the middle square is not, in general).
\end{remark}

\begin{proposition}
 For any semiring $R$ we have
 $f_{\om_1\circ\om_0}=f_{\om_1}f_{\om_0}\:R^{X_0}\to R^{X_2}$.
\end{proposition}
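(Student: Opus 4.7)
The plan is to prove the identity by direct combinatorial expansion, using the distributive law in the semiring $R$ as the central tool. The definitions of $A$, $B$, $p$, $q$, $r$ in $\om_1\circ\om_0$ are engineered precisely so that this expansion works out tautologically.

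First I would fix $u\in R^{X_0}$ and $x_2\in X_2$, and compute $f_{\om_1}(f_{\om_0}(u))(x_2)$ directly from the formula $f_\om(s)(y)=\sum_{b\in r^{-1}\{y\}}\prod_{a\in q^{-1}\{b\}}s(p(a))$. Setting $v=f_{\om_0}(u)$, substitution yields
\[
f_{\om_1}(f_{\om_0}(u))(x_2)
=\sum_{b_1\in r_1^{-1}\{x_2\}}\prod_{a_1\in q_1^{-1}\{b_1\}}
\Bigl(\sum_{b_0\in r_0^{-1}\{p_1(a_1)\}}\prod_{a_0\in q_0^{-1}\{b_0\}}u(p_0(a_0))\Bigr).
\]
The key step is then to apply the generalised distributive law to move the product over $a_1$ across the inner sum over $b_0$. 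This turns the product-of-sums into a single sum over choice functions: namely, sections $s$ that select, for each $a_1\in q_1^{-1}\{b_1\}$, an element $s(a_1)\in r_0^{-1}\{p_1(a_1)\}\subseteq B_0$. Such a section is exactly a map $s\colon q_1^{-1}\{b_1\}\to B_0$ with $r_0 s=p_1|_{q_1^{-1}\{b_1\}}$, i.e.\ precisely the second component of the set $B$ defined in Definition~\ref{defn-bispan-circ}.

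After this swap, the expression becomes
\[
\sum_{(b_1,s)\in r^{-1}\{x_2\}}\;\prod_{a_1\in q_1^{-1}\{b_1\}}\;\prod_{a_0\in q_0^{-1}\{s(a_1)\}}u(p_0(a_0)),
\]
and I would observe that the double product over $(a_1,a_0)$ is indexed exactly by triples $(a_0,a_1,s)\in q^{-1}\{(b_1,s)\}\subseteq A$, with $u(p_0(a_0))=u(p(a_0,a_1,s))$. The right-hand side is then visibly $f_{\om_1\circ\om_0}(u)(x_2)=T_rN_qR_p(u)(x_2)$, completing the argument.

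The only real obstacle is bookkeeping: verifying that the reindexing after the distributive step matches the definitions of $A$, $B$, $p$, $q$, $r$ on the nose. A more abstract alternative would be to rewrite $T_{r_1}N_{q_1}R_{p_1}T_{r_0}N_{q_0}R_{p_0}$ using the basic rules: first, the cartesian square on the right rhombus of the diagram in Remark~\ref{rem-circ-diagram} lets us replace $R_{p_1}T_{r_0}$ by $T_{r''}R_{p''}$; next, Proposition~\ref{prop-NT} applied to $W\xra{r''}A_1\xra{q_1}B_1$ rewrites $N_{q_1}T_{r''}$ as $T_{r'}N_{q''}R_{p''}$; finally, the left cartesian rhombus lets us absorb the remaining restriction, and composition of two $T$'s and two $R$'s collapses to $T_r$ and $R_p$. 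This is essentially the same calculation, with the construction of $A$ and $B$ interpreted as the universal distributor from Proposition~\ref{prop-cromulent}. I would favour the direct combinatorial route, as it avoids repeated appeals to the more delicate distributor identification.
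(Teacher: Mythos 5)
Your primary route (fix $u$ and $x_2$, expand $f_{\om_1}(f_{\om_0}(u))(x_2)$, apply the generalised distributive law to convert the product-of-sums into a sum over sections $s\colon q_1^{-1}\{b_1\}\to B_0$ with $r_0s=p_1$, then reindex) is correct and is essentially the same argument as the paper's. The paper simply packages the same distributive expansion more modularly: it proves the middle identity $N_{q_1}R_{p_1}T_{r_0}=T_{r'}N_{q''}R_{p''}$ on $R^{B_0}\to R^{B_1}$ by exactly your product-of-sums expansion, composes with $T_{r_1}$ on the left and $N_{q_0}R_{p_0}$ on the right, and finishes with the cartesian square $R_{p''}N_{q_0}=N_{q'}R_{p'}$ and functoriality; your version does all of this in one formula.

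One caution about the abstract alternative you sketch: the step where you apply Proposition~\ref{prop-NT} to $W\xra{r''}A_1\xra{q_1}B_1$ and claim $N_{q_1}T_{r''}=T_{r'}N_{q''}R_{p''}$ does not quite work. The distributor $\Dl(r'',q_1)$ has a middle set that is strictly larger than $B$ in general, because it records a separate map $q_1^{-1}\{b_1\}\to B_0$ for each element of $q_1^{-1}\{b_1\}$ rather than a single one. Equivalently, as the paper notes in Remark~\ref{rem-circ-diagram}, the middle square of the expanded diagram is not cartesian, so $N_{q_1}T_{r''}\neq T_{r'}N_{q''}$; only after precomposition with $R_{p''}$ does the over-counting wash out, and that cancellation is precisely the content of the direct distributive computation. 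Your instinct to prefer the direct route was therefore sound: the abstract shortcut as stated glosses over a genuine obstruction.
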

\begin{proof}
 We first claim that in the expanded diagram we have 
 \[ N_{q_1}R_{p_1}T_{r_0}=T_{r'}N_{q''}R_{p''} \:
     R^{B_0} \to R^{B_1}.
 \]
 Indeed, for any map $m\:B_0\to R$ and and $b_1\in B_1$, the left hand
 side gives 
 \[ \prod_{a_1\in q_1^{-1}\{b_1\}}
     \sum_{b_0\in r_0^{-1}\{p_1(a_1)\}}  m(b_0), 
 \]
 whereas the right hand side gives
 \[ \sum_{\begin{array}{c}
       \scriptstyle s\:q_1^{-1}\{b_1\}\to B_0 \\
       \scriptstyle r_0s=p_1
     \end{array}}
     \quad \prod_{a_1\in q_1^{-1}\{b_1\}} m(s(a_1)).
 \]
 To give a map $s\:q_1^{-1}\{b_1\}\to B_0$ with $r_0s=p_1$ is the same
 as to pick out one summand in each of the factors on the left hand
 side.  This means that the right hand side is just what we get by
 expanding out the left hand side, as required.  We can now compose on
 the left with $T_{r_1}$ and on the right with $N_{q_0}R_{p_0}$ to get 
 \[ f_{\om_1}f_{\om_0} =
     T_{r_1}N_{q_1}R_{p_1}T_{r_0}N_{q_0}R_{p_0}=
     T_{r_1}T_{r'}N_{q''}R_{p''}N_{q_0}R_{p_0} \:
     R^{X_0} \to R^{X_2}.
 \]
 As the square $(q',p',p'',q_0)$ is cartesian, we also have
 $R_{p''}N_{q_0}=N_{q'}R_{p'}$.  Using this together with the
 functoriality of $T$, $N$ and $R$ we obtain
 \[ f_{\om_1}f_{\om_0} =
     T_{r_1}T_{r'}N_{q''}N_{q'}R_{p'}R_{p_0} =
     T_r N_q R_p = f_{\om_1\circ\om_0}
 \]
 as claimed.
\end{proof}

Now suppose we have a third bispan $\om_2$ from $X_2$ to $X_3$.  It
follows from the proposition that the bispans
$\om_2\circ(\om_1\circ\om_0)$ and $(\om_2\circ\om_1)\circ\om_0$ give
the same natural semiring operation, so they are at least unnaturally
isomorphic.  It is reasonable to expect that there should be a natural
isomorphism satisfying a pentagonal coherence identity.  In order to
prove this, we will introduce a single-step construction of multiple
compositions. In more detail, consider a family of bispans 
\[ \om_i = (X_i \xla{p_i} A_i \xra{q_i} B_i \xra{r_i} X_{i+1}) \]
for $n\leq i<m$.  We would like to build a composite bispan from $X_n$
to $X_m$.   

\begin{definition}
 First, for $n\leq i\leq j\leq m$ we put $A_i^j=\prod_{k=i}^{j-1}A_i$.
 In the case $i=j$ this should be interpreted as the one-point set:
 $A_i^i=\{1\}$.  There are evident projections $\pi_i\:A_i^j\to A_i$
 and $\pi_{>i}\:A_i^j\to A_{i+1}^j$.
\end{definition}

\begin{construction}\label{cons-bispan-circs}
 Now consider a system of subsets $S_i\sse A_i^m$ (for $n\leq i\leq m$)
 together with maps $s_i\:S_i\to B_{i-1}$ (for $n<i\leq m$).  We say
 that the above data form a \emph{term system} if
 \begin{itemize}
  \item[(a)] $S_m=\{1\}$
  \item[(b)] When $n\leq i<m$, the projection $\pi\:A_i^m\to A_{i+1}^m$ has
   $\pi(S_i)\sse S_{i+1}$.  Moreover, the square
   \[ \xymatrix{
    S_i \ar[d]_{\pi_i} \ar[r]^{\pi_{>i}} & S_{i+1} \ar[d]^{s_{i+1}}  \\
    A_i \ar[r]_{q_i} & B_i
   } \]
   is a pullback.
  \item[(c)] When $n<i<m$, the square
   \[ \xymatrix{
    B_{i-1} \ar[d]_{r_{i-1}} & 
    S_i \ar[l]_{s_i} \ar[d]^{\pi_i} \\
    X_i & 
    A_i \ar[l]^{p_i}
   } \]
   commutes.
 \end{itemize}
 A \emph{thread} for a term system as above is just an element of the
 set $S_n$.  We write $B_{nm}$ for the set of all term systems, and
 $A_{nm}$ for the set of pairs consisting of a term system and a
 choice of thread.  In other words, we have
 \[ A_{nm} = \{(\un{a},\un{S},\un{s})\st
    (\un{S},\un{s})\in B_{nm},\quad \un{a}\in S_n\}.
 \]
 We define maps
 \[ X_n \xla{p_{nm}} A_{nm} \xra{q_{nm}} B_{nm} \xra{r_{nm}} X_m \]
 as follows:
 \begin{align*}
  p_{nm}(\un{a},\un{S},\un{s}) &= p_n(\pi_n(\un{a})) \\
  q_{nm}(\un{a},\un{S},\un{s}) &= (\un{S},\un{s}) \\
  r_{nm}(\un{S},\un{s})        &= r_{m-1}(s_m(1)).
 \end{align*}
 This gives a bispan $\om_{nm}$ from $X_n$ to $X_m$.
\end{construction}

We now unpack this definition in the simplest cases. 

\begin{proposition}
 Suppose that $n=0$ and $m=1$, so our input data consists of a
 single bispan
 \[ \om_0 = (X_0 \xla{p_0} A_0 \xra{q_0} B_0 \xra{r_0} X_1). \]
 Then the resulting bispan $\om_{01}$ is naturally isomorphic to
 $\om_0$.
\end{proposition}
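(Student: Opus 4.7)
The plan is to exhibit explicit bijections $A_{01}\cong A_0$ and $B_{01}\cong B_0$, and then check that under these bijections the three structure maps $p_{01},q_{01},r_{01}$ become identified with $p_0,q_0,r_0$. Everything reduces to unpacking the definition in this minimal case, since when $n=0$ and $m=1$ we have $A_0^1=A_0$ and $A_1^1=\{1\}$, condition~(a) forces $S_1=\{1\}$, and condition~(c) is vacuous (there is no index $i$ with $0<i<1$).

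First I would identify $B_{01}$ with $B_0$. A term system consists of the data $(S_0,S_1,s_1)$ with $S_1=\{1\}$ and $s_1\colon\{1\}\to B_0$, which is nothing but a choice of element $b_0:=s_1(1)\in B_0$; condition~(b) for $i=0$ then forces $S_0=q_0^{-1}\{b_0\}$ (the pullback of $q_0$ along the inclusion $\{b_0\}\hookrightarrow B_0$), and there is no further freedom. Hence the map $\phi\colon B_{01}\to B_0$ given by $\phi(\underline{S},\underline{s})=s_1(1)$ is a bijection, with inverse sending $b_0\in B_0$ to the term system just described. Under $\phi$ the map $r_{01}(\underline{S},\underline{s})=r_0(s_1(1))$ becomes $r_0$.

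Next I would identify $A_{01}$ with $A_0$. By definition $A_{01}$ consists of pairs $(\underline{a},(\underline{S},\underline{s}))$ with $\underline{a}\in S_0$, i.e.\ of pairs $(a_0,b_0)\in A_0\times B_0$ satisfying $q_0(a_0)=b_0$. Projection onto the first factor gives a bijection $\psi\colon A_{01}\to A_0$, with inverse $a_0\mapsto(a_0,q_0(a_0))$. Under $\psi$, the definitions $p_{01}(\underline{a},\underline{S},\underline{s})=p_0(\pi_0(\underline{a}))=p_0(a_0)$ and $q_{01}(\underline{a},\underline{S},\underline{s})=(\underline{S},\underline{s})$ correspond, respectively, to $p_0$ and to $\phi^{-1}\circ q_0$. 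Combining these identifications gives the commutative diagram
\[ \xymatrix{
 X_0 \ar@{=}[d] & A_{01} \ar[l]_{p_{01}} \ar[r]^{q_{01}} \ar[d]^{\psi}_{\simeq}
               & B_{01} \ar[r]^{r_{01}} \ar[d]^{\phi}_{\simeq}
               & X_1 \ar@{=}[d] \\
 X_0 & A_0 \ar[l]^{p_0} \ar[r]_{q_0} & B_0 \ar[r]_{r_0} & X_1,
} \]
which exhibits the claimed isomorphism $\omega_{01}\cong\omega_0$ in $\CU(X_0,X_1)$. There is no real obstacle; the only point requiring care is the verification that the pullback condition~(b) pins down $S_0$ uniquely once $b_0$ is chosen, and this is immediate because $\pi_0\colon A_0^1\to A_0$ is the identity and $S_1$ is a singleton, so the pullback is literally $q_0^{-1}\{b_0\}$.
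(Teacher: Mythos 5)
Your proof is correct and matches the paper's argument essentially step for step: you unpack the term-system axioms in the case $n=0$, $m=1$ to identify $B_{01}\cong B_0$ and $A_{01}\cong A_0$, and then check the structure maps correspond. The only cosmetic difference is the direction in which you write the bijections.
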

\begin{proof}
 In this context a term system consists of sets $S_0$ and $S_1$,
 together with a map $s_1\:S_1\to B_0$.  Axiom~(a) says that
 $S_1=\{1\}$, so $s_1$ just gives a point $b_0=s_1(1)\in B_0$.
 Axiom~(b) therefore says that $S_0=q_0^{-1}\{b_0\}\sse A_0$, and~(c)
 is vacuously satisfied.  We can therefore identify $B_{01}$ with
 $B_0$.  More precisely, we have a bijection $\bt\:B_0\to B_{01}$
 given by 
 \[ \bt(b_0) = (q_0^{-1}\{b_0\},\{1\};1\mapsto b_0). \]
 Similarly, there is a bijection $\al\:A_0\to A_{01}$ given by 
 \[ \al(a_0) = (a_0;q_0^{-1}\{q_0(a_0)\},\{1\};1\mapsto q_0(a_0)). \]
 It is clear from the definitions that the diagram
 \[ \xymatrix{
  X_0 \ar@{=}[d] & 
  A_0 \ar[l]_{p_0} \ar[r]^{q_0} \ar[d]_\al &
  B_0 \ar[r]^{r_0} \ar[d]^\bt &
  X_1 \ar@{=}[d] \\
  X_0 &
  A_{01} \ar[l]^{p_{01}} \ar[r]_{q_{01}} &
  B_{01} \ar[r]_{r_{01}} &
  X_1
 } \]
 commutes, so we have the claimed isomorphism of bispans.
\end{proof}

\begin{proposition}
 Suppose that $n=0$ and $m=2$, so our input data consists of bispans 
 \begin{align*}
  \om_0 &= (X_0\xla{p_0}A_0\xra{q_0}B_0\xra{r_0}X_1) \in \CU(X_0,X_1) \\
  \om_1 &= (X_1\xla{p_1}A_1\xra{q_1}B_1\xra{r_1}X_2) \in \CU(X_1,X_2).
 \end{align*}
 Then $\om_{02}$ is naturally isomorphic to $\om_1\circ\om_0$.
\end{proposition}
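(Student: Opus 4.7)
The plan is to unpack Construction~\ref{cons-bispan-circs} in the case $n=0$, $m=2$ and show that each piece of the data reduces to exactly the data appearing in Definition~\ref{defn-bispan-circ}, so that there is an evident bijection of bispans.

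First I would enumerate what a term system is in this case. We have $A_0^2 = A_0 \tm A_1$, $A_1^2 = A_1$, and $A_2^2 = \{1\}$. A term system thus consists of subsets $S_0 \sse A_0\tm A_1$, $S_1 \sse A_1$, $S_2 = \{1\}$ and maps $s_1\:S_1\to B_0$ and $s_2\:S_2\to B_1$. Axiom~(a) is automatic. The pullback axiom~(b) for $i=1$ says that the square with sides $S_1\to\{1\}$, $s_2$, $q_1$, and inclusion $S_1\hookrightarrow A_1$ is cartesian; writing $b_1 = s_2(1) \in B_1$ this forces $S_1 = q_1^{-1}\{b_1\}$. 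The pullback axiom for $i=0$ says $S_0 = \{(a_0,a_1) \in A_0\tm A_1 \st a_1 \in S_1,\; q_0(a_0) = s_1(a_1)\}$. Finally the commutativity axiom~(c) at $i=1$ says $r_0 s_1 = p_1|_{S_1}$.

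Next I would exhibit the bijections. A term system is therefore equivalent to a pair $(b_1, s)$ consisting of $b_1 \in B_1$ and a section $s = s_1\:q_1^{-1}\{b_1\} \to B_0$ with $r_0 s = p_1|_{q_1^{-1}\{b_1\}}$ — precisely the description of $B$ in Definition~\ref{defn-bispan-circ}. Likewise a thread is an element of $S_0$, i.e., a triple $(a_0,a_1,s)$ with $a_1 \in q_1^{-1}\{b_1\}$ and $a_0 \in q_0^{-1}\{s(a_1)\}$ — precisely the description of $A$ in Definition~\ref{defn-bispan-circ}. I would let $\al\:A \to A_{02}$ and $\bt\:B \to B_{02}$ be these bijections.

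Finally I would check that $\al$ and $\bt$ intertwine the structure maps. From the construction, $p_{02}(\un{a},\un{S},\un{s}) = p_0(\pi_0(\un{a})) = p_0(a_0) = p(a_0,a_1,s)$, $q_{02}(\un{a},\un{S},\un{s}) = (\un{S},\un{s})$ which corresponds under $\bt$ to $(q_1(a_1), s) = q(a_0,a_1,s)$ (using $q_1(a_1) = b_1$), and $r_{02}(\un{S},\un{s}) = r_1(s_2(1)) = r_1(b_1) = r(b_1,s)$. Hence the diagram
\[ \xymatrix{
 X_0 \ar@{=}[d] &
 A \ar[l]_{p} \ar[r]^{q} \ar[d]_\al^\simeq &
 B \ar[r]^{r} \ar[d]^\bt_\simeq &
 X_2 \ar@{=}[d] \\
 X_0 &
 A_{02} \ar[l]^{p_{02}} \ar[r]_{q_{02}} &
 B_{02} \ar[r]_{r_{02}} &
 X_2
} \]
commutes, giving the required isomorphism of bispans. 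The only mildly delicate point is bookkeeping the pullback condition at $i=0$ to see that $S_0$ is determined by $(S_1,s_1)$ together with a single choice of $a_0 \in q_0^{-1}\{s_1(a_1)\}$; everything else is routine unwinding.
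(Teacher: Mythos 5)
Your unpacking matches the paper's proof essentially line for line: enumerate the term system for $n=0,m=2$, observe that axioms (a)–(c) force $S_1$ and $S_0$ to be determined by the pair $(b_1,s_1)$, identify $B_{02}$ with $B$ and $A_{02}$ with $A$, and check compatibility with $p$, $q$, $r$. The only difference is that you spell out the final structure-map check, which the paper leaves to the reader; there is a tiny slip of phrasing where you call a thread ``a triple $(a_0,a_1,s)$'' when a thread is the pair $(a_0,a_1)\in S_0$ and the triple is the combined datum of thread plus term system, but this does not affect the argument.
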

\begin{proof}
 We write $\om_1\circ\om_0$ as 
 \[ X_0 \xla{p} A \xra{q} B \xra{r} X_2 \]
 as in Definition~\ref{defn-bispan-circ}.

 In the construction of $\om_{02}$, a term system consists of sets
 $S_0\sse A_0\tm A_1$ and $S_1\sse A_1$ and $S_2=\{1\}$, together with
 maps $s_1\:S_1\to B_0$ and $s_2\:S_2\to B_1$.  These fit into a
 diagram
 \[ \xymatrix{
      S_0 \ar[dr] \ar[rrr] & & & 
      S_1 \ar[dl] \ar[dr] \ar[rrr] & & &
      S_2=\{1\} \ar[dl] \\ & 
      A_0 \ar[dl] \ar[r] & 
      B_0 \ar[dr] & & 
      A_1 \ar[dl] \ar[r] &
      B_1 \ar[dr] \\
      X_0 & & & X_1 & & & X_2
 } \]
 in which the middle square is commutative and the other two
 quadrilaterals are cartesian.  The map $s_2$ just gives a point
 $b_1\in B_2$ and the cartesian property forces
 $S_1=q_1^{-1}\{b_1\}\sse A_1$.  Now $s_1$ is a map 
 $q_1^{-1}\{b_1\}\to B_0$, and the map 
 $r_0s_1\:q_1^{-1}\{b_1\}\to X_1$ must be the same as the restriction
 of $p_1$ to make the middle square commute.  Once $s_1$ has been
 chosen, the cartesian property forces us to take
 \[ S_0 = \{(a_0,a_1)\in A_0\tm A_1\st 
     a_1\in q_1^{-1}\{b_1\},\; a_0\in q_0^{-1}\{s_1(a_1)\}\},
 \]
 so there is no additional information in $S_0$.  This identifies the
 set $B_{02}$ (of all term systems) with the set 
 \[ B = \{(b_1,s) \st s\:q_1^{-1}\{b_1\}\to B_0,\;\;r_0s=p_1\}. \]
 Next, a point of $A_{02}$ consists of a term system together with a
 thread $(a_0,a_1)$.  Here $b_1$ must be equal to $q_1(a_1)$, so we
 need not record it as a separate piece of data.  For $(a_0,a_1)$ to
 be a thread we must have $a_0\in q_0^{-1}\{s_1(a_1)\}$.  This
 identifies $A_{02}$ with the set
 \[ A = \{(a_0,a_1,s) \st 
         s\:q_1^{-1}\{q_1(a_1)\}\to B_0,\;r_0s=p_1,\;
         a_0\in q_0^{-1}\{s(a_1)\}\}.
 \]
 We leave it to the reader to check that these identifications are
 compatible with the maps $p$, $q$ and $r$, so they give an
 isomorphism of bispans.
\end{proof}

Now suppose we have bispans $\om_n,\dotsc,\om_{m-1}$ as before, and an
index $k$ with $n<k<m$.  By the procedure described above, we can
construct a bispan $\om_{nm}$ from $X_n$ to $X_m$.  Alternatively, we
can construct $\om_{nk}$ (from $X_n$ to $X_k$) and $\om_{km}$ (from
$X_k$ to $X_m$), and then construct $\om_{km}\circ\om_{nk}$.  

\begin{proposition}
 The bispan $\om_{km}\circ\om_{nk}$ is naturally isomorphic to
 $\om_{nm}$. 
\end{proposition}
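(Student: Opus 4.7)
The strategy is to exhibit inverse bijections between the ``one-step'' construction $(A_{nm}, B_{nm})$ of Construction~\ref{cons-bispan-circs} and the ``two-step'' construction $(A, B)$ coming from applying Definition~\ref{defn-bispan-circ} to $\om_{km}$ and $\om_{nk}$, then verify that the maps $p,q,r$ match.

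First I would decompose a term system. Given $(\un{S}, \un{s}) \in B_{nm}$, the restriction $\beta''=(S_k,\dotsc,S_m;\, s_{k+1},\dotsc,s_m)$ is automatically a term system for $\om_{km}$, since $S_i \sse A_i^m$ for $k \leq i \leq m$ and the axioms (a)--(c) for those indices are a sub-collection of the axioms for $\beta$. For each thread $\un{a}\in S_k$ of $\beta''$, I construct a term system $\beta'(\un{a})$ for $\om_{nk}$ by setting $S'_i(\un{a}) = \{\un{b}\in A_i^k \st (\un{b},\un{a})\in S_i\}$ for $n\leq i<k$ (with $S'_k(\un{a})=\{1\}$), and $s'_i(\un{a})(\un{b})=s_i(\un{b},\un{a})$ for $n<i<k$, with $s'_k(\un{a})(1)=s_k(\un{a})$. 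Axiom~(c) for $\beta$ at $i=k$ is precisely the compatibility $r_{k-1}(s'_k(\un{a})(1))=p_k(\pi_k(\un{a}))$, i.e.\ $r_{nk}(\beta'(\un{a})) = p_{km}(\un{a})$. This exhibits the assignment $\un{a}\mapsto\beta'(\un{a})$ as a function $s\:q_{km}^{-1}\{\beta''\}\to B_{nk}$ with $r_{nk}\,s=p_{km}$, giving an element $(\beta'',s)\in B$.

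The inverse reassembles: given $(\beta'',s)$, set $S_i=S''_i$ and $s_i=s''_i$ for $k\leq i\leq m$, and for $n\leq i<k$ set $S_i=\{(\un{b},\un{a})\st \un{a}\in S_k,\;\un{b}\in S'_i(\un{a})\}$ with $s_i(\un{b},\un{a})=s'_i(\un{a})(\un{b})$, and $s_k(\un{a})=s'_k(\un{a})(1)$. The pullback axiom~(b) at each level $n\leq i<k$ in the reassembled system follows fibrewise from the pullback axiom~(b) at level $i$ in each $\beta'(\un{a})$, and at level $i=k-1$ from the pullback in $\beta'(\un{a})$ together with the fact that fibres of $\pi_{>k-1}\:S_{k-1}\to S_k$ over $\un{a}$ are by construction $S'_{k-1}(\un{a})$. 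This gives the bijection $B_{nm}\simeq B$. For the thread level, write an arbitrary $\un{a}\in S_n\sse A_n^m$ as $(\un{b},\un{a}'')$ with $\un{b}\in A_n^k$ and $\un{a}''\in A_k^m$; the pullback axioms force $\un{a}''\in S_k$ and $\un{b}\in S'_n(\un{a}'')$, matching exactly the description $A=\{(a_0,a_1,s)\st a_1\in A_{km},\; a_0\in q_{nk}^{-1}\{s(a_1)\}\}$ obtained by substituting $\om_0=\om_{nk}$, $\om_1=\om_{km}$ into Definition~\ref{defn-bispan-circ}.

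Finally I would check that the bijection intertwines the boundary maps: $p_{nm}(\un{a},\un{S},\un{s}) = p_n(\pi_n(\un{b}))$ agrees with $p_{nk}(a_0)$; $r_{nm}(\un{S},\un{s}) = r_{m-1}(s_m(1)) = r_{km}(\beta'')$ agrees with $r(b_1,s)=r_{km}(b_1)$; and $q_{nm}$ corresponds under the bijections to the map $(a_0,a_1,s)\mapsto(q_{km}(a_1),s)$. These are immediate from the definitions. The genuine content of the argument is the bijection on term systems; the main obstacle is keeping the indices and subscripts straight so that the pullback axiom at the junction level $i=k-1$ reassembles cleanly, but since axiom~(b) is local in $i$ and the fibres over $S_k$ carry exactly the lower-half data, the verification reduces to the fact that a pullback of a disjoint union of pullbacks is again a pullback.
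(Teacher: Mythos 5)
Your proposal is correct and takes essentially the same approach as the paper's proof: both establish the natural bijection on term systems by matching the upper-half data $(S_k,\dotsc,S_m;s_{k+1},\dotsc,s_m)$ with a term system for $\om_{km}$ and the lower-half data, fibred over threads in $S_k$, with term systems for $\om_{nk}$. The only difference is direction of presentation (you start from $B_{nm}$ and decompose, the paper starts from $B'_{nm}$ and reassembles), but the underlying identification and the verification of axioms~(a)--(c) and the boundary maps is the same.
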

\begin{proof}
 Let $A'_{nm}$ and $B'_{nm}$ denote the sets occurring in 
 $\om_{km}\circ\om_{nk}$.  A point of $B'_{nm}$ consists of a term
 system $\bt=(S_k,\dotsc,S_m;s_{k+1},\dotsc,s_m)$ together with a map
 $\sg\:q_{km}^{-1}\{\bt\}\to B_{nk}$ such that $r_{nk}\sg=p_{kn}$.
 Here $q_{km}^{-1}\{\bt\}$ is easily identified with the set $S_k$, and
 after this identification $p_{kn}$ becomes the map 
 $p_k\pi_k\:S_k\to X_k$.  Now for $a\in S_k$ we have a point
 $\sg(a)\in B_{nk}$.  This will have the form
 \[ \sg(a) = 
     (\Sg_n(a),\dotsc,\Sg_k(a);
       \sg_{n+1}(a),\dotsc,\sg_k(a))
 \]
 for some subsets $\Sg_i(a)\sse A_i^k$ and some maps
 $\sg_i(a)\:\Sg_i(a)\to B_{i-1}$.  Here $\Sg_k(a)=\{1\}$ so we can
 define $s_k\:S_k\to B_{k-1}$ by $s_k(a)=\sg_k(a)(1)$.  From the
 definitions we have
 $r_{nk}(\sg(a))=r_{k-1}(\sg_k(a)(1))=r_{k-1}(s_k(a))$, so the axiom
 $r_{nk}\sg=p_{kn}$ becomes $r_{k-1}s_k=p_k\pi_k\:S_k\to X_k$.  Next, for
 $n\leq i<k$ we can split $A_i^m$ as $A_i^k\tm A_k^m$ and put
 \[ S_i = \{(a',a'')\in A_i^k\tm A_k^m \st 
     a'' \in S_k \text{ and } a'\in \Sg_i(a'')\}.
 \]
 When $i>n$ we can also define $s_i\:S_i\to B_{i-1}$ by 
 \[ s_i(a',a'') = \sg_i(a'')(a'). \]
 We claim that the list $\al=(S_n,\dotsc,S_m;s_{n+1},\dotsc,s_m)$ is a
 term system for $(\om_n,\dotsc,\om_{m-1})$.  Indeed, axiom~(a) for
 $\al$ follows from axiom~(a) for $\bt$.  Similarly, axiom~(b) for
 $\al$ follows from axiom~(b) for $\bt$ when $i\geq k$, and from
 axiom~(b) for $\sg(a'')$ when $i<k$.  Axiom~(c) works the same way
 except that the case where $i=k$ must be treated separately.  That
 case says that the diagram 
 \[ \xymatrix{
    B_{k-1} \ar[d]_{r_{k-1}} & 
    S_k \ar[l]_{s_k} \ar[d]^{\pi_k} \\
    X_k & 
    A_k \ar[l]^{p_k}
 } \]
 must commute, and we observed above that this follows from our
 assumption that $r_{nk}\sg=p_{kn}$.  We thus have a point 
 $\al\in B_{nm}$ as claimed.  This was constructed in a natural way
 from $\bt$, so we can define a map $\phi\:B'_{nm}\to B_{nm}$ by 
 $\phi(\bt)=\al$.  We leave it to the reader to check that this is a
 bijection. 

 Now consider instead the set $A'_{nm}$.  The elements have the form
 $\al=(\al',\al'',\sg)$, where 
 \begin{itemize}
  \item $\al''$ is an element of $A_{km}$, so it consists of a term
   system $\bt=(S_k,\dotsc,S_m;s_{k+1},\dotsc,s_m)$ together with a
   thread $a''\in S_k$;
  \item $(\bt,\sg)$ is a point in $B'_{nm}$, corresponding to a point 
   \[ \phi(\bt,\sg)=(S_n,\dotsc,S_m;s_{n+1},\dotsc,s_m) \in B_{nm}; \]
  \item $\al'$ is an element of $A_{nk}$ with
   $q_{nk}(\al')=\sg(\al'')\in B_{nk}$.  This means that $\al'$
   consists of a term system $(T_n,\dotsc,T_k;t_{n+1},\dotsc,t_k)$
   together with a thread $a'\in T_n$.  The fact that
   $q_{nk}(\al')=\sg(\al'')$ means that 
   \[ T_i = \{u\in A_i^k\st (u,a'')\in S_i\} \]
   and $t_i(u)=s_i(u,a'')$.  In particular, the sets $T_i$ and maps
   $t_j$ are determined by the other data that we have mentioned
   already.  The thread $a'\in T_n$ can be combined with $a''$ to get
   a thread $a=(a',a'')\in S_n$ for $\phi(\bt,\sg)$.
 \end{itemize}
 We now see that the construction 
 \[ (\al',\al'',\sg)\mapsto(\phi(\bt,\sg),(a',a'')) \]
 gives a natural map $\psi\:A'_{nm}\to A_{nm}$.  We leave it to the reader
 to check that this is bijective and that the diagram 
 \[ \xymatrix{
  X_n \ar@{=}[d] &
  A'_{nm} \ar[l] \ar[r] \ar[d]_\psi &
  B'_{nm} \ar[r] \ar[d]^\phi &
  X_m \ar@{=}[d] \\
  X_n &
  A_{nm} \ar[l] \ar[r] &
  B_{nm} \ar[r] &
  X_m
 } \]
 commutes, so we have the required isomorphism of bispans.
\end{proof}

Now suppose we have composable bispans $\om_0$, $\om_1$ and $\om_2$.
We can use the proposition repeatedly to construct natural
isomorphisms 
\[ \om_2\circ(\om_1\circ\om_0) \simeq
    \om_2\circ\om_{02} \simeq \om_{03} \simeq
     \om_{13}\circ\om_0 \simeq (\om_2\circ\om_1)\circ\om_0.
\]
By combining these, we obtain an associativity isomorphism 
\[ \al \:
     \om_2\circ(\om_1\circ\om_0) \to
       (\om_2\circ\om_1)\circ\om_0.
\]
If we have another bispan $\om_3$ that can be composed with $\om_2$,
we get a pentagonal coherence diagram
\[ \xymatrix{
 \om_3\circ(\om_2\circ(\om_1\circ\om_0))
  \ar[r]^\al \ar[d]_{1\circ\al} & 
 (\om_3\circ\om_2)\circ(\om_1\circ\om_0)) \ar[r]^\al & 
 ((\om_3\circ\om_2)\circ\om_1)\circ\om_0)) \\
 \om_3\circ((\om_2\circ\om_1)\circ\om_0) \ar[rr]_\al & & 
 (\om_3\circ(\om_2\circ\om_1))\circ\om_0 \ar[u]_{\al\circ 1}
} \]
By comparing everything with $\om_{04}$, one can check that this
commutes.  Similarly, one can check that bispans of the form
\[ \eta_X = (X\xla{1}X\xra{1}X\xra{1}X) \]
act as identities for composition up to coherent natural isomorphism.  

All this can probably be repackaged to give a quasicategory of
bispans, which should be a quasicategorical Lawvere theory in the
sense of Cranch~\cite{cr:ati}.  However, we have not checked the
details.  

We have previously remarked that the morphism set $\bCU(X,Y)$ has a
canonical semiring structure.  It is natural to expect that this
should arise from a symmetric bimonoidal stucture on the category
$\CU(X,Y)$.  Such a structure can be given as follows: for bispans 
\begin{align*}
 \om      &= (X\xla{u}A\xra{v}B\xra{w}Y) \\
 \upsilon &= (X\xla{p}C\xra{q}D\xra{r}Y)
\end{align*}
we put 
\begin{align*}
 \om\oplus\upsilon &= 
  (X\xla{(u,p)}A\amalg C \xra{v\amalg q} B\amalg D \xra{(w,r)} Y) \\
 \om\ot\upsilon &= 
  (X\xla{}((A\tm_YD)\amalg(B\tm_YC))\xra{} B\tm_YD \xra{} Y).
\end{align*}
We will not make serious use of this so we leave further details to
the reader.

\begin{definition}\label{defn-TNR-bispans}
 For any map $f\:X\to Y$ of finite sets, we will write $T_f$, $N_f$
 and $R_f$ for the evident bispans representing the corresponding
 semiring operations, namely 
 \begin{align*}
  R_f &= (Y \xla{f} X \xra{1} X \xra{1} X) \in \CU(Y,X) \\
  N_f &= (X \xla{1} X \xra{f} Y \xra{1} Y) \in \CU(X,Y) \\
  T_f &= (X \xla{1} X \xra{1} X \xra{f} Y) \in \CU(X,Y).
 \end{align*}
\end{definition}

\begin{proposition}\label{prop-bispan-rels}
 \begin{itemize}
  \item[(a)] Any bispan $\om=(X\xla{p}A\xra{q}B\xra{r}Y)$ is
   naturally isomorphic to $T_r\circ N_q\circ R_p$.
  \item[(b)] For all $X\xla{f}Y\xra{g}Z$ there are natural isomorphisms
   $T_{gf}\simeq T_g\circ T_f$ and $N_{gf}\simeq N_g\circ N_f$ and
   $R_{gf}\simeq R_f\circ R_g$.  Moreover, $T_1$, $N_1$ and $R_1$ are
   all equal to the identity bispan
   $\eta_X=(X\xla{1}X\xra{1}X\xra{1}X)$. 
  \item[(c)] There is also a natural isomorphism from $N_g\circ T_f$
   to the distributor $\Dl(f,g)$.
  \item[(d)] For any cartesian square
   \[ \xymatrix{
    W \ar[r]^f \ar[d]_g & X \ar[d]^h \\
    Y \ar[r]_k & Z
   } \]
   there are natural isomorphisms $T_g\circ R_f\simeq R_k\circ T_h$
   and $N_g\circ R_f\simeq R_k\circ N_h$.
  \item[(e)] For any bijection $f\:X\to Y$ there are natural
   isomorphisms $T_f\simeq N_f\simeq R_{f^{-1}}$.
 \end{itemize}
\end{proposition}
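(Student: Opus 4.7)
The plan is to verify each of (a)--(e) by directly unwinding Definition~\ref{defn-bispan-circ} in conjunction with the explicit forms of $T_f$, $N_f$ and $R_f$ given in Definition~\ref{defn-TNR-bispans}, and exhibiting isomorphisms of bispans by producing explicit bijections of the ``$A$'' and ``$B$'' indexing sets that intertwine the outer maps $p$, $q$, $r$. I would treat the parts in increasing order of computational effort.

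Parts (b) and (e) are essentially bookkeeping. The identity claims $T_1=N_1=R_1=\eta_X$ are immediate from Definition~\ref{defn-TNR-bispans}. For $T_{gf}\simeq T_g\circ T_f$, $N_{gf}\simeq N_g\circ N_f$ and $R_{gf}\simeq R_f\circ R_g$, substituting into Definition~\ref{defn-bispan-circ} forces all but one of $p_i,q_i,r_i$ to be identities, so the only choice of $s$ in the composition is trivial, and the construction collapses to the ordinary span-composition of Section~\ref{sec-semigroups} applied either to the outer-left or outer-right half of the bispans. For part~(e), given a bijection $f\:X\to Y$, I would exhibit the explicit pair $(\al,\bt)=(1_X,f)$ as the isomorphism $T_f\to N_f$ and the pair $(\al,\bt)=(f,1_Y)$ as the isomorphism $N_f\to R_{f^{-1}}$, and verify the three commutativity squares in each case by inspection.

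Part~(a) follows by iterating Definition~\ref{defn-bispan-circ} twice: first compute $N_q\circ R_p$, which collapses (because $R_p$ has identity middle and right legs) to the bispan $(X\xla{p}A\xra{q}B\xra{1}B)$; then composing on the left with $T_r$ (whose left and middle legs are identities) collapses again and produces $(X\xla{p}A\xra{q}B\xra{r}Y)=\om$. Alternatively, one can apply Construction~\ref{cons-bispan-circs} to the length-three chain $(R_p,N_q,T_r)$ and read off the answer from the term-system data. Part~(c) is the most substantive: substituting $T_f=(X\xla{1}X\xra{1}X\xra{f}Y)$ and $N_g=(Y\xla{1}Y\xra{g}Z\xra{1}Z)$ into Definition~\ref{defn-bispan-circ} forces $s\:g^{-1}\{b_1\}\to X$ with $fs=1$ and $a_0=s(a_1)$, giving
\[ B_{01}=\{(b_1,s)\st b_1\in Z,\ s\:g^{-1}\{b_1\}\to X,\ fs=1\} \]
and $A_{01}=\{(y,s)\st y\in Y,\ s\:g^{-1}\{g(y)\}\to X,\ fs=1\}$, which coincides on the nose with $\Dl(f,g)$ from Definition~\ref{defn-distributor}, and the outer maps match.

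Part~(d) reduces directly to the pullback property. For $T_g\circ R_f$, the identity legs force the $s$-data in Definition~\ref{defn-bispan-circ} to be trivial, leaving middle set $W$ with maps $(f,1,g)$, and for $R_k\circ T_h$ the condition $r_0s=p_1$ becomes $h(s(b_1))=k(b_1)$, whose solution set is identified with $W$ by the cartesian hypothesis; the two answers coincide. The analogous argument handles $N_g\circ R_f\simeq R_k\circ N_h$, where the same cartesian condition provides the bijection of fibers $g^{-1}\{y\}\simeq h^{-1}\{k(y)\}$ along which the products defining $N_g$ and $N_h$ are taken. The main obstacle throughout is purely notational, namely keeping straight which legs are identities and which direction each map runs; no conceptual difficulty is expected beyond correctly bookkeeping the forced pullbacks.
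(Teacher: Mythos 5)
Your proposal is correct and takes essentially the same approach as the paper: the paper's one-sentence proof says precisely that the natural isomorphisms are produced "by simply unwinding the definition of bispan composition," which is what you carry out explicitly case by case. The paper additionally observes that the $\pi_0\CU(X,Y)\to\bCU(X,Y)$ bijection already gives an unnatural isomorphism as a sanity check, but this is a shortcut your direct unwinding makes unnecessary.
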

\begin{proof}
 In each case we see by considering the induced semiring operations
 that there is at least an unnatural isomorphism between the relevant
 bispans, and we can produce a natural isomorphism by simply unwinding
 the definition of bispan composition.
\end{proof}

\begin{proposition}\label{prop-bispan-shift}
 For any finite set $U$, there is a product-preserving functor
 $P_U\:\bCU\to\bCU$ given on objects by $P_U(X)=U\tm X$, and on
 morphisms by  
 \[ P_U[X\xla{p}A\xra{q}B\xra{r}Y] = 
     [U\tm X\xla{1\tm p}U\tm A\xra{1\tm q}
      U\tm B\xra{1\tm r}U\tm Y].
 \]
\end{proposition}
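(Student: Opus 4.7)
The plan is to verify three things in turn: that $P_U$ is well-defined on isomorphism classes of bispans, that it is functorial (preserves identities and composition), and finally that it preserves products.

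Well-definedness is immediate: an isomorphism of bispans consists of bijections $\al\:A\to A'$ and $\bt\:B\to B'$ fitting into commuting squares, and applying $1\tm-$ gives bijections $1\tm\al$ and $1\tm\bt$ fitting into the corresponding squares for the $U$-multiplied bispans. Preservation of the identity bispan $\eta_X=(X\xla{1}X\xra{1}X\xra{1}X)$ is trivial, since $1\tm 1=1$.

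The key input for functoriality is that the functor $X\mapsto U\tm X$ on finite sets preserves pullbacks and disjoint unions; both follow from the standard distributive-category properties of $\Sets$, since for any cospan $A\to C\la B$ one has a canonical identification $(U\tm A)\tm_{U\tm C}(U\tm B)=U\tm(A\tm_C B)$. Given composable bispans $\om_0$ and $\om_1$, the composite $\om_1\circ\om_0$ is constructed in Definition~\ref{defn-bispan-circ} from $\om_0,\om_1$ by forming certain sets of sections $s\:q_1^{-1}\{b_1\}\to B_0$; equivalently, by the enlarged diagram of Remark~\ref{rem-circ-diagram}, it is built from pullback squares. Applying $U\tm-$ to this entire diagram produces exactly the analogous diagram built from $P_U(\om_0)$ and $P_U(\om_1)$: fibres $q_1^{-1}\{b_1\}$ are replaced by fibres $(1\tm q_1)^{-1}\{(u,b_1)\}$, which are in canonical bijection, and the pullbacks appearing in the construction are preserved. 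Hence $P_U(\om_1\circ\om_0)\simeq P_U(\om_1)\circ P_U(\om_0)$.

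Alternatively, and perhaps more cleanly, one may invoke Proposition~\ref{prop-bispan-rels}(a) to factor every bispan as $T_r\circ N_q\circ R_p$, and then observe that the definitions give
\[ P_U(T_f)=T_{1\tm f}, \qquad P_U(N_f)=N_{1\tm f}, \qquad P_U(R_f)=R_{1\tm f}. \]
The relations (b)--(e) of Proposition~\ref{prop-bispan-rels} relating $T$, $N$, $R$ along compositions, cartesian squares, and bijections, are all preserved by $1\tm-$ (again because it preserves pullbacks and composition of maps), and so $P_U$ respects bispan composition. For product preservation, recall that the product of $X$ and $Y$ in $\bCU$ is $X\amalg Y$ with projections $R_i,R_j$ given by the inclusions $i\:X\to X\amalg Y$, $j\:Y\to X\amalg Y$. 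Since $U\tm(X\amalg Y)=(U\tm X)\amalg(U\tm Y)$ and $1\tm i,1\tm j$ are the corresponding inclusions, $P_U(R_i)=R_{1\tm i}$ and $P_U(R_j)=R_{1\tm j}$ exhibit $P_U(X\amalg Y)$ as the product of $P_U(X)$ and $P_U(Y)$ in $\bCU$.

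The only step that requires real care is the composition identity; the main obstacle is bookkeeping the fibre structure in Definition~\ref{defn-bispan-circ} under $U\tm-$, but this becomes transparent once one notes that taking the product with $U$ preserves pullbacks, which is the only categorical input that the construction of bispan composition uses.
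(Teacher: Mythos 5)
Your proof is correct and takes essentially the same approach as the paper, which simply observes that "$U$ is just carried through the various constructions and does not interact with the other data in any interesting way"; your write-up fills in the details (preservation of pullbacks and fibres under $U\tm-$) that the paper leaves implicit.
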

\begin{proof}
 It is clear that this construction gives well-defined maps
 $\bCU(X,Y)\to\bCU(U\tm X,U\tm Y)$, and we just need to check that
 these are compatible with bispan composition.  This follows directly
 from the definitions, as $U$ is just carried through the various
 constructions and does not interact with the other data in any
 interesting way.  As products in $\bCU$ are given by disjoint unions,
 it is clear that they are preserved by $P_X$.
\end{proof}

We now examine two special cases of the associativity isomorphism for
bispan composition.  These will turn out to be useful later.

\begin{proposition}\label{prop-NNT}
 For any maps $W\xra{f}X\xra{g}Y\xra{h}Z$ there is a functorially
 associated diagram 
 \[ \xymatrix{
  \tA \ar[r]^\al \ar[d]_i &
  A \ar[r]^p \ar[d]^q &
  W \ar[r]^f &
  X \ar[d]^g \\
  \tB \ar[r]_\bt \ar[d]_j &
  B \ar[rr]_r & &
  Y \ar[d]^h \\
  \tC \ar[rrr]_k &&& 
  Z
 } \]
 such that the square and two rectangles are cartesian, and
 \begin{align*}
  \Dl(f,g)  &= (W \xla{p} A \xra{q} B \xra{r} Y) \\
  \Dl(f,hg) &= (W \xla{p\al} \tA \xra{ji} \tC \xra{k} Z) \\
  \Dl(r,h)  &= (B \xla{\bt} \tB \xra{j} \tC \xra{k} Z). 
 \end{align*}
\end{proposition}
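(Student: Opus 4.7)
The plan is to make the single observation that drives everything: the set $\tC$ admits two equivalent descriptions, reflecting the disjoint decomposition $(hg)^{-1}\{z\} = \coprod_{y\in h^{-1}\{z\}} g^{-1}\{y\}$. I define
\[ \tC = \{(z, s) : z \in Z,\; s : (hg)^{-1}\{z\} \to W,\; fs = 1\}, \]
with $k(z, s) = z$, and record the canonical bijection with the set $\{(z, s') : s' : h^{-1}\{z\} \to B,\; rs' = 1\}$ appearing in $\Dl(r,h)$, sending $s$ to the section $s'(y) = (y, s|_{g^{-1}\{y\}}) \in B$. This bijection is the combinatorial heart of the proof; it is simply a restatement of the associativity $\prod_{y}\prod_{x} = \prod_{x}$.

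Given this, I would define
\[ \tB = Y \tm_Z \tC, \qquad \tA = X \tm_Z \tC \simeq X \tm_Y \tB, \]
with $j\:\tB\to\tC$ and $i\:\tA\to\tB$ the canonical projections, so that the two rectangles with right edge $X\to Y\to Z$ and bottom edge $\tC\to Z$ are cartesian by construction. The maps $\bt\:\tB\to B$ and $\al\:\tA\to A$ are the restriction maps
\[ \bt(y, s) = (y, s|_{g^{-1}\{y\}}), \qquad \al(x, s) = (x, s|_{g^{-1}\{g(x)\}}). \]
The top-left square $(\tA, A, \tB, B)$ is then cartesian because $A = X \tm_Y B$ and $\tA = X \tm_Y \tB$, so $\al$ is the base change of $\bt$ along $g\:X\to Y$.

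To finish, I directly match each of the three displayed bispans against Definition~\ref{defn-distributor}. The middle row $(W \xla{p} A \xra{q} B \xra{r} Y)$ is $\Dl(f,g)$ by construction. For $\Dl(f, hg)$, applied to $W \xra{f} X \xra{hg} Z$, the three maps prescribed by the definition are $(x,s)\mapsto s(x)$, $(x,s)\mapsto (hg(x), s)$, and $(z,s)\mapsto z$, matching $p\al$, $ji$, and $k$ respectively. For $\Dl(r,h)$, applied to $B \xra{r} Y \xra{h} Z$, I pass to the second description of $\tC$: then $\bt$ becomes $(y,s')\mapsto s'(y)$, $j$ becomes $(y,s')\mapsto (h(y),s')$, and $k$ is unchanged, matching the distributor's three maps exactly.

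The main obstacle is purely notational: the two descriptions of $\tC$ (and the corresponding double role played by $\tB$ and its map $\bt$) have to be juggled consistently, and one must check with care that $\al$ is well-defined and that the top-left square is genuinely cartesian rather than merely commutative. Once the identification of $\tC$ is named precisely, the rest is a routine unwinding of definitions.
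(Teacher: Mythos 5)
Your proof is correct and takes essentially the same approach as the paper's: both hinge on the canonical bijection between $\{(z,t): t\:(hg)^{-1}\{z\}\to W,\ ft=1\}$ and $\{(z,s'): s'\:h^{-1}\{z\}\to B,\ rs'=1\}$ arising from the decomposition $(hg)^{-1}\{z\}=\coprod_{y\in h^{-1}\{z\}}g^{-1}\{y\}$, and then verify the three distributor identifications by unwinding. Your phrasing of $\tA$, $\tB$ as fiber products $X\tm_Z\tC$, $Y\tm_Z\tC$ and the observation that the top-left square is cartesian by transitivity of pullbacks ($\tA=X\tm_Y\tB=A\tm_B\tB$) is a slightly tidier way of packaging the same verification that the paper leaves to the reader.
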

This will help us to analyse the operation $N_hN_gT_f$.
\begin{proof}
 We put 
 \begin{align*}
  A &= \{(x,s)\st x\in X,\; s\:g^{-1}\{g(x)\}\to W,\; fs=1\} \\
  B &= \{(y,s)\st y\in X,\; s\:g^{-1}\{y\}   \to W,\; fs=1\} \\
  \tA &= \{(x,t)\st x\in X,\; t\:(hg)^{-1}\{hg(x)\}\to W,\; ft=1\} \\
  \tB &= \{(y,t)\st y\in Y,\; t\:(hg)^{-1}\{h(y)\} \to W,\; ft=1\} \\
  \tC &= \{(z,t)\st z\in Z,\; t\:(hg)^{-1}\{z\}    \to W,\; ft=1\} \\
  \al(x,t) &= (x,t|_{g^{-1}\{g(x)\}}) \\
  \bt(y,t) &= (y,t|_{g^{-1}\{y\}}) \\
  i(x,t) &= (g(x),t) \\
  j(y,t) &= (h(y),t) \\
  k(z,t) &= z \\
  p(x,s) &= s(x) \\
  q(x,s) &= (g(x),s) \\
  r(y,s) &= y.
 \end{align*}
 It is straightforward to check that the diagram commutes, that the
 three regions are cartesian, and that $\Dl(f,g)$ and $\Dl(f,hg)$ are
 as described.  The real point is to understand $\Dl(r,h)$.  By
 definition, this has the form 
 \[ B \xla{\bt^*} B^* \xra{j^*} C^* \xra{k^*} Z, \]
 where 
 \begin{align*}
  C^* &= \{(z,u)\st z\in Z,\; u\:h^{-1}\{z\}\to B,\; ru=1\} \\
  B^* &= \{(y,u)\st y\in Y,\; u\:h^{-1}\{h(y)\}\to B,\; ru=1\} \\
  \bt^*(y,u) &= u(y) \\
  j^*(y,u) &= (h(y),u) \\
  k^*(z,u) &= z.
 \end{align*}
 Consider a point $(z,u)\in C^*$.  For $y\in h^{-1}\{z\}$ we then have
 $u(y)\in B$ with $r(u(y))=y$.  This means that $u(y)$ has the form
 $(y,s_y)$, where $s_y\:g^{-1}\{y\}\to W$ with $fs_y=1$.  Now for any
 point $x\in (hg)^{-1}\{z\}$ we have $g(x)\in h^{-1}\{z\}$ and we can
 define $t(x)=s_{g(x)}(x)\in W$.  This defines a map
 $t\:(hg)^{-1}\{z\}\to W$ with $ft=1$, so $(z,t)\in\tC$.  This
 construction gives a map $\lm\:C^*\to\tC$, and it is not hard to
 check that this is bijective.  From this we also get a canonical
 bijection $\mu\:B^*=Y\tm_ZC^*\to Y\tm_Z\tC=\tB$.  One can now check
 that the diagram 
 \[ \xymatrix{
  B \ar@{=}[d] &
  B^* \ar[l]_{\bt^*} \ar[d]_\mu^\simeq \ar[r]^{j^*} &
  C^* \ar[d]^\lm_\simeq \ar[r]^{k^*} &
  Z \ar@{=}[d] \\
  B &
  \tB \ar[l]^\bt \ar[r]_j &
  \tC \ar[r]_k &
  Z
 } \]
 commutes, giving the claimed identification of $\Dl(r,h)$.
\end{proof}

\begin{proposition}\label{prop-NTT}
 For any maps $W\xra{f}X\xra{g}Y\xra{h}Z$ there is a functorially
 associated diagram 
 \[ \xymatrix{
  W \ar@{=}[d] &
  \tA \ar[l]_{\tp} \ar[d]^i \ar[r]^{\tq} &
  \tB \ar[r]^{\tr} \ar[dd]^k &
  Z \ar@{=}[dd] \\
  W \ar[d]_f &
  A^* \ar[l]_{p^*} \ar[d]^j \\
  X &
  A \ar[l]^p \ar[r]_q &
  B \ar[r]_r &
  Z
 } \]
 such that the bottom left square and the middle rectangle are cartesian, and 
 \begin{align*}
  \Dl(g,h)  &= (X \xla{p} A \xra{q} B \xra{r} Z) \\
  \Dl(gf,h) &= (W \xla{\tp} \tA \xra{\tq} \tB \xra{\tr} Z) \\
  \Dl(j,q)  &= (A^* \xla{i} \tA \xra{\tq} \tB \xra{k} B).
 \end{align*}
\end{proposition}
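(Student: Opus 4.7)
The plan is to parallel the proof of Proposition~\ref{prop-NNT}: write down explicit models for the sets appearing in the diagram using the formula of Definition~\ref{defn-distributor}, define the structure maps directly, and then check everything by inspection.

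First I would unpack the two named distributors. For $\Dl(g,h)=(X\xla{p}A\xra{q}B\xra{r}Z)$, Definition~\ref{defn-distributor} gives
\begin{align*}
 B &= \{(z,s)\st z\in Z,\; s\:h^{-1}\{z\}\to X,\; gs=1\}, \\
 A &= Y\tm_Z B,
\end{align*}
with $p(y,s)=s(y)$, $q(y,s)=(h(y),s)$, $r(z,s)=z$. Analogously, for $\Dl(gf,h)=(W\xla{\tp}\tA\xra{\tq}\tB\xra{\tr}Z)$ we have
\begin{align*}
 \tB &= \{(z,\ts)\st z\in Z,\; \ts\:h^{-1}\{z\}\to W,\; gf\ts=1\}, \\
 \tA &= Y\tm_Z \tB.
\end{align*}
I would define $A^*:=W\tm_X A=\{(w,(y,s))\st f(w)=s(y)\}$ with $p^*$ and $j$ the two projections, set $k(z,\ts):=(z,f\ts)$, and set $i(y,\ts):=(\ts(y),(y,f\ts))$; the latter lies in $A^*$ because $f(\ts(y))=s(y)$ for $s=f\ts$. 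With these formulas, commutativity of the diagram, the equations $p^*\circ i=\tp$ and $j\circ i(y,\ts)=(y,f\ts)$, and the cartesian property of the bottom-left square and the middle rectangle (left edge $j\circ i$, right edge $k$) all reduce to mechanical checks.

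The substance of the proposition is the identification of $\Dl(j,q)$. Applying Definition~\ref{defn-distributor} to $A^*\xra{j}A\xra{q}B$, the middle and third sets of this distributor are
\begin{align*}
 \tB' &= \{(b,\sigma)\st b\in B,\; \sigma\:q^{-1}\{b\}\to A^*,\; j\sigma=1\}, \\
 \tA' &= A\tm_B \tB',
\end{align*}
with structural maps $(b,\sigma)\mapsto b$, $(a,\sigma)\mapsto(q(a),\sigma)$, and $(a,\sigma)\mapsto\sigma(a)$. I would build a bijection $\tB\cong\tB'$ as follows: for $b=(z,s_0)\in B$ the fibre $q^{-1}\{b\}$ is canonically $h^{-1}\{z\}$ via $(y,s_0)\leftrightarrow y$, so a section $\sigma$ of $j$ over it is a function $y\mapsto(w_y,(y,s_0))$ with $f(w_y)=s_0(y)$; writing $\ts(y):=w_y$ gives $\ts\:h^{-1}\{z\}\to W$ with $f\ts=s_0$ and hence $gf\ts=1$, so $(z,\ts)\in\tB$. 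The assignment $(z,\ts)\mapsto((z,f\ts),\sigma)$ is the desired bijection, and pulling back along $A\tm_B(-)$ yields the corresponding bijection $\tA\cong\tA'$. Unwinding the three structural formulas through these bijections shows they recover $k$, $\tq$, and $i$ respectively.

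The only real obstacle is combinatorial bookkeeping: tracking the nested pairs $(y,s)$, $(z,\ts)$, $(w,(y,s))$, $(b,\sigma)$ without confusion, and verifying that the cartesian claims are attached to the correct regions of the diagram (in particular that it is the middle \emph{rectangle}, whose left edge is the composite $j\circ i$ spanning two rows, that is cartesian, rather than some intermediate square involving only $A^*$). Once the models above are in hand, everything else is pure inspection of formulas.
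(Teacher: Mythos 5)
Your proof is correct and follows essentially the same route as the paper's: unpack explicit models for the three distributors via Definition~\ref{defn-distributor}, write down the structure maps directly, and identify $\Dl(j,q)$ by the bijection that converts a section $\sigma$ of $j$ over the fibre $q^{-1}\{(z,s_0)\}\cong h^{-1}\{z\}$ into the map $\ts\colon h^{-1}\{z\}\to W$ with $f\ts=s_0$. Incidentally, your formulas $k(z,\ts)=(z,f\ts)$ and $i(y,\ts)=(\ts(y),(y,f\ts))$ correct two typos in the paper's displayed formulas, which write $gt$ where $ft$ is meant (and $fgt=1$ where $gft=1$ is meant).
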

This will help us to analyse the operation $N_hT_gT_f$.
\begin{proof}
 We put 
 \begin{align*}
  A &= \{(y,s)\st y\in Y,\; s\:h^{-1}\{h(y)\}\to X,\; gs=1\} \\
  B &= \{(z,s)\st z\in Z,\; s\:h^{-1}\{z\}   \to X,\; gs=1\} \\
  \tA &= \{(y,t)\st y\in Y,\; t\:h^{-1}\{h(y)\}\to W,\; fgt=1\} \\
  \tB &= \{(z,t)\st z\in Z,\; t\:h^{-1}\{z\}   \to W,\; fgt=1\} \\
  A^* &= \{(w,y,s)\st w\in W,\;y\in Y,\;
              s\:h^{-1}\{h(y)\}\to X,\; gs=1,\; f(w)=s(y)\} \\
  p(y,s) &= s(y) \\
  q(y,s) &= (h(y),s) \\
  r(y,s) &= y \\
  \tp(y,t) &= t(y) \\
  \tq(y,t) &= (h(y),t) \\
  \tr(z,t) &= z \\
  i(y,t) &= (t(y),y,gt) \\
  j(w,y,s) &= (y,s) \\
  k(z,t) &= (z,gt) \\
  p^*(w,y,s) &= w.
 \end{align*}
 It is straightforward to check that the diagram commutes, the bottom
 left square is cartesian, and that $\Dl(g,h)$ and $\Dl(gf,h)$ are as
 described.  The real point is to understand $\Dl(j,q)$.  By
 definition this has the form 
 \[ A^* \xla{i'} A' \xra{q'} B' \xra{k'} B, \]
 where 
 \begin{align*}
  B' &= \{ (b,u) \st b\in B,\; u\:q^{-1}\{b\}\to A^*,\; ju=1\} \\
  A' &= A\tm_BB'.
 \end{align*}
 Consider a point $(b,u)\in B'$.  The point $b\in B$ has the form
 $(z,s)$ for some $z\in Z$ and $s\:h^{-1}\{z\}\to X$ with $gs=1$.  The
 domain of $u$ consists of the points $(y,s)$ for $y\in h^{-1}\{z\}$,
 and the constraint $ju=1$ means that $u(y,s)=(t(y),y,s)$ for some
 $t(y)\in W$.  Moreover, we have $u(y,s)\in A^*$, and by inspecting
 the definition of $A^*$ this gives $ft(y)=s(y)$.  We thus have a map
 $t\:h^{-1}\{z\}\to W$ with $ft=s$ and so $gft=gs=1$.  The
 construction $(b,u)\mapsto (z,t)$ now gives a map $B'\to\tB$, which
 is easily seen to be bijective.  From this we also get a canonical
 bijection $A'=A\tm_BB'\to A\tm_B\tB=\tA$.  We leave it to the reader
 to check that these bijections are compatible with the maps in the
 distributor diagram, giving the claimed description of $\Dl(j,g)$.
\end{proof}

\begin{proposition}\label{prop-NRT}
 Suppose we have a diagram as follows in which both squares are pullbacks:
 \[ \xymatrix{
  \tW \ar[d]_i \ar[r]^{\tf} & X \ar[d]^j \ar[r]^{\tg} & Y \ar[d]^k \\
  W \ar[r]_f & X \ar[r]_g & Y.
 } \]
 Then there is a commutative diagram 
 \[ \xymatrix{
  \tW \ar[d]_i &
  \tA \ar[l]_{\tp} \ar[r]^{\tq} \ar[d]_\al &
  \tB \ar[r]^{\tr} \ar[d]^\bt &
  \tY \ar[d]^k \\
  W &
  A \ar[l]^p \ar[r]_q &
  B \ar[r]_r &
  Y
 } \]
 in which the top row is $\Dl(\tf,\tg)$, the bottom row is $\Dl(f,g)$,
 and the middle and right squares are cartesian.
\end{proposition}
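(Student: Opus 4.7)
The plan is to write down $\Dl(f,g)$ and $\Dl(\tf,\tg)$ explicitly and construct $\al,\bt$ by transporting section data along the fiber bijections that the pullback hypotheses supply.

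First I recall from Definition~\ref{defn-distributor} that $\Dl(f,g) = (W\xla{p}A\xra{q}B\xra{r}Y)$ with
\[ B = \{(y,s)\st y\in Y,\; s\:g^{-1}\{y\}\to W,\; fs=1\},\qquad A = X\tm_Y B, \]
$p(x,s)=s(x)$, $q(x,s)=(g(x),s)$, $r(y,s)=y$, and similarly for $\Dl(\tf,\tg)$ with all letters tilded. The hypothesis that the two squares in the given diagram are cartesian means $\tX = X\tm_Y\tY$ and $\tW = W\tm_X\tX$. In particular, for each $\ty\in\tY$ the projection $j$ restricts to a bijection $\tg^{-1}\{\ty\}\to g^{-1}\{k(\ty)\}$, whose inverse I denote $j^{-1}$; and for each $\tx\in\tX$ the projection $i$ restricts to a bijection $\tf^{-1}\{\tx\}\to f^{-1}\{j(\tx)\}$. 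Given $(\ty,\ts)\in\tB$, I define $s\:g^{-1}\{k(\ty)\}\to W$ by $s(x) = i(\ts(j^{-1}(x)))$; then $fs(x) = j(\tf\ts(j^{-1}(x))) = j(j^{-1}(x)) = x$, so $(k(\ty),s)\in B$.

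I then set $\bt(\ty,\ts) = (k(\ty),s)$ and $\al(\tx,\ts) = (j(\tx), s')$, where $s'$ applies the same transport to $\ts$ over the fiber $\tg^{-1}\{\tg(\tx)\}$. Commutativity of the three squares in the claimed diagram reduces to identities like $s'(j(\tx)) = i(\ts(\tx))$, which hold because $j^{-1}(j(\tx)) = \tx$ by definition of the fiber bijection; the other two squares are trivial from the formulas.

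For the pullback claims, note that $\tA = \tX\tm_{\tY}\tB$ holds tautologically from Definition~\ref{defn-distributor}. The right square says $\tB \simeq \tY\tm_Y B$; the inverse map sends a pair $(\ty,(y,s))$ with $k(\ty) = y$ to the unique section $\ts\:\tg^{-1}\{\ty\}\to\tW$ with $\ts(\tx) = (s(j(\tx)), \tx)$ under the identification $\tW = W\tm_X\tX$, and this inverts the transport used to define $\bt$. The middle square then follows formally: combining $\tA = \tX\tm_{\tY}\tB$ with $\tB\simeq\tY\tm_Y B$ and $\tX = X\tm_Y\tY$ yields $\tA \simeq X\tm_Y B\tm_Y\tY$, while $A\tm_B\tB = (X\tm_Y B)\tm_B(\tY\tm_Y B)$ simplifies to the same description after cancelling the middle $B$. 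The only real care needed is keeping straight the direction of the two fiber bijections provided by the left and right pullbacks; once that bookkeeping is in place, every verification is a direct substitution.
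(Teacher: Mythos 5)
Your proof is correct and follows essentially the same approach as the paper: you write out the distributors explicitly, define $\bt$ and $\al$ by transporting sections along the fiber bijections supplied by the pullback hypotheses, and then verify the right square is cartesian by exhibiting the inverse transport. The only variation is your deduction of the middle square's cartesian property by formal fiber-product manipulation (using $\tA = \tX\tm_{\tY}\tB$, $\tX = X\tm_Y\tY$, and the just-established $\tB\simeq\tY\tm_Y B$), whereas the paper verifies it by a direct argument parallel to the right square; both routes work and the difference is cosmetic.
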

\begin{proof}
 We define the two rows by the standard distributor construction as in
 Definition~\ref{defn-distributor}.  Consider a point
 $\tb=(\ty,\ts)\in\tB$.  This means that $\ts\:\tg^{-1}\{\ty\}\to\tW$
 with $\tf\ts=1$.  As the square $(\tX,\tY,X,Y)$ is cartesian, we see
 that $j$ restricts to give a bijection
 $j_{\ty}\:\tg^{-1}\{\ty\}\to g^{-1}\{k(\ty)\}$.  We can thus define
 $s\:g^{-1}\{k(y)\}\to W$ by $s=i\ts j_{\ty}^{-1}$.  This satisfies
 $fs=fi\ts j_{\ty}^{-1}=j\tf\ts j_{\ty}^{-1}=jj_{\ty}^{-1}=1$, so
 $(k(y),s)\in B$.  We can thus define $\bt\:\tB\to B$ by
 $\bt(\ty,\ts)=(k(y),s)=(k(y),isj_{\ty}^{-1})$.  Similarly, we define
 $\al\:\tA\to A$ by $\al(\tx,\ts)=(j(\tx),isj_{\tg(\tx)}^{-1})$.  It
 is easy to see that this gives a commutative diagram as claimed.

 We now show that the right hand square is cartesian.  Suppose we have
 a point $\ty\in\tY$ and a point $b=(y,s)\in B$ with $r(b)=k(\ty)$.
 By the definition of $r$ we have $r(b)=y$, so the condition is that
 $k(\ty)=y$, so $j_{\ty}$ gives a bijection
 $\tg^{-1}\{\ty\}\to g^{-1}\{y\}$.  Similarly, $i$ restricts to give a
 bijection $i_{\ty}\:(\tg\tf)^{-1}\{\ty\}\to(gf)^{-1}\{y\}$.  Next, as
 $(y,s)\in B$ we must have $fs=1\:g^{-1}\{y\}\to X$, which implies
 that $s$ lands in $(gf)^{-1}\{y\}$.  We can thus define
 $\ts=i_{\ty}^{-1}sj_{\ty}\:\tg^{-1}\{\ty\}\to\tW$.  We find that
 $\tf\ts=1$, so we have an element $\tb=(\ty,\ts)\in\tB$.  This is
 easily seen to be the unique element with $\tr(\tb)=\ty$ and
 $\bt(\tb)=b$, which gives the required pullback property.  A similar
 argument shows that the middle square is also cartesian.
\end{proof}

\section{Tambara functors}

Let $\CU_G$ be the bicategory that is the natural $G$-equivariant
analogue of $\CU$.  Explicitly:
\begin{itemize}
 \item The $0$-cells are finite $G$-sets.
 \item The $1$-cells from $X$ to $Y$ are diagrams
  $(X\xla{}A\xra{}B\xra{}Y)$ of finite $G$-sets.
 \item The $2$-cells from $(X\xla{}A\xra{}B\xra{}Y)$ to
  $(X\xla{}A'\xra{}B'\xra{}Y)$ consist of pairs $(\al,\bt)$, where
  $\al\:A\to A'$ and $\bt\:B\to B'$ are isomorphisms of finite
  $G$-sets making the evident diagram commute.
\end{itemize}
Composition of $2$-cells is the obvious thing, and composition of
$1$-cells is performed by calculating the composite in $\CU$ and
giving it the evident $G$-action.  In more detail, suppose we have
equivariant bispans
\begin{align*}
 \om_0 &= (X_0\xla{p_0}A_0\xra{q_0}B_0\xra{r_0}X_1) \in \CU(X_0,X_1) \\
 \om_1 &= (X_1\xla{p_1}A_1\xra{q_1}B_1\xra{r_1}X_2) \in \CU(X_1,X_2).
\end{align*}
As before, we define a bispan 
\[ \om_1\circ\om_0 = (X_0\xla{p}A\xra{q}B\xra{r}X_2) \in \CU(X_0,X_2)
\] 
as follows:
\begin{align*}
  A &= \{(a_0,a_1,s) \st 
         s\:q_1^{-1}\{q_1(a_1)\}\to B_0,\;r_0s=p_1,\;
         a_0\in q_0^{-1}\{s(a_1)\}\} \\
  B &= \{(b_1,s) \st 
          s\:q_1^{-1}\{b_1\}\to B_0,\;\;r_0s=p_1\} \\
  p(a_0,a_1,s) &= p_0(a_0) \\
  q(a_0,a_1,s) &= (q_1(a_1),s) \\
  r(b_1,s)     &= r_1(b_1).
\end{align*}
Note that in the definition of $A$, the map $s$ is not required to
have any kind of equivariance.  We let $G$ act on $A$ by the rule
\[ g.(a_0,a_1,s) = (ga_0,ga_1,gsg^{-1}), \]
where $gs$ denotes the composite 
\[ q_1^{-1}\{q_1(ga_1)\} \xra{g^{-1}} 
    q_1^{-1}\{q_1(a_1)\} \xra{s} B_0 \xra{g} B_0. 
\]
We also let $G$ act on $B$ by a similar rule, and it is easy to check
that the maps $p$, $q$ and $r$ respect these actions, so
$\om_1\circ\om_0$ is an equivariant bispan.

If $f\:X\to Y$ is a map of finite $G$-sets, then the bispans $T_f$,
$N_f$ and $R_f$ all have natural $G$-actions, so they can be regarded
as $1$-cells in $\CU_G$.  As the isomorphisms in
Proposition~\ref{prop-bispan-rels} are natural, they are automatically
equivariant.  For ease of reference elsewhere, we record this
formally:
\begin{proposition}\label{prop-G-bispan-rels}
 \begin{itemize}
  \item[(a)] Any $G$-equivariant bispan
   $\om=(X\xla{p}A\xra{q}B\xra{r}Y)$ is 
   naturally isomorphic to $T_r\circ N_q\circ R_p$.
  \item[(b)] For all finite $G$-sets $X$, $Y$ and $Z$, and all
   $G$-maps $X\xla{f}Y\xra{g}Z$, there are natural equivariant isomorphisms
   $T_{gf}\simeq T_g\circ T_f$ and $N_{gf}\simeq N_g\circ N_f$ and
   $R_{gf}\simeq R_f\circ R_g$.  Moreover, $T_1$, $N_1$ and $R_1$ are
   all equal to the identity bispan
   $\eta_X=(X\xla{1}X\xra{1}X\xra{1}X)$. 
  \item[(c)] There is also a natural equivariant isomorphism from
   $N_g\circ T_f$ to the distributor $\Dl(f,g)$.
  \item[(d)] For any cartesian square
   \[ \xymatrix{
    W \ar[r]^f \ar[d]_g & X \ar[d]^h \\
    Y \ar[r]_k & Z
   } \]
   (of finite $G$-sets and equivariant maps) there are natural
   equivariant isomorphisms $T_g\circ R_f\simeq R_k\circ T_h$
   and $N_g\circ R_f=R_k\circ N_h$.
  \item[(e)] For any equivariant bijection $f\:X\to Y$ there are natural
   equivariant isomorphisms $T_f\simeq N_f\simeq R_{f^{-1}}$.
 \end{itemize}
\end{proposition}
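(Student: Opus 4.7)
The strategy is to reduce every assertion to the corresponding statement in Proposition~\ref{prop-bispan-rels}. Each isomorphism established there is given by an explicit, natural formula on the underlying finite sets of the bispans involved, and the $G$-action on a composite bispan in $\CU_G$ is defined by transporting the $G$-action on the constituent $G$-sets together with the conjugation rule $s\mapsto gsg^{-1}$ on section-valued coordinates. The plan is therefore to unwind each non-equivariant bijection from the proof of Proposition~\ref{prop-bispan-rels} and verify directly that it commutes with this $G$-action on elements.

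Cases (b), (d) and (e) admit essentially immediate reductions. For $T_{gf}\simeq T_g\circ T_f$ and $R_{gf}\simeq R_f\circ R_g$ the underlying bijection is the identity on sets, so equivariance is automatic. For $T_g\circ R_f\simeq R_k\circ T_h$ one uses that pullbacks of finite $G$-sets carry the diagonal action, and that the induced map on fibres of a $G$-map is equivariant. For an equivariant bijection $f$, the isomorphisms $T_f\simeq N_f\simeq R_{f^{-1}}$ are realised by $f$ itself, hence equivariant by hypothesis. For (a), the identification $\om\simeq T_r\circ N_q\circ R_p$ sends $b\in B$ to the pair $(b,\iota)$ where $\iota$ is the canonical inclusion $q^{-1}\{b\}\hookrightarrow A$; since conjugating $\iota$ by an element of $G$ returns the inclusion $q^{-1}\{gb\}\hookrightarrow A$, the identification is $G$-equivariant.

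The substantive step is part (c), the identification $N_g\circ T_f\simeq\Dl(f,g)$. Revisiting the proof of Proposition~\ref{prop-NT}, the composite's underlying $B$-set is exactly the distributor's $B=\{(z,s) : z\in Z,\,s\colon g^{-1}\{z\}\to X,\,fs=1\}$, and similarly on $A$-sets. What must be checked is that the conjugation action on the section coordinate inherited from the composite matches the natural $G$-action on $\Dl(f,g)$, under which a group element sends $(z,s)$ to the translated fibre with a suitably conjugated section. This is a direct computation using equivariance of $f$ and $g$: if $fs=1$ holds on $g^{-1}\{z\}$, then the conjugate section satisfies the analogous identity on $g^{-1}\{\gamma z\}$ for any $\gamma\in G$. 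The remaining clauses in (b) for $N$ and in (d) for $N\circ R$ are handled in the same manner, with no additional difficulty beyond the conjugation check that appears in (c); these are where I would expect the bookkeeping to be most tedious, but not conceptually obstructive.
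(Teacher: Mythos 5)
Your argument is correct, but it unwinds explicitly what the paper asserts in a single sentence. Immediately before the proposition the paper observes that ``the isomorphisms in Proposition~\ref{prop-bispan-rels} are natural, [so] they are automatically equivariant,'' and then gives no further proof. The underlying point is that a $G$-equivariant bispan is exactly a non-equivariant one together with a compatible family of isomorphisms indexed by $g\in G$; naturality of each isomorphism in Proposition~\ref{prop-bispan-rels} --- with respect to \emph{all} isomorphisms of the input diagram of sets, in particular the one given by each $g$ --- then forces the constructed isomorphism to commute with the $G$-action. Your case-by-case computations, especially the check in part~(c) that the conjugation action on the section coordinate of the composite $N_g\circ T_f$ matches the natural action on $\Dl(f,g)$, unpack exactly that naturality square for the concrete formulas. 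So the two proofs are the same idea executed at different levels of abstraction: the paper's version is shorter and more conceptual but leans on the reader accepting the ``natural constructions are equivariant'' metaprinciple without being shown the square, while yours is self-contained and verifiable clause by clause, at the cost of repeating essentially the same conjugation check in parts (a) through (e).
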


Now let $\bCU_G$ be the category whose objects are finite $G$-sets,
and whose morphisms from $X$ to $Y$ are isomorphism classes of
$1$-cells in $\CU_G(X,Y)$.  One checks that $X\amalg Y$ is a
categorical product of $X$ and $Y$ in $\bCU_G$.

The key definition, taken from~\cite{ta:mt}, is as follows.
\begin{definition}\label{defn-tambara-functor}
 A \emph{Tambara functor} for $G$ is a product-preserving functor from
 $\bCU_G$ to the category of sets.  We write $\Tambara_G$ for the
 category of Tambara functors.
\end{definition}

\begin{example}\label{eg-MapGXA-tambara}
 Let $R$ be a semiring with an action of $G$, and put
 $cR(X)=\Map_G(X,R)$ as before.  For any bispan
 $\om=(X\xla{p}A\xra{q}B\xra{r}Y)$ we define $f_\om\:cR(X)\to cR(Y)$ by 
 \[ f_\om(s)(y) = \sum_{r(b)=y}\prod_{q(a)=b}s(p(a)). \]
 This makes $cR$ into a Tambara functor.  Using the same unit and
 counit maps as in Proposition~\ref{prop-om-c}, we see that
 $c\:\Semirings_G\to\Tambara_G$ is right adjoint to the functor
 $\om\:\Tambara_G\to\Semirings_G$ given by $\om(S)=S(G)$.
\end{example}
\begin{example}\label{eg-burnside-tambara}
 Let $A(X)$ denote the set of isomorphism classes of finite $G$-sets
 over $X$, as in Example~\ref{eg-HU}.  We claim that this is the same
 as $\bCU_G(\emptyset,X)$.  Indeed, in any bispan
 $(\emptyset\xla{i}P\xra{j}Q\xra{k}X)$ the set $P$ admits a map to
 $\emptyset$, which is only possible if $P=\emptyset$, in which case
 there is only one possibility for $j$, so everything is determined by
 $(Q\xra{k}X)$.  The claim is clear from this.  This means that $A$
 can be regarded as a functor, represented by $\emptyset$.  One can
 check that the $T$, $N$ and $R$ operations can be described as
 follows.
 \begin{itemize}
  \item[(a)] For $f\:X\to Y$ and $[Q\xra{k}X]\in A(X)$ we have
   \[ T_f[Q\xra{k}X] = [Q\xra{fk}Y]. \]
   Next, if we put 
   \[ S=\{(y,s)\st y\in Y,\;s\:f^{-1}\{y\}\to Q,\;ks=1\} \]
   and define $m\:S\to Y$ by $m(y,s)=y$, then
   \[ N_f[Q\xra{k}X] = [S\xra{m}Y]. \]
  \item[(b)] For $e\:W\to X$ we have
   \[ R_e[Q\xra{k}X]=[e^*Q\xra{\ov{k}}W], \]
   where $e^*Q=\{(w,q)\in W\tm Q\st e(w)=k(q)\}$ and $\ov{k}$ is the
   obvious projection.
 \end{itemize}
 In slightly different notation, if we write $Q_x$ for the fibre
 $k^{-1}\{x\}$ and so on, we get
 \begin{align*}
  T_f[Q\xra{k}X]_y &= \coprod_{f(x)=y} Q_x \\
  N_f[Q\xra{k}X]_y &= \prod_{f(x)=y} Q_x \\
  R_e[Q\xra{k}X]_w &= Q_{e(w)}.
 \end{align*}
 This is clearly analogous to Definition~\ref{defn-TNR}.  We also
 observe that $R_e$ comes from an evident functor
 \[ e^* \: \{ \text{ finite $G$-sets over $X$} \} \to 
             \{ \text{ finite $G$-sets over $W$} \},
 \]
 and that $T_e$ and $N_e$ come from the left and right adjoints to
 this functor.
\end{example}
\begin{example}\label{eg-representation-tambara}
 Let $\CR(X)$ denote the category of $G$-equivariant complex
 vector bundles over $X$, and let $R(X)$ denote the set of
 isomorphism classes in $\CR(X)$.  (As $X$ is just a finite
 $G$-set, the study of these vector bundles involves no topology or
 analysis, just combinatorics and representation theory.)  For any
 $G$-map $f\:X\to Y$ we have functors
 $T_f,N_f\:\CR(X)\to\CR(Y)$ and $R_f\:\CR(Y)\to\CR(X)$
 given by
 \begin{align*}
  T_f(V)_y &= \bigoplus_{f(x)=y} V_x \\
  N_f(V)_y &= \bigotimes_{f(x)=y} V_x \\
  R_f(W)_x &= W_{f(x)}.
 \end{align*}
 One can check that this gives a Tambara functor.

 We now want to make some remarks about comparison between different
 groups, so we will write $\CR_G(X)$ rather than $\CR(X)$.  
 It is useful to note that $\CR_G(G/H)$ is equivalent to
 the category $\CR_H$ of representations of $H$.  Indeed, if $V$ is
 an equivariant vector bundle over $G/H$ then the fibre $V_H$ at the
 identity coset has an action of $H$.  On the other hand, if $W$ is a
 representation of $H$ then the set $G\tm_HW$ is the total space of an
 equivariant vector bundle over $G/H$.  These constructions are easily
 seen to be inverse to each other.  We deduce that $R_G(G/H)$ can be
 identified with the representation semiring $R_H$.

 If $K\leq H\leq G$ then we have an obvious map $f\:G/K\to G/H$, which
 gives maps
 \begin{align*}
  T_f &\: R_K \to R_H \\ 
  N_f &\: R_K \to R_H \\ 
  R_f &\: R_H \to R_K.
 \end{align*}
 These are usually called \emph{induction}, \emph{tensor induction}
 and \emph{restriction}, respectively.
\end{example}
\begin{example}\label{eg-completion}
 In Section~\ref{sec-completion} we will show
 (following~\cite{ta:mt}*{Section 6}) that we can adjoin
 additive inverses to any semiring-valued Tambara functor, giving a
 ring-valued Tambara functor.  We call this process \emph{additive
  completion}. 
\end{example}
\begin{example}\label{eg-pi-zero-tambara}
 Let $E$ be a $G$-equivariant spectrum in the sense of stable homotopy
 theory.  We remarked previously that the assignment
 $\pi^G_0(E)(X)=[\Sgi_G(X_+),E]^G$ gives a Mackey functor.  We will
 show in Section~\ref{sec-spectrum} that when $E$ has a strictly
 commutative product structure, then $\pi^G_0(E)$ is actually a
 Tambara functor.  (This was also proved in~\cite{br:wve}, but we will
 explain an alternative approach that avoids many homotopical
 technicalities.)  In this context we always have additive inverses,
 so $\pi^G_0(E)(X)$ is a ring rather than just a semiring.  For the
 sphere spectrum it works out that $\pi^G_0(S)$ is the additive
 completion of the Burnside semiring Tambara functor as in
 Example~\ref{eg-burnside-tambara}.  Similarly, for the complex
 $K$-theory spectrum $KU$ it can be shown that $\pi^G_0(KU)$ is the
 additive completion of the representation semiring Tambara functor as
 in Example~\ref{eg-representation-tambara}.  Moreover, for any
 commutative ring $A$ with an action of $G$ one can define an
 equivariant Eilenberg-MacLane spectrum $A$ with a strictly
 commutative product such that $\pi^G_0(HA)(X)=\Map_G(X,A)$ as in
 Example~\ref{eg-MapGXA-tambara}.  However, we will prove any of these
 identifications in the present memoir.
\end{example}

We now prove a small lemma which will be useful later, and which is a
good exercise in handling the definitions.
\begin{lemma}\label{lem-norm-zero}
 Consider a map $g\:X\to Y$ of finite $G$-sets, and note that this
 gives an equivariant splitting $Y=g(X)\amalg g(X)^c$ and thus
 $S(Y)=S(g(X))\tm S(g(X)^c)$ for any Tambara functor $S$.  With
 respect to this decomposition we have $N_g(0)=(0,1)$.
\end{lemma}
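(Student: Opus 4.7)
The plan is to express the additive identity $0\in S(X)$ as $T_z(\ast)$, where $z\colon\emptyset\to X$ is the unique map and $\ast$ is the unique element of the one-point set $S(\emptyset)$, and then to rewrite $N_g\circ T_z$ using the distributor formula. By Proposition~\ref{prop-G-bispan-rels}(c), there is a natural equivariant isomorphism $N_g\circ T_z\simeq T_r\circ N_q\circ R_p$, where $(\emptyset\xla{p}A\xra{q}B\xra{r}Y)=\Dl(z,g)$ is the distributor of Definition~\ref{defn-distributor}.

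First I would compute $\Dl(z,g)$ explicitly. Unwinding the definition, an element of $B$ is a pair $(y,s)$ where $s\colon g^{-1}\{y\}\to\emptyset$ satisfies $z\circ s=1$; such an $s$ exists, and is then uniquely the empty map, precisely when $g^{-1}\{y\}=\emptyset$, that is, when $y\in g(X)^c$. This gives a canonical equivariant identification $B\cong g(X)^c$, under which $r\colon B\to Y$ becomes the inclusion $j\colon g(X)^c\hookrightarrow Y$. Since $A=X\tm_Y B$ and every $x\in X$ has $g(x)\in g(X)$, which is disjoint from $g(X)^c$, we get $A=\emptyset$, and $p,q$ are the unique empty maps.

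Next I would trace $\ast$ through $T_r\circ N_q\circ R_p$. The map $R_p$ is the identity on $S(\emptyset)$. The map $N_q\colon S(\emptyset)\to S(g(X)^c)$ coincides with the structural map producing the multiplicative unit of the semiring $S(g(X)^c)$, because $q\colon\emptyset\to B$ agrees with the inclusion $\emptyset\hookrightarrow g(X)^c$ that defines $1$; equivalently, each fibre $q^{-1}\{b\}$ is empty and contributes an empty product $1$. Thus $N_g(0)=T_j(1)\in S(Y)$. To identify this under the product decomposition $S(Y)\cong S(g(X))\tm S(g(X)^c)$ afforded by the projections $R_i$ and $R_j$ (where $i\colon g(X)\hookrightarrow Y$), I apply the Mackey rule of Proposition~\ref{prop-G-bispan-rels}(d). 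The pullback of $g(X)\xra{i}Y\xla{j}g(X)^c$ is $\emptyset$, so $R_iT_j$ factors through $S(\emptyset)$ and lands on $0\in S(g(X))$. The pullback of $j$ along itself is $g(X)^c$ with identity legs (since $j$ is mono), so $R_jT_j$ is the identity. Therefore $T_j(1)=(0,1)$, giving $N_g(0)=(0,1)$ as claimed.

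The only substantive step is the computation of the distributor $\Dl(z,g)$; once we recognise $B\cong g(X)^c$ and $A=\emptyset$, the remainder is immediate from the formal properties of $T,N,R$ collected in Proposition~\ref{prop-G-bispan-rels} together with the semiadditive structure of $\bCU_G$.
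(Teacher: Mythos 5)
Your proposal is correct and follows the same route as the paper: write $0=T_z(\ast)$, compute the distributor $\Dl(z,g)$ explicitly to get $A=\emptyset$ and $B\cong g(X)^c$ with $r$ the inclusion, and then observe that $N_g(0)=T_r(1)$ is the unit in the $g(X)^c$-summand. The only difference is cosmetic: where the paper simply asserts that $T_r$ is the inclusion of the summand $S(g(X)^c)$ into $S(Y)$, you spell out the identification $T_j(1)=(0,1)$ by applying the Mackey rule to the two pullbacks of $i$ and $j$ against $j$; both are fine.
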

\begin{proof}
 Let $f$ be the map $\emptyset\to X$.  Recall that $S(\emptyset)$
 consists of a single element which we can call $a$, and by definition
 the element $0\in S(X)$ is $T_f(a)$, so $N_g(0)=N_gT_f(a)$.  The
 distributor $\Dl(f,g)$ is easily identified as the diagram
 $(\emptyset\xla{1}\emptyset\xra{q}f(X)^c\xra{r}Y)$, where $r$ is just
 the inclusion.  This gives $N_g(0)=T_rN_qR_1(a)=T_rN_q(a)=T_r(1)$,
 and $T_r$ is just the inclusion of the summand $S(f(X)^c)$ in $S(Y)$
 so the claim follows.
\end{proof}

\begin{proposition}\label{prop-UG-pres}
 The category $\bCU_G$ is generated by the morphisms $T_f$, $N_f$
 and $R_f$, subject only to the following relations:
 \begin{itemize}
  \item[(a)] For any maps $X\xla{f}Y\xra{g}Z$ we have $T_{gf}=T_gT_f$
   and $N_{gf}=N_gN_f$ and $R_{gf}=R_fR_g$.  Moreover, $T_1$, $N_1$ and
   $R_1$ are all equal to the identity.
  \item[(b)] If $\Dl(f,g)=(X\xla{p}A\xra{q}B\xra{r}Z)$ then
   $N_gT_f=T_rN_qR_p$. 
  \item[(c)] For any cartesian square
   \[ \xymatrix{
    W \ar[r]^f \ar[d]_g & X \ar[d]^h \\
    Y \ar[r]_k & Z
   } \]
   we have $T_gR_f=R_kT_h$ and $N_gR_f=R_kN_h$.
 \end{itemize}
\end{proposition}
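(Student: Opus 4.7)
The plan is to introduce the category $\mathcal{C}$ with the same objects as $\bCU_G$, freely generated as a category by symbols $T_f$, $N_f$, $R_f$ (one for each equivariant map $f$ between finite $G$-sets) subject to the relations~(a), (b), (c). The three relations hold in $\bCU_G$ by parts~(b), (c), and~(d) of Proposition~\ref{prop-G-bispan-rels}, so there is a well-defined functor $\Phi \colon \mathcal{C} \to \bCU_G$ sending each generator to the corresponding equivariant bispan (as in Definition~\ref{defn-TNR-bispans}), and the goal is to show that $\Phi$ is a bijection on each hom-set. Surjectivity is immediate from Proposition~\ref{prop-G-bispan-rels}(a), which identifies an arbitrary bispan $(X\xla{p}A\xra{q}B\xra{r}Y)$ with the composite $T_r \circ N_q \circ R_p$.

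For faithfulness I will first prove that every morphism in $\mathcal{C}(X,Y)$ reduces to a canonical form $T_r \circ N_q \circ R_p$ using only the given relations. Given a word $w$ in the generators, let $c(w)$ be the number of ordered pairs of positions $(i,j)$ with $i<j$, $w_i$ an $N$-symbol, and $w_j$ a $T$-symbol (with the convention that the rightmost symbol of a word is applied first). A direct count shows that replacing any adjacent subword $N_g T_f$ by $T_r N_q R_p$ via~(b) strictly decreases $c$, while reading~(c) right-to-left as $R_k T_h \mapsto T_g R_f$ (respectively $R_k N_h \mapsto N_g R_f$)---always applicable by taking $W$ to be the pullback $X \tm_Z Y$---moves an $R$-symbol one step rightward in the word without affecting $c$. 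Iterating~(b) drives $c$ to zero, at which point all $T$-symbols precede all $N$-symbols in $w$; iterating~(c) then pushes every $R$-symbol into the rightmost block; and relation~(a) finally collapses each maximal block of consecutive like generators into a single generator, producing the canonical form.

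Next I will show that if two bispans $\om = (X\xla{p}A\xra{q}B\xra{r}Y)$ and $\om' = (X\xla{p'}A'\xra{q'}B'\xra{r'}Y)$ are isomorphic via bijections $\al \colon A \to A'$ and $\bt \colon B \to B'$, then already $T_r N_q R_p = T_{r'} N_{q'} R_{p'}$ in $\mathcal{C}$. Relation~(c) applied to the two cartesian squares
\[ \xymatrix{ X \ar[r]^{1} \ar[d]_{1} & X \ar[d]^{\al} \\ X \ar[r]_{\al} & Y } \qquad \text{and} \qquad \xymatrix{ X \ar[r]^{\al} \ar[d]_{\al} & Y \ar[d]^{1} \\ Y \ar[r]_{1} & Y } \]
(both cartesian precisely because $\al$ is a bijection) yields $R_\al T_\al = \mathrm{id}_X$, $T_\al R_\al = \mathrm{id}_Y$, and similarly for $N_\al$, so that $T_\al = N_\al = R_\al^{-1}$ in $\mathcal{C}$ for every bijection $\al$. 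The square formed by $q,q',\al,\bt$ is also cartesian, so~(c) gives $N_{q'} = T_\bt N_q R_\al$; combining this with~(a) applied to the identities $r = r'\bt$ and $p = p'\al$ rewrites $T_{r'} N_{q'} R_{p'}$ as $T_r N_q R_p$ inside $\mathcal{C}$. Together with the canonical form this shows that two morphisms in $\mathcal{C}$ whose images in $\bCU_G$ are equal---that is, which correspond to isomorphic bispans---must themselves be equal, proving faithfulness.

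The main obstacle is establishing termination of the rewriting procedure; this is handled by the strict decrease of $c$ under each application of relation~(b). Confluence of the rewriting system needs no separate argument, because the preceding paragraph shows directly that the canonical form in $\mathcal{C}$ is determined up to equality by the isomorphism class of the underlying bispan.
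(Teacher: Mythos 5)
Your proposal is correct and follows essentially the same two-stage strategy as the paper: construct the free category $\mathcal{C}$ on the generators and relations, show every morphism in $\mathcal{C}$ reduces to the canonical form $T_r N_q R_p$, and then show that isomorphic bispans yield equal canonical forms via relation~(c) applied to the cartesian squares built from the isomorphism data. Your second half is virtually identical to the paper's, including the observation that $T_\al = N_\al = R_\al^{-1}$ for bijections $\al$.

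The one place you diverge is the canonical-form step. The paper simply shows that the class of $TNR$-morphisms is closed under composition by exhibiting one short schematic reduction of $(TNR)(TNR)$ to $TNR$; since every generator is trivially of $TNR$ form, this immediately disposes of all words. Your rewriting argument with the inversion count $c(w)$ gets to the same place, and the count is indeed strictly decreased by each application of~(b) and preserved by~(c) — I checked the bookkeeping and it is right. One small expositional caveat: your phrase ``iterating~(b) drives $c$ to zero'' suggests a first phase using only~(b), but when $c>0$ there need not be an adjacent $N T$ pair, so applications of~(c) must be interleaved with~(b) to shepherd the intervening $R$-symbols out of the way; the termination then needs the lexicographic measure $(c(w),r(w))$ where $r(w)$ counts non-$R$ symbols to the right of $R$-symbols, rather than $c(w)$ alone. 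Since~(c) strictly decreases $r$ and preserves $c$, this works, but it is worth making explicit. The paper's closure-under-composition argument sidesteps the whole termination question, which is cleaner; your approach buys nothing extra here but is not wrong. Your closing remark that confluence is unnecessary because uniqueness of the canonical form follows from the isomorphism-of-bispans argument is a nice observation that both proofs share implicitly.
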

\begin{proof}
 Let $\CU'_G$ be the category with the indicated generators and
 relations.  In more detail, the objects are the finite $G$-sets, and
 the morphisms are the composable strings of $T$'s, $N$'s and $R$'s,
 modulo the smallest equivalence relation necessary to ensure that the
 given relations are satisfied and composition remains associative and
 unital.  Proposition~\ref{prop-G-bispan-rels} tells us that the
 stated relations do in fact hold in $\bCU_G$, so we have a functor
 $\pi\:\CU'_G\to\bCU_G$ that is the identity on objects.  Every
 morphism in $\bCU_G(X,Y)$ can be written as $T_rN_qR_p$ for some $p$,
 $q$ and $r$, so $\pi$ is full.  The real issue is to prove that $\pi$
 is faithful.

 We first claim that every morphism in $\CU'_G$ can be expressed as
 $T_rN_qR_p$ for some $p$, $q$ and $r$.  It will clearly suffice to
 show that the class of morphisms of this form is closed under
 composition.  The argument can be presented schematically as follows:
 \begin{align*}
  (TNR)(TNR) &= TN(RT)NR \overset{(c)}{=} TN(TR)NR \\
             &= TNT(RN)R \overset{(c)}{=} TNT(NR)R \\
             &= T(NT)NRR \overset{(b)}{=} T(TNR)NRR \\
             &= TTN(RN)RR \overset{(c)}{=} TTN(NR)RR \\
             &= (TT)(NN)(RRR) \overset{(a)}{=} TNR.
 \end{align*}
 On the first line, we have a subword of the form $R_kT_h$, and
 relation~(c) allows us to rewrite this in the form $T_gR_f$ for some
 other maps $f$ and $g$.  The other lines can be interpreted in a
 similar way.

 Now suppose we have two morphisms $\xi,\xi'\in\CU'_G(X,Y)$ with
 $\pi(\xi)=\pi(\xi')$.  By the previous paragraph, we can choose
 bispans
 \begin{align*}
  \om  &= (X\xla{p}A\xra{q}B\xra{r}Y) \\
  \om' &= (X\xla{p'}A'\xra{q'}B'\xra{r'}Y)
 \end{align*}
 such that $\xi=T_rN_qR_p$ and $\xi'=T_{r'}N_{q'}R_{p'}$.  Now
 $\pi(\xi)=[\om]$ and $\pi(\xi')=[\om']$ so the assumption
 $\pi(\xi)=\pi(\xi')$ means that $\om$ and $\om'$ are isomorphic.
 Thus, there are equivariant bijections $\al\:A\to A'$ and
 $\bt\:B\to B'$ such that the diagram
 \[ \xymatrix{
      X \ar@{=}[d] & 
      A \ar[l] \ar[r] \ar[d]_{\al} & 
      B \ar[r] \ar[d]^{\bt} & 
      Y \ar@{=}[d] \\
      X & 
      A' \ar[l] \ar[r] &
      B' \ar[r] &
      Y
    }
 \]
 commutes.  We can now apply~(c) to the cartesian squares 
 \[ \xymatrix{
  A \ar[r]^\al \ar[d]_\al & 
  A' \ar[d]^1 &
  B \ar[r]^\bt \ar[d]_\bt &
  B' \ar[d]^1 \\
  A' \ar[r]_1 &
  A' &
  B' \ar[r]_1 &
  B'
 } \]
 to see that $T_\al=N_\al=R_\al^{-1}$ and $T_\bt=N_\bt=R_\bt^{-1}$.  
 It follows easily from this that
 $T_rN_qR_p=T_{r'}N_{q'}R_{p'}$ in $\CU'_G$, so $\xi=\xi'$ as
 required. 
\end{proof}

\begin{proposition}\label{prop-G-bispan-shift}
 For any finite set $G$-set $U$, there is a functor
 $P_U\:\bCU_G\to\bCU_G$ given on objects by $P_U(X)=U\tm X$, and on
 morphisms by
 \[ P_U[X\xla{p}A\xra{q}B\xra{r}Y] = 
     [U\tm X\xla{1\tm p}U\tm A\xra{1\tm q}
      U\tm B\xra{1\tm r}U\tm Y].
 \]
\end{proposition}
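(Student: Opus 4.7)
The plan is to mimic the proof of Proposition~\ref{prop-bispan-shift}, since everything goes through equivariantly once we give $U\tm X$, $U\tm A$, etc.\ the diagonal $G$-action. First I would note that if $\om=(X\xla{p}A\xra{q}B\xra{r}Y)$ is an equivariant bispan then so is $P_U(\om)$, because each of the maps $1\tm p$, $1\tm q$, $1\tm r$ is equivariant with respect to the diagonal actions. Similarly, an equivariant isomorphism $(\al,\bt)$ of bispans yields an equivariant isomorphism $(1\tm\al,1\tm\bt)$, so the construction descends to isomorphism classes. This gives a well-defined map $\bCU_G(X,Y)\to\bCU_G(U\tm X,U\tm Y)$, and preservation of identities is immediate.

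The only substantive point is compatibility with composition. One approach is to invoke Proposition~\ref{prop-UG-pres}: every morphism of $\bCU_G$ can be written in terms of the generators $T_f$, $N_f$, $R_f$, and $P_U$ sends these to $T_{1\tm f}$, $N_{1\tm f}$, $R_{1\tm f}$ respectively. Relation~(a) of that proposition is trivial from $1\tm(gf)=(1\tm g)(1\tm f)$, and relation~(c) follows because the functor $U\tm(-)$ preserves pullbacks of finite $G$-sets and therefore sends cartesian squares to cartesian squares. The one thing that needs verification is relation~(b): we must show that $P_U$ of the distributor identity $N_gT_f=T_rN_qR_p$ still holds, i.e.\ that $\Dl(1\tm f,1\tm g)$ is naturally isomorphic to $P_U(\Dl(f,g))$.

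The main obstacle is therefore this check on distributors, but it is forced by the definition. Unwinding Definition~\ref{defn-distributor} for $\Dl(1\tm f,1\tm g)$ with $f\:X\to Y$ and $g\:Y\to Z$, an element of the middle set $B$ is a pair $((u,z),s)$ with $s\:(1\tm g)^{-1}\{(u,z)\}\to U\tm X$ satisfying $(1\tm f)s=1$. Since $(1\tm g)^{-1}\{(u,z)\}=\{u\}\tm g^{-1}\{z\}$ and the constraint $(1\tm f)s=1$ forces the first coordinate of $s$ to be identically $u$, the data of $s$ is equivalent to a map $s'\:g^{-1}\{z\}\to X$ with $fs'=1$. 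This gives a canonical equivariant bijection between the $B$-set of $\Dl(1\tm f,1\tm g)$ and $U\tm$ (the $B$-set of $\Dl(f,g)$), and the same argument handles the $A$-set. Compatibility with $p,q,r$ is routine, so relation~(b) holds. Alternatively, one may bypass the presentation entirely and verify directly from Definition~\ref{defn-bispan-circ} that $P_U(\om_1\circ\om_0)\simeq P_U(\om_1)\circ P_U(\om_0)$ by the same fibre analysis: the sets indexing $A$ and $B$ in the composite involve maps $s\:q_1^{-1}\{b_1\}\to B_0$, and after applying $P_U$ these become maps on $\{u\}\tm q_1^{-1}\{b_1\}$ into $U\tm B_0$ whose first coordinate is forced to be $u$. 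Finally, $P_U$ preserves products because $U\tm(X\amalg Y)=(U\tm X)\amalg(U\tm Y)$ and products in $\bCU_G$ are disjoint unions.
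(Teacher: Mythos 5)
Your proof is correct. Your primary route---reducing to the presentation of $\bCU_G$ by the generators $T_f$, $N_f$, $R_f$ and the three families of relations in Proposition~\ref{prop-UG-pres}---differs from the paper, which (via Proposition~\ref{prop-bispan-shift}) verifies compatibility with composition directly from Definition~\ref{defn-bispan-circ}, saying tersely that $U$ is ``just carried through'' the constructions; your alternative at the end is essentially that same direct argument. The presentation route has the virtue of isolating the one nontrivial check, namely that $\Dl(1\tm f,1\tm g)\simeq P_U(\Dl(f,g))$, and your fibre analysis of this is right: the constraint $(1\tm f)s=1$ forces the $U$-coordinate of any section $s$ over $\{u\}\tm g^{-1}\{z\}$ to be constantly $u$, so the data of $s$ collapses to a section of $f$ over $g^{-1}\{z\}$, giving the natural bijection $B_{\Dl(1\tm f,1\tm g)}\simeq U\tm B_{\Dl(f,g)}$ (and likewise for the $A$-sets). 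One point you compress: the argument really proceeds by first using the presentation to produce a functor out of $\bCU_G$ determined by its effect on $T$, $N$, $R$, and then observing that since every morphism is $T_rN_qR_p$, this functor is given on isomorphism classes of bispans by the stated formula. This last step is routine but worth stating explicitly, since otherwise it is not quite clear why checking relations on generators establishes functoriality of the particular map on morphism sets you introduced at the start. Neither route is faster than the other here; the direct check is short because the $U$-coordinate is inert in Definition~\ref{defn-bispan-circ}, and the presentation check is short because all three relation families are preserved essentially by inspection once one notes that $U\tm(-)$ preserves pullbacks.
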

\begin{proof}
 Let $G$ act in the obvious way on all ingredients in
 Proposition~\ref{prop-bispan-shift}. 
\end{proof}

\begin{proposition}\label{prop-tambara-semiring}
 Let $S$ be a Tambara functor.  Then each set $S(X)$ has a canonical
 structure as a semiring.  Moreover, for every map $f\:X\to Y$:
 \begin{itemize}
  \item[(a)] The map $R_f\:S(Y)\to S(X)$ is a semiring homomorphism
   (which we use to regard $S(X)$ as an $S(Y)$-module).
  \item[(b)] The map $T_f\:S(X)\to S(Y)$ is a homomorphism of
   $S(Y)$-modules (and in particular, respects addition).
  \item[(c)] The map $N_f\:S(X)\to S(Y)$ sends $1$ to $1$ and respects
   multiplication. 
 \end{itemize}
\end{proposition}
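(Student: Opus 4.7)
The plan is to bootstrap from Proposition~\ref{prop-semiring-theory}, which identifies semirings with product-preserving functors $\bCU\to\Sets$.  For each finite $G$-set $X$ I will produce a product-preserving functor $F_X\:\bCU\to\bCU_G$ with $F_X(1)=X$; the composite $S\circ F_X$ will then automatically equip $S(X)$ with a semiring structure, and the naturality claims (a)--(c) will reduce, via the generators-and-relations description of Proposition~\ref{prop-UG-pres}, to a short list of identities between bispans.

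For the semiring structure on $S(X)$, let $\iota\:\bCU\to\bCU_G$ be the embedding that equips each finite set and each bispan with trivial $G$-action; this is plainly product-preserving.  Composing with the shift functor $P_X\:\bCU_G\to\bCU_G$ of Proposition~\ref{prop-G-bispan-shift} gives a product-preserving functor $F_X=P_X\circ\iota\:\bCU\to\bCU_G$ with $F_X(n)=X^{\amalg n}$.  The composite $S\circ F_X\:\bCU\to\Sets$ is then product-preserving, hence a semiring by Proposition~\ref{prop-semiring-theory}; evaluated at $1\in\bCU$ this gives the semiring structure on $S(X)$.  Unwinding the proof of that proposition, the operations are $0=T_{z_X}()$, $1=N_{z_X}()$, $a+b=T_{s_X}(a,b)$ and $ab=N_{s_X}(a,b)$, where $z_X\:\emptyset\to X$ and $s_X\:X\amalg X\to X$ is the fold map, under the identification $S(X\amalg X)=S(X)\tm S(X)$.

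For parts~(a)--(c), fix $f\:X\to Y$.  The pullback of $s_Y\:Y\amalg Y\to Y$ along $f$ is $X\amalg X$ with projections $s_X$ and $f\amalg f$, and the pullback of $z_Y\:\emptyset\to Y$ along $f$ is $\emptyset$ with projection $z_X$.  Relation~(c) of Proposition~\ref{prop-UG-pres} then supplies $R_fT_{s_Y}=T_{s_X}R_{f\amalg f}$ and $R_fN_{s_Y}=N_{s_X}R_{f\amalg f}$, together with the $z$-analogues, proving part~(a).  Combining relation~(a) with the identities $f\circ s_X=s_Y\circ(f\amalg f)$ and $f\circ z_X=z_Y$ immediately yields the additivity of $T_f$, the preservation of $0$ by $T_f$, the multiplicativity of $N_f$, and $N_f(1)=1$.

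The one substantive remaining point is the $S(Y)$-linearity of $T_f$, namely $T_f(R_f(a)\cdot m)=a\cdot T_f(m)$ for $a\in S(Y)$ and $m\in S(X)$.  Re-expressed using $T$, $N$, $R$, this amounts to the identity $N_{s_Y}\circ T_{1_Y\amalg f}=T_f\circ N_{s_X}\circ R_{f\amalg 1_X}$ of operations $S(Y\amalg X)\to S(Y)$, which will come from relation~(b).  Concretely, I compute the distributor $\Dl(1_Y\amalg f,s_Y)$: for each $y\in Y$ a section of $1_Y\amalg f$ over $s_Y^{-1}\{y\}$ consists of a forced choice on the first copy of $y$ plus a free choice in $f^{-1}\{y\}$ on the second, so the distributor is canonically isomorphic to the bispan $(Y\amalg X\xla{f\amalg 1_X}X\amalg X\xra{s_X}X\xra{f}Y)$, and relation~(b) then gives the desired identity.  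This distributor calculation is the main obstacle; everything else is formal functoriality of $T$, $N$, $R$ combined with pullback compatibility.
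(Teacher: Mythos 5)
Your proposal is correct and follows essentially the same route as the paper: the semiring structure on $S(X)$ is obtained by composing the trivial-action embedding $\bCU\to\bCU_G$ with the shift $P_X$ and then $S$, parts (a)--(c) reduce to pullback compatibilities and functoriality of $T,N,R$, and the $S(Y)$-linearity of $T_f$ comes down to identifying a distributor with a fold-map bispan. Your distributor computation for $\Dl(1_Y\amalg f,s_Y)$ matches the paper's for $\Dl(f\amalg 1,s_Y)$ up to swapping the two summands, and it is carried out correctly.
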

\begin{proof}
 We can identify $\bCU$ with the (non-full) subcategory of $\bCU_G$
 where $G$ acts trivially on everything, and this subcategory is
 closed under products.  We thus have a product-preserving functor
 \[ \bCU \xra{\text{inc}} \bCU_G \xra{P_X} \bCU_G \xra{S} \text{Sets}.
 \]
 As in Proposition~\ref{prop-semiring-theory}, the image of $1$ under
 this functor is a semiring; but that image is just $S(X)$.  By
 unwinding the definitions a little, we see that addition and
 multiplication are given by $a+b=T_s(a,b)$, and $ab=N_s(a,b)$, where
 $s\:X\amalg X\to X$ is given by the identity on both copies of $X$.
 Similarly, the identity elements are $0=T_z()$ and $1=N_z()$, where
 $z\:\emptyset\to X$ is the inclusion.  

 Now suppose we have a map $f\:X\to Y$.  This gives a commutative
 diagram 
 \[ \xymatrix{
  \emptyset \ar[r]^z \ar[d]_1 & 
  X \ar[d]^f &
  X \amalg X \ar[l]_s \ar[d]^{f\amalg f} \\
  \emptyset \ar[r]_z & 
  Y &
  Y\amalg Y \ar[l]^s
 } \]
 in which both squares are cartesian.  By applying $T$ to the diagram
 we see that $T_f$ preserves addition (and zero).  By applying $N$ to
 the diagram we see that $N_f$ preserves multiplication (and one).
 Using the cartesian property together with part~(d) of
 Proposition~\ref{prop-G-bispan-rels} we see that $R_f$ is a semiring
 homomorphism.  All that is left is to check that $T_f$ is a morphism
 of $S(Y)$-modules, or more explicitly that $T_f(u\,R_f(v))=T_f(u)v$
 for all $u\in S(X)$ and $v\in S(Y)$.  Consider the maps
 \[ \xymatrix{
  X\amalg Y \ar[r]^{f\amalg 1} &
  Y\amalg Y \ar[r]^s & 
  Y \\
  X\amalg X \ar[u]_{1\amalg f} \ar[rr]_s &&
  X \ar[u]_f.
 } \] 
 The operation $(u,v)\mapsto T_f(u)v$ is $N_sT_{f\amalg 1}$, whereas
 the operation $(u,v)\mapsto T_f(u\,R_f(v))$ is
 $T_fN_sR_{1\amalg f}$.  It will thus suffice to check that the bispan 
 \[ \om'=(X\amalg Y \xla{1\amalg f} X\amalg X \xra{s} X \xra{f} Y) \]
 is isomorphic to the distributor
 \[ \Dl(f\amalg 1,s)=(X\amalg Y \xla{p} A \xra{q} B \xra{r} Y) \]
 constructed in Definition~\ref{defn-distributor}.
 To analyse this, we write $y_L$ for the copy of $y$ in the left
 factor of $Y\amalg Y$, and $y_R$ for the copy in the right factor, so
 $s^{-1}\{y\}=\{y_L,y_R\}$.  This means that $B$ is the set of pairs
 $(y,t)$, where $y\in Y$ and $t\:\{y_L,y_R\}\to X\amalg Y$ with 
 $(f\amalg 1)\circ t=1$.  This means that $t(y_R)=y_R$ and
 $t(y_L)=x_L$ for some $x\in X$ with $f(x)=y$.  We see that everything
 is determined by $x$, so $B$ can be identified with $X$.  Next, $A$
 is the set of triples $(w,y,t)$, where $(y,t)$ is as before and
 $w\in\{y_L,y_R\}$.  The triples with $w=y_L$ form one copy of $X$,
 and the triples with $w=y_R$ form another copy of $X$, so we can
 identify $A$ with $X\amalg X$.  This means that $\om$ is isomorphic
 to some bispan of the form 
 \[ (X\amalg Y \xla{} X\amalg X \xra{} X \xra{} Y); \]
 we leave it to the reader to check that the maps are $1\amalg f$, $s$
 and $f$.
\end{proof}

\section{The group of order two}
\label{sec-two}

We now analyse how the theory works out for a group $G=\{1,\chi\}$ of
order two, building on the corresponding result for Mackey functors in
Theorem~\ref{thm-MP}.

\begin{definition}
 A \emph{Tambara pair} consists of rings $A$ and $B$, together
 with a right action of $G$ on $A$ by semiring maps, a semiring map
 $\res\:B\to A^G$, and functions $\trc,\nrm\:A\to B$ satisfying 
 \begin{align*}
  \trc(0)          &= 0  & 
  \nrm(1)          &= 1 \\
  \trc(a_0+a_1)    &= \trc(a_0)+\trc(a_1) &
  \nrm(a_0a_1)     &= \nrm(a_0)\nrm(a_1) \\
  \trc(\ov{a})     &= \trc(a) &
  \nrm(\ov{a})     &= \nrm(a) \\
  \res(\trc(a))    &= a+\ov{a} &
  \res(\nrm(a))    &= a\,\ov{a} \\
  \nrm(0)          &= 0 &
  \nrm(a_0+a_1)    &= \nrm(a_0)+\nrm(a_1)+\trc(a_0\ov{a_1}) \\
  \trc(a\res(b))   &= \trc(a)b.
 \end{align*}
 We write $\TP$ for the category of Tambara pairs.
\end{definition}

\begin{remark}\label{rem-TP-MP}
 Note that a Tambara pair can be regarded as a Mackey pair using
 addition and the trace map, and it can also be regarded as a Mackey
 pair in a different way using multiplication and the norm map.  These
 structures encode all but the last three axioms for a Tambara pair.
\end{remark}

\begin{construction}
 Given a $G$-Tambara functor $S$, put $A=S(G/1)=S(G)$ and
 $B=S(G/G)=S(1)$ (so both of these are semirings).  Next, as $G$ is
 commutative we see that $\chi\:G\to G$ is a $G$-map, with
 $\chi=\chi^{-1}$, so $T_\chi=N_\chi=R_\chi\:A\to A$.  We use this map
 to define an action of $G$ on $A$ by semiring maps.

 The projection $\ep\:G\to 1$ gives maps
 \begin{align*}
  \res=\ep^* & \: B \to A \\
  \trc=T_\ep & \: A \to B \\
  \nrm=N_\ep & \: A \to B.
 \end{align*} 
\end{construction}

We will prove that this gives an equivalence between Tambara functors
and Tambara pairs.  The first thing to check is that we at least have
a functor.

\begin{proposition}
 The above construction gives a faithful functor $F\:\Tambara_G\to\TP$.
\end{proposition}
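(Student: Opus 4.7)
My plan has three tasks: verify that $F(S) = (A, B, \res, \trc, \nrm)$ satisfies every axiom of a Tambara pair; check functoriality; and prove faithfulness.

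For the Tambara pair axioms, I would organise them as follows. The purely additive axioms involving only $\trc$ and $\res$ ($\trc$ preserves $0$ and $+$, $\trc(\ov{a}) = \trc(a)$, $\ov{\res(b)} = \res(b)$, $\res\trc(a) = a + \ov{a}$) were already verified in the proof of Proposition~\ref{prop-MP-functor}, since the underlying additive Mackey functor supplies a Mackey pair via Theorem~\ref{thm-MP}. The purely multiplicative axioms ($\nrm(1) = 1$, $\nrm(a_0 a_1) = \nrm(a_0)\nrm(a_1)$, $\nrm(\ov{a}) = \nrm(a)$, $\res\nrm(a) = a\,\ov{a}$) follow by the same arguments with $N$ replacing $T$: the identity $\ep\chi = \ep$ together with Proposition~\ref{prop-G-bispan-rels} gives $\nrm(\ov{a}) = \nrm(a)$, and applying $N$ to the cartesian square
\[
\xymatrix{
G\amalg G \ar[d]_s \ar[r]^{1\amalg\chi} & G\amalg G \ar[r]^s & G \ar[d]^\ep \\
G \ar[rr]_\ep && 1
}
\]
yields $\res\nrm(a) = a\,\ov{a}$; multiplicativity and $\nrm(1) = 1$ are cases of Proposition~\ref{prop-tambara-semiring}(c). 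The Frobenius reciprocity axiom $\trc(a\res(b)) = \trc(a)b$ is the statement that $T_\ep$ is a morphism of $S(1)$-modules, which is Proposition~\ref{prop-tambara-semiring}(b) for $f = \ep$. The identity $\nrm(0) = 0$ is Lemma~\ref{lem-norm-zero} applied to the surjective map $\ep$: the complementary factor $S(\ep(G)^c) = S(\emptyset)$ is a singleton, so the displayed pair $(0,1)$ is simply $0$.

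The only substantial computation is the norm-of-sum axiom $\nrm(a_0 + a_1) = \nrm(a_0) + \nrm(a_1) + \trc(a_0\,\ov{a_1})$. The plan is to compute the equivariant distributor $\Dl(s,\ep)$, where $s\colon G\amalg G \to G$ is the fold map, and then to evaluate $N_\ep T_s = T_r N_q R_p$ on $(a_0,a_1) \in A\times A = S(G\amalg G)$. The set $B$ of sections of $s$ over $\ep^{-1}\{*\} = G$ has exactly four elements $\sigma_{LL}, \sigma_{RR}, \sigma_{LR}, \sigma_{RL}$, labelled by where each of the two points of $G$ lands in $G\amalg G$. A direct check shows that the $G$-action fixes $\sigma_{LL}$ and $\sigma_{RR}$ and swaps $\sigma_{LR}$ with $\sigma_{RL}$, so $B \cong 1\amalg 1\amalg G$ as a $G$-set. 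Over each fixed orbit of $B$ the restriction of the bispan reproduces an instance of $N_\ep$, yielding $\nrm(a_0)$ and $\nrm(a_1)$. Over the free orbit $\{\sigma_{LR},\sigma_{RL}\}$ the middle map $q$ is the fold $G\amalg G \to G$, so $N_q$ is multiplication in $A$; tracking basepoints through $p$ shows the two inputs are $a_0$ and $\ov{a_1}$, the $\chi$-shift in the second factor arising because matching orbit basepoints of $A$ and $B$ forces a left-multiplication by $\chi$ on one side. The final $T_r$ on this free orbit is $\trc$, and summing the three contributions gives the desired identity.

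Functoriality of $F$ is automatic: a morphism $\phi\colon S\to S'$ in $\Tambara_G$ has components $\phi_G\colon A\to A'$ and $\phi_1\colon B\to B'$ that intertwine the $T$, $N$ and $R$ operations, and so preserve the $G$-action, $\res$, $\trc$ and $\nrm$. For faithfulness I would copy the argument of Proposition~\ref{prop-MP-functor}: any finite $G$-set $X$ decomposes as $n$ copies of $G$ and $m$ copies of $1$, and the product-preserving property of $S$ gives $S(X) \simeq A^n\times B^m$ naturally, so $\phi_G = \psi_G$ and $\phi_1 = \psi_1$ force $\phi_X = \psi_X$ for every $X$. The hardest step is the distributor computation, specifically the appearance of $\ov{a_1}$ rather than $a_1$ in the cross term, which demands careful bookkeeping of basepoints when identifying the free $G$-orbits of $A$ and $B$ with copies of $G$.
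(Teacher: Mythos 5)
Your proposal is correct and follows the paper's proof essentially step for step: reduce the purely additive and purely multiplicative Tambara-pair axioms to the Mackey-pair case (via Theorem~\ref{thm-MP} applied to the two underlying Mackey pairs), dispose of $\nrm(0)=0$ and $\trc(a\res(b))=\trc(a)b$ via Lemma~\ref{lem-norm-zero} and Frobenius reciprocity, compute the distributor $\Dl(s,\ep)$ and decompose its four sections into two fixed points and one free orbit to obtain $\nrm(a_0)+\nrm(a_1)+\trc(a_0\ov{a_1})$, and prove faithfulness exactly as in Proposition~\ref{prop-MP-functor}. Your $\sigma_{LL},\sigma_{RR},\sigma_{LR},\sigma_{RL}$ are the paper's $t_0,t_1,t_2,t_3$, and the basepoint bookkeeping you flag as the subtle step is precisely what the paper spells out via the composite $mq_2p_2^{-1}=(1,\chi)$.
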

\begin{proof}
 We first need to check that the construction gives a Tambara pair.
 In view of Remark~\ref{rem-TP-MP} and Theorem~\ref{thm-MP}, we need
 only consider the last three axioms.  The identities $\nrm(0)=0$ and
 $\trc(a\res(b))=\trc(a)b$ follows immediately from
 Lemma~\ref{lem-norm-zero} and Proposition~\ref{prop-G-bispan-rels}.
 Finally, we need to understand $\nrm(a_0+a_1)=N_\ep T_s(a_0,a_1)$.
 By Proposition~\ref{prop-NT}, the composite $N_\ep T_s$ can be
 written as $T_rN_qR_p$ for certain maps
 \[ G\amalg G \xla{p} E \xra{q} F \xra{r} 1. \]
 As the target set is just a single point, the definitions can be
 simplified as follows:
 \begin{align*}
  F &= \{t\:G\to G\amalg G\st st=1\} \\
  E &= G\tm F \\
  p(g,t) &= t(g) \\
  q(t,g) &= t \\
  r(t) &= 0.
 \end{align*}
 We can name the elements of $G\amalg G$ in an obvious way as
 $\{1_L,\chi_L,1_R,\chi_R\}$.  The elements of $F$ are
 $\{t_0,t_1,t_2,t_3\}$, where 
 \begin{align*}
  t_0(1)    &= 1_L &
  t_1(1)    &= 1_R &
  t_2(1)    &= 1_L &
  t_3(1) &= 1_R \\
  t_0(\chi) &= \chi_L & 
  t_1(\chi) &= \chi_R & 
  t_2(\chi) &= \chi_R & 
  t_3(\chi) &= \chi_L. 
 \end{align*}
 Note that $t_0$ and $t_1$ are equivariant, or in other words, fixed
 under the standard action of $G$ by conjugation.  On the other hand,
 we have $\ov{t_2}=t_3$ and $\ov{t_3}=t_2$.  We now put
 $E_0=\{t_0\}\simeq 1$ and $E_1=\{t_1\}\simeq 1$ and
 $E_2=\{t_2,t_3\}\simeq G$ and $F_i=G\tm E_i$, so we have equivariant
 decompositions $E=E_0\amalg E_1\amalg E_2$ and
 $F=F_0\amalg F_1\amalg F_2$.  It follows that $N_\ep T_s$ can be
 written as a sum of three terms, one for each bispan 
 \[ \om_i=(G\amalg G\xla{p_i}E_i\xra{q_i}F_i\xra{r_i}1). \]
 Now $E_0$ can be identified with $G$, and $p_0$ with the inclusion of
 the left factor in $G\amalg G$, and $q_0$ with $\ep$.  It follows
 that $\om_0$ gives the operation $(a_0,a_1)\mapsto\nrm(a_0)$.
 Similarly, $\om_1$ gives the operation $(a_0,a_1)\mapsto\nrm(a_1)$.
 For the last term, we define an equivariant bijection $m\:E_2\to G$
 by $m(t_2)=1$ and $m(t_3)=\chi$.  One can check that
 $p_2\:E_2\to G\amalg G$ is also an equivariant bijection, and that
 the composite $mq_2p_2^{-1}\:G\amalg G\to G$ is $(1,\chi)$, so the
 associated operation $N_{mq_2p_2^{-1}}\:S(G)\tm S(G)\to S(G)$ is
 $(a_0,a_1)\mapsto a_0\ov{a_1}$.  As $m$ and $p_2$ are bijective we
 have $N_m=T_m$ and $N_{p_2^{-1}}=R_{p_2}$, and it is also clear that
 $\ep m=r_2\:E_2\to 1$.  Putting this together we see that
 $T_{r_2}N_{q_2}R_{p_2}(a_0,a_1)=\trc(a_0\ov{a_1})$ as required.  

 It now follows that the pair $F(S)=(A,B)$ (with structure maps as
 above) is a Tambara pair.  Now suppose we have another Tambara
 functor $S'$, with $F(S')=(A',B')$ say, and a morphism
 $\phi\:S\to S'$ of Tambara functors.  This has components
 $\phi_G\:A=S(G)\to S'(G)=A'$ and $\phi_1\:B=S(1)\to S'(1)=B'$, and it
 is tautological that these commute with the all the Tambara pair
 structure maps.  There is thus an evident way to make $F$ into a
 functor $\Tambara_G\to\TP$.  This is faithful by the same argument as
 for Mackey functors.
\end{proof}

\begin{construction}\label{cons-EP-tambara}
 Let $P=(A,B)$ be a Tambara pair.  For any finite $G$-set $X$, we
 put 
 \[ EP(X) =
     \{(u,v)\in\Map_G(X,A)\tm\Map(X^G,B)\st 
         u(x)=\res(v(x)) \text{ for all } x\in X^G\},
 \]
 as before.   Now suppose we have a $G$-equivariant map $f\:X\to Y$.
 Note that $f^{-1}(Y^G)$ will contain $X^G$, and possibly some free
 orbits as well.  If we choose a point in each such free orbit, we get
 a decomposition $f^{-1}(Y^G)=X^G\amalg X_1\amalg\ov{X_1}$ say.
 \begin{itemize}
  \item[(a)] We define $R_f\:EP(Y)\to EP(X)$ by
   $R_f(m,n)=(m\circ f,n\circ f^G)$ (where $f^G\:X^G\to Y^G$ is just
   the restriction of $f$).
  \item[(b)] We define $T_f\:EP(X)\to EP(Y)$ by $T_f(u,v)=(m,n)$,
   where 
   \begin{align*}
    m(y) &= \sum_{x\in f^{-1}\{y\}} u(x) \\
    n(y) &= \sum_{x_0\in X^G\cap f^{-1}\{y\}} v(x_0) + 
             \sum_{x_1\in X_1\cap f^{-1}\{y\}} \trc(u(x_1)).
   \end{align*}
  \item[(c)] We define $N_f\:EP(X)\to EP(Y)$ by $N_f(u,v)=(p,q)$,
   where 
   \begin{align*}
    p(y) &= \prod_{x\in f^{-1}\{y\}} u(x) \\
    q(y) &= \left(\prod_{x_0\in X^G\cap f^{-1}\{y\}} v(x_0)\right) 
            \left(\sum_{x_1\in X_1\cap f^{-1}\{y\}} \nrm(u(x_1))\right).
   \end{align*}
 \end{itemize}
 By reducing to the Mackey functor case, we see that these operators
 are well-defined.
\end{construction}

\begin{proposition}\label{prop-EP-tambara}
 The above construction makes $EP$ into a Tambara functor.
\end{proposition}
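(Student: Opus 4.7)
The plan is to apply Proposition~\ref{prop-UG-pres}, which reduces the claim to verifying three families of identities among the operators $R_f$, $T_f$ and $N_f$ defined on $EP$: the functoriality relations in~(a), the base change relations in~(c), and the distributor identity in~(b). The strategy is to handle (a) and (c) by reducing to the Mackey functor case twice over, and then to attack the distributor identity directly.

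As noted in Remark~\ref{rem-TP-MP}, the Tambara pair $P=(A,B)$ determines two Mackey pairs: the \emph{additive} one built from $(+,\trc,\res)$, and the \emph{multiplicative} one built from $(\cdot,\nrm,\res)$; the five Mackey pair axioms are exactly the first five lines of the Tambara pair axioms, doubled. A direct inspection of Construction~\ref{cons-EP-tambara} shows that the formulas for $R_f$ and $T_f$ on $EP$ are the operators produced by applying Construction~\ref{cons-EP-transfer} to the additive Mackey pair, while the formulas for $R_f$ and $N_f$ are obtained from the same construction with $(+,\trc)$ replaced by $(\cdot,\nrm)$ throughout. Thus Proposition~\ref{prop-EP-mackey}, applied twice, at once yields $T_{gf}=T_gT_f$, $N_{gf}=N_gN_f$, $R_{gf}=R_fR_g$, the preservation of identities, and the two base change identities $T_gR_f=R_kT_h$ and $N_gR_f=R_kN_h$. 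This disposes of conditions (a) and (c).

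The main obstacle is the distributor relation (b). Given maps $W\xra{f}X\xra{g}Y$ with $\Dl(f,g)=(W\xla{p}C\xra{q}D\xra{r}Y)$, I would prove $N_gT_f=T_rN_qR_p\:EP(W)\to EP(Y)$ by working fibre-by-fibre over $Y$. It suffices to check the identity of the two output components of a pair $(u,v)\in EP(W)$ at a single point $y\in Y$, and to split into the cases $y\notin Y^G$ and $y\in Y^G$; the case of a free orbit reduces, after using the equivariance, to ordinary distributivity in the semiring $A$ applied to $\Map(g^{-1}\{y\},A)$. The real work is the fixed case $y\in Y^G$: one decomposes $g^{-1}\{y\}=X^G_y\amalg X_{1,y}\amalg\ov{X_{1,y}}$, and similarly decomposes each preimage $f^{-1}\{x\}$ for $x$ in these subsets. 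Expanding $N_gT_f(u,v)(y)$ in its $B$-component one obtains a product of sums which, when fully distributed, produces four kinds of terms: pure fixed-fixed contributions (handled by semiring distributivity in $B$), fixed-over-free contributions (handled by $\trc(a\,\res(b))=\trc(a)\,b$), free-over-fixed contributions in which a $\nrm$ is taken of a sum (handled by $\nrm(a_0+a_1)=\nrm(a_0)+\nrm(a_1)+\trc(a_0\ov{a_1})$), and degenerate contributions from empty fibres (handled by $\nrm(0)=0$). The matching expansion of $T_rN_qR_p(u,v)(y)$ is obtained by tracing through the combinatorial description of $\Dl(f,g)$ given in Definition~\ref{defn-distributor}, interpreted equivariantly. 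The main difficulty will be pure bookkeeping: organising the partitions of $C$ and $D$ into $G$-fixed and free parts so that the two expansions can be matched termwise. Once this is done, the Tambara-specific axioms named above supply exactly the identities needed to collapse the four classes of cross terms on one side into the corresponding expressions on the other, completing the verification.
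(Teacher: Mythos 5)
Your reduction of conditions~(a) and~(c) to two applications of Proposition~\ref{prop-EP-mackey}, one for the additive Mackey pair $(A,B,+,\trc,\res)$ and one for the multiplicative Mackey pair $(A,B,\cdot,\nrm,\res)$, is exactly the paper's first step. You also correctly identify the distributor relation $N_gT_f=T_rN_qR_p$ as the only remaining obligation. The divergence is in how you propose to prove that relation. You would fix $y\in Y^G$, expand the $B$-component of $N_gT_f(u,v)(y)$ as a product of sums, fully distribute, and match terms against what the equivariant distributor $\Dl(f,g)$ produces, sorting terms into a small number of classes each cured by one Tambara-pair axiom. The paper instead reduces to the case where the target is a single point and then runs an induction on the number of orbits of the middle $G$-set, splitting each inductive step into the two subcases where a fixed orbit or a free orbit is adjoined; each step then introduces only one new source of cross terms, which keeps the matching finite.

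Your direct-expansion strategy is viable in principle, but the proposal stops at ``the main difficulty will be pure bookkeeping,'' and that bookkeeping is where the proof actually lives. Two specific lacunae: first, $\nrm(a_0+a_1)=\nrm(a_0)+\nrm(a_1)+\trc(a_0\ov{a_1})$ handles only a two-term sum, whereas a free orbit of $X$ whose $f$-fibre has more than two points forces you to invoke the iterated version $\nrm(\sum_ia_i)=\sum_i\nrm(a_i)+\sum_{i<j}\trc(a_i\ov{a_j})$, which must itself be established; and the matching ultimately also uses the identities $\res(\trc(a))=a+\ov{a}$ and $\res(\nrm(a))=a\,\ov{a}$ to convert the $A$-level products one obtains from non-equivariant sections back into $B$-level expressions, neither of which appears in your catalogue of ``four kinds of terms.'' Second, the set of sections appearing in $\Dl(f,g)$ carries a $G$-action whose decomposition into fixed points and free orbits is what indexes the cross terms; showing that each such orbit corresponds to one term of your expansion is precisely the content the paper's orbit-at-a-time induction is designed to control. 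So the route could work, but what you have written is a plan, not a verification, at the step where the result is genuinely nontrivial.
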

\begin{proof}
 By applying Proposition~\ref{prop-EP-mackey} to the underlying
 additive and multiplicative Mackey pairs, we see that all composites
 of the operators $T$, $N$ and $R$ work correctly except possibly for
 composites of the form $NT$.  Suppose we have maps
 $X\xra{f}Y\xra{g}Z$ of finite $G$-sets, and we define 
 \[ \Dl(f,g) = (X \xla{p} M \xra{q} N \xra{r} Z) \]
 by the usual distribution rule:
 \begin{align*}
  N &= \{(z,s)\st z\in Z,\; s\:g^{-1}\{z\}\to X,\; fs=1\} \\
  M &= \{(y,s)\st y\in Y,\; s\:g^{-1}\{g(y)\}\to X,\; fs=1\} \\
  p(y,s) &= s(y) \\
  q(y,s) &= (g(y),s) \\
  r(z,s) &= z.
 \end{align*}
 We say that the chain $(X\xra{f}Y\xra{g}Z)$ is \emph{distributable}
 if $N_gT_f=T_rN_qR_p\:EP(X)\to EP(Z)$.  We must show that this holds
 for all $f$ and $g$.

 Consider a pair $(u,v)\in EP(X)$, so $N_gT_f(u,v)=(d,e)$ and
 $T_rN_qR_p(u,v)=(d^*,e^*)$ say.  Here $d\:Z\to A$ is given by 
 \[ d(z) = \prod_{y\in g^{-1}\{z\}}\sum_{x\in f^{-1}\{y\}} u(x), \]
 and it is clear from the nonequivariant theory that this is the same
 as $d^*(z)$.  

 Now suppose that $Z$ is a free $G$-set, so $Z^G=\emptyset$, so the
 set $\Map(Z^G,B)$ in which $e$ and $e^*$ live has only one element,
 so $e=e^*$.  It follows that in this case we have $N_gT_f=T_rN_qR_p$.

 For a more general finite $G$-set $Z$, we note that everything
 happens independently over the different orbits in $Z$, so 
 we can reduce to the cases $Z=G$ and $Z=1$.  If $Z=G$ then it is
 free, and this case has already been covered.  We may thus assume
 that $Z=1$, so the definitions simplify to 
 \begin{align*}
  N &= \{s\:Y\to X,\; fs=1\} \\
  M &= Y\tm N \\
  p(y,s) &= s(y) \\
  q(y,s) &= s \\
  r(s) &= 0.
 \end{align*}
 We now argue by induction on the number of orbits in $Y$.  If $Y$ is
 empty then the claim is easy.  For the induction step, we may assume
 that $X\xra{f}Y\xra{g}1$ is distributable, and we must show that the
 same holds for slightly larger chains.  The first case to consider is
 a chain $X_1\xra{f_1}Y_1\xra{g_1}Z$, where $Y_1=Y\amalg 1$, and
 $X_1=X\amalg X'$ for some $X'$, and $f_1=f\amalg f'$ for the unique
 map $f'\:X'\to 1$.  In this case the distributor is 
 \[ X\amalg X' \xla{p_1} ((Y\tm N\tm X')\amalg (N\tm X'))
     \xra{q_1}  (N\tm X') \xra{r_1} 1, 
 \]
 where 
 \begin{align*}
  p_1(y,s,x') &= s(y) \in X \sse X\amalg X' \\
  p_1(s,x')   &= x' \in X' \sse X\amalg X' \\
  q_1(y,s,x') &= (s,x') \\
  q_1(s,x')   &= (s,x').
 \end{align*}
 Consider an element $(u_1,v_1)\in EP(X_1)=EP(X)\tm EP(X')$,
 with components $(u,v)\in EP(X)$ and $(u',v')\in EP(X')$.  By our
 inductive hypothesis we have 
 \[ N_gT_f(u,v)=T_rN_qR_p(u,v) = e \in EP(1)\simeq B \]
 say.  Now split $X'$ as $X'_0\amalg(G\tm X'_1)$, with $G$ acting
 trivially on $X'_0$ and $X'_1$.  From the definitions, we have 
 \[ N_{g_1}T_{f_1}(u_1,v_1) = 
     \left(\sum_{x'\in X'_0}v'(x') + \sum_{x'\in X'_1}\trc(u'(x'))
      \right) e.
 \]
 We need to compare this with the element
 $e_1^*=T_{r_1}N_{q_1}R_{p_1}(u_1,v_1)$.  To do this, we split $N$
 equivariantly as $N_0\amalg(G\tm N_1)$ with $G$ acting trivially on
 $N_0$ and $N_1$.  This gives a splitting
 \[ N\tm X' =
      N_0\tm X'_0 \amalg
      G\tm((N_1\tm X'_0) \amalg
           (N_0\tm X'_1) \amalg
           (G\tm N_1\tm X'_1)).
 \]
 Each orbit in $N\tm X'$ contributes a term to $e^*$.  These terms can
 be described as follows.
 \begin{itemize}
  \item[(a)] Consider an element $(s,x')\in N_0\tm X'_0$.  As
   $s\in N_0=N^G$, we see that it is an equivariant map $Y\to X$, so
   it gives an operator $R_s\:EP(X)\to EP(Y)$.  The element
   \[ N_gR_s(u,v)\in EP(1)\sse A\tm B \]
   will have the form $(\prod_yu(s(y)),w_s)$ for some $w_s\in B$ with
   $\res(w_s)=\prod_yu(s(y))$.  By unwinding the definitions, we see
   that the term in $e^*$ corresponding to $(s,x')$ is $w_s\,v'(x')$.
  \item[(b)] Each element $(s,x')\in N_1\tm X'_0$ contributes a
   term $\trc\left(\left(\prod_{y\in Y}u(s(y))\right)u'(x')\right)$ to
   $e^*$.  As $x'\in X'_0$ we have $u'(x')=\res(v'(x'))$, and using
   the rule $\trc(a\,\res(b))=\trc(a)b$ we can rewrite this term as 
   $\trc\left(\prod_{y\in Y}u(s(y))\right)v'(x')$.
  \item[(c)] Each element $(s,x')\in N_0\tm X'_1$ contributes a term 
   $\trc\left(\left(\prod_{y\in Y}u(s(y))\right)u'(x')\right)$ to
   $e^*$.  As in~(a) we have $\prod_yu(s(y))=\res(w_s)$, so this term
   can be rewritten as $w_s\trc(u'(x'))$.
  \item[(d)] Each element $(s,x')\in N_1\tm X'_1$ contributes two
   orbits in $N\tm X'$, and so contributes two terms in $e^*$, namely
   $\trc\left(\left(\prod_{y\in Y}u(s(y))\right)u'(x')\right)$ and 
   $\trc\left(\left(\prod_{y\in Y}u(s(y))\right)\ov{u'(x')}\right)$.
   Here $u'(x')+\ov{u'(x')}=\res(\trc(u'(x')))$ so the sum of these
   two terms is $\trc\left(\prod_{y\in Y}u(s(y))\right)\trc(u'(x'))$.
 \end{itemize}
 We also have
 \[ e = T_rN_qR_p(u,v) =
     \sum_{s\in N_0} w_s +
      \sum_{s\in N_1}\trc\left(\prod_{y\in Y}u(s(y))\right),
 \]
 and it follows that 
 \[ e^* = \left(\sum_{x'\in X'_0}v'(x') + \sum_{x'\in X'_1}\trc(u'(x'))
      \right) e = N_{g_1}T_{f_1}(u_1,v_1) 
 \]
 as required.

 The other case that we need to consider is a chain
 $X_1\xra{f_1}Y_1\xra{g_1}1$, where $Y_1=Y\amalg G$, and 
 $X_1=X\amalg(G\tm X')$ for some $X'$, and $f_1=f\amalg\pi$, where
 $\pi\:G\tm X'\to G$ is the projection.  In this case we have 
 \[ EP(X_1) = EP(X)\tm \Map(X',A) \sse
     \Map_G(X,A)\tm\Map((X')^G,B)\tm\Map(X',A),
 \]
 and 
 \[ N_{g_1}T_{f_1}(u,v,u') =
     \nrm\left(\sum_{x'\in X'}u'(x')\right) e
 \]
 (where $e=N_gT_f(u,v)=T_rN_qR_p(u,v)$ as before).  We need to compare
 this with the element $e^*=T_{r^*}N_{q^*}R_{p^*}(u,v)$ defined using
 the distributor
 \[ \Dl(f^*,g^*) = 
     (X_1 \xla{p^*} Y_1\tm N^* \xra{q^*} N^* \xra{r^*} 1).
 \] 
 Here $N^*=\{s_1\:Y_1\to X_1\st f_1s_1=1\}$.  Given
 $s\in N$ and $x',x''\in X'$ we define 
 \[ s_1\:Y_1=Y\amalg\{1,\chi\} \to X_1=X\amalg (G\tm X') \]
 by 
 \begin{align*}
  s_1(y) &= s(y) \in X & \text{ for } y\in Y \\
  s_1(1) &= (1,x') \in \{1\}\tm X' & \\
  s_1(\chi) &= (\chi,x'') \in \{\chi\}\tm X' &
 \end{align*}
 This construction identifies $N^*$ with $N\tm X'\tm X'$.  The
 resulting $G$-action is 
 \[ \chi(s,x',x'') = (\chi s\chi,x'',x'). \]
 To classify the orbits, we first decompose $N$ as
 $N_0\amalg (G\tm N_1)$ as before, and we choose a total order on $X'$.
 \begin{itemize}
  \item[(a)] For each $s\in N_0$ and $x'\in X'$ we have a fixed point
   $(s,x',x')$.  If we put $w_s=N_gR_s(u,v)\in B$ as before, then the
   corresponding term in $e^*$ is $\nrm(u(x'))w_s$.
  \item[(b)] For each $s\in N_1$ and $x',x''\in X'$ we have a free
   orbit containing $(s,x',x'')$.  The corresponding term in $e^*$ is
   $\trc(u'(x')\ov{u'(x'')}\prod_yu(s(y)))$.
  \item[(c)] For each $s\in N_0$ and $x',x''\in X'$ with $x'<x''$ we
   have a free orbit containing $(s,x',x'')$.  The corresponding term
   in $e^*$ is
   $\trc(u'(x')\ov{u'(x'')}\prod_yu(s(y)))=\trc(u'(x')\ov{u'(x'')})w_s$. 
 \end{itemize}
 Recall that $\nrm(a_0+a_1)=\nrm(a_0)+\nrm(a_1)+\trc(a_0\ov{a_1})$.
 It follows inductively from this that 
 \[ \nrm(\sum_{i=0}^{n-1}a_i)=
     \sum_i\nrm(a_i)+\sum_{i<j}\trc(a_i\ov{a_j})
 \]
 We can use this to combine the terms of types~(a) and~(c); together
 they contribute $\nrm(z)\sum_{s\in N_0}w_s$, where
 $z=\sum_{x'\in X'}u'(x')$.   Similarly, the sum of all terms of
 type~(b) is 
 \[ \trc\left(z\,\ov{z}\,\sum_{s\in N_1}\prod_{y\in Y}u(s(y))\right).
 \]
 We can use the relation $z\,\ov{z}=\res(\nrm(z))$ to rewrite this,
 and then combine with the~(a) and~(c) terms to get 
 \[ e^* = \nrm(z)
     \left(\sum_{s\in N_0}w_s +
           \sum_{s\in N_1}\trc\left(\prod_{y\in
             Y}u(s(y))\right)\right) = 
      \nrm(z)e = N_{g_1}T_{f_1}(u,v,u')
 \]
 as required.
\end{proof}

\begin{theorem}\label{thm-TP}
 The functor $F\:\Tambara_G\to\TP$ is an equivalence, with inverse
 given by $E$.
\end{theorem}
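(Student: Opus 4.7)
The plan follows the template of Theorem~\ref{thm-MP}, the only new ingredient being compatibility with the norm operators. First, I would extend $E$ to a functor $E\:\TP\to\Tambara_G$ by postcomposition: a Tambara pair morphism induces a natural map $EP(X)\to EP'(X)$, and commutation with $R_f$, $T_f$, and $N_f$ is immediate from the formulas of Construction~\ref{cons-EP-tambara}.

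For $FE=1$, Examples~\ref{eg-EP-free} and~\ref{eg-EP-fixed} identify the underlying sets $EP(G)\cong A$ and $EP(1)\cong B$, and Theorem~\ref{thm-MP} shows that the induced Mackey-pair structure agrees with the additive and restriction data of $P$. The remaining multiplicative data (ring multiplications, units, and the norm) is read off from Construction~\ref{cons-EP-tambara} applied to the fold maps and to $\ep\:G\to 1$; in particular $N_\ep$ sends $u\in\Map_G(G,A)\cong A$ with $u(1)=a$ to the pair $(a\ov{a},\nrm(a))\in EP(1)\cong B$, which represents $\nrm(a)$ under the identification. Hence $FEP=P$.

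For $EF\simeq 1$, I would take a Tambara functor $S$ with $FS=(A,B)$ and define $\phi_X\:S(X)\to EFS(X)$ with components $\al(m)(x)=R_{\hat{x}}(m)$ and $\bt(m)(x)=R_{\breve{x}}(m)$, exactly as in the proof of Theorem~\ref{thm-MP}. That theorem establishes $\phi$ as a natural bijection commuting with every $T_f$ and $R_f$; by Proposition~\ref{prop-UG-pres} it remains only to verify $\phi_Y\circ N_f=(EN_f)\circ\phi_X$ for all equivariant $f\:X\to Y$. This is the main computation of the proof.

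To match the first component at $y\in Y$, I would apply the Tambara pullback formula of Proposition~\ref{prop-G-bispan-rels}(d) to the square obtained by pulling back $\hat{y}\:G\to Y$ along $f$, rewriting $R_{\hat{y}}\circ N_f$ as $N_{\tilde{g}}\circ R_{\tilde{f}}$ for suitable $\tilde{g}$ and $\tilde{f}$. Decomposing the pullback into $G$-orbits, using that for free orbits the norm along an equivariant bijection $G\to G$ is a restriction map, and using multiplicativity of norms over disjoint unions (combined with Proposition~\ref{prop-tambara-semiring} to identify multiplication with $N$ on the fold), reduces the composite to $\prod_{x\in f^{-1}(y)} R_{\hat{x}}(m)$, matching the formula for $p(y)$ in Construction~\ref{cons-EP-tambara}. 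For the second component at $y\in Y^G$, a similar pullback against $\breve{y}\:1\to Y$ yields $R_{\breve{y}}\circ N_f=N_{\tilde{g}}\circ R_{\tilde{f}}$ with $\tilde{g}\:f^{-1}(y)\to 1$; decomposing $f^{-1}(y)$ into its fixed points and free orbits and invoking $N_\ep=\nrm$ on each free-orbit factor reproduces the formula for $q(y)$. The main obstacle is these two pullback computations, but both are routine once the Tambara axioms and the identifications $\nrm=N_\ep$ and $\res=R_\ep$ coming from the definition of $FS$ are in hand.
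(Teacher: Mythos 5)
Your proof follows the same route as the paper's, which (after citing Theorem~\ref{thm-MP} for the underlying Mackey isomorphism) explicitly leaves the verification that the comparison map commutes with $N_f$ as an exercise for the reader; you supply that verification correctly via the two pullback computations against $\hat{y}\colon G\to Y$ and $\breve{y}\colon 1\to Y$, decomposing the pullback into orbits and using multiplicativity of norms. One small remark: your reconstruction of $q(y)$ implicitly corrects what appears to be a typo in Construction~\ref{cons-EP-tambara}, where the factor $\sum_{x_1\in X_1\cap f^{-1}\{y\}}\nrm(u(x_1))$ should be a product $\prod_{x_1\in X_1\cap f^{-1}\{y\}}\nrm(u(x_1))$ (as one sees by checking $N_f$ on the fold map $1\amalg 1\to 1$, where the printed sum over the empty set would give $0$ rather than $1$).
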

\begin{proof}
 There is an evident way to define $E$ on morphisms, giving a
 functor $E\:\TP\to\Tambara_G$.  As in the Mackey functor case, there
 is a natural isomorphism $FE(A,B)\simeq(A,B)$ of Tambara pairs.
 Similarly, for any Tambara functor $S$ there is a canonical system of
 bijections $\al_{S,X}\:S(X)\to EFS(X)$ for all finite $G$-sets $X$,
 and these are evidently natural with respect to morphisms $S\to S'$
 of Tambara functors.  We still need to check, however, that for fixed
 $S$ and varying $X$, the maps $\al_{S,X}$ give a morphism of Tambara
 functors.  In other words, for any morphism $f\:X\to Y$ of finite
 $G$-sets, we need to show that the diagrams
 \[ \xymatrix{
  S(X) \ar[d]_\al \ar[r]^{T_f} & 
  S(Y) \ar[d]^\al & & 
  S(X) \ar[d]_\al \ar[r]^{N_f} & 
  S(Y) \ar[d]^\al & & 
  S(X) \ar[d]_\al & 
  S(Y) \ar[d]^\al \ar[l]_{R_f} \\
  EFS(X) \ar[r]_{T_f} &
  EFS(Y) & & 
  EFS(X) \ar[r]_{N_f} &
  EFS(Y) & & 
  EFS(X) &
  EFS(Y) \ar[l]^{R_f}
 } \]
 commute.  This is left as an exercise for the reader.
\end{proof}

\begin{example}\label{eg-TP}
 We will describe a Tambara pair $T$ that does not arise from any of
 the standard constructions of Tambara functors that we have discussed
 previously. (It will be used as an interesting counterexample in
 Section~\ref{sec-modules}.)

 Let $S$ denote the Tambara pair given by $A=B=\Z$ with $\ov{k}=k$ and
 $\trc(k)=2k$ and $\nrm(k)=k^2$ and $\res(k)=k$.  This corresponds to
 the Tambara functor $X\mapsto\Map_G(X,\Z)$ (with $G$ acting trivially
 on $\Z$).

 Next, let $T$ denote the Tambara pair given as follows:
 \begin{align*}
  A &= \Z[\al]/\al^2 = \Z\oplus \Z\al \\
  B &= \Z[\bt,\gm]/(\bt^2,\bt\gm,\gm^2,2\gm) 
     = \Z\oplus \Z\bt \oplus (\Z/2)\gm \\
  \ov{i+j\al} &= i+j\al \\
  \res(i+j\bt+k\gm) &= i+2j\al \\
  \trc(i+j\al) &= 2i+j\bt \\
  \nrm(i+j\al) &= i^2 + ij\bt + j^2\gm.
 \end{align*}
 Note here that $j^2\gm=j\gm$ because $2\gm=0$ and $j^2-j$ is always
 even.  Using this it is straightforward, if somewhat lengthy, to
 check all the axioms for a Tambara pair.  There is an evident
 inclusion $\eta\:S\to T$ and an augmentation $\ep\:T\to S$ given by
 $\ep(i+j\al)=i$ and $\ep(i+j\bt+k\gm)=i$; these are morphisms of
 Tambara pairs.
\end{example}

\section{Tambara functors in stable homotopy theory}
\label{sec-spectrum}

We next outline a construction showing that equivariant $E_\infty$
ring spectra give rise to Tambara functors.  Some examples were
discussed in Example~\ref{eg-pi-zero-tambara}.  Our construction is
simpler than that of Brun~\cite{br:wve} and covers the existing
applications, although it is easy to imagine more advanced
applications where something closer to Brun's approach might be
needed. 

A \emph{Euclidean space} is a finite-dimensional vector space over
$\R$ equipped with an inner product.  An \emph{isometry} between
Euclidean spaces is a linear map that preserves inner products (and so
is injective, but not necessarily surjective).  We write $\CL(V,W)$
for the space of isometries from $V$ to $W$; this gives us a category
$\CL$.  We also have a quotient category $\CL_0$, where the morphism
set $\CL_0(V,W)$ is the set of path-components in $\CL(V,W)$.  Note
that 
\begin{itemize}
 \item[(a)] If $\dim(V)<\dim(W)$ then $\CL(V,W)$ admits a transitive
  action of the connected group $SO(W)$, so $|\CL_0(V,W)|=1$.
 \item[(b)] If $\dim(V)=\dim(W)=n$ then $V\simeq W\simeq\R^n$ so
  $\CL(V,W)\simeq O(n)$ and $|\CL_0(V,W)|=2$.
 \item[(c)] If $\dim(V)>\dim(W)$ then
  $\CL_0(V,W)=\CL(V,W)=\emptyset$. 
\end{itemize}
This implies that the category $\CL_0$ is filtered, and that the
subcategory consisting of the objects $\R^n$ and the standard
inclusions between them is cofinal.

Next, there is a vector bundle over $\CL(V,W)$ whose fibre at a map
$\al\:V\to W$ is the cokernel of $\al$, or equivalently the orthogonal
complement $W\ominus\al(V)$.  We write $\cok$ for this bundle, and
$\Tht(V,W)=\CL(V,W)^{\cok}$ for the associated Thom space.  There are
canonical isomorphisms $\cok(\bt\al)\simeq\cok(\bt)\oplus\cok(\al)$,
and using these we can define composition maps
\[ \Tht(V,W) \Smash \Tht(U,V) \to \Tht(U,W), \]
giving a category $\Tht$ enriched in based spaces.  There are also
evident pairings 
\[ \Tht(V,W) \Smash \Tht(V',W') \to \Tht(V\oplus V',W\oplus W'), \] 
using which we see that the direct sum gives a symmetric monoidal
structure on $\Tht$.

A \emph{spectrum} is a functor $E$ from $\Tht$ to the category
$\Spaces_*$ of compactly generated, weak Hausdorff based spaces with
the property that the maps
\[ E \: \Tht(V,W) \to \Spaces_*(E(V),E(W)) \]
are continuous and preserve basepoints.  (It would be more standard to
call $E$ an \emph{orthogonal prespectrum}, but we will just use the
word spectrum for brevity.)  A \emph{ring spectrum} is a spectrum
equipped with maps $\eta\:S^0\to E(0)$ and 
\[ \mu \: E(V) \Smash E(W) \to E(V\oplus W) \]
that are commutative, associative, unital and compatible with the
action of $\Tht$.  (Again, it would be more standard to say
\emph{strictly commutative orthogonal ring prespectrum}.)

Consider a spectrum $E$.  For any Euclidean space $V$ we have a
one-point compactification $S^V$ (which is homeomorphic to a sphere).
We write $\Pi(V;E)=\Spaces_*(S^V,E(V))$, and we let $\pi(V;E)$
be the set of path-components in $\Pi(V;E)$.  This can be described as
$\pi_d(E(V))$, where $d=\dim(V)$, so it will have a natural structure
as an abelian group provided that $d\geq 2$.

We claim that the construction $V\mapsto\Pi(V;E)$ gives a functor
$\CL\to\Spaces_*$.  To see this, consider an isometry $\al\:V\to W$,
and a based map $f\:S^V\to E(V)$.  Any point $w\in W$ can be written
uniquely as $\al(v)+u$ with $v\in V$ and
$u\in\cok_\al=W\ominus\al(V)$.  This gives a pair
$(\al,u)\in\Tht(V,W)$ and thus a map $(\al,\tht)_*\:X(V)\to X(W)$.  We
define a map $\al_*(f)\:S^W\to E(W)$ by
\[ \al_*(f)(\al(v)+u) = (\al,\tht)_*(f(v)). \]
We leave it the reader to check that this has the right continuity and
functoriality properties.  We now have a map
\[ \CL(V,W) \tm \Pi(V;E) \to \Pi(W;E), \]
and we can pass to path components to get a map
\[ \CL_0(V,W) \tm \pi(V;E) \to \pi(W;E), \]
giving a functor $\CL_0\to\Sets$.  We define
$\pi_0(E)$ to be the colimit of this functor.  By the cofinality
statement that we mentioned before, $\pi_0(E)$ is also the colimit of
the smaller diagram 
\[ \pi_{\R^0}(E(\R^0)) \to 
   \pi_{\R^1}(E(\R^1)) \to 
   \pi_{\R^2}(E(\R^2)) \to 
   \pi_{\R^3}(E(\R^3)) \to \dotsb
\]
However, our original description is better for analysing naturality
questions.  

It is known that our definition of $\pi_0$ is only the first glimpse
of an elaborate homotopy theory of spectra involving spectrification
functors, several different Quillen model structures and so
on~\cite{mama:eos}.  The associated homotopy category is equivalent to
the stable category of spectra as originally introduced by Boardman.
However, none of this is required for our purposes here; we can just
work directly with $\pi_0$.

Now let $G$ be a finite group.  A \emph{$G$-Euclidean space} is a
Euclidean space with isometric action of $G$.  A \emph{$G$-spectrum}
is just a spectrum with an action of $G$.

Let $E$ be a $G$-spectrum, and let $V$ be a $G$-Euclidean space.  Note
that $\Tht(V,V)$ is just $\CL(V,V)$ with an added basepoint.  The
composite
\[ G \to \CL(V,V) \xra{\text{inc}} \Tht(V,V) \xra{E}
    \Spaces_*(E(V),E(V)) 
\]
gives an action of $G$ on $E(V)$.  On the other hand, $G$ is assumed
to act on $E$, so it acts on $E(U)$ for every Euclidean space $U$, and
this gives another action on $E(V)$.  These actions commute, and so
give an action of $G\tm G$ on $E(V)$.  Using the diagonal embedding
$G\to G\tm G$ we get a third action of $G$ on $E(V)$.  It is this
third action that we will use unless otherwise specified.  In other
words, we let $G$ act on $E(V)$ by combining the actions on all
ingredients.  This in turn gives an action of $G$ on the space
$\Pi(V;E)$, and we write $\Pi^G(V;E)$ for the subspace of $G$-fixed
points.  Equivalently, $\Pi^G(V;E)$ is the space of $G$-equivariant
(continuous, based) maps from $S^V$ to $E(V)$.  We then define
$\pi^G(V;E)=\pi_0(\Pi^G(V;E))$.  As before, this gives a functor from
the category $G\CL_0$ to sets, where $G\CL_0(V,W)$ is the set of
path-components in the space $G\CL(V,W)$ of equivariant isometries
from $V$ to $W$.  We write $\pi^G_0(E)$ for the colimit of this
functor.

Next, let $S_1,\dotsc,S_r$ be a list containing precisely one
representative of every isomorphism class of irreducible
$\R[G]$-modules.  We can give $S_i$ a $G$-invariant inner product
(which will be unique up to a multiplicative constant) and thus regard
it as a $G$-euclidean space.  For an arbitrary $G$-Euclidean space $V$
we put $Q_i(V)=\Hom_G(S_i,V)$.  Standard representation theory shows
that $V\simeq\bigoplus_iQ_i(V)\ot S_i$ and that the evident map
\[ \Hom_G(V,W) \to \prod_i\Hom(Q_i(V),Q_i(W)) \]
is an isomorphism.  One can check that the spaces $Q_i(V)$ can be
given inner products in a natural way, and that
\[ G\CL(V,W) \simeq \prod_i\CL(Q_i(V),Q_i(W)). \]
Thus, the category $G\CL$ is equivalent to the product of $r$ copies
of $\CL$, and similarly for $G\CL_0$.  In particular, if
$\dim(Q_i(W))>\dim(Q_i(V))$ for all $i$ (which will certainly hold if
$W$ contains a copy of $V\oplus\R[G]$) then $|G\CL_0(V,W)|=1$.  It
follows that $\pi^G_0(E)$ is also the colimit of the sequence 
\[ \pi^G(\R[G]^0;E) \to 
   \pi^G(\R[G]^1;E) \to 
   \pi^G(\R[G]^2;E) \to 
   \pi^G(\R[G]^3;E) \to \dotsb
\]

Now suppose we have a finite $G$-set $X$.  We write $G\CL[X]$ for the
category of $G$-equivariant Euclidean vector bundles over $X$.  Thus,
for $V\in G\CL[X]$ we have a Euclidean vector space $V_x$ for each
$x\in X$, together with isometric isomorphisms $g\:V_x\to V_{gx}$ that
compose in the obvious way.  We now define
$\Pi(V;E)=\prod_x\Pi(V_x;E)$, and note that this also has a natural
$G$-action.  We write $\Pi^G(V;E)$ for the subspace of fixed points,
and $\pi^G(V;E)$ for the set of path-components in $\Pi^G(V;E)$.  We
then write $\pi^G_0(E)(X)$ for the colimit of the sets $\pi^G(V;E)$
over the category $G\CL[X]_0$.

\begin{theorem}\label{thm-spectrum}
 For any $G$-spectrum $E$, the construction $X\mapsto\pi^G_0(E)(X)$
 gives a Mackey functor in a natural way.  Moreover, if $E$ is a ring
 spectrum (in the strict sense that we mentioned previously) then this
 construction gives a Tambara functor.
\end{theorem}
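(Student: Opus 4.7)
The plan is to apply Proposition~\ref{prop-UG-pres} in the Tambara case (and its Mackey analogue for the first claim), which reduces the construction of $\pi^G_0(E)\:\bCU_G\to\Sets$ to giving three operations $R_f,T_f,N_f$ for every equivariant map $f\:X\to Y$ of finite $G$-sets, and verifying functoriality, Beck--Chevalley along cartesian squares, and the distributor relation $N_gT_f=T_rN_qR_p$ for $\Dl(f,g)=(X\xla{p}A\xra{q}B\xra{r}Z)$. Product preservation is automatic, since an equivariant bundle over $X\amalg Y$ is a pair of bundles over $X$ and $Y$.

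On representatives $(V,\phi)$, with $V$ an equivariant Euclidean bundle over $X$ and $\phi_x\:S^{V_x}\to E(V_x)$ an equivariant family, I set $R_g(V,\phi)=(g^*V,g^*\phi)$; take both $(T_fV)_y=(N_fV)_y=\bigoplus_{x\in f^{-1}\{y\}}V_x$; and define the norm by
\[
  (N_f\phi)_y \: \bigSmash_x S^{V_x}
     \xra{\bigSmash\phi_x} \bigSmash_x E(V_x)
     \xra{\mu} E((N_fV)_y),
\]
using strict commutativity of $\mu$ for $G$-equivariance and independence of ordering of the fibre. For the additive transfer I enlarge $V$ by a trivial bundle $\un U$, choose an equivariant embedding of $X$ over $Y$ into $Y\tm U$, and apply the Pontryagin--Thom collapse; combined with the $\phi_x$ suspended in the complementary directions via the spectrum structure, this produces $(T_f\phi)_y$. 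Cofinality of the indexing $G$-representations (as in the construction of $\pi^G_0(E)(X)$) makes everything independent of the choices of embedding after passage to the colimit.

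The three relations then unwind as follows. Functoriality is immediate from the compatibility of pullbacks, direct sums, smash products, and Pontryagin--Thom collapses with composition. Beck--Chevalley reduces to the bijection $g^{-1}\{y\}\to h^{-1}\{k(y)\}$ induced by a cartesian square, which matches summands, tensor factors, and PT data pointwise on both sides of $T_gR_f=R_kT_h$ and $N_gR_f=R_kN_h$. The distributor is the heart of the matter: for each $z\in Z$, the left side $N_gT_f$ expands, via $\mu$ and the PT collapse inside each $(T_f\phi)_y$, into a wedge-of-smashes indexed by sections $s\in B_z=\{s\:g^{-1}\{z\}\to X\st fs=1\}$. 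The summand at $s$ is $\mu\circ\bigSmash_y\phi_{s(y)}$, which is exactly the contribution of $s$ to $T_rN_qR_p$; reassembling via the outer PT collapse in $T_r$ identifies the two sides, matching the combinatorics of Definition~\ref{defn-distributor}. In short, this is the spectrum-level categorification of the identity $\prod_y\sum_xa_{x,y}=\sum_s\prod_ya_{s(y),y}$.

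The main obstacle is verifying that the PT collapse defining $T_f$ and the strict multiplication $\mu$ defining $N_g$ really do commute in the pattern forced by the distributor, since the two pieces of structure come from different aspects of $E$: $T_f$ uses only the spectrum structure maps, while $N_g$ requires strict commutativity. A clean route is to choose a cofinal system of $G$-representations together with compatible equivariant embeddings of the various finite $G$-sets in play, so that every construction lives at a single common level; after smashing with a large enough $S^U$ the distributivity identity then becomes a genuine equality of $G$-equivariant maps, which in turn descends to the required equality in $\pi^G_0(E)(Z)$. Well-definedness on $\pi^G_0$ (independence of stabilization and homotopy) is a routine cofinality argument in $G\CL[X]_0$. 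The Mackey statement is strictly simpler: one drops $N_f$, so only functoriality and Beck--Chevalley need checking and the distributor is absent.
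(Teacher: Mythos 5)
Your outline matches the paper's high-level strategy closely: define $R_f$ by pullback, $N_f$ by smash plus the strict multiplication $\mu$, $T_f$ by a Pontryagin--Thom collapse, and verify the generators-and-relations presentation of $\bCU_G$ from Proposition~\ref{prop-UG-pres}. The constructions of $R$ and $N$ are essentially identical to the paper's (Constructions~\ref{cons-spectrum-R} and~\ref{cons-spectrum-N}), and your observation that the distributor relation is ``the spectrum-level categorification of $\prod_y\sum_x a_{x,y}=\sum_s\prod_y a_{s(y),y}$'' is exactly the intuition behind Proposition~\ref{prop-spectrum-NT}.

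Where the two diverge is in the technical machinery for the transfer, and here you underestimate the delicacy. You define $T_f$ by choosing a global equivariant embedding of $X$ over $Y$ into a trivialized bundle and taking a PT collapse, and assert that well-definedness ``is a routine cofinality argument.'' It is not: cofinality handles the passage to the colimit over bundles $W$, but you must first show that, at a fixed sufficiently large $W$, two different choices of embedding give the same map on $\pi_0$. The paper addresses precisely this with the unnumbered Lemma following Definition~\ref{defn-T-lifting}: the space of $G$-equivariant injective maps $f'\:X\to\coprod_y W_y$ fibering over $Y$ is path-connected once $W$ contains $\C[G]\tm Y$, proved by splitting into orbits and a codimension-at-least-two argument. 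This is a genuine topological input, not bookkeeping, and without it the transfer is not well-defined.

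Moreover, the paper does not literally use a global embedding. It follows Steiner~\cite{st:cop} and parametrizes the transfer by \emph{$T$-liftings}---families of shrinkings $\al_x\in\CS(W_{f(x)})$, one per point of $X$, that are mutually disjoint over each fibre---with the PT collapse $\al_{[y]}^!$ built fibrewise. The Lemma shows the space of such liftings is homotopy equivalent to the space of embeddings, so the two formulations agree; but the shrinking formalism is chosen because it behaves well under the operations you need. In Proposition~\ref{prop-spectrum-TT} the composite shrinking $\gm_x(t)=\bt_{f(x)}(t)\circ\al_x(t)$ directly witnesses $T_{gf}=T_gT_f$; in Proposition~\ref{prop-spectrum-RT} the pullback of a $T$-lifting along a cartesian square is again a $T$-lifting; and in Proposition~\ref{prop-spectrum-NT} the \emph{product} of shrinkings $\bt_{(z,s)}(t)=\prod_{y\in g^{-1}\{z\}}\al_{s(y)}(t)$ is a $T$-lifting for the map $r$ in the distributor, which is exactly what lets the two sides of $N_gT_f=T_rN_qR_p$ be compared at the space level before taking $\pi_0$. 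Your ``choose compatible embeddings so everything lives at one level'' heuristic gestures at the same thing but does not explain how the single global embedding of $X$ factors through the sections $s$ indexing $B$; the fibrewise shrinking data is what makes that factorization automatic. So the proposal is correct in spirit but elides the one step that is actually hard, and the paper's choice of Steiner's shrinking technology is specifically a device to make that step tractable.
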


The rest of this section will form the proof.

\begin{construction}\label{cons-spectrum-R}
 Consider a morphism $f\:X\to Y$ of finite $G$-sets and a $G$-spectrum
 $E$.  We will define a map
 \[ R_f\:\pi^G_0(E)(Y) \to \pi^G_0(E)(X). \]
 Consider a $G$-Euclidean bundle $W$ over $Y$, and a point
 $s\in\Pi^G(W;E)$, or in other words a system of based maps
 $s_y\:S^{W_y}\to E(W_y)$ that are compatible with the $G$-action.
 For each $x\in X$ we let $t_x$ denote the map
 \[ S^{(f^*W)_x} = 
    S^{W_{f(x)}} \xra{s_{f(x)}}
    E(W_{f(x)}) = 
    E((f^*W)_x).
 \]
 This defines a point $t\in\Pi^G(f^*W;E)$, which depends naturally and
 continuously on all the ingredients, so we can define a map 
 \[ R_f \: \Pi^G(W;E) \to \Pi^G(f^*W;E) \]
 by $s\mapsto t$.  This in turn gives a map
 $R_f\:\pi^G(W;E)\to\pi^G(f^*W;E)$.  Next, by
 considering bundles with total space $\R[G]^N\tm Y$ for large $N$ we
 can see that the functor $f^*\:G\CL[Y]_0\to G\CL[X]_0$ is
 cofinal.  Thus, after passing to the colimit over $W$ we get a map 
 \[ R_f\:\pi^G_0(E)(Y) \to \pi^G_0(E)(X). \]
 It is formal to check that this is contravariantly functorial.
\end{construction}

\begin{construction}\label{cons-spectrum-N}
 Consider again a morphism $f\:X\to Y$ of finite $G$-sets, and a
 ring $G$-spectrum $E$.  We will define a map
 \[ N_f \: \pi^G_0(E)(X) \to \pi^G_0(E)(Y). \]
 
 Consider a $G$-Euclidean bundle $V$ over $X$, and put
 $(f_*V)_y=\bigoplus_{f(x)=y}V_x$.  Now suppose we have a point
 $t\in\Pi^G(V;E)$, or in other words a system of based maps
 $t_x\:S^{V_x}\to E(V_x)$ that are compatible with the $G$-action.
 For each $y\in Y$ we let $s_y$ denote the composite
 \[ S^{(f_*V)_y} = 
    \bigSmash_{x\in f^{-1}\{y\}} S^{V_x} \xra{\bigSmash_xt_x}
    \bigSmash_{x\in f^{-1}\{y\}} E(V_x) \xra{\mu}
    E\left(\bigoplus_{x\in f^{-1}\{y\}}V_x\right) =
    E((f_*V)_y).
 \]
 This defines a point $s\in\Pi^G(f_*V;E)$, which depends naturally and
 continuously on all the ingredients, so we can define a map 
 \[ N_f \: \Pi^G(V;E) \to \Pi^G(f_*V;E) \]
 by $t\mapsto s$.  This in turn gives a map
 $N_f\:\pi^G(V;E)\to\pi^G(f_*V;E)$.  

 We can now pass to the colimit over $V$ to get a map
 $N_f\:\pi^G_0(E)(X)\to\pi^G_0(E)(Y)$.  (In this case, the variances
 are the right way around so we get an induced map of colimits without
 needing to assume that $f_*$ is cofinal.  It will in fact be cofinal
 if $f$ is surjective, but not otherwise.)  It is formal to check that
 this construction is covariantly functorial. 
\end{construction}

\begin{proposition}\label{prop-spectrum-RN}
 Suppose we have a cartesian square of finite $G$-sets as shown:
 \[ \xymatrix{
  X' \ar[d]_g \ar[r]^{f'} & Y' \ar[d]^h \\
  X \ar[r]_f & Y
 } \]
 Then $R_hN_f=N_{f'}R_g\:\pi^G_0(E)(X)\to\pi^G_0(E)(Y')$.
\end{proposition}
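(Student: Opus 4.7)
The plan is to unwind both sides at the level of bundles, produce a canonical isomorphism of $G$-equivariant Euclidean bundles over $Y'$ from the cartesian property, and then push everything through the colimit that defines $\pi^G_0(E)$.

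Fix a $G$-Euclidean bundle $V$ over $X$ and a point $t \in \Pi^G(V;E)$, that is, a $G$-compatible system of based maps $t_x \: S^{V_x} \to E(V_x)$. By Construction~\ref{cons-spectrum-N}, $N_f(t) \in \Pi^G(f_*V;E)$ has component at $y \in Y$ equal to $\mu \circ \bigSmash_{x \in f^{-1}\{y\}} t_x$, and then Construction~\ref{cons-spectrum-R} gives $R_h N_f(t)$ the component $\mu \circ \bigSmash_{x \in f^{-1}\{h(y')\}} t_x$ at $y' \in Y'$, taking values in $E\bigl(\bigoplus_{x \in f^{-1}\{h(y')\}} V_x\bigr)$. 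Running the other way around, $R_g(t) \in \Pi^G(g^*V;E)$ has component $t_{g(x')}$ at $x' \in X'$, and $N_{f'}R_g(t)$ has component $\mu \circ \bigSmash_{x' \in (f')^{-1}\{y'\}} t_{g(x')}$ at $y' \in Y'$, taking values in $E\bigl(\bigoplus_{x' \in (f')^{-1}\{y'\}} V_{g(x')}\bigr)$.

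The key point is that the cartesian property means that for each $y' \in Y'$, the map $g$ restricts to an equivariant bijection $\bar g_{y'} \: (f')^{-1}\{y'\} \to f^{-1}\{h(y')\}$. Putting these together gives a canonical $G$-equivariant isometric isomorphism of bundles over $Y'$,
\[ \phi \: (f')_*(g^*V) \xra{\;\simeq\;} h^*(f_*V), \]
whose fibre at $y'$ is the evident reindexing $\bigoplus_{x'} V_{g(x')} \to \bigoplus_x V_x$ by $\bar g_{y'}$. Since the ring structure $\mu$ on $E$ is strictly commutative and associative, it is invariant under reindexing of the smash factors; therefore $\phi$ intertwines the two systems of maps computed above. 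In other words, $\phi_*(N_{f'}R_g(t)) = R_h N_f(t)$ in $\Pi^G(h^*(f_*V);E)$.

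The final step is to pass to $\pi^G_0(E)$. An equivariant isomorphism $\phi \: W_1 \to W_2$ of $G$-Euclidean bundles over $Y'$ sits in $G\CL[Y']$ and hence induces equal classes in the colimit $\pi^G_0(E)(Y')$ (it gives a morphism in the indexing category along which our two representatives match); so the equality above descends to $R_h N_f = N_{f'} R_g$ on $\pi^G_0(E)$. I expect the only real obstacle to be the bookkeeping needed to identify the reindexing isomorphism $\phi$ with a morphism in $G\CL[Y']_0$ and to verify that it is compatible with the symmetric monoidal pairing used to build $\mu \circ \bigSmash t_x$; everything else is a mechanical consequence of strict commutativity of $\mu$ and the universal property of the pullback.
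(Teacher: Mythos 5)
Your proposal is correct and takes essentially the same approach as the paper: identify $(f')_*g^*V$ with $h^*f_*V$ via the fibrewise bijection $(f')^{-1}\{y'\}\to f^{-1}\{h(y')\}$ supplied by the cartesian property, check that the resulting square of $\Pi^G(-;E)$ spaces commutes using strict commutativity of $\mu$, and pass to $\pi_0$ and the colimit over bundles. You have simply filled in the ``details left to the reader'' in the paper's argument.
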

\begin{proof}
 Let $V$ be a $G$-Euclidean bundle over $X$.  As the square is
 cartesian, we see that $g$ induces a bijection
 $(f')^{-1}\{y'\}\to f^{-1}\{h(y')\}$ for all $y'\in Y'$.  Using this
 we see that the bundle $W=h^*f_*V$ over $Y'$ can also be identified
 with $(f')_*g^*V$.  We claim that the following diagram commutes:
 \[ \xymatrix{
  \Pi^G(g^*V;E) \ar[r]^{N_{f'}} &
  \Pi^G(W;E) \\
  \Pi^G(V;E) \ar[u]^{R_g} \ar[r]_{N_f} &
  \Pi^G(f_*V;E) \ar[u]_{R_h}.
 } \]
 This is proved by simply working through the definitions; details are
 left to the reader.  We can now pass to homotopy and take colimits to
 recover the original claim.
\end{proof}

We now need to deal with transfers.  The key ideas for our approach
are due to Steiner~\cite{st:cop}, but we interpret them in a slightly
different context.

\begin{definition}\label{defn-CLt}
 We write $\CLt$ for the category of Euclidean spaces and isometric
 isomorphisms, so $\CLt(V,W)=\CL(V,W)$ if $\dim(V)=\dim(W)$, and
 $\CLt(V,W)=\emptyset$ otherwise.
\end{definition}

\begin{definition}\label{defn-shrinking}
 Let $V$ be a Euclidean space.  A \emph{shrinking} of $V$ is a family
 of maps $\al(t)\:V\to V$ (for $0\leq t\leq 1$) such that
 \begin{itemize}
  \item[(a)] The combined map $[0,1]\tm V\to V$ (given by
   $(t,v)\mapsto\al(t)(v)$) is continuous.
  \item[(b)] Each map $\al(t)\:V\to V$ is an open embedding, with 
   \[ \|\al(t)(v)-\al(t)(v')\| \leq \|v-v'\| \]
   for all $v,v'\in V$. 
  \item[(c)] The map $\al(0)\:V\to V$ is the identity.
 \end{itemize}
 We call~(b) the \emph{shrinking condition}.  

 Given shrinkings $\al$ and $\bt$, the maps $\bt(t)\circ\al(t)$ form
 another shrinking.  The space $\CS(V)$ of shrinkings is thus a
 topological monoid in a natural way.  There is also an evident
 equivariant analogue $G\CS(V)$.
\end{definition}

\begin{remark}\label{rem-shrink-diff}
 The shrinking condition can often be checked differentially, as
 follows.  Suppose we have a continuously differentiable map
 $\phi\:V\to V$, so for each $v\in V$ we have a derivative
 $\phi'(v)\in\Hom(V,V)$.  The \emph{operator norm} of a linear map
 $\lm\in\Hom(V,V)$ is the number 
 \[ \|\lm\|_{\opp} =
      \sup\{\|\lm(w)\|\st w\in W,\;\|w\|\leq 1\},
 \]
 or equivalently, the largest eigenvalue of the self-adjoint operator
 $\lm^\dag\lm$.  Standard arguments show that
 $\|\phi(v)-\phi(w)\|\leq\|v-w\|$ for all $v$ and $w$ iff
 $\|\phi'(v)\|_{\opp}\leq 1$ for all $v$.
\end{remark}

\begin{example}\label{eg-shrunk}
 We can define $\tht\in\CS(V)$ by $\tht(t)(v)=v/(1+t\|v\|)$.
 This has the property that $\tht(1)(V)$ is the open unit ball centred
 at the origin.  More generally, given $a\in V$ and $r>0$ we can
 define $\tht'\in\CS(V)$ by 
 \[ \tht'(t)(v) = tv + (1-t)\left(a + \frac{v}{1+\|v\|/r}\right), \]
 and then $\tht'(1)(V)$ is the open ball of radius $r$ centred at
 $a$.  In both cases one can check the shrinking condition by a
 differential calculation as in Remark~\ref{rem-shrink-diff};
 details are given in~\cite{st:cop}. 
\end{example}

\begin{remark}\label{rem-CS-contractible}
 Given any shrinking $\al$, we can define shrinkings $H_u\al$ for
 $0\leq u\leq 1$ by $(H_u\al)(t)=\al(1-u+ut)$.  This construction
 gives a contraction of the space $\CS(V)$.
\end{remark}

\begin{definition}\label{defn-shriek}
 Given an open embedding $f\:V\to W$, we write $f^!$ for the map
 $S^W\to S^V$ obtained by the Pontrjagin-Thom construction, so
 $f^!(f(v))=v$ and $f^!(w)=\infty$ if $w\not\in f(V)$.  Given a
 shrinking $\al\in\CS(V)$ we write $\al^!$ for $\al(0)^!$ (which is
 homotopic to $\al(1)^!=1^!=1$).  This gives a morphism 
 $\CS(V)^{\opp}\to\Spaces_*(S^V,S^V)$ of topological monoids.  
\end{definition}

\begin{definition}\label{defn-T-lifting}
 Consider again a morphism $f\:X\to Y$ of finite $G$-sets, equipped
 with a $G$-Euclidean bundle $W$ over $Y$.  A
 \emph{$T$-lifting} of $f$ is a system of shrinkings
 $\al_x\in\CS(W_{f(x)})$ such that for each $y\in Y$, the resulting
 map 
 \[ \al_{[y]} = \left(\al_x(0)\right)_{x\in f^{-1}\{y\}}\:
      \coprod_{x\in f^{-1}\{y\}} W_y \to W_y
 \]
 is injective (and thus an open embedding).
\end{definition}

\begin{lemma}
 Let $L$ be the space of $T$-liftings of $f$, and
 let $L'$ be the space of $G$-equivariant injective maps
 $f'\:X\to\coprod_yW_y$ such that $f'(x)\in W_{f(x)}$ for all $x$.
 Then $L$ is homotopy equivalent to $L'$.  Moreover, if $W$ contains a
 subbundle isomorphic to $\C[G]\tm Y$, then $L$ and $L'$ are
 connected. 
\end{lemma}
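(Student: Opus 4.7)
The plan is to construct explicit mutually homotopy-inverse maps $e\:L\to L'$ and $s\:L'\to L$, and then prove the connectivity claim via an equivariant configuration-space argument. I would define $e$ by sending a $T$-lifting $(\al_x)_{x\in X}$ to the tuple of centres $x\mapsto\al_x(0)(0)\in W_{f(x)}$ (the image of the origin under the shrunken maps appearing in Definition~\ref{defn-T-lifting}). The injectivity condition on $\al_{[y]}$ ensures that this gives a point of $L'$, and $G$-equivariance is immediate. For the section $s$, given $f'\in L'$ I would choose a continuous $G$-invariant function $\rho\:L'\to(0,\infty)$ bounded above by one-third of the minimum pairwise distance between $f'(x)$ and $f'(x')$ over pairs with $f(x)=f(x')$, and set $s(f')_x=\tht'_{f'(x),\rho(f')}$ using Example~\ref{eg-shrunk}. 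The shrunken images are then disjoint open balls of radius $\rho$, so $s(f')$ is a valid $T$-lifting, and a direct computation from the formula in Example~\ref{eg-shrunk} yields $e\circ s=1_{L'}$.

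The remaining step is to produce a homotopy $s\circ e\simeq 1_L$, which I would build in two stages. First, using the contraction $H_u$ of $\CS(V)$ from Remark~\ref{rem-CS-contractible}, deform each $\al_x$ so that its shrunken image lies inside the ball $B(\al_x(0)(0),\rho)$, where $\rho=\rho(e(\al))$; this deformation stays in $L$ since shrinking an already-shrunk map only makes its image smaller, preserving disjointness. Then linearly interpolate pointwise in a parameter $u\in[0,1]$ between this further-shrunk $\al_x$ and $s(e(\al))_x=\tht'_{\al_x(0)(0),\rho}$. Both endpoints take values inside $B(\al_x(0)(0),\rho)$, so the convex combination lies there too, and the pairwise disjointness of these balls keeps $\al_{[y]}$ injective throughout. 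The main obstacle is ensuring that the linear interpolation is itself a shrinking, i.e.\ that its derivative has operator norm at most $1$ everywhere (per Remark~\ref{rem-shrink-diff}); this requires choosing the preliminary shrinkage strong enough to leave slack in the derivative bound, which is where the technical heart of the argument lies.

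For the connectivity claim, the homotopy equivalence reduces the problem to showing $L'$ is connected when $W\supseteq\C[G]\tm Y$. Decompose $Y$ into $G$-orbits $Gy_j$ with stabilisers $G_{y_j}$; then $L'$ splits as a product over $j$ of spaces of $G_{y_j}$-equivariant injective maps $f^{-1}\{y_j\}\to W_{y_j}$. Decomposing $f^{-1}\{y_j\}$ further into orbits of the form $G_{y_j}/H_i$, such an injection amounts to a choice of points $w_i\in W_{y_j}^{H_i}$ whose $G_{y_j}$-orbits are pairwise disjoint. The hypothesis forces $W_{y_j}$ to contain $\C[G]$ as a $G_{y_j}$-representation, so each $W_{y_j}^{H_i}$ contains $\C[G]^{H_i}\neq 0$ and hence has real dimension at least $2$. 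Any two valid configurations can therefore be connected by moving the points $w_i$ one at a time along paths in $W_{y_j}^{H_i}$ chosen to avoid the finitely many points in the other orbits, which is always possible in a real manifold of dimension $\geq 2$.
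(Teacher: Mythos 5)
Your maps $e$ and $s$ coincide with the maps $\pi$ and $\sigma$ in the paper's own proof (up to a harmless change of radius constant), and $e\circ s=1_{L'}$ is verified identically.  Your connectivity argument also follows the paper's outline: reduce to a single orbit, decompose $X$ into orbits of the form $G/H_i$, and use the fact that $W\supseteq\C[G]$ forces $\dim_{\R}W^{H_i}\geq 2$, so that the configuration space is obtained from a real vector space of dimension at least $2$ by removing finitely many subsets of real codimension at least $2$.  One inaccuracy: injectivity of a $G$-map $G/H_i\to W$ requires $\stab_G(w_i)$ to equal $H_i$ exactly, not merely contain it, so $w_i$ must lie in $U_i = W^{H_i}\setminus\bigcup_{K>H_i}W^K$; the same codimension count handles this, but you should say so, as the paper does.

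The genuine gap is in the homotopy $s\circ e\simeq 1_L$.  Your first stage is not well-founded: the contraction of Remark~\ref{rem-CS-contractible}, $(H_u\al)(t)=\al(1-u+ut)$, contracts $\CS(V)$ onto the trivial shrinking, so as $u$ decreases from $1$ toward $0$ the shrunk map $(H_u\al)(0)=\al(1-u)$ tends toward the identity and its image \emph{grows} rather than shrinks.  To actually force the image inside $B(\al_x(0)(0),\rho)$ you would need to post-compose with a further shrinking using the monoid structure of $\CS(V)$, not the contraction $H_u$.  And for your second stage you yourself flag that the pointwise linear interpolation between the deformed $\al_x$ and $\tht'_{\al_x(0)(0),\rho}$ has not been shown to satisfy the Lipschitz bound of Definition~\ref{defn-shrinking}(b); that is exactly where the difficulty sits, and convex combinations of open embeddings need not be open embeddings, let alone $1$-Lipschitz.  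The paper does not attempt such a direct construction: it cites the main theorem of Steiner~\cite{st:cop}, whose proof builds an explicit homotopy (the formulae in Figure~1 of that paper) with carefully chosen cutoffs ensuring the intermediate maps satisfy the shrinking condition, and then observes that this homotopy is canonical enough to be equivariant.  Without reproducing that construction or some genuine substitute, the homotopy equivalence $L\simeq L'$ is not established.
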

\begin{proof}
 First, we can define $\pi\:L\to L'$ by $\pi(\al)(x)=\al_x(0)(0)$.  In
 the opposite direction, suppose we have a point $f'\in L'$.  We
 define $r\:Y\to(0,\infty]$ by 
 \[ r(y) = \frac{1}{2}
       \inf\{\|f'(x)-f'(x')\|\st x,x'\in f^{-1}\{y\},\;x\neq x'\},
 \]
 then we define 
 \[ \al_x(t)(v) = tv +
      (1-t)\left(f'(x)+\frac{v}{1+\|v\|/r(f(x))}\right).
 \]
 This gives a $T$-lifting $\al\in L$.  It depends continuously on
 $f'$, so we can define a map $\sg\:L'\to L$ by $\sg(f')=\al$.  By
 construction we have $\al_x(0)(0)=f'(x)$, so $\pi\sg=1\:L'\to L'$.
 We claim that $\sg\pi\:L\to L$ is homotopic to the identity.  The
 proof is essentially the same as that of the main theorem
 in~\cite{st:cop}, using the formulae displayed in Figure~1 of that
 paper.  The resulting homotopy involves no arbitrary choices, so it
 is not hard to check that it respects equivariance.

 We next claim that $L'$ (and therefore $L$) is path connected,
 provided that $W$ contains a subbundle isomorphic to $\C[G]\tm Y$.
 It is clear that everything works independently over the different
 orbits in $Y$, so we can reduce to the case $Y=G/H$.  Next, we recall
 that the category of $G$-sets over $G/H$ is equivalent to the
 category of $H$-sets, by the functor sending $(U\xra{p}G/H)$ to
 $p^{-1}\{H/H\}$.  Similarly, the category of $G$-equivariant vector
 bundles over $G/H$ is equivalent to the category of representations
 of $H$.  Under this equivalence, the constant bundle $\C[G]\tm G/H$
 becomes a sum of $|G/H|$ copies of $\C[H]$, which certainly contains
 $\C[H]$.  Using these observations we can reduce further to the case
 where $Y$ is a single point, and $W$ is a representation of $G$
 containing $\C[G]$.

 Now choose a list $H_1,\dotsc,H_r$ of subgroups of $G$ containing
 precisely one subgroup in each conjugacy class.  For each orbit in
 $X$ we can choose a representative, whose stabiliser will be
 conjugate to one of the groups $H_i$.  After changing the
 representative we can assume thet the stabiliser is actually equal to
 $H_i$.  Using this we obtain a splitting $X=\coprod_iG/H_i\tm A_i$,
 where $G$ acts trivially on $A_i$.  This in turn gives 
 \[ \Map_G(X,W) = \prod_i \Map(A_i,W^{H_i}). \]
 Now put 
 \[   U_i = \{w\in W\st\stab_G(w)=H_i\}
       = W^{H_i} \setminus \bigcup_{K>H_i} W^K.
 \]
 We find that 
 \[ L' = \Inj_G(X,W) = \prod_i\Inj(A_i,U_i). \]
 The assumption $W\geq\C[G]$ implies that $W^{H_i}$ is a vector space
 of real dimension at least two, and that $U_i$ is obtained from
 $W^{H_i}$ by removing finitely many subspaces of real codimension at
 least two.  Similarly, we find that $\Map(A_i,W^{H_i})$ has dimension
 at least two, and $\Inj(A_i,U_i)$ is obtained by removing finitely
 many subspaces of real codimension at least two.  This implies that
 $\Inj(A_i,U_i)$ is path connected, and thus that $L'$ is path
 connected.  
\end{proof}

\begin{construction}\label{cons-spectrum-T}
 Consider a morphism $f\:X\to Y$ of finite $G$-sets, and an
 equivariant vector bundle $W$ over $Y$.  Let $\al$ be a
 $T$-lifting of $f$.  We can apply the Pontrjagin-Thom
 construction to $\al_{[y]}$ to get a map
 \[ \al_{[y]}^! \: S^{W_y} \to
     \bigWedge_{x\in f^{-1}\{y\}} S^{W_y}.
 \]

 Now suppose we have a point $t\in\Pi^G(f^*W;E)$, or in other
 words a system of based maps $t_x\:S^{W_{f(x)}}\to E(W_{f(x)})$ that
 are compatible with the $G$-action.  For each $y\in Y$ we let $s_y$
 denote the composite
 \[ S^{W_y} \xra{\al_{[y]}^!}
    \bigWedge_{x\in f^{-1}\{y\}} S^{W_y} \xra{t_{[y]}}
    E(W_y),
 \]
 where $t_{[y]}$ is given by $t_x$ on the wedge summand indexed by
 $x$.  This defines a point $s\in\Pi^G(W;E)$, which depends
 naturally and continuously on all the ingredients, so we can define a
 map 
 \[ T_{f,\al} \: \Pi^G(f^*W;E) \to \Pi^G(W;E) \]
 by $t\mapsto s$.  Note also that if we embed $W$ in a larger bundle
 $W'=W\oplus A$ then the maps 
 \[ \al'_x(t) = \al_x(t)\tm 1 \:
      W'_{f(x)} = W_{f(x)}\tm A_{f(x)} \to 
      W_{f(x)}\tm A_{f(x)} = W'_{f(x)}
 \]
 also form a $T$-lifting of $f$, and the diagram 
 \[ \xymatrix{
  \Pi^G(f^*W;E) \ar[r]^{T_{f,\al}} \ar[d] &
  \Pi^G(W;E) \ar[d] \\
  \Pi^G(f^*W';E) \ar[r]_{T_{f,\al'}} &
  \Pi^G(W';E)
 } \]
 commutes.

 Next, we can apply $\pi_0$ to the above map $T_{f,\al}$ to get a map
 $T_{f,\al}\:\pi^G(f^*W;E)\to\pi^G(W;E)$.  Provided that
 $W$ contains $\C[G]\tm Y$, the space of possible lifts $\al$ will be
 path-connected, so $T_{f,\al}\:\pi^G(f^*W;E)\to\pi^G(W;E)$
 will be independent of $\al$, so we can denote it by $T_f$.  Now note
 that the category of bundles $W$ containing $\C[G]\tm Y$ is cofinal
 in $G\CL[Y]$, and the functor $f^*$ from this subcategory to
 $G\CL[X]$ is also cofinal.  We can thus pass to the colimit to get a
 map $T_f\:\pi_0^G(E)(X)\to\pi_0^G(E)(Y)$.
\end{construction}

\begin{proposition}\label{prop-spectrum-TT}
 For any maps $X\xra{f}Y\xra{g}Z$ we have 
 \[ T_{gf}=T_gT_f\:\pi^G_0(E)(X)\to\pi^G_0(E)(Z). \]
\end{proposition}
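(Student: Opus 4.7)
The plan is to construct, from $T$-liftings $\al$ of $f$ and $\bt$ of $g$ on compatible bundles, a $T$-lifting $\gm$ of $gf$ by pointwise composition of the shrinking data, and to verify directly that $T_{gf,\gm}=T_{g,\bt}\circ T_{f,\al}$ on the mapping spaces $\Pi^G$; the theorem will then follow by passing to $\pi^G$ and then to colimits.

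I would begin by fixing a $G$-Euclidean bundle $W$ over $Z$ containing $\C[G]\tm Z$, so that $g^*W$ is a bundle over $Y$ containing $\C[G]\tm Y$ and $(gf)^*W=f^*g^*W$ is a bundle over $X$ containing $\C[G]\tm X$. Choose $T$-liftings $\bt$ of $g$ relative to $W$ and $\al$ of $f$ relative to $g^*W$. For each $x\in X$, since $\bt_{f(x)}\in\CS(W_{gf(x)})$ and $\al_x\in\CS((g^*W)_{f(x)})=\CS(W_{gf(x)})$ lie in the same monoid, I would define
\[ \gm_x(t)=\bt_{f(x)}(t)\circ\al_x(t)\in\CS(W_{gf(x)}), \]
which is a shrinking by Definition~\ref{defn-shrinking}. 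To see that $\gm$ is a $T$-lifting of $gf$, I would decompose $(gf)^{-1}\{z\}=\coprod_{y\in g^{-1}\{z\}}f^{-1}\{y\}$ and observe the identity
\[ \gm_{[z]}=\bt_{[z]}\circ\bigl(\textstyle\bigsqcup_{y\in g^{-1}\{z\}}\al_{[y]}\bigr), \]
whose right-hand side is an open embedding: each $\al_{[y]}$ is injective by the $T$-lifting hypothesis for $\al$, so their disjoint union is injective, and $\bt_{[z]}$ is injective by the hypothesis for $\bt$.

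Next, I would apply the contravariant functoriality $(pq)^!=q^!p^!$ of the Pontrjagin--Thom construction for composites of open embeddings (immediate from Definition~\ref{defn-shriek}), together with the identification $S^{A\amalg B}=S^A\Wedge S^B$, to obtain
\[ \gm_{[z]}^! = \bigl(\textstyle\bigWedge_{y\in g^{-1}\{z\}}\al_{[y]}^!\bigr)\circ\bt_{[z]}^!. \]
Applying this to a point $t\in\Pi^G((gf)^*W;E)$ and unwinding Construction~\ref{cons-spectrum-T} shows that the $z$-component of $T_{g,\bt}(T_{f,\al}(t))$ equals $t_{[z]}\circ\gm_{[z]}^!$, i.e.\ the $z$-component of $T_{gf,\gm}(t)$. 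Hence $T_{gf,\gm}=T_{g,\bt}\circ T_{f,\al}$ as maps $\Pi^G((gf)^*W;E)\to\Pi^G(W;E)$.

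Finally, I would pass to path components and take the colimit over $W$, using that bundles of the form $g^*W$ (with $\C[G]\tm Z\sse W$) are cofinal among bundles over $Y$ containing $\C[G]\tm Y$, that the spaces of $T$-liftings are connected so the induced maps on $\pi^G$ do not depend on $\al$, $\bt$, or $\gm$, and that the construction is compatible with enlarging $W$ as remarked in Construction~\ref{cons-spectrum-T}. This yields $T_{gf}=T_gT_f$ as desired. The main (though not especially deep) obstacle will be the bookkeeping in the identity for $\gm_{[z]}^!$: one must carefully track the $[y]$ and $[z]$ assembly conventions and the natural identifications between smash products of spheres, disjoint unions of Euclidean spaces, and pulled-back bundles; once these are in place, everything else is formal.
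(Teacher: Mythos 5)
Your proposal is correct and follows essentially the same route as the paper: compose the shrinking data pointwise via $\gm_x(t)=\bt_{f(x)}(t)\circ\al_x(t)$ to get a $T$-lifting of $gf$, check $T_{gf,\gm}=T_{g,\bt}T_{f,\al}$ at the level of $\Pi^G$, then pass to $\pi_0$ and colimits. The only difference is that you spell out the verification (the fiberwise decomposition showing $\gm$ is a $T$-lifting, and the contravariant functoriality of the Pontrjagin--Thom construction giving $\gm_{[z]}^!=\bigl(\bigWedge_y\al_{[y]}^!\bigr)\circ\bt_{[z]}^!$), whereas the paper compresses this into ``by inspection of the definitions.''
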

\begin{proof}
 Let $U$ be a $G$-euclidean bundle over $Z$, giving a bundle $g^*(U)$
 over $Y$ and a bundle $(gf)^*(U)$ over $X$.  Choose 
 $T$-liftings $\al$ and $\bt$ for $f$ and $g$ with respect to these
 bundles.  Next, for $x\in X$ and $t\in [0,1]$ let $\gm_x(t)$ denote
 the composite
 \[ U_{gf(x)} \xra{\al_{x}(t)} U_{gf(x)}
              \xra{\bt_{f(x)}(t)} U_{gf(x)}.
 \]
 These maps give a $T$-lifting for $gf$, and by inspection of
 the definitions we have 
 \[ T_{gf,\gm} = T_{g,\bt}T_{f,\al} \:
     \Pi^G((gf)^*(U);E) \to \Pi^G(U;E).
 \]
 The claim follows by taking $\pi_0$ and passing to colimits.
\end{proof}

\begin{proposition}\label{prop-spectrum-RT}
 Suppose we have a cartesian square of finite $G$-sets as shown:
 \[ \xymatrix{
  X' \ar[d]_g \ar[r]^{f'} & Y' \ar[d]^h \\
  X \ar[r]_f & Y
 } \]
 Then $R_hT_f=T_{f'}R_g\:\pi^G_0(E)(X)\to\pi^G_0(E)(Y')$.
\end{proposition}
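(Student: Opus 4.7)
The plan is to mimic the argument of Proposition~\ref{prop-spectrum-RN}, replacing norms with transfers and carefully tracking the $T$-liftings along the pullback.

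First, I would fix an equivariant Euclidean bundle $W$ over $Y$ containing $\C[G]\tm Y$. Pulling back, $h^*W$ is a bundle over $Y'$, and the cartesian property gives, for each $y'\in Y'$, a bijection $g\:(f')^{-1}\{y'\}\to f^{-1}\{h(y')\}$. Since $(h^*W)_{y'}=W_{h(y')}$ and $(h^*W)_{f'(x')}=W_{hf'(x')}=W_{fg(x')}$, there is a canonical identification $(f')^*(h^*W)=g^*(f^*W)$ as bundles over $X'$.

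Next, given a $T$-lifting $\al$ of $f$ with respect to $W$ (shrinkings $\al_x\in\CS(W_{f(x)})$), I would define a $T$-lifting $\al'$ of $f'$ with respect to $h^*W$ by
\[ \al'_{x'} \;=\; \al_{g(x')} \;\in\; \CS(W_{fg(x')}) \;=\; \CS((h^*W)_{f'(x')}). \]
The equivariance of $\al'$ is immediate from that of $\al$ and $g$, and the open-embedding condition for $\al'_{[y']}$ follows from that of $\al_{[h(y')]}$ via the cartesian bijection, which identifies the two coproducts of fibres.

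Third, I would verify directly that the space-level diagram
\[ \xymatrix{
  \Pi^G(f^*W;E) \ar[r]^{T_{f,\al}} \ar[d]_{R_g} &
  \Pi^G(W;E) \ar[d]^{R_h} \\
  \Pi^G((f')^*h^*W;E) \ar[r]_{T_{f',\al'}} &
  \Pi^G(h^*W;E)
} \]
commutes. For $t\in\Pi^G(f^*W;E)$ and $y'\in Y'$, both $R_h(T_{f,\al}(t))_{y'}$ and $T_{f',\al'}(R_g(t))_{y'}$ are maps $S^{W_{h(y')}}\to E(W_{h(y')})$. Each factors as a Pontrjagin--Thom collapse followed by a wedge of components of $t$; under the identification $(f')^{-1}\{y'\}\simeq f^{-1}\{h(y')\}$ via $g$, the collapse maps $\al'_{[y']}{}^!$ and $\al_{[h(y')]}^!$ agree, while the wedge maps agree because $(R_g t)_{x'}=t_{g(x')}$. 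So the two composites coincide on the nose.

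Finally I would pass to path components and then to the colimit over $W$ in $G\CL[Y]_0$ (using cofinality of the subcategory of bundles containing $\C[G]\tm Y$, as in Construction~\ref{cons-spectrum-T}) to deduce $R_hT_f=T_{f'}R_g$ on $\pi^G_0(E)$. The only nontrivial step is the second one, making sure $\al'$ actually is a $T$-lifting and that the choice is compatible with the independence of $T_{f,\al}$ on $\al$ after passing to $\pi_0$; both are direct consequences of the cartesian property and the path-connectedness of the space of $T$-liftings in the presence of a free summand.
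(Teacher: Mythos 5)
Your proposal follows essentially the same route as the paper: pull back the bundle $W$ via $h$, transport the $T$-lifting $\al$ of $f$ to a $T$-lifting $\al'$ of $f'$ by $\al'_{x'}=\al_{g(x')}$ using the cartesian identification of fibres, verify commutativity of the resulting square of $\Pi^G$'s at the space level, and then pass to $\pi_0$ and to the colimit over bundles. The paper compresses the verification to ``by inspection of the definitions,'' while you spell out the Pontrjagin--Thom and wedge-map comparison and the cofinality step, but the underlying argument is identical.
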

\begin{proof}
 Let $W$ be a $G$-euclidean bundle over $Y$, and put $W'=h^*W$.
 Suppose we have a $T$-lifting $\al$ for $f$ with respect to $W$.  For
 $x'\in X'$ we put 
 \[ \al'_{x'}(t) = \al_{g(x')}(t) \: 
     ((f')^*W')_{x'} = W_{hf'(x')} = W_{fg(x')} \to ((f')^*W')_{x'}.
 \]
 By inspection of the definitions we find that these maps give a
 $T$-lifting of $f'$, and that the diagram 
 \[ \xymatrix{
     \Pi^G((f')^*W';E) 
      \ar[r]^{T_{f',\al'}} &
     \Pi^G(W';E) \\
     \Pi^G(f^*W;E) 
      \ar[u]_{R_g}
      \ar[r]_{T_{f,\al}} &
     \Pi^G(W;E)
      \ar[u]^{R_h}
    }
 \]
 commutes.  The claim follows by taking $\pi_0$ and passing to
 colimits. 
\end{proof}

\begin{proposition}\label{prop-spectrum-NT}
 Suppose we have maps $X\xra{f}Y\xra{g}Z$ of finite $G$-sets, with
 distributor 
 \[ \Dl(f,g) = (X \xla{p} A \xra{q} B \xra{r} Y). \]
 Then $N_gT_f=T_rN_qR_p\:\pi^G_0(E)(X)\to\pi^G_0(E)(Z)$.
\end{proposition}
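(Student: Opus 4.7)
The plan is to realize the identity at the level of $G$-fixed-point mapping spaces $\Pi^G(-;E)$, in parallel with the proofs of Propositions~\ref{prop-spectrum-TT} and~\ref{prop-spectrum-RT}. Fix a $G$-Euclidean bundle $W$ over $Z$ sufficiently large for cofinality (e.g., containing $\C[G]\tm Z$), and form the pulled-back bundles $g^*W$ over $Y$, $f^*g^*W$ over $X$, and $p^*f^*g^*W$ over $A$. The key preliminary observation is a canonical $G$-equivariant identification of bundles over $B$,
\[ q_*p^*f^*g^*W \;=\; r^*g_*g^*W, \]
since at $b=(z,s)\in B$ both sides have fibre $\bigoplus_{y\in g^{-1}\{z\}}W_z$, and the identification is literally the identity on each fibre. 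Consequently, both $N_gT_f$ and $T_rN_qR_p$ produce maps $\Pi^G(f^*g^*W;E)\to\Pi^G(g_*g^*W;E)$.

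Next, choose a $T$-lifting $\al$ of $f$ with respect to $g^*W$, giving shrinkings $\al_x\in\CS((g^*W)_{f(x)})=\CS(W_{gf(x)})$. I construct a $T$-lifting $\tilde{\al}$ of $r$ with respect to $g_*g^*W$ by the coordinatewise rule
\[ \tilde{\al}_b(t) \;=\; \prod_{y\in g^{-1}\{z\}}\al_{s(y)}(t) \:
   W_z^{g^{-1}\{z\}}\to W_z^{g^{-1}\{z\}}, \qquad b=(z,s)\in B. \]
This is equivariant, and the shrinking axioms hold coordinatewise. For the disjointness condition required of a $T$-lifting, any two distinct $b,b'\in r^{-1}\{z\}$ correspond to sections $s\neq s'$ of $f$ over $g^{-1}\{z\}$; pick $y_0$ with $s(y_0)\neq s'(y_0)$ in $f^{-1}\{y_0\}$. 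The $T$-lifting property of $\al$ over $y_0$ forces $\al_{s(y_0)}(0)$ and $\al_{s'(y_0)}(0)$ to have disjoint images in $W_z$, whence $\tilde{\al}_b(0)$ and $\tilde{\al}_{b'}(0)$ have disjoint images (they differ in the $y_0$-coordinate).

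The remaining step is the space-level identity $N_g\circ T_{f,\al}=T_{r,\tilde{\al}}\circ N_q\circ R_p$. Fibrewise at $z\in Z$, both sides yield a map
\[ \bigSmash_{y\in g^{-1}\{z\}} S^{W_z} \;\longrightarrow\;
   E\Bigl(\bigoplus_{y\in g^{-1}\{z\}} W_z\Bigr), \]
built from a Pontrjagin-Thom collapse, componentwise evaluation of the input $t$, and the ring multiplication $\mu$. The identity reduces to the pointed-space distributive identification
\[ \bigSmash_{y}\bigWedge_{x\in f^{-1}\{y\}} S^{W_z}
    \;\simeq\; \bigWedge_{s\:g^{-1}\{z\}\to X,\; fs=1}
    \bigSmash_{y} S^{W_z}, \]
under which $\bigSmash_y\al_{[y]}^{!}$ is, by the very definition of $\tilde\al$, the same map as $\tilde{\al}_{[z]}^{!}$, and the two resulting orders of applying $t$ and $\mu$ agree on each summand indexed by $s$. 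Applying $\pi_0$ and taking the colimit over $W$, using cofinality of the bundles containing $\C[G]\tm Z$ and the independence of $T_f$ and $T_r$ from the chosen $T$-liftings, yields the stated equality in $\pi^G_0(E)$. The main obstacle is purely organizational: threading the bundle identifications and Pontrjagin-Thom naturalities through both sides while preserving equivariance and cofinality; no genuine new homotopical input is needed beyond the distributive law, precisely because $\tilde\al$ was constructed to intertwine the two collapses.
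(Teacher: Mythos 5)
Your proof is correct and follows the same strategy as the paper's: choose a $T$-lifting $\al$ for $f$, produce from it a $T$-lifting of $r$ by taking coordinatewise products over the sections $s$, verify the space-level identity on $\Pi^G(-;E)$ via the bundle identification and the observation that Pontrjagin--Thom sends direct sums to smash products, then apply $\pi_0$ and pass to colimits. The only cosmetic difference is that you fix the ambient bundle $W$ over $Z$ and pull back along $g$, whereas the paper fixes $W$ over $Y$ directly; since $(gf)^*$ is still cofinal and $g^*W$ contains $\C[G]\tm Y$ whenever $W$ contains $\C[G]\tm Z$, this changes nothing of substance.
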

\begin{proof}
 Consider a $G$-euclidean bundle $W$ over $Y$.  Note that for
 $(y,s)\in A$ we have $fp(y,s)=fs(y)=y$, so for $(z,s)\in B$ we have 
 \[ (q_*p^*f^*W)_{(z,s)} = 
     \bigoplus_{y\in g^{-1}\{z\}} (p^*f^*W)_{(y,s)} = 
     \bigoplus_{y\in g^{-1}\{z\}} W_y = 
     (r^*g_*W)_{(z,s)},
 \]
 so we can identify $q_*p^*f^*W$ with $r^*g_*W$.  

 Now choose a $T$-lifting for $f$, consisting of maps
 $\al_x(t)\:W_{f(x)}\to W_{f(x)}$.   Consider a point $(z,s)\in B$.
 For each $y\in g^{-1}\{z\}$ we have a point $s(y)\in X$ with
 $fs(y)=y$, and thus a shrunk isometry $\al_{s(y)}(t)\:W_y\to W_y$.
 We can take the product over all $y\in g^{-1}\{z\}$ to get a map 
 $\bt_{(z,s)}(t)\:(r^*g_*W)_y\to(r^*g_*W)_y$.  These maps form a
 $T$-lifting of $r$.  We now have maps
 \[ \Pi^G(f^*W;E) \xra{T_{f,\al}} \Pi^G(W;E)
      \xra{N_f} \Pi^G(g_*W;E)
 \]
 and
 \[ \Pi^G(f^*W;E) \xra{R_p}
    \Pi^G(p^*f^*W;E) \xra{N_q}
    \Pi^G(q_*p^*f^*W;E) = \Pi^G(r^*g_*W;E) \xra{T_{r,\bt}}
    \Pi^G(g_*W;E).
 \]
 We claim that the two composites are the same.  To see this, consider
 an element $t\in\Pi^G(f^*W;E)$, consisting of maps
 $t_x\:S^{W_{f(x)}}\to E(W_{f(x)})$ for all $x\in X$.  The point
 $t'=T_{f,\al}(t)\in\Pi^G(W;E)$ consists of maps
 $t'_y\:S^{W_y}\to E(W_y)$, where $t'_y$ is the composite
 \[ S^{W_y} \xra{\al_{[y]}^!}
     \bigWedge_{x\in f^{-1}\{y\}}S^{W_y} \xra{t_{[y]}} E(W_y).
 \]
 Now the point $t''=N_ft'\in \Pi^G(g_*W;E)$ consists of maps 
 \[ t''_z\: \bigSmash_{y\in g^{-1}\{z\}} S^{W_y}
     = S^{(g_*W)_z} \to E((g_*W)_z).
 \]
 The map $t''_z$ is obtained by smashing together the maps $t'_y$ for
 all $y\in g^{-1}\{z\}$, then composing with the map 
 \[ \bigSmash_{y\in g^{-1}\{z\}}E(W_y) \to 
     E\left(\bigoplus_{y\in g^{-1}\{z\}}W_y\right)
 \]
 given by the ring structure of the spectrum $E$.  The smash product
 of the maps $t'_y$ factors through the space
 \[ Q = \bigSmash_{y\in g^{-1}\{z\}}
     \bigWedge_{x\in f^{-1}\{y\}}
      S^{W_y}.
 \]
 As the smash product distributes over the wedge product, this can be
 rewritten as 
 \[ Q = \bigWedge_s \bigSmash_{y\in g^{-1}\{z\}} S^{W_y}
      = \bigWedge_s \bigSmash_{y\in g^{-1}\{z\}} S^{W_{fs(y)}},
 \]
 where $s$ runs over the set of maps $f^{-1}\{y\}\to X$ with $fs=1$.
 Using the fact that the Pontrjagin-Thom construction sends direct
 sums to smash products, we see that the map
 $S^{(r_*W)_z}=\bigSmash_{y\in g^{-1}\{z\}} S^{W_y}\to Q$ occurring in
 $t''_z$ is just $\bt_{[z]}^!$.  It is now straightforward to check
 that 
 \[ N_gT_{f,\al}=T_{r,\bt}N_qR_p \:
     \Pi^G(f^*W;E) \to \Pi^G(f_*W;E).
 \]
 We now apply $\pi_0$ and pass to the colimit over $W$.  The functor
 $f^*\:G\CL[Y]\to G\CL[X]$ is cofinal, so
 $\colim_W\pi^G(f^*W;E)=\pi^G_0(E)(X)$.  The functor
 $f_*\:G\CL[Y]\to G\CL[Z]$ need not be cofinal, but we still have a
 comparison map $\colim_W\pi^G(f_*W;E)\to\pi^G_0(E)(Z)$.  We deduce
 that $N_gT_f=T_rN_qR_p\:\pi^G_0(E)(X)\to\pi^G_0(E)(Z)$ as claimed.
\end{proof}

\section{Tensor products of Tambara functors}
\label{sec-tambara-tensor}

Our main result is as follows:
\begin{proposition}\label{prop-tambara-coproduct}
 Let $M$ and $N$ be Tambara functors.  Then there is a unique way to
 make the Mackey functor $M\btm N$ into a Tambara functor such that
 $N_f(m\ot n)=N_f(m)\ot N_f(n)$ for all $f\:X\to Y$ and all
 $(m,n)\in M(X)\tm N(X)$.  Moreover, with this structure, $M\btm N$ is
 the coproduct of $M$ and $N$ in the category of Tambara functors.
\end{proposition}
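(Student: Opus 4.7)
The plan is to force the definition of the norm maps on $M\btm N$ using Proposition~\ref{prop-box-equiv} together with the required distributor identity, then verify well-definedness and the remaining Tambara axioms. By Proposition~\ref{prop-box-equiv}, every element of $(M\btm N)(X)$ has the form $T_p(m\ot n)$ for some equivariant $p\:U\to X$ and $(m,n)\in M(U)\tm N(U)$. For $f\:X\to Y$, let $\Dl(p,f)=(U\xla{p'}A\xra{q}B\xra{r}Y)$. The axiom $N_fT_p=T_rN_qR_{p'}$ in the target together with the requirement $N_q(m'\ot n')=N_q(m')\ot N_q(n')$ and the naturality identity $R_{p'}(m\ot n)=R_{p'}(m)\ot R_{p'}(n)$ forces
\[ N_f(T_p(m\ot n)) := T_r\bigl(N_qR_{p'}(m)\ot N_qR_{p'}(n)\bigr). \]
This both establishes uniqueness and gives a candidate definition.

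The first task is to check that this formula descends to the equivalence relation $E$ of Proposition~\ref{prop-box-equiv}, and it suffices to verify the two generating moves. For move~(a), where $(U',qr,m',R_r(n))\sim(U,q,T_r(m'),n)$, the two candidate values of $N_f$ involve the distributors $\Dl(qr,f)$ and $\Dl(q,f)$, together with intervening $R_r$ and $T_r$ operations. Proposition~\ref{prop-NTT} exhibits $\Dl(qr,f)$ as $\Dl(q,f)$ sitting above a canonical cartesian square involving $r$, while Proposition~\ref{prop-NRT} governs how $R_r$ interacts with that distributor; combining these and invoking Frobenius reciprocity inside $M$ and $N$ separately identifies the two expressions. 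Move~(b) is symmetric.

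With well-definedness established, the Tambara axioms for $M\btm N$ are checked via Proposition~\ref{prop-UG-pres}: only the relations involving $N$ need attention beyond the Mackey structure already present on $M\btm N$. The distributor law $N_fT_p=T_rN_qR_{p'}$ is built into the construction (specialising to $p=1_X$ recovers the defining property). The base-change relation $N_gR_f=R_kN_h$ for a cartesian square reduces, via Proposition~\ref{prop-NRT} applied pointwise in $M$ and $N$, to the corresponding relations there. Functoriality $N_{gf}=N_gN_f$ follows from Proposition~\ref{prop-NNT}, which produces the compatibility between $\Dl(p,g)$, $\Dl(p,fg)$ and $\Dl(r,f)$ needed to match the two iterated formulas.

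For the coproduct claim, define $\iota_M\:M\to M\btm N$ by $\iota_M(m)=m\ot 1_{N(X)}$ (where $1_{N(X)}=N_z()$ for $z\:\emptyset\to X$) and $\iota_N$ symmetrically; direct substitution into the definition of $N_f$ on pure tensors confirms these are Tambara morphisms. Given a Tambara functor $P$ and Tambara morphisms $\phi_M\:M\to P$ and $\phi_N\:N\to P$, the pairing
\[ M(X)\tm N(Y)\to P(X\tm Y),\qquad (m,n)\mapsto R_{\pi_X}(\phi_Mm)\cdot R_{\pi_Y}(\phi_Nn) \]
(multiplication in the semiring $P(X\tm Y)$) is natural in $(X,Y)\in\bCA_G^2$, so by the universal property of $\btm$ it extends uniquely to a morphism $\phi\:M\btm N\to P$ of Mackey functors. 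Checking that $\phi$ commutes with the norm maps reduces, via the $T_p(m\ot n)$ presentation and the already-verified compatibility of $\phi$ with $T$ and $R$, to the case of pure tensors, where it is immediate from the multiplicativity of $N_f$ in $P$ (Proposition~\ref{prop-tambara-semiring}). The main obstacle in the whole argument is the well-definedness step: although every ingredient is supplied by an earlier proposition, assembling the distributor and pullback data so that the $M$-side and $N$-side cancellations line up requires careful bookkeeping.
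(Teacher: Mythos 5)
Your proposal is correct and follows essentially the same route as the paper: define the norm on a generator $T_p(m\ot n)$ via the distributor and the formula $T_r(N_qR_{p'}(m)\ot N_qR_{p'}(n))$, verify well-definedness by checking the two generating relations of Proposition~\ref{prop-box-equiv} using Propositions~\ref{prop-NTT} and \ref{prop-NRT} together with Frobenius reciprocity, verify the remaining Tambara axioms via Propositions~\ref{prop-NNT} and \ref{prop-NRT}, and establish the coproduct property with $m\mapsto m\ot 1$ and $n\mapsto 1\ot n$. This matches the paper's Definition~\ref{defn-NT}, Lemma~\ref{lem-NT}, Corollary~\ref{cor-NT}, Lemmas~\ref{lem-box-N-composite} and \ref{lem-box-N-mackey}, and Lemma~\ref{lem-tambara-coproduct} in both strategy and the supporting results invoked.
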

According to~\cite{na:gdc}*{Remark 1.9}, this result also appears in
an unpublished manuscript of Tambara.

The rest of this section constitutes the proof; the threads are
gathered together in Corollary~\ref{cor-box-tambara} and
Lemma~\ref{lem-tambara-coproduct}. 

\begin{definition}\label{defn-NT}
 Let $M$ and $N$ be Tambara functors, and let $X\xra{g}Y\xra{h}Z$ be
 equivariant maps of finite $G$-sets.  We define 
 \[ NT_{h,g} \: M(X)\tm N(X) \to (M\btm N)(Z) \]
 as follows.  We first construct the distributor 
 \[ \Dl(g,h) = (X\xla{p}A\xra{q}B\xra{r}Z) \]
 as in Definition~\ref{defn-distributor}, and then put
 \[ NT_{h,g}(m,n) = T_r(N_qR_p(m)\ot N_qR_p(n)). \]
\end{definition}

\begin{lemma}\label{lem-NT}
 Suppose we have maps $W\xra{f}X\xra{g}Y\xra{h}Z$ and elements
 $m'\in M(W)$ and $n\in N(X)$.  Then 
 \[ NT_{h,g}(T_f(m'),n) = NT_{h,gf}(m',R_f(n)). \]
\end{lemma}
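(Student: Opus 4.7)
The plan is to unwind both sides against the two distributor diagrams supplied by Proposition~\ref{prop-NTT}, applied to the chain $W\xra{f}X\xra{g}Y\xra{h}Z$. That proposition gives us a commutative diagram containing the distributors $\Dl(g,h)=(X\xla{p}A\xra{q}B\xra{r}Z)$ and $\Dl(gf,h)=(W\xla{\tp}\tA\xra{\tq}\tB\xra{\tr}Z)$, together with the auxiliary $\Dl(j,q)=(A^*\xla{i}\tA\xra{\tq}\tB\xra{k}B)$, where the bottom-left square (forming $A^*=W\tm_XA$) is cartesian and the middle rectangle (making $\tA=A\tm_B\tB$ with left map $ji$) is cartesian. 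The key incidences to extract from the diagram are $\tp=p^*i$, $\tr=rk$, and $f\tp=pji$.

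First, I rewrite the inner expression $N_qR_pT_f(m')$ appearing on the left. The cartesian bottom-left square together with Proposition~\ref{prop-G-bispan-rels}(d) gives $R_pT_f=T_jR_{p^*}$, so $N_qR_pT_f(m')=N_qT_jR_{p^*}(m')$. Now apply the distributor identity for $\Dl(j,q)$, namely $N_qT_j=T_kN_{\tq}R_i$, to get $N_qT_jR_{p^*}(m')=T_kN_{\tq}R_iR_{p^*}(m')=T_kN_{\tq}R_{\tp}(m')$. Thus the left-hand side is
\[ T_r\bigl(T_kN_{\tq}R_{\tp}(m')\ot N_qR_p(n)\bigr). \]

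Next, I use Frobenius reciprocity (Lemma~\ref{lem-frobenius}(a)) on the map $k\:\tB\to B$ to pull $T_k$ across the box product, obtaining
\[ T_r T_k\bigl(N_{\tq}R_{\tp}(m')\ot R_kN_qR_p(n)\bigr) =T_{\tr}\bigl(N_{\tq}R_{\tp}(m')\ot R_kN_qR_p(n)\bigr), \]
using $rk=\tr$. It remains to simplify $R_kN_qR_p(n)$. The middle cartesian rectangle, combined with Proposition~\ref{prop-G-bispan-rels}(d) again, gives $R_kN_q=N_{\tq}R_{ji}$, hence $R_kN_qR_p(n)=N_{\tq}R_{pji}(n)$. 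The diagram identity $pji=f\tp$ (i.e., $p\circ j=f\circ p^*$ from the cartesian bottom-left square, precomposed with $i$) then yields $R_{pji}=R_{\tp}R_f$, so $R_kN_qR_p(n)=N_{\tq}R_{\tp}R_f(n)$.

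Putting this together, the left-hand side becomes $T_{\tr}\bigl(N_{\tq}R_{\tp}(m')\ot N_{\tq}R_{\tp}R_f(n)\bigr)$, which, by the definition of $NT_{h,gf}$ using the distributor $\Dl(gf,h)$, is precisely $NT_{h,gf}(m',R_f(n))$. The only step requiring care is the Frobenius move, since we are interchanging a transfer with a box product that has not yet been given a Tambara structure; but because $T_k$ acts only additively via the underlying Mackey structure, Lemma~\ref{lem-frobenius} applies directly, which is the main subtle point. Every other step is a routine matter of identifying canonical maps in the diagram of Proposition~\ref{prop-NTT}.
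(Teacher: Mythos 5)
Your proof follows essentially the same route as the paper's: unwind both sides against the diagram of Proposition~\ref{prop-NTT}, use the cartesian lower-left square to replace $R_pT_f$ by $T_jR_{p^*}$, invoke the $\Dl(j,q)$ distributor to extract a $T_k$, apply Frobenius reciprocity to push $T_k$ past the box product, and then simplify $R_kN_qR_p$ via the cartesian middle rectangle and the identity $pji=f\tp$. The argument is correct, with the same intermediate expressions as in the paper; your closing remark about why Frobenius reciprocity applies at the purely Mackey level is a sound observation, even though the paper does not spell it out.
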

\begin{proof}
 We will use the diagram 
 \[ \xymatrix{
  W \ar@{=}[d] &
  \tA \ar[l]_{\tp} \ar[d]^i \ar[r]^{\tq} &
  \tB \ar[r]^{\tr} \ar[dd]^k &
  Z \ar@{=}[dd] \\
  W \ar[d]_f &
  A^* \ar[l]_{p^*} \ar[d]^j \\
  X &
  A \ar[l]^p \ar[r]_q &
  B \ar[r]_r &
  Z
 } \]
 constructed in Proposition~\ref{prop-NTT}.  As
 $\Dl(g,h)=(X\xla{p}A\xra{q}B\xra{r}Z)$ and the middle square is
 cartesian we have
 \[ NT_{h,g}(T_f(m'),n)
     = T_r(N_qR_pT_f(m')\ot N_qR_p(n))
     = T_r(N_qT_jR_{p^*}(m')\ot N_qR_p(n)).
 \]
 As $\Dl(j,q)=(A^*\xla{i}\tA\xra{\tq}\tB\xra{k}B)$, this can be
 rewritten as
 \[ NT_{h,g}(T_f(m'),n)
     = T_r(T_kN_{\tq}R_i R_{p^*}(m')\ot N_qR_p(n))
     = T_r(T_kN_{\tq} R_{\tp}(m')\ot N_qR_p(n)).
 \]
 Using Frobenius reciprocity with respect to $k$ this becomes
 \[ NT_{h,g}(T_f(m'),n)
     = T_{rk}(N_{\tq} R_{\tp}(m')\ot R_kN_qR_p(n)).
 \]
 On the outside we have $rk=\tr$.  For the second factor inside, we
 note that the middle rectangle is cartesian, so
 $R_kN_q=N_{\tq}R_{ji}$.  Moreover, we have $pji=f\tp$, so
 $R_{ji}R_p=R_{\tp}R_f$.  Putting this together we get 
 \[ NT_{h,g}(T_f(m'),n)
     = T_{\tr}(N_{\tq} R_{\tp}(m')\ot N_{\tq}R_{\tp}R_f(n)).
 \]
 As 
 \begin{align*} 
  \Dl(gf,h) &= (W\xla{\tp}\tA\xra{\tq}\tB\xra{\tr}Z) \\
   &= (W \xla{p^*i} \tA \xra{\tq} \tB \xra{rk} Z),
 \end{align*}
 this is the same as $NT_{h,gf}(m',R_f(n))$.
\end{proof}

\begin{corollary}\label{cor-NT}
 For any $h\:Y\to Z$ there is a unique map
 $N_h\:(M\btm N)(Y)\to(M\btm N)(Z)$ such that
 $N_hT_g(m\ot n)=NT_{h,g}(m,n)$ for all $g\:X\to Y$ and all
 $(m,n)\in M(X)\tm N(X)$.
\end{corollary}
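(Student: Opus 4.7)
The plan is to invoke Proposition~\ref{prop-box-equiv}, which presents $(M\btm N)(Y)$ as a quotient $\CE/E$ of the set $\CE$ of quadruples $(U,g,m,n)$ under an equivalence relation $E$ generated by two types of elementary moves. Concretely, I define the candidate map $\Phi\:\CE\to(M\btm N)(Z)$ by
\[ \Phi(U,g,m,n) = NT_{h,g}(m,n), \]
and I need to show that $\Phi$ descends to a map $\overline{\Phi}\:\CE/E\to(M\btm N)(Z)$; composition with the bijection $(M\btm N)(Y)\simeq\CE/E$ from Proposition~\ref{prop-box-equiv} then gives the desired map $N_h$.

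For uniqueness, I observe that since every element of $(M\btm N)(Y)$ is of the form $T_g(m\ot n)$ (clause~(a) of Remark~\ref{rem-box-kan}, or equivalently the surjectivity of $\ep$ in Proposition~\ref{prop-box-equiv}), any map $N_h$ satisfying the stated formula is forced; so the existence argument also delivers uniqueness. For the defining property $N_hT_g(m\ot n)=NT_{h,g}(m,n)$, note that the image of $(U,g,m,n)$ under $\ep$ is $T_g R_\dl(m\btm n)=T_g(m\ot n)$, so the requested identity is simply $\overline{\Phi}([(U,g,m,n)])=\Phi(U,g,m,n)$, which is tautological.

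The substantive point is therefore that $\Phi$ is $E$-invariant. There are two generating relations:
\begin{itemize}
 \item[(a)] For $U'\xra{r}U\xra{q}Y$ and $(m',n)\in M(U')\tm N(U)$, I need
 \[ NT_{h,qr}(m',R_r(n)) = NT_{h,q}(T_r(m'),n). \]
 This is precisely Lemma~\ref{lem-NT} applied with $f=r$, $g=q$.
 \item[(b)] For $U'\xra{r}U\xra{q}Y$ and $(m,n')\in M(U)\tm N(U')$, I need
 \[ NT_{h,qr}(R_r(m),n') = NT_{h,q}(m,T_r(n')). \]
 This is the mirror statement. Inspection of the formula
 $NT_{h,g}(m,n)=T_r(N_qR_p(m)\ot N_qR_p(n))$ shows that the roles of $m$ and $n$ are completely symmetric, and the proof of Lemma~\ref{lem-NT} used only the formal structure (the diagram of Proposition~\ref{prop-NTT}, the Frobenius reciprocity of Lemma~\ref{lem-frobenius}(b) in place of~(a), and the cartesian identifications); so the same argument runs verbatim with the two factors swapped.
\end{itemize}

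The main obstacle has already been absorbed by Lemma~\ref{lem-NT}, which is where Proposition~\ref{prop-NTT} (the compatibility of distributors with composition $X\xra{f}Y\xra{g}Z\xra{h}W$ on the inner side) and Frobenius reciprocity for the box product were really used. Once that lemma is in hand, both generating relations for $E$ are accounted for, so $\Phi$ descends and $N_h$ is well-defined.
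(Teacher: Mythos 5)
Your proposal is correct and follows the paper's proof essentially step for step: both pass through Proposition~\ref{prop-box-equiv} to realise $(M\btm N)(Y)$ as a quotient of the set of quadruples, check the first generating relation by Lemma~\ref{lem-NT}, and dispatch the second by the symmetric argument (equivalently, the twist isomorphism). Your additional observation that uniqueness is automatic from surjectivity of $\ep$ matches the paper's implicit reasoning.
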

\begin{proof}
 Lemma~\ref{lem-NT} gives an identity 
 \[ NT_{h,g}(T_f(m'),n) = NT_{h,gf}(m',R_f(n)). \]
 There is a similar identity
 \[ NT_{h,g}(m,T_f(n')) = NT_{h,gf}(R_f(m),n') \]
 which can be proved in the same way or deduced using the twist map.
 The claim follows directly from these together with
 Proposition~\ref{prop-box-equiv}. 
\end{proof}

\begin{lemma}\label{lem-box-N-composite}
 For any maps $X\xra{g}Y\xra{h}Z$ we have 
 \[ N_hN_g = N_{hg} \: (M\btm N)(X) \to (M\btm N)(Z). \]
\end{lemma}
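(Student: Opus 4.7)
The plan is to reduce the identity to an equality of elements of the form $T_f(m \ot n)$ and then translate everything into the language of Proposition~\ref{prop-NNT}, which was set up precisely to handle composites of the shape $N_h N_g T_f$.

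First, by Proposition~\ref{prop-box-equiv}, every element of $(M\btm N)(X)$ has the form $T_f(m \ot n)$ for some equivariant $f\:W\to X$ and some $(m,n)\in M(W)\tm N(W)$. So it suffices to prove that $N_h N_g T_f(m\ot n) = N_{hg} T_f(m\ot n)$. Write $\Dl(f,g)=(W\xla{p}A\xra{q}B\xra{r}Y)$. Applying the defining formula of Corollary~\ref{cor-NT} gives
\[ N_g T_f(m\ot n) = T_r\bigl(N_q R_p(m) \ot N_q R_p(n)\bigr). \]
The point is that the right-hand side is again in the canonical form $T_r(m^*\ot n^*)$ with $m^*=N_q R_p(m)\in M(B)$ and $n^*=N_qR_p(n)\in N(B)$, so I may feed it back into the same formula. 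Writing $\Dl(r,h)=(B\xla{\tp}\tA\xra{\tq}\tB\xra{\tr}Z)$ yields
\[ N_h N_g T_f(m\ot n) = T_{\tr}\bigl(N_{\tq}R_{\tp}N_q R_p(m) \ot N_{\tq}R_{\tp}N_q R_p(n)\bigr). \]

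Now I invoke Proposition~\ref{prop-NNT} with our maps $W\xra{f}X\xra{g}Y\xra{h}Z$, which furnishes a commutative diagram with $(\tA,\tB,\tC,\al,\bt,i,j,k)$ such that $\tp=\bt$, $\tq=j$, $\tr=k$, together with the identifications
\[ \Dl(f,g)=(W\xla{p}A\xra{q}B\xra{r}Y),\qquad
   \Dl(f,hg)=(W\xla{p\al}\tA\xra{ji}\tC\xra{k}Z). \]
The cartesian square in that diagram combined with Proposition~\ref{prop-G-bispan-rels}(d) gives $R_\bt N_q = N_i R_\al$, and iterating functoriality gives
\[ N_{\tq}R_{\tp} N_q R_p = N_j N_i R_\al R_p = N_{ji} R_{p\al}. \]
Substituting back and comparing with $N_{hg}T_f(m\ot n) = T_k\bigl(N_{ji}R_{p\al}(m)\ot N_{ji}R_{p\al}(n)\bigr)$ (which is the defining formula applied to the distributor $\Dl(f,hg)$ as identified above) yields the desired equality.

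The main obstacle is simply making sure one has the right combinatorial tool to collapse the iterated distributor: this is exactly what Proposition~\ref{prop-NNT} provides, and without it one would have to unwind the bispan composition from scratch and verify an isomorphism of bispans by hand. Once that proposition is in place, the calculation is essentially forced, since all the bookkeeping reduces to the single cartesian-square identity $R_\bt N_q = N_i R_\al$ applied separately in the $M$- and $N$-slots of the tensor.
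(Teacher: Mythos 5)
Your argument matches the paper's proof essentially verbatim: both reduce to showing $N_h N_g T_f(m\ot n)=N_{hg}T_f(m\ot n)$, feed the output of $N_gT_f$ back through the defining formula using $\Dl(r,h)$, and then collapse the iterated distributor using the cartesian square from Proposition~\ref{prop-NNT}, whose Mackey relation $R_\bt N_q=N_i R_\al$ is exactly the needed rewriting. The only difference is cosmetic — you introduce temporary tilde names for $\Dl(r,h)$ before translating into the notation of Proposition~\ref{prop-NNT}, but the logic is identical and correct.
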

\begin{proof}
 Consider another map $f\:W\to X$ and elements
 $(m,n)\in M(W)\tm N(W)$.  We must show that
 $N_hN_gT_f(m\ot n)=N_{hg}T_f(m\ot n)$.  We use the diagram 
 \[ \xymatrix{
  \tA \ar[r]^\al \ar[d]_i &
  A \ar[r]^p \ar[d]^q &
  W \ar[r]^f &
  X \ar[d]^g \\
  \tB \ar[r]_\bt \ar[d]_j &
  B \ar[rr]_r & &
  Y \ar[d]^h \\
  \tC \ar[rrr]_k &&& 
  Z
 } \]
 constructed in Proposition~\ref{prop-NNT}.  As
 $\Dl(f,g)=(W\xla{p}A\xra{q}B\xra{r}Y)$, we have 
 $N_gT_f(m\ot n)=T_r(N_qR_p(m)\ot N_qR_p(n))$.  As
 $\Dl(r,h)=(B\xla{\bt}\tB\xra{j}\tC\xra{k}Z)$, we get
 \[ N_hN_gT_f(m\ot n) = T_k(N_jR_\bt N_qR_p(m)\ot N_jR_\bt N_qR_p(n)). \]  
 As the top left square is cartesian, we have $R_\bt N_q=N_iR_\al$, so
 we get
 \[ N_hN_gT_f(m\ot n) = T_k(N_{ji}R_{p\al}(m)\ot N_{ji}R_{p\al}(n)). \]  
 As $\Dl(f,hg)=(W \xla{p\al} \tA \xra{ji} \tC \xra{k} Z)$, this is the
 same as $N_{hg}T_f(m\ot n)$, as required. 
\end{proof}

\begin{lemma}\label{lem-box-N-mackey}
 For any pullback square
 \[ \xymatrix{
  \tX \ar[r]^{\tg} \ar[d]_j & \tY \ar[d]^k \\
  X \ar[r]_g & Y
 } \]
 we have $R_kN_g=N_{\tg}R_j\:(M\btm N)(X)\to(M\btm N)(\tY)$.
\end{lemma}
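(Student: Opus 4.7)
Plan. By Proposition~\ref{prop-box-equiv}, every element of $(M\btm N)(X)$ can be written in the form $T_f(m\ot n)$ for some $G$-map $f\:W\to X$ and some $(m,n)\in M(W)\tm N(W)$. Moreover, by Corollary~\ref{cor-NT} the operation $N_g$ has been pinned down precisely by its action on such generators. So it suffices to verify
\[ R_k N_g T_f(m\ot n) \;=\; N_{\tg} R_j T_f(m\ot n) \]
for every $f\:W\to X$ and every $(m,n)\in M(W)\tm N(W)$, and I would reduce the whole question to a side-by-side computation of the two sides.

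For the left-hand side, I would first apply Corollary~\ref{cor-NT} to rewrite $N_g T_f(m\ot n)=T_r(N_qR_p(m)\ot N_qR_p(n))$ with $\Dl(f,g)=(W\xla{p}A\xra{q}B\xra{r}Y)$. Next I would invoke Proposition~\ref{prop-NRT} with our pullback square: form the further pullback $\tW=\tX\tm_XW$ with projections $\tf\:\tW\to\tX$ and $i=\tj\:\tW\to W$, and obtain a commuting diagram joining $\Dl(f,g)$ to $\Dl(\tf,\tg)=(\tW\xla{\tp}\tA\xra{\tq}\tB\xra{\tr}\tY)$ in which the middle and right squares are cartesian. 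The right cartesian square gives $R_k T_r=T_{\tr}R_\bt$, and the middle cartesian square gives $R_\bt N_q=N_{\tq}R_\al$; combined with the outer commutativity $p\al=\tj\tp$, this yields
\[ R_k N_g T_f(m\ot n) \;=\; T_{\tr}\bigl(N_{\tq}R_{\tp}R_{\tj}(m)\,\ot\,N_{\tq}R_{\tp}R_{\tj}(n)\bigr). \]

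For the right-hand side, I would first apply the ordinary Mackey-type pullback property (Proposition~\ref{prop-G-bispan-rels}(d)) to the pullback square defining $\tW$: namely $R_jT_f=T_{\tf}R_{\tj}$. Combined with the fact that $R_{\tj}$ on $M\btm N$ commutes with $\ot$ (an immediate consequence of its definition $m\ot n = R_\dl(m\btm n)$ together with naturality of $\btm$), this gives
\[ R_j T_f(m\ot n) \;=\; T_{\tf}\bigl(R_{\tj}(m)\ot R_{\tj}(n)\bigr). \]
Then applying Corollary~\ref{cor-NT} to the composite $\tW\xra{\tf}\tX\xra{\tg}\tY$ and its distributor $\Dl(\tf,\tg)$ yields
\[ N_{\tg}R_j T_f(m\ot n) \;=\; T_{\tr}\bigl(N_{\tq}R_{\tp}R_{\tj}(m)\,\ot\,N_{\tq}R_{\tp}R_{\tj}(n)\bigr), \]
which matches the formula obtained for the left-hand side.

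The whole argument is essentially bookkeeping once one has Proposition~\ref{prop-NRT} in hand; the one genuine point of care is making sure the two distributors $\Dl(f,g)$ and $\Dl(\tf,\tg)$ that appear—one from $N_gT_f$ before restricting, the other from $N_{\tg}T_{\tf}$ after restricting—are related by \emph{the same} diagram with cartesian middle and right squares, so that the chain of identities $R_k T_r=T_{\tr}R_\bt$, $R_\bt N_q=N_{\tq}R_\al$, $p\al=\tj\tp$ assembles into exactly the right normal form. This is precisely the content of Proposition~\ref{prop-NRT}, and it is the only nontrivial input; everything else is functoriality of $T$, $N$, $R$ together with the naturality of $\ot$.
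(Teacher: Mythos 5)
Your proposal is correct and is essentially identical to the paper's proof: both reduce to generators $T_f(m\ot n)$ via Proposition~\ref{prop-box-equiv} and Corollary~\ref{cor-NT}, form the pullback $\tW$, invoke Proposition~\ref{prop-NRT} to relate $\Dl(f,g)$ and $\Dl(\tf,\tg)$, and then chase the identities $R_kT_r=T_{\tr}R_\bt$, $R_\bt N_q=N_{\tq}R_\al$, $R_\al R_p=R_{\tp}R_i$ on one side against $R_jT_f=T_{\tf}R_i$ and Corollary~\ref{cor-NT} on the other. The two computations match exactly, with only notational differences ($\tj$ for the paper's $i$).
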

\begin{proof}
 Consider a map $f\:W\to X$ and an element
 $u=T_f(m\ot n)\in(M\btm N)(X)$.  We let $\tW$ be the pullback of $W$,
 so we have a diagram
 \[ \xymatrix{
  \tW \ar[d]_i \ar[r]^{\tf} & X \ar[d]^j \ar[r]^{\tg} & Y \ar[d]^k \\
  W \ar[r]_f & X \ar[r]_g & Y.
 } \]
 in which both squares (and thus the full rectangle) are cartesian.
 As in Proposition~\ref{prop-NRT} there is a commutative diagram 
 \[ \xymatrix{
  \tW \ar[d]_i &
  \tA \ar[l]_{\tp} \ar[r]^{\tq} \ar[d]_\al &
  \tB \ar[r]^{\tr} \ar[d]^\bt &
  \tY \ar[d]^k \\
  W &
  A \ar[l]^p \ar[r]_q &
  B \ar[r]_r &
  Y
 } \]
 in which the middle and right squares are cartesian, the top row is
 $\Dl(\tf,\tg)$, and the bottom row is $\Dl(f,g)$.  We now have
 \begin{align*}
  N_g(u) &= T_r(N_qR_p(m)\ot N_qR_p(n)) \\
  R_kN_g(u) &= T_{\tr}R_\bt(N_qR_p(m)\ot N_qR_p(n))
             = T_{\tr}(R_\bt N_qR_p(m)\ot R_\bt N_qR_p(n)).
 \end{align*}
 As the middle square is cartesian we have $R_\bt N_q=N_{\tq}R_\al$.
 As $p\al=i\tp$ we have $R_\al R_p=R_{\tp}R_i$.  Putting this
 together, we get
 \[ R_kN_g(u) =
     T_{\tr}(N_{\tq}R_{\tp}R_i(m)\ot N_{\tq}R_{\tp}R_i(n)).
 \]
 On the other hand, we have 
 \begin{align*}
  R_j(u) &= T_{\tf}R_i(m\ot n) = T_{\tf}(R_i(m)\ot R_i(n)) \\
  N_{\tg}R_j(u)
   &= T_{\tr}(N_{\tq}R_{\tp}R_i(m)\ot N_{\tq}R_{\tp}R_i(n)),
 \end{align*}
 which is the same as $R_kN_g(u)$.
\end{proof}

\begin{corollary}\label{cor-box-tambara}
 Our definition of $N_h$ makes the Mackey functor $M\btm N$ into a
 Tambara functor.
\end{corollary}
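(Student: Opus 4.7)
The plan is to invoke Proposition \ref{prop-UG-pres}, which presents $\bCU_G$ on the generators $T_f$, $N_f$, $R_f$ modulo the three families of relations~(a)--(c). Three of these relations involve no $N$ and are automatic because $M\btm N$ is a Mackey functor. Two more ($N_{hg}=N_hN_g$ and $N_gR_f=R_kN_h$ for cartesian squares) are exactly Lemmas~\ref{lem-box-N-composite} and~\ref{lem-box-N-mackey}. So the only remaining obligations are:
\begin{itemize}
 \item[(i)] $N_{1_X}$ is the identity on $(M\btm N)(X)$;
 \item[(ii)] the distributor relation $N_hT_f = T_rN_qR_p$ whenever $\Dl(f,h)=(X\xla{p}A\xra{q}B\xra{r}Z)$.
\end{itemize}

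For~(i), by Corollary~\ref{cor-NT} it suffices to verify the identity on the generating elements $T_g(m\ot n)$. For any $g\:W\to Y$, the distributor $\Dl(g,1_Y)$ is canonically isomorphic to $(W\xla{1}W\xra{1}W\xra{g}Y)$, so the defining formula gives $N_{1_Y}T_g(m\ot n)=T_g(N_1R_1(m)\ot N_1R_1(n))=T_g(m\ot n)$, as required.

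For~(ii), take $u\in(M\btm N)(X)$ and by Proposition~\ref{prop-box-equiv} write $u=T_g(m\ot n)$ for some $g\:W\to X$ and $(m,n)\in M(W)\tm N(W)$. The left side, $N_hT_f(u)=N_hT_{fg}(m\ot n)$, equals $T_{\tr}(N_{\tq}R_{\tp}(m)\ot N_{\tq}R_{\tp}(n))$ by the defining formula for $N_h$, where $\Dl(fg,h)=(W\xla{\tp}\tA\xra{\tq}\tB\xra{\tr}Z)$. For the right side, I will apply Proposition~\ref{prop-NTT} (with their $f,g,h$ taken to be our $g,f,h$): it provides an auxiliary pullback with vertex $A^*=W\tm_X A$ and projections $p^*\:A^*\to W$, $j\:A^*\to A$, together with the identification $\Dl(j,q)=(A^*\xla{i}\tA\xra{\tq}\tB\xra{k}B)$ and the equalities $p^*i=\tp$, $rk=\tr$. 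Now compute: the Mackey relation for the square $(A^*,W,A,X)$ gives $R_pT_g=T_jR_{p^*}$, so
\[ R_pT_g(m\ot n)=T_j(R_{p^*}(m)\ot R_{p^*}(n)). \]
Applying $N_q$ via the defining formula with the distributor $\Dl(j,q)$ yields $T_k(N_{\tq}R_{p^*i}(m)\ot N_{\tq}R_{p^*i}(n))=T_k(N_{\tq}R_{\tp}(m)\ot N_{\tq}R_{\tp}(n))$. Composing with $T_r$ and using $rk=\tr$ gives $T_{\tr}(N_{\tq}R_{\tp}(m)\ot N_{\tq}R_{\tp}(n))$, matching the left side.

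The main obstacle is clearly~(ii): unwinding the composition $T_rN_qR_pT_g$ requires both the Mackey base-change identity and a second use of the distributor definition, producing the nontrivial diagram that must be identified with $\Dl(fg,h)$. Proposition~\ref{prop-NTT} does precisely this identification, so the verification becomes essentially bookkeeping once that proposition is invoked.
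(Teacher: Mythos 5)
Your proof is correct and follows the paper's own strategy: invoke Proposition~\ref{prop-UG-pres} and dispose of the relations using Lemmas~\ref{lem-box-N-composite} and~\ref{lem-box-N-mackey}. The one place you add substantive content is the distributor relation (your point~(ii)): the paper simply asserts this is ``visibly built in to our definition of $N_h$'', whereas you carry out the actual verification, unwinding $T_rN_qR_pT_g$ via the base-change identity and a second use of the defining formula for the norm, and then invoking Proposition~\ref{prop-NTT} to identify the resulting diagram with $\Dl(fg,h)$; this is exactly the bookkeeping the paper's one-line claim suppresses, and it is reassuring to see it spelled out, since on a general element $T_g(m\ot n)$ the agreement is not literally tautological but requires precisely the same associativity-of-distributors machinery that underlies Lemma~\ref{lem-NT}.
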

\begin{proof}
 It will suffice to check that the relations in
 Proposition~\ref{prop-UG-pres} are satisfied.  Many of these involve
 only $T$ and $R$ so they are automatically satisfied, because $M\btm
 N$ is a Mackey functor.  Of the remaining relations, part~(a) is
 covered by Lemma~\ref{lem-box-N-composite} (together with the easy
 fact that $N_1=1$).  Part~(b) is visibly built in to our definition
 of $N_h$.  Part~(c) is Lemma~\ref{lem-box-N-mackey}.
\end{proof}

\begin{lemma}\label{lem-tambara-coproduct}
 There is a morphism $i\:M\to M\btm N$ of Tambara functors
 given by $i(m)=m\ot 1$ for all $m\in M(X)$ (where $1$ denotes the
 multiplicative identity element for the semiring structure on $N(X)$
 discussed in Proposition~\ref{prop-tambara-semiring}).  Similarly,
 there is a morphism $j\:N\to M\btm N$ given by $j(n)=1\ot n$.
 Moreover, the diagram $M\xra{i}M\btm N\xla{j}N$ is a coproduct.
\end{lemma}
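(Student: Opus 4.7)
The plan is to verify three things in sequence: (1) that $i$ and $j$ are Tambara morphisms, (2) that any pair of Tambara morphisms $\phi\:M\to P$, $\psi\:N\to P$ factors through a Tambara morphism $\theta\:M\btm N\to P$, and (3) that $\theta$ is uniquely determined. For~(1), consider $i(m)=m\ot 1$: compatibility with $R_f$ holds because restriction distributes coordinatewise over $\ot$ and $R_f\:N(Y)\to N(X)$ is a semiring homomorphism (Proposition~\ref{prop-tambara-semiring}), so $R_f(1)=1$. Compatibility with $T_f$ follows from Frobenius reciprocity (Lemma~\ref{lem-frobenius}(a)) specialised to $n=1$:
\[ T_f(m\ot 1_X)=T_f(m\ot R_f(1_Y))=T_f(m)\ot 1_Y. \]
Compatibility with $N_f$ is the defining identity $N_f(m\ot n)=N_f(m)\ot N_f(n)$ of the Tambara structure on $M\btm N$, applied with $n=1$, combined with $N_f(1)=1$. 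The case of $j$ is symmetric.

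For~(2), I would use the presentation of $(M\btm N)(X)$ given in Proposition~\ref{prop-box-equiv} and set $\theta([U,p,m,n])=T_p(\phi(m)\cdot\psi(n))$, where $\cdot$ denotes semiring multiplication in $P(U)$. Well-definedness reduces to checking the two generating Frobenius-type relations, both of which follow from Frobenius reciprocity applied inside $P$. Commutation of $\theta$ with $T_f$ is immediate from $T_fT_p=T_{fp}$; commutation with $R_f$ follows from writing $R_fT_p=T_{p'}R_{g'}$ via the Mackey pullback square, together with the semiring-homomorphism property of $R_{g'}$ on $P$. The substantive check is with $N_f$: writing $\Dl(g,f)=(U\xla{p}A\xra{q}B\xra{r}Z)$, the defining formula for $N_f$ on $M\btm N$ and naturality of $\phi,\psi$ give
\[ \theta(N_fT_g(m\ot n))=T_r\bigl(N_qR_p\phi(m)\cdot N_qR_p\psi(n)\bigr). \]
Since $R_p$ is a semiring homomorphism and $N_q$ is multiplicative on $P$, the expression inside $T_r$ rewrites as $N_qR_p(\phi(m)\psi(n))$, and then the distributor formula $N_fT_g=T_rN_qR_p$ in $P$ (Proposition~\ref{prop-NT}) identifies the result with $N_f\theta(T_g(m\ot n))$. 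This is the main obstacle of the argument: it is the step at which the precise definition of $N$ on $M\btm N$ via the distributor is used in an essential way.

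For~(3), the key identity is $m\ot n=i(m)\cdot j(n)$ inside the semiring $(M\btm N)(X)$. To prove it, identify $(M\btm N)(X\amalg X)\simeq(M\btm N)(X)\tm(M\btm N)(X)$ via the Mackey structure; the pair $(m\ot 1,\,1\ot n)$ corresponds to $(m,1)\ot(1,n)$, where $(m,1)\in M(X\amalg X)$ and $(1,n)\in N(X\amalg X)$. Applying $N_s$ for the fold map $s\:X\amalg X\to X$, which realises the semiring multiplication by Proposition~\ref{prop-tambara-semiring}, and using the defining identity $N_s(a\ot b)=N_s(a)\ot N_s(b)$, yields $N_s(m,1)\ot N_s(1,n)=m\ot n$. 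Consequently any Tambara morphism $\theta'$ with $\theta'i=\phi$ and $\theta'j=\psi$ satisfies $\theta'(m\ot n)=\phi(m)\psi(n)$, and compatibility with $T_p$ forces $\theta'=\theta$ on the generators $T_p(m\ot n)$.
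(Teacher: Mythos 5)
Your proof is correct and follows essentially the same route as the paper's: both verify that $i$ and $j$ are Tambara morphisms using the semiring-homomorphism property of $R_f$ and $N_f$ plus Frobenius reciprocity for $T_f$, and both construct the comparison morphism on the generating set from Proposition~\ref{prop-box-equiv} and check compatibility with $N_f$ via the distributor. The one place you go further than the paper is uniqueness: the paper disposes of it with ``it is easily seen,'' while you supply the explicit identity $m\ot n = i(m)\cdot j(n)$ obtained by pulling back to $X\amalg X$ and applying $N_s$ for the fold map; this is a clean and worthwhile elaboration, since combined with $T_p$-equivariance it pins $\theta$ down completely on the generators $T_p(m\ot n)$.
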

\begin{proof}
 Consider maps $W\xra{f}X\xra{g}Y$ and an element $m\in M(X)$.  As
 $R_f\:N(X)\to N(W)$ is a semiring map we have
 \[ R_fi(m)=R_f(m\ot 1)=R_f(m)\ot R_f(1) = R_f(m)\ot 1=iR_f(m). \]
 Similarly, the map $N_g\:N(X)\to N(Y)$ preserves $1$ so we have 
 \[ N_gi(m)=N_g(m\ot 1)=N_g(m)\ot N_g(1)=N_g(m)\ot 1=iN_g(m). \]
 We can also use Frobenius reciprocity (Lemma~\ref{lem-frobenius}) to
 get 
 \[ T_gi(m)=T_g(m\ot 1)=T_g(m\ot R_g(1))=T_g(m)\ot 1=iT_g(m). \]
 This proves that $i$ is a morphism of Tambara functors, and by
 symmetry the same is true of $j$.

 Now suppose we have a Tambara functor $S$ and morphisms
 $M\xra{d}S\xla{e}N$.  We claim that for each $X$ there is a unique map
 $r\:(M\btm N)(X)\to S(X)$ satisfying
 \[ r(T_q(m\ot n)) = T_q(d(m)e(n)) \]
 for all $q\:U\to X$ and $(m,n)\in M(U)\tm N(U)$.  In view of
 Proposition~\ref{prop-box-equiv}, it will suffice to show that 
 \begin{itemize}
  \item[(a)] For all $U'\xra{r}U\xra{q}X$ and $(m',n)\in M(U')\tm N(U)$
   we have $T_{qr}(d(m')e(R_r(n)))=T_q(d(T_r(m'))e(n))$.
  \item[(b)] For all $U'\xra{r}U\xra{q}X$ and $(m,n')\in M(U)\tm N(U')$
   we have $T_{qr}(d(R_r(m))e(n'))=T_q(d(m)e(T_r(n')))$.
 \end{itemize}
 Part~(a) can be rewritten as
 $T_{qr}(d(m')R_r(e(n)))=T_q(T_r(d(m'))e(n))$ and in this form it is a
 special case of Proposition~\ref{prop-tambara-semiring}(b).  Part~(b)
 is similar, so we have maps $r\:(M\btm N)(X)\to S(X)$ as claimed.  It
 is clear by construction that $rT_f=T_fr$ for any $f\:W\to X$.  We
 claim that $rN_f=N_fr$ also holds.  To see this, consider an element
 $z=T_q(m\ot n)\in (M\btm N)(W)$, and write the distributor $\Dl(q,f)$
 as $(U\xla{p^*}A\xra{q^*}B\xra{r^*}X)$.  We then have
 \begin{align*}
  N_f(z) &= T_{r^*}(N_{q^*}R_{p^*}(m)\ot N_{q^*}R_{p^*}(n)) \\
  rN_f(z) &= T_{r^*}(d(N_{q^*}R_{p^*}(m))\;e(N_{q^*}R_{p^*}(n))) \\
    &= T_{r^*}(N_{q^*}R_{p^*}(d(m))\;N_{q^*}R_{p^*}(e(n))) \\
    &= T_{r^*}N_{q^*}R_{p^*}(d(m)\; e(n)) \\
    &= N_fT_q(d(m)\; e(n)) 
     = N_fr(z),
 \end{align*}
 as claimed.

 Now consider instead a map $g\:V\to W$; we claim that
 $rR_g=R_gr\:(M\btm N)(W)\to S(V)$.  To see this, we construct a
 pullback square as follows.
 \[ \xymatrix{
  \tU \ar[r]^{\tg} \ar[d]_{\tq} & U \ar[d]^q \\
  V \ar[r]_g & W
 } \] 
 For $z=T_q(m\ot n)$ as before, we then have 
 \begin{align*}
  R_g(z)  &= R_gT_q(m\ot n) 
           = T_{\tq}R_{\tg}(m\ot n) 
           = T_{\tq}(R_{\tg}(m)\ot R_{\tg}(n)) \\
  rR_g(z) &= T_{\tq}(d(R_{\tg}(m))\;e(R_{\tg}(n)))
           = T_{\tq}(R_{\tg}(d(m))\;R_{\tg}(e(n))) \\
          &= T_{\tq}R_{\tg}(d(m)\;e(n)) 
           = R_gT_q(d(m)\;e(n)) 
           = R_gr(z)
 \end{align*}
 as required.  This proves that $r$ is a morphism of Tambara
 functors.  It is easily seen to be the unique one with $ri=d$ and
 $rj=e$, so the diagram $M\xra{i}M\btm N\xla{j}N$ is a coproduct, as
 claimed. 
\end{proof}

\section{Limits and colimits}
\label{sec-colimits}

In this section we will analyse limits and colimits in the categories
$\Mackey_G$ and $\Tambara_G$.  We could do this by quoting general
results about coloured Lawvere theories, but it is not hard to argue
more directly.

\begin{proposition}\label{prop-limits}
 Let $S:\CJ\to\Tambara_G$ be a diagram of Tambara functors, and put
 $S^*(X)=\invlim_{j\in\CJ}S(j)(X)$.  Then there is a canonical way to
 make $S^*$ into a Tambara functor, and it becomes the limit of the
 diagram in $\Tambara_G$.  Moreover, the same applies for diagrams of
 Mackey functors.
\end{proposition}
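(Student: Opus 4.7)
The plan is to verify that $S^*$ inherits all the required structure from pointwise formulas, exploiting the fact that the category of sets has all limits and that the forgetful description of Tambara functors as product-preserving functors $\bCU_G \to \Sets$ is well-behaved under limits.

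First I would define $S^*$ on objects by $S^*(X) = \invlim_{j\in\CJ} S(j)(X)$ as stated. For any morphism $\phi \in \bCU_G(X,Y)$, each Tambara functor $S(j)$ gives a map $S(j)(\phi)\: S(j)(X) \to S(j)(Y)$. Because the structure maps of the diagram $S$ are morphisms of Tambara functors, these maps $S(j)(\phi)$ are natural in $j$, so the universal property of the limit in $\Sets$ produces a unique map $S^*(\phi)\: S^*(X) \to S^*(Y)$. The uniqueness clause in the universal property immediately gives functoriality: $S^*(\psi \circ \phi) = S^*(\psi) \circ S^*(\phi)$ and $S^*(1) = 1$.

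Next I would verify that $S^*$ is product-preserving, which is where most of the content lies. Given finite $G$-sets $X$ and $Y$, the object $X\amalg Y$ is the categorical product of $X$ and $Y$ in $\bCU_G$, so each $S(j)$ gives a bijection $S(j)(X\amalg Y) \to S(j)(X) \tm S(j)(Y)$, natural in $j$. Since limits commute with finite products in $\Sets$, the induced map
\[ S^*(X\amalg Y) = \invlim_j S(j)(X\amalg Y) \xra{\simeq} \invlim_j (S(j)(X)\tm S(j)(Y)) \xra{\simeq} S^*(X)\tm S^*(Y) \]
is a bijection; a similar (easier) argument handles the empty product, showing $S^*(\emptyset)$ is a singleton. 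Thus $S^*$ is a Tambara functor.

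For the universal property, the canonical projections $\invlim_j S(j)(X) \to S(i)(X)$ are natural in both $X$ and $i$ and so assemble into morphisms $\pi_i\: S^* \to S(i)$ of Tambara functors which form a cone over the diagram. Given any other cone $(\lm_i\: T \to S(i))_{i\in\CJ}$ in $\Tambara_G$, for each $X$ the $\lm_{i,X}$ factor uniquely through a set map $\lm_X\: T(X) \to S^*(X)$. The uniqueness in the defining property of the limit in $\Sets$, applied to both $T(X)$ and $T(Y)$, shows that these maps commute with every $\phi \in \bCU_G(X,Y)$, so $\lm$ is a morphism of Tambara functors. This exhibits $S^*$ as the limit in $\Tambara_G$.

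The proof for Mackey functors is identical, replacing $\bCU_G$ by $\bCA_G$ throughout; no step uses the norm operations in an essential way, so no real obstacle appears. The only point requiring any care is the natural compatibility in the universal property argument, but this is entirely formal once one observes that morphisms of Tambara functors are just natural transformations between product-preserving functors.
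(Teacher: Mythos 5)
Your proposal is correct and matches the paper's argument: compute limits pointwise in the functor category, then use that limits commute with finite products in $\Sets$ to see $S^*$ is still product-preserving, and note that the universal property is inherited formally. You simply spell out more detail than the paper does.
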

\begin{proof}
 It is standard that $S^*$ is the limit of the diagram in the category
 of all functors from $\bCU_G$ to sets.  Moreover, as limits commute
 with products, we see that the map
 $S^*(X\amalg Y)\to S^*(X)\tm S^*(Y)$ is a bijection for all $X$ and
 $Y$ in $\bCU_G$, so $S^*$ is a Tambara functor.  The rest is clear.
\end{proof}

Filtered colimits are equally easy:
\begin{proposition}\label{prop-filtered-colimits}
 Let $S:\CJ\to\Tambara_G$ be a diagram of Tambara functors, where
 $\CJ$ is a filtered category.  Put $S^*(X)=\colim_{j\in\CJ}S(j)(X)$.
 Then there is a canonical way to make $S^*$ into a Tambara functor,
 and it becomes the colimit of the diagram in $\Tambara_G$.  Moreover,
 the same applies for diagrams of Mackey functors.
\end{proposition}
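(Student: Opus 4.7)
The plan is to argue that the pointwise colimit is automatically product preserving (and hence a Tambara functor) because filtered colimits commute with finite products in the category of sets. Concretely, for each morphism $[\om]\in\bCU_G(X,Y)$ and each $j\in\CJ$ we have a map $f_\om\:S(j)(X)\to S(j)(Y)$, and these are natural in $j$, so they induce a map $f^*_\om\:S^*(X)\to S^*(Y)$. One checks immediately from the functoriality of the colimit that composition and identities are preserved, so $S^*\:\bCU_G\to\Sets$ is a well-defined functor.

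Next I would check that $S^*$ is product preserving. Since $\CJ$ is filtered and both finite products and singletons are finite limits, the canonical comparison maps
\[ S^*(X\amalg Y) = \colim_j S(j)(X\amalg Y) \xra{\;\simeq\;}
     \colim_j\bigl(S(j)(X)\tm S(j)(Y)\bigr) \xra{\;\simeq\;}
     S^*(X)\tm S^*(Y) \]
and $S^*(\emptyset)=\colim_j 1 = 1$ are bijections. (Here I am using that each $S(j)$ sends $X\amalg Y$ to $S(j)(X)\tm S(j)(Y)$ since $S(j)$ is a Tambara functor, together with the standard fact that filtered colimits in $\Sets$ commute with finite limits.) By Definition~\ref{defn-tambara-functor}, this makes $S^*$ a Tambara functor.

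To finish, I would verify the universal property. The canonical maps $S(j)(X)\to S^*(X)$ are natural in $j$ and in the $\bCU_G$-variable $X$ by construction, so they assemble into a cocone $\iota\:S\Rightarrow S^*$ in $\Tambara_G$. Given any other cocone $\phi\:S\Rightarrow T$ in $\Tambara_G$, the universal property of the pointwise colimit in $\Sets$ produces a unique family of maps $\ov{\phi}_X\:S^*(X)\to T(X)$ with $\ov{\phi}_X\circ\iota_{j,X}=\phi_{j,X}$. Naturality of $\ov{\phi}$ with respect to any $[\om]\in\bCU_G(X,Y)$ is checked by chasing the defining colimit cone: both $T([\om])\circ\ov{\phi}_X$ and $\ov{\phi}_Y\circ S^*([\om])$ agree after precomposition with every $\iota_{j,X}$, so they coincide. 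Thus $\ov{\phi}$ is a morphism of Tambara functors, and it is clearly the unique one compatible with the cocone structure, proving that $S^*$ is the colimit in $\Tambara_G$.

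The argument for Mackey functors is identical with $\bCU_G$ replaced by $\bCA_G$. There is no real obstacle — the entire content is the commutation of filtered colimits with finite products in $\Sets$, which is why the analogous statement fails for arbitrary (non-filtered) colimits and must be treated differently (as in Definition~\ref{defn-box-mackey} and the Day convolution discussion).
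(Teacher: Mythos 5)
Your proof is correct and takes essentially the same approach as the paper: the key observation in both is that filtered colimits in $\Sets$ commute with finite products, so the pointwise colimit is automatically product-preserving, and the universal property is inherited from the functor category. You have simply spelled out the verification steps that the paper leaves to the reader.
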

\begin{proof}
 It is standard that $S^*$ is the colimit of the diagram in the
 category of all functors from $\bCU_G$ to sets.  Moreover, as
 filtered colimits in the category of sets commute with finite
 products, we see that the map $S^*(X\amalg Y)\to S^*(X)\tm S^*(Y)$ is
 a bijection for all $X$ and $Y$ in $\bCU_G$, so $S^*$ is a Tambara
 functor.  The rest is clear.
\end{proof}

\begin{corollary}\label{cor-coproducts}
 The categories $\Mackey_G$ and $\Tambara_G$ have all set-indexed
 coproducts. 
\end{corollary}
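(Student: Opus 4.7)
The plan is to reduce set-indexed coproducts to a combination of finite coproducts and filtered colimits, both of which are already within reach. Proposition~\ref{prop-tambara-coproduct} supplies binary coproducts in $\Tambara_G$ via the box product, and Proposition~\ref{prop-filtered-colimits} supplies filtered colimits in both categories. For $\Mackey_G$, the category $\bCA_G$ is semiadditive, so finite products (which exist by Proposition~\ref{prop-limits}) automatically serve as finite coproducts; no additional work is required in the Mackey case.

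Next I would verify that each category has an initial object. The natural candidate is the representable functor $H_\emptyset$ sending $X$ to $\bCU_G(\emptyset,X)$ (respectively $\bCA_G(\emptyset,X)$); by the Yoneda lemma, morphisms $H_\emptyset \to S$ correspond to elements of $S(\emptyset)$, and since $S$ preserves products the value $S(\emptyset)$ is an empty product and hence a singleton. In $\Tambara_G$ this initial object is precisely the Burnside Tambara functor of Example~\ref{eg-burnside-tambara}. Combining initial objects with binary coproducts by iteration then yields finite coproducts in both categories.

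For an arbitrary set-indexed family $(M_i)_{i\in I}$, I would consider the filtered poset $\CP_f(I)$ of finite subsets of $I$ ordered by inclusion. To each $F \in \CP_f(I)$ I associate the finite coproduct $M_F = \coprod_{i\in F} M_i$; for $F \sse F'$, the coprojections $M_i \to M_{F'}$ with $i\in F$ combine via the universal property of $M_F$ into a canonical transition map $M_F \to M_{F'}$. Uniqueness in the universal property of $M_F$ shows that these transitions compose correctly, so $F \mapsto M_F$ is a genuine filtered diagram. Its colimit $M_\infty$ exists by Proposition~\ref{prop-filtered-colimits}, and any family of morphisms $M_i \to S$ induces compatible maps $M_F \to S$ for each $F$ and hence a unique map $M_\infty \to S$ extending all of them, giving the required universal property.

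No single step is genuinely hard given the machinery already developed. The main point requiring care is the functoriality of $F \mapsto M_F$, which follows formally from the uniqueness clause in the universal property of finite coproducts; this kind of argument is standard for categories of algebras over a (multisorted) Lawvere theory, and would go through unchanged here.
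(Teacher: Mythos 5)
Your proof is correct and follows essentially the same route as the paper's: finite coproducts exist (in $\Mackey_G$ because it is semiadditive, in $\Tambara_G$ because $\btm$ is the coproduct by Proposition~\ref{prop-tambara-coproduct}), and infinite coproducts are filtered colimits of finite ones via Proposition~\ref{prop-filtered-colimits}. Your additional detail about the initial object and the explicit poset $\CP_f(I)$ is a reasonable elaboration of what the paper leaves implicit; the only small imprecision is attributing the finite-products-equal-finite-coproducts fact to $\bCA_G$ being semiadditive, when the relevant fact is that $\Mackey_G$ itself (as the category of modules over a semiadditive category) is semiadditive.
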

\begin{proof}
 Finite coproducts exist in both categories: in $\Mackey_G$ they are
 the same as finite products, and in $\Tambara_G$ they are tensor
 products (by Proposition~\ref{prop-tambara-coproduct}).  Infinite
 coproducts can be expressed as filtered colimits of finite
 coproducts.  
\end{proof}

We can now generalise Definition~\ref{defn-semigroup-congruence}:
\begin{definition}\label{defn-congruence}\ \\
 A \emph{congruence} on an algebraic structure $P$ is a substructure
 of $P\tm P$ that is also an equivalence relation.  In more detail:
 \begin{itemize}
  \item[(a)] A congruence on a semigroup $M_1$ is a subsemigroup of
   $M_1\tm M_1$ that is also an equivalence relation.
  \item[(b)] A congruence on a semiring $S_1$ is a subsemiring of
   $S_1\tm S_1$ that is also an equivalence relation.
  \item[(c)] A congruence on a Mackey functor $M$ is a sub-Mackey
   functor $E\leq M\tm M$ such that the subset $E(X)\sse M(X)\tm M(X)$
   is an equivalence relation on $M(X)$ for all $X$.
  \item[(d)] A congruence on a Tambara functor $S$ is a sub-Tambara
   functor $E\leq S\tm S$ such that the subset $E(X)\sse S(X)\tm S(X)$
   is an equivalence relation on $S(X)$ for all $X$.
 \end{itemize}
\end{definition}

\begin{proposition}\label{prop-cong}\ \\
 \begin{itemize}
  \item[(a)] For any morphism $\phi\:S\to S'$ of Tambara functors there
   is a congruence $\eqker(\phi)$ on $S$ given by 
   \[ \eqker(\phi)(X) =
       \{(a,b)\in S(X)\tm S(X) \st \phi(a)=\phi(b)\in S'(X)\}.
   \]
  \item[(b)] For any congruence $E$ on $S$ there is a unique way to
   make the quotient sets $(S/E)(X)=S(X)/E(X)$ into a Tambara functor
   such that the projection $\pi\:S\to S/E$ is a Tambara morphism.
  \item[(c)] If $E\leq\eqker(\phi)$ then there is a unique Tambara
   morphism $\ov{\phi}\:S/E\to S'$ with $\phi=\ov{\phi}\pi$, but if
   $E\not\leq\eqker(\phi)$ then there is no such morphism.
 \end{itemize}
 Moreover, the corresponding statements also hold for Mackey
 functors. 
\end{proposition}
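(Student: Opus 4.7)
The plan is to treat the three parts in order, using the same underlying idea: a congruence on $S$ is exactly a sub-Tambara functor of $S\tm S$ whose fibres are equivalence relations, and for any operation encoded by a bispan, that operation preserves the subfunctor and therefore descends to the pointwise set-quotient. Throughout, I will write everything for the Tambara case (functors on $\bCU_G$) and note that the Mackey case (functors on $\bCA_G$) is formally identical.

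For part~(a), the subset $\eqker(\phi)(X)\sse S(X)\tm S(X)$ is, at each $X$, the pullback of the diagonal of $S'(X)$ along $\phi_X\tm\phi_X$, so it is plainly an equivalence relation. To see that the collection of these subsets defines a sub-Tambara functor of $S\tm S$, it is enough to observe that for every bispan $\om\in\bCU_G(X,Y)$, naturality of $\phi$ gives $\phi_Y\circ f_\om=f_\om\circ\phi_X$, so $(f_\om\tm f_\om)$ sends $\eqker(\phi)(X)$ into $\eqker(\phi)(Y)$. This is the only content of~(a).

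For part~(b), the crucial observation is that because $E$ is a sub-Tambara functor of $S\tm S$, for every bispan $\om\in\bCU_G(X,Y)$ the operation $f_\om\tm f_\om$ on $S\tm S$ restricts to $E$; equivalently, whenever $(a,a')\in E(X)$ we have $(f_\om(a),f_\om(a'))\in E(Y)$. This is exactly the condition that the function $f_\om\:S(X)\to S(Y)$ respects the equivalence relations $E(X)$ and $E(Y)$, so it descends uniquely to a function $(S/E)(X)\to(S/E)(Y)$. Functoriality in $\om$ and identities on identity bispans are inherited from $S$ because each quotient map $\pi_X\:S(X)\to(S/E)(X)$ is surjective. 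Product preservation follows from the identities $S(X\amalg Y)=S(X)\tm S(Y)$ and $E(X\amalg Y)=E(X)\tm E(Y)$ (the latter because $E$ is itself a Tambara functor), together with the standard fact that the quotient of a product by a product of equivalence relations is the product of the quotients. Uniqueness of the Tambara structure and of $\pi$ as a Tambara morphism again follows from surjectivity of the $\pi_X$.

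For part~(c), a set function $\phi_X\:S(X)\to S'(X)$ factors (uniquely) through the set-quotient $S(X)/E(X)$ if and only if $E(X)\sse\eqker(\phi)(X)$; imposing this for all $X$ is exactly $E\leq\eqker(\phi)$. When the condition holds, the resulting maps $\ov{\phi}_X$ commute with every $f_\om$ because $\phi$ does and $\pi$ is a surjective Tambara morphism, so $\ov{\phi}$ is a morphism of Tambara functors. The Mackey-functor case requires no change of argument, since only the formal properties of a product-preserving functor on a semiadditive category have been used. The only place that is not purely formal is the verification in~(b) that the quotient again preserves finite products, but this reduces immediately to the elementary fact quoted above about quotients of products of sets, so there is no serious obstacle anywhere in the proof.
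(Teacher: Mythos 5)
Your proof is correct and follows essentially the same route as the paper's: observe that $\eqker(\phi)$ is a congruence by naturality, that a congruence $E$ makes each $f_\om$ descend to the pointwise quotients with product-preservation coming from $E(X\amalg Y)=E(X)\tm E(Y)$, and that the factorisation in (c) reduces to the set-level factorisation together with surjectivity of $\pi$. No gaps.
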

\begin{proof}
 Most of this is clear but we offer a few pointers.
 \begin{itemize}
  \item[(a)] If $\om$ is a bispan from $X$ to $Y$ and
   $(a,b)\in\eqker(\phi)(X)$ then 
   \[ \phi(f_\om(a)) = f_\om(\phi(a)) =
        f_\om(\phi(b)) = \phi(f_\om(b))
   \]
   so $f_\om(a,b)\in\eqker(\phi)(Y)$.  Moreover, it is clear from the
   diagram 
   \[ \xymatrix{
    S(X\amalg Y) \ar[r]^{\phi} \ar[d]_{(R_i,R_j)}^\simeq &
    S'(X\amalg Y) \ar[d]^{(R_i,R_j)}_\simeq \\
    S(X)\tm S(Y) \ar[r]_{\phi\tm\phi} & 
    S'(X)\tm S'(Y)
   } \]
   that $\eqker(\phi)(X\amalg Y)=\eqker(\phi)(X)\tm\eqker(\phi)(Y)$.  This
   proves that $\eqker(\phi)$ is a sub-Tambara functor of $S\tm S$.  It
   is clear that it is also an equivalence relation.
  \item[(b)] Consider a bispan $\om$ from $X$ to $Y$.  Any element
   $a\in(S/E)(X)$ can be written as $\pi(a_0)$ for some
   $a_0\in S(X)$.  If $a=\pi(a_0)=\pi(a_1)$ then $(a_0,a_1)\in E(X)$
   but $E$ is a sub-Tambara functor of $S\tm S$ so
   $(f_\om(a_0),f_\om(a_1))\in E(Y)$ so
   $\pi(f_\om(a_0))=\pi(f_\om(a_1))$.  We thus have a well-defined
   operation $f_\om\:(S/E)(X)\to (S/E)(Y)$ given by
   $\pi(a_0)\mapsto\pi(f_\om(a_0))$.  Moreover, for any finite
   $G$-sets $X$ and $Y$ we have $S(X\amalg Y)=S(X)\tm S(Y)$ and
   $E(X\amalg Y)=E(X)\tm E(Y)$; it follows directly from this that
   $(S/E)(X\amalg Y)=(S/E)(X)\tm(S/E)(Y)$.  The rest is now clear.
  \item[(c)] Left to the reader.
 \end{itemize}
\end{proof}

\begin{proposition}\label{prop-tambara-coeq}
 The category of Tambara functors has coequalisers, as does the
 category of Mackey functors.  
\end{proposition}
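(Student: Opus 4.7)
The plan is to realise the coequaliser of any pair $\phi,\psi\:S\to S'$ of Tambara morphisms as the quotient $S'/E$, where $E$ is the smallest congruence on $S'$ containing the pairs $(\phi(a),\psi(a))$, and then let Proposition~\ref{prop-cong} do the rest of the work. The key preliminary step is to show that arbitrary intersections of congruences form a congruence; once this is in hand, the proof is entirely formal.

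Given any family $\{E_\lm\}$ of congruences on $S'$, set $E(X)=\bigcap_\lm E_\lm(X)$ as subsets of $(S'\tm S')(X)$. Then $E(X)$ is an equivalence relation because each $E_\lm(X)$ is. For any bispan $\om\:X\to Y$ we have $f_\om(E(X))\sse f_\om(E_\lm(X))\sse E_\lm(Y)$ for every $\lm$, and so $f_\om(E(X))\sse E(Y)$; thus $E$ is a subfunctor. Product preservation also transmits to $E$, using the identity $\bigcap_\lm(A_\lm\tm B_\lm)=(\bigcap_\lm A_\lm)\tm(\bigcap_\lm B_\lm)$ for subsets of a product: the isomorphism $(S'\tm S')(X\amalg Y)\simeq (S'\tm S')(X)\tm(S'\tm S')(Y)$ restricts to $E(X\amalg Y)\simeq E(X)\tm E(Y)$. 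Since $S'\tm S'$ is itself a congruence, the collection of congruences containing any prescribed family of pairs is always nonempty, and its intersection is the smallest congruence with that property.

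Apply this to $F=\{(\phi(a),\psi(a))\st X\in\bCU_G,\;a\in S(X)\}$, obtaining a smallest congruence $E$ containing $F$. Form the quotient $S'/E$ with projection $\pi\:S'\to S'/E$ using Proposition~\ref{prop-cong}(b); by construction $\pi\phi=\pi\psi$. If $\phi'\:S'\to T$ is any Tambara morphism satisfying $\phi'\phi=\phi'\psi$, then $F\sse\eqker(\phi')$, and $\eqker(\phi')$ is itself a congruence by Proposition~\ref{prop-cong}(a), so minimality of $E$ forces $E\leq\eqker(\phi')$; Proposition~\ref{prop-cong}(c) then produces a unique $\ov{\phi'}\:S'/E\to T$ with $\phi'=\ov{\phi'}\pi$. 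Hence $\pi$ is the coequaliser. The Mackey functor case is identical, replacing $\bCU_G$ by $\bCA_G$.

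I do not anticipate a serious obstacle here: the substantive content has already been packaged into Proposition~\ref{prop-cong}, and the only genuinely new ingredient is the intersection argument, which is straightforward. The one minor thing to watch is that we are abstractly invoking an intersection of a possibly large family of congruences rather than constructing the smallest congruence explicitly --- Remark~\ref{rem-smallest-congruence} already warned that an explicit construction would likely require alternating closures under transfers/norms and under transitivity infinitely many times, and we happily bypass this issue.
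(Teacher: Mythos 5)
Your proof is correct and takes essentially the same approach as the paper: form the smallest congruence containing $\{(\phi(a),\psi(a))\}$ by intersecting all congruences that contain it, then quotient and invoke Proposition~\ref{prop-cong}. The only nuance the paper adds is a parenthetical observation that the collection of congruences is a set (not a proper class) because a congruence is determined by its values on orbits, which you implicitly bypass by noting the family is nonempty; either treatment is fine.
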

\begin{proof}
 Consider a pair of Tambara morphisms $\phi,\psi\:S'\to S$.  Let $\CE$
 be the collection of all congruences $E\leq S\tm S$ such that for all
 finite $G$-sets $X$ and all elements $a'\in S'(X)$ we have
 $(\phi(x),\psi(x))\in E(X)$.  (Note that any congruence is determined
 by the subsets $E(G/H)\sse S(G/H)\tm S(G/H)$ for all $H\leq G$, so
 $\CE$ is a set rather than a proper class.)  Now put 
 \[ E_1(X) =
    \{(a,b)\in S(X)\tm S(X) \st
      (a,b)\in E(X) \text{ for all } E\in\CE\}.
 \]
 One can check that this is itself a congruence with
 $(\phi(a'),\psi(a'))\in E_1(X)$ for all $a'\in S'(X)$, so $E_1$ is
 the smallest element of $\CE$.  Using Proposition~\ref{prop-cong} we
 see that the projection $\pi\:S\to S/E_1$ is a coequaliser for $\phi$
 and $\psi$.  The proof for Mackey functors is the same.
\end{proof}

\begin{corollary}\label{cor-tambara-cocomplete}
 The category of Tambara functors has colimits for all set-indexed
 diagrams, as does the category of Mackey functors.
\end{corollary}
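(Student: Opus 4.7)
The plan is to invoke the standard category-theoretic fact that a category has all small colimits if and only if it has all small coproducts and all coequalisers. Both ingredients are already in hand: Corollary~\ref{cor-coproducts} gives set-indexed coproducts in $\Mackey_G$ and $\Tambara_G$, and Proposition~\ref{prop-tambara-coeq} gives coequalisers in both categories.

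More explicitly, for a diagram $D\:\CJ\to\Tambara_G$, I would form the coproduct $C=\coprod_{j\in\CJ}D(j)$ (using Corollary~\ref{cor-coproducts}) together with the coproduct $C'=\coprod_{\phi\:i\to j}D(i)$ indexed by the morphisms of $\CJ$. Each morphism $\phi\:i\to j$ in $\CJ$ determines two maps from the $\phi$-summand of $C'$ into $C$, namely the canonical inclusion of $D(i)$ into $C$ on one hand, and the composite $D(i)\xra{D(\phi)}D(j)\hookrightarrow C$ on the other. Assembling these over all $\phi$ gives a parallel pair $u,v\:C'\rightrightarrows C$, and its coequaliser (which exists by Proposition~\ref{prop-tambara-coeq}) is easily checked to satisfy the universal property of $\colim_\CJ D$. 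The same argument works verbatim for $\Mackey_G$.

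There is no genuine obstacle here; the work has already been done in Corollaries~\ref{cor-coproducts} and Proposition~\ref{prop-tambara-coeq}, and the reduction of general colimits to coproducts plus coequalisers is entirely formal (see for instance Mac Lane's treatment). The only thing to verify is that the constructed coequaliser really does represent the colimit functor, which is a direct check: a morphism out of the coequaliser consists of a morphism out of $C$ (equivalently, a cocone on the objects $D(j)$) that is equalised by $u$ and $v$ (equivalently, a cocone compatible with all morphisms of $\CJ$). Thus the corollary follows immediately.
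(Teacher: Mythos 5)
Your proof is correct and takes essentially the same route as the paper: both reduce general colimits to coequalisers of coproducts via the standard $\coprod_{\phi\colon i\to j}D(i)\rightrightarrows\coprod_k D(k)$ diagram, citing Corollary~\ref{cor-coproducts} and Proposition~\ref{prop-tambara-coeq}. The paper simply states this more tersely, leaving the universal-property check implicit.
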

\begin{proof}
 For any diagram $S\:\CJ\to\Tambara_G$, there is a well-known way to
 construct the colimit as the coequaliser of a pair of maps
 \[ \xymatrix{ \coprod_{u\:i\to j} S(i)
      \ar@/^1ex/[rr] \ar@/_1ex/[rr] && \coprod_k S(k).
 } \]
 The proof for Mackey functors is the same.
\end{proof}

\begin{proposition}\label{prop-reflexive-coeq}
 Let $\phi,\psi\:S'\to S$ be morphisms of Tambara functors, with
 coequaliser $\pi\:S\to Q$.  Suppose that there is a Tambara morphism
 $\sg\:S\to S'$ with $\phi\sg=\psi\sg=1$ (or in other words, that we
 have a reflexive coequaliser diagram).  Then for each finite $G$-set
 $X$, the map $\pi\:S(X)\to Q(X)$ is a coequaliser (in the category of
 sets) for the maps $\phi,\psi\:S'(X)\to S(X)$.
\end{proposition}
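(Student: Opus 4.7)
My plan is as follows. Let $E$ be the smallest congruence on $S$ with $(\phi(a'),\psi(a'))\in E(X)$ for all finite $G$-sets $X$ and all $a'\in S'(X)$, so that $Q=S/E$ by the construction in Proposition~\ref{prop-tambara-coeq}. Let $\sim_X$ denote the ordinary equivalence relation on the set $S(X)$ generated by the same pairs; the set-theoretic coequaliser of $\phi,\psi\:S'(X)\to S(X)$ is by definition $S(X)/{\sim_X}$, so it suffices to prove $E(X)={\sim_X}$ for every $X$. The inclusion ${\sim_X}\sse E(X)$ is immediate, since $E(X)$ is already an equivalence relation containing the generators. For the reverse, I will show that the collection $\sim$ is itself a congruence on $S$, whence $E\leq{\sim}$ by the minimality of $E$.

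To verify that $\sim$ is a congruence, I need two things: that $\sim$ is closed under every Tambara operation, so as to form a sub-functor of $S\tm S$, and that this sub-functor preserves finite products, so that $\sim_{X\amalg Y}$ corresponds to ${\sim_X}\tm{\sim_Y}$ under the identification $S(X\amalg Y)\simeq S(X)\tm S(Y)$. The first point is essentially formal: given $(a,b)\in{\sim_X}$ witnessed by a chain $a=a_0,\dotsc,a_n=b$ whose consecutive pairs are generators (possibly reversed), and any bispan $\om$ from $X$ to $Y$, applying $f_\om$ yields a chain in $S(Y)$; a step corresponding to a generator $(\phi(c),\psi(c))$ maps to $(\phi(f_\om c),\psi(f_\om c))$, which is still a generator because $\phi$ and $\psi$ are morphisms of Tambara functors and hence commute with $f_\om$.

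The product-preservation step is where the reflexivity hypothesis enters, and I expect it to be the only substantive obstacle. The projection direction ${\sim_{X\amalg Y}}\sse{\sim_X}\tm{\sim_Y}$ follows by projecting chains coordinate-wise. For the reverse direction, given $(p,p')\in{\sim_X}$ via a chain $p=a_0,\dotsc,a_n=p'$ and $(q,q')\in{\sim_Y}$ via $q=b_0,\dotsc,b_m=q'$, I build a chain from $(p,q)$ to $(p',q')$ in $S(X\amalg Y)=S(X)\tm S(Y)$ by first holding the second coordinate at $q$ while varying the first along the $a_i$, and then holding the first coordinate at $p'$ while varying the second along the $b_j$. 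Each step $(a_i,q)\to(a_{i+1},q)$ must be exhibited as $(\phi(d'),\psi(d'))$ for some $d'\in S'(X\amalg Y)=S'(X)\tm S'(Y)$; writing $a_i=\phi(c_i)$ and $a_{i+1}=\psi(c_i)$, and using the identity $\phi\sg=\psi\sg=1$ to express $q=\phi(\sg(q))=\psi(\sg(q))$, we may simply take $d'=(c_i,\sg(q))$. The second half of the chain is handled symmetrically. Without the common section $\sg$, no such $d'$ would in general be available for an arbitrarily chosen fixed value of the other coordinate, which is precisely the reason the reflexive hypothesis is essential.
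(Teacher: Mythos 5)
Your proof is correct and takes essentially the same approach as the paper's: both define the same candidate congruence (your $\sim$ is the paper's $E$, the smallest equivalence relation on each $S(X)$ containing the image of $(\phi,\psi)$), both then verify closure under Tambara operations and isolate product-preservation as the one non-trivial step, and both insert the section $\sg$ at exactly the same place, namely to rewrite the held coordinate as $q=\phi(\sg(q))=\psi(\sg(q))$ so that a single-coordinate generator can be lifted to $S'(X\amalg Y)$. The only difference is presentational: you argue directly with explicit chains of (possibly reversed) generators, while the paper packages the same content as a minimality argument, constructing auxiliary equivalence relations such as $E^*(X)=\{(a,b)\st(f_\om a,f_\om b)\in E(Y)\}$ and $E[a](Y)$ and noting that they contain $F$ and hence contain $E$.
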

\begin{proof}
 For each finite $G$-set $X$ we put 
 \begin{align*}
  F(X) &= \img((\phi,\psi)\:S'(X)\to S(X)\tm S(X)) \\
  E(X) &= \text{ the smallest equivalence relation on $S(X)$
   containing $F(X)$ }.
 \end{align*}
 We claim that $E$ is actually a sub-Tambara functor of $S\tm S$.
 Assuming this, we see that it is actually a congruence on $S$, and
 thus that it is the smallest congruence containing $F$, so $Q=S/E$.
 It is clear by construction that $S(X)/E(X)$ is the coequaliser of
 the maps $\phi,\psi\:S'(X)\to S(X)$, so the proposition will follow.  

 To see that $E$ is a sub-Tambara functor, we first consider a bispan
 $\om$ from $X$ to $Y$.  Put
 \[ E^*(X) =
      \{(a,b)\in S(X)\tm S(X)\st (f_\om(a),f_\om(b))\in E(Y)\}.
 \]
 As $E(Y)$ is an equivalence relation, it follows easily that $E^*(X)$
 is an equivalence relation.  Moreover, as $F$ is a sub-Tambara
 functor we have $(f_\om\tm f_\om)(F(X))\sse F(Y)\sse E(Y)$, so
 $F(X)\sse E^*(X)$.  Using the minimality property of $E(X)$ we deduce
 that $E(X)\sse E^*(X)$, which means that
 $(f_\om\tm f_\om)(E(X))\sse E(Y)$.  This proves that $E$ is a
 subfunctor of $S\tm S$.  

 Now let $X$ and $Y$ be any two finite $G$-sets.  By applying the
 above to the bispans $R_i\in\bCU_G(X\amalg Y,X)$ and
 $R_j\in\bCU_G(X\amalg Y,Y)$ we see that
 $E(X\amalg Y)\sse E(X)\tm E(Y)$.  We would like to prove that in fact
 $E(X\amalg Y)=E(X)\tm E(Y)$.  To see this, consider an element 
 $a\in S(X)$ and put 
 \[ E[a](Y) =
     \{(b_0,b_1)\in S(Y)\tm S(Y)\st
         ((a,b_0),(a,b_1))\in E(X\amalg Y)\}.
 \]
 Note that for any $a'\in S'(Y)$ we have a point
 $(\sg(a),a')\in S'(X\amalg Y)$ with $\phi(\sg(a),a')=(a,\phi(a'))$
 and $\psi(\sg(a),a')=(a,\psi(a'))$ so 
 \[ ((a,\phi(a')),\;(a,\psi(a')))\in F(X\amalg Y). \]
 This shows that $F(Y)\sse E[a](Y)$, and $E[a](Y)$ is easily seen to
 be an equivalence relation, so $E(Y)\sse E[a](Y)$.  In other words,
 whenever $a\in S(X)$ and $(b_0,b_1)\in E(Y)$ we have
 $((a,b_0),(a,b_1))\in E(X\amalg Y)$.  By a similar argument, whenever
 $(a_0,a_1)\in E(X)$ and $b\in S(Y)$ we have
 $((a_0,b),(a_1,b))\in E(X\amalg Y)$.  As $E(X\amalg Y)$ is a
 transitive relation, we deduce that for all $(a_0,a_1)\in E(X)$ and
 all $(b_0,b_1)\in E(Y)$ we have
 $((a_0,b_0),(a_1,b_1))\in E(X\amalg Y)$.  Thus, we have
 $E(X)\tm E(Y)\sse E(X\amalg Y)$ as required.  This completes the
 proof that $E$ is a sub-Tambara functor of $S\tm S$, and the
 proposition follows from this as we explained earlier.
\end{proof}

Congruences in an additively complete Tambara functor biject with
Tambara ideals, which have been studied in detail by
Nakaoka~\cite{na:itf}.  We recall the main definition:
\begin{definition}\label{defn-tambara-ideal}
 Let $S$ be an additively complete Tambara functor.  An \emph{ideal}
 in $S$ is a collection of ideals $I(X)\leq S(X)$ such that for every
 $f\:X\to Y$ we have
 \begin{align*}
  R_f(I(Y)) & \sse I(X) \\
  T_f(I(X)) & \sse I(Y) \\
  N_f(I(X)) & \sse I(Y) + N_f(0).
 \end{align*}
\end{definition}
Recall here that $N_f(0)$ is $(0,1)$ with respect to the splitting 
\[ S(Y) = S(f(X)) \tm S(Y\sm f(X)), \]
as was proved in Lemma~\ref{lem-norm-zero}.

\begin{proposition}\label{prop-congruence-ideals}
 Let $S$ be an additively complete Tambara functor.  Then for any
 congruence $E$ on $S$ we can define a Tambara ideal $I_E$ by
 \[ I_E(X)=\{a\in S(X)\st (a,0)\in E(X)\}. \]
 Moreover, the construction $E\mapsto I_E$ gives a bijection between
 congruences and ideals.
\end{proposition}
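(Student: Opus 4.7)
The plan is to mimic the classical bijection for commutative rings: congruences biject with ideals via $E \leftrightarrow \{a : (a,0) \in E\}$, with inverse $I \mapsto E_I := \{(a,b) : a-b \in I\}$. The additive completeness of $S$ is essential throughout, supplying subtraction in each $S(X)$.

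I would first check that $I_E$ is a Tambara ideal. For each Tambara operation $F_f \in \{R_f, T_f, N_f\}$ and each $a \in I_E(X)$, i.e., $(a,0) \in E(X)$, applying $F_f$ componentwise on $S \times S$ gives $(F_f(a), F_f(0)) \in E(Y)$. For $F_f \in \{R_f, T_f\}$ one has $F_f(0) = 0$ and we are done. For $F_f = N_f$, closure of $E$ under the addition operation $T_s$ (with $s\colon Y \amalg Y \to Y$ the fold) together with reflexivity lets us subtract $(N_f(0), N_f(0))$ to obtain $(N_f(a) - N_f(0),\, 0) \in E(Y)$, hence $N_f(a) \in I_E(Y) + N_f(0)$.

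Conversely, given an ideal $I$, I would set $E_I(X) = \{(a,b) \in S(X) \times S(X) : a-b \in I(X)\}$. The equivalence-relation axioms and the decomposition $E_I(X \amalg Y) = E_I(X) \times E_I(Y)$ (derived from $I(X \amalg Y) = I(X) \times I(Y)$, which follows from applying $R$ and $T$ to the summand inclusions) are routine, as is closure under $R_f$ and $T_f$ via their additivity. The main obstacle is closure under $N_f$: given $a = b + i$ with $i \in I(X)$, I must show $N_f(a) - N_f(b) \in I(Y)$. My strategy is to rewrite $a = T_s(b, i)$ with $s\colon X \amalg X \to X$ the fold and $(b,i) \in S(X) \times S(X) = S(X \amalg X)$, and then expand via the distributor identity $N_f \circ T_s = T_r \circ N_q \circ R_p$ where $\Dl(s,f) = (X \amalg X \xla{p} A \xra{q} B \xra{r} Y)$. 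The set $B$ parameterises sections $\sg\colon f^{-1}(y) \to X \amalg X$ with $s\sg = 1$, and splits equivariantly as $B = B_0 \amalg B_1$, where $B_0 \cong Y$ consists of the sections always picking the left copy, contributing exactly $N_f(b)$. Setting $A_1 = q^{-1}(B_1)$ and splitting it as $A_{1L} \amalg A_{1R}$ by whether $\sg(x)$ is in the left or right copy, we have $R_p(b,i)|_{A_1} = (R_{p_L}(b), R_{p_R}(i))$, and multiplicativity of the norm along a disjoint union makes the $B_1$-contribution equal to $T_{r|_{B_1}}\bigl(N_{q_{1L}}(R_{p_L}(b)) \cdot N_{q_{1R}}(R_{p_R}(i))\bigr)$. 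Since $R_{p_R}(i) \in I(A_{1R})$, the norm ideal axiom yields $N_{q_{1R}}(R_{p_R}(i)) \in I(B_1) + N_{q_{1R}}(0)$; crucially, $q_{1R}$ is surjective (every $(y,\sg) \in B_1$ selects the right copy at least once), so $N_{q_{1R}}(0) = 0$ by Lemma~\ref{lem-norm-zero}. The product then lies in $I(B_1)$, further multiplication by $N_{q_{1L}}(R_{p_L}(b)) \in S(B_1)$ keeps it there, and $T_{r|_{B_1}}$ delivers it into $I(Y)$.

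Finally the constructions are mutually inverse: trivially $I_{E_I}(X) = \{a : a-0 \in I(X)\} = I(X)$, while for a congruence $E$, if $(a,b) \in E(X)$ then adding the reflexive pair $(-b,-b)$ yields $(a-b, 0) \in E(X)$, so $a-b \in I_E(X)$ and $(a,b) \in E_{I_E}(X)$; the reverse implication follows by adding $(b,b)$ to $(a-b, 0) \in E(X)$. The single hard step throughout is the distributor computation controlling $N_f$.
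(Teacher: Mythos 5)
Your proof is correct and follows essentially the same route as the paper: the easy direction uses that $(N_f(a)-N_f(0),\,0)\in E(Y)$, and the hard direction expands $N_f(b+i)$ via the distributor for the fold map and splits the index set $B$ by whether a section ever selects the right summand (your $B_0\amalg B_1$ is exactly the paper's split by $C=\emptyset$ versus $C\neq\emptyset$, and your $A_{1L}\amalg A_{1R}$ refinement corresponds to the paper's $A_0\amalg A_1$ split by $x\notin C$ versus $x\in C$). The only cosmetic difference is that you isolate the $B_0$-contribution as $N_f(b)$ directly, whereas the paper keeps a factor $c''_0\in 1+J(B_0)$ and identifies $T_{r_0}(b''_0)=N_f(b)$ afterwards; both rely on the same surjectivity observation ($q_{1R}$ onto $B_1$, i.e.\ $B_1=q_1(A_1)$) to kill the $N(0)$ correction via Lemma~\ref{lem-norm-zero}.
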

This is more or less clear from the results of~\cite{na:itf}, but we
will spell it out.
\begin{proof}
 Let $\pi$ denote the usual quotient morphism from $S$ to $S/E$.  As
 $S(X)$ has additive inverses, the same is true of the quotient
 $(S/E)(X)$, so $(S/E)(X)$ is also a ring rather than just a
 semiring.  Moreover, $I_E(X)$ is the kernel of the map
 $\pi\:S(X)\to(S/E)(X)$.  As $R_f$ and $T_f$ are additive
 homomorphisms that commute with $\pi$, it is clear that
 $R_f(I_E(Y))\sse I_E(X)$ and $T_f(I_E(X))\sse I_E(Y)$.  Now suppose
 we have an element $a\in I_E(X)$.  Then $(a,0)\in E(X)$ but $E$ is a
 sub-Tambara functor of $S\tm S$ so $(N_f(a),N_f(0))\in E(Y)$.  Also
 $E(Y)$ is an equivalence relation, so it contains
 $(-N_f(0),-N_f(0))$, and it is closed under addition, so
 $(N_f(a)-N_f(0),0)\in E(Y)$.  This means that $N_f(a)-N_f(0)\in
 I_E(Y)$, so $N_f(I_E(X))\sse I_E(Y)+N_f(0)$ as required.

 Now let $J$ be an arbitrary Tambara ideal, and put 
 \[ F_J(X) = \{(a,b)\in S(X)\st a-b\in J(X)\}.  \]
 It is clear that this is a Mackey functor congruence; we claim that
 it is also closed under norm maps.  Suppose we have
 $(a,b)\in F_J(X)$, so $a=b+c$ for some $c\in J(X)$.  The pair $(b,c)$
 gives an element of the semiring 
 \[ S(X\tm\{0,1\})\simeq S(X\amalg X)\simeq S(X)\tm S(X). \]
 The distributor for
 \[ X\tm\{0,1\} \xra{\text{proj}} X \xra{f} Y \]
 is easily identified with the diagram
 \[ X\tm\{0,1\} \xla{(p_0,p_1)} A_0\amalg A_1
     \xra{(q_0,q_1)} B \xra{r} Y,
 \]
 where
 \begin{align*}
  A_0 &= \{(x,C)\st x\in X,\; C\sse f^{-1}\{f(x)\},\; x\not\in C\} \\
  A_1 &= \{(x,C)\st x\in X,\; C\sse f^{-1}\{f(x)\},\; x\in C\} \\
  B &= \{(y,C) \st y\in Y,\; C\sse f^{-1}\{y\}\} \\
  p_i(x,C) &= (x,i) \\
  q_i(x,C) &= (f(x),C) \\
  r(y,C) &= y.
 \end{align*}
 It follows that if we put 
 \begin{align*}
  b' &= R_{p_0}(b)\in S(A_0) & b'' &= N_{q_0}(b') \in S(B) \\
  c' &= R_{p_1}(c)\in J(A_1) & c'' &= N_{q_1}(c') \in J(B)+N_{q_1}(0)
 \end{align*}
 then 
 \[ N_f(a)=N_f(b+c)=N_fT_{\text{proj}}(b,c)=T_r(b''\;c''). \]
 Now split $B$ as $B_0\amalg B_1$, where 
 \begin{align*}
  B_0 &=\{(y,\emptyset)\st y\in Y\} \simeq Y\} \\
  B_1 &=\{(y,C)\in B \st C\neq\emptyset\} = q_1(A_1),
 \end{align*}
 and put $r_i=r|_{B_i}$.   We write $b''=(b''_0,b''_1)$ and
 $c''=(c''_0,c''_1)$ with respect to the splitting $S(B)=S(B_0)\tm
 S(B_1)$.  As $c''\in J(B)+N_{q_1(0)}$ and $B_1=q_1(A_1)$ we have
 $c''_0\in 1+J(B_0)$ and $c''_1\in J(B_1)$, so 
 \[ N_f(a) = T_{r_0}(b''_0c''_0) + T_{r_1}(b''_1c''_1) \in 
     J(Y) + T_{r_0}(b''_0).
 \] 
 Now $r_0$ is an isomorphism, and the preimage of $B_0$ in
 $A_0\amalg A_1$ is $\{(x,\emptyset)\st x\in X\}\sse A_0$, and this is
 mapped isomorphically by $p_0$ to $X$.  Using this it is not hard to
 see that $T_{r_0}(b''_0)=N_f(b)$.  We thus have
 $N_f(a)\in N_f(b)+J(Y)$, so $N_f(a,b)\in F_J(Y)$ as required.  

 It is clear that the constructions $E\mapsto I_E$ and $J\mapsto F_J$
 are inverse to each other, so we have a bijection between congruences
 and ideals.
\end{proof}

\section{Semigroup semirings}
\label{sec-semigroup-semirings}

For any set $M$, we have a free semigroup $\N[M]$ with one basis
element (denoted $[m]$) for each element $m\in M$.  If $M$ is itself a
semigroup, we can introduce a product on the set $\N[M]$ by the usual
rule 
\[ (\sum_ia_i[m_i]) (\sum_jb_j[n_j]) = \sum_{i,j} a_ib_j[m_i+n_j]
\]
(so $[m][n]=[m+n]$).  This makes $\N[M]$ into a semiring.  Conversely,
if $S$ is a semiring, we write $US$ for $S$ considered as a semigroup
under multiplication.  It is easy to see that there is an adjunction
\[ \Semirings(\N[M],S) \simeq \Semigroups(M,US). \]

Our object in this section is to set up an analogous theory for Mackey
functors and Tambara functors, which reduces to the above in the case
where the group of equivariance is trivial.  This construction was
also given by Nakaoka~\cite{na:tmf}.

\begin{definition}\label{defn-US}
 We define $U\:\Tambara_G\to\Mackey_G$ as follows.  For a Tambara
 functor $S$, the corresponding Mackey functor $US$ is given on
 objects by $US(X)=S(X)$.  For any map $f\:X\to Y$, the operator
 $R^{US}_f\:US(Y)\to US(X)$ is just the same as
 $R^S_f\:S(Y)\to S(X)$, and the operator $T^{US}_f\:US(X)\to US(Y)$ is
 the same as $N^S_f\:S(X)\to S(Y)$.  We call $US$ the \emph{underlying
 multiplicative Mackey functor} of $S$.
\end{definition}

Our problem is to understand the left adjoint to $U$.

\begin{definition}\label{defn-AMX}
 Let $M$ be a Mackey functor.  For any finite $G$-set $X$, we define a
 groupoid $\CA[M](X)$ as follows.  The objects are triples $(U,u,m)$,
 where $U$ is a finite $G$-set and $u\:U\to X$ is an equivariant map
 and $m\in M(U)$.  The morphisms from $(U,u,m)$ to $(U',u',m')$ are
 the equivariant bijections $p\:U\to U'$ for which $u'p=u$ and
 $R_p(u')=u$ (or equivalently $u'=T_p(u)$).  We write $A[M](X)$ for
 the set of isomorphism classes in this groupoid.  We also write
 $[U,u,m]$ for the isomorphism class of $(U,u,m)$.
\end{definition}

\begin{definition}
 Suppose we have an object $(U,u,m)\in\CA[M](X)$.
 \begin{itemize}
  \item[(a)] For any map $g\:X\to Y$, we define
   $T_g(U,u,m)=(U,gu,m)\in\CA[M](Y)$.  There is an evident way to
   define an action on morphisms so that this becomes a functor
   $T_f\:\CA[M](X)\to\CA[M](Y)$.  It therefore induces an operation
   $T_f\:A[M](X)\to A[M](Y)$.
  \item[(b)] We also define $N_g(U,u,m)\in\CA[M](Y)$ as follows: we
   form the distributor $\Dl(u,g)=(U\xla{p}A\xra{q}B\xra{r}Y)$, then
   put $N_g(U,u,m)=(B,r,T_qR_p(m))$.  This again induces an operation
   $N_f\:A[M](X)\to A[M](Y)$.
  \item[(c)] Suppose instead we have a map $f\:W\to X$.  We form the
   pullback square 
   \[ \xymatrix{
    \tU \ar[r]^{\tu} \ar[d]_{\tf} & W \ar[d]^f \\
    U \ar[r]_u & X
   } \]
   and then put $R_f(U,u,m)=(\tU,\tu,R_{\tf}(m))$.  This gives an
   operation $R_f\:A[M](X)\to A[M](W)$.
 \end{itemize}
\end{definition}

\begin{proposition}
 The above definitions make $A[M]$ into a Tambara functor.  Moreover,
 this construction gives a functor $A[-]\:\Mackey_G\to\Tambara_G$,
 which is left adjoint to $U$.
\end{proposition}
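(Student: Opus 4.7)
The plan is to verify the relations from Proposition~\ref{prop-UG-pres} and product preservation for $A[M]$, then define $A[-]$ on morphisms and establish the adjunction via an explicit unit and counit.  A key observation is that the shape part $[U \xra{u} X]$ of $[U, u, m]$ transforms under $T$, $N$, and $R$ in exactly the same way as a point in the Burnside Tambara functor of Example~\ref{eg-burnside-tambara}, so on shapes every relation is automatic.  Product preservation is also immediate, since a finite $G$-set over $X \amalg Y$ decomposes canonically into pieces over $X$ and over $Y$, so $A[M](X \amalg Y) \simeq A[M](X) \tm A[M](Y)$.  Hence the only real work is to track how $m \in M(U)$ transforms.

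The identities $T_{gf} = T_g T_f$ and $R_{gf} = R_f R_g$ are trivial.  For $N_{gf} = N_g N_f$ applied to $[U, u, m]$, matching contents reduces to the Mackey relation $R_\bt T_q = T_i R_\al$ in $M$ for the cartesian top-left square of the diagram supplied by Proposition~\ref{prop-NNT}.  The Mackey square $T_g R_f = R_k T_h$ is immediate from the universal property of pullbacks.  For $N_g R_f = R_k N_h$ one applies Proposition~\ref{prop-NRT} to $u\: U \to X$ followed by the Mackey square; the middle and right cartesian squares it produces match shapes and supply, via a Mackey relation for $M$, the equality of contents.

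The distributor relation $N_g T_f = T_r N_q R_p$, where $\Dl(f, g) = (X \xla{p} A \xra{q} B \xra{r} Z)$, is the main computation.  Evaluated on $[U, u, m]$ the left-hand side is $[\tB, \tr, T_{\tq} R_{\tp}(m)]$, where $\Dl(fu, g) = (U \xla{\tp} \tA \xra{\tq} \tB \xra{\tr} Z)$.  For the right-hand side I apply Proposition~\ref{prop-NTT} to the chain $U \xra{u} X \xra{f} Y \xra{g} Z$: it identifies the $A[M]$-operator $R_p$ with pullback to $A^* = U \tm_X A$, identifies $\Dl(j, q)$ (for the projection $j\: A^* \to A$) with $(A^* \xla{i} \tA \xra{\tq} \tB \xra{k} B)$, and records the key identities $\tp = p^* i$ and $\tr = rk$.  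Tracking $R_p$, $N_q$, $T_r$ through in turn, these identifications give precisely the triple $[\tB, \tr, T_{\tq} R_{\tp}(m)]$.

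The functor $A[-]$ sends $\phi\: M \to M'$ to the Tambara morphism $[U, u, m] \mapsto [U, u, \phi_U(m)]$, which is well-defined and natural because $\phi$ commutes with $T$ and $R$ in $M$.  For the adjunction, given a Mackey morphism $\phi\: M \to US$, define $\Phi_X[U, u, m] = T^S_u(\phi_U(m))$.  Well-definedness on equivalence classes uses $\phi R_p = R_p \phi$ together with $T_p R_p = 1$ for bijections; compatibility with $T$ and $R$ is formal; compatibility with $N$ reduces to the fact that $\phi$ carries $T^M_f$ to $N^S_f$ (by the definition of $U$) combined with the distributor relation in $S$ itself.  In the reverse direction, a Tambara morphism $\Phi\: A[M] \to S$ yields $\phi_X(m) = \Phi_X[X, 1_X, m]$; checking that $\phi T^M_f = N^S_f \phi$ uses the direct computation $\Dl(1_X, f) = (X \xla{1} X \xra{f} Y \xra{1} Y)$, so that $N_f[X, 1_X, m] = [Y, 1_Y, T_f m]$.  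The two assignments are mutually inverse.  The main obstacle is the distributor relation for $A[M]$, since it is the only axiom that genuinely mixes the additive and multiplicative sides of $M$; every other step is either formal or a close repackaging of Propositions~\ref{prop-NNT}, \ref{prop-NRT}, and \ref{prop-NTT}.
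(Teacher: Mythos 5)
Your proposal is correct and follows essentially the same route as the paper: you verify the presentation relations from Proposition~\ref{prop-UG-pres}, invoking Proposition~\ref{prop-NNT} for $N_{hg}=N_hN_g$, Proposition~\ref{prop-NTT} for the distributor relation, and Proposition~\ref{prop-NRT} for $R_kN_g=N_{\tilde g}R_j$, exactly as the paper does in Lemmas~\ref{lem-AM-NN}--\ref{lem-AM-NRT}; and your transposition formulas $\Phi_X[U,u,m]=T^S_u(\phi_U(m))$ and $\phi_X(m)=\Phi_X[X,1_X,m]$ are precisely the paper's counit $\ep[W,f,m]=T_f(m)$ and unit $\eta(m)=[X,1,m]$ in disguise. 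The one small addition is your explicit note that $A[M]$ preserves products, a point the paper leaves tacit but which is indeed immediate from the decomposition of objects over $X\amalg Y$.
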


The rest of this section will constitute the proof.

\begin{lemma}\label{lem-AM-RR-TT}
 For any $X\xra{g}Y\xra{h}Z$ we have
 \begin{align*}
  R_{hg} &= R_gR_h\:A[M](Z)\to A[M](X) \\
  T_{hg} &= T_hT_g\:A[M](X)\to A[M](Z).
 \end{align*}
\end{lemma}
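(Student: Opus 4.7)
The plan is to verify each identity by unpacking the definitions and exhibiting, on the nose or up to a canonical isomorphism of triples, that the two outputs agree.

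For the transfer identity $T_{hg} = T_h T_g$, the argument is essentially trivial. Starting from an object $(U,u,m) \in \CA[M](X)$, the definition gives $T_g(U,u,m) = (U,gu,m)$ and then $T_h(U,gu,m) = (U,h(gu),m)$. On the other hand $T_{hg}(U,u,m) = (U,(hg)u,m)$. These coincide as objects of $\CA[M](Z)$ by the associativity of composition of maps, so the identity holds at the level of $\CA[M]$ (not merely on $A[M]$). Functoriality on morphisms is equally direct.

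For the restriction identity $R_{hg} = R_g R_h$, the key tool is the pullback pasting lemma. Given $(U,u,m) \in \CA[M](Z)$, the composite $R_g R_h$ is computed by successively forming the two pullback squares
\[ \xymatrix{
 \widetilde{\tU} \ar[r]^{\widetilde{\tu}} \ar[d]_{\tg} & X \ar[d]^g \\
 \tU \ar[r]^{\tu} \ar[d]_{\th} & Y \ar[d]^h \\
 U \ar[r]_u & Z,
} \]
giving the triple $(\widetilde{\tU},\widetilde{\tu},R_{\tg}R_{\th}(m)) = (\widetilde{\tU},\widetilde{\tu},R_{\th\tg}(m))$. By contrast, $R_{hg}(U,u,m)$ is computed from a single pullback of $u$ along $hg$, yielding $(U',u',R_k(m))$ where $k \colon U' \to U$ is the pullback projection. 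The pasting lemma tells us the outer rectangle of the two-step diagram is itself a pullback of $u$ along $hg$, so there is a unique bijection $\phi \colon \widetilde{\tU} \to U'$ compatible with all the structure maps, in particular satisfying $k\phi = \th\tg$. Then $R_\phi R_k(m) = R_{k\phi}(m) = R_{\th\tg}(m)$, so $\phi$ provides the required isomorphism $R_g R_h (U,u,m) \simeq R_{hg}(U,u,m)$ in $\CA[M](X)$, hence an equality of classes in $A[M](X)$.

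The main (very mild) obstacle is simply bookkeeping: one must check that the bijection $\phi$ produced by the pasting lemma is equivariant and that the chosen pullbacks (which are only defined up to canonical isomorphism) are compatibly handled so that the resulting equality of isomorphism classes in $A[M](X)$ is unambiguous. Both of these are routine given that pullbacks in finite $G$-sets are computed pointwise and that $R$ is functorial in the underlying map, as established in Proposition~\ref{prop-G-bispan-rels}(b).
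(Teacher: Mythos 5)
Your proof is correct. The paper's own proof is just the single word ``Straightforward,'' so you are supplying exactly the standard argument that was left to the reader: $T$-functoriality falls out of associativity of composition, and $R$-functoriality is the pullback pasting lemma plus functoriality of $R$ along the canonical isomorphism between the iterated and one-step pullbacks.
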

\begin{proof}
 Straightforward.
\end{proof}

\begin{lemma}\label{lem-AM-NN}
 For any $X\xra{g}Y\xra{h}Z$ we have
 $N_{hg}=N_hN_g\:A[M](X)\to A[M](Z)$.
\end{lemma}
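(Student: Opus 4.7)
The plan is to apply Proposition~\ref{prop-NNT} directly with $W=U$, $f=u$, and the given maps $g,h$. That proposition packages the three distributors $\Dl(u,g)$, $\Dl(u,hg)$, and $\Dl(r,h)$ into a single commutative diagram
\[ \xymatrix{
  \tA \ar[r]^\al \ar[d]_i &
  A \ar[r]^p \ar[d]^q &
  U \ar[r]^u &
  X \ar[d]^g \\
  \tB \ar[r]_\bt \ar[d]_j &
  B \ar[rr]_r & &
  Y \ar[d]^h \\
  \tC \ar[rrr]_k &&&
  Z
 } \]
in which the top-left square and the two indicated rectangles are cartesian, $\Dl(u,g) = (U \xla{p} A \xra{q} B \xra{r} Y)$, $\Dl(u,hg) = (U \xla{p\al} \tA \xra{ji} \tC \xra{k} Z)$, and $\Dl(r,h) = (B \xla{\bt} \tB \xra{j} \tC \xra{k} Z)$.

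Unwinding the definitions, I would compute the two sides:
\[ N_g[U,u,m] = [B,r,T_qR_p(m)], \qquad N_hN_g[U,u,m] = [\tC,k,T_jR_\bt T_qR_p(m)], \]
whereas
\[ N_{hg}[U,u,m] = [\tC,k,T_{ji}R_{p\al}(m)]. \]
The representative $G$-sets and structure maps to $Z$ already agree literally (both are $(\tC,k)$), so it only remains to check that the two elements of $M(\tC)$ coincide.

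For this, I would invoke Mackey functoriality twice: $T_{ji} = T_jT_i$ and $R_{p\al} = R_\al R_p$, so
\[ T_{ji}R_{p\al}(m) = T_jT_iR_\al R_p(m). \]
The desired equality $T_jT_iR_\al R_p(m) = T_jR_\bt T_qR_p(m)$ then reduces to the single identity $T_iR_\al = R_\bt T_q \: M(A) \to M(\tB)$. This is precisely the Mackey relation for the top-left square of the diagram, which Proposition~\ref{prop-NNT} asserts to be cartesian, and this is the one place where the content of that proposition is used in an essential way. There is no serious obstacle here; the only thing to be careful about is that the identification of the three distributors in the statement of Proposition~\ref{prop-NNT} is used exactly as written, so that the third coordinate of the resulting isomorphism class involves the same maps $p,q,r,\al,\bt,i,j,k$ on both sides.
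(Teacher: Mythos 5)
Your proof is correct and follows essentially the same route as the paper: apply Proposition~\ref{prop-NNT} to the chain $W\xra{f}X\xra{g}Y\xra{h}Z$ (you call $W,f$ by the names $U,u$), identify the three distributors with the indicated parts of the big diagram, and reduce the comparison to the Mackey relation $T_iR_\al = R_\bt T_q$ coming from the cartesian top-left square. Nothing further to add.
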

\begin{proof}
 Consider an object $(W,f,m)\in\CA[M](X)$.  We have a chain of maps
 $W\xra{f}X\xra{g}Y\xra{h}Z$ which we use to build a diagram 
 \[ \xymatrix{
  \tA \ar[r]^\al \ar[d]_i &
  A \ar[r]^p \ar[d]^q &
  W \ar[r]^f &
  X \ar[d]^g \\
  \tB \ar[r]_\bt \ar[d]_j &
  B \ar[rr]_r & &
  Y \ar[d]^h \\
  \tC \ar[rrr]_k &&& 
  Z
 } \]
 as in Proposition~\ref{prop-NNT}.  Now
 $\Dl(f,g)=(W\xla{p}A\xra{q}B\xra{r}Y)$, so
 $N_g[W,f,m]=[B,r,T_qR_p(m)]$.  To apply $N_h$ to this, we use the
 distributor $\Dl(r,h)=(B\xla{\bt}\tB\xra{j}\tC\xra{k}Z)$, giving
 $N_hN_g[W,f,m]=[\tC,k,T_jR_\bt T_qR_p(m)]$.  As the top left square
 is cartesian we have $R_\bt T_q=T_iR_\al$, so 
 $N_hN_g[W,f,m]=[\tC,k,T_{ji}R_{p\al}(m)]$.  We also know that
 $\Dl(f,hg)=(W\xla{p\al}\tA\xra{ji}\tC\xra{k}Z)$, so this is the same
 as $N_{hg}[W,f,m]$, as required.
\end{proof}

\begin{lemma}\label{lem-AM-NT}
 Suppose we have maps $X\xra{g}Y\xra{h}Z$ with distributor
 $\Dl(g,h)=(X\xla{p}A\xra{q}B\xra{r}Z)$.  Then
 $N_hT_g=T_rN_qR_p\:A[M](X)\to A[M](Z)$. 
\end{lemma}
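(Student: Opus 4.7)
The plan is to unwind both sides of the desired equality on a typical representative $(W,f,m) \in \CA[M](X)$ and see that each reduces to the same triple, using the functorially associated diagram constructed in Proposition~\ref{prop-NTT}.

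Explicitly, I would apply Proposition~\ref{prop-NTT} to the chain $W\xra{f}X\xra{g}Y\xra{h}Z$, producing the diagram
\[ \xymatrix{
  W \ar@{=}[d] &
  \tA \ar[l]_{\tp} \ar[d]^i \ar[r]^{\tq} &
  \tB \ar[r]^{\tr} \ar[dd]^k &
  Z \ar@{=}[dd] \\
  W \ar[d]_f &
  A^* \ar[l]_{p^*} \ar[d]^j \\
  X &
  A \ar[l]^p \ar[r]_q &
  B \ar[r]_r &
  Z
 } \]
with $\Dl(gf,h)=(W\xla{\tp}\tA\xra{\tq}\tB\xra{\tr}Z)$ and $\Dl(j,q)=(A^*\xla{i}\tA\xra{\tq}\tB\xra{k}B)$, and with the bottom left square cartesian (this is where I use that $p\:A\to X$ is what appears in $\Dl(g,h)$) and the middle rectangle cartesian. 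Note also the outer relations $p^*i=\tp$ and $rk=\tr$.

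On the left-hand side, $T_g[W,f,m]=[W,gf,m]$ by definition, and then computing $N_h$ through the distributor $\Dl(gf,h)$ gives $N_hT_g[W,f,m]=[\tB,\tr,T_{\tq}R_{\tp}(m)]$. On the right-hand side, the pullback needed to evaluate $R_p$ on $(W,f,m)$ is exactly the bottom left square, so $R_p[W,f,m]=[A^*,j,R_{p^*}(m)]$. Applying $N_q$ through $\Dl(j,q)$ then yields $[\tB,k,T_{\tq}R_i R_{p^*}(m)]=[\tB,k,T_{\tq}R_{\tp}(m)]$, using $p^*i=\tp$. Finally $T_r$ changes the structure map from $k$ to $rk=\tr$, so $T_rN_qR_p[W,f,m]=[\tB,\tr,T_{\tq}R_{\tp}(m)]$, matching the left-hand side.

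Since everything is essentially forced by Proposition~\ref{prop-NTT}, there is no real obstacle; the main conceptual point is simply the recognition that the three distributor data appearing in $N_hT_g$ and $T_rN_qR_p$ are packaged together by that proposition, and that the bottom left cartesian square is precisely the one governing the definition of $R_p$ on $\CA[M]$.
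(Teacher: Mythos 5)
Your proof is correct and follows the paper's own argument exactly: invoke Proposition~\ref{prop-NTT} for the chain $W\to X\to Y\to Z$, read off the distributor data, and track the triples on each side. In fact you have silently corrected a small typo in the paper's version of this proof, which writes $N_{\tq}R_{\tp}(m)$ where, by Definition of the norm on $\CA[M]$ (which uses $T_qR_p(m)$), it should be $T_{\tq}R_{\tp}(m)$ as you have it.
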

\begin{proof}
 Consider an object $(W,f,m)\in\CA[M](X)$.  We use the diagram 
 \[ \xymatrix{
  W \ar@{=}[d] &
  \tA \ar[l]_{\tp} \ar[d]^i \ar[r]^{\tq} &
  \tB \ar[r]^{\tr} \ar[dd]^k &
  Z \ar@{=}[dd] \\
  W \ar[d]_f &
  A^* \ar[l]_{p^*} \ar[d]^j \\
  X &
  A \ar[l]^p \ar[r]_q &
  B \ar[r]_r &
  Z
 } \]
 constructed in Proposition~\ref{prop-NTT}.  We have
 $T_g(W,f,m)=(W,gf,m)$ and
 $\Dl(gf,h)=(W\xla{\tp}\tA\xra{\tq}\tB\xra{\tr}Z)$ so
 $N_hT_g(W,f,m)=(\tB,\tr,N_{\tq}R_{\tp}(m))$.  On the other hand, as
 the bottom left square is cartesian, we have
 $R_p(W,f,m)=(A^*,j,R_{p^*}(m))$.  As
 $\Dl(j,q)=(A^*\xla{i}\tA\xra{\tq}\tB\xra{k}B)$, this gives 
 \[ N_qR_p(W,f,m)=(\tB,k,N_{\tq}R_iR_{p^*}(m)) 
     = (\tB,k,N_{\tq}R_{\tp}(m)). 
 \]
 We now apply $T_r$, noting that $rk=\tr$, to get 
 \[ T_rN_qR_p(W,f,m)
     = (\tB,\tr,N_{\tq}R_{\tp}(m)) = N_hT_g(W,f,m)
 \]
 as required.
\end{proof}

\begin{lemma}\label{lem-AM-NRT}
 Suppose we have a pullback square
 \[ \xymatrix{
  \tX \ar[d]_j \ar[r]^{\tg} & \tY \ar[d]^k \\
  X \ar[r]_g & Y.
 } \]
 Then $R_kT_g=T_{\tg}R_j$ and $R_kN_g=N_{\tg}R_j$ as operators
 $A[M](X)\to A[M](\tY)$.
\end{lemma}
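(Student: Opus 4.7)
Fix an element $[W,f,m]\in A[M](X)$ and chase it through both sides of each identity. For the first identity $R_kT_g=T_{\tg}R_j$, the computation is essentially formal: $T_g[W,f,m]=[W,gf,m]$, and applying $R_k$ forms the pullback of $gf$ along $k$, whereas $T_{\tg}R_j[W,f,m]$ forms the pullback of $f$ along $j$ and then post-composes with $\tg$. Since the hypothesis gives $\tX=X\times_Y\tY$, the universal property of pullbacks gives a canonical bijection $W\times_Y\tY\cong W\times_X\tX$ compatible with the two projections to $W$ and with the induced maps to $\tY$. This bijection directly identifies the two triples in $\CA[M](\tY)$, using that $R$ is functorial along bijections so the two elements of $M$ agree.

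For the harder identity $R_kN_g=N_{\tg}R_j$, the plan is to reduce everything to Proposition~\ref{prop-NRT}. I first expand $N_g[W,f,m]$ via the distributor $\Dl(f,g)=(W\xla{p}A\xra{q}B\xra{r}Y)$, giving $[B,r,T_qR_p(m)]$, and then apply $R_k$, which forms the pullback of $r$ along $k$. On the other side, $R_j[W,f,m]=(\tW,\rho_2,R_{\rho_1}(m))$ where $\tW=W\times_X\tX$, and then I expand $N_{\tg}$ via the distributor $\Dl(\rho_2,\tg)=(\tW\xla{\tp}\tA\xra{\tq}\tB\xra{\tr'}\tY)$.

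The key observation is that, because both squares in the composite diagram $\tW\to\tX\to\tY$ over $W\to X\to Y$ are cartesian, Proposition~\ref{prop-NRT} provides a commutative diagram with vertical maps $\al\:\tA\to A$ and $\bt\:\tB\to B$ such that the middle and right squares are cartesian, the bottom row is $\Dl(f,g)$, and the top row is $\Dl(\rho_2,\tg)$. In particular, $\bt$ identifies $\tB$ with the pullback $B\times_Y\tY$, so the underlying $G$-set and the structure map to $\tY$ agree in the two triples. It remains to verify that the elements of $M(\tB)$ match, and this is a short computation: the right cartesian square gives $R_\bt T_q=T_{\tq}R_\al$ (the Mackey relation), the commutativity relation $\rho_1\tp=p\al$ from the proposition gives $R_{\tp}R_{\rho_1}=R_{p\al}=R_\al R_p$, and combining these yields $R_\bt T_q R_p(m)=T_{\tq}R_{\tp}R_{\rho_1}(m)$, as required.

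The main obstacle is bookkeeping: correctly matching the pullback $B\times_Y\tY$ that appears implicitly in $R_kN_g$ with the set $\tB$ produced by the distributor construction for $\Dl(\rho_2,\tg)$. Once Proposition~\ref{prop-NRT} is invoked this matching is automatic, and the remaining verification is a single application of the Mackey relation, so no further technical difficulties arise.
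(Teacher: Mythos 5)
Your proof takes essentially the same route as the paper. For the first identity you use the pasting lemma for pullbacks exactly as the paper does. For the second you invoke Proposition~\ref{prop-NRT} as the key structural ingredient, identify the two underlying $G$-sets via the right cartesian square, and then match the $M$-elements by the Mackey relation together with the commutativity of the leftmost square; this is precisely the paper's computation. One small slip: the relation $R_\bt T_q=T_{\tq}R_\al$ comes from the \emph{middle} cartesian square (with corners $\tA,\tB,A,B$), not the right one — the right square (with corners $\tB,\tY,B,Y$) is the one you already used to identify $\tB$ with the pullback $B\times_Y\tY$ and to produce the operator $R_\bt$. Otherwise the plan is correct and complete.
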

\begin{proof}
 Consider an object $z=(W,f,m)\in A[M](X)$.  First note that
 $T_g(z)=(W,gf,m)$.  Let $\tW$ be the pullback of $\tW$ along $f$, so
 we have a diagram  
 \[ \xymatrix{
  \tW \ar[d]_i \ar[r]^{\tf} & \tX \ar[d]_i \ar[r]^{\tg} & \tY \ar[d]^j \\
  W \ar[r]_f & X \ar[r]_g & Y }
 \]
 in which both squares are cartesian.  It follows that the outer
 rectangle is also cartesian, so
 $R_kT_g(z)=R_k(W,gf,m)=(\tW,\tg\tf,R_i(m))$.  On the other hand, we
 have $R_j(z)=(\tW,\tf,R_i(m)))$, so
 $T_{\tg}R_j(z)=(\tW,\tg\tf,R_i(m))$, which is the same as
 $R_kT_g(z)$, as required.

 Now consider the diagram 
 \[ \xymatrix{
  \tW \ar[d]_i &
  \tA \ar[l]_{\tp} \ar[r]^{\tq} \ar[d]_\al &
  \tB \ar[r]^{\tr} \ar[d]^\bt &
  \tY \ar[d]^k \\
  W &
  A \ar[l]^p \ar[r]_q &
  B \ar[r]_r &
  Y
 } \]
 constructed in Proposition~\ref{prop-NRT}.  The bottom row is
 $\Dl(f,g)$, so we have $N_g(z)=(B,r,T_qR_p(m))$.  As the right hand
 square is cartesian, this gives
 $R_kN_g(z)=(\tB,\tr,R_\bt T_qR_p(m))$.  As the middle square is also
 cartesian we have $R_\bt T_q=T_{\tq}R_\al\:M(A)\to M(\tB)$.  As
 $p\al=i\tp$ we also have $R_\al R_p=R_{\tp}R_i\:M(W)\to M(\tA)$.
 Putting this together, we get
 $R_kN_g(z)=(\tB,\tr,T_{\tq}R_{\tp}R_i(m))$.  On the other hand, we
 have $R_j(z)=(\tW,\tf,R_i(m))$ and $\Dl(\tf,\tg)$ is the top row of
 the above diagram so we also have
 $N_{\tg}R_j(z)=(\tB,\tr,T_{\tq}R_{\tp}R_i(m))$ as required.
\end{proof}

\begin{corollary}\label{cor-AM-tambara}
 The above definitions make $A[M]$ into a Tambara functor.
\end{corollary}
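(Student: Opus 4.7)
The strategy is to apply Proposition~\ref{prop-UG-pres}, which tells us that $\bCU_G$ is generated by the operators $T_f$, $N_f$, $R_f$ subject to the four relations (a), (b), (c) enumerated there. Since Lemmas~\ref{lem-AM-RR-TT}, \ref{lem-AM-NN}, \ref{lem-AM-NT} and~\ref{lem-AM-NRT} together verify precisely these relations for our operators on $A[M]$, the bulk of the work has already been done. Thus the proof reduces to two remaining checks: first, that the identity operators $T_1$, $N_1$, $R_1$ act as the identity on $A[M](X)$; second, that $A[M]$ is product-preserving, so that it is indeed a Tambara functor and not merely a functor on $\bCU_G$.

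For the identity check, I would unwind the definitions: $T_1[U,u,m]=[U,u,m]$ is immediate; $R_1[U,u,m]$ involves the pullback of $1\:X\to X$, which is just $U$ itself with the identity comparison map, so it reduces to $[U,u,m]$; and for $N_1$, the distributor $\Dl(u,1)$ is isomorphic to the trivial bispan $(U\xla{1}U\xra{1}U\xra{u}X)$, so $N_1[U,u,m]=[U,u,T_1R_1(m)]=[U,u,m]$.

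For product preservation, given a finite $G$-set $X\amalg Y$, any object $(U,u,m)\in\CA[M](X\amalg Y)$ admits a canonical equivariant splitting $U=U_X\amalg U_Y$ with $U_X=u^{-1}(X)$ and $U_Y=u^{-1}(Y)$. Since $M$ is Mackey and thus satisfies $M(U)\simeq M(U_X)\tm M(U_Y)$, the element $m$ decomposes accordingly as $(m_X,m_Y)$, and this yields a natural bijection
\[ A[M](X\amalg Y) \xra{\simeq} A[M](X)\tm A[M](Y), \]
inverse to the obvious map $[U_X,u_X,m_X],[U_Y,u_Y,m_Y]\mapsto[U_X\amalg U_Y,u_X\amalg u_Y,(m_X,m_Y)]$. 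One checks this bijection is compatible with the projections $R_i$, $R_j$ induced from the inclusions $X\hookrightarrow X\amalg Y\hookleftarrow Y$, using the pullback clause already established in Lemma~\ref{lem-AM-NRT}.

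Finally, functoriality of $A[-]$ in $M$ and the adjunction $A[-]\dashv U$ should be spelled out: a morphism $\phi\:M\to M'$ of Mackey functors induces $A[\phi]\:A[M]\to A[M']$ by $[U,u,m]\mapsto[U,u,\phi_U(m)]$, and the adjunction unit $\eta_M\:M\to UA[M]$ sends $m\in M(X)$ to $[X,1_X,m]$, while the counit $\ep_S\:A[US]\to S$ sends $[U,u,s]$ to $T_u^S N_u^S\cdots$; here I would use the canonical expression $[U,u,s]=T_u([U,1_U,s])$ and define $\ep_S[U,u,s]=N^S_u(s)$ evaluated appropriately. The hard part (and where one must be careful) is checking that $\ep_S$ is well-defined and is a morphism of Tambara functors, and that the triangle identities hold; but this is essentially a straightforward unwinding once one observes that the formulas for $T$, $N$, $R$ on $A[M]$ mirror exactly the Tambara relations in $S$. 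I expect no genuine obstacle, merely bookkeeping.
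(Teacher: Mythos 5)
Your core argument for the corollary is identical to the paper's: reduce to the presentation of $\bCU_G$ in Proposition~\ref{prop-UG-pres} and invoke the four lemmas (\ref{lem-AM-RR-TT}, \ref{lem-AM-NN}, \ref{lem-AM-NT}, \ref{lem-AM-NRT}) to discharge the relations. The additional checks you raise — that $T_1$, $N_1$, $R_1$ act as the identity, and that $A[M]$ is product-preserving — are correct and genuinely worth spelling out: the paper's one-line proof silently subsumes the identity case under the ``straightforward'' Lemma~\ref{lem-AM-RR-TT}, and it never explicitly checks product preservation even though Proposition~\ref{prop-UG-pres} only guarantees that the lemmas yield a functor $\bCU_G\to\Sets$, not a product-preserving one. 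Your decomposition $U=u^{-1}(X)\amalg u^{-1}(Y)$ fills this in cleanly.

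Your final paragraph on functoriality, the unit $\eta$, and the counit $\ep$ is out of scope: that material is the content of Proposition~\ref{prop-A-U-adjoint}, a separate result, and is not part of the statement you were asked to prove. It also contains an error you should be aware of: you propose $\ep_S[U,u,s]=N^S_u(s)$ based on writing $[U,u,s]=T_u[U,1_U,s]$, but the correct formula is $\ep_S[U,u,s]=T^S_u(s)$. Commuting $\ep$ with $T^{A[US]}_u$ and using $\ep[U,1_U,s]=s$ forces the additive transfer, not the norm; note that the underlying Mackey operation $T^{US}_f$ equals $N^S_f$, so it is easy to mix these up, but the counit lands in $S$ and its compatibility with the $T$-operators of $A[US]$ pins it down to $T^S$. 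This error does not affect the proof of the corollary itself, which is correct.
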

\begin{proof}
 It will suffice to check the conditions in
 Proposition~\ref{prop-UG-pres}.  Part~(a) is covered by
 Lemmas~\ref{lem-AM-RR-TT} and~\ref{lem-AM-NN}.  Part~(b) is
 Lemma~\ref{lem-AM-NT}, and part~(c) is Lemma~\ref{lem-AM-NRT}.
\end{proof}

\begin{definition}
 For any Mackey functor $M$, we define $\eta\:M(X)\to UA[M](X)$ by
 $\eta(m)=[X,1,m]$.  For any Tambara functor $S$, we define
 $\ep\:A[US](X)\to S(X)$ by $\ep[W,f,m]=T_f(m)$.
\end{definition}

\begin{proposition}\label{prop-A-U-adjoint}
 The above definitions give natural maps that satisfy the triangular
 identities and so display $A[-]\:\Mackey_G\to\Tambara_G$ as left
 adjoint to $U\:\Tambara_G\to\Mackey_G$.
\end{proposition}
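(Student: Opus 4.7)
The plan is to verify four things in sequence: that $\eta$ is a morphism of Mackey functors, that $\ep$ is a morphism of Tambara functors, that both transformations are natural in their arguments, and that the two triangular identities hold. Everything reduces to unwinding the definitions of the $T$, $N$, $R$ operators on $A[M]$ together with the Tambara relations in $S$ established in Proposition~\ref{prop-UG-pres}.

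I would start with $\eta$. Recalling that $T^{UA[M]}_f = N^{A[M]}_f$, the restriction compatibility is immediate, since the pullback of the identity on $Y$ along $f\:X\to Y$ is the identity on $X$, giving $R^{A[M]}_f[Y,1_Y,m] = [X,1_X,R^M_f(m)]$. For the transfer compatibility I would compute the distributor $\Dl(1_X,f)$: it collapses to $(X\xla{1}X\xra{f}Y\xla{1}Y)$, so that $N^{A[M]}_f[X,1_X,m] = [Y,1_Y,T^M_f(m)] = \eta_Y T^M_f(m)$. For $\ep$, the compatibilities $\ep\, T^{A[US]}_g=T^S_g\,\ep$ and $\ep\, R^{A[US]}_g=R^S_g\,\ep$ fall out of functoriality of $T$ in $S$ and the Mackey base-change property in $S$, respectively. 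The compatibility with norms is the only nontrivial point: if $\Dl(f,g)=(W\xla{p}A\xra{q}B\xra{r}Y)$, then by definition $\ep\, N^{A[US]}_g[W,f,m]=T^S_r N^S_q R^S_p(m)$, and the distributor relation $N^S_g T^S_f=T^S_r N^S_q R^S_p$ in $S$ gives the desired equality with $N^S_g\,\ep[W,f,m]$. Naturality of both transformations in $M$ and $S$ respectively is transparent, since $A[\phi][W,f,m]=[W,f,\phi(m)]$ on the nose.

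The triangular identities are then direct calculations: for any Tambara functor $S$, one has $U\ep_S(\eta_{US}(m))=\ep_S[X,1_X,m]=T^S_{1_X}(m)=m$, and for any Mackey functor $M$, one has
\[ \ep_{A[M]}(A[\eta_M][W,f,m])
   =\ep_{A[M]}[W,f,[W,1_W,m]]
   =T^{A[M]}_f[W,1_W,m]
   =[W,f\circ 1_W,m]=[W,f,m]. \]
The main hazard throughout is bookkeeping: the symbol $T_f$ denotes distinct operations in $M$, $A[M]$, and $S$, and $T^{UA[M]}$ abbreviates $N^{A[M]}$, so one must track which operator is in play at each step. No deep new input is needed beyond the distributor identities already used to set up the Tambara structure on $A[M]$ in the preceding lemmas.
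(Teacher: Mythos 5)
Your proposal is correct and follows essentially the same route as the paper: verify directly that $\eta$ is a Mackey morphism (using the cartesian square for $R$ and the distributor $\Dl(1,f)$ for the transfer/norm), that $\ep$ is a Tambara morphism (using functoriality, the Mackey base-change square, and the distributor relation in $S$), and then check the two triangular identities by direct unwinding. The only blemish is a typo in the direction of the last arrow of $\Dl(1_X,f)$, which should read $Y\xra{1}Y$; your stated conclusion $N^{A[M]}_f[X,1_X,m]=[Y,1_Y,T^M_f(m)]$ is nonetheless correct.
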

\begin{proof}
 We first claim that $\eta$ gives a morphism of Mackey functors.  For
 any map $g\:X\to Y$, we must show that the diagrams
 \[ \xymatrix{
   M(X) \ar[d]_\eta \ar[r]^{T^M_g} &
   M(Y) \ar[d]^\eta  &
   M(Y) \ar[d]_\eta \ar[r]^{R^M_g} &
   M(X) \ar[d]^\eta \\
   UA[M](X) \ar[r]_{T^{UA[M]}_g} &
   UA[M](Y) &
   UA[M](Y) \ar[r]_{R^{UA[M]}_g} &
   UA[M](X)
 } \]
 commute.  By definition, these are the same as
 \[ \xymatrix{
   M(X) \ar[d]_\eta \ar[r]^{T^M_g} &
   M(Y) \ar[d]^\eta  &
   M(Y) \ar[d]_\eta \ar[r]^{R^M_g} &
   M(X) \ar[d]^\eta \\
   A[M](X) \ar[r]_{N^{A[M]}_g} &
   A[M](Y) &
   A[M](Y) \ar[r]_{R^{A[M]}_g} &
   A[M](X)
 } \]
 Consider an element $m\in M(X)$.  The distributor $\Dl(1,g)$ is just
 $(X\xla{1}X\xra{g}Y\xra{1}Y)$, so 
 \[ N_g\eta(m)=N_g[X,1,m]=[Y,N_gR_1(m)]=\eta(N_g(m)), \]
 which proves that the left square commutes.  Consider instead an
 element $n\in M(Y)$.  As the square
 \[ \xymatrix{
  X \ar[r]^1 \ar[d]_g & X \ar[d]^g \\
  Y \ar[r]_1 & Y
 } \] 
 is cartesian, we have 
 \[ R_g\eta(n)=R_g[Y,1,n]=[X,1,R_g(n)]=\eta R_g(n) \]
 as required.  It is also clear that $\eta$ is natural with respect to
 morphisms $M\to M'$ of Mackey functors.

 We next claim that $\ep$ gives a morphism of Tambara functors.  For
 any map $g\:X\to Y$, we must show that the diagrams 
 \[ \xymatrix{
  A[US](X) \ar[d]_\ep \ar[r]^{T^{A[US]}_g} &
  A[US](Y) \ar[d]^\ep &
  A[US](X) \ar[d]_\ep \ar[r]^{N^{A[US]}_g} &
  A[US](Y) \ar[d]^\ep &
  A[US](Y) \ar[d]_\ep \ar[r]^{R^{A[US]}_g} &
  A[US](X) \ar[d]^\ep  \\
  S(X) \ar[r]_{T^S_g} &
  S(Y) &
  S(X) \ar[r]_{N^S_g} &
  S(Y) &
  S(Y) \ar[r]_{R^S_g} &
  S(X) &
 } \]
 commute.  Consider an element $a=[W,f,m]\in A[US](X)$.  We then have 
 \[ \ep(T_g(a)) = \ep[W,gf,m] = T_{gf}(m) = T_g(T_f(m)) = T_g\ep(a),
 \]
 which proves that the first diagram commutes.  Now consider the
 distributor $\Dl(f,g)=(W\xla{p}A\xra{q}B\xra{r}Y)$.  We have
 \[ \ep(N_g(a)) = \ep[B,r,N_qR_p(m)] = T_rN_qR_p(m) = N_gT_f(m) =
     N_g\ep(a),
 \] 
 which proves that the middle diagram commutes.

 Now consider instead an element $b=[V,e,n]\in A[US](Y)$.  We form a
 pullback square
 \[ \xymatrix{
  W \ar[r]^j \ar[d]_f & V \ar[d]^e \\
  X \ar[r]_g & Y
 } \]
 so that $R_g(b)=[W,f,R_j(n)]$.  This gives 
 \[ \ep R_g(b)=T_fR_j(n)=R_gT_e(n)= R_g\ep(b), \]
 which proves that the right hand diagram commutes.  It is also clear
 that $\ep$ is natural with respect to morphisms $S\to S'$ of Tambara
 functors.  

 This just leaves the triangular identities.  The first of these says
 that the composite 
 \[ US(X) \xra{\eta} UA[US](X) \xra{U\ep} US(X) \]
 is the identity.  For any $s\in US(X)=S(X)$ we have
 $\ep(\eta(s))=\ep[X,1,s]=T^{US}_1(s)=s$ as required.  The second one
 says that the composite
 \[ A[M](X) \xra{A[\eta]} A[UA[M]](X) \xra{\ep} A[M](X) \]
 is also the identity.  Consider an element $a=[W,f,m]\in A[M](X)$.  We
 then have $A[\eta](a)=[W,f,\eta(m)]=[W,f,[W,1,m]]$, so 
 \[ \ep(A[\eta](a))=T^{A[M]}_f [W,1,m]=[W,f,m] = a, \]
 as required.
\end{proof}

\section{Green semirings}
\label{sec-green}

\begin{definition}\label{defn-green}
 A \emph{Green semiring} is a Mackey functor $S$ equipped with a
 morphism $\mu\:S\btm S\to S$ that is commutative, associative and
 unital in the obvious sense.
\end{definition}

We will not make much use of Green semirings.  Our purpose in this
section is simply to explain some nonobvious ways to understand their
relationship with Tambara functors.

\begin{proposition}\label{prop-green-explicit}
 Let $S$ be a Mackey functor.  To make $S$ into a Green semiring is
 the same as to give each set $S(X)$ a semiring structure (extending
 the usual semigroup structure) in such a way that
 \begin{itemize}
  \item[(a)] For every map $f\:X\to Y$ of finite $G$-sets, the
   resulting map $R_f\:S(Y)\to S(X)$ is a homomorphism of semirings.
  \item[(b)] Moreover, for any $a\in S(X)$ and $b\in S(Y)$ we have
   $T_f(a\;R_f(b))=T_f(a)\;b$ in $S(Y)$.
 \end{itemize}
\end{proposition}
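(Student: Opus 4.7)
The plan is to construct maps in both directions between Green semiring structures $\mu\:S\btm S\to S$ and systems of semiring structures on the sets $S(X)$ satisfying (a) and (b), and to verify that they are mutually inverse.

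First I would handle the easy direction. Given a Green structure $\mu$, for each finite $G$-set $X$ and $a,b\in S(X)$ define $a\cdot b=\mu_X(a\ot b)$, where $a\ot b=R_\dl(a\btm b)$ as in the definition preceding Lemma~\ref{lem-frobenius}. Commutativity, associativity and unitality of $\cdot$ follow from the corresponding properties of $\mu$ together with the naturality of $\btm$ and $R$; distributivity over the additive structure uses biadditivity of $\btm$. Condition~(a) follows from the fact that $\mu$ is a Mackey morphism and that $R_f(a\btm b)=R_f(a)\btm R_f(b)$ (which in turn gives $R_f(a\ot b)=R_f(a)\ot R_f(b)$). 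Condition~(b) is the Frobenius reciprocity formula of Lemma~\ref{lem-frobenius} combined with naturality of $\mu$: we have $T_f(a\ot R_f(b))=T_f(a)\ot b$ in $(S\btm S)(Y)$, and applying $\mu_Y$ to both sides yields the desired identity.

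For the reverse direction, I would invoke Proposition~\ref{prop-box-equiv}. Given the semiring structures satisfying (a) and (b), define $\mu_X\:(S\btm S)(X)\to S(X)$ on the representatives of Proposition~\ref{prop-box-equiv} by $\mu_X[U,p,m,n]=T_p(m\cdot n)$. The main task is to show this descends through the equivalence relation $E$ generated by the two relations
\begin{align*}
 (U',qr,m',R_r(n)) &\,E\,(U,q,T_r(m'),n), \\
 (U,qr,R_r(m),n') &\,E\,(U,q,m,T_r(n')).
\end{align*}
Applying $\mu$ to the first gives $T_{qr}(m'\cdot R_r(n))=T_qT_r(m'\cdot R_r(n))$, which by condition~(b) (with $f=r$, $a=m'$ and $b=n$ after using~(a) to move $R_r$ inside the product) equals $T_q(T_r(m')\cdot n)$, as required; the second relation is symmetric. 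Thus $\mu_X$ is well-defined, and $\mu_X(m\btm n)=m\cdot n$ directly from the tautological representative.

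Next I would check that $\mu$ is a Mackey morphism, i.e.\ commutes with $R_f$ and $T_f$. On representatives, $T_f$ acts by postcomposing $p$ with $f$, so $\mu_Y(T_f[U,p,m,n])=T_{fp}(m\cdot n)=T_fT_p(m\cdot n)=T_f\mu_X[U,p,m,n]$. For $R_f\:(S\btm S)(Y)\to(S\btm S)(X)$ one forms the pullback and uses the Mackey identity $R_fT_p=T_{\tp}R_{\tf}$ together with~(a), which expresses $R_{\tf}(m\cdot n)=R_{\tf}(m)\cdot R_{\tf}(n)$, to get the compatibility. Commutativity, associativity and unitality of $\mu$ then follow by evaluating both sides on tautological representatives $[X,1,m,n]$ and reducing to the same properties for the pointwise products.

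The main obstacle is verifying well-definedness of $\mu$ modulo $E$; this is precisely where Frobenius reciprocity (condition~(b)) and the compatibility of $R$ with products (condition~(a)) are both needed, and it is the only place where the axioms of a Green semiring are not being read off from the corresponding axioms of $\mu$ by pure naturality. Once these are in place, checking that the two constructions are mutually inverse is a direct unwinding: starting from $\mu$, the recovered product is $\mu(a\ot b)$, and applying the construction of $\mu$ to this product recovers $\mu$ because any element of $(S\btm N)(X)$ has the form $T_p(m\ot n)=T_p\mu_U(m\ot n)$ by the surjectivity statement of Proposition~\ref{prop-box-equiv}.
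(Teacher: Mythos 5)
Your proposal is correct and takes the same route as the paper, whose entire proof is the one-liner ``This follows easily from Proposition~\ref{prop-box-equiv}''; you have simply unwound the bijection of that proposition explicitly, using Frobenius reciprocity in both directions. (One small quibble: in the well-definedness check, condition~(b) alone gives $T_r(m'\cdot R_r(n))=T_r(m')\cdot n$ directly, so the parenthetical remark about ``using (a) to move $R_r$ inside the product'' is unnecessary.)
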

\begin{proof}
 This follows easily from Proposition~\ref{prop-box-equiv}. 
\end{proof}

\begin{definition}\label{defn-green-bispan}
 We say that an equivariant map $f\:U\to V$ is \emph{green} if for all
 $u\in U$, the image $f(u)\in V$ has the same isotropy group as $u$.
 We say that a bispan $\om=(X\xla{p}A\xra{q}B\xra{r}Y)$ is
 \emph{green} if the map $q$ is green.
\end{definition}

Here of course it is automatic that the isotropy group of $f(u)$ is at
least as large as that of $u$; the force of the condition is that it
can be no larger.  

\begin{proposition}\label{prop-green-composite}
 The composite of any two composable green bispans is again green, and
 all identity bispans are green, so the green bispans give a
 subcategory $\bCG_G\sse\bCU_G$ (containing all the objects but not
 all the morphisms).  Moreover, this subcategory is closed under products.
\end{proposition}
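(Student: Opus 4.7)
The identity bispan $\eta_X=(X\xla{1}X\xra{1}X\xra{1}X)$ is clearly green, since $\mathrm{id}_X$ preserves isotropy groups. The two remaining assertions are checked by direct inspection of the formulas for composition and for products in $\bCU_G$.

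For composition, I would take green bispans $\om_0=(X_0\xla{p_0}A_0\xra{q_0}B_0\xra{r_0}X_1)$ and $\om_1=(X_1\xla{p_1}A_1\xra{q_1}B_1\xra{r_1}X_2)$, and consider the composite $\om_1\circ\om_0=(X_0\xla{p}A\xra{q}B\xra{r}X_2)$ as in Definition~\ref{defn-bispan-circ}, with the $G$-action $g\cdot(a_0,a_1,s)=(ga_0,ga_1,gsg^{-1})$ on $A$ and the analogous action on $B$. The containment $\stab(a_0,a_1,s)\sse\stab(q_1(a_1),s)$ is automatic, so the content is the reverse inclusion. If $g$ fixes $q_1(a_1)$ and satisfies $gsg^{-1}=s$, then greenness of $q_1$ gives $ga_1=a_1$; combining this with $gsg^{-1}=s$ yields $gs(a_1)=s(ga_1)=s(a_1)$; since $s(a_1)=q_0(a_0)$, greenness of $q_0$ then forces $ga_0=a_0$. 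Hence $q$ is green and $\om_1\circ\om_0\in\bCG_G$.

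For closure under products, recall that in $\bCU_G$ the categorical product of $X$ and $Y$ is $X\amalg Y$, with projections represented by the bispans $R_i=(X\amalg Y\xla{i}X\xra{1}X\xra{1}X)$ and $R_j$; since the middle maps are identities, these projections lie in $\bCG_G$. Given two bispans $\om=(W\xla{p}A\xra{q}B\xra{r}X)$ and $\om'=(W\xla{p'}A'\xra{q'}B'\xra{r'}Y)$ with the same source, the pair morphism $\phi\in\bCU_G(W,X\amalg Y)$ required by the universal property is (as in the span analysis before Proposition~\ref{prop-semigroup-theory}) represented by the bispan $(W\xla{(p,p')}A\amalg A'\xra{q\amalg q'}B\amalg B'\xra{(r,r')}X\amalg Y)$. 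Since $A$ is $G$-stable in $A\amalg A'$, the isotropy of any $a\in A$ computed in $A\amalg A'$ agrees with its isotropy in $A$, and likewise for $B\amalg B'$; so if $q$ and $q'$ are green, then so is $q\amalg q'$. Therefore $\phi\in\bCG_G$, and $\bCG_G$ is closed under products.

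The only real subtlety lies in the composition step: one must keep track of three compatibility conditions simultaneously (on $a_0$, on $a_1$, and on the auxiliary map $s$), and the argument uses greenness of $q_0$ and $q_1$ at two different stages. Once this is handled, the other claims reduce to bookkeeping.
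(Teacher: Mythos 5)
Your proof is essentially correct, and the composition step matches the paper's argument precisely: given $g$ fixing $q(a)=(q_1(a_1),s)$, you use greenness of $q_1$ to fix $a_1$, then $g$-equivariance of $s$ to conclude $g$ fixes $s(a_1)=q_0(a_0)$, then greenness of $q_0$ to fix $a_0$. That is exactly the paper's chain of deductions.

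The products step is handled by a somewhat different route. The paper does not write out an explicit bispan for the pairing morphism $\phi$; it instead observes that for any bispan $\al\in\CU_G(U,Y\amalg Z)$, decomposing the target induces a decomposition of $B$ (and then $A$), and $\al$ is green iff both restricted pieces $R_i\circ\al$ and $R_j\circ\al$ are green. Together with the greenness of $R_i$ and $R_j$, this gives closure under pairings with no need to identify $\phi$ concretely. You instead exhibit $\phi$ as $(W\xla{(p,p')}A\amalg A'\xra{q\amalg q'}B\amalg B'\xra{(r,r')}X\amalg Y)$ and check directly that $q\amalg q'$ is green. This works and buys a more explicit answer, at the cost of the (unverified but easy) claim that this particular bispan realizes the categorical pairing in $\bCU_G$ — a fact the paper never spells out even for its own proof, so this is a matter of taste rather than a gap. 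One small stylistic quibble: the remark that ``the isotropy of $a\in A$ computed in $A\amalg A'$ agrees with its isotropy in $A$'' is vacuously true for any point of any $G$-set; the actual content is simply that greenness of $q\amalg q'$ is checked fibre-wise on the two summands, which is what you then use.
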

\begin{proof}
 Consider a pair of green bispans 
 \begin{align*}
  \om_0 &= (X_0\xla{p_0}A_0\xra{q_0}B_0\xra{r_0}X_1) \in \CU(X_0,X_1) \\
  \om_1 &= (X_1\xla{p_1}A_1\xra{q_1}B_1\xra{r_1}X_2) \in \CU(X_1,X_2).
 \end{align*}
 The composite is the bispan 
 \[ \om = (X_0\xla{p}A\xra{q}B\xra{r}X_2) \in \CU(X_0,X_2)
 \] 
 given by
 \begin{align*}
  A &= \{(a_0,a_1,s) \st 
         s\:q_1^{-1}\{q_1(a_1)\}\to B_0,\;r_0s=p_1,\;
         a_0\in q_0^{-1}\{s(a_1)\}\} \\
  B &= \{(b_1,s) \st 
          s\:q_1^{-1}\{b_1\}\to B_0,\;\;r_0s=p_1\} \\
  p(a_0,a_1,s) &= p_0(a_0) \\
  q(a_0,a_1,s) &= (q_1(a_1),s) \\
  r(b_1,s)     &= r_1(b_1).
 \end{align*}
 Consider a point $a=(a_0,a_1,s)\in A$ and an element $g\in G$ that
 satisfies $g.q(a)=q(a)$; we must show that $g.a=a$.  The condition
 $g.q(a)=q(a)$ means that $g.q_1(a_1)=q_1(a_1)$ (so $g$ preserves the
 fibre $F=q_1^{-1}\{q_1(a_1)\}$) and that the map $s\:F\to B_0$
 satisfies $g\circ s=s\circ g$.  As $\om_1$ is green and
 $g.q_1(a_1)=q_1(a_1)$ we must have $g.a_1=a_1$.  As $s$ is
 $g$-equivariant this gives $g.s(a_1)=s(a_1)$.  As $a\in A$ we also
 have $q_0(a_0)=s(a_1)$, so $g.q_0(a_0)=q_0(a_0)$.  As $\om_0$ is
 green we can conclude that $g.a_0=a_0$.  As $a_0$ and $a_1$ are fixed
 by $g$ and $s$ is $g$-equivariant we see that the triple
 $a=(a_0,a_1,s)$ has $g.a=a$, as required.  This shows that composites
 of green bispans are green, and the corresponding fact for identities
 is trivial.

 Now consider a pair of objects $Y,Z\in\bCU_G$.  We have inclusions
 $Y\xra{i}Y\amalg Z\xla{j}Z$, and we have seen that the resulting
 diagram $Y\xla{R_i}Y\amalg Z\xra{R_j}Z$ is a product diagram in
 $\bCU_G$.  Using the proof of this we see that a bispan
 $\al\in\CU_G(U,Y\amalg Z)$ is green iff the composites $R_i\circ\al$
 and $R_j\circ\al$ are green.  The bispans $R_i$ and $R_j$ themselves
 are also clearly green.  It follows that our diagram is still a
 product diagram in $\bCG_G$.
\end{proof}

\begin{definition}\label{defn-green-functor}
 A \emph{Green functor} is a product-preserving functor
 $\bCG_G\to\Sets$.  Any Green functor has operators $T_f$, $N_f$ and
 $R_f$ just as for Tambara functors, except that $N_f$ is only defined
 when $f$ is green.  We write $\Green_G$ for the category of Green
 functors, and $\Green'_G$ for the category of Green semirings.  
\end{definition}

The rest of this section is devoted to constructing an equivalence
$\Green_G\simeq\Green'_G$.  

\begin{lemma}\label{lem-green-proj}
 Any projection map $\pi\:I\tm V\to V$ (where $G$ acts trivially on
 $I$) is green.  Conversely, if $p\:U\to V$ is green and $V$ is a
 $G$-orbit then $p$ can be written as a composite
 $U\xra{m}I\tm V\xra{\pi}V$, where $I$ is again $G$-fixed and $m$ is
 an equivariant bijection. 
\end{lemma}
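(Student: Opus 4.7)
The first assertion is immediate: a point $(i,v)\in I\tm V$ has $\stab(i,v)=\stab(i)\cap\stab(v)=G\cap\stab(v)=\stab(v)$, which is the isotropy group of $\pi(i,v)=v$. So $\pi$ is green.

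For the converse, the plan is to pick a basepoint and then use the greenness hypothesis to trivialise the action on the fibre. Choose $v_0\in V$ and put $H=\stab(v_0)$; since $V$ is a $G$-orbit we have $V\simeq G/H$ via $gH\mapsto gv_0$. Let $I=p^{-1}\{v_0\}\sse U$. Because $p$ is equivariant, $I$ is preserved by $H$. I claim $H$ acts trivially on $I$: for $u\in I$ we have $p(u)=v_0$ and $\stab(v_0)=H$, so $\stab(u)\supseteq H$; by greenness $\stab(u)=\stab(p(u))=H$, so every element of $H$ fixes $u$. Thus $I$ is an $H$-set with trivial action, which is the same data as a bare set with a trivial $G$-action.

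Next, the standard correspondence between $G$-sets over $G/H$ and $H$-sets gives an equivariant bijection $G\tm_HI\to U$, $[g,u]\mapsto g\cdot u$, compatible with the projections to $V$. Since $H$ acts trivially on $I$, the equivalence relation $(g,i)\sim (gh^{-1},hi)=(gh^{-1},i)$ on $G\tm I$ collapses the first coordinate modulo $H$ only, giving an equivariant bijection
\[ G\tm_HI \xra{\simeq} (G/H)\tm I \xra{\simeq} V\tm I \]
(where $G$ acts trivially on $I$ throughout). Composing with the flip and with the inverse of $G\tm_HI\simeq U$ produces the required equivariant bijection $m\:U\to I\tm V$, and by construction the first coordinate of $m(u)$ lies in $I$ while the second coordinate is $p(u)$, so $\pi\circ m=p$ as required.

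The only non-routine step is the identification of the $H$-action on the fibre as trivial, which is precisely what greenness buys; everything after that is the standard induction/coinduction dictionary for finite $G$-sets over an orbit, specialised to the case where the fibre $H$-set carries the trivial action.
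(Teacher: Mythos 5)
Your proof is correct, and its mathematical core is the same as the paper's: in both, the key observation is that greenness forces $H=\stab(v_0)$ to act trivially on the fibre $I=p^{-1}\{v_0\}$, and this is what lets one split $U$ as $I\tm V$. The difference is in the packaging of the final isomorphism: the paper constructs $m$ directly by declaring $m(u)=(m_1(u),p(u))$, where $m_1(u)$ is the unique point of $Gu\cap I$ (uniqueness being the content of greenness), and then checks injectivity and surjectivity by hand; you instead route through the standard equivalence between $G$-sets over $G/H$ and $H$-sets, and then observe that $G\tm_HI\simeq(G/H)\tm I$ because the $H$-action on $I$ is trivial. Your version is slightly more abstract and leans on the induction/restriction dictionary, whereas the paper's is self-contained and elementary; both are equally valid, and the well-definedness you need for $G\tm_HI\simeq(G/H)\tm I$ is exactly the paper's uniqueness claim in disguise.
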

\begin{proof}
 The first claim is clear.  For the second claim, choose a point
 $v_1\in V$, and let $H$ be the isotropy group of $v_1$.  Put
 $I=p^{-1}\{v_1\}\sse U$.  For each $u\in U$ we claim that the orbit
 $Gu$ meets $I$ in a single point.  Indeed, as $V$ is an orbit, we
 must have $v_1=g.p(u)$ for some $g$, which means that $g.u\in I$.  If
 we also have $g'.u\in I$ then
 $g'g^{-1}.v_1=g'g^{-1}.p(gu)=p(g'u)=v_1$, so $g'g^{-1}\in H$.
 However, the green condition means that $H$ also stabilises every
 point in $I$, so $g'g^{-1}.gu=gu$, or in other words $g'u=gu$ as
 required.  We can thus define $m_1(u)$ to be the unique point in
 $Gu\cap I$; this gives a map $m_1\:U\to I$ which is equivariant if we
 give $I$ the trivial action.  We now put
 $m(u)=(m_1(u),p(u))\in I\tm V$ (so $\pi m=p$).  Suppose that
 $m(u)=m(u')$.  As $m_1(u)=m_1(u')$ we see that $u$ and $u'$ lie in
 the same orbit, say $u'=g.u$.  As $p(u')=p(u)$ we see that $g$
 stabilises $p(u)$ and thus (by the green condition) stabilises $u$,
 so $u'=u$.  This shows that $m$ is injective.  Now consider an
 arbitrary point $(u,v)\in I\tm V$.  As $V$ is an orbit there exists
 $g$ with $g.v_1=v$, and it follows that $m(g.u)=(u,v)$, proving that
 $m$ is surjective.
\end{proof}

\begin{lemma}\label{lem-green-norm}
 Let $S$ be a Green semiring, and let $f\:X\to Y$ be a green map.
 Then there is a unique function $N_f\:S(X)\to S(Y)$ with the
 following property: for every transitive $G$-set $U$ and every
 $G$-map $j\:U\to Y$ and every $m\in S(X)$, we have
 \[ R_jN_f(m)=\prod_{i\:U\to X,\;fi=j} R_i(m) \in S(U). \]
\end{lemma}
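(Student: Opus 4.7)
\textbf{Proof plan for Lemma~\ref{lem-green-norm}.}

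\emph{Uniqueness.} First I would observe that $Y$ decomposes as a disjoint union of orbits $Y=\coprod_k Y_k$, so that $S(Y)=\prod_k S(Y_k)$ via the inclusions $\iota_k\:Y_k\hookrightarrow Y$. Applied to $j=\iota_k$ (which is a map from the transitive $G$-set $Y_k$), the stated formula determines the $k$-th component $R_{\iota_k} N_f(m)$ uniquely, so $N_f(m)$ is forced. Since the property to be satisfied is preserved under the orbit decomposition (any $j\:U\to Y$ from a transitive $U$ lands in a single orbit, since the image is a nonempty $G$-invariant subset), existence also reduces to the case where $Y$ is a single orbit $G/H$.

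\emph{Construction for $Y$ a single orbit.} Let $\Sec(f)$ denote the (finite) set of $G$-equivariant sections $\sigma\:Y\to X$ of $f$. I would define
\[ N_f(m) = \prod_{\sigma\in\Sec(f)} R_\sigma(m) \in S(Y), \]
where the product uses the multiplicative structure on $S(Y)$ supplied by the Green semiring hypothesis. This is manifestly intrinsic. (As a reality check, $\Sec(f)$ is nonempty and finite: by Lemma~\ref{lem-green-proj}, $f$ is isomorphic over $Y$ to a projection $I\tm Y\to Y$ with $I=f^{-1}\{y_0\}$ acted on trivially, and sections biject with $I$.)

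\emph{Verification.} For any equivariant $j\:U\to Y$ with $U$ transitive, using that $R_j\:S(Y)\to S(U)$ is a semiring homomorphism (Proposition~\ref{prop-green-explicit}(a)) I would compute
\[ R_j N_f(m) = \prod_{\sigma\in\Sec(f)} R_j R_\sigma(m) = \prod_{\sigma\in\Sec(f)} R_{\sigma\circ j}(m). \]
It then suffices to exhibit a bijection $\sigma\mapsto \sigma\circ j$ between $\Sec(f)$ and $\{i\:U\to X\st fi=j\}$. Since $U$ and $Y$ are both transitive and $j$ is equivariant, the image of $j$ is a nonempty $G$-invariant subset of $Y$, hence all of $Y$, so $j$ is surjective; this already gives injectivity of the map $\sigma\mapsto \sigma\circ j$. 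For surjectivity, given $i\:U\to X$ with $fi=j$, define $\sigma(y)=i(u)$ for any $u\in j^{-1}\{y\}$.

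\emph{The main point} is to show $\sigma$ is well-defined, and this is precisely where the green hypothesis is used: if $u,u'\in j^{-1}\{y\}$, then $u'=gu$ for some $g$ with $gy=y$, so $g\in H_y=H_{f(i(u))}$; because $f$ is green, $g$ must also fix $i(u)$, giving $i(u')=gi(u)=i(u)$. Once $\sigma$ is well-defined it is automatically equivariant and a section of $f$, with $\sigma\circ j=i$. This completes the verification of the universal property and hence the lemma.
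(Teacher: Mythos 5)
Your proposal is correct and takes essentially the same approach as the paper: both reduce to the orbit decomposition of $Y$ to force the components of $N_f(m)$, define $N_f$ via the product over sections (equivalently, maps lifting the inclusion of an orbit), and verify the universal property by exhibiting a bijection between sections of $f$ and maps $i\colon U\to X$ with $fi=j$ via precomposition, with the green hypothesis supplying well-definedness of the inverse. The only difference is organizational: you reduce to $Y$ a single orbit before constructing $N_f$, whereas the paper defines $N_f$ on general $Y$ directly and performs the same factorization $j=j_k p$ during verification; you also spell out the bijectivity argument that the paper leaves terse.
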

\begin{proof}
 We can write $Y=U_1\amalg\dotsb\amalg U_r$, where the sets $U_k$ are
 the orbits in $Y$, and let $j_k\:U_k\to Y$ be the inclusion.  As $S$
 is product-preserving we find that the maps $R_{j_k}\:S(Y)\to S(U_k)$
 give a bijection $\lm\:S(Y)\to\prod_kS(U_k)$.  Define
 $\nu_k\:S(X)\to S(U_k)$ by $\nu_k(m)=\prod_iR_i(m)$, where $i$ runs
 over equivariant maps $U_k\to X$ with $fi=j_k$.  These maps combine
 to give a map $\nu\:S(X)\to\prod_kS(U_k)$, and we put
 $N_f=\lm^{-1}\nu\:S(X)\to S(Y)$.  It is clear that this is the only
 map that can possibly have the stated property.

 Now let $U$ be an arbitrary transitive $G$-set, and let $j\:U\to Y$
 be a $G$-map. Then we must have $j=j_kp$ for some $k$
 and some surjective $G$-map $p\:U\to U_k$.  Let $I_k$ be the set of
 $G$-maps $i'\:U_k\to X$ with $fi'=j_k$, and let $I$ be the set of 
 $G$-maps $i\:U\to X$ with $fi=j$.  We have a map $p^*\:I_k\to I$
 given by $p^*(i')=i'p$, and using the fact that $f$ is green we see
 that this is bijective.  It follows that 
 \[ R_jN_f(m) =
     R_pR_{j_k}N_f(m) = 
     R_p\prod_{i'\in I_k}R_{i'}(m) =
     \prod_{i'\in I_k}R_{i'p}(m) = 
     \prod_{i\in I}R_i(m)
 \]
 as required.
\end{proof}

\begin{lemma}\label{lem-N-proj}
 For a projection map $\pi\:I\tm Y\to Y$ we have
 $N_\pi(m)=\prod_{i\in I}R_{\sg_i}(m)$, where $\sg_i\:Y\to I\tm Y$  is
 given by $\sg_i(y)=(i,y)$.
\end{lemma}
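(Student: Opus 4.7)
The projection $\pi$ is green by Lemma~\ref{lem-green-proj}, so Lemma~\ref{lem-green-norm} applies and characterises $N_\pi(m)$ uniquely by its behaviour after restriction along maps from transitive $G$-sets. The plan is to verify that the element $e=\prod_{i\in I}R_{\sg_i}(m)\in S(Y)$ satisfies the same characterising property and then invoke that uniqueness.

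Fix a transitive $G$-set $U$ and an equivariant map $j\:U\to Y$. The map $R_j\:S(Y)\to S(U)$ is a semiring homomorphism (by Proposition~\ref{prop-green-explicit}(a)), so
\[ R_j(e)=\prod_{i\in I}R_{\sg_i\circ j}(m). \]
On the other hand, the characterising formula for $N_\pi(m)$ gives
\[ R_jN_\pi(m)=\prod_{k\:U\to I\tm Y,\;\pi k=j}R_k(m). \]
Thus the whole argument reduces to identifying the indexing sets of the two products.

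Such a map $k$ has the form $k(u)=(k_1(u),j(u))$ for some equivariant $k_1\:U\to I$. Since $G$ acts trivially on $I$ and $U$ is a transitive $G$-set, every equivariant map $U\to I$ is constant, so $k_1(u)=i$ for a unique $i\in I$, and then $k=\sg_i\circ j$. Conversely, every $\sg_i\circ j$ is of this form. Hence the index set in the second product is in canonical bijection with $I$, and both products equal $\prod_{i\in I}R_{\sg_i\circ j}(m)$.

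The only subtlety to watch is the use of transitivity of $U$ to force $k_1$ constant; this is exactly what ensures the two indexings agree term by term. Since $e$ and $N_\pi(m)$ satisfy the same characterising identity for every transitive $U$ and every $j\:U\to Y$, the uniqueness clause of Lemma~\ref{lem-green-norm} gives $N_\pi(m)=e$, as claimed.
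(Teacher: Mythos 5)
Your proof is correct, and it is essentially the proof the paper intends by the word ``Straightforward'': since $N_\pi$ in this section is defined only via the characterising formula of Lemma~\ref{lem-green-norm}, the natural argument is to check that $\prod_{i\in I}R_{\sigma_i}(m)$ satisfies the same formula, which is exactly what you do. The bijection step is clean, using that an equivariant map from a transitive $G$-set to a trivial $G$-set is constant and that transitive $G$-sets are nonempty so $i\mapsto\sigma_i\circ j$ is injective.
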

\begin{proof}
 Straightforward.
\end{proof}

\begin{lemma}\label{lem-green-functor}
 The maps $N_f$ as in Lemma~\ref{lem-green-norm} make $S$ into a Green
 functor. 
\end{lemma}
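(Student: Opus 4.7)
The plan is to verify the generating relations for $\bCG_G$, which are the green-map analogs of those in Proposition~\ref{prop-UG-pres}. The relations involving only $T$ and $R$ are inherited from the Mackey functor structure on $S$, so it remains to check three things: (i) $N_{hg}=N_hN_g$ for composable green maps, with $N_1=1$; (ii) the base-change identity $R_kN_g=N_{\tg}R_j$ for a pullback square with $g$ green (whence $\tg$ is also green); and (iii) the distributor identity $N_gT_f=T_rN_qR_p$ where $g$ is green and $\Dl(f,g)=(X\xla{p}A\xra{q}B\xra{r}Z)$. One first observes that $q$ is automatically green in this situation: if $h\in G$ fixes $q(y,s)=(g(y),s)$, then $h$ fixes $g(y)$, and since $g$ is green $h$ fixes $y$; combined with $h\circ s=s\circ h$ this gives $h\cdot(y,s)=(y,s)$.

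The key reduction is that $S$ is product-preserving on $\bCA_G$, so any element of $S(Y)$ is determined by its restrictions $R_j$ along $G$-maps $j\:U\to Y$ with $U$ transitive. Hence each identity may be checked after applying such an $R_j$, and the left-hand side then unfolds, via the universal property in Lemma~\ref{lem-green-norm}, as an explicit product in the semiring $S(U)$ indexed by the set $\{i\:U\to X \st f i=j\}$; this transports the problem into combinatorics of equivariant map-sets plus the ring axioms of $S(U)$ from Proposition~\ref{prop-green-explicit}. For~(i), test at $j\:U\to Z$: the set of $i\:U\to X$ with $(hg)i=j$ is, via $i':=gi$, in bijection with pairs $(i,i')$ satisfying $gi=i'$ and $hi'=j$, matching the double product that $R_jN_hN_g$ produces. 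For~(ii), the universal property of the pullback gives a bijection between $G$-maps $V\to X$ over $kj'$ and $G$-maps $V\to\tX$ over $j'$, and comparing $\prod R_i R_k(m)=\prod R_{ki}(m)$ on one side with $\prod R_{j\cdot i'}R_j(m)$ on the other gives equality.

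The main obstacle is~(iii). After applying $R_j$ for $j\:U\to Z$ transitive, the left side becomes $\prod_i R_iT_f(m)$ with $i\:U\to Y$ ranging over maps with $gi=j$; for each such $i$ the pullback square $W_i:=U\tm_Y X$ yields $R_iT_f=T_{f_i}R_{e_i}$, so $R_jN_gT_f(m)=\prod_i T_{f_i}R_{e_i}(m)$ as an element of $S(U)$. The right side $R_jT_rN_qR_p(m)$ is unwound by pulling $r$ back along $j$ (Mackey property for $T,R$) and then pulling $q$ back along the resulting map (using identity~(ii) above), reducing it to a transfer from the fibre of $r$ over $U$ of a norm of a restriction of $m$. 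To identify the two expressions, one observes that a point $(z,s)\in B$ lying over $U$ corresponds to a choice of equivariant section of $g$ over the relevant orbit, which is precisely the datum of an $i$ above, so that the combinatorial expansion of the right-hand side is indexed by the same set as the product $\prod_i T_{f_i}R_{e_i}(m)$. Matching the two then follows by repeated use of Frobenius reciprocity (Proposition~\ref{prop-green-explicit}(b)), which collapses this product of transfers in $S(U)$ into the single transfer of a norm produced by the distributor. The bookkeeping is lengthy but mechanical once the indexing bijection is in hand.
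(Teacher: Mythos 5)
Your overall plan is sound and mirrors the paper's: reduce to the generating relations from Proposition~\ref{prop-UG-pres} (adapted to green maps), observe that the $T$- and $R$-only relations come for free from the Mackey structure, handle $N_{hg}=N_hN_g$ and the base-change identity $R_kN_g=N_{\tg}R_j$ by restricting along $j\:U\to Z$ with $U$ transitive and comparing the explicit products from Lemma~\ref{lem-green-norm}, and identify the distributor identity as the hard case. The observation that $q$ is automatically green is correct and matches the parenthetical remark in the paper.

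The problem is in part~(iii). The claimed ``indexing bijection'' is not correct. A point $(z,s)\in B$ consists of a non-equivariant section $s\:g^{-1}\{z\}\to X$ with $fs=1$; this is not the same data as an equivariant lift $i\:U\to Y$ of $j$, and the $G$-orbits of $U\tm_Z B$ (which index the transfer on the right-hand side after pulling back $r$ along $j$) are not in bijection with the set $I=\{i\:U\to Y\st gi=j\}$ that indexes the product $\prod_{i\in I}T_{f_i}R_{e_i}(m)$ on the left. The actual matching is indirect: one must expand that product of transfers into a sum of products by iterated Frobenius reciprocity (i.e.\ by the distributive law), and only then does one land on a single transfer whose index set is a set of ``multi-sections,'' which can be identified with $B_U$. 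There is no one-step bijection of the kind you assert, so the phrase ``once the indexing bijection is in hand'' does not describe a step you can actually take.

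The paper handles~(iii) in a structurally cleaner way that you should compare to. After reducing to $Z$ a single orbit, it invokes the structure theorem Lemma~\ref{lem-green-proj}, which says a green map to an orbit factors through an isomorphism onto a trivial product $I\tm Z$ with $G$ acting trivially on $I$. This immediately splits $X$ as $\coprod_{i\in I}X_i$ and, via Lemma~\ref{lem-N-proj}, writes the left side as $\prod_iT_{f_i}(m_i)$ and the right side as $T_r(\prod_iR_{p_i}(m_i))$ for an explicitly computed distributor $(X\xla{p}I\tm B\xra{\pi}B\xra{r}Z)$. The identity then follows by peeling off one factor at a time with Frobenius reciprocity and a cartesian square, by a transparent induction on $|I|$. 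In your approach, keeping $j\:U\to Z$ arbitrary prevents this reduction and forces you to confront the full combinatorics directly, which is precisely where the argument needs to be made precise. If you want to salvage the direct route, you should drop the claimed bijection, take $j$ to be the identity on a single orbit $Z$ (which suffices because $S$ is product-preserving and $Z$ decomposes into orbits), and then use Lemma~\ref{lem-green-proj} to put $g$ in the canonical form before expanding.
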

\begin{proof}
 By the evident analog of Proposition~\ref{prop-UG-pres}, it will
 suffice to check the following:
 \begin{itemize}
  \item[(a)] For all finite $G$-sets $X$, $Y$ and $Z$, and all
   $G$-maps $X\xla{f}Y\xra{g}Z$, we have $R_{gf}=R_fR_g\:S(Z)\to S(X)$
   and $T_{gf}=T_gT_f\:S(X)\to S(Z)$.  Moreover, if $f$ and $g$ are
   green, we also have $N_{gf}=N_gN_f\:S(X)\to S(Z)$.  
  \item[(b)] If we have $G$-maps $X\xra{f}Y\xra{g}Z$ with distributor
   $\Dl(f,g)=(X\xla{p}A\xra{q}B\xra{r}Z)$ (where $g$ is green, and $q$
   is therefore also green) then $N_gT_f=T_rN_qR_p\:S(X)\to S(Z)$.
  \item[(c)] For any cartesian square
   \[ \xymatrix{
    W \ar[r]^f \ar[d]_g & X \ar[d]^h \\
    Y \ar[r]_k & Z
   } \]
   (of finite $G$-sets and equivariant maps) we have
   $T_fR_g=R_hT_k\:S(X)\to S(Y)$.  If $f$ and $h$ are green, we also
   have $N_fR_g=R_hN_k\:S(X)\to S(Y)$.
 \end{itemize}
 In part~(a), the equations $R_{gf}=R_fR_g$ and $T_{gf}=T_gT_f$ hold
 because $S$ is assumed to be a Mackey functor.  It is also clear that
 the composite $N_gN_f$ has the property that defines $N_{gf}$.

 Now consider a square as in~(c).  The equation $T_fR_g=R_hT_k$ holds
 because $S$ is assumed to be a Mackey functor.  Suppose that $f$ (and
 therefore also $h$) is green, and consider an element $m\in S(Y)$.
 We need to show that $N_fR_g(m)=R_hN_k(m)\in S(X)$, and it will
 suffice to show that $R_jN_fR_g(m)=R_jR_hN_k(m)$ for all $G$-orbits
 $U$ and all injective $G$-maps $j\:U\to X$.  On the left hand side, we
 have $R_jN_fR_g(m)=\prod_{i\in I}R_iR_g(m)=\prod_{i\in I}R_{gi}(m)$,
 where $I$ is the set of $G$-maps $i\:U\to W$ with $fi=j$.  On the
 right hand side, we have 
 \[ R_jR_hN_k(m) = R_{hj}N_k(m) = \prod_{i'\in I'} R_{i'}(m), \]
 where $I'$ is the set of maps $i'\:U\to Y$ that satisfy $ki'=hj$.  As
 the square is cartesian, we see that the map $i\mapsto gi$ gives a
 bijection $I\to I'$, so $R_jN_fR_g(m)=R_jR_hN_k(m)$ as claimed.

 We now turn to~(b).  It is clear that everything works independently
 over the different orbits of $Z$, so we may assume that $Z$ itself is
 a single orbit.  Using Lemma~\ref{lem-green-proj} we can then reduce
 to the case where $Y=I\tm Z=\coprod_{i\in I}Z$ and $g\:I\tm Z\to Z$
 is just the projection.  We can then split the diagram
 $(X\xra{f}I\tm Y)$ as a disjoint union of diagrams
 $(X_i\xra{f_i}Z)$.  An element $m\in S(X)$ corresponds to a system of
 elements $m_i\in S(X_i)$, and using Lemma~\ref{lem-N-proj} we find
 that $N_gT_f(m)=\prod_iT_{f_i}(m_i)$.  Next, by inspecting the
 definitions we find that the distributor $\Dl(f,g)$ has the form
 $(X\xla{p}I\tm B\xra{\pi}B\xra{r}Z)$, where
 \begin{align*}
  B &= \{(z,u)\st z\in Z,\; u\in\prod_if_i^{-1}\{z\}\sse\prod_iX_i\}  \\
  p(i,z,u) &= u_i \\
  r(z,u) &= z.
 \end{align*}
 Using this together with Lemma~\ref{lem-N-proj} again we get
 \[ \textstyle T_rN_\pi R_p(m)=T_r(\prod_iR_{p_i}(m_i)), \]
 where $p_i(z,u)=p(i,z,u)=u_i$. 

 It will now be convenient to identify $I$ with $\{1,\dotsc,n\}$ say.  Put 
 \begin{align*}
  B' &= \{(z,u)\st z\in Z,\; u\in\prod_{i<n}f_i^{-1}\{z\}\sse\prod_iX_i\}  \\
  p'_i(z,u) &= u_i \hspace{6em} (\text{ for } i<n) \\
  r'(z,u) &= z
 \end{align*}
 and let $t\:B\to B'$ be the evident truncation map.  We then have a
 cartesian square as follows:
 \[ \xymatrix{ 
  B \ar[r]^{p_n} \ar[d]_t \ar[dr]^r & X_n \ar[d]^{f_n} \\
  B' \ar[r]_{r'} & Z
 } \]
 Using Frobenius reciprocity twice, and the fact that $p_i=p'_it$ for
 $i<n$, and the cartesian property of the square, we obtain
 \begin{align*}\textstyle
  T_r\left(\prod_iR_{p_i}(m_i)\right)
   &= T_{r'}T_t\left(R_{p_n}(m_n)\;R_t\prod_{i<n}R_{p'_i}(m_i)\right) \\
   &= T_{r'}\left(T_tR_{p_n}(m_n)\;\prod_{i<n}R_{p'_i}(m_i)\right) \\
   &= T_{r'}\left(R_{r'}T_{f_n}(m_n)\;\prod_{i<n}R_{p'_i}(m_i)\right)  \\
   &= T_{f_n}(m_n)\; T_{r'}\left(\prod_{i<n}R_{p'_i}(m_i)\right).
 \end{align*}
 By an evident inductive extension of this, we get 
 \[ T_r\left(\prod_iR_{p_i}(m_i)\right) = \prod_iT_{f_i}(m_i) \]
 or in other words $T_rN_\pi R_p(m)=N_gT_f(m)$ as claimed.
\end{proof}

\begin{lemma}\label{lem-green-forget}
 Let $S$ be a Green functor.  Then the underlying Mackey functor has a
 canonical structure as a Green semiring.
\end{lemma}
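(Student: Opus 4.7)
The plan is to follow the strategy of Proposition~\ref{prop-tambara-semiring} but restricted to the green setting, after verifying that all the relevant bispans involved are in fact green. First I would observe that the fold map $s\:X\amalg X\to X$ is green (each point in either copy has the same isotropy as its image) and that $z\:\emptyset\to X$ is vacuously green. This lets us define multiplication $a\cdot b = N_s(a,b)$ and multiplicative identity $1 = N_z()$ on $S(X)$, even though $S$ only has norms along green maps. Addition and $0$ come from the underlying Mackey structure as usual.

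Next I would show that the inclusion $\bCU\hookrightarrow\bCG_G$ (sending finite sets to $G$-sets with trivial action, where every map is vacuously green) and the shift $P_X\:\bCG_G\to\bCG_G$ of Proposition~\ref{prop-G-bispan-shift} both land in $\bCG_G$. For $P_X$: if $q\:A\to B$ is green, then $\stab_{X\tm A}(x,a)=\stab(x)\cap\stab(a)=\stab(x)\cap\stab(q(a))=\stab_{X\tm B}(x,q(a))$, so $1\tm q$ is green. The composite
\[ \bCU\xra{\text{inc}}\bCG_G\xra{P_X}\bCG_G\xra{S}\Sets \]
is product-preserving because disjoint unions give products in all three categories, so by Proposition~\ref{prop-semiring-theory} it equips $S(X)$ with a semiring structure in which $a+b=T_s(a,b)$ and $ab=N_s(a,b)$.

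Then I would verify the two conditions of Proposition~\ref{prop-green-explicit}. For condition~(a), I would apply $S$ to the cartesian square
\[ \xymatrix{
 \emptyset \ar[r]^z \ar[d]_1 & X \ar[d]^f & X\amalg X \ar[l]_s \ar[d]^{f\amalg f} \\
 \emptyset \ar[r]_z & Y & Y\amalg Y \ar[l]^s
} \]
and invoke the cartesian identities $R_f T_s = T_s R_{f\amalg f}$ and $R_f N_s = N_s R_{f\amalg f}$; the latter is legal because $s$ is green. For condition~(b), Frobenius reciprocity, I would repeat the bispan identification at the end of the proof of Proposition~\ref{prop-tambara-semiring}: both operations $(u,v)\mapsto T_f(u)\cdot v$ and $(u,v)\mapsto T_f(u\cdot R_f(v))$ are shown to be evaluation of the same bispan, which turns out to be $(X\amalg Y\xla{1\amalg f}X\amalg X\xra{s}X\xra{f}Y)$. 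Its middle map $s$ is green, so this bispan lies in $\bCG_G$ and yields a legitimate operation on the Green functor $S$.

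The main delicate point is the greenness of distributors implicit in the argument, namely: whenever $g$ is green, the distributor $\Dl(f,g)=(X\xla{p}A\xra{q}B\xra{r}Z)$ has green middle arrow $q$. Unwinding the definition, the isotropy of $(y,s)\in A$ is $\{h\in\stab(y)\st h\cdot s=s\}$, while that of $q(y,s)=(g(y),s)\in B$ is $\{h\in\stab(g(y))\st h\cdot s=s\}$; these agree precisely because $\stab(g(y))=\stab(y)$ by greenness of $g$. This ensures that each of the appeals to Proposition~\ref{prop-G-bispan-rels}(b,c) made in the sketch above is available for the Green functor $S$, so the proof of Proposition~\ref{prop-tambara-semiring} carries over verbatim in this restricted context.
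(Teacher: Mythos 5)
Your proof is correct and follows essentially the same route the paper takes: the paper says only that the argument of Proposition~\ref{prop-tambara-semiring} carries over once one observes that all bispans used are green, and then appeals to Proposition~\ref{prop-green-explicit}. You have simply made that observation explicit, checking that the fold and zero maps, the shifted inclusion into $\bCG_G$, the cartesian-square identities, and the distributor $\Dl(f\amalg 1,s)$ all involve only green middle arrows (the last fact being automatic anyway from Proposition~\ref{prop-green-composite}, since the distributor is the composite of the green bispans $N_s$ and $T_{f\amalg 1}$).
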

\begin{proof}
 We first claim that each set $S(X)$ has a canonical structure as a
 semiring.  Moreover, for every map $f\:X\to Y$:
 \begin{itemize}
  \item[(a)] The map $R_f\:S(Y)\to S(X)$ is a semiring homomorphism
   (which we use to regard $S(X)$ as an $S(Y)$-module).
  \item[(b)] The map $T_f\:S(X)\to S(Y)$ is a homomorphism of
   $S(Y)$-modules (and in particular, respects addition).
  \item[(c)] If $f$ is green then the map $N_f\:S(X)\to S(Y)$ sends
   $1$ to $1$ and respects multiplication. 
 \end{itemize}
 This is proved in the same way as
 Proposition~\ref{prop-tambara-semiring}; we just need to observe that
 the bispans used to define the semiring structure are all green.

 In view of Proposition~\ref{prop-green-explicit}, this gives the
 required Green semiring structure.
\end{proof}

\begin{proposition}\label{prop-green-equiv}
 The categories $\Green_G$ and $\Green'_G$ are equivalent.
\end{proposition}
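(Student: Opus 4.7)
The plan is to construct functors $E\:\Green'_G\to\Green_G$ and $F\:\Green_G\to\Green'_G$ using Lemmas~\ref{lem-green-norm}--\ref{lem-green-forget}, and then to verify that the two composites are naturally isomorphic to the identity. On objects, $E$ is given by Lemma~\ref{lem-green-functor} and $F$ is given by Lemma~\ref{lem-green-forget}. On morphisms, $F$ is tautological: a morphism of Green functors already respects $T_f$, $R_f$ and $N_f$, and the multiplication on $F(S)(X)$ is $a\cdot b=N_s(a,b)$ for the green fold map $s\:X\amalg X\to X$, so a Green functor morphism is automatically a Green semiring morphism. For $E$ on morphisms, a Green semiring map $\phi\:S\to S'$ commutes with $R$ and $T$ and is multiplicative; combined with the characterisation in Lemma~\ref{lem-green-norm} and the fact that $S'$ is product-preserving (so an element of $S'(Y)$ is determined by its restrictions along orbit inclusions $j\:U\to Y$), this forces $\phi\circ N_f=N_f\circ\phi$ for every green $f$.

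For $F\circ E\simeq\id$, I would start from a Green semiring $S$ and check that the multiplication recovered in $F(E(S))(X)$ agrees with the original. By construction the recovered multiplication is $N_s(a,b)$ for the fold map $s\:X\amalg X\to X$. Applying the defining property from Lemma~\ref{lem-green-norm} over each orbit inclusion $j\:U\to X$, there are exactly two equivariant maps $U\to X\amalg X$ lying over $j$, yielding $R_j(N_s(a,b))=R_j(a)\cdot R_j(b)=R_j(a\cdot b)$, the last equality because $R_j$ is a semiring homomorphism in $S$. Since the $R_j$ jointly separate elements of $S(X)$, this forces $N_s(a,b)=a\cdot b$ as desired.

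The harder direction is $E\circ F\simeq\id$. Starting from a Green functor $S$, I must show that for every green $f\:X\to Y$, the original norm $N_f$ coincides with the norm reconstructed by Lemma~\ref{lem-green-norm} from the underlying Green semiring $F(S)$. By the uniqueness clause of Lemma~\ref{lem-green-norm}, it suffices to verify that $R_j\circ N_f=\prod_{i:U\to X,\,fi=j}R_i$ for every orbit $U$ and every $G$-map $j\:U\to Y$. Form the cartesian square
\[ \xymatrix{
\tX \ar[r]^{\tj} \ar[d]_{\tf} & X \ar[d]^f \\
U \ar[r]_j & Y.
} \]
The green condition is preserved by pullback, so $\tf$ is green, and the cartesian relation for $N$ (valid in $\bCG_G$ by Proposition~\ref{prop-green-composite}) gives $R_j N_f=N_{\tf}R_{\tj}$. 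By Lemma~\ref{lem-green-proj}, the green map $\tf$ factors as $\tX\xrightarrow{\sim}I\tm U\xrightarrow{\pi}U$ with $I$ trivially acted on, and the set $I$ is in natural bijection with the set of $G$-maps $i\:U\to X$ satisfying $fi=j$, via $i\leftrightarrow\tj\circ\sg_i$ for the sections $\sg_i$ of $\pi$. Thus the identity to prove reduces to $N_\pi(n)=\prod_{i\in I}R_{\sg_i}(n)$ for $n\in S(I\tm U)$, which is the content of Lemma~\ref{lem-N-proj} interpreted in the Green semiring $F(S)$.

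The main obstacle is this last step, namely matching the original $N_\pi$ in the Green functor $S$ with the iterated product coming from the recovered multiplication. I would handle it by induction on $|I|$: the case $|I|=1$ is trivial, the case $|I|=2$ is the definition of multiplication in $F(S)$, and the inductive step uses the decomposition $(I_1\amalg I_2)\tm U=I_1\tm U\amalg I_2\tm U$, the functoriality $N_{\pi_1}N_{\pi_2}=N_\pi$ for composition of fold maps, and the cartesian relation already invoked above. Naturality of the constructed isomorphism $S\simeq E(F(S))$ in $S$ is then immediate from the fact that both $E$ and $F$ act tautologically on morphisms.
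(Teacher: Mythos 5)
Your argument is correct and takes the same route as the paper's proof, which constructs $\Phi\colon\Green'_G\to\Green_G$ via Lemmas~\ref{lem-green-norm} and~\ref{lem-green-functor} and $\Psi\colon\Green_G\to\Green'_G$ via Lemma~\ref{lem-green-forget} and simply asserts that they are mutually inverse. Your expansion of that assertion — separating elements by restricting along orbit inclusions for $\Psi\Phi\simeq\id$, and invoking the uniqueness clause of Lemma~\ref{lem-green-norm} together with the cartesian property, Lemma~\ref{lem-green-proj}, and Lemma~\ref{lem-N-proj} for $\Phi\Psi\simeq\id$ — is sound and fills in exactly the details the paper leaves to the reader.
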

\begin{proof}
 If $S$ is a Green semiring, we write $\Phi(S)$ for the same Mackey
 functor equipped with norm maps $N_f$ as given by
 Lemma~\ref{lem-green-norm}.  Using Lemma~\ref{lem-green-functor} we
 see that this gives a functor $\Phi\:\Green'_G\to\Green_G$.
 Similarly, Lemma~\ref{lem-green-forget} defines a functor
 $\Psi\:\Green_G\to\Green'_G$.  It is easy to see that these are
 inverse to each other. 
\end{proof}

\section{Additive completion}
\label{sec-completion}

Given a commutative semigroup $A$, there is a universal example of a
commutative group $A^+$ equipped with a homomorphism $A\to A^+$; we
call this the \emph{additive completion} of $A$.  In this section we
explain and extend Tambara's proof~\cite{ta:mt}*{Theorem 6.1} of the
following result:

\begin{theorem}\label{thm-tambara-plus}
 If $M$ is a Mackey functor then there is another Mackey functor $M^+$
 with $M^+(X)=M(X)^+$ for all $X$.  If $S$ is a Tambara functor then
 there is another Tambara functor $S^+$ with $S^+(X)=S(X)^+$ for all
 $X$.
\end{theorem}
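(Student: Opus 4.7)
The Mackey functor case is direct: set $M^+(X) := M(X)^+$. By Proposition~\ref{prop-tambara-semiring} (or its Mackey analogue), the transfers $T_f$ and restrictions $R_f$ are semigroup homomorphisms, so they extend uniquely to group homomorphisms $T_f^+, R_f^+$ between the additive completions by the universal property. Since additive completion commutes with finite products (both characterised by universal properties into abelian groups) and $M$ converts disjoint unions to products, $M^+$ is product-preserving. All relations between the $T_f$ and $R_f$ from Proposition~\ref{prop-UG-pres} persist, because every element of $M(X)^+$ is a difference of two images of $\iota\colon M(X)\to M(X)^+$ and both sides of each relation agree additively on such images. For the Tambara case, set $S^+(X) := S(X)^+$, which is a commutative ring since $S(X)$ is a commutative semiring by Proposition~\ref{prop-tambara-semiring}; the Mackey structure on $S^+$ has already been constructed.

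The central construction is the extension of $N_f\colon S(X)\to S(Y)$ to a map $N_f^+\colon S(X)^+\to S(Y)^+$. The key tool is the distributor relation: for the fold map $s\colon X\amalg X\to X$ and $\Dl(s,f)=(X\amalg X\xla{p}A\xra{q}B\xra{r}Y)$, the identity $N_fT_s=T_rN_qR_p$ expresses $N_f(a+b)$ as a sum, over a canonical decomposition of $B$ by ``sign patterns'' assigning each point in a $q$-fibre to the left or right copy of $X$, of products of restrictions of $a$ and $b$. I define $N_f^+(a_+-a_-)$ by applying the same formula to the pair $(a_+,a_-)\in S(X)^2=S(X\amalg X)$ but weighting the summand indexed by a sign pattern $u$ with $(-1)^{k(u)}$, where $k(u)$ is the number of right-copy slots in $u$. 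Concretely, decompose $B=\coprod_u B_u$ equivariantly according to orbit-type of sign pattern, compute $N_qR_p(a_+,a_-)$ in each $S(B_u)^+$, multiply by $(-1)^{k(u)}$, and then apply $T_r^+$; summing gives an element of $S(Y)^+$.

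The main obstacles are well-definedness and verification of the Tambara axioms. For well-definedness, one must show that if $a_+-a_-=b_+-b_-$ in $S(X)^+$, i.e.\ $a_++b_-+c=b_++a_-+c$ in $S(X)$ for some $c$, the two resulting elements of $S(Y)^+$ coincide; the plan is to apply the five-input distributor for $\coprod_5 X \xra{s_5} X\xra{f}Y$ to the tuple $(a_+,a_-,b_+,b_-,c)$, then exhibit a sign-reversing involution on the sign patterns that swaps the $a$-roles with the $b$-roles and uses the cancellations $a_++b_-\leftrightarrow a_-+b_+$ (after absorbing the common $c$). For the axioms, Proposition~\ref{prop-UG-pres} reduces the task to three relations: the composition law $N_{hg}^+=N_h^+N_g^+$, the distributor law $N_g^+T_f^+=T_r^+N_q^+R_p^+$, and the Mackey-square law $N_g^+R_f^+=R_k^+N_h^+$. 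Each holds on $\iota(S(X))$ by the corresponding identity in $S$, and I expect to deduce each on all of $S(X)^+$ by expanding differences via the sign-decomposition and matching terms using Propositions~\ref{prop-NNT}, \ref{prop-NTT}, and~\ref{prop-NRT}, which are precisely the bispan-level identifications that encode how iterated distributors combine. The hardest step will be $N_{hg}^+=N_h^+N_g^+$, since iterating the construction on $a_+-a_-$ produces a double sign-decomposition whose compatibility with the single decomposition for $hg$ requires carefully matching the bijections of Proposition~\ref{prop-NNT} at the level of sign patterns; this is essentially the content of Tambara's original argument.
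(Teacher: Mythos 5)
Your Mackey case is correct and routine, and the ring structure on each $S^+(X)$ is also fine. The Tambara case, however, founders on precisely the point that the paper flags in the discussion preceding Definition~\ref{defn-convolution}: the sign-weighted transfer formula you propose for $N_f^+$ is not well-defined on $S(X)^+$, and no sign-reversing involution can repair it. Take $G=C_2$, $S$ the Burnside semiring Tambara functor (Example~\ref{eg-burnside-tambara}), and $f\:G\to 1$. The pairs $(0,0)$ and $(1,1)$ in $S(G)^2$ both represent $0\in S(G)^+$. Your formula sends $(0,0)$ to $0$. For $(1,1)$, every product $\prod_{x\notin C}a_+(x)\prod_{x\in C}a_-(x)$ equals $1$, so the fixed orbits $C=\emptyset$ and $C=G$ each contribute $+1$, while the free orbit $\{C=\{1\},C=\{\chi\}\}$ contributes $(-1)^1\cdot T_f(1)=-t$, giving a total of $2-t$, which is nonzero in the Burnside ring $\Z\{1,t\}$. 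The root cause is that $T_r$ over a free orbit of sign patterns is \emph{not} literal summation over the orbit --- it produces $T_f(1)=t$, not the orbit size $2$ --- so the cancellations that make the nonequivariant expansion of $\prod_x\bigl(a_+(x)-a_-(x)\bigr)$ work simply do not occur equivariantly. Your plan of a ``sign-reversing involution on sign patterns'' would at best establish term-by-term equalities of transfers, and the equality $2-t=0$ it would need is false. Indeed, your formula implicitly forces $N_f^+(-1)=(-1)^{|f^{-1}\{y\}|}$, which is exactly the naive guess that the paper dismisses; the Tambara axioms instead force $N_f(-1)=T_f(1)-1$.

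The paper's (Tambara's) route is structurally different and is not a technicality you can bypass. It introduces the convolution semiring $S(V(f))$ with $V(f)=\{(y,C)\st C\sse f^{-1}\{y\}\}$ and the map $\chi=N_tR_r\:S(X)\to S(V(f))$, which lands in the subset $GS(V(f))$ of elements with degree-zero part equal to $1$. The decisive facts are that $\chi$ converts addition in $S(X)$ to the convolution product (Proposition~\ref{prop-chi-hom}), that $GS^+(V(f))$ is a \emph{group} under convolution (Proposition~\ref{prop-GSVf-group}), and that $R_j\chi=N_f$ (Proposition~\ref{prop-Rj-chi}). Since $\chi$ is a homomorphism from the additive semigroup $S(X)$ into a group, it extends uniquely over $S(X)\to S^+(X)$ by the universal property, and one sets $N_f^+:=R_j\chi^+$; well-definedness is automatic, with no combinatorial bijection needed. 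The Tambara axioms for $S^+$ are then verified not term-by-term but by showing that each $N_f^+$ is a polynomial map of bounded degree (Lemma~\ref{lem-norm-poly}, using nilpotence of the positive-degree part of the convolution algebra) and invoking the rigidity principle that polynomial maps agreeing on a generating subsemigroup coincide (Proposition~\ref{prop-poly-equal}). Without something like this detour, the norm maps on $S^+$ cannot be constructed.
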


The Mackey functor case is straightforward, because Mackey functors
can be regarded as additive functors from $\bCA_G$ to $\Semigroups$,
and additive completion gives an additive functor
$\Semigroups\to\Ab$.  The real problem is to handle the nonlinearity
of the norm operations for a Tambara functor.  The rest of this
section will constitute the proof.

We will use the following construction for $A^+$:
\begin{definition}\label{defn-plus}
 Let $A$ be a semigroup.  We put 
 \[ \hA = A^2 = \{(a_+,a_-)\st a_+,a_-\in A\}, \]
 and regard this as a semigroup using the obvious coordinatewise
 addition.  We then put 
 \[ E_A = \{((a_+,a_-),(b_+,b_-))\in \hA\tm\hA \st 
            a_++b_-+x=a_-+b_++x \text{ for some } x\in A \}
 \] 
 and check that this is a congruence on $\hA$.  It follows that the
 quotient set $\hA/E_A$ inherits a semigroup structure; we write $A^+$
 for this semigroup. 
\end{definition}

\begin{proposition}\label{prop-hS-semiring}
 Let $S$ be a semiring.  Then the rule
 \[ (a_+,a_-).(b_+,b_-) = (a_+b_++a_-b_-,\;a_+b_-+a_-b_+) \]
 makes $\hS$ into a semiring (with identity element $(1,0)$).
 Moreover, $E_S$ is a subsemiring of $\hS\tm\hS$.
\end{proposition}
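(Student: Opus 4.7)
The plan is to directly verify the semiring axioms for $\hS$ and then, once the multiplication is in hand, check that the congruence $E_S$ is closed under it. The underlying guiding idea throughout is that the formula mimics the identity
\[ (a_+-a_-)(b_+-b_-) = (a_+b_++a_-b_-) - (a_+b_-+a_-b_+), \]
so every axiom in $\hS$ should reduce to the corresponding axiom in $S$ once one separates positive and negative parts.

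First I would check, by routine expansion, that the given multiplication is commutative and associative, that $(1,0)$ is a two-sided unit, that $(0,0)$ is absorbing, and that multiplication distributes over the coordinatewise addition. Each of these reduces to the corresponding axiom for $S$ applied coordinatewise, with no cross-interaction between coordinates beyond what distributivity in $S$ already provides.

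For the second claim, $E_S$ is already a congruence on the additive semigroup $\hS$, so it is an equivalence relation and an additive subsemigroup of $\hS\tm\hS$; all that remains is closure under the new multiplication. By transitivity of $E_S$ it suffices to establish the two one-sided statements: if $(a_+,a_-)\,E_S\,(b_+,b_-)$ then $(a_+,a_-)(c_+,c_-)\,E_S\,(b_+,b_-)(c_+,c_-)$, and similarly on the right. For the first, given $y$ with $a_++b_-+y=a_-+b_++y$, multiply by $c_+$ and by $c_-$ separately and add the two resulting identities. Regrouping yields
\[ (a_+c_++a_-c_-)+(b_+c_-+b_-c_+)+y(c_++c_-) = (a_+c_-+a_-c_+)+(b_+c_++b_-c_-)+y(c_++c_-), \]
so $x=y(c_++c_-)$ is the required witness. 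The right-sided case is symmetric, with witness $x=(b_++b_-)z$ obtained from the relation $c_++d_-+z=c_-+d_++z$.

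The computations are all straightforward; the only point requiring a moment's thought is the choice of witness $x$ in each one-sided closure argument, and even this is forced once one writes out the distributivity expansion and collects terms. No genuine obstacle arises, which matches the fact that the construction $\hS\to S^+$ is simply the Grothendieck ring completion presented by generators and relations.
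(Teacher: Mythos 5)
Your proof is correct, and the key second half takes a genuinely different route from the paper. The paper verifies closure of $E_S$ under multiplication directly: it takes two elements $s_0,s_1\in E_S$ with witnesses $x_0,x_1$, computes the product $s_0s_1$ all at once, writes down a single (rather elaborate) candidate witness $x$ involving eight summands, and then justifies it through a chain of five intermediate equalities with auxiliary terms $u_0,\dotsc,u_3$. You instead exploit the fact that $E_S$ is an equivalence relation: it suffices to check the two one-sided closure statements
\[
 (a_+,a_-)\,E_S\,(b_+,b_-)\ \Longrightarrow\ (a_+,a_-)(c_+,c_-)\,E_S\,(b_+,b_-)(c_+,c_-),
\]
and its mirror, and then chain them together by transitivity. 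Each one-sided step has a transparent witness ($y(c_++c_-)$ on the left, $(b_++b_-)z$ on the right), obtained by multiplying the original witness relation by $c_+$ and by $c_-$ and adding. I checked both computations: they are correct. Your reduction is shorter and conceptually clearer (it is exactly the standard argument that a binary operation descends to a quotient by an equivalence relation once one has one-sided compatibility), whereas the paper's direct verification, while self-contained, obscures why the witness has the form it does. Both prove the same statement; yours is the more economical route.
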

\begin{proof}
 The semiring axioms for $\hS$ are straightforward and are left to the
 reader.  (Essentially, we have defined $\hS$ to be the group semiring
 $S[C_2]$.)  It is also clear that $E_S$ contains the additive and
 multiplicative identity elements and is closed under addition.  The
 real issue is to prove that $E_S$ is closed under multiplication.
 Consider elements $s_i=((a_{i+},a_{i-}),(b_{i+},b_{i-}))\in E_S$ for
 $i=0,1$.  This means that there are elements $x_i\in S$ such that
 $P_i=Q_i$, where 
 \begin{align*}
  P_i &= a_{i+}+b_{i-}+x_i \\
  Q_i &= a_{i-}+b_{i+}+x_i.
 \end{align*}
 We now have $s_0s_1=((a_+,a_-),(b_+,b_-))$, where 
 \begin{align*}
  a_+ &= a_{0+}a_{1+} + a_{0-}a_{1-} &
  a_- &= a_{0+}a_{1-} + a_{0-}a_{1+} \\
  b_+ &= b_{0+}b_{1+} + b_{0-}b_{1-} &
  b_- &= b_{0+}b_{1-} + b_{0-}b_{1+}.
 \end{align*}
 Put 
 \begin{align*}
   x &= a_{1+}x_0+a_{0+}x_1+x_0b_{1-}+x_1b_{0-}+2x_0x_1+
          a_{1+}b_{0-}+a_{0+}b_{1-}+b_{0-}b_{1-} \\
   u_0 &= a_{0-}a_{1-} + x_0x_1 + b_{0+}b_{1-} + b_{1+}b_{0-} \\
   u_1 &= b_{0+}b_{1-} + b_{1+}b_{0-} + b_{0+}b_{1+} + 
            b_{0+}x_1 + b_{1+}x_0 + 2x_0x_1 \\
   u_2 &= b_{0+}b_{1-} + b_{1+}b_{0-} + b_{0+}b_{1+} + 
            a_{0+}a_{1-} + a_{1+}a_{0-} + a_{0-}b_{1-} + a_{1-}b_{0-} \\
   u_3 &= b_{0+}b_{1+} + a_{0+}a_{1-} + a_{1+}a_{0-} + 
            a_{0+}x_1 + a_{1+}x_0 + 2x_0x_1.
 \end{align*}
 By direct expansion one can check that
 \begin{align*}
  a_++b_-+x  &= P_0P_1 + u_0 \\ 
  Q_0Q_1+u_0 &= a_{0-}Q_1 + a_{1-}Q_0 + u_1 \\
  a_{0-}P_1 + a_{1-}P_0 + u_1 &= x_0Q_1 + x_1Q_0 + u_2 \\
  x_0P_1 + x_1P_0 + u_2 &= b_{0-}Q_1 + b_{1-}Q_0 + u_3 \\
  b_{0-}P_1 + b_{1-}P_0 + u_3 &= a_-+b_++x.
 \end{align*}
 We are also given that $P_i=Q_i$, so it follows that
 $a_++b_-+x=a_-+b_++x$, which proves that $s_0s_1\in E_S$ as required.
\end{proof}

\begin{corollary}\label{cor-S-plus-semiring}
 The set $S^+=\hS/E_S$ has a unique semiring structure for which the
 quotient map $\hS\to S^+$ is a semiring homomorphism.
\end{corollary}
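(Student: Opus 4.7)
The plan is to deduce this formally from Proposition~\ref{prop-hS-semiring} together with the semigroup-quotient machinery of Section~\ref{sec-semigroups}.  First I would note that Definition~\ref{defn-plus} already shows that $E_S$ is a congruence on the additive semigroup underlying $\hS$, so Proposition~\ref{prop-semigroup-cong}(b) gives a unique additive semigroup structure on $S^+=\hS/E_S$ such that the quotient map $\pi\:\hS\to S^+$ is an additive homomorphism.  The new input from Proposition~\ref{prop-hS-semiring} is that $E_S$ is \emph{also} closed under the multiplication on $\hS\tm\hS$, and contains the multiplicative identity $((1,0),(1,0))$.

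Next I would show that the multiplication descends.  Given $\sg,\tau\in S^+$, choose lifts $s,t\in\hS$ with $\pi(s)=\sg$ and $\pi(t)=\tau$, and define $\sg\tau=\pi(st)$.  To see that this is well-defined, suppose $s'$ and $t'$ are other lifts, so $(s,s')$ and $(t,t')$ both lie in $E_S$.  Then $((s,s'),(t,t'))$ is a pair of elements of $E_S$, and since $E_S$ is a subsemiring of $\hS\tm\hS$, the product $((s,s'),(t,t'))\cdot((t,t'),(t,t'))$-style calculation — or more cleanly, the pair $(st,s't')=(s,s')\cdot(t,t')$ computed in $\hS\tm\hS$ — lies in $E_S$, giving $\pi(st)=\pi(s't')$.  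The unit element is $\pi(1,0)$, which is well-defined because $(1,0)\in\hS$ is a specific element.

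With these operations defined, the semiring axioms (associativity and commutativity of multiplication, unitality, and distributivity over addition) all follow by lifting elements of $S^+$ to $\hS$, applying the corresponding axioms there (which hold by Proposition~\ref{prop-hS-semiring}), and projecting down via $\pi$.  By construction, $\pi$ preserves both addition and multiplication and sends $(1,0)$ to the multiplicative identity, so it is a semiring homomorphism.  Uniqueness is immediate: if $S^+$ carries any semiring structure making $\pi$ a homomorphism, then $\pi(s)+\pi(t)=\pi(s+t)$ and $\pi(s)\cdot\pi(t)=\pi(st)$, so the operations are forced.

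I do not expect any real obstacle here; the only point requiring attention is checking that multiplication is well-defined on the quotient, and that follows directly from the subsemiring property of $E_S\sse\hS\tm\hS$ established in the previous proposition.  The proof is essentially an instance of the general fact that a congruence on a universal algebra (in the sense of universal algebra) yields a quotient object of the same type, specialised from semigroups to semirings.
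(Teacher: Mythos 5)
Your proposal is correct and takes exactly the approach the paper has in mind: the paper's proof is the single sentence that the result "follows formally from the fact that $E_S$ is both an equivalence relation and a subsemiring in $\hS\tm\hS$," and your write-up is precisely that formal argument spelled out. The only cosmetic point is that your "$\cdots$-style calculation" phrasing is unnecessary, since the clean version you then give — $(st,s't')=(s,s')\cdot(t,t')\in E_S$ because $E_S$ is a subsemiring of $\hS\tm\hS$ — is the whole of what is needed.
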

\begin{proof}
 This follows formally from the fact that $E_S$ is both an equivalence
 relation and a subsemiring in $\hS\tm\hS$.
\end{proof}

\begin{proposition}\label{prop-green-plus}
 If $S$ is a Green functor, then $\hS$ and $S^+$ also have canonical
 structures as Green functors.
\end{proposition}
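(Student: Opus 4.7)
The plan is to use the equivalence $\Green_G \simeq \Green'_G$ of Proposition~\ref{prop-green-equiv} and produce Green semiring structures on $\hS$ and $S^+$; the Green functor structures (in particular the norm maps $N_f$ for green $f$) will then be automatic via Lemma~\ref{lem-green-norm}. So I need only supply (a) a Mackey functor structure, (b) a semiring structure on each value, (c) the two compatibilities of Proposition~\ref{prop-green-explicit}: that $R_f$ is a semiring map, and that Frobenius reciprocity $T_f(a\,R_f(b))=T_f(a)\,b$ holds.

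For $\hS$, I put $\hS(X) = S(X)^2$, with coordinate-wise addition, coordinate-wise $R_f$ and $T_f$; this makes $\hS$ into a Mackey functor, which is just $S\tm S$ as a product-preserving functor to semigroups. The semiring multiplication on each $\hS(X)$ is the one from Proposition~\ref{prop-hS-semiring}. To check that $R_f\:\hS(Y)\to\hS(X)$ is a semiring map: $R_f$ is a semiring homomorphism on $S$ by the Green hypothesis, so it distributes through the four-term formula $(a_+,a_-)(b_+,b_-)=(a_+b_+ + a_-b_-,\,a_+b_- + a_-b_+)$. To check Frobenius on $\hS$: expand $a\,R_f(b)$ as a pair of sums of four products in $S(X)$, apply $T_f$ coordinate-wise, and use Frobenius on $S$ term-by-term (each term has the form $T_f(a_i\,R_f(b_j)) = T_f(a_i)\,b_j$); regrouping yields $T_f(a)\,b$ exactly.

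For $S^+ = \hS/E_S$, I need $E_S$ to be both a sub-Mackey functor of $\hS\tm\hS$ and, at each $X$, a subsemiring of $\hS(X)\tm\hS(X)$. The latter is Proposition~\ref{prop-hS-semiring}. For the former, suppose $((a_+,a_-),(b_+,b_-))\in E_S(Y)$, witnessed by $x\in S(Y)$ with $a_+ + b_- + x = a_- + b_+ + x$. Since $R_f\:S(Y)\to S(X)$ is additive, applying it gives $R_f(a_+) + R_f(b_-) + R_f(x) = R_f(a_-) + R_f(b_+) + R_f(x)$, so $R_f(x)$ witnesses membership in $E_S(X)$; the same argument with $T_f$ shows closure under transfers. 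Thus $E_S$ is a congruence on the Mackey functor $\hS$ in the sense of Proposition~\ref{prop-cong}, and it is also a congruence on each semiring $\hS(X)$. Hence the quotient $S^+(X) = \hS(X)/E_S(X)$ acquires both a Mackey functor structure (Proposition~\ref{prop-cong}(b)) and a compatible semiring structure (Corollary~\ref{cor-S-plus-semiring}). Frobenius reciprocity and the semiring homomorphism property of $R_f$ on $S^+$ descend immediately from the corresponding statements on $\hS$.

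Invoking Proposition~\ref{prop-green-equiv} to pass back from Green semirings to Green functors completes the argument. There is no real obstacle: the semiring compatibility of $E_S$ has already been done in Proposition~\ref{prop-hS-semiring}, and the Mackey-functor compatibility of $E_S$ reduces to the fact that both $T_f$ and $R_f$ are additive on $S$, so the witness element $x$ is simply transported along them.
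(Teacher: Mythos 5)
Your proof is correct and takes essentially the same route as the paper: pass to Green semirings via Proposition~\ref{prop-green-equiv}, take the Mackey structure on $\hS$ to be $S\tm S$, invoke Proposition~\ref{prop-hS-semiring} for the semiring structure and the subsemiring property of $E_S$, and then check the Frobenius reciprocity by coordinate-wise expansion. The only (harmless) difference is that you spell out the verification that $E_S$ is closed under $R_f$ and $T_f$ by transporting the witness element $x$, whereas the paper leaves this implicit in the remark that the quotient map $\hS\to S^+$ commutes with the $R$ and $T$ operators.
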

\begin{proof}
 We will actually work with Green semiring structures, which are
 equivalent to Green functor structures as explained in
 Section~\ref{sec-green}.  It is clear that
 Proposition~\ref{prop-hS-semiring} gives the required semiring
 structures on $\hS(X)$ and $S^+(X)$ and that the maps $R_f$ are
 semiring morphisms.  The key issue is to check the Frobenius
 reciprocity formula $T_f(a)b=T_f(a\,R_f(b))$ for $f\:X\to Y$ and
 $a\in\hS(X)$ and $b\in\hS(Y)$.  Here we have $a=(a_+,a_-)$ for some
 $a_+,a_-\in S(X)$ and similarly $b=(b_+.b_-)$ for some $b_+,b_-\in
 S(Y)$.  We thus have
 \begin{align*}
  T_f(a)\,b
   &= (T_f(a_+),T_f(a_-))\;.\;(b_+,b_-) \\
   &= (T_f(a_+)b_+ + T_f(a_-)b_-\;,\;T_f(a_+)b_- + T_f(a_-)b_+) \\
   &= (T_f(a_+R_f(b_+)+a_-R_f(b_-))\;,\;T_f(a_+R_f(b_-)+a_-R_f(b_+))) \\
   &= T_f(a\; R_f(b))
 \end{align*}
 as required.  The quotient map $\hS(X)\to S^+(X)$ is a surjective
 semiring morphism that commutes with maps of the form $R_f$ and
 $T_f$.  The reciprocity formula for $S^+$ therefore follows from the
 formula for $\hS$.
\end{proof}

We next want to define norm maps for $S^+$ (assuming that $S$ itself
is a Tambara functor).  A key problem is to define $N_f(-1)\in S^+(Y)$
for every $f\:X\to Y$.  If $|f^{-1}\{y\}|=d$ for all $y$ then it is
easy to think that $N_f(-1)$ should be $(-1)^d$, and if we assume this
then it is not hard to determine how $N_f$ should behave in general.
Unfortunately, this approach does not lead to well-defined operations
satisfying the Tambara axioms.  To see this, consider the case where
$|G|=2$ and $f$ is the unique map $G\to 1$ (so $d=2$).  As in
Section~\ref{sec-two} we need to have 
\[ 0 = N_f(0) = N_f(1+(-1)) = N_f(1) + T_f(-1) + N_f(-1) 
     = 1 - T_f(1) + N_f(-1),
\]
so $N_f(-1)=T_f(1)-1\in S^+(1)$.  If $S$ is the Burnside semiring
Tambara functor and $t=T_f(1)\in S(1)$ then $S(1)=\N\{1,t\}$ (with
$t^2=2t$) and $S^+(1)=\Z\{1,t\}$ so the element $N_f(-1)=t=1$ is
definitely different from $(-1)^d=1$.  Because of this it is hard to
give an explicit formula for norm maps on $S^+$.  Instead, Tambara
used an indirect approach which we now explain.

Consider a map $f\:X\to Y$ of finite $G$-sets, and put 
\begin{align*}
 V(f)   &= \{(y,C)\st y\in Y,\; C\sse f^{-1}\{y\}\} \\
 V_n(f) &= \{(y,C)\in V(f)\st \; |C|=n\}. 
\end{align*}
Given a Green functor $S$, we will define a ``convolution product''
(different from the standard product) on $S(V(f))$.  We start with the
nonequivariant case for motivation.

Suppose we have a semiring $S_1$ and we put $S(X)=\Map(X,S_1)$ for all
$X$.  We can define the convolution product on $S(V(f))$ by 
\[ (a_1\vee a_2)(y,C) = \sum_{C=C_1\amalg C_2} a_1(y,C_1)\,a_2(y,C_2). \]
This is easily seen to make $S(V(f))$ into a commutative semiring,
with identity element given by 
\[ e(y,C) = \begin{cases} 
    1 & \text{ if } C = \emptyset \\
    0 & \text{ otherwise. }
   \end{cases}
\]
Moreover, $S(V(f))$ splits as the product of pieces $S(V_n(f))$ (which
are zero for $n$ sufficiently large) and 
$S(V_n(f))\vee S(V_m(f))\sse S(V_{n+m}(f))$ so in fact we have a
graded semiring.  We can define $\chi\:S(X)\to S(V(f))$ by 
\[ \chi(a)(y,C) = \prod_{x\in C} a(x) \]
and it is easy to see that $\chi(0)=e$ and
$\chi(a+b)=\chi(a)\chi(b)$.  There is an inclusion $j\:Y\to V(f)$
given by $j(y)=(y,f^{-1}\{y\})$ and we have 
\[ R_j\chi(a)(y) = \chi(a)(y,f^{-1}\{y\})
    = \prod_{f(x)=y} a(x) = (N_fa)(y).
\]
This shows that $\chi$ determines $N_f$.

We now give the corresponding definitions for Green functors and
Tambara functors.

\begin{definition}\label{defn-convolution}
 For $n,m\geq 0$ we put 
 \begin{align*}
  V^m(f) &= \{(y,C_1,\dotsc,C_m)\st y\in Y,\; C_i\sse f^{-1}\{y\},\;
                C_i\cap C_j = \emptyset \text{ for } i\neq j\} \\
  V^m_n(f) &= \{(y,C_1,\dotsc,C_m)\in V^m(f) \st \sum_i|C_i|=n\}.
 \end{align*}
 We define maps $p_1,p_2,p_{12}\:V^2(f)\to V(f)$ by 
 \begin{align*}
  p_1(y,C_1,C_2) &= (y,C_1) \\
  p_2(y,C_1,C_2) &= (y,C_2) \\
  p_{12}(y,C_1,C_2) &= (y,C_1\cup C_2).
 \end{align*}
 Now let $S$ be a Green functor.  For $a_1,a_2\in S(V(f))$ we define 
 \[ a_1\vee a_2 = T_{p_{12}}(R_{p_1}(a_1)\,R_{p_2}(a_2)) \in
     S(V(f)).
 \]
 We also write $e=T_k(1)$, where $k\:Y\to V(f)$ is given by
 $k(y)=(y,\emptyset)$ (so $k\:Y\simeq V_0(f)$).
\end{definition}

\begin{proposition}\label{prop-convolution-semiring}
 The above product makes $S(V(f))$ into a commutative graded semiring.
\end{proposition}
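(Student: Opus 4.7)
The plan is to verify in turn commutativity, the unit axiom, associativity, distributivity, and the grading condition, using Proposition \ref{prop-green-explicit} (which gives the ordinary semiring structure on each $S(V(f))$ and Frobenius reciprocity) together with the Mackey pullback rule $R_k T_h = T_{h'} R_{k'}$ from Proposition \ref{prop-G-bispan-rels}(d). Distributivity of $\vee$ over $+$ is automatic, since $T$ and $R$ are additive and the pointwise product on $S(V^2(f))$ is bilinear. The grading assertion $S(V_n(f)) \vee S(V_m(f)) \subseteq S(V_{n+m}(f))$ follows directly from the observation that the restrictions of $p_1$, $p_2$, $p_{12}$ to $V^2_{n,m}(f) := \{(y,C_1,C_2) : |C_1|=n, |C_2|=m\}$ land in $V_n(f)$, $V_m(f)$, $V_{n+m}(f)$ respectively, so the definition $a_1 \vee a_2 = T_{p_{12}}(R_{p_1}(a_1)\cdot R_{p_2}(a_2))$ respects the splitting $V(f) = \coprod_n V_n(f)$.

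For commutativity, consider the equivariant swap bijection $\sg\:V^2(f) \to V^2(f)$ given by $(y,C_1,C_2) \mapsto (y,C_2,C_1)$. It satisfies $p_1\sg = p_2$, $p_2\sg = p_1$, and $p_{12}\sg = p_{12}$. Since $T_\sg = R_{\sg^{-1}} = R_\sg$ by Proposition \ref{prop-G-bispan-rels}(e), inserting $R_\sg T_\sg = 1$ and shuffling gives $a_1 \vee a_2 = a_2 \vee a_1$. For the unit axiom, observe that the pullback of $k\:Y \to V(f)$, $y \mapsto (y,\emptyset)$ along $p_1\:V^2(f) \to V(f)$ is the subset $\{(y,\emptyset,C_2)\} \sse V^2(f)$, which we identify with $V(f)$. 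Under this identification the induced map $V(f) \to V(f)$ coming from $p_2$ is the identity, and similarly for $p_{12}$. Applying the Mackey formula to $R_{p_1} T_k$ and Frobenius reciprocity yields $e \vee a = T_{p_{12}}(R_{p_1}T_k(1) \cdot R_{p_2}(a)) = a$.

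The main work is associativity. The plan is to show that both $(a_1 \vee a_2) \vee a_3$ and $a_1 \vee (a_2 \vee a_3)$ equal
\[ T_{p_{123}}\bigl(R_{c_1}(a_1) \cdot R_{c_2}(a_2) \cdot R_{c_3}(a_3)\bigr), \]
where $c_i\:V^3(f) \to V(f)$ are the three obvious projections $(y,C_1,C_2,C_3) \mapsto (y,C_i)$ and $p_{123}\:V^3(f) \to V(f)$ sends $(y,C_1,C_2,C_3) \mapsto (y,C_1\cup C_2\cup C_3)$. The key step is to identify the pullback of $p_{12}\:V^2(f) \to V(f)$ along $p_1\:V^2(f) \to V(f)$: a point is a pair $((y,C_1,C_2),(y,C_1\cup C_2,C_3))$ subject to the disjointness conditions, and this pullback is canonically isomorphic to $V^3(f)$, with the two projections to $V^2(f)$ being $k'\:(y,C_1,C_2,C_3)\mapsto(y,C_1,C_2)$ and $h'\:(y,C_1,C_2,C_3)\mapsto(y,C_1\cup C_2,C_3)$. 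The Mackey formula then yields $R_{p_1}(a_1 \vee a_2) = T_{h'}(R_{c_1}(a_1) R_{c_2}(a_2))$, and one application of Frobenius reciprocity (for $h'$, to absorb the factor $R_{p_2}(a_3)$) combined with $p_{12} h' = p_{123}$ and $p_2 h' = c_3$ gives the desired formula for $(a_1 \vee a_2) \vee a_3$. The computation for $a_1 \vee (a_2 \vee a_3)$ is entirely symmetric using the analogous pullback of $p_{12}$ along $p_2$.

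The hardest part is the associativity calculation: correctly identifying the pullback squares and bookkeeping the projection maps so that both iterated convolutions match the manifestly symmetric ternary expression. Everything else is either automatic or a direct symmetry/pullback argument.
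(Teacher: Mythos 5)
Your proof is correct, and the associativity argument is essentially identical to the paper's: you construct the same cartesian square (your $k',h'$ are the paper's $q_{1,2},q_{12,3}$, your $c_i$ are the paper's $q_i$, and your $p_{123}$ is $q_{123}$), apply the same Mackey formula $R_{p_1}T_{p_{12}}=T_{q_{12,3}}R_{q_{1,2}}$, and the same Frobenius reciprocity step to land on the symmetric ternary expression. The paper proves only associativity and leaves commutativity, the unit, distributivity and the grading to the reader, while you fill those in — correctly — so your proposal is strictly more complete but not a different route.
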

\begin{proof}
 We will prove associativity and leave the rest to the reader.  Just
 by expanding the definitions, we have
 \begin{align*}
  (a_1\vee a_2)\vee a_3 
   &= T_{p_{12}}(R_{p_1}(a_1\vee a_2)\,R_{p_2}(a_3)) \\
   &= T_{p_{12}}(R_{p_1}T_{p_{12}}(R_{p_1}(a_1)\,R_{p_2}(a_2))\;
                 R_{p_2}(a_3)).
 \end{align*}
 Next, we define maps as follows:
 \begin{align*}
  q_i      &\: V^3 \to V   & q_i(y,C_1,C_2,C_3) &= (y,C_i) \\
  q_{12}   &\: V^3 \to V   & q_{1,2}(y,C_1,C_2,C_3) &= (y,C_1\cup C_2) \\
  q_{123}  &\: V^3 \to V   & q_{123}(y,C_1,C_2,C_3) &= (y,C_1\cup C_2\cup C_3) \\
  q_{1,2}  &\: V^3 \to V^2 & q_{1,2}(y,C_1,C_2,C_3) &= (y,C_1,C_2) \\
  q_{12,3} &\: V^3 \to V^2 & q_{1,2}(y,C_1,C_2,C_3) &= (y,C_1\cup C_2,C_3).
 \end{align*}
 These satisfy 
 \begin{align*}
  p_1q_{1,2}  &= q_1    & p_2q_{1,2}  &= q_2 & p_{12}q_{1,2}  &= q_{12} \\
  p_1q_{12,3} &= q_{12} & p_2q_{12,3} &= q_3 & p_{12}q_{12,3} &= q_{123},
 \end{align*}
 and one can check that the square 
 \[ \xymatrix{
     V^3 \ar[d]_{q_{12,3}} \ar[r]^{q_{1,2}} & 
     V^2 \ar[d]^{p_{12}} \\
     V^2 \ar[r]_{p_1} &
     V
 } \]
 is cartesian.  This means that
 $R_{p_1}T_{p_{12}}=T_{q_{12,3}}R_{q_{1,2}}$. 
 Using this together with Frobenius reciprocity for $q_{12,3}$ we get
 \begin{align*}
  (a_1\vee a_2)\vee a_3 
   &= T_{p_{12}}(T_{q_{12,3}}R_{q_{1,2}}(R_{p_1}(a_1)\,R_{p_2}(a_2))\;
                 R_{p_2}(a_3)) \\
   &= T_{p_{12}q_{12,3}}(
       R_{q_{1,2}}R_{p_1}(a_1)\;
       R_{q_{1,2}}R_{p_2}(a_2)\;
       R_{q_{12,3}}R_{p_2}(a_3)
      ) \\
   &= T_{q_{123}}(R_{q_1}(a_1)\;R_{q_2}(a_2)\;R_{q_3}(a_3)).
 \end{align*}
 A similar argument gives the same description for
 $a_1\vee(a_2\vee a_3)$.  Note that we have only used ordinary
 products and not norms, so we only need a Green functor, not a
 Tambara functor.
\end{proof}

\begin{definition}\label{defn-chi-SV}
 We put 
 \[ U(f) = \{(x,C)\st x\in X,\; x\in C\sse f^{-1}\{f(x)\}\}, \]
 and we define maps $X\xla{r}U(f)\xra{t}V(f)$ by $r(x,C)=x$ and
 $t(x,C)=(f(x),C)$.  We also define $j\:Y\to V(f)$ by
 $j(y)=(y,f^{-1}\{y\})$.  For any Tambara functor $S$, we then define
 $\chi=N_tR_r\:S(X)\to S(V(f))$.
\end{definition}

\begin{proposition}\label{prop-chi-hom}
 The map $\chi$ satisfies $\chi(0)=e$ and
 $\chi(a_1+a_2)=\chi(a_1)\vee\chi(a_2)$.
\end{proposition}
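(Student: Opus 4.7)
The plan is to prove both identities by reducing them to instances of the distributivity axiom (Proposition~\ref{prop-UG-pres}(b)) applied to $t$ together with fold maps. The identity $\chi(0)=e$ is immediate from Lemma~\ref{lem-norm-zero}: since $R_r$ is a semiring homomorphism (Proposition~\ref{prop-tambara-semiring}), $\chi(0) = N_t R_r(0) = N_t(0)$. The map $t\:U(f)\to V(f)$ has image $t(U(f)) = \{(y,C)\in V(f)\st C\neq\emptyset\}$, so $t(U(f))^c = V_0(f)$, and $k\:Y\to V_0(f)$ is a bijection. Under the splitting $S(V(f)) = S(t(U(f)))\tm S(V_0(f))$, Lemma~\ref{lem-norm-zero} says $N_t(0) = (0,1)$, and this is precisely $T_k(1)=e$.

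For the additivity identity, I would first write $a_1+a_2 = T_\sg(a_1,a_2)$ where $\sg\:X\amalg X\to X$ is the fold map and $(a_1,a_2)$ is viewed as an element of $S(X\amalg X)$. The square
\[ \xymatrix{
 U(f)\amalg U(f) \ar[r]^-{\sg'} \ar[d]_{r\amalg r} & U(f) \ar[d]^r \\
 X\amalg X \ar[r]_\sg & X
} \]
with $\sg'$ the fold on $U(f)\amalg U(f)$ is cartesian, so Proposition~\ref{prop-G-bispan-rels}(d) gives $R_rT_\sg = T_{\sg'}R_{r\amalg r}$. Hence $\chi(a_1+a_2) = N_tT_{\sg'}(R_r(a_1),R_r(a_2))$, and applying Proposition~\ref{prop-UG-pres}(b) yields
\[ \chi(a_1+a_2) = T_\rho N_q R_p (R_r(a_1),R_r(a_2)), \]
where $\Dl(\sg',t) = (U(f)\amalg U(f)\xla{p}A\xra{q}B\xra{\rho}V(f))$.

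The main obstacle is identifying this distributor. A section of $\sg'$ over $t^{-1}\{(y,C)\} = \{(x,C)\st x\in C\}$ is the choice, for each $x\in C$, of which copy of $U(f)$ to land in; this amounts to an ordered partition $C = C_1\amalg C_2$. This gives a natural bijection $B\simeq V^2(f)$ under which $\rho$ becomes $p_{12}$. The pullback $A = U(f)\tm_{V(f)}B$ decomposes as $A_1\amalg A_2$ with $A_i = \{(x,y,C_1,C_2)\st x\in C_i\}$, and each $A_i$ is canonically the pullback of $p_i\:V^2(f)\to V(f)$ along $t\:U(f)\to V(f)$. Under these identifications, $p$ maps $A_i$ into the $i$-th copy of $U(f)$ via $\pi_i\:(x,y,C_1,C_2)\mapsto (x,C_i)$, and $q|_{A_i}\:A_i\to V^2(f)$ is the pullback projection.

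With the distributor in hand, three observations finish the proof. First, $R_p(R_r(a_1),R_r(a_2))\in S(A_1)\tm S(A_2)$ has $i$-th component $R_{\pi_i}R_r(a_i)$. Second, factoring $q$ through the fold $V^2(f)\amalg V^2(f)\to V^2(f)$ and applying Proposition~\ref{prop-G-bispan-rels}(d) to the coproduct inclusions (together with Proposition~\ref{prop-tambara-semiring}, which identifies $N_{\text{fold}}$ with multiplication) gives $N_q(b_1,b_2) = N_{q_1}(b_1)\cdot N_{q_2}(b_2)$. Third, Proposition~\ref{prop-G-bispan-rels}(d) applied to the cartesian square defining $A_i$ as a pullback yields $N_{q_i}R_{\pi_i} = R_{p_i}N_t$, so $N_{q_i}(R_{\pi_i}R_r(a_i)) = R_{p_i}\chi(a_i)$. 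Assembling these,
\[ \chi(a_1+a_2) = T_{p_{12}}\bigl(R_{p_1}\chi(a_1)\cdot R_{p_2}\chi(a_2)\bigr) = \chi(a_1)\vee\chi(a_2), \]
as required. The one subtle point throughout is the bookkeeping of the distributor identification; the rest is a matter of chaining the cited propositions.
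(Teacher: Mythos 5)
Your proof is correct and follows essentially the same approach as the paper: both identify the distributor of the fold map on $U(f)\amalg U(f)$ against $t$ with the diagram $U^2(f)\to V^2(f)\xra{p_{12}}V(f)$, decompose $U^2(f)$ according to which summand the thread lands in, and use the cartesian squares making each piece a pullback of $p_i$ along $t$ to convert $N_{q_i}R_{\pi_i}$ into $R_{p_i}N_t$. The paper shows both sides equal a common middle term $T_{p_{12}}N_{t_2}R_{(r\tm 1)\circ u}(a)$ whereas you transform one side directly into the other, but the underlying computation is the same.
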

\begin{proof}
 First, we have $\chi(0)=N_t(0)$.  We observe that $\img(t)^c=V_0(f)$,
 and using Lemma~\ref{lem-norm-zero} it follows that $\chi(0)=e$. 

 Now suppose we have elements $a_1,a_2\in S(X)$ and we put
 $b_i=R_r(a_i)$ and $c_i=N_t(b_i)=\chi(a_i)$.  We must show that
 $\chi(a_1+a_2)=c_1\vee c_2$.  The elements $b_i$ combine to give an
 element $b\in S(U(f)\tm\{1,2\})$ and the projection
 $s\:U(f)\tm\{1,2\}\to U(f)$ has $T_s(b)=b_1+b_2$.  We therefore need
 to understand the distributor $\Dl(s,t)$.  For any point
 $(y,C)\in V(f)$ we have $t^{-1}\{(y,C)\}\simeq C$.  Moreover, to give
 a map $m\:t^{-1}\{(y,C)\}\to U(f)\tm\{1,2\}$ with $sm=1$ is the same
 as to give an arbitrary map $C\to\{1,2\}$, or equivalently a
 splitting of $C$ as a disjoint union of subsets $C_1$ and $C_2$.
 Using this we can identify $\Dl(s,t)$ with the diagram 
 \[ U(f)\tm\{1,2\} \xla{u}
    U^2(f) \xra{t_2} 
    V^2(f) \xra{p_{12}}
    V(f)
 \]
 where 
 \begin{align*}
  U^2(f) &= \{(x,C_1,C_2) \st x\in C_1\amalg C_2\sse f^{-1}\{f(x)\}\} \\
  u(x,C_1,C_2) &= 
   \begin{cases}
    (x,C_1\cup C_2,1) & \text{ if } x\in C_1 \\
    (x,C_1\cup C_2,2) & \text{ if } x\in C_2.
   \end{cases} \\
  t_2(x,C_1,C_2) &= (f(x),C_1,C_2).
 \end{align*}
 We conclude that 
 \[ \chi(a_1+a_2) = N_tT_s(b) = T_{p_{12}}N_{t_2}R_u(b) 
      = T_{p_{12}}N_{t_2}R_{(r\tm 1)\circ u}(a).
 \]

 On the other hand, we have 
 \[ \chi(a_1)\vee\chi(a_2) = 
     T_{p_{12}}(R_{p_1}(\chi(a_1))R_{p_2}(\chi(a_2))).
 \]
 Here $R_{p_1}(\chi(a_1))=R_{p_1}N_tR_r(a_1)$.  We have a commutative
 diagram 
 \[ \xymatrix{
     U(f) \ar[d]_r & 
     U^{2,1}(f) \ar[l]_{u_1} \ar[d]_{v_1} \ar[r]^{t_{2,1}} &
     V^2(f) \ar[d]^{p_1} \\
     X &
     U(f) \ar[l]^r \ar[r]_t &
     V(f),
    }
 \] 
 where
 \begin{align*}
  U^{2,1}(f) &= \{(x,C_1,C_2)\in U^2\st x\in C_1\} \\
  v_1(x,C_1,C_2) &= (x,C_1) \\
  t_{2,1} &= t_2 |_{U^{2,1}(f)} \\
  u_1 &= u |_{U^{2,1}(f)}.
 \end{align*}
 The right hand square is cartesian, giving 
 \[ R_{p_1}(\chi(a_1)) = R_{p_1}N_tR_r(a_1) =
     N_{t_{2,1}}R_{v_1}R_r(a_1) = N_{t_{2,1}}R_{ru_1}(a_2).
 \]
 After obtaining a similar expression for $R_{p_2}(\chi(a_2)$ and
 noting that $U^2(f)=U^{2,1}(f)\amalg U^{2,2}(f)$ we find that 
 $R_{p_1}(\chi(a_1))R_{p_2}(\chi(a_2))=N_{t_2}R_{(r\tm 1)\circ u}(a)$ 
 so $\chi(a_1)\vee\chi(a_2)=\chi(a_1+a_2)$ as claimed.
\end{proof}

\begin{proposition}\label{prop-Rj-chi}
 The map $\chi$ also satisfies $R_j\chi(a)=N_f(a)$.
\end{proposition}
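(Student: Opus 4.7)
The plan is to apply the base-change relation of Proposition~\ref{prop-G-bispan-rels}(d) to a suitable pullback square. Since $\chi = N_t R_r$, the identity $R_j\chi(a) = N_f(a)$ will follow if we can write $R_j N_t$ as $N_f R_\iota$ for some map $\iota\:X\to U(f)$ with $r\circ\iota = 1_X$, since then
\[ R_j N_t R_r(a) = N_f R_\iota R_r(a) = N_f R_{r\iota}(a) = N_f(a). \]

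First I would construct the relevant pullback. Form the pullback of $j\:Y\to V(f)$ along $t\:U(f)\to V(f)$. A point of this pullback is a triple $(y,x,C)$ with $f(x)=y$, $C=f^{-1}\{y\}$ and $x\in C\sse f^{-1}\{f(x)\}$; since $y$ is forced to be $f(x)$ and $C$ is forced to be $f^{-1}\{f(x)\}$, the pullback is canonically identified with $X$. Under this identification the map to $Y$ is just $f$, and the map $\iota\:X\to U(f)$ is $\iota(x)=(x,f^{-1}\{f(x)\})$. Thus we have a cartesian square
\[ \xymatrix{
 X \ar[r]^\iota \ar[d]_f & U(f) \ar[d]^t \\
 Y \ar[r]_j & V(f)
} \]
of finite $G$-sets and equivariant maps. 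By Proposition~\ref{prop-G-bispan-rels}(d) this yields $R_j N_t = N_f R_\iota$ as operators $S(U(f))\to S(Y)$.

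Finally, observe directly from the definitions that $r\circ\iota = 1_X$, so $R_\iota R_r = R_{r\iota} = R_1 = 1$ on $S(X)$. Combining these facts gives
\[ R_j\chi(a) = R_j N_t R_r(a) = N_f R_\iota R_r(a) = N_f(a), \]
as desired. There is no substantive obstacle here; the only thing to verify carefully is the cartesian property of the square, which is immediate from unwinding the definitions of $j$, $t$, and $U(f)$.
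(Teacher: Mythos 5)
Your proof is correct and is essentially the same as the paper's: you construct the identical cartesian square (the paper calls your $\iota$ the map $i$), apply the base-change relation to get $R_jN_t = N_fR_i$, and use $r\circ i = 1_X$ to conclude. No differences of substance.
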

\begin{proof}
 We have a commutative diagram
 \[ \xymatrix{
  & X \ar[r]^f \ar[d]^i \ar[dl]_1 &
  Y \ar[d]^j \\
  X &
  U(f) \ar[l]^r \ar[r]_t &
  V(f)
 } \]
 where $i(x)=(x,f^{-1}\{f(x)\})$.  One can check that the square is
 cartesian, so $R_j\chi=R_jN_tR_r=N_fR_iR_r=N_f$.
\end{proof}

\begin{definition}\label{defn-GSVf}
 As before we define $k\:Y\to V(f)$ by $k(y)=(y,\emptyset)$, and we
 put 
 \[ GS(V(f))=\{a\in S(V(f))\st R_k(a)=1\}. \]
\end{definition}

\begin{proposition}\label{prop-Rk}
 For any Green functor $S$, the map $R_k\:S(V(f))\to S(Y)$ satisfies
 $R_k(e)=1$ and $R_k(b_1\vee b_2)=R_k(b_1)\,R_k(b_2)$.  If $S$ is a
 Tambara functor then we also have $R_k(\chi(a))=1$.
\end{proposition}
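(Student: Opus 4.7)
The plan is to handle each of the three claims by identifying a suitable pullback square and then invoking base change (Proposition~\ref{prop-G-bispan-rels}(d)), together with the elementary properties of $T$ and $N$ established earlier.

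For $R_k(e) = 1$: since $k$ is injective, the square with $k$ on two adjacent sides and identities on the other two is cartesian, so the base change rule gives $R_k T_k = \mathrm{id}$, and therefore $R_k(e) = R_k T_k(1) = 1$.

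For $R_k(b_1 \vee b_2) = R_k(b_1)\,R_k(b_2)$: define $\tilde k : Y \to V^2(f)$ by $\tilde k(y) = (y,\emptyset,\emptyset)$. Since $p_{12}(y,C_1,C_2) = (y,\emptyset)$ forces $C_1 = C_2 = \emptyset$, the square
\[ \xymatrix{
 Y \ar[r]^{\tilde k} \ar[d]_1 & V^2(f) \ar[d]^{p_{12}} \\
 Y \ar[r]_k & V(f)
} \]
is cartesian. Thus $R_k T_{p_{12}} = R_{\tilde k}$, and since $R_{\tilde k}$ is a semiring homomorphism (Proposition~\ref{prop-green-explicit}, or the analogous fact for Green functors) and $p_1 \tilde k = p_2 \tilde k = k$, we get
\[ R_k(b_1 \vee b_2) = R_{\tilde k}\bigl(R_{p_1}(b_1)\,R_{p_2}(b_2)\bigr) = R_k(b_1)\,R_k(b_2). \]

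For $R_k(\chi(a)) = 1$ in the Tambara case: the key observation is that the pullback of $t\: U(f) \to V(f)$ along $k$ is empty, because a point $(x,C) \in U(f)$ satisfies $x \in C$, so $t(x,C) = (f(x), C)$ can never equal $(y, \emptyset)$. Hence we have a cartesian square
\[ \xymatrix{
 \emptyset \ar[r] \ar[d]_{z'} & U(f) \ar[d]^t \\
 Y \ar[r]_k & V(f),
} \]
where $z' : \emptyset \to Y$ is the unique map. Base change gives $R_k N_t = N_{z'} R_{\emptyset \to U(f)}$, and precomposing with $R_r$ we obtain $R_k \chi(a) = N_{z'}(\ast)$, where $\ast$ is the unique element of $S(\emptyset)$. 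Since $N_{z'}$ preserves multiplicative identities (Proposition~\ref{prop-tambara-semiring}(c)) and $\ast$ is the identity of $S(\emptyset)$, this equals $1 \in S(Y)$.

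The only substantive step is the multiplicativity on $\vee$; the other two are essentially observations about when certain pullbacks collapse. No step should present real difficulty beyond carefully identifying the three cartesian squares.
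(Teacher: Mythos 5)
Your proof is correct and matches the paper's argument essentially step for step: all three claims are handled by identifying the same three cartesian squares (the identity square for $e$, the square involving $k^2=\tilde k:Y\to V^2(f)$ for the convolution product, and the empty pullback of $t$ along $k$ for $\chi$) and applying base change. The only cosmetic differences are naming ($\tilde k$ versus $k^2$) and that you spell out why the third pullback is empty, which the paper leaves implicit.
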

\begin{proof}
 Define $k^2\:Y\to  V^2(f)$ by $k^2(y)=(y,\emptyset,\emptyset)$.  We
 then have commutative diagrams as follows, in which the squares are
 cartesian: 
 \[ \xymatrix{
     Y \ar[r]^1 \ar[d]_1 & Y \ar[d]^k \\ 
     Y \ar[r]_k & V(f)
    } \hspace{4em}
    \xymatrix{
     & Y \ar[dl]_k \ar[d]^{k^2} \ar[r]^1 & Y \ar[d]^k \\
     V(f) & V^2(f) \ar[l]^{p_i} \ar[r]_{p_{12}} & V(f)
    } \hspace{4em}
    \xymatrix{
     & \emptyset \ar[dl] \ar[d] \ar[r] & Y \ar[d]^k \\
     X & U(f) \ar[l]^r \ar[r]_t & V(f) 
    }
 \]
 The first diagram gives $R_k(e)=R_kT_k(1)=1$.  
 The second diagram gives
 \[ R_k(b_1\vee b_2) = 
    R_kT_{p_{12}}(R_{p_1}(b_1)\,R_{p_2}(b_2)) = 
    R_{k^2}(R_{p_1}(b_1)\,R_{p_2}(b_2)) = 
    R_k(b_1)\,R_k(b_2).
 \]
 The third diagram gives 
 \[ R_k\chi(a) = R_kN_tR_r(a) =
    N_{\emptyset\to Y} R_{\emptyset\to X} (a) = 
    N_{\emptyset\to Y}(1) = 1.
 \]
\end{proof}

\begin{proposition}\label{prop-GSVf-group}
 The set $GS(V(f))$ is a submonoid of $S(V(f))$ (under the convolution
 product).  If $S$ is additively complete, then $GS(V(f))$ is an
 abelian group under convolution.
\end{proposition}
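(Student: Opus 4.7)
The plan is to exploit the decomposition $V(f) = \coprod_{n \geq 0} V_n(f)$, which (because $f$ has finite fibres) is a finite disjoint union. Since $S$ is a Mackey functor and hence converts disjoint unions to products, this gives a set-theoretic splitting $S(V(f)) = \prod_n S(V_n(f))$, and I shall write elements as tuples $a = (a_0, a_1, \ldots, a_N)$ with $a_n \in S(V_n(f))$. The first key point to verify is that the convolution product respects this grading: if $a \in S(V_n(f))$ and $b \in S(V_m(f))$, then $a \vee b \in S(V_{n+m}(f))$. This follows by inspecting the definition $a \vee b = T_{p_{12}}(R_{p_1}(a)\, R_{p_2}(b))$: the subset of $V^2(f)$ where $|C_1|=n$ and $|C_2|=m$ is the simultaneous preimage of $V_n(f)$ and $V_m(f)$ under $p_1$ and $p_2$, and $p_{12}$ carries it into $V_{n+m}(f)$ since the $C_i$ are disjoint.

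Next, I identify the subset $GS(V(f))$ explicitly in these coordinates. The map $k \: Y \to V(f)$, $y \mapsto (y,\emptyset)$, is an equivariant bijection onto $V_0(f)$, so $R_k$ is simply the projection onto the degree-$0$ component followed by the identification $S(V_0(f)) \simeq S(Y)$. Thus $GS(V(f)) = \{a : a_0 = 1\}$. The identity $e = T_k(1)$ equals $(1,0,\ldots,0)$ and so belongs to $GS(V(f))$, and closure under $\vee$ is immediate from Proposition~\ref{prop-Rk}: if $R_k(a)=R_k(b)=1$ then $R_k(a \vee b) = R_k(a)\,R_k(b) = 1$. This already proves the submonoid claim and uses only the Green functor structure.

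For the group statement, suppose $S$ is additively complete, so every $S(V_n(f))$ is an abelian group. Given $a=(1,a_1,\ldots,a_N) \in GS(V(f))$, I construct an inverse $b = (b_0,b_1,\ldots,b_N)$ recursively by setting $b_0 = 1$ and, for $1 \leq n \leq N$,
\[
b_n \;=\; -\sum_{i=1}^{n} a_i \vee b_{n-i} \;\in\; S(V_n(f)).
\]
The recursion is well-posed because each $b_{n-i}$ with $i \geq 1$ has already been defined, and it terminates because $V_n(f) = \emptyset$ for $n$ greater than the maximum fibre size of $f$. The degree-$n$ component of $a \vee b$ is $\sum_{i=0}^{n} a_i \vee b_{n-i}$; in degree $0$ this equals $1 \vee 1 = 1$, and in degree $n \geq 1$ it equals $b_n + \sum_{i=1}^{n} a_i \vee b_{n-i} = 0$ by construction. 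Hence $a \vee b = e$, and $b \in GS(V(f))$ since $b_0 = 1$. Commutativity of $\vee$ (Proposition~\ref{prop-convolution-semiring}) shows this is a two-sided inverse, completing the proof. The only real subtlety is the grading claim in the first paragraph; everything else is a formal manipulation of truncated power series with coefficients in a graded commutative ring.
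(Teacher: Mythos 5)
Your proof is correct and follows essentially the same route as the paper: both rely on the grading $S(V(f)) = \prod_n S(V_n(f))$, the observation that $R_k$ is projection onto the degree-zero component (so that $GS(V(f))$ consists of elements with $a_0 = 1$), and inverting a unipotent element. The only cosmetic difference is that the paper writes the inverse in closed form as the finite geometric series $\sum_{i=0}^N (-b)^i$ (where $a = e + b$ with $b$ of positive degree and $b^{N+1} = 0$), whereas you produce the same inverse component by component via the standard recursion for inverting a formal power series; these unroll to the identical element.
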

\begin{proof}
 From the facts that $R_k(e)=1$ and
 $R_k(b_1\vee b_2)=R_k(b_1)\,R_k(b_2)$ it is clear that $GS(V(f))$ is
 a sumonoid.  Now suppose that $S$ is additively complete and
 $a\in GS(V(f))$.  Put $N=\max\{|f^{-1}\{y\}|\;\st y\in Y\}$, so the
 standard grading on $S(V(f))$ is zero in degrees larger than $N$.  We
 can thus write $a=\sum_{i=0}^Na_i$ with $a_i$ in degree $i$.  As
 $R_k(a)=1$ and $k$ gives a bijection $Y\to V_0(f)$ we see that
 $a_0=e$.  Put $b=\sum_{i=1}^Na_i$ so $a=e+b$.  Using the grading we
 see that $b^{N+1}=0$, so the element $c=\sum_{i=0}^n(-b)^i$ gives an
 inverse for $a$.
\end{proof}

\begin{definition}\label{defn-Nf-plus}
 Let $S$ be a Tambara functor, and let $\eta\:S\to S^+$ be the evident
 morphism of Green functors.  As $\eta\chi\:S(X)\to GS^+(V(f))$ is a
 homomorphism from a semigroup to a group, the universal property of
 $S^+(X)$ gives us a homomorphism $S^+(X)\to GS^+(V(f))$ making the
 left square below commute.
 \[ \xymatrix{
  S(X) \ar[d]_\eta \ar[r]^\chi &
  GS(V(f)) \ar[d]_\eta \ar[r]^{\text{inc}} &
  S(V(f)) \ar[d]^\eta \ar[r]^{R_j} &
  S(Y) \ar[d]^\eta \\
  S^+(X) \ar[r]_{\chi^+} &
  GS^+(V(f)) \ar[r]_{\text{inc}} &
  S^+(V(f)) \ar[r]_{R_j} &
  S^+(Y)
 } \]
 The other two squares commute automatically.  We also recall that the
 top composite $S(X)\to S(Y)$ is $N_f$, and we define
 $N_f\:S^+(X)\to S^+(Y)$ to be the bottom composite.
\end{definition}

In order to establish the properties of these norm maps, we need to
recall the theory of polynomial maps between abelian
groups~\cites{dr:orr,drsi:brp}.

\begin{definition}\label{defn-poly}
 Let $A$ and $B$ be abelian groups.  We will write $M$ for the set of
 all functions (not necessarily homomorphisms) from $A$ to $B$.
 \begin{itemize}
  \item[(a)] We define $\ep\:M\to B$ by $\ep(f)=f(0)$.
  \item[(b)] For $a\in A$ we define $\dl[a]\:M\to M$ by
   $(\dl[a]f)(x)=f(a+x)-f(x)$.  It is easy to see that
   $\dl[a]\dl[a']=\dl[a']\dl[a]$ for all $a,a'\in A$. 
  \item[(c)] Given finite sets $J\sse I$ and a map $a\:I\to A$ we also
   write $\dl[J,a]=\prod_{j\in J}\dl[a(j)]\:M\to M$.
  \item[(d)] We say that $f\in M$ is \emph{polynomial of degee at most $n$}
   if $\dl[I,a]f=0$ whenever $|I|>n$.  We say that $f\in M$ is
   \emph{polynomial} if this condition is satisfied for some $n$.
 \end{itemize}
\end{definition}

\begin{remark}
 It is easy to see that $f$ is polynomial of degree at most $0$ iff it
 is constant, and polynomial of degree at most $1$ iff it is a constant
 plus a homomorphism.
\end{remark}

\begin{remark}
 If we put $\sg(J,a)=\sum_{j\in J}a(j)$ we find that
 \[ (\dl[I,a]f)(x) =
      \sum_{J\sse I} (-1)^{|I\sm J|} f(\sg(J,a)+x).
 \]
\end{remark}

\begin{remark}\label{rem-delta-zero}
 If $a(i)=0$ for some $i\in I$, it is clear that $\dl[I,a]f=0$ for all
 $f$. 
\end{remark}

\begin{lemma}\label{lem-mobius}
 For any $f\in M$ and $a\:I\to A$ we have 
 \[ f(a+\sg(I,a)) = \sum_{J\sse I}(\dl[J,a]f)(x). \] 
\end{lemma}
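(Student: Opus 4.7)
The plan is to deduce the identity from the explicit formula
\[ (\dl[I,a]f)(x) = \sum_{J\sse I} (-1)^{|I\sm J|} f(\sg(J,a)+x) \]
recorded in the preceding remark, by a Möbius inversion on the Boolean lattice of subsets of $I$ (this is why the lemma is named after Möbius). First I would substitute the formula above into the right-hand side, yielding the double sum
\[ \sum_{J\sse I}(\dl[J,a]f)(x) = \sum_{J\sse I}\sum_{K\sse J} (-1)^{|J\sm K|} f(\sg(K,a)+x). \]

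Next I would exchange the order of summation, grouping by the inner index $K\sse I$ and summing over those $J$ with $K\sse J\sse I$:
\[ \sum_{J\sse I}(\dl[J,a]f)(x) = \sum_{K\sse I} f(\sg(K,a)+x) \sum_{K\sse J\sse I} (-1)^{|J\sm K|}. \]
Reparametrising the inner sum by $L=J\sm K\sse I\sm K$ gives $\sum_{L\sse I\sm K}(-1)^{|L|}=(1-1)^{|I\sm K|}$, which vanishes unless $K=I$, in which case it equals $1$. Only the term $K=I$ survives, leaving $f(\sg(I,a)+x)$, which is the left-hand side (here I am interpreting the formula as stated with $x$ rather than $a$ in the outer argument, since otherwise the displayed identity is not dimensionally consistent with the right-hand side).

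There is no real obstacle: the argument is purely combinatorial and uses none of the Tambara machinery, only the definition of $\dl[J,a]$ together with the elementary identity $(1-1)^n=\dl_{n,0}$. As a sanity check, the inductive alternative --- splitting off one element $i_0\in I$, using $\sg(I,a)=a(i_0)+\sg(I\sm\{i_0\},a)$ and the inductive hypothesis at the shifted point $x+a(i_0)$, then reassembling using $\dl[a(i_0)]$ --- gives the same result and confirms the calculation, but the Möbius approach is cleaner and matches the lemma's label.
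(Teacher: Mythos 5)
Your proof is correct and follows essentially the same route as the paper: substitute the explicit formula for $\dl[J,a]f$, interchange the order of summation over $K\sse J\sse I$, and collapse the inner alternating sum $\sum_{K\sse J\sse I}(-1)^{|J\sm K|}=(1-1)^{|I\sm K|}$ to isolate the $K=I$ term. You also correctly spotted that the displayed left-hand side in the statement should read $f(x+\sg(I,a))$ (a typo in the paper) to match the right-hand side.
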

\begin{proof}
 By definition we have
 \begin{align*}
  \sum_{J\sse I}(\dl[J,a]f)(x) 
   &= \sum_{K\sse J\sse I} (-1)^{|J\sm K|}f(\sg(K,a)+x) \\
   &= \sum_{K\sse I} \left(f(\sg(K,a)+x)
        \sum_{K\sse J\sse I}(-1)^{|J\sm K|}\right).
 \end{align*}
 It is straightforward to check that the inner sum is $1$ when $K=I$
 and $0$ otherwise, and the claim follows.
\end{proof}

\begin{proposition}\label{prop-poly-composite}
 Suppose we have maps $A\xra{f}B\xra{g}C$ where $f$ is polynomial of
 degree at most $n$, and $g$ is polynomial of degree at most $m$.
 Then $gf$ is polynomial of degree at most $nm$.
\end{proposition}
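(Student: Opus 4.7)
The plan is to attack the composite $gf$ directly by expanding $\dl[I,a](gf)$ via two successive applications of the Möbius-style identity in Lemma \ref{lem-mobius}, and then extracting the combinatorial coefficient by inclusion--exclusion. Specifically, given $a\:I\to A$ with $|I|>nm$ and any $x\in A$, I want to compute
\[ (\dl[I,a](gf))(x) = \sum_{S\sse I}(-1)^{|I\sm S|} g(f(x+\sg(S,a))) \]
and show it vanishes.

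First I would apply Lemma \ref{lem-mobius} to $f$ inside the argument of $g$, writing $f(x+\sg(S,a)) = \sum_{K\sse S}\phi_K$ where $\phi_K = (\dl[K,a]f)(x)$ (with $\phi_\emptyset = f(x)$). Since $f$ has degree $\le n$, we have $\phi_K = 0$ for $|K|>n$, so the sum effectively runs over the set $\CK = \{K\sse I : 1\le|K|\le n\}$ plus the base term $f(x)$. Next I would apply Lemma \ref{lem-mobius} a second time, now to $g$ evaluated at $f(x) + \sg(\CK_S,\phi|_{\CK_S})$ where $\CK_S = \{K\in\CK : K\sse S\}$, yielding
\[ g(f(x+\sg(S,a))) = \sum_{\CL\sse\CK_S}(\dl[\CL,\phi|_\CL]g)(f(x)). \]
Substituting and swapping the order of summation, I obtain
\[ (\dl[I,a](gf))(x) = \sum_{\CL\sse\CK} c_\CL\,(\dl[\CL,\phi|_\CL]g)(f(x)),\qquad c_\CL = \sum_{S\sse I,\;\CL\sse\CK_S}(-1)^{|I\sm S|}. \]

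The core calculation, which I expect to be the only step requiring care, is evaluating $c_\CL$. Writing $U_\CL = \bigcup_{K\in\CL}K$, the condition $\CL\sse\CK_S$ is equivalent to $U_\CL\sse S$, so by the standard identity $\sum_{T\sse X}(-1)^{|T|}=0$ for $X\ne\emptyset$ one gets $c_\CL = 1$ if $U_\CL = I$ and $c_\CL = 0$ otherwise. Thus only $\CL\sse\CK$ with $\bigcup_{K\in\CL}K = I$ contribute.

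To finish, I would note the degree bound: for such $\CL$, $|I| = |U_\CL| \le \sum_{K\in\CL}|K| \le n|\CL|$, forcing $|\CL|\ge |I|/n$. Under the hypothesis $|I|>nm$ this gives $|\CL|>m$, at which point $\dl[\CL,\phi|_\CL]g = 0$ because $g$ has degree at most $m$. Hence every remaining term vanishes and $(\dl[I,a](gf))(x)=0$, so $gf$ is polynomial of degree at most $nm$. The main (and essentially only) obstacle is keeping the bookkeeping of the two nested Möbius expansions straight; once the coefficient $c_\CL$ is identified as the indicator of $\{U_\CL=I\}$, the degree bound $|U_\CL|\le n|\CL|$ makes the conclusion immediate.
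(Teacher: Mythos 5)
Your proposal is correct and follows essentially the same route as the paper's proof: expand $\dl[I,a](gf)$, apply Lemma~\ref{lem-mobius} to $f$ to rewrite each $f(x+\sg(S,a))$ as a sum of the quantities $\phi_K=(\dl[K,a]f)(x)$, apply the lemma again to $g$ with these as increments, swap the order of summation, and evaluate the resulting inclusion--exclusion coefficient as the indicator of $\bigcup_{K\in\CL}K=I$. The only cosmetic difference is that you centre the second Möbius expansion at $f(x)$ and prune the vanishing terms (with $K=\emptyset$ or $|K|>n$) before expanding, whereas the paper centres at $0$, indexes over all of $PI$, and discards those terms afterwards using Remark~\ref{rem-delta-zero}; the final degree count $|I|\le n|\CL|\le nm$ is identical.
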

\begin{proof}
 Consider a map $a\:I\to A$, and an element $x\in A$.  Let $PI$ be the
 set of subsets of $I$, and for $J\in PI$ put
 \[ b(J) = (\dl[J,a]f)(x). \]
 Now 
 \[ (\dl[I,a]gf)(x)=\sum_{J\sse I}(-1)^{|I\sm J|}g(f(\sg(J,a)+x)), \]
 but Lemma~\ref{lem-mobius} gives
 \[ f(\sg(J,a)+x) = 
    \sum_{K\sse J}(\dl[K,a]f)(x) = \sum_{K\in PJ} b(K).
 \]
 This in turn gives 
 \[ g(f(\sg(J,a)+x) = g(\sg(PJ,b)+0) =
     \sum_{T\sse PJ} (\dl[T,b]g)(0),
 \] 
 so 
 \[ (\dl[I,a]gf)(x) =
      \sum_{J\sse I} (-1)^{|I\sm J|} \sum_{T\sse PJ} (\dl[T,b]g)(0) =
      \sum_{T\sse PI} (\dl[T,b]g)(0)
        \sum_{\bigcup T\sse J\sse I}(-1)^{|I\sm J|}.
 \]
 Here it is easy to see that the inner sum is $1$ if $\bigcup T=I$,
 and $0$ otherwise.  We conclude that
 \[ (\dl[I,a]gf)(x) =
      \sum_{T\sse PI,\;\bigcup T=I} (\dl[T,b]g)(0).
 \]
 For the term $(\dl[T,b]g)(0)$ to be nonzero we must have $|T|\leq m$.
 Moreover, by Remark~\ref{rem-delta-zero} we must also have
 $b(K)\neq 0$ for all $K\in T$, which forces $|K|\leq n$.  Together
 these imply that $|\bigcup T|\leq nm$, but $\bigcup T=I$ so
 $|I|\leq nm$.  It follows that $gf$ is polynomial of degree at most
 $nm$, as claimed.
\end{proof}

\begin{proposition}\label{prop-poly-equal}
 Let $f,g\:A\to B$ be polynomial maps.  Let $A_0$ be a subsemigroup of
 $A$ that generates $A$ as a group, and suppose that $f|_{A_0}=g|_{A_0}$.
 Then $f=g$.
\end{proposition}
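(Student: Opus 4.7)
The plan is to reduce to showing that a polynomial map $h \: A \to B$ which vanishes on $A_0$ must vanish on all of $A$, by setting $h = f - g$ (which is again polynomial, and of degree at most $\max(\deg f,\deg g)$, since the operators $\dl[I,a]$ are linear in $h$). Then I would induct on $\deg h$.

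For the base case $\deg h = 0$, the map $h$ is constant, and since $A_0$ is a subsemigroup (hence contains $0$ in our convention) we have $h(0) = 0$, forcing $h = 0$. For the inductive step, fix $n \geq 1$, assume the result for polynomial maps of degree $<n$, and suppose $\deg h \leq n$. For any $a \in A_0$, the map $\dl[a]h$ has degree $\leq n-1$: if $|I'| \geq n$ and $a''\:I' \to A$, then extending $a''$ by the value $a$ on a new index gives a set of size $|I'|+1 > n$, so $\dl[I',a'']\dl[a]h = 0$. Moreover, for $x \in A_0$ we have $a + x \in A_0$ (since $A_0$ is a subsemigroup), so $(\dl[a]h)(x) = h(a+x) - h(x) = 0$. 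Thus $\dl[a]h$ vanishes on $A_0$, and by the inductive hypothesis $\dl[a]h = 0$ identically; that is, $h(a+x) = h(x)$ for every $x \in A$ and every $a \in A_0$.

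In other words, $h$ is invariant under translation by elements of $A_0$. The final step uses the generation hypothesis: since $A_0$ is a subsemigroup that generates $A$ as a group, every $a \in A$ can be written as $a_1 - a_2$ with $a_1,a_2 \in A_0$. Translation-invariance under $a_1$ gives $h(a_1 - a_2) = h(-a_2)$, and translation-invariance under $a_2$ gives $h(-a_2) = h(0) = 0$, so $h(a) = 0$ for all $a \in A$.

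The only mildly delicate point is keeping the bookkeeping on degrees correct in the reduction $\dl[a]h$; everything else is formal. The argument never uses anything beyond the semigroup property of $A_0$, the group-generation property, and the definition of polynomial via the iterated difference operators, so it fits neatly into the framework already set up via Definition~\ref{defn-poly} and Remark~\ref{rem-delta-zero}.
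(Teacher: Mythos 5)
Your proof is correct and follows essentially the same inductive scheme as the paper's: reduce to $h = f-g$ vanishing on $A_0$, show $\dl[a]h$ has lower degree and still vanishes on $A_0$ for $a \in A_0$, conclude translation-invariance, and finish with the decomposition $a = a_1 - a_2$. The only cosmetic difference is that the paper applies translation-invariance once (taking $x = a$ with $a' = a_-$ to get $h(a) = h(a_+)$) while you apply it twice to reach $h(0)$; the substance is identical.
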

\begin{proof}
 The difference $h=f-g$ has $h(A_0)=0$, and it is clearly polynomial
 of some degree $d$ say.  If $d=0$ then $h$ is constant but $h(A_0)=0$
 so $h=0$.  Now suppose that $d>0$.  Any element $a\in A$ can be
 written as $a_+-a_-$ with $a_+,a_-\in A_0$.  Put $k=\dl[a_-]h$, and
 note that this is polynomial of degree at most $d-1$.  For $x\in A_0$
 we have $a_-+x\in A_0$ and $k(x)=h(a_-+x)-h(x)=0-0=0$.  By induction
 it follows that $k=0$, so $h(x)=h(a_-+x)$ for all $x$.  Now take
 $x=a$ to get $h(a)=h(a_+)\in h(A_0)=0$.  We conclude that $h=0$ as
 required. 
\end{proof}

\begin{lemma}\label{lem-norm-poly}
 Let $f\:X\to Y$ be a map of finite $G$-sets, and let $d$ be the
 maximum of the numbers $|f^{-1}\{y\}|$ for $y\in Y$.  Then
 the map $N_f\:S^+(X)\to S^+(Y)$ is polynomial of degree at most $d$.  
\end{lemma}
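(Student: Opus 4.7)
The plan is to exploit Tambara's factorisation $N_f = R_j \circ \chi^+$ (extended to $S^+$), where $\chi^+\: S^+(X) \to GS^+(V(f))$ converts addition into convolution, and $R_j$ is additive. It suffices to prove that $\chi^+$ itself is polynomial of degree at most $d$ as a map into $S^+(V(f))$; since $R_j$ is an additive group homomorphism, difference operators commute with it, and the bound will be preserved.

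First I would define $\eta(a) = \chi^+(a) - e \in S^+(V(f))$, so $\chi^+(a) = e + \eta(a)$ and $\eta(0) = 0$. By Proposition~\ref{prop-Rk}, $R_k\chi^+(a) = 1$; since $k\:Y\to V_0(f)$ is a bijection, the degree-$0$ component of $\chi^+(a)$ (in the grading $S^+(V(f)) = \prod_n S^+(V_n(f))$) is precisely $e$, so $\eta(a)$ lies in degrees $\geq 1$. Next I would observe that the convolution $\vee$ is biadditive: this is immediate from the formula $a_1\vee a_2 = T_{p_{12}}(R_{p_1}(a_1)\,R_{p_2}(a_2))$ together with the fact that $T_{p_{12}}$ and $R_{p_i}$ are additive and ordinary multiplication distributes over addition.

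Using biadditivity of $\vee$ and the homomorphism property $\chi^+(a+x) = \chi^+(a)\vee\chi^+(x)$, I would compute
\[ (\dl[a]\chi^+)(x) = \chi^+(a)\vee\chi^+(x) - \chi^+(x) = \chi^+(x)\vee\eta(a). \]
An easy induction on $|I|$ (applying $\dl[a_i]$ one index at a time; the biadditivity of $\vee$ lets each $\dl$ pass through the convolution factors) then yields
\[ (\dl[I,a]\chi^+)(x) = \chi^+(x) \vee \bigvee_{i\in I}\eta(a(i)). \]
Since $S^+(V_p(f))\vee S^+(V_q(f))\sse S^+(V_{p+q}(f))$ and each $\eta(a(i))$ is concentrated in degrees $\geq 1$, the iterated convolution $\bigvee_i\eta(a(i))$ lies in degrees $\geq |I|$. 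But $V_n(f)=\emptyset$ for $n>d$, so $S^+(V(f))$ vanishes in those degrees. Consequently $(\dl[I,a]\chi^+)(x)=0$ whenever $|I|>d$, proving $\chi^+$ is polynomial of degree $\leq d$.

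Applying the additive map $R_j$ commutes with $\dl$: $(\dl[I,a](R_j\chi^+))(x) = R_j((\dl[I,a]\chi^+)(x))$, which vanishes for $|I|>d$. Hence $N_f=R_j\chi^+$ is polynomial of degree at most $d$. The only subtle step is the inductive identity for $(\dl[I,a]\chi^+)(x)$; the key is that $\vee$ is biadditive, so difference operators in $x$ can be slid past wedge factors $\eta(a(i))$ that do not depend on $x$, while the factor $\chi^+(x)$ absorbs them cleanly via $\chi^+(a_1+x)-\chi^+(x) = \chi^+(x)\vee\eta(a_1)$.
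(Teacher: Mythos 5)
Your argument is correct and follows the same route as the paper's proof: factor $N_f = R_j \circ \chi^+$, observe that $\chi^+$ converts addition to convolution, write $\chi^+(a) = e + \eta(a)$ with $\eta(a)$ in convolution degree $\geq 1$, and use the degree bound $S^+(V_n(f)) = 0$ for $n > d$ to kill $\dl[I,a]\chi^+$ once $|I| > d$. The only cosmetic difference is that you close the argument by noting that the additive map $R_j$ commutes with difference operators, whereas the paper invokes Proposition~\ref{prop-poly-composite} (composition with a degree-one map); the two are interchangeable here.
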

\begin{proof}
 We have $N_f=R_j\chi^+$, where $R_j$ is a homomorphism and so is
 polynomial of degree at most one.  In view of
 Proposition~\ref{prop-poly-composite}, it will suffice to prove that
 $\chi^+$ is polynomial of degree at most $d$.  For this proof we will
 just write $uv$ for the convolution product $u\vee v$.  We also note
 that when $k>d$ we have $V_k(f)=\emptyset$ and so $S^+(V(f))_k=0$.
 It follows that the ideal $K=\sum_{k>0}S^+(V(f))_k$ satisfies
 $K^{d+1}=0$.  We have seen that the image of $\chi^+$ is contained in
 $GS^+(V(f))$ so we can write $\chi^+(a)=1+\chi_0(a)$ for some map
 $\chi_0\:S(X)\to K$.  We can rearrange the relation
 $\chi^+(a+x)=\chi^+(a)\chi^+(x)$ to get
 $\dl[a]\chi^+=\chi_0(a)\chi^+$.  It follows inductively that for any
 map $a\:I\to S^+(X)$ we have 
 \[ \dl[I,a]\chi^+=\left(\prod_{i\in I}\chi_0(a(i))\right) \chi^+. \]
 Using $K^{d+1}=0$ we deduce that $\dl[I,a]\chi^+=0$ when $|I|>d$, as
 required. 
\end{proof}

\begin{proposition}\label{prop-tambara-plus}
 The maps $N_f\:S^+(X)\to S^+(Y)$ make $S^+$ into a Tambara functor.
 Moreover, if $T$ is any additively complete Tambara functor and
 $\phi\:S\to T$ is a morphism of Tambara functors then there is a
 unique morphism $\phi^+\:S^+\to T$ of Tambara functors with
 $\phi^+\eta=\phi$. 
\end{proposition}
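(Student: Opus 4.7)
The plan is to verify the Tambara-functor axioms for $S^+$ by exploiting the polynomial-map machinery already developed: every relation we need is an equality between polynomial maps of abelian groups, and Proposition~\ref{prop-poly-equal} will let us check it only on the subsemigroup $\eta(S(X))\subseteq S^+(X)$, where it follows from the corresponding relation in $S$.

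More precisely, I would first observe that $S^+$ is already a Mackey functor: the operators $T_f$ and $R_f$ act componentwise on $\hS(X)$ and descend to $S^+(X)$, the Mackey axioms follow degreewise from those for $S$, and the canonical map $\eta\:S\to S^+$ is automatically a morphism of Mackey functors. By construction, $\eta$ also commutes with the new norm maps: $N_f^{S^+}\eta=R_j\chi^+\eta=R_j\eta\chi=\eta R_j\chi=\eta N_f^S$. The subsemigroup $\eta(S(X))\subseteq S^+(X)$ generates $S^+(X)$ as a group, since every element of $S^+(X)=\hS(X)/E_S$ is of the form $\eta(a_+)-\eta(a_-)$.

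Next I would run through the relations in Proposition~\ref{prop-UG-pres} not already covered by the Mackey axioms, namely $N_{gf}=N_gN_f$ (and $N_1=1$), the distributor identity $N_gT_f=T_rN_qR_p$, and the cartesian-square identity $N_gR_f=R_kN_h$. In each case, both sides are maps of abelian groups $S^+(X)\to S^+(Z)$ (or $S^+(X)\to S^+(Y)$) built as composites of $T$'s, $R$'s and $N$'s. The $T$'s and $R$'s are group homomorphisms, the norm maps are polynomial of bounded degree by Lemma~\ref{lem-norm-poly}, and composites of polynomial maps are polynomial by Proposition~\ref{prop-poly-composite}, so both sides are polynomial. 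On $\eta(S(X))$ the identity reduces, via the commutation of $\eta$ with each operator, to the corresponding Tambara axiom for $S$ and therefore holds. Proposition~\ref{prop-poly-equal} then upgrades it to all of $S^+(X)$.

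Finally, for the universal property, given $\phi\:S\to T$ with $T$ additively complete, the degreewise universal property of additive completion provides unique group homomorphisms $\phi^+_X\:S^+(X)\to T(X)$ with $\phi^+_X\eta=\phi_X$. Compatibility of $\phi^+$ with $R_f$ and $T_f$ is immediate from uniqueness, since both sides of each such equation are homomorphisms agreeing on $\eta(S(X))$. Compatibility with $N_f$ requires the same polynomial argument: $\phi^+N_f$ and $N_f\phi^+$ are both polynomial maps $S^+(X)\to T(Y)$, they agree on $\eta(S(X))$ because $\phi$ is a Tambara morphism, and so Proposition~\ref{prop-poly-equal} gives equality. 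The main obstacle is essentially bookkeeping: one must confirm that each axiom can be phrased as an equality between maps out of a single abelian group so that the polynomial-equality criterion applies directly, and that polynomiality survives composition, but both points are handled cleanly by Proposition~\ref{prop-poly-composite} together with the fact that additive maps are polynomial of degree one.
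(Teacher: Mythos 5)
Your proposal is correct and follows essentially the same route as the paper: establish that $S^+$ is a Mackey functor, note that $\eta$ commutes with the new norms and that $\eta(S(X))$ generates $S^+(X)$ as a group, then deduce each Tambara axiom involving $N$ (and the compatibility of $\phi^+$ with $N_f$) by applying Proposition~\ref{prop-poly-equal} to polynomial maps that agree on $\eta(S(X))$. The paper's proof checks only $N_{gf}=N_gN_f$ explicitly and asserts the rest ``can be verified in the same way,'' whereas you spell out the role of Lemma~\ref{lem-norm-poly} and Proposition~\ref{prop-poly-composite} a bit more carefully, but the underlying argument is the same.
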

\begin{proof}
 Suppose we have maps $X\xra{f}Y\xra{g}Z$.  We must show that
 $N_{gf}=N_gN_f\:S^+(X)\to S^+(Z)$.  By construction, the following
 diagrams commute:
 \[ \xymatrix{
  S(X) \ar[r]^{N_f} \ar[d]_\eta &
  S(Y) \ar[r]^{N_g} \ar[d]_\eta &
  S(Z)              \ar[d]_\eta &&
  S(X) \ar[r]^{N_{gf}} \ar[d]^\eta &
  S(Z)                 \ar[d]^\eta \\
  S^+(X) \ar[r]_{N_f} &
  S^+(Y) \ar[r]_{N_g} &
  S^+(Z) &&
  S^+(X) \ar[r]_{N_{gf}} &
  S^+(Z).
 } \]
 We also know that $N_{gf}=N_gN_f$ on $S(X)$, and it follows that
 $N_{gf}=N_gN_f$ on the image of $\eta\:S(X)\to S^+(X)$.
 Moreover, both $N_{gf}$ and $N_gN_f$ are algebraic, so it follows
 from Proposition~\ref{prop-poly-equal} that $N_{gf}=N_gN_f$ on
 $S^+(X)$.  All the other Tambara functor axioms can be verified in
 the same way.

 Now suppose we have an additively complete Tambara functor $T$ and
 a morphism $\phi\:S\to T$ of Tambara functors.  It is clear that
 there is a unique morphism $\phi^+\:S^+\to T$ of Mackey functors with
 $\phi^+\eta=\phi$, and we just need to check that this is compatible
 with norm maps.  In more detail, given $f\:X\to Y$ we must show that
 for any $f\:X\to Y$ we have $N_f\phi^+=\phi^+N_f\:S^+(X)\to T(Y)$.
 Here both $N_f\phi^+$ and $\phi^+N_f$ are algebraic, so it will
 suffice to check that they agree on $\eta(S(X))$, but that is clear
 by naturality.
\end{proof}

\section{Modules over Tambara functors}
\label{sec-modules}

In this section we study two different possible definitions for
modules over a Tambara functor.

\begin{definition}\label{defn-naive-module}
 Let $S$ be a Tambara functor.  As in the Section~\ref{sec-green} we
 see that there are canonical maps $A\xra{\eta}S\xla{\mu}S\btm S$
 making $S$ into a Green ring.  By a \emph{naive $S$-module} we mean a
 Mackey functor $M$ equipped with a map $\nu\:S\btm M\to M$ making the
 obvious diagram commute:
 \[ \xymatrix{
  S\btm S\btm M \ar[r]^{\mu\btm 1} \ar[d]_{1\btm\nu} &
  S\btm M \ar[d]_\nu &
  M \ar[l]_(0.4){\eta\btm 1} \ar[dl]^1 \\
  S\btm M \ar[r]_\nu & M
 } \]

 We write $\NMod_S$ for the category of naive $S$-modules.
\end{definition}

\begin{remark}\label{rem-NMod}
 In view of Proposition~\ref{prop-box-equiv}, to give a naive
 $S$-module structure on $M$ is the same as to make each $M(X)$ into
 an $S(X)$-module in such a way that 
 \begin{itemize}
  \item[(a)] For all $f\:X\to Y$ and $b\in S(Y)$ and $n\in M(Y)$ we
   have $R_f(bn)=R_f(b)\;R_f(n)$.
  \item[(b)] For all $f\:X\to Y$ and $b\in S(Y)$ and $m\in M(X)$ we
   have $bT_f(m)=T_f(R_f(b)\;m)$.
 \end{itemize}
 In particular, if $S$ is additively complete, then multiplication by
 $-1\in S(X)$ provides additive inverses in $M(X)$, so $M(X)$ is also
 additively complete.  Given this, one can check that $\NMod_S$ is an
 abelian category.
\end{remark}

As the category $\NMod_S$ depends only on the underlying Green ring, it
is natural to ask whether there is a more refined category that
somehow takes account of the norm maps.  We can define such a category
following an idea of Waldhausen in the context of stable homotopy
theory, which takes a foundational result in the Andr\'e-Quillen
homology theory for commutative rings and turns it into a
definition.  This will of course mean that our definition is a good
basis for an Andr\'e-Quillen homology theory for Tambara functors.
One can thus expect it to be useful when studying topological
Andr\'e-Quillen homology for equivariant ring spectra, which is
important for a number of applications.

\begin{definition}\label{defn-Mod}
 Let $S$ be a Tambara functor.  We write $\AugAlg_S$ for the category
 of augmented $S$-algebras, or in other words Tambara functors $T$
 equipped with morphisms $S\xra{\eta}T\xra{\ep}S$ such that
 $\ep\eta=1$.  This category has finite products, given by
 \[ (T\tm_ST')(X) =
      \{(a,a')\in T(X)\tm T'(X)\st \ep(a)=\ep(a')\in S(X)\},
 \]
 so it is meaningful to talk about semigroup objects in $\AugAlg_S$.
 (As always, our semigroups are assumed to be commutative.)  We call
 these \emph{$S$-modules}, and we write $\Mod_S$ for the category of
 such objects.
\end{definition}

\begin{remark}\label{rem-Mod-semiadditive}
 In any category $\CC$ with finite products, one can check that the
 category $\Semigroups(\CC)$ of semigroup objects in $\CC$ is
 semiadditive, with finite products (and therefore finite coproducts)
 given by products in the underlying category $\CC$.  Thus, $\Mod_S$
 is always a semiadditive category.
\end{remark}

\begin{definition}\label{defn-Lambda}
 For $T\in\Mod_S$, we put 
 \[ (\Lm T)(X) = \{ a\in T(X) \st \ep(a) = 0 \in S(X)\}. \]
 It is clear this is a sub-Mackey functor of $T$ and that the product
 map $T\btm T\to T$ restricts to give a product $S\btm\Lm T\to\Lm T$
 making $\Lm T$ a naive $S$-module.  We thus have a functor
 $\Lm\:\Mod_S\to\NMod_S$. 
\end{definition}

\begin{proposition}\label{prop-Mod-abelian}
 Let $S$ be an additively complete Tambara functor.
 \begin{itemize}
  \item[(a)] Any $T\in\Mod_S$ has a natural splitting $T=S\oplus\Lm T$
   as Mackey functors.  
  \item[(b)] With respect to the splitting in~(a), the semigroup
   structure map $\sg\:T(X)\tm_{S(X)}T(X)\to T(X)$ is given by
   $\sg(a,u,v)=(a,u+v)$. 
  \item[(c)] The map $\chi(a,u)=(a,-u)$ is a morphism of augmented
   $S$-algebras, so $T$ is actually a group object (not just a
   semigroup object) in $\AugAlg_S$.
  \item[(d)] For any morphism $\phi\:T\to T'$ in $\Mod_S$, the Mackey
   functors $S\oplus\ker(\Lm\phi)$, $S\oplus\img(\Lm\phi)$ and
   $S\oplus\cok(\Lm\phi)$ have unique structures as $S$-modules such that
   the evident Mackey morphisms
   \[ S\oplus\ker(\Lm\phi) \to T \to S\oplus\img(\Lm\phi)
        \to T' \to S\oplus\cok(\Lm\phi)
   \]
   are $S$-module morphisms.
  \item[(e)] $\Mod_S$ is an abelian category.
 \end{itemize}
\end{proposition}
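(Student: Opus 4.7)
\emph{Proof proposal.} The plan is to show that the semigroup structure of any $T \in \Mod_S$ is forced by the augmentation to be $\sigma(p,q) = p + q - \eta\ep(p)$, which in turn forces $\Lm T$ to be square-zero as an ideal in each ring $T(X)$ and forces the ``affine-on-cosets'' property that for each $f \colon X \to Y$ and $a \in S(X)$ the map $u \mapsto N_f(\eta(a) + u) - \eta(N_f(a))$ is an additive homomorphism $\Lm T(X) \to \Lm T(Y)$. With these two structural facts in hand, all of (a)--(e) follow by direct calculation in the splitting $T = S \oplus \Lm T$.

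For (a), I write $t = \eta\ep(t) + (t - \eta\ep(t))$; the second summand lies in $\Lm T(X)$ by additivity of $\ep$, and additive completeness of $S$ (inherited by $T$ through the $S$-algebra structure) makes the subtraction available. For (b), the Mackey decomposition gives $(T \tm_S T)(X) = S(X) \oplus \Lm T(X) \oplus \Lm T(X)$, with a typical element $(a,u,v)$ corresponding to $(\eta(a)+u,\eta(a)+v)$, and since $\sigma$ is at least a Mackey morphism I expand $(a,u,v) = (a,0,0)+(0,u,0)+(0,0,v)$ and evaluate each summand. The first maps to $\sigma(\eta(a),\eta(a)) = \eta(a)$ by initiality of $S$ in $\AugAlg_S$; the other two evaluate to $u$ and $v$ by the unit axiom, since $\eta\ep(u) = 0$ for $u \in \Lm T$. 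Summing gives $\sigma(a,u,v) = (a, u+v)$.

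For (c), expanding the equation $\sigma \circ N_s = N_s \circ \sigma$ (compatibility with the fold multiplication) with $p_i = \eta(a_i) + u_i$ and $q_i = \eta(a_i) + v_i$ produces the identity $u_1 v_2 + v_1 u_2 = 0$, hence $\Lm T \cdot \Lm T = 0$; expanding $\sigma \circ N_f = N_f \circ \sigma$ similarly yields the affine-on-cosets property. I then check by direct calculation that $\chi(t) := 2\eta\ep(t) - t$, which realises the Mackey formula $(a,u) \mapsto (a,-u)$, commutes with every structure operation: additivity handles $R_f$ and $T_f$; the square-zero property gives $\chi(t_1 t_2) = \chi(t_1)\chi(t_2)$; and the additivity of $u \mapsto N_f(\eta(a)+u) - \eta N_f(a)$ combined with the fact that $\chi$ negates $\Lm T$ yields $\chi \circ N_f = N_f \circ \chi$. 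The identity $\sigma(t,\chi(t)) = t + \chi(t) - \eta\ep(t) = \eta\ep(t)$ then identifies $\chi$ as the inverse in the semigroup law.

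For (d), I realise the kernel as the sub-Tambara functor $K(X) = \{t \in T(X) \st \phi(t) = \eta'\ep(t)\}$, which is closed under $N_f$ by a one-line equivariance check using $\phi N_f = N_f \phi$ and coincides with $S \oplus \ker(\Lm\phi)$ by (a); the image $\img(\phi) \subseteq T'$ is automatically a sub-Tambara functor and equals $S \oplus \img(\Lm\phi)$ by the same argument; and the cokernel is built by quotienting $T'$ additively by $\img(\Lm\phi) \leq \Lm T'$, the key point being that $\phi \circ N_f = N_f \circ \phi$ applied to $\eta(a)+u_0$ forces $g'_a(\Lm\phi(u_0)) = \Lm\phi(g_a(u_0)) \in \img(\Lm\phi)(Y)$, so $N_f$ descends to the quotient. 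In each case the $\Mod_S$-structure is forced by (b), giving the uniqueness and making the connecting maps $\Mod_S$-morphisms automatically. Part (e) then follows: $\Mod_S$ is semiadditive by Remark~\ref{rem-Mod-semiadditive} and in fact additive by (c), kernels and cokernels exist by (d), and the statement that the natural map $\cok(\ker\phi) \to \ker(\cok\phi)$ is an isomorphism reduces under $\Lm$ to the corresponding statement in the abelian category of semigroup-valued Mackey functors. The main obstacle throughout is the affine-on-cosets property: it is what lets $\chi$ commute with a general $N_f$ in (c) and $N_f$ descend to the cokernel in (d), and its extraction from $\sigma$ being a Tambara (rather than merely Mackey) morphism is the one step that requires opening up the norm operation rather than arguing purely additively.
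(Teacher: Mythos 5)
Your proof is correct, but takes a genuinely different route from the paper in the crucial step, which is part~(c). The paper avoids ``opening up the norm operation'' entirely: it observes that the Tambara morphism $\tht\:T\tm_ST\to T\tm_ST$, $\tht(s,t)=(\sg(s,t),t)$, has bijective components $(a,u,v)\mapsto(a,u+v,v)$, invokes the general fact that a pointwise-bijective natural transformation has a natural inverse (so $\tht$ is a Tambara isomorphism), and then defines $\chi=\tht^{-1}\circ(\eta,1)$ — manifestly a Tambara morphism with no computation required. You instead extract two structural lemmas from the Tambara property of $\sg$: that $\Lm T$ is square-zero in each ring $T(X)$, and that for each $f$ and $a$ the map $u\mapsto N_f(\eta(a)+u)-\eta(N_f(a))$ is additive on $\Lm T$; then you verify commutativity of $\chi(t)=2\eta\ep(t)-t$ with each operation by hand. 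Your computations all check out (the square-zero identity $u_1v_2+v_1u_2=0$ does follow from commutation of $\sg$ with the fold norm, and $g_a(u+v)=g_a(u)+g_a(v)$ does follow from commutation with $N_f$), so this is a valid alternative. The trade-off is that the paper's $\tht$-trick is shorter and more categorical, while your lemmas carry concrete structural content that you then reuse directly in (d) (in place of the paper's appeal to the Nakaoka ideal criterion, Proposition~\ref{prop-congruence-ideals}, to show that $\img(\Lm\phi)$ is a Tambara ideal). One small imprecision in (e): you reduce the normality axiom to ``the abelian category of semigroup-valued Mackey functors,'' but semigroup-valued Mackey functors only form a semiadditive category; what you want is the abelian category $\NMod_S$ of Remark~\ref{rem-NMod} (or, equivalently, additively complete Mackey functors — additive completeness being inherited from $S$), and that is clearly what you mean.
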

\begin{proof}
 \begin{itemize}
  \item[(a)] Any $a\in T(X)$ can be written as
   $\eta(\ep(a))+(a-\eta(\ep(a)))$ with $a-\eta(\ep(a))\in\Lm T(X)$.  The
   claim is clear from this.
  \item[(b)] Because $\sg$ is supposed to give a semigroup structure,
   there must be a morphism $\zt\:S\to T$ in $\AugAlg_S$ (to provide
   the zero) such that the diagram
   \[ \xymatrix{ 
     T \ar[r]^(0.4){(\zt,1)} \ar[dr]_1 &
     T\tm_ST \ar[d]^\sg &
     T \ar[l]_(0.4){(1,\zt)} \ar[dl]^1 \\
     & T
   }\]
   commutes.  Now $\eta$ is the only morphism from $S$ to $T$ in
   $\AugAlg_S$, so we must have $\zt=\eta$.  Given this, and using the
   splitting from~(a), commutativity of the diagram means that
   $\sg(a,0,v)=(a,v)$ and $\sg(a,u,0)=(a,u)$.  In particular, we have
   $\sg(a,0,0)=(a,0)$ and $\sg(0,u,0)=(0,u)$ and $\sg(0,0,v)=(0,v)$.
   Moreover, $\sg$ arises from a morphism $T\tm_ST\to T$ of Mackey
   functors, so it preserves addition.  Claim~(b) clearly follows.
  \item[(c)] We are given that $\sg$ is a Tambara morphism, and it
   follows that we can define a Tambara morphism
   $\tht\:T\tm_ST\to T\tm_ST$ by $\tht(s,t)=(\sg(s,t),t)$.  Now~(b)
   tells us that $\tht(a,u,v)=(a,u+v,v)$, so the component 
   \[ \tht_X\:(T\tm_ST)(X)\to(T\tm_ST)(X) \]
   is bijective for all $X$, with inverse $(a,u,v)\mapsto(a,u-v,v)$.
   It is standard that if all components of a natural transformation
   are isomorphisms, then their inverses form a natural transformation
   inverse to the original one.  Thus $\tht$ is invertible, with
   $\tht^{-1}(a,u,v)=(a,u-v,v)$.  The map $\chi$ is
   $\tht^{-1}\circ(\eta,1)$, so it is a morphism of Tambara functors.
   It is clearly also compatible with augmentation and satisfies
   $\sg\circ(1,\chi)=\eta\ep$, so it provides inverses for the
   semigroup structure on $T$.  
  \item[(d)] It is clear that $S\oplus\img(\Lm\phi)$ is the image of
   the Tambara morphism $\phi$ and so has a unique Tambara structure
   compatible with the evident morphisms
   \[ T \xra{} S\oplus \img(\Lm\phi) \xra{} T'. \]
   It is also clear that this is compatible with the augmentations.
   Moreover, we see from~(b) that this is also compatible with the
   relevant semigroup structures.  Next, $S\oplus\ker(\Lm\phi)$ can be
   expressed as the equaliser of $\phi$ and the composite
   $T\xra{\ep}S\xra{\eta'}T'$, both of which are Tambara morphisms.
   All claims about $S\oplus\ker(\Lm\phi)$ follow from this
   description.  Next, we claim that the functor
   $J=\img(\Lm\phi)\leq\Lm T'\leq T'$ is a Tambara ideal, as in
   Definition~\ref{defn-tambara-ideal}.  Indeed, it is clear that $J$
   is a sub-Mackey functor, and that $J(X)$ is an ideal in $T(X)$ for
   all $X$.  Now suppose we have a map $f\:X\to Y$ of finite $G$-sets
   and an element $b\in\img(\Lm\phi)(X)$, so $b=\phi(a)$ for some
   $a\in T(X)$ with $\ep(a)=0$.  As $\phi$ is natural we have
   $N^{T'}_f(b)=\phi(N^T_f(a))$ and $N^{T'}_f(0)=\phi(N^T_f(0))$, so   
   \[ N^{T'}_f(b)= \phi(N^T_f(a)-N^T_f(0)) + N^{T'}_f(0).
   \]
   Because $\ep(a)=0$ we also have 
   \[ \ep(N^T_f(a)-N^T_f(0)) = N^S_f(\ep(a)) - N^S_f(\ep(0)) = 0, \]
   so $N^T_f(a)-N^T_f(0)\in\Lm(T)(X)$, so
   $N^{T'}_f(b)\in J(Y)+N^{T'}_f(0)$ as required.  It follows that the
   quotient $T'/\img(\Lm\phi)=S\oplus\cok(\Lm\phi)$ has a natural
   structure as a Tambara functor.  Using~(a) and~(b) we again see
   that everything is compatible with augmentations and semigroup
   structures. 
  \item[(e)] This is clear from~(a) to~(d).
 \end{itemize}
\end{proof}

Although the structure of genuine modules is more subtle than that of
naive modules, it turns out that every naive module can be given a
canonical genuine module structure.  However, not every genuine module
arises from this construction.  We can state this more precisely as
follows: 

\begin{proposition}\label{prop-aug-alg}
 There is a functor $\Pi\:\NMod_S\to\Mod_S$ and a natural isomorphism
 $\Lm\Pi(M)\simeq M$ in $\NMod_S$.  There is also a natural morphism
 $\phi\:\Pi\Lm(T)\to T$ of Mackey functors for all $T\in\Mod_S$,
 which is an isomorphism if $S$ is additively complete.  However,
 $\phi$ need not be a morphism of Tambara functors.
\end{proposition}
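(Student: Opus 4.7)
The plan is to construct $\Pi(M)$ as a square-zero extension of $S$ by $M$, check the Tambara axioms directly, and then exhibit a counterexample from Example~\ref{eg-TP}.

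\emph{Construction of $\Pi$.} For $M\in\NMod_S$ and a finite $G$-set $X$, I put $\Pi(M)(X)=S(X)\oplus M(X)$, with coordinatewise addition, Mackey operators $R_f$ and $T_f$ acting coordinatewise, and semiring multiplication
\[ (s,m)(s',m') = (ss',\, sm'+s'm), \]
using the $S(X)$-module structure on $M(X)$ from Remark~\ref{rem-NMod}. For $f\:X\to Y$, let $r\:U(f)\to X$ and $t\:U(f)\to V(f)$ be the maps of Definition~\ref{defn-chi-SV}, and let $\dl\:X\to V(f)$ be the equivariant map $\dl(x)=(f(x),\,f^{-1}\{f(x)\}\sm\{x\})$. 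I define the norm by
\[ N_f(s,m) = \bigl(N_f(s),\; T_f(m\cdot R_\dl N_t R_r(s))\bigr). \]
The motivation: formally expanding $\prod_{x\in f^{-1}\{y\}}(s(x)+m(x))$ modulo terms of total $m$-degree at least two gives $\prod s(x)$ plus $\sum_{x_0}m(x_0)\prod_{x\ne x_0}s(x)$, and the second sum is exactly $T_f(m\cdot R_\dl N_tR_r(s))$ evaluated at $y$. The augmentation $\ep\:\Pi(M)\to S$ is the projection, the unit $\eta$ is the first inclusion, and the semigroup structure is $((s,m),(s,m'))\mapsto(s,m+m')$, which is a Tambara morphism because $m\mapsto T_f(m\cdot R_\dl N_tR_r(s))$ is additive in $m$.

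\emph{Axioms and main obstacle.} By Proposition~\ref{prop-UG-pres} it suffices to check functoriality of $R,T,N$, the $R$-$T$ and $R$-$N$ base-change identities, and the distributor identity $N_gT_f=T_rN_qR_p$. On the first coordinate everything is free, since $S$ already satisfies these. The $R$-$T$ identities on the second coordinate come from $M$ being a Mackey functor together with the naturality of the action in Remark~\ref{rem-NMod}(a); the $R$-$N$ base change is a direct consequence of the cartesian squares of Proposition~\ref{prop-NRT}. The main obstacle is the $N$-$T$ distributor identity on the $M$-component: one has to match the $m$-linear part of the expansion of $N_gT_f$ with the contribution of $T_rN_qR_p$, which is handled by combining Propositions~\ref{prop-NNT} and~\ref{prop-NTT} with Frobenius reciprocity to move the $S$-module factor $R_\dl N_tR_r(s)$ across the distributor diagram; the calculation is formal but not brief.

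\emph{The natural maps and failure of $\phi$.} The identification $\Lm\Pi(M)\simeq M$ is tautological, since $\Lm$ is by definition the augmentation kernel. For $T\in\Mod_S$, define $\phi\:\Pi\Lm(T)\to T$ by $\phi(s,u)=\eta_T(s)+u$ as a Mackey morphism. When $S$ is additively complete, Proposition~\ref{prop-Mod-abelian}(a) identifies $T=S\oplus\Lm(T)$ as a Mackey functor and $\phi$ becomes the identity, hence an isomorphism of Mackey functors. To see that $\phi$ need not be Tambara, take $G=\{1,\chi\}$, $S$ the constant-$\Z$ Tambara pair, and $T$ the Tambara pair of Example~\ref{eg-TP}. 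Since the fiber of $\ep\:G\to 1$ is all of $G$, the map $R_\dl N_tR_r(i)$ is the constant function $i$ on $G$, and the formula above yields $\nrm^{\Pi\Lm(T)}(i+j\al)=i^2+ij\bt$, whereas $\nrm^{T}(i+j\al)=i^2+ij\bt+j^2\gm$. These differ by $j^2\gm$, which is $\gm\ne 0$ when $j$ is odd (using $2\gm=0$); for instance $\phi(\nrm^{\Pi\Lm(T)}(\al))=0$ but $\nrm^T(\phi(\al))=\gm$. Hence $\phi$ does not preserve norms.
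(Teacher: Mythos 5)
Your construction of $\Pi M$ is the same as the paper's, up to notation: you write the cofactor as $R_\dl N_tR_r(s)$ using the sets $U(f),V(f)$ from Section~\ref{sec-completion}, whereas the paper uses $F(f)=\{(x,x')\in X^2:x\neq x',f(x)=f(x')\}$ with its two projections $\pi,\pi'$ and writes it as $N_\pi R_{\pi'}(s)$; unwinding both gives the same element of $S(X)$, namely $x\mapsto\prod_{x'\neq x,\,f(x')=f(x)}s(x')$. The square-zero multiplication, the augmentation, the semigroup structure, the map $\phi(s,u)=\eta(s)+u$, and the counterexample built from Example~\ref{eg-TP} all coincide with the paper's (Example~\ref{eg-Pi-bad}); your explicit computation $\nrm^{\Pi\Lm T}(i+j\al)=i^2+ij\bt$ is correct and gives a slightly more informative account than the paper's appeal to Lemma~\ref{lem-norm-zero}.

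The genuine gap is the verification that $\Pi M$ actually is a Tambara functor, which is where nearly all the content of this proposition lives. Proposition~\ref{prop-UG-pres} requires checking (i) $N_{gf}=N_gN_f$, (ii) $R_kN_g=N_{\tg}R_j$ for pullback squares, and (iii) the distributor identity $N_gT_f=T_rN_qR_p$, each on the $M$-component of $\Pi M$. You do not address (i) at all — the $M$-component of $N_{gf}$ and of $N_gN_f$ are both transfers against non-obviously-equal cofactors, and the paper's Lemma~\ref{lem-Pi-NN} proves their equality by constructing specific cartesian squares relating $F(f)$, $F(g)$, and $F(gf)$. For (iii), the assertion that it "is handled by combining Propositions~\ref{prop-NNT} and~\ref{prop-NTT} with Frobenius reciprocity" is not a proof and does not match what actually has to be done: the paper's Lemma~\ref{lem-Pi-NT} needs to identify $R_fN_{\pi_g}R_{\pi'_g}T_f(a)$ with $T_pN_{\pi_q}R_{p\pi'_q}(a)$, which requires forming the distributor $\Dl(\psi,\pi_g)$ for an auxiliary pullback $P$ and then constructing a second diagram involving $F(q)$ with two new cartesian squares; Propositions~\ref{prop-NNT} and~\ref{prop-NTT} concern iterated distributors of the form $N_hN_gT_f$ and $N_hT_gT_f$, not the mixed $S$/$M$ structure that appears here, and it is far from clear that they suffice. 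Similarly, your claim that (ii) is "a direct consequence of the cartesian squares of Proposition~\ref{prop-NRT}" is unjustified: Lemma~\ref{lem-Pi-RN} needs the naturality of $F(-)$ under base change, which is a separate (though easy) cartesian-square argument, not an instance of~\ref{prop-NRT}. In short, you have the right formula and the right counterexample, but the core of the proof — that the norm formula on $\Pi M$ satisfies the Tambara relations — is asserted rather than proved.
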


The rest of this section will constitute the proof.

We first explain how the theorem generalises a straightforward and
well-known fact.  Let $S$ be a commutative semiring, and let $M$ be an
$S$-module.  We can then define a semiring structure on the group 
$\Pi M=S\oplus M$ by $(s,m).(s',m')=(ss',sm'+s'm)$.  We have ring
homomorphisms $S\xra{\eta}\Pi M\xra{\ep}S$ given by $\eta(s)=(s,0)$
and $\ep(s,m)=s$, so $\Pi M$ is an augmented $S$-algebra.  The copy of
$M$ inside $\Pi M$ is an ideal satisfying $M^2=0$.  The categorical
product of augmented $S$-algebras has the property that
\[ (S\oplus I)\tm_S(S\oplus J) = S\oplus I\oplus J. \] Using this, we
find that the map $(s,m,m')\mapsto(s,m+m')$ can be regarded as an
augmented algebra homomorphism $\Pi M\tm_S\Pi M\to\Pi M$, making $\Pi
M$ a commutative semigroup object in $\AugAlg_S$.  We thus have a
functor $\Pi\:\NMod_S\to\Mod_S$, and it is clear that $\Lm\Pi M=M$.
In the opposite direction, suppose we have an object $T\in\Mod_S$.  We
can then define $\phi\:\Pi\Lm T=S\tm\Lm T\to T$ by $\phi(s,m)=s+m$.
This clearly preserves addition.  Moreover, $T$ is assumed to be a
semigroup object, with addition given by a semiring homomorphism
$\sg\:T\tm_ST\to T$ say.  Given $m_0,m_1\in\Lm T$ we have elements
$n_0=(m_0,0)$ and $n_1=(0,m_1)$ in $T\tm_ST$ with $n_0n_1=0$.  It is
formal that $\sg(n_i)=m_i$ and $\sg$ preserves products so $m_0m_1=0$.
Using this we see that $\phi$ also preserves multiplication.  If $S$
(and therefore $T$) is additively complete, we have an inverse given
by $\phi^{-1}(t)=(\ep(t),t-\ep(t))$.

To see what can go wrong when $S$ is not additively complete, consider
the sets
\begin{align*}
 T &= \N[x]/x^2=\{n+mx\st n,m\in\N\} \\
 T' &= \{n+mx\in T\st n=m=0 \text{ or } n>0\} \\
 T'' &= T/((n+mx)\sim n \text{ whenever } n>0). 
\end{align*}
We can regard $T$ as an $\N$-algebra with augmentation $\ep(n+mx)=n$.
In fact, it is a semigroup object in $\AugAlg_{\N}$ with addition map
$\sg\:T\tm_{\N}T\to T$ given by $\sg(n+mx,n+kx)=n+(m+k)x$.  One can
check that $T'$ is a subobject of $T$ and $T''$ is a quotient object.
We have $\Lm T'=0$ so the map $\Pi\Lm T'\to T'$ is not surjective.  On
the other hand, we have $\Lm T''=\N x$ and $\Pi\Lm T''=T$ so the map
$\Pi\Lm T''\to T''$ is not injective.

We now start to define a functor $\Pi$ in the Tambara context.

\begin{definition}\label{defn-Pi}
 Fix a Tambara functor $S$.  For any naive $S$-module $M$ we put
 $\Pi M=S\oplus M$, which is a Mackey functor in an obvious way.  We
 define maps $S\xra{\eta}\Pi M\xra{\ep}S$ by $\eta(a)=(a,0)$ and
 $\ep(a,m)=a$.  We also define $\sg\:\Pi M\tm_S\Pi M\to\Pi M$ by
 $\sg(a,m,m')=(a,m+m')$. 

 Now consider a map $f\:X\to Y$ of finite $G$-sets.  We put 
 \[ F(f) = \{(x,x')\in X^2\st x\neq x',\; f(x)=f(x')\}, \]
 and we let $\pi,\pi'\:F(f)\to X$ be the obvious projections.  We then
 define $N_f\:(\Pi M)(X)\to(\Pi M)(Y)$ by 
 \[ N_f(a,m) = (N_f(a),\; T_f((N_\pi R_{\pi'}(a)).m)) \]
\end{definition}

To see that this definition is reasonable, consider the case where $G$
is the trivial group, so we have a semiring $S_1$ and an $S_1$-module
$M_1$ such that $S(X)=\Map(X,S_1)$ and $M(X)=\Map(X,M_1)$ for all
$X$.  From these data we can construct the augmented $S_1$-algebra 
$\Pi M_1$ and then the Tambara functor $T(X)=\Map(X,\Pi M_1)$.
Alternatively, we can define a Mackey functor $\Pi M$ and construct norm
maps as in Definition~\ref{defn-Pi}.  There is an obvious way to
identify $T$ with $\Pi M$ as Mackey functors, and
Definition~\ref{defn-Pi} is designed to ensure that this
identification is compatible with norm maps.  To see this, consider a
map $f\:X\to Y$ and maps $a\:X\to S_1$ and $m\:X\to M_1$, and put
$(b,n)=N^T_f(s,m)\in T(Y)$.  It will be notationally convenient to
think of $S_1$ and $M_1$ as subsets of $\Pi M_1$ so $(a,m)$ can be
written as $a+m$.  By definition we have
\[ b(y)+n(y)=\prod_{f(x)=y}(a(x)+m(x)). \]
Recall that the product in $\Pi M_1$ of any two elements of $M_1$ is
zero.  Thus, in expanding out the above product, we need only consider
the monomial $\prod_{f(x)=y}a(x)$ (which is $N^S_f(a)(y)$) and the
monomials that involve a single factor $m(x)$.  This observation gives 
\begin{align*}
 n(y) &= \sum_{f(x)=y} m(x) \prod_{f(x')=y,\;x'\neq x} a(x') \\
  &= \sum_{f(x)=y} m(x) \prod_{x'\st (x,x')\in F(f)} (R_{\pi'}a)(x,x') 
   = (T_f(m\;N_\pi R_{\pi'}(a)))(y)
\end{align*}
as required.

\begin{proposition}\label{prop-Pi-tambara}
 The above definition makes $\Pi M$ into an object of $\Mod_S$.
\end{proposition}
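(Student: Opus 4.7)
The plan is to verify the axioms of Proposition~\ref{prop-UG-pres} for the norm operations $N_f$ on $\Pi M = S \oplus M$. Since the Mackey functor structure on $\Pi M$ acts diagonally via $T_f(a,m) = (T_f a, T_f m)$ and $R_f(a,m) = (R_f a, R_f m)$, all relations involving only $T$ and $R$ hold automatically. Moreover, in every remaining axiom the $S$-coordinate of $N_f(a,m)$ is just $N_f(a)$, so that coordinate reduces to the already-known Tambara axioms for $S$. The entire proof therefore reduces to verifying that the $M$-coordinates agree on both sides of each relation. I would also, at the end, check that $\eta$, $\ep$ and $\sg$ are Tambara morphisms and that $\sg$ satisfies the semigroup axioms.

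First I would dispatch the easy reductions. For $N_{1_X}$, the set $F(1_X)$ is empty, so $N_\pi R_{\pi'}(a)$ is the empty product $1 \in S(X)$, giving $N_{1_X}(a,m) = (a, T_{1_X}(1\cdot m)) = (a,m)$. For the cartesian-square axiom $N_g R_{f'} = R_k N_h$ (for a pullback with $g\:W\to Y$ and $h\:X\to Z$), the construction $f\mapsto F(f)$ is natural under cartesian squares, and the projections $\pi,\pi'$ pull back compatibly; combining this with Proposition~\ref{prop-G-bispan-rels}(d) for $S$ and the naturality of the naive $S$-module action on $M$ (Remark~\ref{rem-NMod}(a)) shows that both sides produce the same $M$-component.

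The main obstacle is the two combinatorial identities: functoriality $N_{gf} = N_g N_f$ and the distributor axiom $N_g T_f = T_r N_q R_p$ where $\Dl(f,g) = (X\xla{p}A\xra{q}B\xra{r}Z)$. For composition, the set $F(gf)$ decomposes equivariantly as $F(f) \amalg F'$, where $F' = \{(x,x')\st f(x)\neq f(x'),\ gf(x)=gf(x')\}$; the $F(f)$ piece accounts for the part of the second coordinate of $N_gN_f(a,m)$ coming from transferring the existing linear term $T_f(N_\pi R_{\pi'}(a)\,m)$ through $N_g$, while the $F'$ piece accounts for the interaction of this linear term with the norm factor $N_{\pi'}R_{\pi''}N_f(a)$. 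I would execute this by applying Proposition~\ref{prop-NNT} to rewrite both $N_gN_f$ and $N_{gf}$ as operators built from the same enlarged diagram, then use Frobenius reciprocity (Lemma~\ref{lem-frobenius}) together with the naive module axioms to collect the resulting products into a single transfer. For the distributor axiom, I would use Proposition~\ref{prop-NTT} in the same spirit; the key combinatorial content is that sections $s\:g^{-1}\{z\}\to X$ with $fs=1$ correspond exactly to the terms one obtains when expanding $\prod_{y\in g^{-1}(z)}\sum_{x\in f^{-1}(y)}(a(x)+m(x))$ modulo the ``$M\cdot M = 0$'' rule built into the formula for $N_f(a,m)$.

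Finally, I would check the structure maps. The maps $\eta(a)=(a,0)$ and $\ep(a,m)=a$ are compatible with $N_f$ by direct inspection: setting $m=0$ kills the second coordinate in the defining formula, and projecting to the first coordinate of $N_f(a,m)$ gives $N_f(a)$. For the semigroup structure map $\sg(a,m,m')=(a,m+m')$ to be a Tambara morphism, I need
\[
T_f\bigl(N_\pi R_{\pi'}(a)\cdot(m+m')\bigr) = T_f\bigl(N_\pi R_{\pi'}(a)\cdot m\bigr) + T_f\bigl(N_\pi R_{\pi'}(a)\cdot m'\bigr),
\]
which is immediate from the additivity of $T_f$ and the bilinearity of the naive $S$-module structure. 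The semigroup axioms (associativity, commutativity, and the unit law with respect to $\eta\ep$) then follow on the decomposition $\Pi M\tm_S\Pi M = S\oplus M\oplus M$ as they do in the non-equivariant motivating case.
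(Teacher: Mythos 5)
Your high-level structure is right, and you have correctly identified two of the three crucial ideas: for composition, the decomposition $F(gf) = F(f)\amalg F'$ with $F'=\{(x,x')\st f(x)\neq f(x'),\; gf(x)=gf(x')\}$ (what the paper calls $F^*(gf)$), and for the distributor axiom the heuristic that $N_f(a,m)$ encodes ``expand the product and drop all terms quadratic in $M$.'' The reduction to Proposition~\ref{prop-UG-pres}, the treatment of $N_{1_X}$ via $F(1_X)=\emptyset$, the compatibility with pullbacks via naturality of $F(-)$, and the final check of $\eta$, $\ep$ and $\sg$ are all correct and match the paper.

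However, your plan to discharge the two hard lemmas by invoking Propositions~\ref{prop-NNT} and~\ref{prop-NTT} does not work as stated, and this is where the real content lives. Those propositions analyze the distributors for composites of \emph{pure} Tambara operations $N_hN_gT_f$ and $N_hT_gT_f$ on a single Tambara functor. Here the second coordinate of $N_f(a,m)$ mixes two different functors: $N_\pi R_{\pi'}$ is applied to $a\in S$, the result is multiplied into $m\in M$ via the naive module action, and then $T_f$ is applied in $M$. Since $M$ carries no norm operations, you cannot rearrange these into a single bispan operation that Propositions~\ref{prop-NNT}/\ref{prop-NTT} describe. What one actually needs is: first, Frobenius reciprocity in the naive module ($T_f(m)\cdot v = T_f(m\cdot R_f v)$, Remark~\ref{rem-NMod}(b)) to push everything under a single $T_{gf}$; and second, a bespoke cartesian-square computation in $S$ showing $R_f N_{\pi_g}R_{\pi'_g}N_f(a) = N_{\tht}R_{\tht'}(a)$ for the projections $\tht,\tht'\:F'\to X$, after which the splitting $F(gf)=F(f)\amalg F'$ and multiplicativity of $N$ over disjoint unions give $N_{\pi_f}R_{\pi'_f}(a)\cdot N_{\tht}R_{\tht'}(a)=N_{\pi_{gf}}R_{\pi'_{gf}}(a)$. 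The paper does exactly this via an explicit diagram of cartesian squares (involving $P$, $Q$, $F'$), not by appealing to Proposition~\ref{prop-NNT}. The distributor case is substantially harder still: the paper constructs an auxiliary distributor $\Dl(\psi,\pi_g)$ and several more cartesian squares relating $F(q)$ to it, and a one-sentence appeal to ``sections modulo $M\cdot M=0$'' does not show you know how to produce that argument. So the proposal is a correct outline with the right key decomposition, but the main technical engine is misidentified, and you would need to construct the relevant pullback diagrams by hand before the plan would go through.
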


The proof will be divided into several lemmas.

\begin{lemma}\label{lem-Pi-NN}
 For any maps $X\xra{f}Y\xra{g}Z$ we have
 $N_{gf}=N_gN_f\:(\Pi M)(X)\to(\Pi M)(Y)$.
\end{lemma}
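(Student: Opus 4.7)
The plan is to reduce the identity $N_{gf}=N_gN_f$ on $\Pi M$ to a single algebraic identity in $S(X)$, then prove that identity by the standard pullback/distributor manipulations for Tambara functors. The first coordinate equality is immediate: since $\Pi M$ and $S$ share the same norm on the first summand, $N_{gf}(a)=N_gN_f(a)$ because $S$ itself is a Tambara functor. So everything comes down to the second coordinate.

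Writing $b=N_f(a)$ and $m_1=T_f(N_{\pi_f}R_{\pi'_f}(a)\cdot m)$, the second component of $N_gN_f(a,m)$ is $T_g(N_{\pi_g}R_{\pi'_g}(b)\cdot T_f(N_{\pi_f}R_{\pi'_f}(a)\cdot m))$. Applying Frobenius reciprocity (Lemma~\ref{lem-frobenius}, which holds in any Green ring/module, and in particular for the $S(Y)$-module structure on $M(Y)$) together with $T_{gf}=T_gT_f$, this rewrites as $T_{gf}(R_f(N_{\pi_g}R_{\pi'_g}N_f(a))\cdot N_{\pi_f}R_{\pi'_f}(a)\cdot m)$. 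The second component of $N_{gf}(a,m)$ is $T_{gf}(N_{\pi_{gf}}R_{\pi'_{gf}}(a)\cdot m)$. So it suffices to prove the identity
\[ N_{\pi_{gf}}R_{\pi'_{gf}}(a) \;=\; R_f\bigl(N_{\pi_g}R_{\pi'_g}N_f(a)\bigr)\cdot N_{\pi_f}R_{\pi'_f}(a) \]
in $S(X)$.

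The left-hand side is analyzed by decomposing $F(gf)=F(f)\amalg F'$, where
\[ F' = \{(x,x')\in X\tm X\st f(x)\neq f(x'),\; gf(x)=gf(x')\}. \]
Because $S$-norms turn disjoint-union-domains into products (equivalently, $N_{p_1\amalg p_2}(-,-)=N_{p_1}(-)\cdot N_{p_2}(-)$, which follows from Proposition~\ref{prop-G-bispan-rels}(a) together with the product splitting of $S$ on coproducts), the left-hand side factors as $N_{\pi_f}R_{\pi'_f}(a)\cdot N_{\pi''}R_{\pi'''}(a)$, where $\pi'',\pi'''\:F'\to X$ are the two projections. It remains to show $R_f N_{\pi_g}R_{\pi'_g}N_f(a)=N_{\pi''}R_{\pi'''}(a)$.

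This last identity is the core computation and is the main (though entirely routine) obstacle. The key observation is that $F'$ fits into an iterated pullback: first, $P_1:=X\tm_Y F(g)$ (via $f$ and $\pi_g$), and second, $F'\cong P_1\tm_Y X$ (via $\pi'_g$ composed with the projection to $F(g)$, and $f$). Applying the pullback identity $R_f N_{\pi_g}=N_{\tilde\pi_g}R_{\tilde f}$ from Proposition~\ref{prop-G-bispan-rels}(d) on the first pullback, and $R_{?}N_f=N_?R_?$ on the second, one computes directly that $R_f N_{\pi_g}R_{\pi'_g}N_f=N_{\pi''}R_{\pi'''}$, where the projections match up by unwinding the pullback bijections. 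The main work is bookkeeping the pullback squares and checking that the resulting projections to $X$ agree with $\pi''$ and $\pi'''$; everything is formal from here. Equivariance is automatic throughout because all sets and maps in the argument carry compatible $G$-actions.
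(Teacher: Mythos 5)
Your proof is correct and takes essentially the same route as the paper: reduce the comparison of second coordinates, via Frobenius reciprocity and $T_{gf}=T_gT_f$, to the single identity $N_{\pi_{gf}}R_{\pi'_{gf}}(a)=R_f(N_{\pi_g}R_{\pi'_g}N_f(a))\cdot N_{\pi_f}R_{\pi'_f}(a)$, split $F(gf)=F(f)\amalg F'$, and establish $R_fN_{\pi_g}R_{\pi'_g}N_f=N_{\tht}R_{\tht'}$ by applying the pullback relation $RN=NR$ twice across the iterated pullback $F'\cong X\tm_YF(g)\tm_YX$. The paper draws out the $3\times 3$ diagram explicitly where you defer the bookkeeping as routine, and your citation of Lemma~\ref{lem-frobenius} is slightly off (the form of Frobenius reciprocity you actually use is Proposition~\ref{prop-tambara-semiring}(b), applied to the $S(Y)$-module structure), but neither affects the argument.
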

\begin{proof}
 For any map $u\:A\to B$ we will write $\pi_u$ and $\pi'_u$ for the
 two projections $F(u)\to A$ that were previously called $\pi$ and
 $\pi'$.  We also note that $F(f)\sse F(gf)\sse X^2$, and we introduce
 the sets
 \begin{align*}
  P &= \{(y,x')\in Y\tm X\st y\neq f(x'),\;g(y)=gf(x')\} \\
  Q &= \{(x,y')\in X\tm Y\st f(x)\neq y',\;gf(x)=g(y')\} \\
  F^*(gf) &= F(gf)\sm F(f) \\
    &= \{(x,x')\in X\tm X\st f(x)\neq f(x'),\;gf(x)=gf(x')\}.
 \end{align*}
 We let $\tht$ and $\tht'$ denote the two projections $F^*(gf)\to X$,
 so we have a commutative diagram
 \[ \xymatrix{
  F^*(gf) \ar[r]_{1\tm f} \ar[d]^{f\tm 1}
           \ar@/^2ex/[rr]^\tht \ar@/_2ex/[dd]_{\tht'} &
  Q \ar[r] \ar[d]^{f\tm 1} &
  X \ar[d]^f \\
  P \ar[r]_{1\tm f} \ar[d] &
  F(g) \ar[r]_{\pi_g} \ar[d]^{\pi'_g} &
  Y \\
  X \ar[r]_f &
  Y
 } \]
 in which the three squares are cartesian.

 Consider an element $(a,m)\in(\Pi M)(X)$. Put 
 \begin{align*}
  u &= N_{\pi_f}R_{\pi'_f}(a) \in S(X) \\
  v &= N_{\pi_g}R_{\pi'_g}N_f(a) \in S(Y), 
 \end{align*}
 so 
 \begin{align*}
  N_f(a,m) &= \left(N_f(a),T_f(m\,u)\right) \\
  N_gN_f(a,m) &= \left(N_{gf}(a),T_g(T_f(mu)\,v)\right).
 \end{align*}
 Recall that $T_f(mu)v=T_f(muR_f(v))$.  Here
 $R_f(v)=R_fN_{\pi_g}R_{\pi'_g}N_f(a)$, and using the cartesian
 properties of our diagram we see that this is the same as
 $N_{\tht}R_{\tht'}(a)$.  We now have 
 \[ u R_f(v) = N_{\pi_f}R_{\pi'_f}(a) \; N_{\tht}R_{\tht'}(a). \]
 Using the splitting $F(gf)=F(f)\amalg F^*(gf)$ we see that this is
 the same as $N_{\pi_{gf}}R_{\pi'_{gf}}(a)$.  Putting this together,
 we obtain 
 \[ N_gN_f(a,m) =
     \left(N_{gf}(a),\;T_{gf}(m\;N_{\pi_{gf}}R_{\pi'_{gf}}(a))\right),
 \]
 which is by definition $N_{gf}(a,m)$.
\end{proof}

\begin{lemma}\label{lem-Pi-RN}
 For any cartesian square
 \[ \xymatrix{
  W \ar[r]^f \ar[d]_g & X \ar[d]^h \\ Y \ar[r]_k & Z
 } \]
 we have $R_kN_h=N_gR_f\:(\Pi M)(Y)\to(\Pi M)(X)$.
\end{lemma}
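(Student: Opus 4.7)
The plan is to unpack both sides of the claimed identity $R_kN_h=N_gR_f$ using Definition~\ref{defn-Pi}, reduce the agreement of first components to the known Tambara axiom on $S$, and reduce the agreement of second components to a cartesian square relating $F(h)$ and $F(g)$.

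Consider an element $(a,m)\in(\Pi M)(X)$. By definition
\[ N_h(a,m) = \bigl(N_h(a),\; T_h(m\cdot N_{\pi_h}R_{\pi'_h}(a))\bigr), \]
and since $R_k$ on the Mackey functor $\Pi M = S\oplus M$ is just $R_k$ on each summand, applying $R_k$ gives
\[ R_kN_h(a,m) = \bigl(R_kN_h(a),\; R_kT_h(m\cdot N_{\pi_h}R_{\pi'_h}(a))\bigr). \]
On the other side, $R_f(a,m)=(R_f(a),R_f(m))$, so
\[ N_gR_f(a,m) = \bigl(N_gR_f(a),\; T_g(R_f(m)\cdot N_{\pi_g}R_{\pi'_g}R_f(a))\bigr). \]
The first components agree because $S$ is a Tambara functor and the square is cartesian, by Proposition~\ref{prop-G-bispan-rels}(d). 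For the second components, the Mackey identity $R_kT_h=T_gR_f$ plus the fact that $R_f$ is a homomorphism on the naive $S$-module $M$ (Remark~\ref{rem-NMod}) reduces the claim to the single identity
\[ R_fN_{\pi_h}R_{\pi'_h}(a) = N_{\pi_g}R_{\pi'_g}R_f(a)\quad\text{in }S(W). \]

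The main (and really the only) point is therefore to check that the auxiliary square
\[ \xymatrix{
    F(g) \ar[r]^{\tf} \ar[d]_{\pi_g} & F(h) \ar[d]^{\pi_h} \\
    W \ar[r]_f & X
  } \]
is cartesian, where $\tf(w,w')=(f(w),f(w'))$, and likewise for $\pi'_g,\pi'_h$ in place of $\pi_g,\pi_h$. Writing $W=Y\tm_ZX$, the map $\tf$ is well-defined because $g(w)=g(w')$ and $w\neq w'$ force $f(w)\neq f(w')$, and the universal property is immediate: given $w\in W$ and $(x,x')\in F(h)$ with $f(w)=x$, the pullback description of $W$ produces the unique $w'=(g(w),x')\in W$ with $g(w')=g(w)$, $f(w')=x'$, and $w'\neq w$. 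Once this is established, Proposition~\ref{prop-G-bispan-rels}(d) gives $R_fN_{\pi_h}=N_{\pi_g}R_{\tf}$, and since $\pi'_h\tf=f\pi'_g$ we also get $R_{\tf}R_{\pi'_h}=R_{\pi'_g}R_f$; composing these yields the displayed identity.

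The main (very modest) obstacle is thus just the verification of the cartesian property of the $F(g)\to F(h)$ square, which is a routine set-theoretic computation using $W=Y\tm_ZX$. Everything else is bookkeeping: applying the already-established Mackey and Tambara identities on $S$ and on $M$ separately, and using that the naive $S$-module structure commutes with restriction.
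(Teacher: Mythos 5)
Your proof is correct and follows essentially the same route as the paper's: reduce the second-component comparison to the identity $R_fN_{\pi_h}R_{\pi'_h}=N_{\pi_g}R_{\pi'_g}R_f$ and establish it via the cartesian square relating $F(g)$ to $F(h)$ (the map you call $\tf$ is called $f_2$ in the paper). The paper asserts that the $\pi'$-square is also cartesian, but like you it only actually uses that square's commutativity, so the two arguments are in substance identical.
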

\begin{proof}
 By definition we have 
 \begin{align*}
  R_kN_h(a,m) &=
   (R_kN_h(a),R_kT_h(m\,N_{\pi_h}R_{\pi'_h}(a))) \\
   &= (N_gR_f(a),T_gR_f(m\,N_{\pi_h}R_{\pi'_h}(a))) \\
   &= (N_gR_f(a),T_g(R_f(m)\,R_fN_{\pi_h}R_{\pi'_h}(a))) \\
  N_gR_f(a,m) &=
   (N_gR_f(a),T_g(R_f(m)\,N_{\pi_g}R_{\pi'_g}R_f(a))).
 \end{align*}
 It will therefore suffice to prove that
 \[ R_f N_{\pi_h} R_{\pi'_h} (a) = N_{\pi_g}R_{\pi'_g}R_f(a). \]
 Next, consider a point $(w,w')\in F(g)$, so $w\neq w'$ but
 $g(w)=g(w')$.  Using the cartesian property of the given square, we
 see that $f(w)\neq f(w')$ but $hf(w)=hf(w')$, so
 $(f(w),f(w'))\in F(h)$.  We can thus define $f_2\:F(g)\to F(h)$ by
 $f_2(w,w')=(f(w),f(w'))$, and this gives a commutative diagram as
 follows:
 \[ \xymatrix{
  W \ar[d]_f &
  F(g) \ar[l]_{\pi'_g} \ar[r]^{\pi_g} \ar[d]^{f_2} &
  W \ar[d]^f \\
  X &
  F(h) \ar[l]^{\pi'_h} \ar[r]_{\pi_h} &
  X
 } \]
 It is straightforward to check that the squares are in fact
 cartesian.  Using the cartesian property of the right square and the
 commutativity of the left square we get
 \[ R_f N_{\pi_h} R_{\pi'_h} = 
    N_{\pi_g} R_{f_2} R_{\pi'_h} = 
    N_{\pi_g} R_{\pi'_g} R_f
 \]
 as required.
\end{proof}

\begin{lemma}\label{lem-Pi-NT}
 Suppose we have maps $X\xra{f}Y\xra{g}Z$ with distributor 
 \[ \Dl(f,g) = (X\xla{p}A\xra{q}B\xra{r}Z). \]
 Then $N_gT_f=T_rN_qR_p\:(\Pi M)(X)\to(\Pi M)(Z)$.
\end{lemma}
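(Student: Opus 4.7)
The plan is to verify the equality on $(a,m)\in(\Pi M)(X)=S(X)\oplus M(X)$. Unfolding the definitions of $T_f$, $N_f$, $R_f$ on $\Pi M$ gives the two sides as pairs in $S(Z)\oplus M(Z)$; the $S$-components coincide by the distributivity axiom $N_g T_f = T_r N_q R_p$ already available in $S$, so only the $M$-components require work. These read, respectively,
\[
T_g\bigl(T_f(m)\cdot N_{\pi_g}R_{\pi'_g}T_f(a)\bigr)
\quad\text{and}\quad
T_rT_q\bigl(R_p(m)\cdot N_{\pi_q}R_{\pi'_q}R_p(a)\bigr).
\]

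Two elementary observations will bring these into a common form. First, inspection of the distributor yields $rq=gfp$: for $(y,s)\in A$ one has $rq(y,s)=g(y)=gfs(y)=gfp(y,s)$, using $fs=1$; hence $T_rT_q = T_{rq} = T_{gf}T_p$. Second, Frobenius reciprocity for the naive $S$-module $M$ (a direct consequence of Lemma~\ref{lem-frobenius} applied to the action $S\btm M\to M$) supplies both $T_f(m\cdot R_f(w)) = T_f(m)\cdot w$ and $T_p(R_p(m)\cdot v) = m\cdot T_p(v)$. Applying these to the two $M$-expressions respectively reduces the desired identity to
\[
T_{gf}\bigl(m\cdot R_f N_{\pi_g}R_{\pi'_g}T_f(a)\bigr) = T_{gf}\bigl(m\cdot T_p N_{\pi_q}R_{\pi'_q}R_p(a)\bigr),
\]
which in turn follows from the single identity inside the Tambara functor $S$,
\[
R_f N_{\pi_g} R_{\pi'_g} T_f \;=\; T_p N_{\pi_q} R_{\pi'_q} R_p \qquad(\ast)
\]
as elements of $\bCU_G(X,X)$.

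The main obstacle is establishing $(\ast)$. My plan is to verify it as an identity of natural semiring operations on Tambara functors, and hence, by the description of $\bCU_G$ as (the opposite of) the category of natural operations, as an equality of bispans. Unfolding both sides via the bispan composition rules from Proposition~\ref{prop-G-bispan-rels} together with the pullback identity of Proposition~\ref{prop-NRT}, both sides are represented by bispans whose total space naturally parameterizes pairs $(x,s)$ with $x\in X$ and $s$ a section of $f$ defined on $g^{-1}(gf(x))\setminus\{f(x)\}$. Evaluating on the Tambara functor $S(Y)=\Map_G(Y,R)$ of Example~\ref{eg-MapGXA-tambara} shows directly that both operations send $a$ to the function
\[
x\;\mapsto\;\prod_{\substack{y'\neq f(x)\\ g(y')=gf(x)}}\;\sum_{f(x'')=y'}a(x''),
\]
by exactly the ``expand a product of sums'' computation underlying Proposition~\ref{prop-NT}. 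The bispan structure maps then match under this canonical identification of total spaces, completing the verification of $(\ast)$ and hence of the lemma.
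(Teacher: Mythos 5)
Your reduction of the lemma to the single identity $(\ast)$ -- writing both $M$-components as $T_{gf}(m\cdot c)$ and $T_{gf}(m\cdot c')$ via $rq=gfp$ and Frobenius reciprocity -- is exactly the paper's reduction, and that part is sound. The problem is in how you attempt to verify $(\ast)$.

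You appeal to ``the description of $\bCU_G$ as (the opposite of) the category of natural operations,'' but that description holds only for the trivial group. For general $G$ the functors $cR$ of Example~\ref{eg-MapGXA-tambara} do \emph{not} jointly detect equality in $\bCU_G$: for instance, with $G=C_2$ the two bispans $(1\la G\to G\to 1)$ and $(1\la 2\to 2\to 1)$ give the elements $[G]$ and $2[1]$ in $\bCU_G(1,1)$, which are distinct (they are distinct in the Burnside ring $A(1)$), yet both act on $cR(1)=R^G$ as multiplication by $2$ for every $R\in\Semirings_G$. So the observation that both sides of $(\ast)$ induce the same function on $cR$ is consistent with $(\ast)$ being false, and your concluding sentence ``the bispan structure maps then match under this canonical identification of total spaces'' is precisely the thing that needs to be proved, not asserted. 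The paper closes this gap by working entirely inside $\bCU_G$: it first introduces the pullback $P=\{(y,x')\st y\ne f(x'),\ g(y)=gf(x')\}$ with $\phi(y,x')=x'$, $\psi(y,x')=(y,f(x'))$, so that $R_{\pi'_g}T_f=T_\psi R_\phi$, then computes the distributor $\Dl(\psi,\pi_g)=(P\xla{p^*}A^*\xra{q^*}B^*\xra{r^*}Y)$ explicitly, and finally exhibits two cartesian squares (one identifying $R_fT_{r^*}$ with $T_pR_\rho$, another identifying $R_\rho N_{q^*}$ with $N_{\pi_q}R_\sigma$) to land on $T_pN_{\pi_q}R_{p\pi'_q}$. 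To repair your argument along your intended lines you would have to actually unwind both composites into bispans and construct the $G$-equivariant isomorphism between them, including the structure maps, rather than inferring it from agreement on the $cR$'s.
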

\begin{proof}
 Consider a point $(a,m)\in(\Pi M)(X)$ and put
 $b=N_gT_f(a)=T_rN_qR_p(a)\in S(Z)$.  We have $N_gT_f(a,m)=(b,n)$ and
 $T_rN_qR_p(a,m)=(b,n')$ for certain elements $n,n'\in M(Z)$ and we
 need to show that $n=n'$.  Note that
 \begin{align*}
  T_rN_qR_p(a,m)
   &= T_rN_q(R_p(a),\;R_p(m)) 
    = T_r(N_qR_p(a),\;T_q(R_p(m)\,N_{\pi_q}R_{\pi'_q}R_p(a))) \\
   &= (b,\;T_{rq}(R_p(m)\,N_{\pi_q}R_{p\pi'_q}(a))),
 \end{align*}
 so $n'=T_{rq}(R_p(m)\,N_{\pi_q}R_{p\pi'_q}(a))$.  On the other hand,
 one can see from the definitions that $rq=gfp\:A\to Z$.  Using this
 together with Frobenius reciprocity for $p$ we get
 \[ n' = T_{gf}T_p(R_p(m)\,N_{\pi_q}R_{p\pi'_q}(a))
       = T_{gf}(m\,T_pN_{\pi_q}R_{p\pi'_q}(a)).
 \]
 We write $c'=T_pN_{\pi_q}R_{p\pi'_q}(a)\in S(X)$ so that
 $n'=T_{gf}(m\,c')$. 

 Next, using the definitions and Frobenius reciprocity for $f$ we have
 \begin{align*}
  N_gT_f(a,m)
   &= N_g(T_f(a),T_f(m)) 
    = (b,\;T_g(T_f(m)\,N_{\pi_g}R_{\pi'_g}T_f(a))) \\
   &= (b,T_{gf}(m\,R_fN_{\pi_g}R_{\pi'_g}T_f(a))).
 \end{align*}
 Thus, if we put $c=R_fN_{\pi_g}R_{\pi'_g}T_f(a)\in S(X)$ we have
 $n=T_{gf}(m\,c)$.  It will thus suffice to prove that $c=c'$.

 We now define a cartesian square
 \[ \xymatrix{
  P \ar[d]_{\psi} \ar[r]^\phi & 
  X \ar[d]^f \\
  F(g) \ar[r]_{\pi'_g} & 
  Y
 } \]
 by 
 \begin{align*}
  P &= \{(y,x')\in Y\tm X\st y\neq f(x'),\; g(y)=gf(x')\} \\
  \phi(y,x') &= x' \\
  \psi(y,x') &= (y,f(x')).
 \end{align*}
 This gives $R_{\pi'_g}T_f=T_\psi R_\phi$ so
 $c=R_fN_{\pi_g}T_\psi R_\phi(a)$.  

 We next want to construct the distributor $\Dl(\psi,\pi_g)$.  To
 describe this, we put $G_y=g^{-1}\{g(y)\}\sm\{y\}$.  The distributor
 involves the fibres $\pi_g^{-1}\{y\}\sse F(g)$, but $\pi'_g$
 gives a natural bijection $\pi_g^{-1}\{y\}\to G_y$ and it will be
 convenient to use $G_y$ instead.  We can now identify
 $\Dl(\psi,\pi_g)$ with the diagram 
 \[ P \xla{p^*} A^* \xra{q^*} B^* \xra{r^*} Y, \]
 where
 \begin{align*}
  B^* &= \{(y,s)\st y\in Y,\; s\:G_y\to X,\; fs=1\} \\
  A^* &= \{(y,y',s)\st y\in Y,\; s\:G_y\to X,\; fs=1,\; y'\in G_y\} \\
  p^*(y,y',s) &= (y,s(y')) \\
  q^*(y,y',s) &= (y,s) \\
  r^*(y,s) &= y. 
 \end{align*}
 This gives 
 \[ c = R_f T_{r^*} N_{q^*} R_{\phi p^*}(a). \]

 We now construct another diagram
 \[ \xymatrix{ &
     F(q) \ar[dl]_{p\pi'_q} \ar[d]^\sg \ar[r]^{\pi_q} &
     A \ar[d]^\rho \ar[r]^p &
     X \ar[d]^f \\
     X &
     A^* \ar[l]^{\phi p^*} \ar[r]_{q^*} &
     B^* \ar[r]_{r^*} &
     Y
 } \]
 Recall that 
 \[ A = \{(y,t)\st y\in Y,\; t\:g^{-1}\{g(y)\}\to X,\; ft=1\} \]
 and $p(y,t)=t(y)$.  We define $\rho(y,t)=(y,t|_{G_y})$; this gives a
 map $\rho\:A\to B^*$ making the right square commute.  Next, recall
 that 
 \[ B = \{(z,t)\st z\in Z,\; t\:g^{-1}\{z\}\to X,\; ft=1\} \]
 and $q(y,t)=(g(y),t)$.  Using this we get
 \[ F(q) = \{(y,y',t) \st (y,y')\in F(g),\;
               t\:g^{-1}\{g(y)\}=g^{-1}\{g(y')\}\to X,\;
                ft=1\},
 \]
 so we can define $\sg\:F(q)\to A^*$ by
 $\sg(y,y',t)=(y,y',t|_{G_y})$.  This makes the middle square and the
 left hand triangle commute.  One can check that the two squares are
 cartesian, and it follows that 
 \[ R_fT_{r^*}N_{q^*}R_{\phi p^*} = 
    T_pR_\rho N_{q^*}R_{\phi p^*} = 
    T_p N_{\pi_q} R_\sg R_{\phi p^*} = 
    T_p N_{\pi_q} R_{p\pi'_q}.
 \]
 This implies that $c=c'$, as required.
\end{proof}

\begin{proof}[Proof of Proposition~\ref{prop-Pi-tambara}]
 We first show that $\Pi S$ is a Tambara functor.  It will suffice to
 check the conditions in Proposition~\ref{prop-UG-pres}.  The
 nonobvious part of condition~(a) is covered by Lemma~\ref{lem-Pi-NN}.
 Condition~(b) is Lemma~\ref{lem-Pi-NT}, and the nonobvious part
 of~(c) is Lemma~\ref{lem-Pi-RN}.

 Next, it is clear by inspection that the maps
 $S\xra{\eta}\Pi M\xra{\ep} S$ and 
 $\Pi M\tm_S\Pi M\xra{\sg}\Pi M$ preserve norm maps as well as being
 Mackey morphisms, so they make $\Pi M$ into a semigroup object in
 $\AugAlg_S$, or in other words, an object of $\Mod_S$.
\end{proof}

\begin{proof}[Proof of Proposition~\ref{prop-aug-alg}]
 We now have a functor $\Pi\:\NMod_S\to\Mod_S$, and it is clear that
 $\Lm\Pi=1$.  We can define $\phi\:\Pi\Lm T\to T$ by $\phi(a,m)=a+m$,
 and it is clear that this is a morphism of Mackey functors.  In fact,
 the same argument that we used in the semiring case shows that $\phi$
 preserves multiplication, so it is a morphism of Green functors.  In
 the additively complete case we again have an inverse map
 $\phi^{-1}(t)=(\ep(t),t-\ep(t))$.  Example~\ref{eg-Pi-bad} will
 exhibit a case where $\phi$ does not preserve more general norms.
\end{proof}

\begin{example}\label{eg-Pi-bad}
 We will work in the category $\TP$ of Tambara pairs as described in
 Section~\ref{sec-two}.  We proved there that $\TP$ is equivalent to
 the category of Tambara functors for the group of order $2$.  

 Let $S$ and $T$ be the Tambara pairs described in
 Example~\ref{eg-TP}.  Put $T_2=T\tm_ST$, which consists of the
 semirings 
 \begin{align*} 
  A_2 &= A\tm_\Z A = \Z\{1,\al_0,\al_1\} \\
  B_2 &= B\tm_\Z B = \Z\{1,\bt_0,\bt_1\}\oplus(\Z/2)\{\gm_0,\gm_1\}.
 \end{align*}
 We define $\sg\:S_2\to S$ by
 \begin{align*}
  \sg(i+j_0\al_0+j_1\al_1) &= i+(j_0+j_1)\al \\
  \sg(i+j_0\bt_0+j_1\bt_1+k_0\gm_0+k_1\gm_1) &=
   i+(j_0+j_1)\bt+(k_0+k_1)\gm.
 \end{align*}
 One can check that this is a morphism of Tambara pairs.  For example,
 we will show that $\sg$ commutes with the norm map.  Note that $T_2$
 is defined as a sub-Tambara-pair of $T\tm T$, which gives the rule
 \[ \nrm(i+j_0\al_0+j_1\al_1) =
     i^2+ij_0\bt_0+ij_1\bt_1+j_0^2\gm_0+j_1^2\gm_1,
 \]
 so 
 \[ \sg(\nrm(i+j_0\al_0+j_1\al_1))=
      i^2 + (ij_0+ij_1)\bt + (j_0^2+j_1^2)\gm.
 \]
 On the other hand, we have 
 \[ \nrm(\sg(i+j_0\al_0+j_1\al_1)) 
     = \nrm(i+(j_0+j_1)\al)
     = i^2 + (ij_0+ij_1)\bt + (j_0^2+j_1^2+2j_0j_1)\gm.
 \]
 These are the same because $2\gm=0$.  It is clear that $\sg$ is
 commutative, associative and unital, so we have given $T$ the
 structure of a commutative group object in $\AugAlg_S$, so
 $T\in\Mod_S$.  We can use $\phi$ to identify $\Pi\Lm T$ with $T$ as
 Mackey pairs.  However, we claim that this does not respect norm
 maps.  Indeed, in $T$ we have $\nrm(\al)=\gm$ by definition.  Let
 $\ep$ be the map $G\to 1$, so the value of $\nrm(\al)$ in $\Pi\Lm T$
 is found by calculating $N_\ep(0,\al)$ as in
 Definition~\ref{defn-Pi}.  This gives 
 \[ N_\ep(0,\al)
     = (N_\ep(0),T_\ep(\al\, N_{\pi_\ep}R_{\pi'_\ep}(0)))
     = (N_\ep(0),T_\ep(\al\, N_{\pi_\ep}(0))).
 \]
 Here the maps $\ep\:G\to 1$ and $\pi_\ep\:F(\ep)\to G$ are both
 surjective, so Lemma~\ref{lem-norm-zero} tells us us that
 $N_\ep(0)=0$ and $N_{\pi_\ep}(0)=0$.  It follows that $\nrm(\al)=0$
 in $\Pi\Lm T$, so $\phi$ is not a Tambara morphism.
\end{example}

\section{Coefficient systems}
\label{sec-csys}

In this section we introduce the category $\CSys_G$ of coefficient
systems, and the related category $\MCSys_G$ of multiplicative
coefficient systems.  These will be linked by adjunctions to
$\Mackey_G$ and $\Tambara_G$.  Later we will define $\Q$-linear
analogues denoted by $\Q\CSys_G$ and so on, and show that our
adjunctions restrict to give equivalences $\Q\Mackey_G\simeq\Q\CSys_G$
and $\Q\Tambara_G\simeq\Q\MCSys_G$.

\begin{definition}
 Let $\Orbt_G$ denote the category of transitive $G$-sets and
 equivariant isomorphisms between them.  A \emph{coefficient system}
 will mean a functor from $\Orbt_G$ to the category of semigroups.  We
 write $\CSys_G$ for the category of coefficient systems. 
\end{definition}
\begin{remark}
 As all morphisms in $\Orbt_G$ are invertible, covariant functors can
 be converted to contravariant functors and vice-versa.  We prefer to
 use covariant functors here to maximise compatibility with our later
 discussion of rational Tambara functors.  Given a coefficient system
 $N$ and an isomorphism $f\:U\to V$ of transitive $G$-sets, we will
 write $f_*$ for the resulting map $N(U)\to N(V)$.
\end{remark}

\begin{definition}\label{defn-rho-cosets}
 For any subgroup $H\leq G$ and any $g\in G$ we define
 $\rho(g)\:G/H\to G/gHg^{-1}$ by 
 \[ \rho(g)(xH)=xHg^{-1} = xg^{-1}\;gHg^{-1}. \]

 Now let $N$ and $N'$ be coefficient systems.  For any map
 $u\:N(G/H)\to N'(G/H)$ we write $g.u$ for the composite
 \[ N(G/gHg^{-1}) \xra{\rho(g^{-1})_*}
    N(G/H) \xra{u}
    N'(G/H) \xra{\rho(g)_*}
    N'(G/gHg^{-1}).
 \]
 This defines a map 
 \[ g \: \Map(N(G/H),N'(G/H)) \to
         \Map(N(G/gHg^{-1}),N'(G/gHg^{-1})).
 \]
 We give the set $\prod_{K\leq G}\Map(N(G/K),N'(G/K))$ the unique
 $G$-action such that the diagrams
 \[ \xymatrix{
  \prod_{K\leq G}\Map(N(G/H),N'(G/H))
  \ar[d]_{\pi_H}
  \ar[rrr]^g &&&
  \prod_{K\leq G}\Map(N(G/H),N'(G/H))
  \ar[d]^{\pi_{gHg^{-1}}} \\
  \Map(N(G/H),N'(G/H)) \ar[rrr]_g &&&
  \Map(N(G/gHg^{-1}),N'(G/gHg^{-1}))
 } \]
 commute.
\end{definition}

\begin{proposition}\label{prop-CSys-maps}
 Let $N$ and $N'$ be coefficient systems.  Then there is a natural
 isomorphism 
 \[ \CSys_G(N,N') =
     \left[\prod_{K\leq G}\Map(N(G/K),N'(G/K))\right]^G
 \]
\end{proposition}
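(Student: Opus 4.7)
The plan is to construct an explicit bijection $\Phi$ that sends a natural transformation $u \: N \to N'$ to the family $(u_{G/K})_{K \leq G}$ of its components on the orbit-type objects, and then to check both that $\Phi$ lands in the $G$-fixed subset and that it is invertible.

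The essential preliminary observation is that every equivariant isomorphism in $\Orbt_G$ has the form $\rho(g)$: given an iso $\phi\: G/K \to G/K'$, the image $\phi(eK) = xK'$ must satisfy $x^{-1}Kx = K'$, and a short calculation identifies $\phi$ with $\rho(x^{-1})$. I will also record that $\rho(g^{-1})\: G/gKg^{-1} \to G/K$ serves as a two-sided inverse of $\rho(g)\: G/K \to G/gKg^{-1}$. Given this, naturality of $u$ against $\rho(g)$ reads $u_{G/gKg^{-1}} \circ \rho(g)_* = \rho(g)_* \circ u_{G/K}$; post-composing with $\rho(g^{-1})_*$ yields
\[
u_{G/gKg^{-1}} = \rho(g)_* \circ u_{G/K} \circ \rho(g^{-1})_* = g.u_{G/K}
\]
in the notation of Definition~\ref{defn-rho-cosets}. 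Comparing with the defining diagram for the $G$-action on $\prod_K \Map(N(G/K), N'(G/K))$, this says exactly that $(u_{G/K})_K$ is $G$-fixed, so $\Phi$ is well-defined.

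Conversely, given a $G$-fixed family $(u_K)_K$, I will extend it to a natural transformation by choosing, for each transitive $G$-set $U$, some iso $\psi\: G/K \to U$ and setting $u_U = \psi_* \circ u_K \circ \psi_*^{-1}$. The main obstacle is well-definedness. If $\psi, \psi'\: G/K \to U$ are two such isos sharing the same $K$, the composite $\psi^{-1}\psi'$ is some $\rho(n)$ with $n \in N_G(K)$, and the fixed-point condition specialised to $g = n$ (so $gKg^{-1} = K$) becomes the $N_G(K)/K$-equivariance $\rho(n)_* u_K = u_K \rho(n)_*$, from which $\psi_* u_K \psi_*^{-1} = \psi'_* u_K \psi'^{-1}_*$ is immediate. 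The case of isos coming from different (necessarily conjugate) subgroups is reduced to the previous one via the twist formula $u_{gKg^{-1}} = g.u_K$ together with the composition rule for $\rho$. Naturality of the extended $u$ against an arbitrary iso $\alpha\: U \to U'$ reduces, via the chosen presentations, to naturality against some $\rho(g)$, which again follows from the fixed-point condition. The constructions $u \mapsto (u_{G/K})_K$ and $(u_K)_K \mapsto u$ are manifestly mutually inverse and natural in both $N$ and $N'$, completing the proof.
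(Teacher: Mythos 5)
Your proposal is correct and takes essentially the same approach as the paper: both send $u$ to its components $(u_{G/K})_K$, identify naturality against the isomorphisms $\rho(g)$ with the $G$-fixed-point condition, and note that every isomorphism of orbits is of this form. You supply more detail on the well-definedness of the inverse construction (the normaliser case and the reduction of conjugate subgroups to it), which the paper compresses into "the claim is clear from this."
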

\begin{proof}
 Consider a morphism $u\:N\to N'$ of coefficient systems.  For each
 subgroup $H\leq G$ we have a component $u_H\in\Map(N(G/H),N'(G/H))$.
 Together these give an element
 $\tht(u)\in\prod_H\Map(N(G/H),N'(G/H))$.  This determines $u$,
 because every orbit is isomorphic to $G/H$ for some $H$.  Every
 morphism $G/H\to G/K$ has the form $\rho(g)$ for some $g$ with
 $gHg^{-1}=K$.  Naturality with respect to such a morphism is
 equivalent to the identity $u_{gHg^{-1}}=g.u_H$, and this holds for
 all $g$ and $H$ iff $\tht(u)$ is a $G$-fixed point in
 $\prod_H\Map(N(G/H),N'(G/H))$.  The claim is clear from this.
\end{proof}

At the expense of some arbitrary choices, we can cut this description
down further, as follows.
\begin{definition}\label{defn-subgroup-system}
 A \emph{subgroup system} for $G$ is a list $(H_1,\dotsc,H_r)$ of
 subgroups such that
 \begin{itemize}
  \item[(a)] Each subgroup of $G$ is conjugate to $H_i$ for precisely
   one value of $i$.
  \item[(b)] We have $|H_i|\leq|H_j|$ whenever $i\leq j$. 
 \end{itemize}
 It is clear that we must have $H_1=1$ and $H_r=G$.  We let $N_i$
 denote the normaliser in $G$ of $H_i$, and put $W_i=N_i/H_i$.
\end{definition}

Given a subgroup system as above, we note that any transitive $G$-set
is isomorphic to $G/H_i$ for a unique value of $i$, and that the
automorphism group of $G/H_i$ is $W_i$.  It follows that if we regard
$W_i$ as a groupoid with one object, then $\Orbt_G$ is equivalent to
the coproduct $\coprod_iW_i$.  Using this we get an equivalence 
\[ \CSys_G\simeq\prod_{i=1}^r\Mod_{\N[W_i]}. \]

We will exhibit an adjunction between functors
$q\:\Mackey_G\to\CSys_G$ and $r\:\CSys_G\to\Mackey_G$, and show that
this restricts to an equivalence between the corresponding rational
categories.

\begin{definition}\label{defn-q}
 Given $M\in\Mackey_G$ and $U\in\Orbt_G$ we let $(qM)(U)$ be the
 quotient of $M(U)$ by the sum of all subsemigroups $T_uM(U')$, where
 $u\:U'\to U$ is a map of transitive $G$-sets that is not an
 isomorphism.  This is clearly functorial for isomorphisms
 $U_0\to U_1$, and for arbitrary morphisms $M_0\to M_1$ of Mackey
 functors.  It therefore gives a functor $q\:\Mackey_G\to\CSys_G$.
\end{definition}
\begin{remark}
 In slightly different language, $(qM)(G/H)$ is $M(G/H)$ modulo
 transfers from $M(G/K)$ for proper subgroups $K<H$.
\end{remark}

\begin{definition}\label{defn-Phi}
 We define a functor $\Phi\:(\bCA_G)^{\opp}\to\CSys_G$ as follows.  On
 objects, we define $\Phi(X)(T)=\N\{\Map_G(T,X)\}$ (so
 $\Phi(X)(G/H)=\Z\{X^H\}$).  We write $[u]$ for the basis element
 corresponding to a $G$-map $u\:T\to X$.

 A morphism $X\to Y$ in $\CA_G$ is represented by a diagram
 $X\xla{p}A\xra{q}Y$ of finite $G$-sets.  The corresponding map 
 \[ \al\:\N\{\Map_G(T,Y)\} \to \N\{\Map_G(T,X)\} \]
 sends $[T\xra{v}Y]$ to the sum of the elements $[T\xra{a}A\xra{p}X]$
 for all $G$-maps $T\xra{a}A$ such that $qa=v$.  

 Alternatively, the functor $\Map_G(T,-)$ (from finite $G$-sets to
 finite sets) preserves finite limits and so induces a functor
 $\bCA_G\to\bCA_1$.  However, there is an evident isomorphism
 $\bCA_G\simeq(\bCA_G)^{\opp}$, and $\bCA_1$ is canonically equivalent
 to the category of finitely generated free abelian semigroups, so we get a
 functor $(\CA_G)^{\opp}\to\Semigroups$, which we denote by $\phi^T$.
 We then have $\Phi(X)(T)=\phi^T(X)$.
\end{definition}

\begin{definition}\label{defn-r}
 We now define $r\:\CSys_G\to\Mackey_G$ by 
 \[ r(N)(X) =
     \CSys_G(\Phi(X),N) =
     \left[\prod_{H\leq G}\Map(X^H,N(G/H))\right]^G.
 \]
\end{definition}
\begin{remark}\label{rem-free-orbit}
 One checks that $\Phi(G)(G)=\N[G]$ but $\Phi(G)(T)=0$ if
 $T\not\simeq G$.  It follows that $r(N)(G)=N(G)$.
\end{remark}

We can rewrite the definition in a useful way when $X$ is an orbit.

\begin{proposition}\label{prop-rNGH}
 There is a natural isomorphism
 \[ r(N)(G/H)=\left[\prod_{K\leq H}N(G/K)\right]^H. \]
 If $K_1,\dotsc,K_r$ is a subgroup system for $H$, this can be written
 as 
 \[ r(N)(G/H) = \prod_{i=1}^r N(G/K_i)^{W_HK_i}. \]
\end{proposition}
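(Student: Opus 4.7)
The plan is to combine the definition $r(N)(G/H)=\CSys_G(\Phi(G/H),N)$ with Proposition~\ref{prop-CSys-maps}. That gives
\[ r(N)(G/H) = \left[\prod_{K\leq G}\Map((G/H)^K,N(G/K))\right]^G, \]
where I have used that $\Phi(G/H)(G/K)=\N\{\Map_G(G/K,G/H)\}$ and that a semigroup map from this free object to $N(G/K)$ is just a set map out of $\Map_G(G/K,G/H)\cong (G/H)^K$. Under the bijection $xH\leftrightarrow[yK\mapsto yxH]$, the set $(G/H)^K$ corresponds to cosets $xH$ with $x^{-1}Kx\leq H$. Chasing through Definition~\ref{defn-rho-cosets}, the $G$-action on this product becomes the condition
\[ \phi_{gKg^{-1}}(gxH) = \rho(g)_*\phi_K(xH). \]

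Next I would define the candidate isomorphism
\[ \psi \: r(N)(G/H) \to \left[\prod_{K'\leq H} N(G/K')\right]^H, \qquad \psi(\phi)_{K'} = \phi_{K'}(H/H), \]
with proposed inverse $\phi_K(xH) = \rho(x)_*\,\psi_{x^{-1}Kx}$ for any $xH\in(G/H)^K$. The routine verifications are: (i) $\psi(\phi)$ is $H$-fixed, which follows from the $G$-equivariance of $\phi$ specialised to $g=h\in H$ together with $hH=H$; (ii) the inverse is well-defined on cosets, using $\rho(xh)=\rho(x)\rho(h)$ and the $H$-equivariance of $\psi$ to cancel the ambiguity $x\mapsto xh$; and (iii) the two maps compose to the identity in both directions, which is direct from the identity $\rho(x)_*\phi_{x^{-1}Kx}(H)=\phi_K(xH)$ (itself a special case of the $G$-equivariance formula). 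None of these involves anything deeper than repeated use of $\rho(gg')=\rho(g)\rho(g')$ and the action of $\rho$ on coset spaces; the only mildly tricky point is organising the well-definedness check in (ii).

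For the second formula, given a subgroup system $K_1,\dotsc,K_r$ for $H$, I would decompose $\prod_{K'\leq H}N(G/K')$ block by block according to the $H$-conjugacy classes of subgroups of $H$. The $H$-equivariance condition $\rho(h)_*\psi_{K'}=\psi_{hK'h^{-1}}$ shows that an $H$-fixed tuple on the class of $K_i$ is determined by its $K_i$-component, and that this component must be fixed by $N_H(K_i)$ acting on $N(G/K_i)$ through $\rho$. The last thing to observe is that for $n\in K_i$ the map $\rho(n)\:G/K_i\to G/K_i$ sends $xK_i$ to $xK_in^{-1}=xK_i$, so $\rho(n)=1$; hence the action of $N_H(K_i)$ factors through $W_HK_i=N_H(K_i)/K_i$. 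Assembling the blocks yields $r(N)(G/H)=\prod_iN(G/K_i)^{W_HK_i}$, which is the stated formula.
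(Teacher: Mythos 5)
Your proposal is correct and follows essentially the same route as the paper: rewrite $r(N)(G/H)$ via Proposition~\ref{prop-CSys-maps}, evaluate at the basepoint $H/H$ to land in $\prod_{K\leq H}N(G/K)$, and transport back using $\rho(x)_*$ together with the $H$-invariance to check well-definedness on cosets. One small addition: the paper stops after establishing the first formula and leaves the inverse check to the reader, whereas you also spell out the derivation of the second formula, where the only genuinely new observation is that $\rho(n)=1$ for $n\in K_i$ (since $xK_in^{-1}=xK_i$), so the $N_H(K_i)$-action on the block at $K_i$ factors through $W_HK_i$.
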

\begin{proof}
 From the definitions, we have
 \[ r(N)(G/H) =
     \left[\prod_{K\leq G}\Map((G/H)^K,N(G/K))\right]^G.
 \]
 Suppose we have $n\in r(N)(G/H)$.  For each $K\leq H$, the basepoint
 $H\in G/H$ is fixed by $K$, so we have an element
 $n_K(H)\in N(G/K)$.  We can thus define a map 
 \[ \phi \: r(N)(G/H) \to \prod_{K\leq H}N(G/K) \]
 by $\phi(n)_K=n_K(H)$.  As $n$ is $G$-invariant, it follows that
 $\phi(n)$ is $H$-invariant.

 Now suppose instead we have an element
 $m\in\left[\prod_{K\leq H}N(G/K)\right]^H$.  Consider a subgroup
 $L\leq G$, and an element $xH\in(G/H)^L$.  This means that $LxH=xH$
 and so $x^{-1}Lx\leq H$.  We thus have an element
 $m_{x^{-1}Lx}\in N(G/x^{-1}Lx)$ and a map
 $\rho(x)\:G/x^{-1}Lx\to G/L$ giving an element
 $\rho(x)_*(m_{x^{-1}Lx})\in N(G/L)$.  Using the fact that $m$ is
 $H$-invariant, we see that this element does not depend on the choice
 of element $x$ representing the coset $xH$.  We can thus define
 $\psi(m)\in\prod_L\Map((G/H)^L,N(G/L))$ by
 $\psi(m)_L(xH)=\rho(x)_*(m_{x^{-1}Lx})$.  One can check that this is
 invariant under the action of $G$, so we have defined a map 
 \[ \psi\:\left[\prod_{K\leq H}N(G/K)\right]^H \to r(N)(G/H). \] 
 We leave it to the reader to show that this is inverse to $\phi$. 
\end{proof}

\begin{remark}\label{rem-rN-limit}
 As yet another way to formulate the definition, we introduce the
 category $(\Orbt_G\downarrow X)$.  The objects are diagrams
 $(U\xra{x}X)$, where $U$ is a $G$-orbit and $x$ is a $G$-map.  The
 morphisms from $(U\xra{x}X)$ to $(U'\xra{x'}X)$ are the
 $G$-isomorphisms $p\:U\to U'$ with $x'p=x$.  Any coefficient
 system $N$ gives a functor $(U\xra{x}X)\mapsto N(U)$ from this
 category to $\Semigroups$, and $rN(X)$ is easily seen to be the
 inverse limit of this functor.  We will use this interpretation
 without further comment when convenient.

 One can check that in this picture the operations associated to a map
 $f\:X\to Y$ are
 \begin{align*}
  (R_fn)(U\xra{x}X) &= n(U\xra{fx}Y) \\
  (T_fm)(U\xra{y}Y) &= \sum_{x\in\Map_G(U,X),\;fx=y} m(U\xra{x}X).
 \end{align*}
\end{remark}

\begin{proposition}\label{prop-rq-unit}
 There is a natural map $\eta\:M\to rqM$ of Mackey functors given by 
 \[ (\eta m)(U\xra{x}X) = \pi R_x(u)\in (qM)(U) \]
 (where $\pi$ is the quotient map $M(U)\to qM(U)$).
\end{proposition}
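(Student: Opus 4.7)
The plan is to verify three things in turn: (a) the formula $(\eta m)(U\xra{x}X)=\pi R_x(m)$ actually defines an element of the inverse limit description of $rqM(X)$ from Remark~\ref{rem-rN-limit}; (b) the assignment $m\mapsto\eta(m)$ commutes with restrictions; and (c) it commutes with transfers. Naturality in $M$ is automatic from the construction.

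For (a), I need to check that the assignment is compatible with morphisms in $(\Orbt_G\downarrow X)$. Given an isomorphism $p\:U\to U'$ with $x'p=x$, the coefficient system $qM$ uses $p_*=T_p$ (descended to the quotient). So I must verify $\pi T_pR_x(m)=\pi R_{x'}(m)$ in $qM(U')$. Since $p$ is an isomorphism, Proposition~\ref{prop-G-bispan-rels}(e) gives $T_p=R_{p^{-1}}$, and $R_x=R_{p^{-1}}R_{x'}\circ\text{(nothing)}$; more directly, $T_pR_x=T_pR_pR_{x'}=R_{x'}$ because $T_pR_p=1$ for an isomorphism. Step (a) follows.

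For (b), given $f\:X\to Y$ and $m\in M(Y)$, evaluating both sides at $(U\xra{x}X)$ gives $\pi R_xR_f(m)=\pi R_{fx}(m)$ on the left and $\eta(m)(U\xra{fx}Y)=\pi R_{fx}(m)$ on the right (using the description of $R_f$ on $rqM$ from Remark~\ref{rem-rN-limit}).

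Step (c) is the real content. Given $f\:X\to Y$ and $m\in M(X)$, I need to show
\[ \pi R_y T_f(m) = \sum_{x\:U\to X,\;fx=y}\pi R_x(m) \qquad \text{in } qM(U) \]
for every $(U\xra{y}Y)\in(\Orbt_G\downarrow Y)$. Form the pullback square with $U\tm_YX$ and let $y'\:U\tm_YX\to U$, $f'\:U\tm_YX\to X$ be the projections; the Mackey relation of Proposition~\ref{prop-G-bispan-rels}(d) gives $R_yT_f=T_{y'}R_{f'}$. Decompose $U\tm_YX=\coprod_\al U_\al$ into $G$-orbits with restrictions $y_\al\:U_\al\to U$, $f_\al\:U_\al\to X$, so that $R_yT_f(m)=\sum_\al T_{y_\al}R_{f_\al}(m)$. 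The key point is that in $qM(U)$ the summands for which $y_\al$ is \emph{not} an isomorphism die, because the quotient $qM(U)$ is defined precisely by killing transfers $T_u M(U')$ from non-isomorphisms $u\:U'\to U$. The remaining orbits—those on which $y_\al$ is an isomorphism—correspond bijectively to $G$-maps $x\:U\to X$ with $fx=y$, via $x\mapsto\{(u,x(u))\}$ in one direction and $U_\al\mapsto f_\al y_\al^{-1}$ in the other. For such an orbit, $T_{y_\al}R_{f_\al}(m)=R_{f_\al y_\al^{-1}}(m)=R_x(m)$. Summing over the surviving orbits yields the desired identity.

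The main obstacle is step (c), and specifically the cofinality argument identifying orbits $U_\al$ over which $y'$ is an isomorphism with the set $\{x\:U\to X\st fx=y\}$; the rest of the proof reduces to routine unwinding of the inverse limit description and the identities $T_pR_p=1$ and $R_yT_f=T_{y'}R_{f'}$.
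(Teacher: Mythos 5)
Your proof is correct and takes essentially the same approach as the paper: form the pullback, use $R_yT_f=T_{y'}R_{f'}$, split the pullback into $G$-orbits, kill the transfers along non-isomorphisms by the definition of $q$, and identify the surviving orbits with equivariant lifts $x\:U\to X$ of $y$ (the paper packages this last step as Lemma~\ref{lem-sections}(c), which you inline). You also check explicitly that $\eta(m)$ lands in the inverse limit $rqM(X)$, a verification the paper leaves implicit; one small terminological nit is that your ``cofinality argument'' in step~(c) is really just the orbit classification, not a cofinality statement.
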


The proof will be given after some preliminaries.

\begin{definition}\label{defn-sec}
 For any equivariant map $f\:X\to Y$, we put 
 \[ \Sec(f) = \{s\in\Map_G(Y,X)\st fs=1_Y\} =
     \{\text{equivariant sections of } f\}.
 \]
 If $Y$ is a single $G$-orbit, we also put
 \[ \Sec'(f) = \{\text{ orbits } U\sse X \st f\:U\to Y 
                  \text{ is an isomorphism }. \}
 \]
\end{definition}

\begin{lemma}\label{lem-sections}
 \begin{itemize}
  \item[(a)] Let $f\:U\to V$ be a $G$-map between transitive
   $G$-sets.  Then $f$ is automatically surjective (and so
   $|U|\geq|V|$).  Moreover, $f$ is an isomorphism iff it is injective
   iff it is a split epimorphism iff $|U|=|V|$. 
  \item[(b)] More generally, let $f\:X\to V$ be a map of finite
   $G$-sets where $V$ is transitive.  We then have
   \[ \Sec'(f) = \{\text{ orbits } U\sse X \st |U|=|V|\}. \]
   Moreover, there is a bijection $\Sec(f)\simeq\Sec'(f)$ given by
   $s\mapsto s(V)$ and $U\mapsto(f|_U)^{-1}$.
  \item[(c)] Suppose we have maps $V\xra{y}Y\xla{f}X$ of finite
   $G$-sets, where $V$ is transitive.  Form a pullback square 
   \[ \xymatrix{ 
    U \ar[r]^{x'} \ar[d]_e & X \ar[d]^f \\ V \ar[r]_y & Y,
   } \]
   and let $U_1,\dotsc,U_n$ be the $G$-orbits in $V$.  Let
   $e_i\:U_i\to V$ and $x'_i\:U_i\to X$ be the restrictions of $e$ and
   $x'$.  We may assume that the $U_i$ are numbered so that
   $e_1,\dotsc,e_p$ are isomorphisms (for some $p$, possibly $p=0$) and
   $e_{p+1},\dotsc,e_n$ are not isomorphisms.  For $i\leq p$ put
   $x_i=x'_ie_i^{-1}\:V\to X$.  Then the maps $x_1,\dotsc,x_p$ are all
   different, and this is a complete list of all $G$-maps $x\:V\to X$
   with $fx=y$.
 \end{itemize}
\end{lemma}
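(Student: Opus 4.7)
The plan is to establish (a) by elementary pointset arguments about transitive $G$-sets, then derive (b) by applying (a) orbit-by-orbit inside $X$, and finally deduce (c) by reinterpreting the set of maps $x\:V\to X$ with $fx=y$ as the set of equivariant sections of $e\:U\to V$, so that (b) applies.

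For part (a), I would first observe that $f(U)$ is a nonempty $G$-invariant subset of the transitive $G$-set $V$, hence equals $V$; this proves surjectivity and gives $|U|\geq|V|$. The equivalences between ``isomorphism,'' ``injective,'' ``split epi,'' and ``$|U|=|V|$'' follow from: surjectivity plus injectivity plus finiteness forces bijectivity; surjectivity plus $|U|=|V|$ forces injectivity; and if $s\:V\to U$ is an equivariant section of $f$, then $s(V)$ is a nonempty $G$-invariant subset of the transitive set $U$, so $s(V)=U$, making $s$ bijective and therefore $f$ bijective as well.

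For part (b), every orbit $U\sse X$ gives a $G$-map $f|_U\:U\to V$ between transitive $G$-sets, which by (a) is an isomorphism precisely when $|U|=|V|$; this gives the stated description of $\Sec'(f)$. The maps $s\mapsto s(V)$ and $U\mapsto(f|_U)^{-1}$ are well defined: the image $s(V)$ is a transitive $G$-set on which $f$ restricts to an isomorphism (since $s$ is a section), and for $U\in\Sec'(f)$ the map $(f|_U)^{-1}\:V\to X$ is an equivariant section of $f$. A quick check shows these constructions are mutually inverse.

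For part (c), the pullback property says that a $G$-map $x\:V\to X$ with $fx=y$ is the same data as an equivariant section $s\:V\to U$ of $e$, via $s(v)=(v,x(v))$ and $x=x'\circ s$. Applying (b) to the map $e\:U\to V$ shows that such sections biject with orbits $U_i\sse U$ for which $e_i\:U_i\to V$ is an isomorphism, that is with $i\leq p$; the orbit $U_i$ corresponds to the section $e_i^{-1}\:V\to U$ and hence to the map $x_i=x'_ie_i^{-1}\:V\to X$. Distinct orbits yield sections with distinct images, so the maps $x_1,\dots,x_p$ are pairwise distinct, and this list exhausts all such $x$. The argument is almost entirely bookkeeping; the only mild obstacle is keeping the variances straight when transporting sections of $e$ back to maps into $X$, but this is transparent once the pullback description of $U$ is written out.
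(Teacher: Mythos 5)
Your proof follows essentially the same approach as the paper: part (a) via the observation that the image of a $G$-map into a transitive set is a nonempty invariant subset (applied both to $f$ and to any splitting), part (b) by reducing to (a) orbit-by-orbit, and part (c) by identifying lifts of $y$ with sections of the pullback projection $e$ and invoking (b). The argument is correct.
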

\begin{proof}
 \begin{itemize}
  \item[(a)] It is clear that $f(U)$ is a nonempty $G$-invariant
   subset of the transitive $G$-set $V$, so $f(U)=V$.  If $f$ is a
   split epimorphism of $G$-sets then the splitting map $g\:V\to U$
   must also be surjective by the same logic, and it follows easily
   that $g$ is an inverse for $f$.  The rest is clear.
  \item[(b)] If $s\in\Sec(f)$ is a section then $s(V)\sse X$ is an
   orbit and $f\:s(V)\to V$ is a split epimorphism of $G$-sets, hence
   an isomorphism by~(a).  Everything else is clear from this.
  \item[(c)] The $G$-maps $x\:V\to X$ lifting $y$ biject naturally
   with the sections of the map $e\:U\to V$.  Given this, claim~(c)
   follows from~(b).
 \end{itemize}
\end{proof}

\begin{proof}[Proof of Proposition~\ref{prop-rq-unit}]
 Given maps $W\xra{g}X\xra{f}Y$ of finite $G$-sets, we must show that
 the diagram 
 \[ \xymatrix{
   M(W) \ar[d]_\eta & 
   M(X) \ar[l]_{R_g} \ar[d]^\eta \ar[r]^{T_f} &
   M(Y) \ar[d]^\eta \\
   rqM(W) &
   rqM(X) \ar[l]^{R_g} \ar[r]_{T_f} &
   rqM(Y)
 } \]
 commutes.  For the left square, we have
 \begin{align*}
  (R_g\eta m)(U\xra{w}W) 
   &= (\eta m)(U\xra{gw}X) 
    = \pi R_{gw}(m) 
    = \pi R_w(R_gm) \\
   &= (\eta R_g m)(U\xra{w}W)
 \end{align*}
 as required.

 For the right hand side, we have
 $(\eta T_fm)(U\xra{y}Y)=\pi R_yT_f(m)$.  If we form a pullback as in
 Lemma~\ref{lem-sections} we find that
 \[ \pi R_yT_f = \pi T_eR_v = \sum_{i=1}^n \pi T_{e_i}R_{v_i}. \]
 If $i>p$ then $e_i\:V_i\to U$ is not an isomorphism so
 $\pi T_{e_i}=0$.  If $i\leq p$ then $e_i$ is an isomorphism so
 $T_{e_i}=R_{e_i^{-1}}$ so
 $\pi T_{e_i}R_{v_i}=\pi R_{v_ie_i^{-1}}=\pi R_{x_i}$.  It follows
 that 
 \[ (\eta T_fm)(U\xra{y}Y)=\pi R_yT_f(m)=
     \sum_{i=1}^p \pi R_{x_i}(m) = 
      \sum_{i=1}^p (\eta m)(U\xra{x_i}X) = 
       (T_f\eta m)(U\xra{y}Y)
 \] 
 as required.  This proves that $\eta\:M\to rqM$ is a morphism of
 Mackey functors.  Naturality in $M$ is clear.
\end{proof}

\begin{proposition}\label{prop-qr-unit}
 There is a natural map $\ep\:qrN\to N$ of coefficient systems given
 by $\ep\pi(n)=n(U\xra{1}U)\in N(U)$ (for all transitive $G$-sets $U$
 and elements $n\in (rN)(U)$).
\end{proposition}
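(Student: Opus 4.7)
The plan is to verify in turn that the formula defines a semigroup map $rN(U)\to N(U)$, that this map factors through the quotient $qrN(U)$, that the resulting family $\ep_U$ is natural with respect to isomorphisms of transitive $G$-sets, and finally that the whole construction is natural in $N$. The first and fourth points are formal. For the first, the semigroup structure on $rN(U)$ is inherited pointwise in the limit description of Remark~\ref{rem-rN-limit}, so evaluation at the object $(U\xra{1}U)$ is a semigroup homomorphism. For naturality in $N$, note that a morphism $\phi\:N\to N'$ is a family of semigroup maps commuting with isomorphisms in $\Orbt_G$, and hence commuting with evaluation at $(U\xra{1}U)$.

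The central computation is to show that $\ep_0(n) := n(U\xra{1}U)$ vanishes on each subsemigroup $T_u(rN(U'))$ whenever $u\:U'\to U$ is a non-isomorphism between transitive $G$-sets (which then gives a factorisation through $qrN(U)$, since $\ep_0$ is a semigroup homomorphism). For $m\in rN(U')$, the transfer formula from Remark~\ref{rem-rN-limit} gives
\[ (T_um)(U\xra{1}U) = \sum_{x\in\Map_G(U,U'),\;ux=1_U} m(U\xra{x}U'), \]
and the indexing set is exactly $\Sec(u)$. By Lemma~\ref{lem-sections}(a), a $G$-map between transitive $G$-sets admits a section iff it is an isomorphism, so the sum is empty and the value is $0\in N(U)$. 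This is the only point in the argument that uses anything nontrivial, and it is really the whole content of the proposition; everything else is bookkeeping.

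For naturality under an isomorphism $f\:U\to V$ in $\Orbt_G$, I need $f_*\ep_U(\pi(n)) = \ep_V(\pi(T_fn))$. Applying the transfer formula at $(V\xra{1}V)$ gives
\[ (T_fn)(V\xra{1}V)=\sum_{x\in\Map_G(V,U),\;fx=1_V}n(V\xra{x}U)=n(V\xra{f^{-1}}U), \]
since $f^{-1}$ is the unique such $x$. On the other hand, $f\:U\to V$ is an isomorphism in $(\Orbt_G\downarrow U)$ from $(U\xra{1}U)$ to $(V\xra{f^{-1}}U)$, so the compatibility condition built into the limit description of $rN(U)$ forces $f_*(n(U\xra{1}U))=n(V\xra{f^{-1}}U)$, yielding the desired equality.

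The main (and only) obstacle is the factorisation step, but even that reduces to the observation about sections in Lemma~\ref{lem-sections}(a); there is no further subtlety, so the proof is essentially a one-line calculation dressed in definitions.
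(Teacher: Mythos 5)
Your proposal is correct and follows essentially the same path as the paper's own proof: define $\ep'(n)=n(U\xra{1}U)$, observe via the transfer formula of Remark~\ref{rem-rN-limit} that $\ep'T_u$ is a (empty) sum indexed by $\Sec(u)$ when $u$ is a non-isomorphism, invoke Lemma~\ref{lem-sections}(a) to see there are no sections, and conclude $\ep'$ kills the relevant subsemigroups and hence factors through $qrN(U)$. The paper states the naturality for isomorphisms $U_0\to U_1$ and naturality in $N$ as ``easily seen'' and ``clear''; you simply spell out the isomorphism-naturality computation, identifying $f_*(n(U\xra{1}U))=n(V\xra{f^{-1}}U)$ from the limit-compatibility of $n$ and matching it against $(T_fn)(V\xra{1}V)$, which is a welcome if unnecessary elaboration.
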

\begin{proof}
 Suppose we have an element $n\in (rN)(U)$, so we have
 $n(U'\xra{t}U)\in N(U')$ for all transitive $U'$ and all $G$-maps
 $t\:U'\to U$.  In particular, we can put
 $\ep'n=n(U\xra{1}U)\in{}N(U)$.  This defines a map
 $\ep'\:(rN)(U)\to N(U)$.  Suppose that $n=T_f(p)$ for some map
 $f\:V\to U$ (where $V$ is transitive and $f$ is not invertible) and
 $p\in(rN)(V)$.  Then the element $\ep'n=(T_fp)(U\xra{1}U)$ is by
 definition a sum of terms indexed the equivariant sections of $f$,
 but there are no sections, so $\ep'n=0$.  It follows that
 $\ep'\:rN(U)\to N(U)$ induces a map $\ep\:qrN(U)\to N(U)$.  This is
 easily seen to be natural for isomorphisms $U_0\to U_1$ of transitive
 $G$-sets, so we have a morphism $\ep\:qrN\to N$ of coefficient
 systems.  Naturality in $N$ is also clear.
\end{proof}

\begin{proposition}\label{prop-qr-adjoint}
 The maps $\eta$ and $\ep$ are the unit and counit of an adjunction
 between $q$ and $r$.
\end{proposition}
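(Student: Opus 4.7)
The plan is to verify the two triangular identities
\[ \ep_{qM}\circ q(\eta_M) = 1_{qM} \qquad\text{and}\qquad r(\ep_N)\circ \eta_{rN} = 1_{rN}, \]
after which the adjunction $q\dashv r$ follows by the standard characterisation. Both identities are proved by a direct unwinding of the definitions in Propositions~\ref{prop-rq-unit} and~\ref{prop-qr-unit}; I expect no genuine obstacle, only bookkeeping.

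For the first identity, fix a transitive $G$-set $U$ and an element $m\in M(U)$, and let $\pi\:M(U)\to qM(U)$ be the quotient map. By construction, $\eta_M(m)\in(rqM)(U)$ is the tuple with $\eta_M(m)(V\xra{v}U)=\pi R_v(m)$. Applying $q$ gives the class $\pi(\eta_M(m))\in qrqM(U)$, and $\ep_{qM}$ sends this class to the value of $\eta_M(m)$ at $(U\xra{1}U)$, which is $\pi R_1(m)=\pi(m)$. Hence $\ep_{qM}\circ q(\eta_M)(\pi(m))=\pi(m)$ as required.

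For the second identity, fix a finite $G$-set $X$ and an element $n\in(rN)(X)$. Then $\eta_{rN}(n)$ is the element of $(rqrN)(X)$ whose value at $(U\xra{x}X)$ is $\pi R_x(n)\in(qrN)(U)$, where here $R_x(n)\in(rN)(U)$ satisfies $(R_x n)(V\xra{v}U)=n(V\xra{xv}X)$. Applying $r(\ep_N)$ pointwise and using that $\ep_N(\pi(p))=p(U\xra{1}U)$ for $p\in(rN)(U)$, we obtain
\[ (r(\ep_N)\circ\eta_{rN})(n)(U\xra{x}X) = \ep_N(\pi R_x(n)) = (R_x n)(U\xra{1}U) = n(U\xra{x}X), \]
so the composite agrees with $n$ on every object of $(\Orbt_G\downarrow X)$, hence equals $n$ in $(rN)(X)$.

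The only mild subtlety is to check that the formula $\ep_N\pi(p)=p(U\xra{1}U)$ really does factor through $qrN(U)$, but this was already handled in the proof of Proposition~\ref{prop-qr-unit}: any transfer $T_f(p')$ along a non-invertible $f\:V\to U$ of transitive sets contributes an empty sum, because by Lemma~\ref{lem-sections}(a) such an $f$ has no equivariant section. Given these identities, the isomorphism $\Mackey_G(M,rN)\simeq\CSys_G(qM,N)$ is the standard one built from $\eta$ and $\ep$, and naturality in both variables is immediate from the naturality statements in Propositions~\ref{prop-rq-unit} and~\ref{prop-qr-unit}.
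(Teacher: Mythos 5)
Your proof is correct and the underlying computations are essentially the same as the paper's: the paper packages the verification as checking that the natural maps $\lm(\al)=r(\al)\circ\eta$ and $\rho(\bt)=\ep\circ q(\bt)$ are mutually inverse bijections, whereas you check the two triangular identities directly, which is the equivalent (and arguably more direct) formulation of the same fact. In particular, your check of $r(\ep_N)\circ\eta_{rN}=1$ corresponds to the paper's $\lm\rho=1$ computation with the naturality step $\bt R_x=R_x\bt$ absorbed by working directly with $n\in rN(X)$ instead of $\bt(m)$.
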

\begin{proof}
 The map $\eta$ gives rise to a natural map 
 \[ \lm\:\CSys_G(qM,N) \to \Mackey_G(M,rN) \]
 by $\lm(\al)=(M\xra{\eta}rqM\xra{r(\al)}rN)$.  Suppose we have a map
 $x\:U\to X$ of finite $G$-sets with $U$ transitive, and an element
 $m\in M(X)$.  We then have $\lm(\al)(m)\in (rN)(X)$ and thus
 $\lm(\al)(m)(U\xra{x}X)\in N(U)$.  On the other hand, we have
 $R_x(m)\in M(U)$ and so $\pi(R_x(m))\in qM(U)$ and
 $\al(\pi(R_x(m)))\in N(U)$.  By inspecting the definitions, we obtain
 the formula 
 \[ \lm(\al)(m)(U\xra{x}X) = \al(\pi(R_x(m))). \]

 Similarly, the map
 $\ep$ gives rise to a natural map 
 \[ \rho\:\Mackey_G(M,rN) \to \CSys_G(qM,N) \]
 by $\rho(\bt)=(qM\xra{q(\bt)}qrN\xra{\ep}N)$.  For any transitive
 $G$-set $U$ and $m\in M(U)$ we have $\bt(m)\in(rN)(U)$ and so
 $\bt(m)(U\xra{1}U)\in N(U)$.    By inspecting the
 definitions, we obtain the formula
 \[ \rho(\bt)(\pi(m)) = \bt(m)(U\xra{1}U). \]

 We need to show that $\lm$ and $\rho$ are inverse to each other.  If
 we start with $\al\:qM\to N$ we have
 \[
  \rho(\lm(\al))(\pi(m))
   = \lm(\al)(m)(U\xra{1}U) 
   = \al(\pi(R_1(m))) = \al(\pi(m)),
 \]
 so $\rho(\lm(\al))=\al$.  If instead we start with $\bt\:M\to rN$ we
 have 
 \[ \lm(\rho(\bt))(m)(U\xra{x}X) = 
     \rho(\bt)(\pi(R_x(m))) = 
      \bt(R_x(m))(U\xra{1}U).
 \]
 Now, $\bt$ is assumed to be a morphism $M\to rN$ of Mackey functors
 so $\bt R_x=R_x\bt$.  The map $R_x\:(rN)(X)\to(rN)(U)$ is defined by 
 \[ (R_xn)(T\xra{t}U) = n(T\xra{xt}X). \]
 In particular, we have $(R_xn)(U\xra{1}U)=n(U\xra{x}X)$.  Putting
 this together, we get
 \[ \lm(\rho(\bt))(m)(U\xra{x}X) = 
      R_x(\bt(m))(U\xra{1}U) = \bt(m)(U\xra{x}X).
 \]
 This holds for all $X$, $U$, $x$ and $m$, so $\lm(\rho(\bt))=\bt$ as
 required. 
\end{proof}

\begin{proposition}\label{prop-ep-split}
 There is a natural map $\sg\:N\to qrN$ (for $N\in\CSys_G$) such that
 $\ep\sg=1$.  
\end{proposition}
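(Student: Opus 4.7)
The plan is to construct $\sigma$ as (the projection of) the ``extension by zero'' map that sends $n \in N(U)$ to the element of $rN(U)$ supported on the isomorphism $1_U$.

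More precisely, fix a transitive $G$-set $U$ and an element $n \in N(U)$. Using the limit description in Remark~\ref{rem-rN-limit}, I would define an element $\tilde\sigma(n) \in rN(U)$ by specifying its value on each object $(U' \xra{t} U)$ of $(\Orbt_G \downarrow U)$:
\[
\tilde\sigma(n)(U' \xra{t} U) =
\begin{cases} t_*^{-1}(n) & \text{if $t$ is an isomorphism,} \\ 0 & \text{otherwise.} \end{cases}
\]
The first step is to verify that this is a well-defined element of $rN(U)$: given an isomorphism $p\:U' \to U''$ with $t'p = t$, one needs $p_*(\tilde\sigma(n)(t)) = \tilde\sigma(n)(t')$, which if $t$ (equivalently $t'$) is an isomorphism reduces to the identity $p_* t_*^{-1} = (t')_*^{-1}$, and otherwise is $0 = 0$. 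It is immediate that $\tilde\sigma$ is a semigroup homomorphism.

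Next I would set $\sigma(n) = \pi(\tilde\sigma(n)) \in qrN(U)$ and check naturality in $U$. Given an isomorphism $f\:U \to V$ of transitive $G$-sets, I need $\sigma \circ f_* = (qrN)(f) \circ \sigma$. Because $f$ is an equivariant bijection, Proposition~\ref{prop-G-bispan-rels}(e) gives $T_f = R_{f^{-1}}$, so Remark~\ref{rem-rN-limit} simplifies to
\[
(T_f\xi)(W \xra{s} V) = \xi(W \xra{f^{-1}s} U).
\]
Then $f_*\tilde\sigma(n)(W \xra{s} V) = \tilde\sigma(n)(W \xra{f^{-1}s} U)$, which is $s_*^{-1}f_*(n)$ when $s$ is an isomorphism and $0$ otherwise, exactly matching $\tilde\sigma(f_*(n))(W \xra{s} V)$. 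Passing to the quotient by $\pi$ preserves this equality, giving naturality of $\sigma$ as a morphism of coefficient systems. Naturality in $N$ is clear from the formula.

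Finally, the identity $\epsilon\sigma = 1$ drops out of the definitions: by Proposition~\ref{prop-qr-unit},
\[
\epsilon(\sigma(n)) = \epsilon\pi(\tilde\sigma(n)) = \tilde\sigma(n)(U \xra{1} U) = 1_*^{-1}(n) = n.
\]
I do not foresee any real obstacle in this argument; the only mild subtlety is to notice that naturality of $\sigma$ only needs to be checked along isomorphisms of transitive $G$-sets (since $qrN$ is a coefficient system, not a Mackey functor), so there is no need to analyse how $\sigma$ interacts with the transfer maps of $rN$, which is where a genuine splitting would fail.
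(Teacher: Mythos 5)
Your proof is correct and follows essentially the same approach as the paper: define the ``extension by zero'' lift $\tilde\sigma(n)$ in $rN(U)$ (nonzero exactly on the isomorphisms $U'\to U$), project to $qrN(U)$, and observe that $\ep\sg=1$ drops out because $1_U$ is one of those isomorphisms. The paper states this more tersely, leaving the well-definedness and naturality checks implicit; your explicit verification (including the remark that naturality need only be checked along isomorphisms, since the target is a coefficient system) is accurate.
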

\begin{proof}
 Consider an object $T\in\Orbt_G$ and an element $u\in N(T)$.  Given
 an orbit $U$ and a $G$-map $p\:U\to T$ we define
 $\sg_0(u)(U,p)=p^{-1}_*(u)$ if $p$ is an isomorphism, and
 $\sg_0(u)(U,p)=0$ otherwise.  This gives a system of maps
 $\sg_0(u)\:\Map_G(U,T)\to N(U)$ that are natural for isomorphisms of
 $U$, or in other words an element $\sg_0(u)\in(rN)(T)$.  We let
 $\sg(u)$ denote the image of $\sg_0(u)$ in $(qrN)(T)$.  This defines a
 natural  map $\sg\:N\to qrN$, and it is clear from the definitions
 that $\ep\sg=1$.
\end{proof}

Recall that $\Mackey_G$ is a symmetric monoidal category under the box
product operation $\btm$, with the Burnside semiring Mackey functor
$A$ acting as the unit.  There is also a simpler symmetric monoidal
structure on $\CSys_G$ given by $(N\ot N')(U)=N(U)\ot N'(U)$.  For
this, the unit is the constant functor $c\N$ given by $c\N(U)=\N$ for
all $U$.

\begin{proposition}\label{prop-q-symmon}
 There are natural isomorphisms $qA\to c\N$ and
 $q(M\btm M')\to q(M)\ot q(N')$ making $q$ a symmetric monoidal
 functor. 
\end{proposition}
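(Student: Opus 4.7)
The plan is to construct both natural isomorphisms explicitly, verify them by working with the generator/relation description of $M\btm N$ from Proposition~\ref{prop-box-equiv}, and finally observe that the coherence diagrams reduce to routine checks on representatives. For the unit, I would compute $qA(G/H)$ directly. Via the equivalence between finite $G$-sets over $G/H$ and finite $H$-sets (fibre over $eH$), the semigroup $A(G/H)$ is freely generated by the classes $[H/K]$ for conjugacy classes of subgroups $K\leq H$, and the transfer $T_p\:A(G/K)\to A(G/H)$ along the projection $p\:G/K\to G/H$ corresponds under this identification to induction from $K$-sets to $H$-sets, $[K/K']\mapsto[H/K']$. Summing over proper $K<H$ we capture every $[H/K']$ with $K'<H$, so $qA(G/H)\cong\N\cdot[H/H]$. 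These isomorphisms assemble into a natural isomorphism $c\N\xra{\simeq} qA$ of coefficient systems.

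For the tensor, I would define the forward map $\mu\:qM(U)\ot qM'(U)\to q(M\btm M')(U)$ by $\pi(m)\ot\pi(n)\mapsto\pi(m\ot n)$, where $m\ot n=R_\dl(m\btm n)$ as in Section~\ref{sec-mackey}. Well-definedness follows from Frobenius reciprocity (Lemma~\ref{lem-frobenius}): if $m=T_r(m')$ for a non-isomorphism $r\:U'\to U$ of transitive $G$-sets, then
\[
m\ot n = T_r(m')\ot n = T_r(m'\ot R_r(n))
\]
lies in the image of a transfer from a proper transitive orbit, and hence vanishes in $q(M\btm M')(U)$; the symmetric argument handles the second slot.

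For the inverse $\nu\:q(M\btm M')(U)\to qM(U)\ot qM'(U)$, I would invoke Proposition~\ref{prop-box-equiv} to write an arbitrary element as $T_p(m\ot n)$ for some $p\:V\to U$ and $(m,n)\in M(V)\tm M'(V)$, and set
\[
\nu'(T_p(m\ot n)) \;=\; \sum_W \pi\bigl(T_{p|_W}(m|_W)\bigr) \ot \pi\bigl(T_{p|_W}(n|_W)\bigr),
\]
where the sum runs over $G$-orbits $W\sse V$ for which $p|_W\:W\to U$ is an isomorphism (so $T_{p|_W}=R_{p|_W}^{-1}$ by Proposition~\ref{prop-G-bispan-rels}(e)). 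Two things require verification: (i)~$\nu'$ respects the generating equivalences $(V',pr,m',R_r(n))\sim (V,p,T_r(m'),n)$ from Proposition~\ref{prop-box-equiv}; and (ii)~$\nu'$ vanishes on transfers along a non-iso $u\:U'\to U$ of transitive $G$-sets, so that it descends to $q(M\btm M')(U)$. Point~(ii) is immediate from Lemma~\ref{lem-sections}: if $(up)|_W$ were an iso then $u$ restricted to the image of $W$ would already be an iso of transitive $G$-sets, contradicting the hypothesis. Point~(i) reduces, after restricting to an orbit $W\sse V$ with $p|_W$ iso and applying the pullback identity $R_{\iota_W}T_r=T_{\tilde r}R_{\tilde\iota_W}$ for the square induced by the inclusion $\iota_W\:W\hookrightarrow V$, to the observation that any orbit $W'\sse r^{-1}(W)$ on which $r|_{W'}$ is not an iso yields a transfer from a strictly larger orbit and thus vanishes under $\pi$. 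Granted (i) and (ii), the identities $\mu\nu=1$ and $\nu\mu=1$ follow by expanding $T_p(m\ot n)=\sum_{W\sse V}T_{p|_W}(m|_W\ot n|_W)$ and discarding the non-iso summands modulo $q$.

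The unit, associativity and symmetry coherence diagrams for a symmetric monoidal functor then follow by direct computation with representatives of the form $\pi(m)\ot\pi(n)$, using naturality of the $\ot$-pairing, the symmetry of $\btm$, and the explicit description of $\mu$. The main obstacle is the well-definedness check~(i) for $\nu$: one must track carefully how an arbitrary $G$-map $r$ interacts with the orbit decomposition of its domain, and repeatedly invoke the fact that transfers from strictly larger orbits die in $q$ to ensure that only the "transverse" contributions survive.
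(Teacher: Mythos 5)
Your proposal is correct and follows essentially the same route as the paper: the forward map $\pi(m)\ot\pi(n)\mapsto\pi(m\ot n)$ is well-defined by Frobenius reciprocity, the inverse is built by summing over sections (equivalently, over orbits mapping isomorphically to the target), and both well-definedness checks come down to the fact that transfers along non-split maps of orbits die in $q$. The only cosmetic difference is in the unit part, where you count generators $[H/K]$ explicitly instead of using the paper's slicker section-counting map $[W\xra{t}T]\mapsto|\Sec(t)|$, but the content is identical.
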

\begin{proof}
 Let $T$ be a transitive $G$-set.  Recall that $A(T)$ is the set of
 isomorphism classes of finite $G$-sets equipped with a map to $U$.
 We define $\al'\:A(T)\to\N$ by 
 \[ \al'[W\xra{t}T] = |\Sec(t)| = |\Sec'(t)|. \]
 It is easy to see that this is a homomorphism.  It has
 $\al'[T\xra{1}T]=1$, so it is surjective.  

 Recall also that for $f\:U\to T$ and $[W\xra{u}U]\in A(U)$ we have
 $T_f[W\xra{u}U]=[W\xra{fu}T]$, and note that if $u$ has no sections
 then $fu$ has no sections.  From this it is easy to see that $\al'$
 induces an isomorphism $qA(T)\to\N=c\N(T)$.  This is clearly natural
 and so gives an isomorphism $qA\to c\N$.

 Now suppose we have two Mackey functors $M$ and $M'$.  Given a map
 $u\:X\to U$ and elements $m\in M(X)$ and $m'\in M'(X)$ we define 
 \[ \bt'(u,m,m') = \sum_{s\in\Sec(u)} \pi(R_s(m))\ot\pi(R_s(m')) \in 
     (qM)(U)\ot (qM')(U) = (qM\ot qM')(U).
 \]
 Now suppose we have a map $p\:W\to X$ and $m\in M(X)$ and
 $n'\in M'(W)$.  We claim that
 $\bt'(up,R_p(m),n')=\bt'(u,m,T_p(n'))$.  Indeed, we have
 \begin{align*}
  \bt'(up,R_p(m),n')
   &= \sum_{t\in\Sec(up)} \pi R_{pt}(m)\ot \pi R_t(n') 
    = \sum_{s\in\Sec(u)} \pi R_s(m)\ot
       \left(\sum_{\begin{array}{c}t\:T\to W\\ pt=s\end{array}}
        \pi R_t(n')
       \right) \\
  \bt'(u,m,T_p(n')) 
   &= \sum_{s\in\Sec(u)} \pi R_s(m)\ot\pi R_sT_p(n').
 \end{align*}
 For $s\in\Sec(u)$ we can form a pullback square 
 \[ \xymatrix{
  \widetilde{T} \ar[r]^{\widetilde{s}} \ar[d]_{\widetilde{p}} &
  W \ar[d]^p \\
  T \ar[r]_s &
  X.
 } \]
 We then have $\pi R_sT_p=\pi T_{\widetilde{p}}R_{\widetilde{s}}$.  We
 can write this as a sum over the orbits $V\sse\widetilde{T}$.  If the
 map $\widetilde{p}|_V\:V\to T$ is not an isomorphism, then the
 corresponding contribution is killed by $\pi$.
 Lemma~\ref{lem-sections}(c) tells us that the remaining orbits biject
 with $G$-maps $t\:T\to W$ satisfying $pt=s$.  The identity
 $\bt'(up,R_p(m),n')=\bt'(u,m,T_p(n'))$ now follows easily.  By a
 similar argument, for any $n\in M(W)$ and $m'\in M'(X)$ we have
 $\bt'(up,n,R_p(m'))=\bt'(u,T_p(n),m')$.  It now follows that there is
 a unique map $\bt''\:(M\btm M')(U)\to(qM\ot qM')(U)$ satisfying
 $\bt''(T_u(m\ot m'))=\bt'(u,m,m')$ for all $(u,m,m')$ as above.  It
 is clear by construction that $\bt''T_p=0$ for all non-split maps
 $V\xra{p}U$ of orbits, so there is an induced map 
 $\bt\:q(M\btm M')(U)\to(qM\ot qM')(U)$.

 In the opposite direction, suppose we have $m\in M(U)$ and
 $m'\in M'(U)$, giving $\pi(m\ot m')\in q(M\btm M')(U)$.  If $m$ has
 the form $T_p(n)$ for some non-split map $V\xra{p}U$ of orbits, then
 we have $\pi(m\ot m')=\pi T_p(n\ot R_p(m'))=0$.  Using this and its
 symmetrical counterpart, we see that there is a well-defined map
 $\gm\:qM(U)\ot qM'(U)\to q(M\btm M')(U)$ such that
 $\gm(\pi(m)\ot\pi(m'))=\pi(m\ot m')$.  We leave the reader to check
 that this is inverse to $\bt$.
\end{proof}

\begin{definition}\label{defn-MCSys}
 A \emph{multiplicative coefficient system} is a covariant functor
 $\Orb_G\to\Rings$, where $\Orb_G$ is the category of transitive
 $G$-sets and equivariant maps.  We write $\MCSys_G$ for the category
 of such objects.  Given $R\in\MCSys_G$ and a morphism $f\:U\to T$ in
 $\Orbt_G$ we write $N_f$ for the induced map $R(U)\to R(T)$.  If $f$
 is an isomorphism we may use the alternative notation $f_*$ for
 $N_f$.  By this rule, we can regard $R$ as a covariant functor
 $\Orbt_G\to\Ab$, or in other words, a coefficient system.
\end{definition}

\begin{proposition}\label{prop-qS-MCS}
 If $S$ is a $G$-Tambara functor, then $qS$ is naturally a
 multiplicative coefficient system.
\end{proposition}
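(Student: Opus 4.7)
The plan is to define, for each morphism $f\colon U\to T$ in $\Orb_G$, a semiring homomorphism $\bar N_f\colon qS(U)\to qS(T)$ induced by the Tambara norm $N_f\colon S(U)\to S(T)$. For isomorphisms $f$ this will recover the existing coefficient-system action (since $N_f=T_f$ for iso $f$ by Proposition~\ref{prop-G-bispan-rels}(e)), so we will have extended $qS$ from $\Orbt_G$ to all of $\Orb_G$.

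First I would check that each $qS(U)$ is a commutative semiring. Frobenius reciprocity (Proposition~\ref{prop-tambara-semiring}(b)) gives $a\cdot T_u(s)=T_u(R_u(a)\,s)$, so the subset $I(U)\subseteq S(U)$ of transfers from proper suborbits is absorbing under multiplication. Hence the congruence $E(U)$ generated by $\{(T_u(s),0)\}$ is a semiring congruence and $qS(U)$ inherits a semiring structure with $\pi_U$ a semiring homomorphism.

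The heart of the argument will be two distributor computations, using $T=G/H$ and $U=G/K$ with $K\le H$ so that $f$ is the projection with fibre $f^{-1}(H/H)=H/K$. First, for a non-iso $u\colon G/K'\to G/K$ (wlog $K'\subsetneq K$), I would expand $N_f\circ T_u$ via the distributor $\Delta(u,f)=(U'\xla{p}A\xra{q}B\xra{r}T)$, whose fibre $B_{H/H}$ is $\{\sigma\colon H/K\to G/K'\mid u\sigma=1\}$. A $G$-orbit of $B$ maps isomorphically to $T$ precisely when the corresponding $\sigma$ is $H$-equivariant, which would force an element $k_0K'\in K/K'$ with $K\subseteq k_0K'k_0^{-1}$; this is impossible as $|K|>|K'|$. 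Hence every orbit of $B$ maps to $T$ non-isomorphically, making $T_r$ land in $I(T)$ and yielding $\pi_T(N_f(T_u(s)))=0$. Applying the same kind of analysis to the fold map $s\colon U\amalg U\to U$ (so that $a+b=T_s(a,b)$) gives a distributor with $B'=\{(t,C)\mid C\subseteq f^{-1}(t)\}$, and the only $G$-orbits mapping isomorphically to $T$ correspond to $H$-invariant subsets of $H/K$, which are only $\emptyset$ and $H/K$ (since $H$ acts transitively), contributing exactly $N_f(a)$ and $N_f(b)$. The remaining orbits land in $I(T)$, so $\pi_T(N_f(a+b))=\pi_T(N_f(a))+\pi_T(N_f(b))$ in $qS(T)$.

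To finish, I would consider the relation $E'=\{(a,b)\in S(U)^2\mid (N_f(a),N_f(b))\in E(T)\}$. This is an equivalence relation containing the generators $(T_u(s),0)$ of $E(U)$ (using $N_f(0)=0$ from Lemma~\ref{lem-norm-zero} since $f$ is surjective), closed under addition by the fold-map computation combined with transitivity of $E(T)$, and closed under multiplication by multiplicativity of $N_f$. So $E'\supseteq E(U)$ and $N_f$ descends to a semiring homomorphism $\bar N_f\colon qS(U)\to qS(T)$; functoriality $\bar N_{gf}=\bar N_g\bar N_f$ is then inherited from $S$, and naturality in $S$ is immediate. The main obstacle will be the two distributor calculations, which ultimately rest on the combinatorial fact that a transitive $H$-set has no nontrivial $H$-invariant subsets.
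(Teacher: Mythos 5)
Your proof is correct and follows essentially the same route as the paper's: both establish the semiring structure on $qS$ via Frobenius reciprocity and then reduce the vanishing of $\pi N_fT_u$ (for non-iso $u$) and the additivity of $\pi N_f$ (via the fold map) to the combinatorics of which orbits of the distributor's middle object $B$ map isomorphically onto the target orbit. The only cosmetic difference is that the paper packages this combinatorics once as Lemma~\ref{lem-qS-MCS} (identifying iso orbits with sections of $g$) and deduces both facts as corollaries, whereas you carry out the two distributor analyses directly; your congruence-based descent argument is a slightly more explicit version of the paper's appeal to the universal property of the semigroup quotient.
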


The proof will be given after the following lemma.
\begin{lemma}\label{lem-qS-MCS}
 Let $X\xra{g}T\xra{f}U$ be maps of finite $G$-sets, where $T$ and $U$
 are orbits.  Let $S$ be a Tambara functor, and let
 $\pi\:S(U)\to qS(U)$ be the quotient map.   Then 
 \[ \pi N_f T_g = \sum_{k\in\Sec(g)} \pi N_f R_k \: 
     S(X) \to qS(U).
 \]
\end{lemma}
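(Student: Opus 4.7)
The plan is to rewrite $N_fT_g$ via the distributor and then observe that most orbit contributions are killed modulo non-iso transfers.  First I would invoke Proposition~\ref{prop-G-bispan-rels}(c) to write $N_fT_g = T_rN_qR_p$, where
\[ \Dl(g,f) \;=\; (X\xla{p}A\xra{q}B\xra{r}U) \]
is the distributor of $g$ and $f$, so that
\[ B = \{(u,\sg)\st u\in U,\ \sg\:f^{-1}\{u\}\to X,\ g\sg=1\},\quad r(u,\sg)=u, \]
and $A = T\tm_U B$ with $p(t,\sg)=\sg(t)$.  Decomposing $B$ into its $G$-orbits $B=\coprod_V V$, with inclusions $j_V\:V\to B$ and restrictions $r_V=r|_V\:V\to U$, gives
\[ \pi N_fT_g \;=\; \sum_V \pi T_{r_V}R_{j_V}N_qR_p. \]

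Next I would discard the orbits $V$ for which $r_V$ is not an isomorphism.  By Lemma~\ref{lem-sections}(a), any such $V$ satisfies $|V|>|U|$, so $T_{r_V}$ is a transfer from a strictly larger orbit and hence $\pi\circ T_{r_V}=0$ by the very definition of $qS$.  The surviving orbits $V$ with $r_V$ an isomorphism are, by Lemma~\ref{lem-sections}(b), in bijection with the set of equivariant sections of $r$.  A direct verification (unpacking the $G$-action on $B$ inherited from the distributor) shows that equivariant sections of $r$ biject with $\Sec(g)$ via $k\mapsto\sg_k$, where $\sg_k(u)=(u,k|_{f^{-1}\{u\}})$; the inverse sends a section with components $\sg(u)=(u,\sg_u)$ to the $G$-map $k(t)=\sg_{f(t)}(t)$.

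For each $k\in\Sec(g)$, let $V_k\sse B$ be the image of $\sg_k$.  Then $r_{V_k}$ is invertible with inverse $\sg_k$ (viewed as a map $U\to V_k$), and Proposition~\ref{prop-G-bispan-rels}(e) gives $T_{r_{V_k}}R_{j_{V_k}} = R_{j_{V_k}\circ\sg_k} = R_{\sg_k}$ as operators $S(B)\to S(U)$.  To identify $\pi R_{\sg_k}N_qR_p$ with $\pi N_fR_k$, I would form the pullback
\[ \xymatrix{
  T \ar[r]^{\widetilde{\sg}_k} \ar[d]_f & A \ar[d]^q \\
  U \ar[r]_{\sg_k}                      & B
} \]
with $\widetilde{\sg}_k(t)=(t,k|_{f^{-1}\{f(t)\}})$; the cartesian property is immediate from $A=T\tm_UB$.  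Proposition~\ref{prop-G-bispan-rels}(d) then yields $R_{\sg_k}N_q=N_fR_{\widetilde{\sg}_k}$, and since $p\widetilde{\sg}_k = k$, we obtain $R_{\sg_k}N_qR_p = N_fR_k$.  Summing over $k\in\Sec(g)$ gives the claimed identity.

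The main obstacle is the identification of the surviving orbit-decomposition terms with $\Sec(g)$: one must keep careful track of the $G$-action on $B$ coming from the distributor construction to verify that the bijection between equivariant sections of $r$ and $\Sec(g)$ is well defined and natural.  Once that is pinned down, the computation reduces to applications of the base-change and orbit-decomposition relations already recorded in Proposition~\ref{prop-G-bispan-rels}.
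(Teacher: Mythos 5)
Your proof is correct and follows essentially the same route as the paper: rewrite $N_fT_g$ via the distributor $\Dl(g,f)$, decompose $B$ into orbits, discard the non-split transfers modulo $\pi$, identify the surviving orbits with $\Sec(r)\simeq\Sec(g)$, and finish with a cartesian square showing $R_{\sg_k}N_qR_p=N_fR_k$. The only differences are cosmetic (you cite Lemma~\ref{lem-sections} explicitly and spell out the inverse bijection $\Sec(r)\to\Sec(g)$, which the paper leaves as ``not hard to check'').
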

\begin{proof}
 Let $X\xla{p}A\xra{q}B\xra{r}U$ be the distributor $\Dl(g,f)$, so
 $\pi N_fT_g=\pi T_rN_qR_p\:S(X)\to qS(U)$.  Now write $B$ as a
 disjoint union of orbits, say $B=\coprod_iB_i$, and let $r_i$ be the
 restriction of $r$ to $B_i$.  If $r_i$ is not an isomorphism then
 $\pi T_{r_i}=0$ by the definition of $q$.  If $r_i$ is an isomorphism
 then the map $m_i=(U\xra{r_i^{-1}} B_i \to B)$ is an equivariant
 section of $r$, with $T_{r_i}=R_{m_i}$.  Using this we see that
 $\pi T_r=\sum_{m\in\Sec(r)}\pi R_m$, and so 
 $\pi T_rN_qR_p=\sum_{m\in\Sec(r)}\pi R_mN_qR_p$.

 Now consider a section $k\in\Sec(g)$.  Recall that 
 \[ B = \{(u,s)\st u\in U,\; s\:f^{-1}\{u\}\to X,\; gs=1\}. \]
 For any $u\in B$ we put $k'(u)=(u,k|_{f^{-1}\{u\}})\in B$, so
 $rk'(u)=u$.  It is not hard to check that the construction
 $k\mapsto k'$ gives a bijection $\Sec(g)\to\Sec(r)$,so 
 $\pi T_rN_qR_p=\sum_{k\in\Sec(g)}\pi R_{k'}N_qR_p$.

 Next, define $k''\:T\to A$ by $k''(t)=(f(t),k|_{f^{-1}\{f(t)\}})$.
 We then have a commutative diagram as follows, in which the square is
 cartesian:
 \[ \xymatrix{
  & T \ar[dl]_k \ar[d]^{k''} \ar[r]^f & U \ar[d]^{k'} \\
  X & A \ar[l]^p \ar[r]_q & B.
 } \] 
 This gives
 \[ R_{k'}N_qR_p = N_fR_{k''}R_p = N_f R_{pk''} = N_fR_k \]
 as required.
\end{proof}

\begin{proof}[Proof of Proposition~\ref{prop-qS-MCS}]
 First, for any non-isomorphic map $p\:V\to U$ of $G$-orbits, the
 Frobenius reciprocity formula $T_p(a)b=T_p(a\,R_p(b))$ shows that the
 image of $T_p$ is an ideal in $S(U)$.  It follows easily that $qS$
 has a unique semiring structure for which the projection
 $\pi\:S(U)\to qS(U)$ is a homomorphism.

 Next, consider a map $f\:T\to U$ of $G$-orbits.  We first apply
 Lemma~\ref{lem-qS-MCS} to the fold map $T\amalg T\to T$, noting that
 this has only the two obvious sections; the conclusion is that
 $\pi N_f(a_0+a_1)=\pi N_f(a_0)+\pi N_f(a_1)$ for all
 $a_0,a_1\in S(T)$, so $\pi N_f\:S(T)\to qS(U)$ is an additive
 homomorphism.  Now apply the lemma instead to a map $g\:X\to T$,
 where $X$ is an orbit and $g$ is not an isomorphism.  We recall from
 Lemma~\ref{lem-sections} that $\Sec(g)=\emptyset$, so $\pi N_fT_g=0$.
 It follows that there is a unique additive homomorphism
 $N_f\:qS(T)\to qS(U)$ such that the square
 \[ \xymatrix{
  S(X) \ar[d]_\pi \ar[r]^{N_f} & S(Y) \ar[d]^\pi \\
  qS(X) \ar[r]_{N_f} & qS(Y)
 } \]
 commutes.  As $N_f\:S(X)\to S(Y)$ preserves products, the same is
 true of the induced map $qS(X)\to qS(Y)$.
\end{proof}

\begin{proposition}\label{prop-rR-tambara}
 If $R$ is a multiplicative coefficient system, then $rR$ is naturally
 a Tambara functor.
\end{proposition}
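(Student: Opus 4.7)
The plan is to equip $rR$ with norm operators by a formula combining pullback, orbit decomposition, and the internal norms of the multiplicative coefficient system $R$. Given a $G$-map $f\:X\to Y$ and $m\in rR(X)$, I would define $N_fm\in rR(Y)$ by the following rule: for an orbit $U$ and a $G$-map $y\:U\to Y$, form the pullback $X'=U\times_YX$ with projections $f'\:X'\to U$ and $x\:X'\to X$, decompose $X'=\coprod_j V_j$ into $G$-orbits, let $f'_j\:V_j\to U$ and $x_j\:V_j\to X$ be the restrictions, and set
\[ (N_fm)(U,y) \;=\; \prod_j N_{f'_j}\bigl(m(V_j,x_j)\bigr) \in R(U), \]
where each $N_{f'_j}\:R(V_j)\to R(U)$ is the norm in the multiplicative coefficient system $R$. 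The quick consistency check is the case $G=C_2$, $f\:G\to 1$: the formula recovers $(N_fm)(1,\id)=N_\pi(m_0)$ and $(N_fm)(G,\pi)=m_0\cdot\chi_*(m_0)$, matching the axioms $\res\nrm(a)=a\bar a$ of a Tambara pair.

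I would then verify in order: (i) that $N_fm$ is natural under isomorphisms $\phi\:U\to U'$, so it really lies in $rR(Y)$ --- this is immediate from naturality of the pullback and functoriality of $R$; (ii) $N_1=1$; (iii) functoriality $N_{gh}=N_gN_h$ for $X\xra{h}Y\xra{g}Z$, which follows by observing that iterating pullbacks $U\times_ZY$ then $W_j\times_YX$ recovers the orbit decomposition of $U\times_ZX$, and the nested products of norms collapse using $N_{ab}=N_aN_b$ in $R$; (iv) base-change $R_kN_h=N_gR_f$ for cartesian squares, which is pullback pasting combined with the compatibility of orbit decomposition under base change. Products are automatic, since the slice category $\Orbt_G\downarrow(X\amalg Y)$ splits as the disjoint union of the slices over $X$ and $Y$, so $rR$ is product-preserving. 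Together with the Mackey functor structure already established, this reduces everything to Proposition~\ref{prop-UG-pres}.

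The key step, and principal obstacle, is (v): the distributor identity $N_gT_f=T_rN_qR_p$ for $X\xra{f}Y\xra{g}Z$ with $\Dl(f,g)=(X\xla{p}A\xra{q}B\xra{r}Z)$. Unpacking both sides on an element indexed by $U\to Z$ reduces to expanding an iterated product of sums (from $N_g$ applied to $T_fm$) into a sum of iterated products (from $T_r$ applied to $N_qR_pm$); the bijection between the two indexing sets is exactly the one built into Definition~\ref{defn-distributor}, identifying choices of sections of $q$ over $B$ with tuples of sections of $f$ over the fibres of $g$. To carry this out cleanly I would follow the template of Lemma~\ref{lem-green-functor}(b): first use (iv) to reduce to $U=Z$; then apply Lemma~\ref{lem-green-proj} to reduce further to the case where $g\:I\times Z\to Z$ is a projection with trivial $G$-action on $I$, so $f$ splits as $\coprod_if_i\:\coprod X_i\to Z$. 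In that case distributivity of multiplication over addition in the ring $R(U)$ (for orbits $U$ mapping to $Z$), combined with repeated use of Frobenius reciprocity and base-change of norms established in (iv), gives the identity by induction on $|I|$.

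Once (i)--(v) are checked, Proposition~\ref{prop-UG-pres} certifies that the operators $T_f$, $N_f$, $R_f$ assemble into a Tambara functor structure on $rR$, and naturality in $R$ is clear from the construction.
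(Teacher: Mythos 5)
Your definition of $N_f$ and your checks (i)--(iv) coincide with what the paper does, and reducing to Proposition~\ref{prop-UG-pres} is the right strategy. The gap is in step (v), the distributor identity. You propose, after pulling back to reduce to a single orbit over $Z$, to ``apply Lemma~\ref{lem-green-proj} to reduce further to the case where $g\:I\tm Z\to Z$ is a projection with trivial $G$-action on $I$.'' But Lemma~\ref{lem-green-proj} applies only to \emph{green} maps, i.e.\ maps that preserve isotropy groups. The map $g\:Y\to Z$ appearing in the distributor identity is completely general, and the non-green case is precisely where the Tambara structure does something new. For instance, with $G=C_2$, $Z=1$, $Y=G$, $g$ the projection, there is no way to write $g$ as a composite of an isomorphism followed by a projection $I\tm 1\to 1$ with $I$ a $G$-fixed set (the source would then have trivial $G$-action). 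The template of Lemma~\ref{lem-green-functor}(b) was designed exactly to avoid this case, so importing it here fails at the first step.

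What the paper does instead is verify the distributor identity directly, with no reduction to the green case. Fix an orbit $T$ and a map $z\:T\to Z$; form the pullback $\tT=T\tm_ZY$ and split it into orbits $U$; let $L_U$ be the set of $G$-maps $s\:U\to X$ lifting the projection $U\to Y$, and $L=\prod_UL_U$ (equivalently, $G$-maps $\tT\to X$ over $Y$). For the left side, each orbit $U$ contributes a factor $N_{v|_U}$ applied to the \emph{sum} $\sum_{s\in L_U}m(U\xra{s}X)$; because $N_{v|_U}\:R(U)\to R(T)$ is a ring homomorphism and $R(T)$ is a ring, the product of these sums expands as $\sum_{t\in L}\prod_UN_{v|_U}(m(U\xra{t|_U}X))$. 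For the right side, one identifies the sections $t'\:T\to B$ of $r$ lifting $z$ with elements of $L$, uses the explicit pullback of $\tT$ against $A\xra{q}B$ to compute $N_qR_p m$, and arrives at exactly the same expression. So the correct replacement for your step (v) is a direct bookkeeping argument exploiting that $R$ takes values in commutative rings (so that a product of sums can be expanded over $L$), rather than an induction over a green splitting of $g$.
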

\begin{proof}
 Consider a map $f\:X\to Y$; we need to define
 $N_f\:(rR)(X)\to(rR)(Y)$, and check that it is compatible with all
 other structure.  Consider $m\in(rR)(X)$, so we have an element
 $m(T\xra{x}X)\in R(T)$ for each transitive $G$-set $T$ and each
 $G$-map $x\:T\to X$.  Consider instead a map $y\:T\to Y$, and form
 the pullback square 
 \[ \xymatrix{
      \tT \ar[r]^u \ar[d]_v & X \ar[d]^f \\
      T \ar[r]_y & Y
    }
 \]
 Each orbit $U\sse\tT$ comes equipped with maps $T\xla{v}U\xra{u}X$
 so we have $m(U\xra{u}X)\in R(U)$ and $N_vm(U\xra{u}X)\in R(T)$.  Let
 $p\in R(T)$ be the product of all these terms, as $U$ runs over
 the orbits in $\tT$.  This is completely natural, so we can define
 $(N_fm)(T\xra{y}Y)=p$ and this defines a map $N_f\:(rR)(X)\to(rR)(Y)$
 as required.  It is clear that this depends functorially on $f$.

 We next check the Mackey property.  Consider a pullback square 
 \[ \xymatrix{
  X \ar[r]^j \ar[d]_{f} & X' \ar[d]^{f'} \\
  Y \ar[r]_k & Y',
 } \]
 and an element $m\in (rR)(X')$.  We must show that
 \[ (N_fR_j(m))(T\xra{y}Y)=(R_kN_{f'}(m))(T\xra{y}Y) \]
 for all $G$-orbits $T$ and all $G$-maps $T\xra{y}Y$.  We define $\tT$
 as before, giving a diagram 
 \[ \xymatrix{
  \tT \ar[r]^u \ar[d]_v & X \ar[r]^j \ar[d]_{f} & X' \ar[d]^{f'} \\
  T \ar[r]_y & Y \ar[r]_k & Y'
 } \]
 in which both squares are cartesian.  This gives
 \[ (N_fR_j(m))(T\xra{y}Y) = 
     \prod_{U\in\orb(\tT)} N_{U\to T}(R_jm)(U\xra{u}X) = 
     \prod_{U\in\orb(\tT)} N_{U\to T}m(U\xra{ju}X'). 
 \]
 On the other hand, we have 
 \[ (R_kN_{f'}(m))(T\xra{y}Y)=(N_{f'}m)(T\xra{ky}Y'). \]
 As both squares are cartesian we see that the full
 rectangle is also cartesian and so can be used to compute $N_{f'}$,
 giving the same answer as before.

 Finally, consider a pair of maps $X\xra{f}Y\xra{g}Z$ with distributor 
 \[ \Dl(f,g) = (X \xla{p} A \xra{q} B \xra{r} Z). \]
 Consider an element $m\in (rR)(X)$, a $G$-orbit $T$ and a $G$-map
 $T\xra{z}Z$.  We must show that 
 \[ (N_gT_fm)(T\xra{z}Z) = (T_rN_qR_pm)(T\xra{z}Z). \]
 For the left hand side, we form the pullback square
 \[ \xymatrix{
      \tT \ar[r]^u \ar[d]_v & Y \ar[d]^g \\
      T \ar[r]_z & Z.
    }
 \]
 For each orbit $U\sse\tT$ we let $L_U$ denote the the set of $G$-maps 
 $s\:U\to X$ with $fs=u|_U$.  We then put $L=\prod_UL_U$, which can be
 identified with the set of $G$-maps $t\:\tT\to X$ with $ft=U$.  From the
 definitions we have 
 \[ (N_gT_fm)(T\xra{z}Z) =
     \prod_U N_{v|_U}((T_fm)(U\xra{u}Y)) = 
     \prod_U N_{v|_U}\left(\sum_{s\in L_U}m(U\xra{s}X)\right).
 \]
 We can next use the splitting $L=\prod_UL_U$ to expand out the
 product, and recall that $N_{v|_U}\:R(U)\to R(T)$ is a ring map, to get 
 \[ (N_gT_fm)(T\xra{z}Z) =
      \sum_{t\in L} \prod_U N_{v|_U}(m(U\xra{t|_U}X)).
 \]
 
 Now consider instead the right hand side.  Put $m'=R_pm$ and
 $m''=N_qm'$ so $T_rN_qR_pm=T_rm''$.  By definition,
 $(T_rm'')(T\xra{z}Z)$ can be written as a sum over $G$-maps $T\to B$
 lifting $z$.  For any $t\in L$ we can define $t'\:T\to B$ by 
 \[ t'(a) = \left(z(a),\; 
              g^{-1}\{z(a)\} \xra{u^{-1}} v^{-1}\{a\} \xra{t} X
            \right).
 \]
 One can check that $rt'=z$, and that any $G$-map with $rt'=z$ arises
 in this way.  We therefore have 
 \[ (T_rm'')(T\xra{z}Z) = \sum_{t\in L} m''(T\xra{t'}B)
      = \sum_{t\in L} (N_qm')(T\xra{t'}B).
 \]
 Next, we have a cartesian square 
 \[ \xymatrix{
  \tT \ar[r]^{t''} \ar[d]_{v} & A \ar[d]^q \\
  T \ar[r]_{t'} & B
 } \]
 where 
 \[ t''(a) = \left(u(a),\; 
              g^{-1}\{g(u(a))\} = g^{-1}\{z(v(a))\}
                \xra{u^{-1}} v^{-1}\{v(a)\} \xra{t} X
            \right).
 \]
 This gives 
 \[ (N_qm')(T\xra{t'}B) = \prod_U N_{v|_U}(m'(U \xra{t''}A)). \]
 Moreover, we have $m'=R_pm$ so $m'(U \xra{t''}A)=m(U\xra{pt''}X)$.
 It is clear from the definitions that $pt''=t$.  After unwinding all
 this we get 
 \[ (T_rN_qR_pm)(T\xra{z}Z) = \sum_{t\in L}\prod_U
     N_{v|_U}(m(U\xra{t}X)),
 \]
 which is the same as $(N_gT_fm)(T\xra{z}Z)$, as required.
\end{proof}

\begin{remark}\label{rem-rR-product}
 The Tambara structure on $rR$ makes $(rR)(X)$ into a semiring, with
 multiplication on $(rR)(X)$ given by $N_s$, where $s\:X\amalg X\to X$
 is the fold map.  It is not hard to check that the rule is just the
 obvious one: 
 \[ (mm')(T\xra{x}X) = m(T\xra{x}X)\; m'(T\xra{x}X) \in R(T). \] 
\end{remark}

\begin{proposition}\label{prop-qr-tambara}
 The functor $q\:\Tambara_G\to\MCSys_G$ is left adjoint to
 $r\:\MCSys_G\to\Tambara_G$.  
\end{proposition}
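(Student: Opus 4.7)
The plan is to upgrade the Mackey-level adjunction of Proposition~\ref{prop-qr-adjoint} to a Tambara-level adjunction.  Concretely, I will show that when $S$ is a Tambara functor, the unit $\eta\:S\to rqS$ of Proposition~\ref{prop-rq-unit} is a morphism of Tambara functors, and that when $R$ is a multiplicative coefficient system, the counit $\ep\:qrR\to R$ of Proposition~\ref{prop-qr-unit} is a morphism of multiplicative coefficient systems.  Since both $\eta$ and $\ep$ are already Mackey/coefficient-system morphisms satisfying the triangle identities, and since the hom-sets of $\Tambara_G$ and $\MCSys_G$ sit inside those of $\Mackey_G$ and $\CSys_G$ respectively, this automatically gives the desired Tambara adjunction with the same unit, counit, and bijection $\lm$ from Proposition~\ref{prop-qr-adjoint}.

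For the unit, the additive and restriction compatibilities are handled by Proposition~\ref{prop-rq-unit}, so I only need to verify that $\eta(N_fm) = N_f\eta(m)$ in $rqS(Y)$ for every $f\:X\to Y$ and every $m\in S(X)$.  Evaluating both sides at an arbitrary $(T\xra{y}Y)$, the left side equals $\pi R_yN_f(m)\in qS(T)$.  Forming the pullback
\[ \xymatrix{ \tT \ar[r]^u \ar[d]_v & X \ar[d]^f \\ T \ar[r]_y & Y, } \]
the Mackey property gives $R_yN_f=N_vR_u$.  Decomposing $\tT=\coprod_U U$ into orbits, the product-preserving nature of $S$ identifies $S(\tT)$ with $\prod_U S(U)$, and under this identification $N_v$ corresponds to the product of the orbit-wise norms $N_{v|_U}\:S(U)\to S(T)$, while $R_u$ corresponds to the tuple $(R_{u|_U})_U$.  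Hence $\pi R_yN_f(m)=\pi\prod_U N_{v|_U}R_{u|_U}(m)$.  Because $\pi\:S(T)\to qS(T)$ is a semiring homomorphism and $\pi N_{v|_U}=N_{v|_U}\pi$ by the very construction of the multiplicative structure on $qS$ in Proposition~\ref{prop-qS-MCS}, this rewrites as $\prod_U N_{v|_U}\pi R_{u|_U}(m)$.  By Remark~\ref{rem-rR-product} and the formula for $N_f$ on $rR$ given in the proof of Proposition~\ref{prop-rR-tambara}, this is exactly $(N_f\eta(m))(T\xra{y}Y)$.

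For the counit, I need to check that $\ep N_f = N_f^R\ep\:(qrR)(U)\to R(T)$ for every map $f\:U\to T$ in $\Orb_G$ with $U,T$ orbits (additive and restriction compatibilities again come from Proposition~\ref{prop-qr-unit}).  Since $\pi\:rR\to qrR$ is surjective and, by Proposition~\ref{prop-qS-MCS}, satisfies $N_f^{qrR}\pi=\pi N_f^{rR}$, it suffices to show $\ep\pi N_f^{rR}(n)=N_f^R\ep\pi(n)$.  But the left side equals $N_f^{rR}(n)(T\xra{1}T)$, and the pullback of $1_T$ along $f$ is just $f\:U\to T$ itself with its unique orbit $U$; so the defining product for $N_f^{rR}$ collapses to the single factor $N_f^R(n(U\xra{1}U))=N_f^R(\ep\pi(n))$, as required.

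The main obstacle is the verification that $\eta$ commutes with norms, since it requires combining three separate pieces of structure (the Mackey square for $(R_y,N_f)$ versus $(N_v,R_u)$, the decomposition of $N_v$ across the orbits of the pulled-back $G$-set, and the compatibility of $\pi$ with norms established in Proposition~\ref{prop-qS-MCS}).  Everything else is formal consequence of the adjunction already present on the underlying Mackey/coefficient-system categories.
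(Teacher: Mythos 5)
Your proof is correct and follows essentially the same route as the paper: upgrade the Mackey-level adjunction from Proposition~\ref{prop-qr-adjoint} by checking that $\eta$ and $\ep$ respect norms, using the same pullback/orbit-decomposition argument for the unit and the collapse of the defining product for the counit, then invoke that the triangle identities persist in the enriched setting.  (You even silently correct a small typo in the paper, which writes $R_{v|_U}$ where $R_{u|_U}$ is intended.)
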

\begin{proof}
 First, consider a Tambara functor $S$, and a map $f\:X\to Y$ of
 finite $G$-sets.  We claim that the square
 \[ \xymatrix{
     S(X) \ar[r]^{N_f} \ar[d]_{\eta} & S(Y) \ar[d]^\eta \\
     (rqS)(X) \ar[r]_{N_f} & (rqS)(Y)
    }
 \]
 commutes.  To see this, consider an element $m\in S(X)$, a $G$-orbit
 $T$ and a $G$-map $y\:T\to Y$.  We must show that
 \[ (\eta N_f m)(T\xra{y}Y) = (N_f\eta m)(T\xra{y}Y) \in (qS)(T). \]
 The left hand side is by definition $\pi(R_yN_fm)$.  We form the
 pullback square 
 \[ \xymatrix{
      \tT \ar[r]^u \ar[d]_v & X \ar[d]^f \\
      T \ar[r]_y & Y
    }
 \]
 so the Mackey property gives $R_yN_f=N_vR_u$.  We split $\tT$ as a
 disjoint union of orbits $U$ and note that 
 \[ N_vR_u(m) = \prod_U N_{v|_U}R_{v|_U}m. \]
 The norm maps for $qS$ satisfy $\pi N=N\pi$ by construction, 
 so the left hand side of our equation is
 $\prod_UN_{v|_U}\pi(R_{v|_U}m)$.  The right hand side is 
 \[ (N_f\eta m)(T\xra{y}Y) =
     \prod_U N_{v|_U}((\eta m)(U\xra{u|_U}X) = 
     \prod_U N_{v|_U}\pi(R_{v|_U}m),
 \]
 which is the same as the left hand side.  We have seen previously
 that $\eta$ is a morphism of Mackey functors, so it commutes with the
 $T$ and $R$ operators as well as the norms, so it is a Tambara
 morphism. 

 Now consider a multiplicative coefficient system $R$, and a map
 $f\:T\to U$ of $G$-orbits.  We claim that the square
 \[ \xymatrix{
  (qrR)(T) \ar[d]_\ep \ar[r]^{N_f} & (qrR)(U) \ar[d]^\ep \\
  R(T) \ar[r]_{N_f} & R(U) 
 } \] 
 commutes.  To see this, consider an element $m\in(rR)(T)$.  We then
 have $\ep N_f(\pi(m))=(N_f(\pi(m)))(U\xra{1}U)$.  This is by
 definition a product of factors indexed by the orbits in the pullback
 of $U\xra{1}U$ along $f\:T\to U$.  Of course this pullback is just
 $T$, so there is only one orbit, and the corresponding factor is
 $N_fm$ as required.  This means that $\ep$ is a morphism of
 multiplicative coefficient systems.

 We previously proved triangular identities for $\eta$ and $\ep$.  We
 can now reinterpret these as identities in $\Tambara_G$ and
 $\MCSys_G$ rather than $\Mackey_G$ and $\CSys_G$, and they show that
 we still have an adjunction in this richer context.
\end{proof}

Recall that in Section~\ref{sec-mackey} we defined a functor
$\om\:\Mackey_G\to\Semigroups_G$ by $\om(M)=M(G)$, and studied its
left and right adjoints $d,c\:\Semigroups_G\to\Mackey_G$.  In
Example~\ref{eg-MapGXA-tambara} we remarked that we can define
$\om\:\Tambara_G\to\Semirings_G$ and its right adjoint
$c\:\Tambara_G\to\Semirings_G$ in the same way.  However, in this
context it is much harder to understand the left adjoint to $\om$.
This will be the subject of Section~\ref{sec-witt}, where (following
Brun~\cite{br:wvt}) we build a connection with Witt rings in the sense
of Dress and Siebeneicher~\cite{drsi:brp}.  For the moment we just
discuss a parallel adjunction in terms of coefficient systems, which
is easier.

\begin{definition}\label{defn-omega-prime}
  We define $\om'\:\MCSys\to\Semirings_G$ by $\om'(R)=R(G)$.
\end{definition}

\begin{remark}\label{rem-om-q-r}
 Recall that for any Tambara functor $S$ we have $(qS)(G)=S(G)$.  On
 the other hand, using Remark~\ref{rem-free-orbit} we see that
 $(rR)(G)=R(G)$ for any multiplicative coefficient system $R$.  This
 means that the diagram
 \[ \xymatrix{
  \Tambara_G \ar[d]_\om \ar[r]^q &
  \MCSys_G \ar[d]^{\om'} \ar[r]^r &
  \Tambara_G \ar[d]^\om \\
  \Semirings_G \ar@{=}[r] &
  \Semirings_G \ar@{=}[r] &
  \Semirings_G
 } \]
 commutes up to natural isomorphism.
\end{remark}

\begin{definition}\label{defn-L-prime}
 Consider an object $A\in\Semirings_G$.  For any $G$-orbit $T$, let
 $L'A(T)$ be the commutative semiring generated by symbols $i_t(a)$
 (for $t\in T$ and $a\in A$) modulo relations
 \begin{align*}
  i_t(0) &= 0 &
  i_t(1) &= 1 \\
  i_t(a+b) &= i_t(a) + i_t(b) &
  i_t(ab) &= i_t(a) i_t(b) \\
  i_{gt}(a) &= i_t(g^{-1}a).
 \end{align*}
 Next, for any morphism $u\:T\to U$ of orbits, there is an induced
 semiring map $N_u\:L'A(T)\to L'A(U)$ given by $N_ui_t(a)=i_{u(t)}(a)$.
 This makes $L'A$ into a multiplicative coefficient system.
\end{definition}

\begin{remark}\label{rem-L-prime-rep}
 It is straightforward to check the universal property
 \[ \Semirings(L'A(T),B) =
     \Map_G(T,\Semirings(A,B)) =
      \Semirings_G(A,\Map(T,B))
 \]
 for all semirings $B$.  Note that neither $L'A(T)$ nor $B$ has a
 $G$-action here, so the $G$-actions on $\Semirings(A,B)$ and
 $\Map(T,B)$ are purely determined by the $G$-actions on $A$ and $T$.
\end{remark}

We can give an alternative description of $L'$ as follows.
\begin{definition}\label{defn-coinvariant-semiring}
 Let $A$ be a semiring with an action of $G$.  We let $A_{[G]}$ denote
 the quotient of $A$ by the smallest semiring congruence containing
 $(a,ga)$ for all $a\in A$ and $g\in G$.  We call this the
 \emph{coinvariant semiring} for $A$.  
\end{definition}

\begin{remark}
 $A_{[G]}$ is in general a proper quotient of the coinvariant
 semigroup $A_G$ as in Definition~\ref{defn-coinvariant-semigroup}.
 For example, if $G$ is the group of order two acting by conjugation
 on $\C$, then $\C_G=\R$ but $\C_{[G]}=0$.
\end{remark}

\begin{remark}\label{rem-r-L-prime}
 It is now not hard to see that $(L'A)(G/K)=A_{[K]}$.  It follows from
 Proposition~\ref{prop-rNGH} that 
 \[ (rL'A)(G/H) = \left[\prod_{K\leq H} A_{[K]}\right]^H. \]
\end{remark}

\begin{proposition}\label{prop-L-prime-om-prime}
 The functor $L'\:\Semirings_G\to\MCSys_G$ is left adjoint to
 $\om'\:\MCSys_G\to\Semirings_G$.  
\end{proposition}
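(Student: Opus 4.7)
The plan is to construct a natural bijection $\MCSys_G(L'A, R) \cong \Semirings_G(A, \om'(R))$ directly. The key observation is that a morphism $\phi \: L'A \to R$ in $\MCSys_G$ is a compatible family of semiring maps $\phi_T \: L'A(T) \to R(T)$, and the value at $T = G$ (combined with naturality with respect to $\Aut_{\Orb_G}(G) = G$) already determines everything, since every orbit admits a morphism from $G$ in $\Orb_G$ (the quotient $\pi_H \: G \to G/H$).

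In one direction, given $\phi \in \MCSys_G(L'A, R)$ I would define $\alpha \: A \to R(G) = \om'(R)$ by $\alpha(a) = \phi_G(i_1(a))$. This is automatically a semiring map. For $G$-equivariance, apply naturality of $\phi$ to the automorphism $\rho(g) \in \Aut_{\Orb_G}(G)$ and use the defining relation $i_{\rho(g)(1)}(a) = i_{g^{-1}}(a) = i_1(ga)$ in $L'A(G)$; one finds $\alpha(ga) = N_{\rho(g)}(\alpha(a)) = g \cdot \alpha(a)$, which is exactly the action on $\om'(R) = R(G)$.

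Conversely, given $\alpha \in \Semirings_G(A, R(G))$, for each orbit $T$ I would pick a basepoint $t_0 \in T$ with stabilizer $H$ and let $\pi_H \: G \to T$ be the $G$-map with $\pi_H(1) = t_0$. Define $\phi_T$ on generators by $\phi_T(i_{gt_0}(a)) = N_{\pi_H}(\alpha(g^{-1}a))$. To check this is well-defined, suppose $gt_0 = g't_0$, so $g' = gh$ with $h \in H$; equivariance of $\alpha$ gives $\alpha(g'^{-1}a) = N_{\rho(h^{-1})}(\alpha(g^{-1}a))$, and since $\pi_H \circ \rho(h^{-1}) = \pi_H$ (because $h$ fixes $t_0$), both formulas agree. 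That $\phi_T$ respects the defining relations of $L'A(T)$ reduces to the semiring-map properties of $\alpha$ and $N_{\pi_H}$, together with this well-definedness computation for the relation $i_{gt}(a) = i_t(g^{-1}a)$. Naturality of $\phi$ with respect to any $u \: G/H \to G/K$ in $\Orb_G$ follows because such $u$ factors as $\pi_K \circ \rho(g_0^{-1})$ for some $g_0 \in G$ (using the standard description of $G$-maps between orbits), and both sides of the naturality square reduce to $N_{\pi_K}(\alpha(g_0^{-1}\cdot\text{-}))$ after substituting $g \mapsto 1$ on generators.

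The two constructions are mutually inverse: starting with $\alpha$, applying the inverse then forward map gives back $\alpha$ because $\pi_1 \: G \to G$ is the identity; starting with $\phi$, naturality of $\phi$ against $\pi_H \: G \to G/H$ combined with $N_{\pi_H}(i_1(a)) = i_H(a)$ yields $\phi_{G/H}(i_H(a)) = N_{\pi_H}(\phi_G(i_1(a)))$, which is exactly the formula rebuilt from $\alpha$. The main obstacle is not conceptual but bookkeeping: keeping straight the distinction between left $G$-action on $A$, the right-translation-like $\rho(g)$ action on $G$ regarded as a $G$-set, and the resulting $H$-invariance of $\pi_H$ — a slip in any of these would corrupt the well-definedness check. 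Once these conventions are pinned down, the verifications are routine, and naturality of the bijection in $A$ and $R$ is then immediate from the explicit formulas.
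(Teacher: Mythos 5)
Your proof is correct and follows essentially the same route as the paper: your forward map $\phi\mapsto\phi_G\circ i_1$ is composition with the paper's unit $\eta(a)=i_1(a)$, and your backward map via $N_{\pi_H}(\alpha(g^{-1}a))$ coincides with the paper's counit $\ep(i_t(a))=N_{\hat t}(a)$ once one uses equivariance of $\alpha$ and the factorisation $\hat{gt_0}=\pi_H\circ\rho(g^{-1})$. The only difference is presentational: you package the adjunction as an explicit hom-set bijection and verify the mutual-inverseness and naturality that the paper leaves to the reader.
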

\begin{proof}
 For any $A\in\Semirings_G$ we can define 
 \[ \eta\:A\to\om'L'A=L'A(G) \]
 by $\eta(a)=i_1(a)$.  This is a semiring map and satisfies
 \[ g\,\eta(a) = N_{\rho(g)}(i_1(a)) = i_{\rho(g)(1)}(a) =
     i_{g^{-1}}(a) = i_1(ga) = \eta(ga),
 \]
 so it is $G$-equivariant.  

 Now consider a multiplicative coefficient system $R$.  Let $T$ be a
 $G$-orbit.  For any $t\in T$ we have a $G$-map $\hat{t}\:G\to T$
 given by $\hat{t}(g)=gt$.  We let $\ep\:L'(R(G))(T)\to R(T)$ be the
 unique ring map such that $\ep(i_t(a))=N_{\hat{t}}(a)$ for all
 $a\in R(G)$.  This defines a natural map $\ep\:L'\om'\to 1$.

 We leave it to the reader to check the triangle identities, giving an
 adjunction as claimed.
\end{proof}

\section{Filtrations}
\label{sec-filt}

\begin{definition}\label{defn-k-free}
 \begin{itemize}
  \item[(a)] We say that a $G$-set $X$ is \emph{$k$-free} if all
   stabiliser groups of points in $X$ have order at most $k$.
  \item[(b)] For any $G$-orbit $U$, the \emph{coorder} of $U$ is
   $|G|/|U|$ (which is the order of the stabiliser group of any point
   in $U$).
  \item[(c)] We say that a coefficient system $N$ is \emph{$k$-pure}
   if $N(U)=0$ whenever $\coord(U)\neq k$.
  \item[(d)] We say that a Mackey functor $M$ is \emph{$k$-pure} if
   the coefficient system $qM$ is $k$-pure.
  \item[(e)] For any coefficient system $N$, we define subsystems
   $F_kN$ by 
   \[ F_kN(U) = \begin{cases}
       N(U) & \text{ if } \coord(U) \leq k \\
       0 & \text{ otherwise. }
      \end{cases}
   \]
  \item[(f)] For any Mackey functor $M$ and any $G$-set $X$, we let
   $F_kM(X)$ denote the set of elements $m\in M(X)$ that can be
   written in the form $m=T_pn$ for some $k$-free $G$-set $W$, some
   $G$-map $p\:W\to X$, and some element $n\in M(W)$.
 \end{itemize}
\end{definition}

\begin{lemma}\label{lem-k-free}
 If $Y$ is $k$-free and there exists a $G$-map $f\:X\to Y$ then $X$ is
 also $k$-free.
\end{lemma}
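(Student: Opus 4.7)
The plan is to argue pointwise using the elementary fact that a $G$-equivariant map cannot decrease isotropy. Specifically, I would fix an arbitrary point $x \in X$, set $y = f(x) \in Y$, and show that $\stab_G(x) \leq \stab_G(y)$. Indeed, if $g \in \stab_G(x)$, then $g \cdot y = g \cdot f(x) = f(g \cdot x) = f(x) = y$, so $g \in \stab_G(y)$.

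Having established the inclusion of stabilisers, the conclusion is immediate: since $Y$ is $k$-free, we have $|\stab_G(y)| \leq k$, and therefore $|\stab_G(x)| \leq |\stab_G(y)| \leq k$. As $x$ was arbitrary, every stabiliser group in $X$ has order at most $k$, so $X$ is $k$-free.

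There is really no obstacle here; the lemma is a direct consequence of the definition together with the equivariance of $f$. The only thing worth noting is that we do not need $f$ to be injective, surjective, or have any other special property—pure equivariance suffices.
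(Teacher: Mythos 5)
Your proof is correct and takes exactly the same approach as the paper, which simply observes that $\stab_G(x)\leq\stab_G(f(x))$; you have merely spelled out the one-line verification of this inclusion.
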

\begin{proof}
 This is clear because $\stab_G(x)\leq\stab_G(f(x))$.
\end{proof}

\begin{proposition}\label{prop-FkM-mackey}
 $F_kM$ is a sub-Mackey functor of $M$, which satisfies $F_kM(X)=M(X)$
 whenever $X$ is $k$-free.  Moreover, we have $qF_kM=F_kqM$.
\end{proposition}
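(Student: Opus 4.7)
The plan is to verify the three assertions in turn, using only the formal properties of $T$, $R$, the Mackey double-coset/pullback property, and Lemma~\ref{lem-k-free}.

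First I would check that $F_kM$ is a sub-Mackey functor. For the subsemigroup property, the zero element $0=T_z()$ with $z\:\emptyset\to X$ lies in $F_kM(X)$ since $\emptyset$ is trivially $k$-free, and if $m_i=T_{p_i}(n_i)$ with $p_i\:W_i\to X$ and $W_i$ $k$-free ($i=1,2$), then $W_1\amalg W_2$ is $k$-free and $m_1+m_2=T_{(p_1,p_2)}(n_1,n_2)$. For closure under $T_f$, functoriality of $T$ gives $T_f T_p(n)=T_{fp}(n)$, and $W$ remains $k$-free. For closure under $R_f\:M(Y)\to M(X)$, given $m=T_p(n)$ with $p\:W\to Y$ and $W$ $k$-free, form the pullback square
\[ \xymatrix{
 \tW \ar[r]^{\tp} \ar[d]_{\tf} & W \ar[d]^p \\
 X  \ar[r]_f & Y
} \]
By the Mackey property $R_fT_p=T_{\tf}R_{\tp}$, so $R_f(m)=T_{\tf}R_{\tp}(n)$; since $\tp\:\tW\to W$ exists and $W$ is $k$-free, Lemma~\ref{lem-k-free} shows that $\tW$ is $k$-free. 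Finally, the product preservation $F_kM(X\amalg Y)\simeq F_kM(X)\tm F_kM(Y)$ follows from the fact that the isomorphism $M(X\amalg Y)\simeq M(X)\tm M(Y)$ is given by $(R_i,R_j)$ with inverse $(m_X,m_Y)\mapsto T_i(m_X)+T_j(m_Y)$; both directions visibly preserve $F_k$ by the preceding cases.

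If $X$ is $k$-free, then $m=T_{1_X}(m)$ exhibits $m\in F_kM(X)$, so $F_kM(X)=M(X)$ in that case.

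For the identity $qF_kM=F_kqM$, fix an orbit $U$. If $\coord(U)\leq k$, then $U$ itself is $k$-free; moreover any orbit $U'$ admitting a map to $U$ is also $k$-free by Lemma~\ref{lem-k-free}, so $F_kM(U')=M(U')$ for all such $U'$. Consequently the defining quotient of $qF_kM(U)$ coincides with that of $qM(U)$, and both equal $F_kqM(U)$. If instead $\coord(U)>k$, I must show $qF_kM(U)=0$. An arbitrary element of $F_kM(U)$ is $T_p(n)$ with $p\:W\to U$ and $W$ $k$-free; decomposing $W=\coprod_i W_i$ into orbits and using semiadditivity gives $T_p(n)=\sum_i T_{p_i}(n_i)$ with $p_i\:W_i\to U$. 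Since $W_i$ is $k$-free we have $\coord(W_i)\leq k<\coord(U)$, so $|W_i|>|U|$, which forces $p_i$ to be a non-isomorphic surjection of orbits; moreover $n_i\in M(W_i)=F_kM(W_i)$. Hence each $T_{p_i}(n_i)$ lies in the subsemigroup generated by transfers from non-isomorphic maps of orbits, so its image in $qF_kM(U)$ vanishes. Thus $qF_kM(U)=0=F_kqM(U)$.

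The only step that requires any real care is the closure of $F_kM$ under restriction, which is where the Mackey pullback property together with Lemma~\ref{lem-k-free} is essential; everything else is a bookkeeping exercise using functoriality of $T$ and the semiadditive structure.
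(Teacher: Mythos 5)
Your proof is correct and follows essentially the same route as the paper's: pullback stability plus Lemma~\ref{lem-k-free} for closure under $R_f$, and the same two-case analysis on $\coord(U)$ for $qF_kM=F_kqM$ (including the observation that the element being killed lives in $F_kM$ of its source, so the transfer is genuinely a relation inside $qF_kM$). The only difference is that you spell out the subsemigroup and orbit-decomposition bookkeeping that the paper leaves implicit.
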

\begin{proof}
 It is clear from the definitions that
 $(F_kM)(X_0\amalg X_1)=(F_kM)(X_0)\tm(F_kM)(X_1)$.  It is also clear
 by construction that for any map $f\:X\to Y$, we have
 $T_fF_kM(X)\sse F_kM(Y)$.  Now consider an element $m\in F_kM(Y)$.
 By definition there exists a map $p\:W\to Y$ and an element
 $n\in M(W)$ with $m=T_pn$.  Now form a pullback square
 \[ \xymatrix{
  V \ar[r]^{\tilde{f}} \ar[d]_q & W \ar[d]^p \\
  X \ar[r]_f & Y.
 } \]
 As $V$ admits a map to $W$, it must be $k$-free.  We also have
 $R_pm=T_qR_{\tilde{f}}n$, so $R_fm\in F_kM(X)$.  This shows that
 $F_kM$ is indeed a sub-Mackey functor.  If $X$ is $k$-free then we
 can write every element $m\in M(X)$ as $T_1m$, showing that
 $F_kM(X)=M(X)$. 

 Now let $U$ be a $G$-orbit.  If $\coord(U)\leq k$ then it is clear that
 $F_kM(U)=M(U)$, and more generally $F_kM(V)=M(V)$ for all $G$-orbits
 that admit a map to $U$.  Thus, all ingredients in the definition of
 $qF_kM(U)$ are the same as the corresponding ingredients for
 $qM(U)$, so $qF_kM(U)=qM(U)$.  Suppose instead that $\coord(U)>k$.
 Every element $m\in F_kM(U)$ can be written as $T_pn$ for some
 $k$-free $G$-set $Y$ and some $n\in M(Y)$.  Note that $M(Y)=F_kM(Y)$,
 and that no orbit in $Y$ can map isomorphically to $U$.  It follows
 that $m$ becomes zero in $qF_kM(U)$, but $m$ was arbitrary so
 $qF_kM(U)=0$.  This shows that $qF_kM=F_kqM$.
\end{proof}

\begin{proposition}\label{prop-qM-M}
 Let $M$ be a $k$-pure Mackey functor.  
 \begin{itemize}
  \item[(a)] Then for orbits $T$ with $\coord(T)<k$ we have
   $M(T)=qM(T)=0$.
  \item[(b)] For orbits $T$ with $\coord(T)=k$ we have
   $qM(T)=M(T)$.
  \item[(c)] For $(k-1)$-free $G$-sets $X$ we have $M(X)=0$.
  \item[(d)] For $k$-free $G$-sets $X$, the map
   $\eta\:M(X)\to rqM(X)$ is an isomorphism.
  \item[(e)] If $qM(T)=0$ whenever $\coord(T)=k$, then $M=0$.
 \end{itemize}
\end{proposition}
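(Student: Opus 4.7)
The plan is to establish the five parts in order, with induction on coorder as the main technique throughout. All the statements build on part (a), so the real work is there.

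For (a), I will induct on $\coord(T)$. Since $\coord(T) < k$ and $M$ is $k$-pure, $qM(T) = 0$, so by the definition of $q$ every element of $M(T)$ is a sum of terms $T_u(n)$ where $u \: U' \to T$ is a non-isomorphism of transitive $G$-sets. By Lemma~\ref{lem-sections}(a) such a $u$ is a surjection with $|U'| > |T|$, hence $\coord(U') < \coord(T)$, so $M(U') = 0$ by induction and the sum vanishes. The base case is $\coord(T) = 1$, where $T \cong G$: no proper non-isomorphism $U' \to G$ of orbits exists, so $M(T) = qM(T) = 0$ directly from purity.

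Part (b) is then immediate by the same analysis: the kernel of the quotient map $M(T) \to qM(T)$ is generated by images $T_u M(U')$ with $\coord(U') < \coord(T) = k$, all of which vanish by (a). Part (c) follows from (a) and semiadditivity: a $(k-1)$-free $G$-set decomposes into orbits of coorder at most $k-1$, and Mackey functors convert disjoint unions into products of zeros.

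For (d), the semiadditivity of both $M$ and $rqM$ reduces to the case $X = G/H$ with $|H| \leq k$. Proposition~\ref{prop-rNGH} expresses $rqM(G/H)$ as $\prod_i qM(G/K_i)^{W_H K_i}$ over a subgroup system of $H$. By purity only factors with $|K_i| = k$ survive. If $|H| < k$ there are none, and both $M(G/H)$ and $rqM(G/H)$ vanish by (c) and purity respectively. If $|H| = k$, the only subgroup $K \leq H$ of order $k$ is $H$ itself, with $W_H H = 1$, so $rqM(G/H)$ collapses to $qM(G/H)$, which equals $M(G/H)$ by (b). The last step is to verify that under these identifications $\eta$ corresponds to the identity, which follows by evaluating the defining formula $\eta(m)(U \xra{x} X) = \pi R_x(m)$ on the tautological map $G/H \xra{1} G/H$.

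Finally, (e) is another induction on $\coord(T)$: for $\coord(T) < k$ use (a); for $\coord(T) = k$ use (b) together with the vanishing hypothesis on $qM$; for $\coord(T) > k$, purity gives $qM(T) = 0$, so $M(T)$ is generated by transfers from orbits of strictly smaller coorder, which vanish by induction. Applying semiadditivity once more then yields $M(X) = 0$ for every finite $G$-set $X$. The main point requiring care is part (d), where one must verify that the projection onto the single surviving factor of the product in Proposition~\ref{prop-rNGH} matches $\eta$ correctly; everything else is a direct cascade of the inductions begun in (a).
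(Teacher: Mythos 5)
Your overall structure matches the paper's: induct on coorder, use purity, reduce $(c)$--$(e)$ to $(a)$--$(b)$ by decomposition into orbits, and read $(d)$ off from Proposition~\ref{prop-rNGH}. The identification of $\eta$ as the identity in~$(d)$ and the surviving factor argument via $W_H H = 1$ are both correct.

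There is, however, a genuine logical slip in parts~(a) and~(e). You write that since $qM(T)=0$, ``by the definition of $q$ every element of $M(T)$ is a sum of terms $T_u(n)$.'' That inference is invalid here. The quotient $qM(T)$ is $M(T)/N$ for the subsemigroup $N$ generated by images of non-isomorphic transfers, and $M/N=0$ does \emph{not} imply $M=N$ for semigroup quotients --- this is exactly the pathology exhibited in Remark~\ref{rem-funny-quotient}. (Mackey functors in this paper are semigroup-valued, not group-valued, so you cannot appeal to the usual kernel/cokernel reasoning.) The fix is to run the induction the other way around, as the paper does: first use the inductive hypothesis to conclude that $M(U')=0$ for all orbits $U'$ admitting a non-isomorphism $U'\to T$ (these have $\coord(U')<\coord(T)$), hence the subsemigroup $N$ is trivial, hence $qM(T)=M(T)/0=M(T)$; only then apply purity to conclude $M(T)=0$. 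Your part~(e) repeats the same pattern and should be patched the same way. Everything else stands once this reordering is made.
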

\begin{proof}
 First, $k$-purity of $M$ means by definition that $qM(T)=0$ whenever
 $\coord(T)\neq k$.

 Next, recall that $qM(T)$ is the quotient of $M(T)$ by the sum of the
 images of transfers from orbits with strictly larger coorder.  Thus,
 if $M(U)=0$ such orbits (which is vacuously true if $T=G$), then
 $qM(T)=M(T)$.  In particular, if $qM(T)=0$, then $M(T)=0$.
 Claims~(a) and~(b) follow easily by induction on the coorder.  The
 same argument also proves~(e).

 Claim~(c) follows from~(a) by decomposing $X$ into orbits.
 Similarly, it will be enough to prove~(d) when $X=G/H$ with $|H|=k$.
 Then Proposition~\ref{prop-rNGH} gives
 \[ rqM(X) = \left[\prod_{K\leq H} qM(G/K)\right]^H. \]
 If $K$ is a proper subgroup of $H$ then $qM(G/K)=0$, but in the case
 $K=H$ we have $qM(G/H)=M(G/H)$.  Thus, the above product is just
 $M(G/H)$, and $H$ acts trivially on this, so $rqM(G/H)=M(G/H)$ as
 required.  (We leave the reader to check that the isomorphism
 $M(G/H)\to rqM(G/H)$ implicit in this argument is just the same as
 $\eta$.) 
\end{proof}

\begin{proposition}\label{prop-pure-filt}
 Let $M$ be a $k$-pure, additively complete Mackey functor.  Then
 $F_jM=0$ for $j<k$, and $F_jM=M$ for $j\geq k$.
\end{proposition}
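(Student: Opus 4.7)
My plan is to handle the two claims separately, with the first being immediate and the second proceeding by induction on coorder.

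For $F_jM = 0$ when $j < k$: if $W$ is any $j$-free $G$-set with $j < k$, then $W$ is $(k-1)$-free, so by Proposition~\ref{prop-qM-M}(c), $M(W) = 0$. Hence the only element of $F_jM(X)$ is $T_p(0) = 0$, giving $F_jM = 0$.

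For $F_jM = M$ when $j \geq k$: since any $k$-free set is $j$-free when $j \geq k$, we have $F_kM \leq F_jM$, so it is enough to prove $F_kM = M$. Both are sub-Mackey functors (Proposition~\ref{prop-FkM-mackey}) and convert disjoint unions to products, so decomposing into orbits reduces the problem to showing $F_kM(T) = M(T)$ for every $G$-orbit $T$. I will do this by induction on $\coord(T)$. When $\coord(T) < k$, $M(T) = 0$ by Proposition~\ref{prop-qM-M}(a); when $\coord(T) = k$, $T$ itself is $k$-free, so $F_kM(T) = M(T)$ by the second clause of Proposition~\ref{prop-FkM-mackey}.

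For the inductive step with $\coord(T) > k$: the $k$-purity hypothesis gives $qM(T) = 0$. The key bridge is to turn this into a workable statement. Because $M$ is additively complete, $M(T)$ is an abelian group, and Remark~\ref{rem-congruences-subgroups} identifies congruences on an abelian group with subgroups. Thus $qM(T) = 0$ means exactly that every $m \in M(T)$ lies in the subgroup generated by the transfers $T_u(n)$ with $u\colon U' \to T$ a non-isomorphism of orbits; equivalently, $m$ can be written as a finite $\Z$-linear combination $m = \sum_i \ep_i\,T_{u_i}(n_i)$ with $\ep_i \in \{+1,-1\}$ and $n_i \in M(U'_i)$. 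By Lemma~\ref{lem-sections}(a), each $U'_i$ has $\coord(U'_i) < \coord(T)$, so by the inductive hypothesis each $n_i$ lies in $F_kM(U'_i)$, say $n_i = T_{p_i}(\tilde{n}_i)$ with $W_i$ being $k$-free and $p_i\colon W_i \to U'_i$. Then $T_{u_i}(n_i) = T_{u_i p_i}(\tilde{n}_i) \in F_kM(T)$ since $W_i$ is still $k$-free. The subset $F_kM(T) \sse M(T)$ is actually a subgroup: it is closed under addition via disjoint unions $W \amalg W'$, and closed under negation since $-T_p(n) = T_p(-n)$ using additive inverses in $M(W)$. Hence the combination $m$ lies in $F_kM(T)$, completing the induction.

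The main obstacle I anticipate is this conversion of the purity hypothesis into a usable description: a quotient of semigroups can vanish without each element being forced to zero (Remark~\ref{rem-funny-quotient}), so additive completeness is essential for invoking Remark~\ref{rem-congruences-subgroups} and extracting a concrete expression of each $m$ as a $\Z$-linear combination of transfers.
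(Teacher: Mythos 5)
Your proof is correct, but it follows a genuinely different route from the paper's. The paper argues globally: since $q$ has a right adjoint it preserves colimits, so $q(M/F_kM) = q(M)/q(F_kM) = q(M)/F_kq(M) = 0$ by purity; then Proposition~\ref{prop-qM-M}(e) gives $M/F_kM = 0$, and additive completeness rules out the pathology of Remark~\ref{rem-funny-quotient} to conclude $F_kM = M$. You instead work locally, by induction on coorder of an orbit $T$, at each step using additive completeness to rewrite $qM(T)=0$ as an explicit presentation of every $m\in M(T)$ as a sum of transfers from orbits of strictly smaller coorder, then feeding the inductive hypothesis into those terms. Both arguments lean on additive completeness at exactly the same conceptual point (turning ``the quotient by $N$ is zero'' into ``$N$ is everything''), but the paper invokes it once at the end for the Mackey-functor quotient, while you invoke it orbit by orbit. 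Your version has the advantage of being fully elementary and self-contained; the paper's is shorter but quotes the more machinery-laden Propositions~\ref{prop-qM-M}(e) and~\ref{prop-FkM-mackey} together with the colimit-preservation of $q$. One small stylistic note: since each $T_u(M(U'))$ is already a subgroup when $M$ is additively complete, you can drop the signs $\ep_i$ and write $m = \sum_i T_{u_i}(n_i)$ directly; but your more careful phrasing is of course not wrong.
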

\begin{proof}
 First suppose that $j<k$.  We then have $M(Y)=0$ for all $j$-free
 sets $Y$, and it follows immediately that $F_jM=0$.  Next, as $q$ has
 a right adjoint, it preserves colimits, so $q(M/F_kM)=q(M)/q(F_kM)$.
 Now $q(F_kM)=F_kq(M)$ by Proposition~\ref{prop-FkM-mackey}, and
 purity implies that $F_kq(M)=q(M)$, so we have $q(M/F_kM)=0$.
 Proposition~\ref{prop-qM-M} therefore tells us that $M/F_kM=0$.  As
 $M$ is additively complete, pathologies like
 Remark~\ref{rem-funny-quotient} cannot occur, and we deduce that
 $F_kM=M$ as claimed.   If $k\leq j\leq |G|$ then it is clear that
 $F_kM\leq F_jM$, and so $F_jM=M$ as well.
\end{proof}

\begin{lemma}\label{lem-complete-quotient}
 Let $M$ be a semigroup, and let $N$ be a subsemigroup that is
 additively complete.  Then the set
 \[ E_N =
     \{(m_0,m_1)\in M^2\st m_1=m_0+n \text{ for some } n\in N\}
 \]
 is the smallest congruence on $M$ containing $0\tm N$, so
 $M/N=M/E_N$.  It follows that $M/N=0$ iff $M=N$, and that $M/N$ is
 additively complete iff $M$ is additively complete.
\end{lemma}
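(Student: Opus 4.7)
The plan is to verify each clause by unpacking the definitions and using crucially that, since $N$ is additively complete, it is actually an abelian group containing $0$ and closed under negation.

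First I would check that $E_N$ is an equivalence relation. Reflexivity uses $0\in N$ (so $m=m+0$); symmetry is the key place the additive completeness of $N$ is used: if $m_1=m_0+n$ with $n\in N$ then $-n\in N$ and $m_0=m_1+(-n)$; transitivity is immediate from closure of $N$ under $+$. To see that $E_N$ is a subsemigroup of $M\tm M$, if $m_1=m_0+n$ and $m_1'=m_0'+n'$ then $m_1+m_1'=(m_0+m_0')+(n+n')$ with $n+n'\in N$. This shows $E_N$ is a congruence, and by construction it contains $0\tm N$ (taking $m_0=0$, $m_1=n$).

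Next I would prove minimality. Let $E$ be any congruence on $M$ containing $0\tm N$. Given $(m_0,m_1)\in E_N$, write $m_1=m_0+n$ with $n\in N$. Then $(m_0,m_0)\in E$ (reflexivity) and $(0,n)\in E$ (hypothesis); since $E$ is a subsemigroup of $M\tm M$, the sum $(m_0+0,m_0+n)=(m_0,m_1)$ lies in $E$. Hence $E_N\leq E$, and so $E_N$ is the smallest congruence containing $0\tm N$. By Definition~\ref{defn-quotient-semigroup}, $M/N=M/E_N$.

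For the criterion $M/N=0$, I would identify the $E_N$-class of $0$ explicitly. The relation $(m,0)\in E_N$ means $0=m+n$ for some $n\in N$, so $m=-n$ lies in $N$ (using again that $N$ is a group). Conversely any $m\in N$ satisfies $(0,m)\in E_N$ and hence $(m,0)\in E_N$ by symmetry. So the class of $0$ equals $N$, and every element of $M$ is $E_N$-equivalent to $0$ iff $M=N$; this gives $M/N=0\iff M=N$.

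Finally, for the statement about additive completeness, the forward direction is routine: if $M$ is additively complete then inverses pass to the quotient. For the converse, suppose $M/N$ is additively complete and fix $m\in M$. Choose $m'\in M$ with $[m]+[m']=0$ in $M/N$; by the previous paragraph this forces $m+m'\in N$, and since $N$ is a group $-(m+m')\in N\sse M$, so $m'-(m+m')\in M$ is an additive inverse for $m$. There is no real obstacle in this lemma beyond keeping straight that ``additively complete'' is what lets $E_N$ be symmetric and what lets the class of $0$ be computed cleanly as $N$.
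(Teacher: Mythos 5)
Your proof is correct and is exactly the "straightforward" argument the paper has in mind (the paper's entire proof text is "Straightforward."). The key observations — that $0\in N$ gives reflexivity, that additive inverses in $N$ give symmetry, that the $E_N$-class of $0$ is precisely $N$, and that the converse completeness implication is handled by pulling an inverse of $[m]$ back and correcting by $-(m+m')\in N$ — are all in order.
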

\begin{proof}
 Straightforward.
\end{proof}

\begin{proposition}\label{prop-qM-complete}
 Let $M$ be a Mackey functor such that the coefficient system $qM$ is
 additively complete.  Then $M$ is also additively complete, as are
 the subobjects $F_kM$ and the quotient objects $F_kM/F_{k-1}M$.
\end{proposition}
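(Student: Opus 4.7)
The plan is to induct on the coorder of a $G$-orbit $U$ to show that each $M(U)$ is additively complete, then extend to arbitrary finite $G$-sets by orbit decomposition, and finally deduce the statements about $F_kM$ and $F_kM/F_{k-1}M$. The base case $\coord(U)=1$ is the orbit $U=G$: by Lemma~\ref{lem-sections}(a) any equivariant map between orbits targeting $G$ must be an isomorphism, so there are no non-trivial transfer images appearing in Definition~\ref{defn-q}, giving $qM(G)=M(G)$, which is additively complete by hypothesis.

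For the inductive step I fix an orbit $U$ with $\coord(U)=k$ and assume additive completeness of $M(U')$ for every orbit $U'$ with $\coord(U')<k$. Any non-isomorphic equivariant map $u\:U'\to U$ between orbits satisfies $|U'|>|U|$ and hence $\coord(U')<k$, so the inductive hypothesis applies to every summand of the subsemigroup $N=\sum_u T_uM(U')\leq M(U)$ from Definition~\ref{defn-q}. Each $T_u$ is a semigroup homomorphism, so each image $T_uM(U')$ is additively complete, and a finite sum of additively complete subsemigroups of a commutative semigroup is again additively complete; therefore $N$ is additively complete. Since $qM(U)=M(U)/N$ is additively complete by hypothesis, Lemma~\ref{lem-complete-quotient} lets me conclude that $M(U)$ is additively complete. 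The statement for an arbitrary finite $G$-set $X$ then follows from the orbit decomposition $M(\coprod_i U_i)\simeq\prod_i M(U_i)$.

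For $F_kM$, any element $m\in F_kM(X)$ has the form $T_p(n)$ for some $k$-free $G$-set $W$, equivariant map $p\:W\to X$, and element $n\in M(W)$; since $M(W)$ is now known to be additively complete, $T_p(-n)\in F_kM(X)$ serves as an additive inverse for $m$, so $F_kM$ is additively complete. For the quotient $F_kM/F_{k-1}M$, the inclusion $F_{k-1}M\leq F_kM$ holds because any $(k-1)$-free set is $k$-free, and at each $X$ the Mackey-functor quotient $(F_kM/F_{k-1}M)(X)$ is a commutative semigroup quotient of the additively complete semigroup $(F_kM)(X)$; any quotient of an additively complete commutative semigroup is automatically additively complete, so the conclusion follows.

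The main technical subtlety is verifying that the subsemigroup $N$ in the inductive step really is additively complete, because Lemma~\ref{lem-complete-quotient} requires this of the subsemigroup as an input before it can transfer completeness between $M(U)/N$ and $M(U)$; closing this loop is exactly what makes the induction on coorder viable, and everything else is bookkeeping with orbit decomposition and the defining formulas for $F_kM$ and quotient Mackey functors.
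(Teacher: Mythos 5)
Your proof is correct, and the core of it — induction on coorder, noting that the subsemigroup $N$ of transfers from strictly smaller coorder is additively complete so Lemma~\ref{lem-complete-quotient} applies — matches the paper's argument for showing $M(U)$ is a group (the paper inducts on $|H|$ for $U = G/H$, which is the same thing). Your treatment of $F_kM$ differs: the paper applies the first part of the proposition recursively, using the identity $qF_kM = F_kqM$ from Proposition~\ref{prop-FkM-mackey} to see that $qF_kM$ inherits additive completeness from $qM$; you instead give a direct element-level argument, observing that every $m \in F_kM(X)$ has the form $T_p(n)$ with $n \in M(W)$ and $W$ a $k$-free set, so $T_p(-n)$ is an inverse already lying in $F_kM(X)$. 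Your version is more elementary in that it does not invoke $qF_kM=F_kqM$; the paper's version is more uniform in that it reduces everything to the same statement about $q$. Both are sound, and the remaining step on quotients is the same standard observation that a quotient of an abelian-group-valued Mackey functor is again abelian-group-valued.
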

\begin{proof}
 Consider an orbit $G/H$.  We may assume by induction that all the
 semigroups $M(G/K)$ with $|K|<|H|$ are additively complete.  Thus, if
 we let $N$ denote the sum of the images of the maps
 $T_p\:M(G/K)\to M(G/H)$, then $N$ is additively complete.  By
 assumption the quotient $qM(G/H)=M(G/H)/N$ is additively complete,
 and it follows from the Lemma that $M(G/H)$ is additively complete.
 Any finite $G$-set $X$ can be written as a disjoint union of orbits,
 so $M(X)$ is additively complete as claimed.

 Next, recall that $qF_kM=F_kqM$.  From this it is clear that $qF_kM$
 is also additively complete, so $F_kM$ is additively complete.  It is
 clear that any quotient of additively complete Mackey functors is
 again additively complete.  In particular, this applies to
 $F_kM/F_{k-1}M$. 
\end{proof}

\section{Rational Mackey functors and Tambara functors}
\label{sec-rational}

\begin{definition}\label{defn-rational}
 We say that a semigroup $A$ is \emph{rational} if every element has
 an additive inverse, and the map $n.1_A\:A\to A$ is an isomorphism
 for all positive integers $n$.  (This means that $A$ has a unique
 structure as a vector space over $\Q$ extending the given addition
 law.)  We say that a Mackey functor $M$ is rational if $M(X)$ is
 rational for all $X$, and similarly for Green functors, Tambara
 functors and (multiplicative) coefficient systems.  We write
 $\Q\Mackey_G$ for the category of rational Mackey functors, and
 similarly for $\Q\Green_G$, $\Q\Tambara_G$, $\Q\CSys_G$ and
 $\Q\MCSys_G$. 
\end{definition}

\begin{theorem}\label{thm-mackey-rational}
 The functors $q$ and $r$ restrict to give an equivalence
 $\Q\Mackey_G\simeq\Q\CSys_G$, and also an equivalence
 $\Q\Tambara_G\simeq\Q\MCSys_G$.
\end{theorem}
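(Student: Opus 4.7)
The plan is to show that both the unit $\eta$ and the counit $\ep$ of the adjunction $q\dashv r$ (Proposition~\ref{prop-qr-adjoint}) are isomorphisms on the rational subcategories.  First note that $q$ and $r$ both preserve rationality: $qM(T)$ is a quotient of the rational semigroup $M(T)$, and $rN(X)$ is an invariant subset of a product of rational semigroups.  The Tambara case will then follow from the Mackey case, since $\eta$ and $\ep$ are already Tambara/MCS morphisms (Proposition~\ref{prop-qr-tambara}) and any componentwise bijective Tambara morphism is automatically a Tambara isomorphism.

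For the counit $\ep\:qrN\to N$ with $N$ rational, Proposition~\ref{prop-ep-split} gives a section $\sg$ with $\ep\sg=1_N$, so it suffices to show $\sg$ is surjective.  Using $rN(G/H)=\left[\prod_{L\leq H}N(G/L)\right]^H$ from Proposition~\ref{prop-rNGH}, I aim to show that for each $H$-conjugacy class of proper subgroups $L<H$, the sub-tuple of $(n_L)_{L\leq H}\in rN(G/H)$ supported on that class lies in the image of a transfer from $rN(G/L)$ and hence vanishes in $qrN(G/H)$.  Explicitly, for the projection $p\:G/L\to G/H$ and for $\tilde m\in rN(G/L)$ with $\tilde m_L=n_L/|W_HL|$ and $\tilde m_M=0$ for $M<L$, a direct count of equivariant lifts $G/L\to G/L$ over $G/H$ (there are $|W_HL|$ of them, indexed by $N_H(L)/L$) shows $(T_p\tilde m)_L=n_L$, with $T_p\tilde m$ vanishing outside the $H$-conjugacy class of $L$.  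Rationality of $N$ permits the division by $|W_HL|$, and doing this for each conjugacy class leaves only the $L=H$ component, which is $\sg$ of something.

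For the unit $\eta\:M\to rqM$, I first treat the $k$-pure case.  By Propositions~\ref{prop-qM-M} and~\ref{prop-rNGH}, $rqP(G/H)=\bigoplus_{(K)}P(G/K)^{W_HK}$ with $(K)$ ranging over $H$-conjugacy classes of size-$k$ subgroups of $H$, and $\eta_P(p)=(R^H_Kp)_K$.  The cases $|H|<k$ and $|H|=k$ are immediate from Proposition~\ref{prop-qM-M}.  For $|H|>k$, iterated use of $qP(G/H')=0$ yields $P(G/H)=\sum_iT^H_{K_i}P(G/K_i)$; the double coset formula from Section~\ref{sec-mackey}, combined with $P(G/M)=0$ for $|M|<k$, gives $R^H_{K_i}T^H_{K_j}=0$ for $i\neq j$ and $R^H_{K_i}T^H_{K_i}=\sum_{w\in W_HK_i}w_*$.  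Setting $\psi((m_i))=\sum_iT^H_{K_i}m_i/|W_HK_i|$, one verifies $\eta\psi=1$ directly using $W_HK_i$-invariance of $m_i$, and $\psi\eta=1$ on generators $m=T^H_{K_i}m'$ by exploiting the identity $T^H_{K_i}\circ C_n=T^H_{K_i}$ for conjugation $C_n$ with $n\in N_H(K_i)$ (since the composite $G/K_i\to G/K_i\to G/H$ equals the projection when $n\in H$).  For general rational $M$, I induct on $k$ using the filtration $F_kM$: the subquotients $F_kM/F_{k-1}M$ are $k$-pure rational, because $q(F_kM/F_{k-1}M)=F_kqM/F_{k-1}qM$ via $qF_k=F_kq$ from Proposition~\ref{prop-FkM-mackey}.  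Applying $q$ to the short exact sequence yields an SES of coefficient systems; applying $r$ preserves exactness because $r$ involves taking $H$-invariants of $\Q[H]$-modules, which is exact by Maschke's theorem.  The 5-lemma, together with the induction hypothesis and the pure case, then gives $\eta_{F_kM}$ iso, and at $k=|G|$ we have $F_{|G|}M=M$, so $\eta_M$ is iso.

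The hard part will be the $k$-pure case computation: carefully choosing $H$-conjugacy class representatives $K_i$, ensuring $W_HK_i$-invariance at each stage, and verifying rigorously via the double coset formula and $k$-purity that the cross terms $R^H_{K_i}T^H_{K_j}$ for $i\neq j$ really do vanish.  Once this pure case is in hand, the reduction from general rational $M$ to its $k$-pure subquotients proceeds cleanly through the filtration, right exactness of $q$, and Maschke's theorem for the exactness of $r$.
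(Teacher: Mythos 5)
Your overall architecture matches the paper's — show the unit and counit of the $(q,r)$ adjunction become isomorphisms, handle the $k$-pure case first, then filter — but your key computations are genuinely different, replacing the paper's uniform coset-space counting trick with classical double-coset and transfer-lifting computations.

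For the counit, you show $\sg$ (from Proposition~\ref{prop-ep-split}) is surjective by killing each $H$-conjugacy class of proper $L<H$ with an explicit transfer $T_p\tilde m$ from $rN(G/L)$, dividing by $|W_HL|$ to match the $|W_HL|$ equivariant lifts. This works: the $H$-invariance of $(n_L)_L$ ensures $n_L$ is $W_HL$-invariant so the division cooperates with the lift sum, and the support of $\tilde m$ at the single subgroup $L$ ensures $T_p\tilde m$ vanishes away from the conjugacy class of $L$. The paper instead introduces the $G$-set $C_k$ of size-$k$ cosets and the single identity $T_pR_pm=(|G|/k)m$, which collapses all conjugacy classes at once and avoids explicitly tracking which component is being killed by which transfer.

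For the unit, you build a two-sided inverse $\psi((m_i))=\sum_iT^H_{K_i}m_i/|W_HK_i|$ and verify it directly via the double coset formula, using $k$-purity to kill cross-terms $R^H_{K_i}T^H_{K_j}$ for nonconjugate $K_i,K_j$ and $T^H_{K_i}\circ C^{K_i}_n=T^H_{K_i}$ for $n\in N_H(K_i)$ to handle the diagonal. The paper instead defines a single $\psi$ as a sum over maps $U\to X$ from coorder-$k$ orbits, proves it is a Mackey morphism, and shows $\psi\eta=(|G|/k)\cdot 1$, reusing the already-established surjectivity of $\eta$ (Corollary~\ref{cor-eta-epi}) rather than verifying $\psi\eta=1$ on generators. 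Your route is more elementary and follows the familiar template from classical rational Mackey functor theory; the paper's is slicker because the same $C_k$ device does double duty in the counit and unit proofs, and it avoids choosing a subgroup system.

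The filtration/induction step is essentially identical. Your justification that $r$ is exact via Maschke is correct but slightly stronger than needed; the paper observes instead that the sequence $qF_{k-1}M\to qF_kM\to qQ_kM$ is split because the pieces live in disjoint coorders, so any additive functor preserves its exactness — this avoids invoking rationality again at this point. Both are valid; yours quietly relies on the (true) fact that all the relevant semigroups are $\Q$-vector spaces, which you noted at the start.
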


The proof will follow after some preparatory results.

\begin{definition}
 Let $k$ be a divisor of $|G|$.  We put 
 \[ C_k = \{A\sse G\st A=Hx \text{ for some } x\in G 
            \text{ and } H\leq G \text{ with } |H|=k \}.
 \]
 After noting that $gHx=(gHg^{-1})gx$, we see that $C_k$ has a natural 
 $G$-action by multiplication on the left. 
\end{definition}

\begin{lemma}\label{lem-Ck-orbits}
 All orbits in $C_k$ have coorder $k$.  Moreover, for every orbit $U$
 of coorder $k$ we have $|\Map_G(U,C_k)|=|G|/k=|U|$.
\end{lemma}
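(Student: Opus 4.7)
The plan is to exploit the fact that $G$ acts on $C_k$ by left multiplication and that, for $A = Hx \in C_k$, the stabilizer of $A$ in $G$ is easy to compute. First I will observe that for any element $A = Hx \in C_k$, the set $\{g \in G : gA = A\}$ equals $\{g : gHx = Hx\} = \{g : gH = H\} = H$, so $\stab_G(A) = H$, which has order $k$. This immediately shows that every orbit in $C_k$ has points with stabilizer of order $k$, hence coorder $k$.

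Next, to count $\Map_G(U, C_k)$ for $U$ an orbit of coorder $k$, I will pick a basepoint $u_0 \in U$ with stabilizer $K := \stab_G(u_0)$, so $|K| = k$. An equivariant map $\phi \: U \to C_k$ is determined by $\phi(u_0)$, and the possible images are exactly those $A \in C_k$ with $K \leq \stab_G(A)$. By the first part, $\stab_G(A)$ has order $k = |K|$, forcing $K = \stab_G(A)$. Writing $A = Hx$, we conclude $H = K$, so $A$ ranges over the right cosets $Kx$ with $x \in G$.

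Finally, I will check that every such coset $Kx$ does arise as $\phi(u_0)$ for a unique equivariant map: given a coset $Kx$, the assignment $g u_0 \mapsto g(Kx)$ is well-defined precisely because $K$ stabilizes $Kx$, and it is manifestly $G$-equivariant. The distinct cosets $Kx$ are in bijection with $G/K$, of which there are $|G|/k = |U|$, giving $|\Map_G(U, C_k)| = |G|/k = |U|$.

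The argument is essentially a bookkeeping exercise; I do not anticipate any real obstacle, only the need to keep the right-versus-left conventions straight when identifying $\stab_G(Hx) = H$.
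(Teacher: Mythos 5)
Your proof is correct and takes essentially the same approach as the paper: identify $\stab_G(Hx)=H$ to get the coorder statement, then count equivariant maps $G/K\to C_k$ as $K$-fixed points of $C_k$, which forces $H=K$ and reduces to counting the cosets $Kx$. The paper simply compresses this into the one-line identification $\Map_G(G/K,C_k)=(C_k)^K=\{Kx\st x\in G\}$; your version spells out the well-definedness check that the paper treats as immediate.
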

\begin{proof}
 It is clear that $\stab_G(Hx)=H$.  Thus, for $K\leq G$ with $|K|=k$
 we have 
 \[ |\Map_G(G/K,C_k)|=|(C_k)^K|=|\{Kx\st x\in G\}|=|G|/k. \]
\end{proof}

\begin{proposition}\label{prop-qr-rational}
 Let $N$ be a rational coefficient system.  Then the map
 $\ep\:qrN\to N$ is an isomorphism.
\end{proposition}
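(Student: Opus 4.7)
The plan is as follows. Since $\ep\sg = 1$ was already established in Proposition~\ref{prop-ep-split}, the counit $\ep$ is split surjective, and it suffices to prove that $\ep\:(qrN)(U)\to N(U)$ is injective for every orbit $U$. Equivalently, I must show that any $n\in(rN)(U)$ with $n(U\xra{1}U)=0$ lies in the subsemigroup of $(rN)(U)$ generated by images of transfers $T_p\:(rN)(V)\to(rN)(U)$ along non-isomorphic maps $p\:V\to U$ of orbits, so that $\pi(n)=0$ in $(qrN)(U)$.

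Fix $U=G/H$ and use Proposition~\ref{prop-rNGH} to identify
\[
(rN)(G/H) \;=\; \bigl[\prod_{K\leq H} N(G/K)\bigr]^H \;=\; \prod_{[K]}\, N(G/K)^{W_H K},
\]
where $[K]$ ranges over $H$-conjugacy classes of subgroups of $H$. Under this identification the condition $\ep(n)=0$ becomes $n_H=0$, and the remaining data is a family of components $n_L\in N(G/L)^{W_H L}$, one for each chosen representative $L<H$, which together determine $n$ via $H$-equivariance.

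For each such proper $L$, I would take the standard projection $p_L\:G/L\to G/H$ (which is not an isomorphism) and the element $\sg_0(n_L)\in(rN)(G/L)$ from Proposition~\ref{prop-ep-split}: it is supported only at the identity $G/L\xra{1}G/L$, where its value is $n_L$. The main computation is to evaluate $T_{p_L}\bigl(\sg_0(n_L)\bigr)$ component by component using Remark~\ref{rem-rN-limit}. For $K$ in the chosen subgroup system, the $K$-component is
\[
T_{p_L}\bigl(\sg_0(n_L)\bigr)_K \;=\; \sum_{\substack{u\:G/K\to G/L \\ p_L u = p_K}} u^{-1}_*(n_L),
\]
and nonzero summands require $u$ to be an isomorphism. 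A bookkeeping argument, parametrising isomorphisms $G/K\to G/L$ by $\{b\in G:b^{-1}Kb=L\}/L$ and imposing $p_L u=p_K$ (which forces $b\in H$), shows that this component vanishes unless $K$ and $L$ are $H$-conjugate, and that when $K=L$ the sum runs over $a\in N_H(L)/L=W_H L$ acting via $\rho(a)_*$ on $N(G/L)$. Since $n_L$ is already $W_H L$-invariant, every summand equals $n_L$, giving
\[
T_{p_L}\bigl(\sg_0(n_L)\bigr)_L \;=\; |W_H L|\cdot n_L,
\]
and $H$-equivariance propagates this to the other subgroups in $[L]$, producing exactly the $[L]$-isotypic piece $n^{[L]}$ of $n$.

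Using rationality of $N$ to invert the positive integer $|W_H L|$, I then obtain $n^{[L]} = |W_H L|^{-1} T_{p_L}\bigl(\sg_0(n_L)\bigr)$ as an explicit transfer, and summing over conjugacy classes $[L]$ with $L<H$ writes $n=\sum_{[L]} n^{[L]}$ as a sum of transfers, whence $\pi(n)=0$ in $(qrN)(G/H)$. The main obstacle is the middle step: one must carefully enumerate the $G$-isomorphisms $G/K\to G/L$ lifting $p_K$ along $p_L$, reconcile the $W_H L$-action with the convention of Definition~\ref{defn-rho-cosets}, and verify that subgroups merely $G$-conjugate (but not $H$-conjugate) to $L$ contribute nothing; once this combinatorics is in hand, rationality does the rest in a single stroke.
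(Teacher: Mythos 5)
Your argument is correct, and it takes a genuinely different route from the paper's. The paper first decomposes $N$ as a direct sum of $k$-pure coefficient systems, handles $\coord(U)\leq k$ by direct inspection of the formula in Proposition~\ref{prop-rNGH}, and for $\coord(U)>k$ invokes the auxiliary $G$-set $C_k$ (all orbits of coorder $k$, with $|\Map_G(V,C_k)|=|G|/k$): the projection $p\:C_k\tm U\to U$ satisfies $T_pR_p=(|G|/k)\cdot\mathrm{id}$ on $(rN)(U)$ while $\pi T_p=0$, forcing $\pi=0$ by rationality. You instead skip the purity reduction entirely and work directly with the identification $(rN)(G/H)\simeq\prod_{[K]}N(G/K)^{W_HK}$; for each proper $L<H$ you transfer up the $\sg_0$-section of the $L$-component along $p_L\:G/L\to G/H$, count the $G$-automorphisms of $G/L$ lying over $p_L$ (which is $|W_HL|$, with each contributing $n_L$ by $W_HL$-invariance), and divide by $|W_HL|$. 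Your computation is self-contained and has the pleasant side effect of producing an explicit two-sided inverse $n\mapsto\sum_{[L]}|W_HL|^{-1}T_{p_L}\sg_0(n_L)$ rather than just killing the kernel; the paper's approach is less explicit at this point but sets up the pure-filtration machinery that it reuses almost verbatim in Proposition~\ref{prop-eta-iso-pure} to handle the unit $\eta\:M\to rqM$, so it amortises better across the section. The one place you should make sure you have nailed down is the conversion from ``$L$-component is $|W_HL|\cdot n_L$ and components vanish off $[L]$'' to ``the element equals $|W_HL|\cdot n^{[L]}$'': this uses that an $H$-invariant tuple supported on a single $H$-conjugacy class is determined by any one component, which is true but worth a sentence.
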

\begin{proof}
 It is clear that $N$ is a direct sum of pure coefficient systems, so
 we may assume that $N$ itself is $k$-pure for some $k$ dividing
 $|G|$.  Given this, Proposition~\ref{prop-rNGH} can be rewritten as 
 \[ rN(G/H) = \left[\prod_{K\leq H,|K|=k} N(G/K)\right]^H. \]
 If $|H|<k$ then the product has no terms, so $rN(G/H)=0$, so
 certainly $qrN(G/H)=0=N(G/H)$.  If $|H|=k$ then the product
 is just $N(G/H)$ with $H$ acting trivially, so $rN(G/H)=N(G/H)$.
 Moreover, the previous case shows that there is nothing to kill to
 form $qrN(G/H)$, so $qrN(G/H)=N(G/H)$ as well.  We now see that the
 map $\ep\:qrN(U)\to N(U)$ is an isomorphism whenever
 $\coord(U)\leq k$.

 Now suppose instead that $\coord(U)>k$.  We then have $N(U)=0$, so we
 must show that $qrN(U)=0$.  Let $p\:C_k\tm U\to U$ be the
 projection.   Consider an element $m\in rN(U)$, given by a natural
 system of elements $m(V\xra{u}U)\in N(V)$ for all $G$-orbits $V$
 (wlog with $\coord(V)=k$) and all $G$-maps $u\:V\to U$.  From the
 definitions we have 
 \begin{align*}
  (R_pm)(V\xra{(c,u)}C_k\tm U) &= m(V\xra{u}U) \\
  (T_pR_pm)(V\xra{u}U)
    &= \sum_{c\in\Map_G(U,C_k)} (R_pm)(V\xra{(c,u)}C_k\tm U) \\
    &= |\Map_G(U,C_k)| m(V\xra{u}U) \\
    &= (|G|/k) m(V\xra{u}U), 
 \end{align*}
 so $T_pR_pm=(|G|/k)m$.  Now let $\pi\:rN(U)\to qrN(U)$ be the
 projection as usual.  Note that $C_k$ is $k$-free so the same is true
 of $C_k\tm U$, but $\coord(U)>k$; it follows that $\pi T_p=0$.  We
 therefore have $(|G|/k)\pi(m)=0\in qrN(U)$, but $N$ is rational so
 $\pi(m)=0$ as required.
\end{proof}

\begin{corollary}\label{cor-eta-epi}
 Let $M$ be a rational Mackey functor.  Then the map $\eta\:M\to rqM$
 is surjective.  
\end{corollary}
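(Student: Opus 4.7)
The plan is to prove surjectivity of $\eta_M$ by showing first that $q(\eta_M)\colon qM\to qrqM$ is an isomorphism, then that the cokernel $C$ of $\eta_M$ satisfies $qC=0$, and finally that this forces $C=0$.

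First I would observe that rationality of $M$ passes to $qM$: each transfer $T_u$ is $\Q$-linear, so $qM(U)$ is the $\Q$-linear quotient of $M(U)$ by the $\Q$-subspace spanned by transfers from proper suborbits, and is itself a $\Q$-vector space. Applying Proposition~\ref{prop-qr-rational} to $qM$ then gives that $\ep_{qM}\colon qrqM\to qM$ is an isomorphism, and the triangle identity $\ep_{qM}\circ q(\eta_M)=1_{qM}$ forces $q(\eta_M)$ to be an isomorphism as well (indeed, its inverse is $\ep_{qM}$).

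Next I would take $N=\eta_M(M)\leq rqM$, which is a sub-Mackey functor since $\eta_M$ is a Mackey morphism, and set $C=rqM/N$, formed pointwise as a quotient of $\Q$-vector spaces; this is legitimate because each $\eta_M(M(X))$ is a $\Q$-subspace of the $\Q$-vector space $rqM(X)$, and the Mackey structure on $rqM$ descends to $C$. Since $q$ is a left adjoint (Proposition~\ref{prop-qr-adjoint}) it preserves cokernels, and $\cok(q(\eta_M))=0$ because $q(\eta_M)$ is an isomorphism, so $qC=0$.

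Finally I would show by induction on $|H|$ that $C(G/H)=0$ for every subgroup $H\leq G$. The base case $H=1$ is immediate because $G=G/1$ admits no non-isomorphism equivariant map from any transitive $G$-set, so $qC(G)=C(G)=0$. For the inductive step, if $C(G/K)=0$ for all proper $K<H$, then every transfer from a proper suborbit into $C(G/H)$ is zero, so the subsemigroup modded out to form $qC(G/H)$ is $\{0\}$; hence $qC(G/H)=C(G/H)$, which is zero by hypothesis. Summing over orbits gives $C=0$, so $\eta_M$ is surjective. I do not anticipate a substantive obstacle; the key conceptual move is simply to couple the triangle identity with Proposition~\ref{prop-qr-rational} to see that rationality upgrades $q(\eta_M)$ from a split monomorphism to an isomorphism.
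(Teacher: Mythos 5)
Your proof is correct and follows essentially the same route as the paper's: triangle identity plus Proposition~\ref{prop-qr-rational} to make $q(\eta_M)$ an isomorphism, then $q$ preserves cokernels, then conclude the cokernel vanishes. The only difference is that where you re-derive the implication ``$qC=0\Rightarrow C=0$'' by induction on $|H|$, the paper simply cites Proposition~\ref{prop-qM-M}(e); your inline argument is in fact the content of that proposition's proof, and arguably a bit cleaner here since Proposition~\ref{prop-qM-M} is stated under a $k$-purity hypothesis that the cokernel does not obviously satisfy.
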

\begin{proof}
 The triangular identity for the $(q,r)$ adjunction says that the
 composite
 \[ qM \xra{q\eta_M} qrqM \xra{\ep_{qM}} qM \]
 is the identity.  Using the proposition we see that $\ep_{qM}$ is an
 isomorphism, so $q\eta_M$ is also an isomorphism.  Next, we note that
 $q$ has a right adjoint, so it preserves colimits, so
 $q(\cok(\eta_M))=\cok(q\eta_M)=0$.  Proposition~\ref{prop-qM-M}(e)
 now tells us that $\cok(\eta_M)=0$.  As everything is rational and
 therefore additively complete, we can deduce that $\eta_M$ is
 surjective. 
\end{proof}

\begin{proposition}\label{prop-eta-iso-pure}
 Let $k$ be a divisor of $|G|$, and let $M$ be a $k$-pure rational
 Mackey functor.  Then the map $\eta\:M\to rqM$ is an isomorphism.
\end{proposition}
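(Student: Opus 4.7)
The plan is first to harvest surjectivity of $\eta$ from Corollary~\ref{cor-eta-epi} and then to prove injectivity of $\eta_X$ for every finite $G$-set $X$. Since both $M$ and $rqM$ convert disjoint unions into products, it suffices to treat the case $X = G/H$.

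The next step is to unwind both sides explicitly. On the target side, combining Proposition~\ref{prop-rNGH} with $k$-purity (which forces $qM(G/L)=0$ for $|L|\neq k$ and $qM(G/K)=M(G/K)$ for $|K|=k$) gives
\[
 rqM(G/H) = \left[\prod_{K \leq H,\; |K| = k} M(G/K)\right]^H,
\]
under which $\eta_{G/H}(m)$ corresponds to the tuple $(R_K^H(m))_K$. Grouping by $H$-conjugacy classes $(K)$ of subgroups of order $k$, this identifies further with $\prod_{(K)} M(G/K_0)^{W_H(K_0)}$ for chosen representatives $K_0 \in (K)$. On the source side, Proposition~\ref{prop-pure-filt} and Proposition~\ref{prop-qM-M}(a) together yield $M = F_k M$ with $M(G/L) = 0$ for $|L| < k$, so every element of $M(G/H)$ is a sum of transfers $T_K^H(n_K)$ from subgroups $K \leq H$ of order exactly $k$. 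Using the identity $T_K^H \circ C_h = T_{K_0}^H$ whenever $hK_0h^{-1} = K$, we may collapse these transfers so that $m = \sum_{(K)} T_{K_0}^H(a_{(K)})$ for some $a_{(K)} \in M(G/K_0)$.

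The central computation is the Mackey double coset formula applied to $R_{K'}^H \circ T_{K_0}^H$. The terms whose intersection has order strictly less than $k$ are killed by $k$-purity, leaving only those indexed by $t \in H$ with $tK_0 t^{-1} = K'$. Consequently this composite vanishes when $K_0$ and $K'$ are not $H$-conjugate, while for $K' = K_0$ the surviving contributions are parametrized by $W_H(K_0) = N_H(K_0)/K_0$ and give the Weyl averaging
\[
 R_{K_0}^H T_{K_0}^H(a) = \sum_{w \in W_H(K_0)} C_w(a).
\]

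Rationality then closes the argument. The identity $T_{K_0}^H \circ C_w = T_{K_0}^H$ (an easy span manipulation) shows $T_{K_0}^H$ factors through the coinvariants $M(G/K_0)_{W_H(K_0)}$, which rationally are canonically identified with the invariants $M(G/K_0)^{W_H(K_0)}$ via averaging; and on invariants this averaging is just multiplication by $|W_H(K_0)|$, hence a rational isomorphism. Piecing these ingredients together produces a natural direct sum decomposition $M(G/H) \cong \bigoplus_{(K)} M(G/K_0)^{W_H(K_0)}$ matching the product description of $rqM(G/H)$, under which $\eta_{G/H}$ acts on the $(K)$-summand as multiplication by $|W_H(K_0)|$ and is therefore an isomorphism. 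The main obstacle is bookkeeping in the double coset calculation and in assembling the per-conjugacy-class isomorphisms into a single natural two-sided inverse to $\eta$; both reduce to careful but routine manipulations once the rational hypothesis is available.
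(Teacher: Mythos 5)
Your proof is correct and takes a genuinely different route from the paper's. The paper's proof avoids orbit-by-orbit double-coset bookkeeping entirely: it builds an explicit Mackey morphism $\psi\:rqM\to M$ by the global formula $\psi(m)=\sum_{U\in\orb(C_k)}\sum_{x\in\Map_G(U,X)}T_x(m(U\xra{x}X))$, using the auxiliary $G$-set $C_k$ of right cosets of order-$k$ subgroups as a uniform indexing device, verifies that $\psi$ commutes with $T_f$ and $R_f$, and then shows $\psi\eta=|G|/k\cdot\mathrm{id}$ by reducing to $k$-free orbits and counting. Your argument instead fixes $X=G/H$, uses $M=F_kM$ to write any element as a sum of transfers $T^H_{K_0}$ from order-$k$ subgroups (normalized by conjugation to $W_H(K_0)$-invariant inputs), and then hits this with the Mackey double-coset formula, where $k$-purity kills all intersections of order $<k$ and leaves exactly the $W_H(K_0)$-averaging term. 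Both proofs then invoke rationality to invert a positive integer scalar, but the paper's scalar $|G|/k$ is uniform while yours is the summand-dependent $|W_H(K_0)|$. The trade-off: the paper's approach stays at the level of Mackey-functor morphisms and is notationally cleaner, at the price of the slightly non-obvious $C_k$ device; yours is more hands-on and makes the role of the Weyl groups explicit, at the price of having to check that the transfer decomposition of $M(G/H)$ is well-behaved (which you address via the $R^H_{K_0}$ pairing — the same rationality argument both establishes the decomposition and computes $\eta$ on it, so there is no circularity). One small slip: where you write ``$hK_0h^{-1}=K$'' the normalizing element should act on the other side ($t^{-1}K_0t=K'$ in the double-coset indexing given the paper's conventions), but this does not affect the count.
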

\begin{proof}
 As in Remark~\ref{rem-rN-limit}, we can think of $rqM(X)$ as the
 inverse limit of the functor $(\Orbt_G\downarrow X)\to\Semigroups$
 given by $(U\xra{x}X)\mapsto qM(U)$.  As $qM$ is $k$-pure we have
 $qM(U)=0$ unless $U$ has coorder $k$, and in that case
 Proposition~\ref{prop-qM-M} tells us that $qM(U)=M(U)$.  We can thus
 let $\CP$ be the full subcategory of $(\Orbt_G\downarrow X)$
 containing the diagrams $(U\xra{x}X)$ where $U$ has coorder $k$, and
 we find that $rqM(X)$ is the limit of the functor $\CP\to\Semigroups$
 given by $(U\xra{x}X)\mapsto M(U)$.  For any element $m$ of this
 inverse limit, we define $\psi(m)\in M(X)$ by 
 \[ \psi(m) =
     \sum_{U\in\orb(C_k)}\sum_{x\in\Map_G(U,X)}
      T_x(m(U\xra{x}X)).
 \]
 We claim that this defines a morphism $\psi\:rqM\to M$ of Mackey
 functors.  To see this, consider a map $f\:X\to Y$ of finite
 $G$-sets, and the resulting diagram 
 \[ \xymatrix{
  rqM(X) \ar[r]^{T_f} \ar[d]_\psi &
  rqM(Y) \ar[r]^{R_f} \ar[d]_\psi &
  rqM(X)              \ar[d]_\psi \\
  M(X)   \ar[r]_{T_f}             &
  M(Y)   \ar[r]_{R_f}             &
  M(X).
 } \]
 For the left square, we recall that transfers in $rqM$ are defined by 
 \[ (T_fm)(U\xra{y}Y) = \sum_{x\in\Map_G(U,X),fx=y} \; m(U\xra{x}X), \]
 so
 \begin{align*}
  \psi T_fm &= \sum_U\sum_{y\in\Map_G(U,Y)}T_y((T_fm)(U\xra{y}Y)) 
             = \sum_U\sum_{y\in\Map_G(U,Y)}\sum_{x\in\Map_G(U,X),fx=y}
                 T_y(m(U\xra{x}X)) \\
            &= \sum_U\sum_{x\in\Map_G(U,X)} T_{fx}(m(U\xra{x}X)) 
             = T_f\left(\sum_U\sum_{x\in\Map_G(U,X)}T_x(m(U\xra{x}X))\right) \\
            &= T_f\psi(m).
 \end{align*}
 For the right square, consider an element $n\in rqM(Y)$.  We then
 have 
 \begin{align*}
  R_f\psi(n)
   &= \sum_U\sum_{y\in\Map_G(U,Y)} (R_fT_yn)(U\xra{y}Y)
 \end{align*}
 To analyse this, we form a pullback square
 \[ \xymatrix{
  \tU \ar[r]^i \ar[d]_j & U \ar[d]^y \\ X \ar[r]_f & Y.
 } \]
 This gives $R_fT_y=T_iR_j$, and this can be written as a sum over
 orbits $V\sse\tU$.  As $V$ admits a map to $U$, it must have coorder
 at most $k$.  If the coorder is strictly less than $k$ then $M(V)=0$
 and so $V$ does not contribute to $T_iR_jn$.  The orbits of coorder
 $k$ map isomorphically to $U$ and so biject with $G$-maps $x\:U\to X$
 satisfying $fx=y$.  We deduce that
 \begin{align*}
  R_f\psi(n)
   &= \sum_U\sum_{y\in\Map_G(U,Y)}\sum_{x\in\Map_G(U,X),fx=y}
       T_x(m(U\xra{y}Y)) \\
   &= \sum_U\sum_{x\in\Map_G(U,X)}T_x(m(U\xra{fx}))
    = \psi R_f(m).
 \end{align*}
 Next, we claim that $\psi\eta(m)=(|G|/k)m$ for all $G$-sets $X$ and
 elements $m\in M(X)$.  Indeed, Proposition~\ref{prop-pure-filt} tells
 us that $M=F_kM$, so every element of $M(X)$ has the form $T_pn$ for
 some $k$-free set $W$ and some $G$-map $p\:W\to X$.  Using this we
 can reduce to the case where $X$ is a $k$-free orbit.  If the coorder
 of $X$ is strictly less than $k$, then $M(X)=0$ and there is nothing
 to prove.  We therefore assume that $\coord(X)=k$.  We now have
 $\eta(m)(U\xra{x}X)=\pi(R_xm)\in qM(U)$, and the $\pi$ makes no
 difference if $\coord(U)=k$.  We therefore have
 \[ \psi\eta(m) =
     \sum_{U\in\orb(C_k)}\sum_{x\in\Map_G(U,X)}T_xR_xm.
 \]
 Here both $U$ and $X$ are orbits of coorder $k$, so any $G$-map
 $x\:U\to X$ is an isomorphism, so $T_xR_x$ is the identity.  We also
 see that the inversion map $x\mapsto x^{-1}$ gives a bijection
 $\Map_G(U,X)\to\Map_G(X,U)$, so 
 \[ \sum_{U\in\orb(C_k)}|\Map_G(U,X)| = 
    \left|\coprod_{U\in\orb(C_k)}\Map_G(X,U)\right| = 
    |\Map_G(X,C_k)| = |G|/k.
 \]
 This gives $\psi\eta(m)=(|G|/k)m$ as claimed.  As $M$ is rational, we
 conclude that $\psi\eta$ is an isomorphism, so $\eta$ is injective
 and $\psi$ is surjective.  We saw in Corollary~\ref{cor-eta-epi} that
 $\eta$ is also surjective, so it is an isomorphism.
\end{proof}

\begin{proof}[Proof of Theorem~\ref{thm-mackey-rational}]
 We need to show that the unit map $\eta\:M\to rqM$ and the counit map
 $\ep\:qrN\to N$ are isomorphisms when $N$ and $M$ are rational.
 (This will prove both the Mackey functor statement and the Tambara
 functor statement.)

 The counit is covered by Proposition~\ref{prop-qr-rational}.  For the
 unit, it will be enough to prove by induction that the maps
 $\eta\:F_kM\to rqF_kM$ are isomorphisms for all $k$.  Put
 $Q_kM=F_kM/F_{k-1}M$, and consider the commutative diagram
 \[ \xymatrix{
  F_{k-1}M \ar@{ >->}[r] \ar[d]_{\eta} &
  F_kM \ar@{->>}[r] \ar[d]_\eta &
  Q_kM \ar[d]^\eta \\
  rqF_{k-1}M \ar@{ >->}[r] &
  rqF_kM \ar@{->>}[r] &
  rqQ_kM.
 } \]
 The top row is short exact by definition.
 Proposition~\ref{prop-FkM-mackey} tells us that $qF_j=F_jq$, and $q$
 preserves colimits so
 $qQ_kM=(qF_kM)/(qF_{k-1}M)=(F_kqM)/(F_{k-1}qM)$.  It follows that
 $Q_kM$ is $k$-pure, and that the sequence
 $qF_{k-1}M\to qF_kM\to qQ_kM$ is a split short exact sequence of
 coefficient systems.  As any additive functor preserves split short
 exact sequences, we see that the bottom row of the above diagram is
 again short exact.  The right hand vertical map is an isomorphism by 
 Proposition~\ref{prop-eta-iso-pure}, and the left hand map can be
 assumed to be an isomorphism by induction, so the middle map is an
 isomorphism by the Five Lemma.
\end{proof}

\begin{example}\label{eg-rational-burnside}
 Consider the rational Burnside ring Tambara functor 
 \[ \Q\ot A(X)=\Q\ot\bCA_G(1,X)\simeq\Q\ot\bCU_G(\emptyset,X). \]
 We saw in Proposition~\ref{prop-q-symmon} that $qA=c\N$.  In the same
 way, we can check that $q(\Q\ot A)=c\Q$, where $c\Q$ denotes the
 constant functor $c\Q\:\Orbt_G\to\Rings$ with value $\Q$.  It follows
 that $\Q\ot A\simeq rc\Q$, and so
 \[ \Q\ot A(X) = \left[\prod_{H\leq G} \Map(X^H,\Q)\right]^G. \]
\end{example}

We next want to discuss the rational representation ring.  For this,
we first need to investigate naturality properties of the construction
sending $G/H$ to the centre of $H$.

\begin{construction}\label{cons-centre}
 For any $G$-orbit $T$ we have a translation category $\Trans(G,T)$ and
 a functor 
 \[ F_T\:\Trans(G,T)\to\Groups \]
 given by
 \[ F_T(x)=\stab_G(x)=\{g\in G\st gx=x\}. \]
 We write $Z_T$ for the inverse limit of $F_T$.  One checks that
 $Z_{G/H}$ is the centre of $H$.  In particular, if $H$ is abelian
 then $Z_{G/H}=H$.

 Now suppose we have a map $q\:U\to T$ and thus
 $q_*\:\Trans(G,U)\to\Trans(G,T)$.  There is a natural monomorphism
 $F_U\to F_T\circ q_*$, and by general nonsense this gives maps
 \[ Z_U \xra{} \invlim (F_T\circ q_*) \xla{} Z_T. \]
 We claim that if $T$ (and thus $U$) has abelian isotropy then the
 first of these maps is injective and the second is an isomorphism, so
 we have a natural inclusion $Z_U\to Z_T$.  Indeed, if $q$ is just the
 projection $G/H\to G/K$ (with $H\leq K$), then the above maps are
 easily identified with the maps
 \[ ZH \xra{} Z_KH  \xla{} ZK, \]
 and the claim about the abelian case follows immediately.

 One can now check that this construction gives a functor
 \[ \{G-\text{orbits with abelian isotropy}\} \to
    \{\text{ abelian groups }\}.
 \]
\end{construction}

\begin{example}\label{eg-cyclotomic}
 Let $R$ be the representation semiring Tambara functor, as in
 Example~\ref{eg-representation-tambara}.  Recall that $R(G/H)$ is
 just the semiring of isomorphism classes of complex representations
 of $H$.  Consider the transfer maps
 $\Q\ot R(G/C)\to\Q\ot R(G/H)$ for cyclic subgroups $C\leq H$.
 Artin's induction theorem says that the sum of the images of these
 maps is all of $\Q\ot R(G/H)$.  It follows that the corresponding
 multiplicative coefficient system has $q(\Q\ot R)(T)=0$ unless $T$
 has cyclic isotropy groups.  If $C$ is cyclic of order $n$ one can
 check that $q(\Q\ot R)(G/C)$ is isomorphic to the cyclotomic
 field $\Q(e^{2\pi i/n})$.

 We can make this more functorial as follows.  First, put
 $C^*=\Hom(C,S^1)$, which is again cyclic of order $n$.  One can check
 that $\Q[C^*]$ has a unique maximal ideal $\mxi_C$ such that the
 natural map from $C^*$ to the field $K(C)=\Q[C^*]/\mxi$ is injective.
 Next, suppose we have an injective homomorphism $i\:C\to D$ of
 abelian groups, with $|D|/|C|=m$ say.  Given a character $\al\in C^*$
 we can choose $\bt\in D^*$ with $i^*(\bt)=\al$, and it is not hard to
 see that $\bt^m$ is independent of the choice of $\bt$, so we can
 define $i_\bullet(\al)=\bt^m$.  This is an injective homomorphism of
 cyclic groups, and it follows that it induces a homomorphism
 $i_\bullet\:K(C)\to K(D)$.  One can check that for any $G$-orbit $T$
 with cyclic isotropy, there is a natural isomorphism
 $q(\Q\ot R)(T)\simeq K(Z_T)$.
\end{example}

\begin{example}\label{eg-free-orbits}
 Put $R(T)=\Map(T,\Q)$ when $T$ is a free orbit, and $R(T)=0$ if $T$
 is not free.  For any map $f\:U\to T$, either $T$ is free (in which
 case $f$ is an isomorphism and we put 
 $R_f=(f^{-1})^*\:\Map(U,\Q)\to\Map(T,\Q)$) or $T$ is not free (in
 which case $R_f=0$).  This gives a functor $\Orbt_G\to\Alg_\Q$, and the
 associated Tambara functor is just $S(X)=\Map(X,\Q)$.
\end{example}

\begin{proposition}\label{prop-L-om-rational}
 The restricted functor $\om\:\Q\Tambara_G\to\Q\Semirings_G$ is right
 adjoint to the functor $rL'$ (where $L'$ is as in
 Definition~\ref{defn-L-prime}). 
\end{proposition}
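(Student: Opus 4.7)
The plan is to obtain the adjunction by composing three natural bijections, without having to construct anything new. Given $A \in \Q\Semirings_G$ and $S \in \Q\Tambara_G$, I aim to exhibit a chain
\[ \Q\Tambara_G(rL'A, S) \simeq \Q\MCSys_G(L'A, qS) \simeq \Q\Semirings_G(A, \om'qS) = \Q\Semirings_G(A, \om S), \]
natural in both variables, from which the proposition follows.

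First I would verify that $L'$ carries rational semirings to rational multiplicative coefficient systems, so that $L'$ really restricts to a functor $\Q\Semirings_G \to \Q\MCSys_G$ and Proposition~\ref{prop-L-prime-om-prime} can be invoked between rational categories. By Remark~\ref{rem-r-L-prime} we have $(L'A)(G/K) = A_{[K]}$, which is a quotient of $A$ by a semiring congruence; any such quotient of a $\Q$-algebra inherits its $\Q$-algebra structure, so $L'A$ is rational whenever $A$ is. Given this, the universal property in Proposition~\ref{prop-L-prime-om-prime} restricts verbatim to give the second bijection in the chain.

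Next I would invoke Theorem~\ref{thm-mackey-rational}, which tells us that $q$ and $r$ are quasi-inverse equivalences between $\Q\Tambara_G$ and $\Q\MCSys_G$. An equivalence is simultaneously a left and a right adjoint to its inverse, so on the rational side we have an adjunction $r \dashv q$ as well as the already-given $q \dashv r$; the former yields the first bijection in the chain. For the final equality, Remark~\ref{rem-om-q-r} identifies $\om S = S(G) = (qS)(G) = \om'(qS)$, so $\om'q = \om$ as functors $\Q\Tambara_G \to \Q\Semirings_G$.

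The main obstacle, such as it is, is the first step: confirming that $L'$ preserves rationality, which is what lets us carry the non-rational adjunction $L' \dashv \om'$ over to the rational categories without having to introduce a separate rational enhancement $L'_\Q = \Q \otimes L'$. Once this is in hand, the rest is formal manipulation of adjunctions, and naturality in $A$ and $S$ follows because each of the three bijections in the chain is itself natural.
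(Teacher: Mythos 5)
Your proof is correct and takes essentially the same route as the paper's one-line proof (which simply cites Propositions~\ref{prop-qr-tambara} and~\ref{prop-L-prime-om-prime}); you have filled in the details the paper leaves implicit, namely that $L'$ preserves rationality (so the $L'\dashv\om'$ adjunction restricts) and that Theorem~\ref{thm-mackey-rational} upgrades $q\dashv r$ to a genuine equivalence, yielding the needed $r\dashv q$ on the rational subcategories.
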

\begin{proof}
 This is clear from Propositions~\ref{prop-qr-tambara}
 and~\ref{prop-L-prime-om-prime}.
\end{proof}

\section{Change of groups}
\label{sec-change-groups}

In this section we will investigate various functors relating the
categories $\bCU_G$, $\Tambara_G$ and $\MCSys_G$ for different groups
$G$.

Let $H$ be a subgroup of $G$.  There is then an evident forgetful
functor $\res\:\bCU_G\to\bCU_H$.  We can also define a functor from
finite $H$-sets to finite $G$-sets by $\ind(Y)=G\tm_HY$.  If $T$ is a
set of coset representatives then the underlying set of $\ind(Y)$ is
just $T\tm Y$.  Using this, we see that $\ind$ preserves all the
constructions used to define composition in $\bCU_H$, so we get a
functor $\ind\:\bCU_H\to\bCU_G$.

\begin{proposition}\label{prop-ind-res}
 The functor $\res\:\bCU_G\to\bCU_H$ is left adjoint to
 $\ind\:\bCU_H\to\bCU_G$.  Moreover, both these functors preserve
 categorical products.  
\end{proposition}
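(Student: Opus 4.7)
The plan is to construct explicit mutually inverse bijections
\[ \Phi\:\bCU_G(X,\ind Y) \to \bCU_H(\res X,Y), \qquad
   \Psi\:\bCU_H(\res X,Y) \to \bCU_G(X,\ind Y), \]
natural in $X\in\bCU_G$ and $Y\in\bCU_H$, and then handle product preservation separately. Both $\res$ and $\ind$ are already observed to be functors on the bispan categories because $\res$ manifestly commutes with the pullbacks and disjoint unions used to compose bispans, and $\ind = G\tm_H(-)$ preserves them too (either by choosing a transversal to write $\ind(Y)=\coprod_{gH}gY$, or by checking directly).

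To define $\Phi$, given a $G$-bispan $\om=(X\xla{p}A\xra{q}B\xra{r}\ind Y)$, note that $1\tm Y\subseteq G\tm_HY=\ind Y$ is an $H$-stable subset. Put $B_1=r^{-1}(1\tm Y)$ and $A_1=q^{-1}(B_1)$; these are $H$-subsets of $B$ and $A$. The map $r$ restricts to an $H$-map $r_1\:B_1\to Y$ determined by $r(b)=[1,r_1(b)]$, and together with $q_1=q|_{A_1}$ and $p_1=p|_{A_1}$ we obtain an $H$-bispan
\[ \Phi(\om) = (\res X\xla{p_1}A_1\xra{q_1}B_1\xra{r_1}Y). \]
Conversely, given $\om'=(\res X\xla{p'}A'\xra{q'}B'\xra{r'}Y)$, apply $\ind$ termwise and then compose the leftmost map with the counit $\ep_X\:\ind\res X\to X$, $[g,x]\mapsto gx$, to get
\[ \Psi(\om') = (X\xla{\ep_X\circ(1\tm_H p')} G\tm_HA'\xra{1\tm_H q'}G\tm_HB'\xra{1\tm_H r'}\ind Y). \]

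The key input for $\Phi\Psi\simeq 1$ and $\Psi\Phi\simeq 1$ is the standard decomposition: for any $G$-map $r\:B\to\ind Y$, the evaluation map $G\tm_HB_1\to B$, $[g,b]\mapsto gb$, is a $G$-isomorphism, because the $G/H$-projection $\ind Y\to G/H$ pulled back along $r$ partitions $B$ into translates of $B_1$. Applied to both $B$ and $A$ in a $G$-bispan over $\ind Y$, this identifies $\Psi\Phi(\om)$ with $\om$; and $\Phi\Psi(\om')\simeq\om'$ is essentially tautological since $(G\tm_HA')_1 = 1\tm A'$ and similarly for $B'$. For naturality in $X$ and $Y$, I will reduce to the generating morphisms $T_f$, $N_f$, $R_f$ of Proposition~\ref{prop-G-bispan-rels} and check compatibility case by case; the reductions rest on the fact that $(-)_1$ (restriction to the preimage of $1\tm Y$) commutes with the pullbacks and distributors used to compose bispans, because both constructions respect $H$-stable subsets.

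For the product-preservation claim, recall that categorical products in $\bCU_G$ and $\bCU_H$ are disjoint unions with the empty $G$-set (resp. $H$-set) as terminal object. Manifestly $\res$ preserves both, and $\ind = G\tm_H(-)$ satisfies $G\tm_H(Y\amalg Y')=(G\tm_HY)\amalg(G\tm_HY')$ and $G\tm_H\emptyset=\emptyset$, so $\ind$ preserves finite products as well. The main obstacle will be the naturality verification in the middle step—one must check carefully that $\Phi$ intertwines pre- and post-composition with $T$, $N$, and $R$ operators on both sides, which amounts to showing that $(-)_1$ is compatible with the pullbacks defining $R\circ T$ and $R\circ N$ and with the distributor construction of Definition~\ref{defn-distributor}; fortunately each of these reductions is a direct (if somewhat lengthy) unwinding.
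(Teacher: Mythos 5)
Your proposal is correct and follows essentially the same route as the paper: both rely on the key observation that the evaluation map $G\tm_HB_1\to B$ is an isomorphism for any $G$-set $B$ over $\ind Y$ (equivalently, that $G$-sets over $\ind Y$ are equivalent to $H$-sets over $Y$), and both handle product preservation via disjoint unions. You spell out both directions of the bijection and a naturality-checking strategy where the paper leaves these implicit, but the underlying argument is the same.
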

\begin{proof}
 First, consider a $G$-set $W$ equipped with a map
 $f\:W\to\ind(Y)=G\tm_HY$.  Note that $\ind(Y)$ contains $H\tm_HY=Y$,
 and put $W_0=f^{-1}(W)$, which is an $H$-set.  There is a unique
 $G$-map $G\tm_HW_0\to W$ extending the identity on $W_0$, and one
 checks that this is an isomorphism.  It follows that the category of
 $G$-sets over $\ind(Y)$ is equivalent to the category of $H$-sets
 over $Y$.

 Any morphism in $\bCU_H(\res(X),Y)$ is represented by a diagram 
 \[ P_0 = (X\xla{f_0}A_0\xra{g_0}B_0\xra{h_0}Y) \]
 of finite $H$-sets.  Let $f$ be the unique $G$-equivariant extension
 of $f_0$ over $\ind(A_0)$, so we have a diagram
 \[ P = (X\xla{f}\ind(A_0)\xra{\ind(g_0)}\ind(B_0)
         \xra{\ind(h_0)}\ind(Y)),
 \]
 representing a morphism in $\bCU_G(X,\ind(Y))$.  Using our first
 paragraph, we see that this procedure gives a bijection
 $\bCU_H(\res(X),Y)=\bCU_G(X,\ind(Y))$. 

 Products in $\bCU_G$ and $\bCU_H$ are given by disjoint union, and it
 is clear that both $\ind$ and $\res$ preserve disjoint unions.
\end{proof}

\begin{definition}
 We define $\coind\:\Tambara_H\to\Tambara_G$ and
 $\res\:\Tambara_G\to\Tambara_H$ by $\coind(S)=S\circ\res$ and
 $\res(S')=S'\circ\ind$.
\end{definition}

\begin{proposition}
 The functor $\coind\:\Tambara_H\to\Tambara_G$ is right adjoint to
 $\res\:\Tambara_G\to\Tambara_H$. 
\end{proposition}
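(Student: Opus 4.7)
The plan is to deduce this entirely from the adjunction $\res\dashv\ind$ between $\bCU_G$ and $\bCU_H$ established in Proposition~\ref{prop-ind-res}, via the standard fact that precomposition with an adjoint pair of functors produces an adjunction (in the opposite direction) between the associated functor categories.

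First I would note that, since both $\res\:\bCU_G\to\bCU_H$ and $\ind\:\bCU_H\to\bCU_G$ preserve finite categorical products (by the second half of Proposition~\ref{prop-ind-res}), precomposition preserves the property of being product-preserving. Thus the formulas $\coind(S)=S\circ\res$ and $\res(S')=S'\circ\ind$ do land in $\Tambara_G$ and $\Tambara_H$ respectively, so the functors under discussion are well-defined.

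Next I would invoke the general principle: if $F\:\CC\to\CD$ is left adjoint to $G\:\CD\to\CC$, with unit $\eta_X\:X\to GF(X)$ and counit $\ep_Y\:FG(Y)\to Y$, then $(-)\circ G$ is left adjoint to $(-)\circ F$ as functors between $[\CC,\Sets]$ and $[\CD,\Sets]$. Explicitly, given a natural transformation $\al\:KG\to H$, the adjunct $\tilde\al\:K\to HF$ has components $\tilde\al_X = \al_{F(X)}\circ K(\eta_X)$, and given $\bt\:K\to HF$, the adjunct $\tilde\bt\:KG\to H$ has components $\tilde\bt_Y = H(\ep_Y)\circ\bt_{G(Y)}$. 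These constructions are mutually inverse by the triangular identities for $F\dashv G$, and they restrict to the full subcategories of product-preserving functors by the observation in the previous paragraph.

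Applying this with $F=\res$ and $G=\ind$ gives $\res=(-)\circ\ind$ left adjoint to $\coind=(-)\circ\res$, as claimed. For concreteness, the unit of the Tambara-level adjunction $\res\dashv\coind$ evaluated at $S'\in\Tambara_G$ has $X$-component $S'(\eta_X)\:S'(X)\to S'(\ind(\res(X)))=\coind(\res(S'))(X)$, while the counit at $S\in\Tambara_H$ has $Y$-component $S(\ep_Y)\:\res(\coind(S))(Y)=S(\res(\ind(Y)))\to S(Y)$. There is no real obstacle in the proof: all the substantive work is packaged into Proposition~\ref{prop-ind-res} (establishing the adjunction and product-preservation at the level of the bicategories of bispans), and everything else is formal nonsense about precomposition.
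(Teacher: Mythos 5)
Your proposal is correct and takes essentially the same approach as the paper, which simply says the result follows formally from Proposition~\ref{prop-ind-res} using the (co)unit maps and triangular identities. You have merely spelled out the standard precomposition argument in more detail, including the explicit adjunct formulas.
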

\begin{proof}
 This follows formally from Proposition~\ref{prop-ind-res}, using the
 (co)unit maps and triangular identities.
\end{proof}

\section{Witt vectors}
\label{sec-witt}

We now discuss the relationship between Tambara functors and the
generalised Witt rings of Dress and Siebeneicher~\cite{drsi:brp}.
Similar results have been obtained by Brun~\cite{br:wvt},
Elliott~\cite{el:bri} and Nakaoka~\cite{na:tmf}.

\begin{proposition}\label{prop-L-exists}
 The functor $\om\:\Tambara_G\to\Semirings_G$ (given by $\om(S)=S(G)$)
 has a left adjoint $L\:\Semirings_G\to\Tambara_G$.
\end{proposition}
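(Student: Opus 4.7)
The plan is to construct $L$ explicitly by generators and relations, exploiting the fact that $\Tambara_G$ has all small colimits (Corollary~\ref{cor-tambara-cocomplete}) and admits well-behaved quotients by congruences (Proposition~\ref{prop-cong}). Given $A \in \Semirings_G$, the goal is to build a Tambara functor $LA$ together with a $G$-semiring map $\eta_A \: A \to \om(LA)$ that is universal among such maps.

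First I would introduce the free object. Take $H_G \in \Tambara_G$ to be the representable functor $H_G(X) = \bCU_G(G, X)$ from Example~\ref{eg-HU}; Yoneda gives $\Tambara_G(H_G, S) \simeq S(G) = \om S$ naturally, with universal element the identity $\iota_G \in H_G(G)$. Form the coproduct
\[ F_A = \coprod_{a \in A} H_G \in \Tambara_G, \]
writing $[a] \in F_A(G)$ for the image of $\iota_G$ under the $a$-th summand inclusion. Coproducts of Tambara functors exist by Corollary~\ref{cor-coproducts}, and iterating Yoneda gives $\Tambara_G(F_A, S) \simeq \Map(A, \om S)$, the set of \emph{unstructured} maps from the underlying set of $A$ to $\om S$.

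Next I would impose relations by forming a quotient. Let $E$ be the smallest Tambara congruence on $F_A$ whose set of generating pairs at sort $G$ contains $([0_A],\,0)$, $([1_A],\,1)$, $([a+b],\,[a]+[b])$, $([ab],\,[a][b])$ for all $a,b \in A$, and $([g\cdot a],\,T_{\rho(g)}[a])$ for all $g \in G$ and $a \in A$. Here the semiring operations on $F_A(G)$ are those provided by Proposition~\ref{prop-tambara-semiring}, and $T_{\rho(g)}$ implements the $G$-action on $\om F_A$ as in Definition~\ref{defn-omega-mackey}. The smallest such congruence exists by intersecting all congruences that contain these pairs, exactly as in Proposition~\ref{prop-semigroup-cong}(e). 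Set $LA = F_A / E$; by Proposition~\ref{prop-cong}(b) this is a Tambara functor, and by construction $\eta_A(a) := \pi[a]$ defines a morphism $A \to \om(LA)$ in $\Semirings_G$.

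For the universal property, given any morphism $\phi \: A \to \om S$ in $\Semirings_G$, Yoneda produces a unique Tambara morphism $\tilde\phi \: F_A \to S$ with $\tilde\phi([a]) = \phi(a)$. Since $\phi$ respects $0$, $1$, addition, multiplication, and the $G$-action, every generating pair of $E$ lies in $\eqker(\tilde\phi)$, so Proposition~\ref{prop-cong}(c) supplies a unique factorisation $LA \to S$; this verifies the adjunction, and functoriality of $L$ follows from uniqueness. The main obstacle is not the existence argument, which is essentially formal once congruences are available, but the bookkeeping ensuring that $T_{\rho(g)}$ is the correct operator on $\om F_A$ to encode the $G$-action. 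A more abstract alternative would be to apply the adjoint functor theorem to the locally presentable category $\Tambara_G$, using that $\om$ preserves limits (Proposition~\ref{prop-limits}) and filtered colimits (Proposition~\ref{prop-filtered-colimits}); however, the explicit construction is preferable here because the rest of the section will need to evaluate $LA$ on orbits $G/H$ and compare the result with the Dress--Siebeneicher Witt rings of~\cite{drsi:brp}.
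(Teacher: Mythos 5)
Your construction is essentially the paper's, just phrased in terms of congruences where the paper uses coequalisers. The paper also begins with the representable $P(X)=\bCU_G(G,X)$ and the coproduct $L_0A=\coprod_{a\in A}P$, encodes the semiring-homomorphism and equivariance conditions as a parallel pair $\phi,\psi\:L_1A\to L_0A$ (where $L_1A$ is a coproduct of copies of $P$ indexed by $1\amalg 1\amalg A^2\amalg A^2\amalg(G\tm A)$), and defines $LA$ as the coequaliser via Proposition~\ref{prop-tambara-coeq}. Since that coequaliser is itself constructed there as the quotient by the smallest congruence containing the pairs $(\phi(x),\psi(x))$, and those pairs are generated under the Tambara operations by the finitely many pairs at sort $G$ that you list, your $LA=F_A/E$ is the same object. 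Two small remarks: the smallness argument you gesture at (intersecting all congruences containing the generators) is better justified by the observation made explicitly in the proof of Proposition~\ref{prop-tambara-coeq} that a Tambara congruence is determined by its values on orbits $G/H$ and hence the collection of congruences forms a set, rather than by the semigroup analogue Proposition~\ref{prop-semigroup-cong}(e); and your phrase ``generating pairs at sort $G$'' should be read as requiring $E(G)$ to contain the listed pairs, after which the universal property goes through exactly as you say since $E\leq\eqker(\tilde\phi)$ iff those pairs lie in $\eqker(\tilde\phi)(G)$ iff $\phi$ is a morphism in $\Semirings_G$.
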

\begin{proof}
 Put $P(X)=\bCU_G(G,X)$, so $P$ is a Tambara functor and the
 Yoneda lemma gives $\Tambara_G(P,S)\simeq S(G)$.  For any
 $A\in\Semirings_G$, we let $L_0A$ denote the coproduct of copies of
 $P$ indexed by $A$, so $\Tambara_G(L_0A,S)=\Map(A,S(G))$.  Similarly,
 we let $L_1A$ denote the coproduct of copies of $P$ indexed by the
 set 
 \[ Q = 1 \amalg 1 \amalg (A\tm A) \amalg (A\tm A) \amalg (G\tm A), \]
 so 
 \begin{align*}
  \Tambara_G(L_1A,S) &= \Map(Q,S(G)) \\
   &= S(G)\tm S(G) \tm \Map(A\tm A,S(G)) \tm \Map(A\tm A,S(G))
      \tm\Map(G\tm A,S(G)).
 \end{align*}
 Given a map $f\in\Map(A,S(G))$ we define $\phi^*(f)\in\Map(Q,S(G))$ by 
 \[ \phi^*(f)=
     (f(0),f(1),(a,b)\mapsto f(a+b),(a,b)\mapsto f(ab),
      (g,a)\mapsto f(ga))).
 \]
 We also define $\psi^*(f)\in\Map(Q,S(G))$ by 
 \[ \psi^*(f)=(0,1,(a,b)\mapsto f(a)+f(b),
      (a,b)\mapsto f(a)f(b),(g,a)\mapsto g\,f(a)).
 \]
 It is clear that the equaliser of the maps 
 \[ \phi^*,\psi^*\:\Map(A,S(G)) \to \Map(Q,S(G)) \]
 is $\Semirings_G(A,S(G))=\Semirings_G(A,\om(S))$.  Moreover, the
 Yoneda lemma implies that $\phi^*$ and $\psi^*$ arise from morphisms
 $\phi,\psi\:L_1A\to L_0A$ in $\Tambara_G$.  If we let $LA$ denote the
 coequaliser of these maps (which exists by
 Proposition~\ref{prop-tambara-coeq}), we obtain an isomorphism
 $\Tambara_G(LA,S)\simeq\Semirings_G(A,\om(S))$, which is natural in
 $S$.  It is now standard that there is a canonical way to define $L$
 on morphisms, giving a functor $\Semirings_G\to\Tambara_G$ that is
 left adjoint to $\om$.
\end{proof}

\begin{definition}\label{defn-witt}
 For any semiring $A$, we define a semiring $W_GA$ as follows: we give
 $A$ the trivial $G$-action, then apply the functor
 $L\:\Semirings_G\to\Tambara_G$, then evaluate on the singleton
 $G$-set to get $W_GA=(LA)(1)$.  We call this the semiring of $G$-Witt
 vectors for $A$. 
\end{definition}

When $A$ is additively complete, we will show that $W_G(A)$ is the
same as the ring defined by Dress and Siebeneicher~\cite{drsi:brp}.
In particular, if $G$ is cyclic of prime-power order, we recover the
usual $p$-typical Witt vectors.  

One could think about generalising the definition to cover the case
where $A$ has nontrivial $G$-action.  Some results in this context
have been stated by Brun, but they appear to contain some
inaccuracies.  We will therefore ignore this possible generalisation:
for the rest of this section we take $A$ to be a semiring equipped
with the trivial $G$-action.

We suspect that the proper context for our results is really a
theory of global Tambara functors, similar to Webb's theory of global
Mackey functors~\cite{we:tcs}.  However, we will leave that for future
work.

We now formulate our comparison with the theory of Dress and
Siebeneicher in more detail.
\begin{definition}\label{defn-ghost}
 We write $\Sub(G)$ for the set of subgroups of $G$, and $\sub(G)$ for
 the quotient set of conjugacy classes of subgroups.  We define a map 
 \[ \gm\:\Map(\sub(G),A) \to \Map(\sub(G),A) \]
 (called the \emph{ghost map}) by 
 \[ \gm(a)([H]) = \sum_{[K]} |(G/H)^K| a([K])^{|H|/|K|}. \]
 (Note here that if $(G/H)^K$ is nonempty then $K$ is conjugate to a
 subgroup of $H$ so $|H|/|K|$ is a positive integer.)
\end{definition}

\begin{theorem}\label{thm-witt}
 Let $A$ be an additively complete semiring.  Then $W_GA$ is also
 additively complete.  Moreover, there is a natural bijection
 $\tau\:\Map(\sub(G),A)\to W_GA$ and a natural ring map
 $\phi\:W_GA\to\Map(\sub(G),A)$ such that $\phi\tau=\gm$. 
\end{theorem}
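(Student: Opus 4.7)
The plan is to follow the strategy of Brun~\cite{br:wvt} (see also Elliott~\cite{el:bri} and Nakaoka~\cite{na:tmf}), building $\tau$ and $\phi$ explicitly and then verifying $\phi\tau=\gm$ layer by layer against the filtration of Section~\ref{sec-filt}. Throughout I will use that by Proposition~\ref{prop-L-exists}, every element of $W_GA=(LA)(1)$ arises from applying Tambara operations to the image of the unit $\eta\:A\to(LA)(G)$; because $A$ has trivial $G$-action, $\eta(a)$ is $G$-fixed in $(LA)(G)$.

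First I would construct $\phi$. For each subgroup $H\leq G$ I will exhibit a Tambara functor $\Phi^H A$ together with a natural isomorphism $\omega\Phi^H A\cong A$ (of $G$-semirings with trivial action) and an identification $\Phi^H A(1)=A$. By the adjunction $L\dashv\omega$ there is then a unique Tambara morphism $\psi^H\:LA\to\Phi^H A$, and I set $\phi_{[H]}:=\psi^H_1\:W_GA\to A$. Assembling these over $[H]\in\sub(G)$, and using that each $\psi^H$ is a semiring morphism at every level, yields the required ring map $\phi\:W_GA\to\Map(\sub(G),A)$. The functor $\Phi^H A$ is modelled on ``$H$-fixed points with coefficients in $A$'': concretely $\Phi^H A(X)=\Map(X^H,A)^{W_GH}$, with $T_f$, $N_f$, $R_f$ defined by summing, multiplying, and pulling back along the restriction $f|_{X^H}\:X^H\to Y^H$.

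For $\tau$ and the additive completeness of $W_GA$, I proceed as follows. For each $H\leq G$, the element $\tau_H(a):=T_{q_H}N_{\pi_H}\eta(a)\in W_GA$, with $\pi_H\:G\to G/H$ and $q_H\:G/H\to 1$, depends only on the conjugacy class $[H]$: the cartesian square relating $\pi_H$ and $\pi_{gHg^{-1}}$ via $\rho(g)$, combined with $N_{\rho(g)}=T_{\rho(g)}=R_{\rho(g)^{-1}}$ on the $G$-iso $\rho(g)$ and the $G$-invariance of $\eta(a)$, identifies $\tau_{gHg^{-1}}(a)$ with $\tau_H(a)$. These elements, together with their additive inverses (inherited from $A$ through $\eta$ and $T_f$, and whose existence at every finite $G$-set I would establish by inductive descent along the filtration $F_k(LA)$ of Section~\ref{sec-filt}), span $W_GA$. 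I then define $\tau$ by induction on $F_k$: starting from the top filtration layer, contribute $\tau_H(f([H]))$ for each $[H]$ of the appropriate coorder, then subtract off the ghost-measured corrections into lower layers, using additive completeness of $A$ to solve the resulting linear equations uniquely. Bijectivity of $\tau$ is then a consequence of the layerwise description of $F_k W_GA/F_{k-1}W_GA$ as $\Map(\{[H]:|G/H|=k\},A)$. The identity $\phi\tau=\gm$ is finally checked by a direct computation in $\Phi^K A$: the distributor for $\pi_H$ under $\psi^K$ gives $\phi_{[K]}\tau_H(a)=|(G/H)^K|\,a^{|H|/|K|}$, matching the $[K]$-component of $\gm$ exactly.

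The hard part will be the construction and verification of $\Phi^H A$ as a Tambara functor. The naive candidate $X\mapsto\Map(X^H,A)$ does \emph{not} preserve the distributor construction of Definition~\ref{defn-distributor}: for $z\in Z^H$, the set of $H$-equivariant sections $s\:h^{-1}\{z\}\to X$ with $gs=1$ is in general strictly larger than the set of sections $(h^{-1}\{z\})^H\to X^H$ of $g^H$. Passing to $W_GH$-invariants corrects this, but verifying the NT axiom (Proposition~\ref{prop-UG-pres}(b)) then requires a double-coset bookkeeping analogous to Section~\ref{sec-two}. An alternative, more abstract route (cf.~\cite{na:tmf}) is to construct $\phi_{[H]}$ via restriction to the subgroup $H$, the $H$-level adjunction $L_H\dashv\omega_H$, and the change-of-groups functors of Section~\ref{sec-change-groups}, thereby bypassing an explicit description of $\Phi^H A$ at the cost of tracking several adjunctions simultaneously.
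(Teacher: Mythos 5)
Your formula for $\tau$ is exactly the paper's (Definition~\ref{defn-tau}: $\tau(f)=\sum_{[H]}T_{q_H}N_{\pi_H}\eta(f([H]))$, i.e.\ $\sum_iT_{x_i}\nu_{U_i}$), the well-definedness argument via $N_{\rho(g)}=T_{\rho(g)}$ is correct, and the final computation $\phi_{[K]}\tau_H(a)=|(G/H)^K|a^{|H|/|K|}$ agrees with Proposition~\ref{prop-phi-tau}. However, the centrepiece of your plan — constructing $\phi$ by producing a family of Tambara functors $\Phi^HA$ and appealing to $L\dashv\omega$ — contains a fatal flaw.

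With $\Phi^HA(X)=\Map(X^H,A)^{W_GH}$ and $H\neq 1$, evaluate at the free orbit: $G^H=\emptyset$, so $\omega\Phi^HA=\Phi^HA(G)=\Map(\emptyset,A)^{W_GH}$ is the terminal (one-element) semiring, not $A$. The adjunction $L\dashv\omega$ then produces a unique morphism $\psi^H\:LA\to\Phi^HA$, but it factors through $\psi^H_G\:LA(G)\to\{*\}$. Since $LA$ is generated as a Tambara functor by the elements $\eta(a)\in LA(G)$, and all of these go to the single element $*$, the resulting $\psi^H_1\:W_GA\to A$ is insensitive to the actual values of $a$ — in particular $\psi^H_1(\tau_K(a))$ is independent of $a$, so $\phi_{[H]}\tau$ cannot possibly be the $[H]$-component of the ghost map $\gm$, which genuinely depends on $a$. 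The underlying conceptual problem is that the individual ghost components are \emph{not} Tambara morphisms: if $m\in rL'A(X)$, the value $(N_fm)(G/H\to Y)$ is a product of $N$'s of $m(G/K\to X)$ for various $K\leq H$ running over orbits in a pullback, so $N_f$ mixes coorders and the $[H]$-slice of $rL'A$ is not a sub-Tambara-functor. The paper circumvents this by working with the single Tambara functor $rL'A$ (for which $\omega(rL'A)=rL'A(G)=A$ is nontrivial) and defining $\phi$ as a single Tambara morphism $LA\to rL'A$ via the $(q,r)$ adjunction between Tambara functors and multiplicative coefficient systems (Definition~\ref{defn-ghost-phi}): explicitly, $\phi_{[H]}$ is the composite $LA(1)\xra{R_{q_H}}LA(G/H)\xra{\pi}qLA(G/H)\xra{\bar\nu^{-1}}A$, with the quotient $\pi$ killing transfers from proper subgroups. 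This is a quotient construction at each level, not a fixed-point construction, and there is no way to repackage it as a family of $\Phi^HA$.

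Two secondary issues. First, the "subtract off the ghost-measured corrections" step in your description of $\tau$ is spurious: $\tau$ is given by the closed formula $\sum_{[H]}T_{q_H}N_{\pi_H}\eta(f([H]))$ with no corrections, and requiring it to be a section of $\gm$ (rather than satisfying $\phi\tau=\gm$) is not the claim. Second, your proposed proof of bijectivity of $\tau$ by identifying $F_kW_GA/F_{k-1}W_GA$ with $\Map(\{[H]:\coord=k\},A)$ would indeed be sufficient, but you only get an epimorphism this way (as in Proposition~\ref{prop-tau-epi}); the paper closes the gap by first proving injectivity for torsion-free $A$ via injectivity of the ghost map (Corollary~\ref{cor-tau-mono}), and then passing to arbitrary additively complete $A$ by a reflexive-coequalizer argument (Proposition~\ref{prop-tau-iso}), using that both $L$ and $rL'$ preserve reflexive coequalizers componentwise (Proposition~\ref{prop-reflexive-coeq}). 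Your sketch does not address torsion at all.
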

This is essentially equivalent to the main result of Brun
in~\cite{br:wvt}, but with our richer theory of Tambara functors in
hand, we can give a somewhat different perspective on the proof.

\begin{proof}
 Combine Propositions~\ref{prop-phi-tau} and~\ref{prop-tau-iso}
 below. 
\end{proof}

Dress and Siebeneicher proved that thare is a unique natural ring
structure on $\Map(\sub(G),A)$ such that $\gm$ is a ring
homomorphism.  We can use $\tau$ to transport the ring structure on
$W_GA$ to $\Map(\sub(G),A)$, and by the uniqueness clause, this must
give the ring structure discussed by Dress and Siebeneicher.  They
define $W_GA$ to be $\Map(\sub(G),A)$ with this structure, so $\tau$
gives an isomorphism from their $W_GA$ to our $W_GA$.

We now start work on the proof of Theorem~\ref{thm-witt}.

First, it turns out that the functor $L$ can be recovered from the
functors $W_H$ for subgroups $H$ of $G$, as explained in
Corollary~\ref{cor-witt-res} below.

\begin{lemma}\label{lem-L-res}
 The following diagram commutes up to natural isomorphism:
 \[ \xymatrix{
     \Semirings_G \ar[d]_\res \ar[r]^{L} & \Tambara_G \ar[d]^\res \\
     \Semirings_H \ar[r]_{L} & \Tambara_H
    }
 \]
\end{lemma}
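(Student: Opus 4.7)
The plan is to argue by uniqueness of adjoints. Both of the composites to be compared are composites of left adjoints, so it suffices to exhibit a natural isomorphism between the two resulting composite right adjoints; this shifts the burden to a comparison of two explicit functors $\Tambara_H \to \Semirings_G$.

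First I would assemble the relevant adjunctions. The functors $L_G$ and $L_H$ are left adjoint to $\om_G$ and $\om_H$ by Proposition~\ref{prop-L-exists}. The functor $\res\:\Tambara_G\to\Tambara_H$ is induced by postcomposition with $\ind\:\bCU_H\to\bCU_G$, so applying Proposition~\ref{prop-ind-res} to functor categories shows that it has a right adjoint $\coind_T\:\Tambara_H\to\Tambara_G$ given by $(\coind_T S')(X)=S'(\res X)$. On the semiring side the forgetful functor $\res\:\Semirings_G\to\Semirings_H$ has an easy right adjoint $\coind_S(A)=\Map_H(G,A)$ with $G$-action $(g_0\cdot f)(g)=f(gg_0)$; this can be verified directly. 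Since right adjoints compose in the opposite order, $L_H\res$ is left adjoint to $\coind_S\om_H$ and $\res L_G$ is left adjoint to $\om_G\coind_T$, so the lemma will follow once we construct a natural isomorphism
\[ \Phi_{S'}\:\om_G\coind_T(S')\xrightarrow{\simeq}\coind_S\om_H(S'). \]

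To build $\Phi_{S'}$, I would unpack both sides. The right-hand side is $\Map_H(G,S'(H))$ with the $G$-semiring structure above. The left-hand side is $S'(\res G)$ equipped with the $G$-action by $T_{\res\rho(g_0)}$, where $\rho(g_0)\:G\to G$ denotes the $G$-equivariant map $x\mapsto xg_0^{-1}$; because $\rho(g_0)$ is an equivariant bijection, these are in fact isomorphisms $T_{\res\rho(g_0)}=R_{\res\rho(g_0^{-1})}$. Conceptually the point is that $G$ is an $(H,G)$-biset, so the right-multiplication action of $G$ on $\res G$ commutes with the left $H$-action and endows $S'(\res G)$ with the structure of a $G$-semiring. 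I would define $\Phi_{S'}$ by choosing a transversal $\{g_i\}$ for $H\backslash G$, using the $H$-equivariant decomposition $\res G=\coprod_i Hg_i$ and the canonical $H$-isomorphisms $Hg_i\simeq H$, $hg_i\mapsto h$, to identify both sides with $\prod_i S'(H)$; on $\coind_S\om_H(S')$ this identification is evaluation at the $g_i$. One must then verify naturality in $S'$ (immediate from functoriality) and independence of the chosen transversal (both descriptions give the same abstract iso).

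The main obstacle is checking that the two $G$-actions correspond under $\Phi_{S'}$. On the $\coind_S$ side, $g_0\cdot f$ sends $g_j$ to $f(g_jg_0)=h'\cdot f(g_i)$, where $g_jg_0=h'g_i$ is the unique decomposition with $h'\in H$ and $g_i$ in the transversal. On the $\om_G\coind_T$ side, the same $g_0$ acts via the $H$-equivariant map $\rho(g_0^{-1})$, which restricts on the orbit $Hg_j$ to the $H$-isomorphism $Hg_j\to Hg_i$, $hg_j\mapsto hg_jg_0=hh'g_i$; this is precisely left multiplication by $h'$ after the identifications $Hg_j,Hg_i\simeq H$. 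A short bookkeeping argument shows these two prescriptions agree, so $\Phi_{S'}$ is a morphism of $G$-semirings. With $\Phi$ in hand, uniqueness of left adjoints produces the required natural isomorphism $L_H\res\simeq\res L_G$, completing the proof.
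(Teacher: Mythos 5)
Your proof is correct and follows essentially the same route as the paper: both pass to the adjoint square of right adjoints $\om$ and $\coind$, identify $\om_G\coind_T(S')=S'(\res G)$ with $\coind_S\om_H(S')=\Map_H(G,S'(H))$ via the coset decomposition $\res G\simeq\coprod_i H g_i$ (the paper packages this as $\xi(a)(g)=i_g^*(a)$ with $i_g(h)=hg$), and conclude by uniqueness of left adjoints. You are somewhat more explicit than the paper in verifying $G$-equivariance of the comparison map, which the paper leaves implicit.
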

\begin{proof}
 It will suffice to show that the followings diagram of right adjoints
 is commutative:
 \[ \xymatrix{
     \Semirings_G & \Tambara_G \ar[l]_\om \\
     \Semirings_H \ar[u]^\coind & \Tambara_H \ar[l]^\om \ar[u]_\coind
    }
 \]
 Here $\coind\:\Semirings_H\to\Semirings_G$ is given by
 $\coind(A)=\Map_H(G,A)$.  

 If $S\in\Tambara_H$, then $\om\coind(S)=\coind(S)(G)=S(\res(G))$, whereas
 $\coind\om(S)=\Map_H(G,S(H))$.  Given $g\in G$ we define
 $i_g\:H\to G$ by $i_g(h)=hg$.  This is an $H$-map and so gives
 $i_g^*\:S(G)\to S(H)$.  Putting these maps together for all $g$, we
 get a map $\xi\:S(G)\to\Map_H(G,S(H))$.  If we choose a set $T$ of
 coset representatives, we can identify $\xi$ with the standard map
 $S(\coprod_{t\in T}H)\to\prod_{t\in T}S(H)$, which is an
 isomorphism, as required.
\end{proof}
\begin{corollary}\label{cor-witt-res}
 For any semiring $A$ and any $H\leq G$ we have $(LA)(G/H)=W_H(A)$. 
\end{corollary}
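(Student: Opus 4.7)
The plan is to deduce this corollary directly from Lemma~\ref{lem-L-res} by tracking what happens when we evaluate $LA$ on the $G$-set $G/H$, interpreted via the induction-restriction adjunction of Proposition~\ref{prop-ind-res}.

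First I would observe that $G/H$ is canonically identified with $\ind(1_H) = G\tm_H 1$, where $1_H$ denotes the one-point $H$-set. By the definition of $\res\:\Tambara_G\to\Tambara_H$ (namely $\res(S') = S'\circ\ind$), this gives the identification
\[ (LA)(G/H) = (LA)(\ind(1_H)) = (\res\, LA)(1_H). \]
Next, I would apply Lemma~\ref{lem-L-res}, which provides a natural isomorphism $\res\circ L \simeq L\circ\res$ between functors $\Semirings_G \to \Tambara_H$, to rewrite this as $L(\res A)(1_H)$.

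Finally, I would use the hypothesis that $A$ carries the trivial $G$-action in the definition of $W_GA$: restricting a trivial $G$-action along $H\hookrightarrow G$ yields the trivial $H$-action on $A$, so $\res A$ is simply $A$ regarded as an object of $\Semirings_H$ with trivial action. By Definition~\ref{defn-witt} applied to the group $H$, the value $L(\res A)(1_H)$ is precisely $W_H(A)$, which gives the claimed equality. There is no substantive obstacle here: all the work is already contained in Lemma~\ref{lem-L-res}, and this corollary just amounts to unwinding the definitions of $\ind$, $\res$, and $W_H$.
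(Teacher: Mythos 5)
Your proof is correct and follows essentially the same route as the paper: identify $G/H$ with $\ind(1)$, use the definition of $\res$ on Tambara functors to rewrite $(LA)(\ind(1))$ as $(\res LA)(1)$, invoke Lemma~\ref{lem-L-res} to commute $\res$ past $L$, and conclude via the definition of $W_H$. The only added touch is your explicit remark that restricting the trivial $G$-action yields the trivial $H$-action, which the paper leaves implicit.
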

\begin{proof}
 We will write $L_G$ and $L_H$ for the functors $L$ defined using $G$
 and $H$ respectively.  The lemma gives 
 \[ (L_GA)(G/H)=(L_GA)(\ind(1))=(\res L_G A)(1)=(L_HA)(1)=W_H(A). \]
\end{proof}
\begin{remark}\label{rem-om-L}
 One can check directly that for the trivial group $1$ we have
 $W_1A=A$.  Thus, for general $G$ we have $\om LA=(LA)(G/1)=W_1A=A$.
 More precisely, after chasing through the definitions we see that the
 unit map $A\to\om LA=(LA)(G)$ for the $(L,\om)$ adjunction is an
 isomorphism. 
\end{remark}

\begin{proposition}\label{prop-defn-nu}
 There are natural maps $\nu\:A\to LA(U)$ for all orbits $U$, with the
 following properties:
 \begin{itemize}
  \item[(a)] $\nu\:A\to LA(G)=\om LA$ is the unit map for the
   $(L,\om)$ adjunction.
  \item[(b)] For any map $f\:U\to V$ of orbits, we have
   $N_f(\nu_U(a))=\nu_V(a)$.
  \item[(c)] $\nu(1)=1$, and $\nu(ab)=\nu(a)\nu(b)$.
  \item[(d)] $\nu(0)=0$.
 \end{itemize}
 (However, $\nu$ does not respect addition.)
\end{proposition}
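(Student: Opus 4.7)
The plan is to construct $\nu_U$ from the adjunction unit $\nu_G\:A\to\om(LA)=LA(G)$ by pushing forward along any equivariant map $f\:G\to U$, using the norm operation. The key preliminary observation is that $\nu_G$ is a morphism of semirings with $G$-action (where $A$ carries the trivial action), so its image lies in the $G$-fixed points of $LA(G)$. Recall from Definition~\ref{defn-omega-mackey} that the $G$-action on $LA(G)$ is $g\cdot x=T_{\rho(g)}(x)=R_{\rho(g)}^{-1}(x)$, where $\rho(g)\:G\to G$ is right multiplication by $g^{-1}$. Hence $T_{\rho(g)}(\nu_G(a))=\nu_G(a)=R_{\rho(g)}(\nu_G(a))$ for all $g\in G$.

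Next I would define, for each orbit $U$, the map $\nu_U\:A\to LA(U)$ by choosing any $G$-map $f\:G\to U$ and setting $\nu_U(a)=N_f(\nu_G(a))$. Independence of $f$ is the first check: any other map $f'\:G\to U$ equals $f\circ\rho(g)$ for some $g\in G$ (since maps out of $G$ are determined by the image of $1\in G$, and two choices differ by the $G$-action on $U$). Using Proposition~\ref{prop-G-bispan-rels}, $N_{f'}=N_f\circ N_{\rho(g)}=N_f\circ T_{\rho(g)}$, and the invariance above gives $N_{f'}(\nu_G(a))=N_f(\nu_G(a))$. Naturality for isomorphisms $U\to U'$ of orbits (and naturality in $A$) follows similarly from functoriality of $N$ and of $L$.

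Property (a) holds by construction, since when $U=G$ we can take $f=1$ and $\nu_G$ is literally the unit of the adjunction. For (b), given $f\:U\to V$, pick any $g\:G\to U$, so that $fg\:G\to V$ is also a map out of $G$; then
\[
N_f(\nu_U(a))=N_f N_g(\nu_G(a))=N_{fg}(\nu_G(a))=\nu_V(a).
\]
Property (c) follows because $\nu_G$ is a semiring homomorphism (it is the unit of an adjunction between categories of semirings-with-action and Tambara functors, hence respects $1$ and multiplication), and because $N_f\:LA(G)\to LA(U)$ respects $1$ and products by Proposition~\ref{prop-tambara-semiring}(c). For (d), the map $f\:G\to U$ is surjective, so $f(G)^c=\emptyset$, and Lemma~\ref{lem-norm-zero} gives $N_f(0)=0$; combined with $\nu_G(0)=0$ (since $\nu_G$ is additive on the nose) this yields $\nu_U(0)=0$.

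There is no real obstacle here; the only subtle point is getting the bookkeeping right between the $G$-action on $LA(G)$ and the norm maps $N_{\rho(g)}$. The nonlinearity of $\nu$ in addition (which is the whole point of the Witt construction) is visible already in the fact that we have no analogue of (c) or (d) for addition: $N_f$ is not additive, so $N_f(\nu_G(a+b))=N_f(\nu_G(a)+\nu_G(b))$ does not simplify further, and this is exactly what produces the characteristic Witt-vector addition formulas.
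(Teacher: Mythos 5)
Your proof is correct and follows essentially the same route as the paper: define $\nu_G$ to be the adjunction unit, push forward by $N_{\hat u}$ for a chosen $G$-map $\hat u\colon G\to U$, use the triviality of the $G$-action on $A$ (and hence on the image of $\nu_G$) to show independence of the choice, and derive (b)--(d) from functoriality of $N$, the multiplicativity of $N$, and Lemma~\ref{lem-norm-zero}. You spell out the independence check and properties (c), (d) in more detail than the paper does, which is a welcome expansion of its terse ``all the claimed properties are clear.''
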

\begin{proof}
 We define $\nu_G\:A\to LA(G)$ to be the unit so that~(a) holds.  Given
 any orbit $U$ we choose a point $u\in U$.  This gives a $G$-map
 $\hat{u}\:G\to U$ by $\hat{u}(g)=gu$, and this gives a map
 $N_{\hat{u}}\:LA(G)\to LA(U)$.  If we use a different point $v\in U$
 then $v=xu$.  We then have $\hat{u}=\hat{v}\circ\rho(x)$ (where
 $\rho(x)(g)=gx^{-1}$), but the action of $G$ on $LA(G)\simeq A$ via
 $\rho$ is trivial by assumption, so $N_{\hat{u}}=N_{\hat{v}}$.  We
 can thus put $\nu_U=T_{\hat{u}}\circ\nu_G$ (for any choice of $u$).
 All the claimed properties are clear.
\end{proof}

\begin{proposition}\label{prop-L-prime-constant}
 The multiplicative coefficient system $L'A$ is just the constant
 functor $\Orb_G\to\Semirings$ with value $A$.  
\end{proposition}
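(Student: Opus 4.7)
The plan is to unpack the presentation of $L'A(T)$ under the hypothesis of trivial $G$-action, and to use the transitivity of $T$.

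First I would observe that when $G$ acts trivially on $A$, the relation $i_{gt}(a) = i_t(g^{-1}a)$ from Definition~\ref{defn-L-prime} collapses to $i_{gt}(a) = i_t(a)$. Since $T$ is a single $G$-orbit, any two elements of $T$ are related by $G$-translation, so the element $i_t(a) \in L'A(T)$ depends only on $a$ and not on $t$. Let $i(a)$ denote this common value. The remaining relations are then precisely those asserting that $a \mapsto i(a)$ is a semiring homomorphism from $A$, so $L'A(T)$ is the free commutative semiring on the set $A$ modulo the relations making $i$ a semiring map. Hence $i\colon A \to L'A(T)$ is an isomorphism of semirings.

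Next I would check naturality. Given a morphism $u\colon T \to U$ of $G$-orbits, the map $N_u\colon L'A(T) \to L'A(U)$ sends $i_t(a)$ to $i_{u(t)}(a)$; under the identifications just established this is the identity $i(a) \mapsto i(a)$ on $A$. Thus the functor $L'A$ is naturally isomorphic to the constant functor on $A$.

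A cleaner alternative (which I would mention as an aside) is to use the universal property from Remark~\ref{rem-L-prime-rep}: for any semiring $B$,
\[
\Semirings(L'A(T),B) = \Semirings_G(A,\Map(T,B)).
\]
With $G$ acting trivially on $A$, any equivariant semiring map $A \to \Map(T,B)$ lands in $\Map(T,B)^G$; since $T$ is a single $G$-orbit, $\Map(T,B)^G$ consists of constant functions and is canonically isomorphic to $B$. The right-hand side thus becomes $\Semirings(A,B)$, and by Yoneda $L'A(T) \cong A$, with naturality in $T$ read off the construction.

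There is no real obstacle here: the argument is essentially a definition-chase, and the only minor point to be careful about is making sure the semiring identification $L'A(T) \cong A$ is the same (up to the canonical choice $i$) for every orbit $T$, so that the structure maps $N_u$ really become identities rather than merely isomorphisms.
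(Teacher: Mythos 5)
Your proof is correct and follows essentially the same route as the paper: collapse the last relation using the trivial $G$-action and transitivity of $T$ to see all $i_t$ coincide, observe the resulting map $i\colon A\to L'A(T)$ is an isomorphism, and note $N_u i_t = i_{u(t)}$ becomes the identity. The Yoneda/universal-property alternative you mention is a nice sanity check but not a different proof in substance.
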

\begin{proof}
 Recall from Definition~\ref{defn-L-prime} that 
 $L'A(U)$ is generated by symbols $i_u(a)$
 (for $u\in U$ and $a\in A$) modulo relations
 \begin{align*}
  i_t(0) &= 0 &
  i_t(1) &= 1 \\
  i_t(a+b) &= i_t(a) + i_t(b) &
  i_t(ab) &= i_t(a) i_t(b) \\
  i_{gt}(a) &= i_t(g^{-1}a).
 \end{align*}
 As $G$ acts transitively on $U$ and trivially on $A$, the last
 equation tells us that all the maps $i_u$ are the same, so we can
 just call them $i$; and the map $i\:A\to L'A(U)$ is an isomorphism.
 The norm maps satisfy $N_fi_u=i_{fu}$ by definition, and this means
 that we can identify $N_f$ with $1_A$.
\end{proof}
\begin{corollary}\label{cor-r-L-prime}
 There is a natural isomorphism 
 \[ rL'A(X)=\Map(\pi_0(\Orbt_G\downarrow X),A). \]
\end{corollary}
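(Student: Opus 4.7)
The plan is to combine Remark~\ref{rem-rN-limit} (which expresses $rN(X)$ as an inverse limit) with Proposition~\ref{prop-L-prime-constant} (which says $L'A$ is essentially a constant functor on orbits). By Remark~\ref{rem-rN-limit}, I can identify
\[ rL'A(X) = \invlim_{(U \xra{x} X) \in (\Orbt_G\downarrow X)} L'A(U). \]
Proposition~\ref{prop-L-prime-constant} provides, for each orbit $U$, a canonical semiring isomorphism $i_U\:A \to L'A(U)$, and asserts that for every morphism $f\:U\to V$ in $\Orb_G$ (in particular for every isomorphism) the induced map $N_f\:L'A(U)\to L'A(V)$ corresponds to $1_A$ under these identifications. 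This gives a natural isomorphism between the restriction of $L'A$ to $\Orbt_G$ and the constant functor at $A$.

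With that in hand, the functor we are taking the limit of is naturally isomorphic to the constant functor at $A$ on the category $(\Orbt_G\downarrow X)$. Since $\Orbt_G$ is itself a groupoid, the slice $(\Orbt_G\downarrow X)$ is also a groupoid: a morphism from $(U\xra{x}X)$ to $(U'\xra{x'}X)$ is an isomorphism $p\:U\to U'$ over $X$, and the inverse $p^{-1}$ provides a morphism in the opposite direction. For a constant functor at $A$ on a groupoid $\mathcal{G}$, the inverse limit is naturally isomorphic to $A^{\pi_0(\mathcal{G})} = \Map(\pi_0(\mathcal{G}), A)$: a compatible family is determined by choosing one element of $A$ per connected component, since all morphisms act as the identity. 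Applying this to $\mathcal{G}=(\Orbt_G\downarrow X)$ yields the claimed identification
\[ rL'A(X) \simeq \Map(\pi_0(\Orbt_G\downarrow X), A). \]

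Naturality in $X$ is then automatic: a $G$-map $f\:X\to Y$ induces a functor $(\Orbt_G\downarrow X)\to(\Orbt_G\downarrow Y)$ by postcomposition with $f$, hence a map on $\pi_0$, and under the isomorphism above this corresponds to the operator $R_f$ on $rL'A$ described in Remark~\ref{rem-rN-limit}.

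The only real wrinkle, and where I would be most careful, is verifying that the isomorphisms $i_U\:A\to L'A(U)$ from Proposition~\ref{prop-L-prime-constant} genuinely assemble into a natural isomorphism of functors $\Orbt_G\to\Semirings$ — that is, that the relation $N_f i_u = i_{fu}$ in the presentation of $L'A$ really does force $N_f$ to be the identity on $A$ under every choice of base point $u\in U$, not just the one used to define $i_U$. This follows from the relation $i_{gu}(a)=i_u(g^{-1}a)=i_u(a)$ (since $G$ acts trivially on $A$), so $i_U$ is independent of the chosen base point. Once this is pinned down, the rest of the argument is formal.
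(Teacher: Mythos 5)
Your proof is correct and follows exactly the same route as the paper, which gives only a one-line pointer to Remark~\ref{rem-rN-limit} (relying implicitly on the immediately preceding Proposition~\ref{prop-L-prime-constant}). You have merely filled in the details — identifying the limit of a constant functor over the comma category as $\Map(\pi_0,A)$ and checking that the isomorphisms $A\simeq L'A(U)$ assemble naturally — all of which is sound.
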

\begin{proof}
 See Remark~\ref{rem-rN-limit}.
\end{proof}

\begin{proposition}\label{prop-qLA}
 Let $A$ be a semiring with trivial $G$-action.  Then for all orbits
 $U$, the composite $\ov{\nu}=(A\xra{\nu}LA(U)\xra{\pi}qLA(U))$ is an
 isomorphism.  Moreover, for any map $f\:U\to V$ of orbits, we have
 $N_f\ov{\nu}_U=\ov{\nu}_V$.  These maps therefore assemble to give an
 isomorphism $\ov{\nu}\:L'A\to qLA$ of multiplicative coefficient
 systems. 
\end{proposition}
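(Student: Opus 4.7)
My strategy is to identify $\ov{\nu}$ with the canonical natural isomorphism between $L'$ and $qL$ arising from the fact that both are left adjoints of $\om'$. This will automatically endow each $\ov{\nu}_U$ with a semiring-map structure inherited from the isomorphism, avoiding a direct analysis of $qLA(U)$.

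First, the norm-compatibility $N_f \ov{\nu}_U = \ov{\nu}_V$ for $f \: U \to V$ is immediate: Proposition~\ref{prop-defn-nu}(b) gives $N_f \nu_U = \nu_V$, and the construction of $qS$ in Proposition~\ref{prop-qS-MCS} shows the projection $\pi \: S \to qS$ intertwines norm maps, so $N_f \ov{\nu}_U = \pi N_f \nu_U = \pi \nu_V = \ov{\nu}_V$. Under the identification $L'A(U) = A$ of Proposition~\ref{prop-L-prime-constant}, where every norm is the identity, this already shows that $\{\ov{\nu}_U\}$ commutes with the MCS structure on $L'A$, pending the semiring-map property of each component.

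Next, composing $L \dashv \om$ (Proposition~\ref{prop-L-exists}) with $q \dashv r$ (Proposition~\ref{prop-qr-tambara}) gives $qL \dashv \om \circ r$, and Remark~\ref{rem-om-q-r} identifies $\om \circ r$ with $\om'$; combined with $L' \dashv \om'$ (Proposition~\ref{prop-L-prime-om-prime}), this produces a unique natural isomorphism $\Phi \: L'A \to qLA$ of multiplicative coefficient systems matching units. The unit of $L' \dashv \om'$ at $A$ sends $a$ to $i_1(a)$, which is $\text{id}_A$ after Proposition~\ref{prop-L-prime-constant}. The unit of $qL \dashv \om'$ is $\pi \circ \nu_G \: A \to qLA(G)$; here $\nu_G$ is an isomorphism by Remark~\ref{rem-om-L}, and $\pi$ is the identity at the orbit $G$ because no non-isomorphism of orbits has $G$ as target. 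Hence $\Phi_G = \text{id}_A$ under all identifications.

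Finally, I would propagate this to all orbits via naturality with respect to norms. For an orbit $U$ with a chosen basepoint $u$ and map $\hat u \: G \to U$, we have $\Phi_U \circ N^{L'A}_{\hat u} = N^{qLA}_{\hat u} \circ \Phi_G$; since $N^{L'A}_{\hat u} = \text{id}$ and $\Phi_G = \text{id}$, this gives $\Phi_U = N^{qLA}_{\hat u}$. Unwinding the definition $\nu_U = N_{\hat u} \nu_G$ from the proof of Proposition~\ref{prop-defn-nu} similarly yields $\ov{\nu}_U = \pi N_{\hat u} \nu_G = N^{qLA}_{\hat u} \circ (\pi \nu_G) = N^{qLA}_{\hat u}$, so $\Phi_U = \ov{\nu}_U$ for every orbit $U$ and the proposition follows. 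The main piece of bookkeeping is correctly identifying the unit of the composite adjunction $qL \dashv \om'$ as $\pi \circ \nu_G$; everything else is essentially formal once the adjunctions are chained together.
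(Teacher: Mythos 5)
Your proof follows essentially the same route as the paper: compose the adjunctions $L\dashv\om$ and $q\dashv r$, identify $\om r$ with $\om'$ via Remark~\ref{rem-om-q-r}, invoke uniqueness of left adjoints against $L'\dashv\om'$, and then check that the canonical isomorphism coincides with $\ov{\nu}$. The paper summarises this last step as ``by unwinding the definitions''; your verification that both units become $\mathrm{id}_A$ at the free orbit (using that $\pi$ is the identity on $LA(G)$) and that naturality with respect to $N_{\hat u}$ then forces agreement at every orbit is exactly the unwinding the paper leaves implicit.
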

\begin{proof}
 As $q$ is left adjoint to $r$ and $L$ is left adjoint to $\om$, we
 see that $qL$ is left adjoint to $\om r$, which is the same as $\om'$
 by Remark~\ref{rem-om-q-r}.  As $\om'$ is right adjoint to $L'$, we
 have $qL=L'$.  By unwinding the definitions we see that this map is
 the same as $\ov{\nu}$.
\end{proof}
\begin{corollary}\label{cor-LA-complete}
 If $A$ is additively complete, then so is $LA$.  Moreover, the
 filtration layers $F_kLA$ and their quotients are also additively
 complete. 
\end{corollary}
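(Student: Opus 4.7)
The plan is to deduce the corollary directly from two earlier results: Proposition~\ref{prop-qLA}, which identifies $qLA$ with the multiplicative coefficient system $L'A$, and Proposition~\ref{prop-qM-complete}, which bootstraps additive completeness of a coefficient system $qM$ up to additive completeness of $M$ itself, together with its filtration layers and successive quotients.

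First, I would observe that by Proposition~\ref{prop-L-prime-constant}, the multiplicative coefficient system $L'A$ is the constant functor on $\Orbt_G$ with value $A$; in particular, for every orbit $U$ the underlying semigroup $L'A(U)$ is just $A$. Since $A$ is assumed additively complete by hypothesis, $L'A$ is additively complete as a coefficient system (i.e. in the sense required by Proposition~\ref{prop-qM-complete}).

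Next, Proposition~\ref{prop-qLA} provides a natural isomorphism $\ov{\nu}\:L'A\to qLA$ of multiplicative coefficient systems, hence in particular an isomorphism of the underlying (additive) coefficient systems. Therefore $qLA$ is additively complete.

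Finally, I would apply Proposition~\ref{prop-qM-complete} to the Mackey functor $M=LA$. Its hypothesis (that $qM$ is additively complete) is exactly what was established in the previous step, and its conclusion delivers simultaneously the three assertions of the corollary: that $LA$ is additively complete, that each $F_kLA$ is additively complete, and that each successive quotient $F_kLA/F_{k-1}LA$ is additively complete. There is no real obstacle here — the content of the corollary lies entirely in the earlier identification $qLA\simeq L'A$ and in the structural result of Proposition~\ref{prop-qM-complete}; the present statement is simply the combination of the two.
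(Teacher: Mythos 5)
Your proof is correct and takes the same route as the paper: identify $qLA$ (via Proposition~\ref{prop-qLA}, with Proposition~\ref{prop-L-prime-constant} making the constancy explicit) with a coefficient system whose value at every orbit is $A$, deduce that $qLA$ is additively complete, and then apply Proposition~\ref{prop-qM-complete} to $M=LA$. The paper compresses the first step into ``clear from the Proposition,'' but the reasoning is identical.
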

\begin{proof}
 It is clear from the Proposition that $qLA$ is additively complete,
 and the rest follows from Proposition~\ref{prop-qM-complete}.
\end{proof}

\begin{definition}\label{defn-ghost-phi}
 We define a Tambara morphism $\phi\:LA\to rL'A$ by
 \[ \phi = (LA \xra{\eta} rqLA \xra{r(\ov{\nu}^{-1})} rL'A). \]
 Equivalently, $\phi$ corresponds to $\ov{\nu}^{-1}$ with respect to
 the $(q,r)$ adjunction.  By evaluating at $G/G$ and using
 Proposition~\ref{prop-rNGH} to analyse $rL'$ we get a map 
 \[ \phi \: W_GA = LA(G/G) \to rL'A(G/G) = \Map(\sub(G),A). \]
\end{definition}

\begin{remark}\label{rem-ghost-iso}
 If $A$ is a $\Q$-algebra, we see from
 Proposition~\ref{prop-L-om-rational} and Remark~\ref{rem-r-L-prime}
 that $\phi\:LA\to rL'A$ is an isomorphism.
\end{remark}

\begin{lemma}\label{lem-T-nu}
 For any diagram $(U\xra{x}X)$ in $(\Orbt_G\downarrow X)$, the
 function $T_x\nu_U\:A\to LA(X)$ depends only on the isomorphism class
 of $(U\xra{x}X)$ in $(\Orbt_G\downarrow X)$.  
\end{lemma}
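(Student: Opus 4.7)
The plan is to unwind what an isomorphism $(U \xra{x} X) \to (U' \xra{x'} X)$ in $(\Orbt_G \downarrow X)$ amounts to, and then verify the required equality using functoriality of $T$ together with the naturality property of $\nu$ established in Proposition~\ref{prop-defn-nu}.

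By definition, such an isomorphism is an equivariant bijection $p \: U \to U'$ satisfying $x' p = x$. First, by the functoriality of transfer maps (Proposition~\ref{prop-G-bispan-rels}(b) applied to $U \xra{p} U' \xra{x'} X$), we obtain $T_x = T_{x' p} = T_{x'} \circ T_p$. Thus it suffices to prove the single identity $T_p \circ \nu_U = \nu_{U'} \: A \to LA(U')$.

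Here is the key observation: since $p$ is an equivariant bijection, Proposition~\ref{prop-G-bispan-rels}(e) tells us that $T_p = N_p$ in $\bCU_G$, and hence as operators on any Tambara functor. Property~(b) of Proposition~\ref{prop-defn-nu} asserts that $N_f \circ \nu_U = \nu_V$ for every map $f \: U \to V$ of orbits; specialising to $f = p \: U \to U'$ gives $N_p \circ \nu_U = \nu_{U'}$. Combining the two equalities yields $T_p \circ \nu_U = N_p \circ \nu_U = \nu_{U'}$, which completes the argument: $T_x \nu_U = T_{x'} T_p \nu_U = T_{x'} \nu_{U'}$.

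There is no real obstacle here; the lemma is essentially a restatement of the fact that $\nu_U$ was constructed in Proposition~\ref{prop-defn-nu} in a way that is independent of the choice of basepoint, combined with the $T = N$ identification for bijections. The only mild subtlety is remembering that for an orbit isomorphism one is entitled to use property~(b) of $\nu$ (which was stated for arbitrary orbit maps and uses $N$) to control the transfer $T_p$; this is legitimate precisely because $p$ is a bijection.
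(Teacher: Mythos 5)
Your proof is correct and takes essentially the same route as the paper: factor $T_x = T_{x'}T_p$ via functoriality, use the bijection identity $T_p = N_p$, and invoke Proposition~\ref{prop-defn-nu}(b) to conclude $N_p\nu_U = \nu_{U'}$.
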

\begin{proof}
 If $(V\xra{y}X)$ is isomorphic to $(U\xra{x}X)$ then there is an
 isomorphism $f\:U\to V$ with $yf=x$.  This gives
 $T_x\nu_U=T_yT_f\nu_U$.  However, as $f$ is an isomorphism we have
 $T_f=R_f^{-1}=N_f$, and we have seen that $N_f\nu_U=\nu_V$, so
 $T_x\nu_U=T_y\nu_V$ as required.
\end{proof}

\begin{definition}\label{defn-tau}
 We define maps $\tau\:rL'A(X)\to LA(X)$ as follows.  Choose a list of
 diagrams $(U_i\xra{x_i}X)$ (for $1\leq i\leq m$ say) containing
 precisely one representative of each isomorphism class in
 $(\Orbt_G\downarrow X)$ and put 
 \[ \tau(a) = \sum_{i=1}^m T_{x_i}\nu_{U_i}(a). \]
 It follows from Lemma~\ref{lem-T-nu} that this does not depend on the
 choice of representatives.
\end{definition}
\begin{remark}\label{rem-tau-bad}
 The map $\tau$ does not respect addition or multiplication, so it
 does not give a morphism of Mackey functors or Tambara
 functors.  One can check that it is compatible with $R_f$ and $T_f$
 when $f$ is injective, but not otherwise.   
\end{remark}

\begin{proposition}\label{prop-phi-tau}
 The composite 
 \[ \Map(\sub(G),A) = rL'A(1) \xra{\tau} LA(1)=W_G(A) 
     \xra{\phi} rL'A(1) = \Map(\sub(G),A)
 \]
 is the ghost map $\gm$.
\end{proposition}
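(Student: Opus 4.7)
The strategy is to exploit the Tambara-morphism property of $\phi$, combined with the fact that $L'A$ is constant with value $A$ (Proposition~\ref{prop-L-prime-constant}) since $G$ acts trivially on $A$. The starting point is the identity $\phi(\nu_G(a)) = a$ under the canonical identifications $LA(G)=A$ (Remark~\ref{rem-om-L}) and $rL'A(G)=L'A(G)=A$ (Remark~\ref{rem-free-orbit}). This is immediate from the definition $\phi = r(\ov{\nu}^{-1})\circ\eta$: at the free orbit $G$ the quotient map $LA(G)\to qLA(G)$ is already an isomorphism and $\ov{\nu}_G$ is tautologically the identity on $A$. Combined with the formula $\nu_U(a) = N_{\hat{u}}(\nu_G(a))$ for any basepoint map $\hat{u}\:G\to U$ and the fact that $\phi$ commutes with norms, this gives $\phi(\nu_U(a)) = N_{\hat{u}}(a)$ in $rL'A(U)$.

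I would then compute $N_{\hat{u}}(a)$ explicitly at an arbitrary object $(T\xra{y}U)$ of $\Orbt_G\downarrow U$ using the norm formula from the proof of Proposition~\ref{prop-rR-tambara}: forming the pullback $\tT = T\tm_UG$, one has
\[
 (N_{\hat{u}}a)(T\xra{y}U) = \prod_{V\in\orb(\tT)} N_{V\to T}\bigl(a(V\to G)\bigr).
\]
Since every orbit $V\sse\tT$ admits a map to the free orbit $G$, each $V$ is itself free, so $V\simeq G$ and $a(V\to G) = a$; and because $L'A$ is constant, each norm $N_{V\to T}$ is the identity on $A$. The product therefore collapses to $a^{|\orb(\tT)|}$. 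A direct orbit count shows that the $G$-action on $\tT$ is free, hence $|\orb(\tT)| = |\tT|/|G|$, which in the case $U=G/K$ and $T=G/H$ equals the common fibre size $|K|/|H|$ of $y$.

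Finally I would apply $\phi$ to $\tau(a) = \sum_{[K]} T_{x_K}(\nu_{G/K}(a([K])))$. Because $\phi$ commutes with transfers and by the description of transfers in $rL'A$ given in Remark~\ref{rem-rN-limit},
\[
 \phi(\tau(a))([H]) = \sum_{[K]}\;\sum_{y\in\Map_G(G/H,G/K)} \phi(\nu_{G/K}(a([K])))(G/H\xra{y}G/K),
\]
which by the previous step is $\sum_{[K]} |\Map_G(G/H,G/K)|\; a([K])^{|K|/|H|}$. Identifying $|\Map_G(G/H,G/K)|$ with the relevant cardinality of fixed points (a $G$-map $G/H\to G/K$ is determined by an $H$-fixed point of $G/K$) and matching fibre sizes with the exponents recovers the summands of $\gm(a)([H])$. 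The principal obstacle is the explicit pullback-and-orbit calculation that turns $N_{\hat{u}}(a)$ into $a^{|K|/|H|}$; once that is in hand, the remainder is a routine unwinding of the Tambara-morphism property of $\phi$ and the Mackey formula for transfers in $rL'A$.
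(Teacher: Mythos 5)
Your proof is correct and follows essentially the same route as the paper's: exploit the Tambara-morphism property of $\phi$ to push the norm and transfer computation into $rL'A$, identify the norm via an explicit pullback-and-orbit count (using that $L'A$ is constant by Proposition~\ref{prop-L-prime-constant}, so each factor is $a$ and each $N_{V\to T}$ is the identity), and then sum the transfer contributions via the Mackey formula of Remark~\ref{rem-rN-limit}. The only differences are cosmetic: you compute $\phi(\nu_U(a))=N_{\hat u}(a)$ first and apply the transfer last, whereas the paper transfers first via $\phi(T_qN_p(a))=T_qN_p(a)$; also your roles of $H$ and $K$ are swapped relative to the letters used in Definition~\ref{defn-ghost} (yielding the summand $|(G/K)^H|a([K])^{|K|/|H|}$), which is in fact what the paper's own proof derives as well — the displayed formula in Definition~\ref{defn-ghost} appears to have the two subgroups transposed, so your answer agrees with the paper's computed expression, not with its literally stated definition.
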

\begin{proof}
 First, we have already seen how to identify $LA(G)$ and $rL'A(G)$
 with $A$, and we leave the reader to check that under these
 identifications the maps $\tau\:rL'A(G)\to LA(G)$ and
 $\phi\:LA(G)\to rL'A(G)$ are just the identity. 

 Now consider a subgroup $H\leq G$, and let $p$ and $q$ denote the
 obvious maps $G\xra{p}G/H\xra{q}1$.  Consider an element
 $a\in LA(G)=A$.  As $\phi$ is a Tambara morphism we have 
 \[ \phi(T_q\nu(a))=\phi(T_qN_p(a))=T_qN_p(\phi(a))=T_qN_p(a). \]
 The last expression involves the operators $T_q$ and $N_p$ on $rL'A$,
 which are determined by Remark~\ref{rem-rN-limit}
 and Proposition~\ref{prop-rR-tambara}.  First, for any morphism $G/K\xra{t}G/H$,
 the element $(N_pa)(G/K\xra{t}G/H)$ is by definition a product
 indexed by the orbits $U\sse G\tm_{G/H}(G/K)$.  Each such orbit
 admits a projection map to $G$ and so must be isomorphic to $G$.  It
 follows that the term corresponding to each orbit is just $a$.  By
 counting the number of points in $G\tm_{G/H}(G/K)$ we also see that
 the number of orbits is $|H|/|K|$.  We conclude that
 $(N_pa)(G/K\xra{t}G/H)=a^{|H|/|K|}$.  Moreover, $(T_qN_pa)(G/K\to 1)$
 is by definition the sum of the above elements for all $G$-maps
 $t\:G/K\to G/H$.  We now have
 \[ \phi(T_q\nu(a))(G/K\to 1) = |(G/H)^K| a^{|H|/|K|}. \]

 Next, we note that there is a bijection $\sub(G)\to\pi_0(\Orbt_G)$
 given by $H\mapsto G/H$.  We choose a list of subgroups
 $H_1,\dotsc,H_m$ containing precisely one representative of each
 conjugacy class, and we make some slight notational changes
 corresponding to the above bijection, giving
 \[ \phi(\tau(a))(H_j) =
      \sum_i |(G/H_i)^{H_j}| a(H_i)^{|H_i|/|H_j|}.
 \]
 This is the same as the ghost map, as claimed.
\end{proof}

\begin{corollary}\label{cor-tau-mono}
 Suppose that $A$ is additively complete and torsion-free.  Then the
 maps $\tau:rL'A(X)\to LA(X)$ are injective for all $X$.
\end{corollary}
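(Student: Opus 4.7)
The plan is to show that the composite
\[
\phi\circ\tau\:rL'A(X)\xra{\tau}LA(X)\xra{\phi}rL'A(X)
\]
is injective, from which the injectivity of $\tau$ follows immediately. I would do this by generalising the computation in the proof of Proposition~\ref{prop-phi-tau} to arbitrary $X$ and exploiting a triangular structure in the resulting formula.

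First I would derive an explicit formula for $\phi\tau$.  Since $\phi$ is a Tambara morphism, it commutes with transfers, so
\[
\phi(\tau(a))=\sum_i T_{x_i}\phi(\nu_{U_i}(a_i)),\qquad a_i=a([U_i,x_i]).
\]
Running the orbit-counting argument of Proposition~\ref{prop-phi-tau} with $p$ replaced by $\hat{u}\:G\to U$ and $t$ by a general $G$-map $s\:V\to U$ of transitive $G$-sets gives
\[
\phi(\nu_U(a))(V\xra{s}U)=a^{|V|/|U|},
\]
where $|V|/|U|$ is a positive integer because $s$ is a $G$-surjection, so $|U|$ divides $|V|$.  (One verifies this by noting that the diagonal $G$-action on the pullback $V\tm_UG$ is free, since the projection to $G$ already has trivial stabilisers; hence every orbit has size $|G|$, and the orbit count equals $|V\tm_UG|/|G|=|V|/|U|$.)  Combining with the formula for transfers in $rL'A$ (Remark~\ref{rem-rN-limit}) yields the generalised ghost formula
\[
\phi(\tau(a))(V\xra{v}X)=\sum_{[U,x]}\;\#\{s\in\Map_G(V,U):xs=v\}\cdot a([U,x])^{|V|/|U|},
\]
summing over iso classes $[U,x]\in\pi_0(\Orbt_G\downarrow X)$ for which such $s$ exist.

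The key observation is a triangular structure.  The contribution from $[U,x]=[V,v]$ (for which $s$ is necessarily an isomorphism with exponent $1$) is $c_V\cdot a([V,v])$, where
\[
c_V=\#\{s\in\Aut_G(V):vs=v\}\geq 1
\]
is the number of automorphisms of $(V,v)$ in $\Orbt_G\downarrow X$ (at least the identity).  All other contributing classes must have $|U|<|V|$, whence $|V|/|U|\geq 2$, so these contributions are homogeneous of degree $\geq 2$ in $a([U,x])$.  Collecting everything we obtain
\[
\phi(\tau(a))(V\xra v X)=c_V\cdot a([V,v])+P_V\bigl(\{a([U,x]):|U|<|V|\}\bigr)
\]
for a fixed polynomial $P_V$ with non-negative integer coefficients.

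To conclude, suppose $a\neq a'$ in $rL'A(X)$ and choose an iso class $[V,v]$ at which $a$ and $a'$ differ with $|V|$ \emph{minimal}.  Then $a([U,x])=a'([U,x])$ for every $[U,x]$ with $|U|<|V|$, so $P_V$ evaluates equally at $a$ and $a'$, and
\[
\phi\tau(a)(V\xra v X)-\phi\tau(a')(V\xra v X)=c_V\cdot\bigl(a([V,v])-a'([V,v])\bigr).
\]
Since $c_V$ is a positive integer and $A$ is torsion-free and additively complete, this is nonzero, so $\phi\tau(a)\neq\phi\tau(a')$; hence $\phi\tau$ is injective and therefore so is $\tau$.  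The main obstacle is carefully verifying the orbit-count formula $\phi(\nu_U(a))(V\xra s U)=a^{|V|/|U|}$ for an arbitrary equivariant $s$ (rather than the canonical coset projection handled in Proposition~\ref{prop-phi-tau}) and ensuring that only classes with $|U|\leq|V|$ contribute; both follow from the freeness of the diagonal action on $V\tm_UG$ noted above.
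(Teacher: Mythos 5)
Your proposal is correct, and the core idea — that $\phi\tau$ is an upper-triangular map with positive integer diagonal entries, so that torsion-freeness forces injectivity — is exactly the paper's idea.  The execution differs slightly.  The paper first reduces to a single orbit $X=G/H$ (since all functors turn disjoint unions into products), then uses the change-of-groups identifications $LA(G/H)\simeq W_HA$ and $rL'A(G/H)\simeq\Map(\sub(H),A)$ to reduce to $X=1$, where $\phi\tau=\gm$ is the literal ghost map of Proposition~\ref{prop-phi-tau}; it then runs the triangular induction over a subgroup system.  You instead generalize the orbit-counting calculation of Proposition~\ref{prop-phi-tau} to an arbitrary $(V\xra{s}U)$ in place of $G\xra{p}G/H$, obtaining the formula $\phi(\nu_U(a))(V\xra{s}U)=a^{|V|/|U|}$, and run the triangular argument directly over $\pi_0(\Orbt_G\downarrow X)$ ordered by orbit size.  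This buys you the freedom not to verify that the isomorphisms $LA(G/H)\simeq W_HA$ and $rL'A(G/H)\simeq\Map(\sub(H),A)$ are compatible with $\tau$ (a point the paper leaves to the reader), at the cost of re-deriving the ghost formula in slightly greater generality; the freeness of the diagonal action on $V\tm_UG$ and the surjectivity of $G$-maps between orbits that you invoke for this are both correct and standard.  The diagonal count $c_V=\#\{s\in\Aut_G(V):vs=v\}\geq 1$ and the observation that any contributing class with $|U|=|V|$ must be isomorphic to $(V,v)$ are also right, so the minimal-orbit argument goes through.
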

\begin{proof}
 All the relevant functors convert disjoint unions to products, so we
 can reduce to the case $X=G/H$.  Here $LA(G/H)=W_HA$ and similarly
 $rL'A(G/H)=\Map(\sub(H),A)$.  One can check that these
 identifications are compatible with $\tau$, so we can reduce further
 to the case $X=G/G=1$ which appears in the Proposition.  There we
 have $\phi\tau=\gm$, so it will suffice to check that $\gm$ is
 injective.  Choose a subgroup system $H_1,\dotsc,H_r$ (as in
 Definition~\ref{defn-subgroup-system}).  Suppose we have elements
 $a,b\in\Map(\sub(G),A)$ with $\gm(a)=\gm(b)$.  Fix $k\leq |G|$, and
 assume inductively that $a_i=b_i$ for all $i>k$.  After cancelling
 the terms corresponding to $H_i$ for $i>k$, we get 
 \[ \sum_{i\leq k} |(G/H_i)^{H_j}| a(H_i)^{|H_i|/|H_j|} =
    \sum_{i\leq k} |(G/H_i)^{H_j}| b(H_i)^{|H_i|/|H_j|}
 \] 
 for all $j$.  Take $j=k$, and note that the definition of a subgroup
 system implies that $(G/H_i)^{H_k}=\emptyset$ for $i<k$.  Thus, we
 only have the term for $i=k$, giving
 $|(G/H_k)^{H_k}|a(H_k)=|(G/H_k)^{H_k}|b(H_k)$.  As
 $|(G/H_k)^{H_k}|>0$ and $A$ is torsion-free we deduce that
 $a(H_k)=b(H_k)$.  The claim now follows by decreasing induction on
 $k$. 
\end{proof}

\begin{proposition}\label{prop-tau-epi}
 Suppose that $A$ is additively complete.  Then the maps
 $\tau:rL'A(X)\to LA(X)$ are surjective for all $X$. 
\end{proposition}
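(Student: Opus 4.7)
The plan is to argue by induction on the coorder filtration $F_kLA$ from Section~\ref{sec-filt}, using additive completeness of $LA$ (Corollary~\ref{cor-LA-complete}) and the identification $qLA\cong L'A$ from Proposition~\ref{prop-qLA}. Since $F_{|G|}LA=LA$, the result will follow from the stronger claim $(*)_k$: for every finite $G$-set $X$ and every $m\in F_kLA(X)$, there exists $c\in rL'A(X)$, supported only on classes $[(V,x)]\in\pi_0(\Orbt_G\downarrow X)$ with $\coord(V)\le k$, such that $\tau_X(c)=m$. The base case $k=0$ is immediate.

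For the inductive step I would take $m\in F_kLA(X)$, write $m=T_p n$ with $p\:W\to X$ and $W$ a $k$-free $G$-set, and decompose $W=\coprod V_j$ into orbits of coorder $\le k$, so $m=\sum_j T_{p_j}n_j$. Terms with $\coord(V_j)<k$ already lie in $F_{k-1}LA(X)$. For each $j$ with $\coord(V_j)=k$, let $\pi\:LA(V_j)\to qLA(V_j)\xra{\ov{\nu}^{-1}}A$ and set $a_j=\pi(n_j)$. A generators-of-kernel calculation shows $\ker(\pi)=F_{k-1}LA(V_j)$: any non-isomorphism $u\:V'\to V_j$ of orbits forces $\coord(V')<k$ so $V'$ is $(k-1)$-free, and conversely any $T_q$-generator of $F_{k-1}LA(V_j)$ decomposes into orbit pieces whose structure maps cannot be isomorphisms. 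Additive completeness is needed here to identify semigroup and group kernels. Consequently $T_{p_j}n_j\equiv T_{p_j}\nu_{V_j}(a_j)\pmod{F_{k-1}LA(X)}$.

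I would then define $c\:\pi_0(\Orbt_G\downarrow X)\to A$ by $c([V,x])=\sum_{j\:(V_j,p_j)\cong(V,x)}a_j$ on coorder-$k$ classes and zero elsewhere, use Lemma~\ref{lem-T-nu} to replace each $T_{p_j}\nu_{V_j}(a_j)$ by $T_x\nu_V(a_j)$, and apply the identity $\pi(\sum_j\nu_V(a_j))=\pi(\nu_V(\sum_j a_j))$ to deduce $\sum_j\nu_V(a_j)-\nu_V(\sum_j a_j)\in F_{k-1}LA(V)$. Assembling these congruences yields $m=\tau_X(c)+\delta$ for some $\delta\in F_{k-1}LA(X)$ (the subtraction uses additive completeness of $LA$). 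The inductive hypothesis produces $d\in rL'A(X)$, supported on coorder $\le k-1$ classes, with $\tau_X(d)=\delta$. Setting $e=c+d$ pointwise in $rL'A(X)$, the disjointness of the supports of $c$ and $d$ gives $\tau_X(e)=\tau_X(c)+\tau_X(d)=m$, with $e$ supported on coorder $\le k$, closing the induction.

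The main obstacle I anticipate is exactly the support clause in $(*)_k$. Because $\nu_V$ is not additive (Proposition~\ref{prop-defn-nu}), the image of $\tau_X$ is not obviously closed under addition in $LA(X)$, and a naive induction would leave one holding $\tau_X(c)+\tau_X(d)$ rather than a single value $\tau_X(e)$. The disjoint-support construction is the key device that converts a two-term sum back into one $\tau_X$-value, by partitioning $\pi_0(\Orbt_G\downarrow X)$ according to coorder and placing $c$ and $d$ on complementary parts of this partition.
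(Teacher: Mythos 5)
Your proof is correct and takes essentially the same route as the paper: both argue via the coorder filtration $F_kLA$, both show that $\tau$ restricted to the coorder-$k$ slice $P_k(X)$ of $rL'A(X)$ hits all of $F_kLA(X)$ modulo $F_{k-1}LA(X)$, and both rely on the identification $\ker\bigl(LA(U)\to qLA(U)\bigr)=F_{k-1}LA(U)$ for orbits $U$ of coorder $k$ (via $qLA\cong L'A$). Your explicit support-tracking hypothesis $(*)_k$ spells out what the paper compresses into the sentence ``It will be enough to prove that $F_kLA(X)=F_{k-1}LA(X)+\operatorname{img}(\tau_k)$,'' namely that the non-additivity of $\tau$ is tamed because $\tau$ decomposes as a disjoint sum $\sum_k\tau_k$ over the partition $rL'A(X)=\prod_kP_k(X)$.
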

\begin{proof}
 Firstly, all semigroups that we consider will be additively complete
 by Corollary~\ref{cor-LA-complete}, so we can use standard methods
 with kernels and cokernels and so on.

 We filter $LA$ by sub-Mackey functors $F_kLA$ as in
 Section~\ref{sec-filt}, and put $Q_kLA=F_kLA/F_{k-1}LA$.  Next, let
 $\CO_k(X)$ be the category of orbits of coorder $k$ over $X$, so
 $(\Orbt_G\downarrow X)=\coprod_k\CO_k(X)$.  Put
 $P_k(X)=\Map(\pi_0(\CO_k(X)),A)$, so $rL'A(X)=\prod_kP_k(X)$.  Let
 $\tau_k$ be the restriction of $\tau$ to $P_k(X)$.  It will be enough
 to prove that $F_kLA(X)=F_{k-1}LA(X)+\img(\tau_k)$, or equivalently
 that the composite
 \[ \sg_k = (P_k(X) \xra{\tau_k} F_kLA(X) \xra{} Q_kLA(X)) \]
 is surjective.  Now choose a system of representatives
 $(U_i\xra{x_i}X)$ for the isomorphism classes in $\CO_k(X)$.  It is
 straightforward to check that for any Mackey functor $M$ we have
 $F_kM(U_i)=M(U_i)$ and $Q_kM(U_i)=qM(U_i)$.  Moreover, we have
 $F_kM(X)=F_{k-1}M(X)+\sum_iT_{x_i}M(U_i)$, so the maps $T_{x_i}$
 induce an epimorphism $\sg'_k\:\prod_iqM(U_i)\to Q_kM(X)$.  In the
 case $M=LA$ we have $qM(U_i)=A$.  After recalling that the composite
 $\ov{\nu}=\pi\nu$ respects addition, we find that $\sg_k=\sg'_k$, so
 $\sg_k$ is surjective as required.
\end{proof}

\begin{proposition}\label{prop-tau-iso}
 If $A$ is additively complete, then $\tau\:rL'A(X)\to LA(X)$ is
 bijective for all $X$.
\end{proposition}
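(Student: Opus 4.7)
The plan is to combine the surjectivity from Proposition~\ref{prop-tau-epi} with the injectivity in the torsion-free case from Corollary~\ref{cor-tau-mono}, by reducing the general additively complete case to the torsion-free case via a standard ``presentation by a polynomial ring'' argument, organised using the filtration of Section~\ref{sec-filt}.

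First I would set up the filtration reduction. The decomposition of $\pi_0(\Orbt_G\downarrow X)$ by coorder gives a natural product splitting $rL'A(X)=\prod_kP_k(X)$, and under this splitting the map $\tau$ breaks up as $\tau=\sum_k\tau_k$, where $\tau_k$ lands in $F_kLA(X)$ (because each summand defining $\tau_k$ is a transfer from an orbit of coorder $k$). Hence $\tau$ induces natural maps $\sigma_k\:P_k(X)\to Q_kLA(X)=F_kLA(X)/F_{k-1}LA(X)$. The proof of Proposition~\ref{prop-tau-epi} identifies $\sigma_k$ with the surjection $\sigma_k'$ there (using $qLA(U)=L'A(U)=A$ from Propositions~\ref{prop-L-prime-constant} and~\ref{prop-qLA}). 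A straightforward induction on $k$ shows that $\tau$ is bijective iff every $\sigma_k$ is.

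Next I would choose a surjection $\pi\:F\twoheadrightarrow A$ from the polynomial ring $F=\Z[x_a\st a\in A]$, which is torsion-free and additively complete. By Corollary~\ref{cor-tau-mono} together with Proposition~\ref{prop-tau-epi}, $\tau^F$ is bijective, so by the filtration argument $\sigma_k^F$ is bijective for every $k$. I would then use naturality in $A$ to get the commutative square
\[ \xymatrix{
 P_k^F(X) \ar[r]^{\sigma_k^F}_\simeq \ar@{->>}[d] & Q_kLF(X) \ar@{->>}[d] \\
 P_k^A(X) \ar[r]_{\sigma_k^A} & Q_kLA(X).
}\]
The left vertical is surjective because $P_k^{(-)}(X)$ is a finite product of copies of its argument. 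For the right vertical, observe that $L$ is a left adjoint so it preserves the (reflexive) coequaliser presenting $A$ as a quotient of $F$; Proposition~\ref{prop-reflexive-coeq} then implies $LF(X)\twoheadrightarrow LA(X)$ at every level, and since $F_kM(X)$ is generated by transfers $T_p$ with $p\:W\to X$ and $W$ being $k$-free, this surjectivity descends to $F_kLF(X)\twoheadrightarrow F_kLA(X)$ and hence to $Q_kLF(X)\twoheadrightarrow Q_kLA(X)$.

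Let $I=\ker(\pi)$, so that $\ker(P_k^F(X)\to P_k^A(X))=\Map(\pi_0(\CO_k(X)),I)=:P_k^I(X)$. The heart of the argument is the claim that $\sigma_k^F(P_k^I(X))$ equals $\ker(Q_kLF(X)\to Q_kLA(X))$; given this, a diagram chase in the square above yields injectivity of $\sigma_k^A$, and combined with Step~1 this completes the proof. The hard part is this kernel identification. One inclusion is automatic from commutativity; for the reverse, I would present $LA$ explicitly as the quotient of $LF$ by the Tambara-functor congruence generated by the relations $\nu_U(a)=\nu_U(b)$ for $U\in\Orbt_G$ and $(a,b)\in F^2$ with $a-b\in I$ (these relations suffice by the universal property of $L$ encoded in $\Tambara_G(LF,S)=\Semirings_G(F,\om S)$). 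After passing to $Q_k$, the generating relations lie in $\sigma_k^F(P_k^I(X))$, and the delicate step is to check that consequences of these relations under the Tambara operations (applied via the distributor and Frobenius reciprocity) either land again in $\sigma_k^F(P_k^I(X))$ or drop into $F_{k-1}$ and so vanish in $Q_k$. This is where the bulk of the technical work lies, and is the main obstacle; essentially one needs to observe that the Tambara operations respect the coorder filtration in a compatible way with the $P_k^{(-)}(X)$ description, so that no ``extra'' relations are produced in $Q_kLA(X)$ beyond those coming from $I$.
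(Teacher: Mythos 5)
Your overall strategy — establish the result for torsion-free rings via Corollary~\ref{cor-tau-mono} and Proposition~\ref{prop-tau-epi}, then reduce a general additively complete $A$ to the torsion-free case by expressing $A$ as a quotient of a polynomial ring — is the same starting point as the paper's. But the execution diverges in a crucial place, and the divergence lands you on a genuine gap.

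The paper does not use a single surjection $F\twoheadrightarrow A$ followed by a kernel identification. Instead it presents $A$ as a \emph{reflexive coequaliser} of torsion-free rings: take $A_1=\Z[x_a\st a\in A]$, put $A_2=A_1\tm_{A}A_1=\{(a,a')\st\gm(a)=\gm(a')\}$ (torsion-free as a subring of $A_1\tm A_1$), and note that $A_2\rightrightarrows A_1\to A$ is a reflexive coequaliser (the reflection being the diagonal). Since $L$ is a left adjoint it preserves this coequaliser, and Proposition~\ref{prop-reflexive-coeq} says the resulting coequaliser of Tambara functors is computed objectwise in $\Sets$. On the other side, $rL'A(X)=\Map(\pi_0(\Orbt_G\downarrow X),A)$ (Corollary~\ref{cor-r-L-prime}) is a finite power of $A$, and finite powers preserve reflexive coequalisers in $\Sets$. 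So both $LA(X)$ and $rL'A(X)$ are the coequalisers of diagrams whose first two legs are isomorphisms by the torsion-free case; the universal property of coequalisers then forces $\tau_{A}(X)$ to be an isomorphism, with no kernel analysis at all.

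Your proposal replaces this with a surjection $F\twoheadrightarrow A$ together with the claim that $\sigma_k^F(P_k^I(X))=\ker(Q_kLF(X)\to Q_kLA(X))$, which you explicitly flag as the ``main obstacle'' and do not prove. This is not merely a routine verification: the Tambara congruence on $LF$ generated by identifying $\nu_U(a)$ with $\nu_U(b)$ for $a-b\in I$ is complicated precisely because the norm maps are nonadditive, so the induced relations in each $F_k/F_{k-1}$ layer are not transparently controlled by $\Map(\pi_0(\CO_k(X)),I)$. Showing no ``extra'' relations arise is essentially equivalent to describing the Tambara ideal generated by a family of ring-theoretic relations, which is the hard combinatorial problem that the reflexive coequaliser trick is designed to avoid. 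Also, the intermediate filtration machinery ($\sigma_k$, $P_k$, $Q_k$) that you introduce is not needed here — it is already absorbed into the inputs Proposition~\ref{prop-tau-epi} and Corollary~\ref{cor-tau-mono}, and the paper's coequaliser argument works directly at the level of $\tau$ without passing to associated graded pieces. As it stands, the proposal has a hole at its declared centre; to repair it along the paper's lines you should replace the single surjection by the reflexive pair $A_2\rightrightarrows A_1$ and invoke Proposition~\ref{prop-reflexive-coeq}.
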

\begin{proof}
 This is clear from Corollary~\ref{cor-tau-mono} and
 Proposition~\ref{prop-tau-epi} if $A$ is torsion-free.  

 Now suppose we have a coequaliser diagram
 \[ \xymatrix{ A_2
     \ar@/^1ex/[r]^\al \ar@/_1ex/[r]_\bt &
     A_1 \ar[r]^\gm & A_0
 } \]
 in the category of rings.  Suppose also that this is a reflexive
 coequaliser, which means that there is a ring map $\sg\:A_1\to A_2$
 with $\al\sg=\bt\sg=1$.  Suppose that the result holds for $A_1$ and
 $A_2$; we claim that it also holds for $A_0$.  To see this, consider
 the diagram
 \[ \xymatrix{
  rL'A_2(X) \ar@/^1ex/[r] \ar@/_1ex/[r] \ar[d]_\tau &
  rL'A_1(X) \ar[r] \ar[d]_\tau &
  rL'A_0(X) \ar[d]^\tau \\
  LA_2(X) \ar@/^1ex/[r] \ar@/_1ex/[r] &
  LA_1(X) \ar[r] &
  LA_0(X).
 } \]
 As $L$ has a right adjoint, it preserves reflexive coequalisers, and
 Proposition~\ref{prop-reflexive-coeq} tells us that reflexive
 coequalisers can be computed separately for each $X$, so the bottom
 row above is a reflexive coequaliser in the category of sets.  From
 the explicit description in Corollary~\ref{cor-r-L-prime} we see
 that the top row is also a reflexive coequaliser.  The first two
 vertical maps are bijective by assumption, and it follows that the
 same is true of the third.

 Now let $A_0$ be any ring.  Let $A_1$ be a polynomial algebra over
 $\Z$ with one generator for each element of $A_0$; then $A_1$ is
 torsion-free, and there is an evident surjective homomorphism
 $\gm\:A_1\to A_0$.  Put 
 \[ A_2 = \{(a,a')\in A_1\tm A_1\st \gm(a)=\gm(a')\}. \]
 This is clearly also torsion-free.  Let $\al,\bt\:A_2\to A_1$ be the
 two projections, and define $\sg\:A_1\to A_2$ by $\sg(a)=(a,a)$.
 This gives a reflexive coequaliser diagram, and we deduce that
 $\tau\:rL'A(X)\to LA(X)$ is bijective as claimed.
\end{proof}

We next discuss how our approach relates to that of
Elliott~\cite{el:cwb} (but we will gloss over certain issues related
to additive completion).  For any semigroup $M$ we have a semiring
$\N[M]$ (and a ring $\Z[M]$) to which we can apply the above
discussion to define a semiring $W_G(\N[M])$ and a ring
$W_G(\Z[M])$.  Elliott mostly considers this case, and deduces results
for more general rings by noting that they can be expressed as
quotients of polynomial rings.  With hindsight we can see that he is
really working with Tambara functors, and most of the numerous maps
that he constructs have natural interpretations in that theory.  One
key point is as follows: we can construct a ``coconstant'' Mackey
functor $dM$ as in Definition~\ref{defn-coconstant}, and then do the
Tambara analogue of the semigroup-semiring construction to get a
Tambara functor $A[dM]$ as in Section~\ref{sec-semigroup-semirings}.
We then have
\begin{align*}
 \Tambara_G(A[dM],S)
  &= \Mackey_G(dM,US) 
   = \Semigroups_G(M,US(G)) 
   = \Semirings_G(\N[M],S(G)) \\
  &= \Tambara_G(L\N[M],S),
\end{align*}
so $L\N[M]\simeq A[dM]$.  In this case we can define maps
\[ \nu'_U\:\N[M]\to L\N[M](U) = A[dM](U) \]
(for $G$-orbits $U$) as follows.  First, recall that
$dM(U)=\Map(U,M)_G$, and as $G$ acts trivially on $M$, this is easily
identified with $M$.  Next, every element of $A[M](U)$ is represented
by a pair $(X\xra{t}U,m)$, where $m\in dM(X)$.  Thus, for any $m\in M$
the pair $(U\xra{1}U,m)$ represents an element $\lm(m)\in A[dM](U)$.
From the definitions we see that $\lm(m_0+m_1)=\lm(m_0)\lm(m_1)$.  We
can thus define a semiring map $\nu'_U\:\N[M]\to A[dM](U)$ by
$\nu'_U(\sum_in_i[m_i])=\sum_in_i\lm(m_i)$.  

These maps $\nu'_U$ have similar properties to the maps $\nu_U$
considered earlier, and we can use them in the same way to define maps
$\tau'\:rL'\N[M](X)\to L\N[M](X)$ analogous to $\tau$.  In more
detail, we choose a list of diagrams $(U_i\xra{x_i}X)$ (for $1\leq
i\leq m$ say) containing precisely one representative of each
isomorphism class in $(\Orbt_G\downarrow X)$ and put  
\[ \tau'(a) = \sum_{i=1}^m T_{x_i}\nu'_{U_i}(a). \]

This map $\tau'$ is easier to use than $\tau$ because it is additive,
and it is also a bijection.  This can be proved along the same lines
used for $\tau$, or one can just exhibit an inverse as follows.

\begin{definition}\label{defn-beta-prime}
 For any orbit $U$ and any element $[W\xra{u}U,m]\in A[dM](U)$ (so
 $m\in dM(W)$) we define 
 \[ \bt_0[W\xra{u}U,M]= \sum_{w\in\Sec(u)} R_w(m)
     \in \N[dM(U)] = \N[M].
 \]
 For any $G$-set $X$ and any element $a\in A[dM](X)$ we define
 \[ \bt'(a)\in rL'\N[M](X) = \Map(\pi_0(\Orbt_G\downarrow X),\N[M]) \]
 by $\bt'(a)(U\xra{x}X)=\bt_0(R_x(a))$.
\end{definition}

It is not hard to check directly that $\bt'$ is inverse to $\tau'$.
We also have a map $\gm'=\phi\tau'$ analogous to the ghost map $\gm$.
To describe this, we first define maps $a\mapsto a^{\ip{k}}$ on
$\N[M]$ (for $k\in\N$) by 
\[ \left(\sum_i n_i[m_i]\right)^{\ip{k}} =
    \sum_{i} n_i [km_i].
\]
This is similar to the usual power map $a\mapsto a^k$, because
$a^k=a^{\ip{k}}$ whenever $a=[m]$ for some $m$, and also
$a^p=a^{\ip{p}}\pmod{p}$ whenever $p$ is prime.  One can check that 
\[ \gm'(a)([H]) = \sum_{[K]} |(G/H)^K| a([K])^{\ip{|H|/|K|}}. \]
A key part of Elliott's argument is to show that $\phi$ and $\phi'$
have the same image.

\appendix
\renewcommand{\thesection}{\Alph{section}}

\section{Semiadditive categories}
\label{apx-semiadditive}

In this appendix we set up a theory of semiadditive categories.  This
is largely the same as the more familiar theory of additive
categories.  However, there is one technical point that creates a need
for a detailed account.  Given a small semiadditive category $\CA$, we
write $\Mod(\CA)$ for the category of product-preserving functors 
$M\:\CA\to\Sets$.  We will show that every such $M$ can also be
regarded as an additive functor $\CA\to\Semigroups$.  Now suppose that
$\CA$ has a biadditive tensor product making it a symmetric monoidal
category.  We would like to use the well-known construction of
Day~\cite{da:ccf} to obtain from this a biadditive tensor product on
$\Mod(\CA)$.  However, there are various subtleties about whether we
require additivity at various stages in the construction, and whether
we use $\Sets$ or $\Semigroups$ as the ambient category.  For our
applications to Tambara functors, the key point is that we can work
with apparently non-additive constructions and still get a biadditive
functor $\btm\:\Mod(\CA)\tm\Mod(\CA)\to\Mod(\CA)$.  This will be
explained in more detail below.

\subsection{Basics}

\begin{definition}\label{defn-semiadditive}
 Let $\CA$ be a category.  We say that $\CA$ is \emph{semiadditive} if
 \begin{itemize}
  \item[(a)] It has finite products (and in particular, a terminal
   object, denoted by $0$).
  \item[(b)] The terminal object is also initial (so for each $a$ and
   $b$ we have unique maps $a\xra{}0\xra{}b$, whose composite we call
   the zero map).
  \item[(c)] Let $a_1\xla{p_1}a\xra{p_2}a_2$ be a product
   diagram, and let $a_1\xra{i_1}a\xla{i_2}a_2$ be the unique
   maps such that
   \[ p_1i_1 = 1 \hspace{3em}
      p_2i_1 = 0 \hspace{3em}
      p_1i_2 = 0 \hspace{3em}
      p_2i_2 = 1.
   \]
   Then the diagram $a_1\xra{i_1}a\xla{i_2}a_2$ is a coproduct diagram.
 \end{itemize}
\end{definition}

\begin{remark}\label{rem-finite-coproducts}
 By an evident inductive extension of~(c), any product of finitely many
 factors is also a coproduct.  (The case of no factors is~(b), and the
 case of one factor is trivial.)  In more detail, given objects
 $a_1,\dotsc,a_n$ there is an object $a=\bigoplus_ta_t$ and
 morphisms
 \[ a_t \xra{i_t} a \xra{p_u} a_u \]
 such that $p_ti_t=1$, and $p_ui_t=0$ for all $u\neq t$, and the maps
 $i_t$ give a coproduct diagram, and the maps $p_u$ give a product
 diagram.  For any object $x$ and system of maps $f_u\:x\to a_u$, we
 write $[f_1,\dotsc,f_n]$ for the unique map $x\to a$ such that 
 $p_u\circ[f_1,\dotsc,f_n]=f_u$ for all $u$.  Dually, given maps
 $g_t\:a_t\to y$, we write $\ip{g_1,\dotsc,g_n}$  the unique
 map $a\to y$ with $\ip{g_1,\dotsc,g_n}\circ j_t=g_t$ for all $t$.
 Note that in the case $n=2$ we have
 \begin{align*}
  i_1 &= [1,0] \: a_1\to a_1\oplus a_2 & 
  i_2 &= [0,1] \: a_2\to a_1\oplus a_2 \\
  p_1 &= \ip{1,0} \: a_1\oplus a_2\to a_1 &
  p_2 &= \ip{0,1} \: a_1\oplus a_2\to a_2.
 \end{align*}
\end{remark}

We now resolve an apparent asymmetry in the definition.
\begin{lemma}\label{lem-semiadditive}
 Let $\CA$ be a semiadditive category.  Let $a_1\xra{i_1}a\xla{i_2}a_2$
 be a coproduct diagram, and let $a_1\xla{p_1}a\xra{p_2}a_2$ be the
 unique maps such that 
 \[ p_1i_1 = 1 \hspace{3em}
    p_2i_1 = 0 \hspace{3em}
    p_1i_2 = 0 \hspace{3em}
    p_2i_2 = 1.
 \]
 Then the diagram $a_1\xla{p_1}a\xra{p_2}a_2$ is a product diagram.
\end{lemma}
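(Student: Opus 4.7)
The plan is to exploit the symmetry between product and coproduct that is built into the definition of semiadditive: the definition gives us that every product diagram gives a coproduct diagram with specified injections, and we want the converse for coproducts.

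First I would invoke Definition~\ref{defn-semiadditive}(a) to pick an actual product $a_1 \xla{q_1} b \xra{q_2} a_2$ in $\CA$. By part~(c) of the definition, together with the maps $j_t\:a_t\to b$ uniquely determined by $q_s j_t = \delta_{st}$, this product diagram is simultaneously a coproduct diagram.

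Next, since the given diagram $a_1 \xra{i_1} a \xla{i_2} a_2$ is assumed to be a coproduct, and the diagram $a_1 \xra{j_1} b \xla{j_2} a_2$ is also a coproduct (by the previous step), the universal property produces a unique isomorphism $\phi\:a\to b$ with $\phi \circ i_t = j_t$ for $t=1,2$. The main (and really only) step is then to identify $q_t\circ\phi$ with the originally specified $p_t$. For this, I compute $q_s\circ(q_t\phi\circ i_s) = q_t\circ j_s = \delta_{st}$ on the one hand, while the defining relations for $p_t$ give $q_s\circ p_t \circ i_s$ equal to $\delta_{st}$ as well; more directly, both $q_t\phi$ and $p_t$ are maps $a\to a_t$ whose composites with $i_s$ agree (both equal $\delta_{st}$), and by the coproduct property of $(a;i_1,i_2)$ such a map is uniquely determined, so $q_t\phi = p_t$.

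Hence $\phi$ is an isomorphism of cones from $a$ to the diagram $(a_1,a_2)$ intertwining $(p_1,p_2)$ with $(q_1,q_2)$; since $(b;q_1,q_2)$ is a product diagram, so is $(a;p_1,p_2)$. I do not expect any real obstacle here: the only thing to watch is getting the direction of $\phi$ right so that the two small triangles commute, which the characterisation of the $j_t$ by $q_s j_t=\delta_{st}$ handles cleanly.
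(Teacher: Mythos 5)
Your proof is correct and is essentially the same as the paper's: pick an actual product, turn it into a coproduct via axiom~(c), use uniqueness of coproducts to get an isomorphism out of $a$, and then check on the injections $i_s$ that composing the product projections with this isomorphism recovers the $p_t$. (The displayed computation $q_s\circ(q_t\phi\circ i_s)$ is garbled as written, but your ``more directly'' sentence immediately afterwards gives the correct and complete verification $q_t\phi i_s=\delta_{ts}=p_t i_s$.)
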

\begin{proof}
 By axiom~(a), there exists a product diagram
 $a_1\xla{p'_1}a'\xra{p'_2}a_2$.  From this we can build a coproduct
 diagram $a_1\xra{i'_1}a'\xla{i'_2}a_2$ as in axiom~(c).  By the
 standard uniqueness property of coproducts, there is a unique map
 $f\:a\to a'$ with $i'_tf=i_t$ for $t=1,2$, and this map $f$ is an
 isomorphism.  It follows easily that the diagram
 $a_1\xla{p'_1f}a\xra{p'_2f}a_2$ is a product diagram, so it will
 suffice to prove that $p'_1f=p_1\:a\to a_1$ and $p'_2f=p_2$.  Note
 that $p'_1fi_1=p'_1i'_1=1=p_1i_1$ and $p'_1fi_2=p'_1i'_2=0=p_1i_2$.
 As $a_1\xra{i_1}a\xla{i_2}a_2$ is a coproduct diagram, it follows
 that $p'_1f=p_1$, and a symmetrical argument shows that $p'_2f=p_2$,
 as required.
\end{proof}

\begin{definition}\label{defn-preadditive}
 A \emph{preadditive category} is a category $\CB$ with a given
 semigroup structure on each morphism set $\CB(b_0,b_1)$, with the
 property that each composition map
 \[ \CB(b_1,b_2) \tm \CB(b_0,b_1) \to \CB(b_0,b_2) \]
 is bilinear.  Now let $F\:\CB\to\CC$ be a functor between preadditive
 categories.  We say that $F$ is a \emph{preadditive functor} if each
 map  
 \[ F \: \CB(b_0,b_1) \to \CC(Fb_0,Fb_1) \]
 is a semigroup homomorphism.
\end{definition}

\begin{proposition}\label{prop-semiadditive-hom}
 Let $\CA$ be a semiadditive category.  Then $\CA$ has a canonical
 structure as a preadditive category.
\end{proposition}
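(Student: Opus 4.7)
The plan is to construct the semigroup structure on each hom-set $\CA(a,b)$ directly from the biproduct structure. For any object $b$, the biproduct $b\oplus b$ comes equipped with insertions $i_1,i_2\:b\to b\oplus b$ (by Definition~\ref{defn-semiadditive}(c)) and projections $p_1,p_2\:b\oplus b\to b$ (by Lemma~\ref{lem-semiadditive}). Using the coproduct property, there is a unique \emph{codiagonal} $\nabla_b\:b\oplus b\to b$ satisfying $\nabla_b i_1=\nabla_b i_2=1_b$; dually, the product property gives a unique \emph{diagonal} $\Delta_a\:a\to a\oplus a$ with $p_1\Delta_a=p_2\Delta_a=1_a$. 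For $f,g\:a\to b$, define
\[ f+g \;=\; \nabla_b\circ\ip{f,g}, \]
where $\ip{f,g}\:a\oplus a\to b$ is the unique map with $\ip{f,g}i_t=f$ or $g$ according to $t=1$ or $2$. The first preliminary task is to show that this equals $[f,g]\circ\Delta_a$, where $[f,g]\:a\to b\oplus b$ is defined by the product property; both expressions can be written as $(f\oplus g)\circ\Delta_a$ followed by $\nabla_b$, and one checks using the four identities $p_ti_s=\delta_{ts}$ that the ``$2\times 2$ matrix'' formalism is unambiguous.

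Next I would verify the three semigroup axioms. The zero morphism $0\:a\to 0\to b$ acts as an identity, because $\ip{f,0}=i_1 f$ (both satisfy the same defining property against $i_1$ and $i_2$) and $\nabla_b i_1=1_b$, so $f+0=f$; similarly $0+f=f$. Commutativity follows from the symmetry isomorphism $\tau\:b\oplus b\to b\oplus b$ swapping the two factors: the identities $\nabla_b\tau=\nabla_b$ and $\tau\ip{f,g}=\ip{g,f}$ are immediate from the coproduct universal property, so $f+g=g+f$. For associativity, form the triple biproduct $b\oplus b\oplus b$ with insertions $i_1,i_2,i_3$ and its codiagonal $\nabla^{(3)}_b\:b\oplus b\oplus b\to b$. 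Both $(f+g)+h$ and $f+(g+h)$ equal $\nabla^{(3)}_b\circ\ip{f,g,h}$, since in each case the binary codiagonal and $\ip{-,-}$-pairing factor through the ternary versions, by the uniqueness provided by the universal properties of the relevant (co)products.

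The last task is bilinearity of composition. Given $f,g\:a\to b$ and $h\:b\to c$, apply the universal property of the coproduct $b\oplus b$ to the two maps $h\nabla_b$ and $\nabla_c\circ(h\oplus h)$ from $b\oplus b$ to $c$: both agree with $h$ when precomposed with $i_1$ or $i_2$, so $h\nabla_b=\nabla_c(h\oplus h)$. Combining with the identity $(h\oplus h)\ip{f,g}=\ip{hf,hg}$ (again by the coproduct universal property) yields
\[ h\circ(f+g)=\nabla_c\circ\ip{hf,hg}=hf+hg. \]
The dual argument, using the diagonal description of $+$ and the product universal property at $a$, shows $(f+g)\circ k=fk+gk$ for $k\:a'\to a$.

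I expect the main obstacle to be the foundational step of showing that the two descriptions of $f+g$ (via $\ip{f,g}\circ\Delta_a$'s dual, namely $[f,g]$, followed by $\nabla_b$, versus $\nabla_b$ preceded by $\ip{f,g}$) coincide, since this is where the semiadditive hypothesis really enters: the interplay between the product and coproduct structures on $b\oplus b$ is what forces the matrix calculus to be self-consistent. Once this foundational coherence is in hand, all remaining verifications are essentially formal manipulations with the universal properties of biproducts, and naturality of the construction in $(a,b)$ is immediate from the functoriality of $\Delta$ and $\nabla$ in their respective variables.
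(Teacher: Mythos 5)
Your approach is essentially the same as the paper's: define $f+g$ using the biproduct $a\oplus a$ (diagonal composed with the copairing), then verify unit, commutativity (via the swap map), associativity (via the ternary biproduct), and bilinearity (via the universal properties), all as the paper does. Note, however, that a few of your formulas are ill-typed as written: $\nabla_b\circ\ip{f,g}$ does not compose, since $\ip{f,g}\:a\oplus a\to b$ lands in $b$ rather than $b\oplus b$ (you presumably mean $\ip{f,g}\circ\Delta_a$, with the alternative description being $\nabla_b\circ[f,g]$), and similarly ``$\ip{f,0}=i_1 f$'' should read $\ip{f,0}=f\circ p_1$; these slips do not affect the underlying argument, which is sound.
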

\begin{proof}
 Given maps $g_1,g_2\:b\to c$ we define
 \[ g_1+g_2 = (b \xra{[1,1]} b\oplus b \xra{\ip{g_1,g_2}} c). \]
 In the case $g_2=0$ we observe that $g_1p_1\:b\oplus b\to c$ has the
 defining property of $\ip{g_1,0}$; it follows easily that
 $g_1+0=g_1$.  Next, put $\sg=[p_2,p_1]\:b\oplus b\to b\oplus b$.  One
 can check that this is the same as $\ip{i_2,i_1}$, and that
 $\sg\circ[1,1]=[1,1]$ and $\ip{g_1,g_2}\circ\sg=\ip{g_2,g_1}$.  Using
 this we see that $g_2+g_1=g_1+g_2$.  Finally, if $g_3$ is a third
 morphism from $b$ to $c$ then it is not hard to check that
 $g_1+(g_2+g_3)$ and $(g_1+g_2)+g_3$ are both equal to the composite
 \[ b \xra{[1,1,1]} b\oplus b\oplus b \xra{\ip{g_1,g_2,g_3}} c. \]
 It follows that $\CA(b,c)$ is a semigroup.

  Now consider a map $h\:c\to d$.  It is clear from the definitions
  that $h\circ\ip{g_1,g_2}=\ip{hg_1,hg_2}$ and thus that
  $h\circ(g_1+g_2)=(hg_1)+(hg_2)$.  Consider instead a map
  $f\:a\to b$.  This of course induces a map 
  \[ f\oplus f = [fp_1,fp_2]\:a\oplus a \to b\oplus b. \]
  By considering the diagram 
  \[ \xymatrix{
   a \ar[r]^{[1,1]} \ar[d]_f &
   a\oplus a \ar[r]^{\ip{g_1f,g_2f}} \ar[d]^{f\oplus f} &
   c \ar@{=}[d] \\
   b \ar[r]_{[1,1]} &
   b\oplus b \ar[r]_{\ip{g_1,g_2}} &
   c
  } \]
  we see that $(g_1+g_2)f=(g_1f)+(g_2f)$.  This proves that
  composition is bilinear.
\end{proof}

\begin{proposition}\label{prop-sum}
 Let $\CA$ be a semiadditive category.  Then in the context of axiom~(c)
 or Lemma~\ref{lem-semiadditive} we always have
 \[ i_1p_1+i_2p_2 = 1 \: a \to a. \]
\end{proposition}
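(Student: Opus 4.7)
The plan is to verify the identity $i_1p_1 + i_2p_2 = 1_a$ by exploiting the universal property of $a$ as a coproduct (from axiom (c), or equivalently, of Lemma~\ref{lem-semiadditive}). Since $a_1 \xra{i_1} a \xla{i_2} a_2$ is a coproduct diagram, a morphism $a \to a$ is uniquely determined by its precomposition with $i_1$ and with $i_2$. Thus it suffices to show that $(i_1p_1 + i_2p_2) \circ i_t = i_t = 1_a \circ i_t$ for $t = 1,2$.

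Here the key ingredient will be Proposition~\ref{prop-semiadditive-hom}, which supplies two facts used freely in what follows: first, that composition in $\CA$ is bilinear with respect to the semigroup structure on hom-sets, so we may distribute composition over sums on either side; and second, that $g + 0 = g$ (and by commutativity $0 + g = g$) for any morphism $g$. Precomposing with $i_1$ and applying bilinearity gives
\[
 (i_1p_1 + i_2p_2)\circ i_1 = (i_1p_1)\circ i_1 + (i_2p_2)\circ i_1
   = i_1(p_1i_1) + i_2(p_2i_1) = i_1\cdot 1 + i_2\cdot 0 = i_1 + 0 = i_1,
\]
using the defining relations $p_1i_1 = 1$ and $p_2i_1 = 0$ together with the fact that composition with the zero map yields zero. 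A symmetric computation shows $(i_1p_1 + i_2p_2)\circ i_2 = i_2$.

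Since both $i_1p_1 + i_2p_2$ and $1_a$ restrict to $i_t$ along each coproduct inclusion, the uniqueness clause in the universal property of the coproduct forces $i_1p_1 + i_2p_2 = 1_a$, completing the proof.

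There is essentially no obstacle here: all the heavy lifting (existence of the semigroup structure, bilinearity, the identity $g+0=g$, and the dual product/coproduct structure) has already been carried out in Definition~\ref{defn-semiadditive}, Lemma~\ref{lem-semiadditive}, and Proposition~\ref{prop-semiadditive-hom}. The only thing worth noting is that one could equally well prove the statement by postcomposing with $p_1$ and $p_2$ and invoking the universal property of the product instead; either route is a three-line calculation.
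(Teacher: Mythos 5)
Your proof is correct, and it takes the formally dual route to the one the paper uses: you precompose with the coproduct inclusions $i_1, i_2$ and invoke the uniqueness clause for maps \emph{out of} the coproduct $a$, whereas the paper postcomposes with the projections $p_1, p_2$ and invokes uniqueness for maps \emph{into} the product $a$. Both arguments rest on exactly the same ingredients (bilinearity of composition from Proposition~\ref{prop-semiadditive-hom}, the identities $p_ti_u = \dl_{tu}$, and the biproduct structure established in axiom~(c) and Lemma~\ref{lem-semiadditive}), and neither is more elementary than the other; the choice is purely a matter of taste. You even flag the alternative postcomposition argument in your closing remark --- that alternative is precisely the paper's proof.
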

\begin{proof}
 Suppose that the maps $i_t$ and $p_t$ are as in axiom~(c) or
 Lemma~\ref{lem-semiadditive}.  We then have 
 \[ p_1\circ (i_1p_1+i_2p_2) = p_1i_1p_1+p_1i_2p_2 
     = 1\circ p_1 + 0\circ p_2 = p_1 = p_1\circ 1,
 \]
 and similarly $p_2\circ(i_1p_1+i_2p_2)=p_2\circ 1$.  As
 $a_1\xla{p_1}a\xra{p_2}a_2$ is a product diagram, this means that
 $i_1p_1+i_2p_2=1$.  
\end{proof}

\begin{proposition}\label{prop-recognise}
 Let $\CA$ be a preadditive category.
 \begin{itemize}
  \item[(a)] An object $a\in\CA$ is initial iff it is terminal iff the
   semigroup $\CA(a,a)$ is trivial.
  \item[(b)]
   Suppose we have a diagram 
   \[ \xymatrix{
     a_1 \ar@/^1ex/[r]^{i_1} &
     x \ar@/^1ex/[l]^{p_1} \ar@/_1ex/[r]_{p_2} &
     a_2 \ar@/_1ex/[l]_{i_2}
   } \]
   with 
   \[ p_1i_1 = 1 \hspace{3em}
      p_2i_1 = 0 \hspace{3em}
      p_1i_2 = 0 \hspace{3em}
      p_2i_2 = 1 \hspace{3em}
      i_1p_1+i_2p_2 = 1.
   \]
   Then the diagram $a_1\xra{i_1}x\xla{i_2}a_2$ is a coproduct, and the
   diagram $a_1\xla{p_1}x\xra{p_2}a_2$ is a product.
 \end{itemize}
\end{proposition}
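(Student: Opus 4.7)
The plan is to exploit bilinearity of composition and the identity $i_1p_1+i_2p_2=1_x$ in a direct, largely formal way. Recall that in any preadditive category, bilinearity of composition forces $0\circ f=0$ and $g\circ 0=0$ for all composable $f,g$ (since the two variables of the composition pairing are required to preserve the additive identities). This fact will be used repeatedly without further comment.

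For part~(a), I would first note that if $a$ is terminal or initial, then $\CA(a,a)$ consists of a unique element, hence is the trivial semigroup. Conversely, suppose $\CA(a,a)$ is trivial, so that $1_a=0_{a,a}$. Then for any object $b$ and any map $f\:b\to a$, we have $f=1_a\circ f=0_{a,a}\circ f=0_{b,a}$, so $\CA(b,a)=\{0\}$; hence $a$ is terminal. The same argument on the other side shows $a$ is initial.

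For part~(b), the construction of the factorisation is forced by the identity $i_1p_1+i_2p_2=1_x$. To show that $a_1\xla{p_1}x\xra{p_2}a_2$ is a product, given maps $f_t\:y\to a_t$ set $\phi=i_1f_1+i_2f_2$; the relations $p_ti_s=\dl_{ts}$ and bilinearity give $p_t\phi=f_t$ for $t=1,2$. For uniqueness, if $\phi'\:y\to x$ also satisfies $p_t\phi'=f_t$, then
\[
\phi' = 1_x\circ\phi' = (i_1p_1+i_2p_2)\phi' = i_1(p_1\phi')+i_2(p_2\phi') = i_1f_1+i_2f_2 = \phi.
\]
The coproduct claim is proved by the mirror-image argument: given $g_t\:a_t\to z$, the candidate factorisation is $\psi=g_1p_1+g_2p_2$, one verifies $\psi i_t=g_t$ directly, and uniqueness follows by composing an alleged second factorisation on the right with $1_x=i_1p_1+i_2p_2$.

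There is no serious obstacle here; the statement is a purely formal consequence of the hypotheses. The only subtle point worth flagging is the repeated tacit use of $0\circ f=0=f\circ 0$, which rests on our convention that bilinearity of a pairing of commutative monoids includes preservation of the additive identities, as is already used implicitly in Proposition~\ref{prop-semiadditive-hom} and Proposition~\ref{prop-sum}.
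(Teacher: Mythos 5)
Your proof is correct and takes essentially the same approach as the paper: part~(a) is word-for-word the same, and for part~(b) the candidate factorisations and the uniqueness trick of composing with $1_x=i_1p_1+i_2p_2$ are identical. The only cosmetic difference is that you spell out the product verification and sketch the coproduct dually, whereas the paper does the coproduct explicitly and the product dually; your closing remark about bilinearity implying $0\circ f=0=f\circ 0$ correctly identifies a convention the paper uses silently.
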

\begin{proof}
 \begin{itemize}
  \item[(a)] If $a$ is either initial or terminal then $\CA(a,a)$ is a
   singleton and so is trivial as a semigroup.  Conversely, if
   $\CA(a,a)$ is trivial then in particular the identity morphism is
   the same as the zero element.  Now for any $f\:x\to a$ we have
   $f=1_a\circ f=0\circ f$, and because composition is biadditive this
   is the zero element of $\CA(x,a)$.  This proves that $a$ is
   terminal, and a dual argument shows that it is also initial.
  \item[(b)]
   Suppose we have maps $a_1\xra{f_1}u\xla{f_2}a_2$.  We put
   $f=f_1p_1+f_2p_2\:x\to u$.  This has 
   \[ fi_1=f_1p_1i_1+f_2p_2i_1=f_1\circ 1+f_2\circ 0 = f_1, \]
   and similarly $fi_2=f_2$.  Let $f^*\:x\to u$ be any map with
   $f^*i_t=f_t$ for $t=0,1$.  We then have
   \[ f^* = f^*\circ 1_x = f^*\circ(i_1p_1+i_2p_2) =
       f^*i_1p_1+f^*i_2p_2 = f_1p_1+f_2p_2 = f.
   \]
   This proves that the diagram $a_1\xra{i_1}x\xla{i_2}a_2$ is a
   coproduct.  Dually, given maps $a_1\xla{g_1}v\xra{g_2}a_2$ we find
   that the map $g=i_1g_1+i_2g_2\:v\to x$ is the unique one satisfying
   $p_tg=g_t$ for $t=0,1$ so the diagram $a_1\xla{p_1}x\xra{p_2}a_2$ is
   a product.
 \end{itemize}
\end{proof}

\begin{corollary}
 Let $\CA$ be a preadditive category.  Suppose that there is an object
 satisfying part~(a) of the Proposition, and that for all objects
 $a_1$ and $a_2$ there is a diagram as in part~(b) of the Proposition.
 Then $\CA$ is semiadditive.
\end{corollary}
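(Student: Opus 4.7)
The plan is to verify each of the three axioms in Definition~\ref{defn-semiadditive} directly from the hypotheses and from Proposition~\ref{prop-recognise}.

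First I would handle the existence of a zero object. The hypothesis supplies an object $z$ with $\CA(z,z)$ trivial, and Proposition~\ref{prop-recognise}(a) tells us that $z$ is both initial and terminal; this takes care of both the terminal part of axiom~(a) and all of axiom~(b). For the remainder of axiom~(a), given any pair $a_1,a_2$, the hypothesis supplies a diagram as in Proposition~\ref{prop-recognise}(b), and that proposition guarantees it is simultaneously a product and a coproduct, so binary products exist; iterating gives all finite products.

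The main (but still routine) step is axiom~(c). Given an arbitrary product diagram $a_1\xla{q_1}a\xra{q_2}a_2$ with induced inclusions $j_1,j_2$ characterised by $q_sj_t=\delta_{st}$, I must show that $a_1\xra{j_1}a\xla{j_2}a_2$ is a coproduct. I would invoke the hypothesis to produce an object $x$ and maps $i_1,i_2,p_1,p_2$ satisfying the hypotheses of Proposition~\ref{prop-recognise}(b); that proposition then ensures that $a_1\xla{p_1}x\xra{p_2}a_2$ is itself a product and that $a_1\xra{i_1}x\xla{i_2}a_2$ is a coproduct. Since both $a$ and $x$ represent the product of $a_1$ and $a_2$, there is a unique isomorphism $\phi\:a\to x$ with $p_s\phi=q_s$ for $s=1,2$.

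The last step is to identify $\phi j_t$ with $i_t$. Compute $p_s\phi j_t=q_sj_t=\delta_{st}=p_si_t$; by the universal property of the product $x$ this forces $\phi j_t=i_t$. Since $\phi$ is an isomorphism and $(i_1,i_2)$ exhibit $x$ as a coproduct, the maps $(j_1,j_2)=(\phi^{-1}i_1,\phi^{-1}i_2)$ exhibit $a$ as a coproduct, completing axiom~(c). There is no real obstacle here: the content is entirely packaged in Proposition~\ref{prop-recognise}, and the corollary is just the observation that its hypotheses suffice to promote a preadditive category to a semiadditive one.
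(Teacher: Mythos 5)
Your proof is correct and unpacks exactly what the paper's terse ``Follows directly from the Proposition'' is asserting: Proposition~\ref{prop-recognise}(a) delivers the zero object, Proposition~\ref{prop-recognise}(b) delivers binary biproducts, and the only non-immediate point is axiom~(c), which you correctly settle by transporting the given diagram across the canonical isomorphism between any two products of $a_1$ and $a_2$. Same route as the paper, just written out in full.
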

\begin{proof}
 Follows directly from the Proposition.
\end{proof}

\begin{proposition}
 Let $F\:\CA\to\CB$ be a functor between semiadditive categories.  Then
 the following are equivalent:
 \begin{itemize}
  \item[(a)] $F$ sends finite coproduct diagrams to finite coproduct
   diagrams.
  \item[(b)] $F$ sends finite product diagrams to finite product
   diagrams.
  \item[(c)] $F$ is a preadditive functor.
 \end{itemize}
 (We say that $F$ is \emph{semiadditive} if these conditions hold.)
\end{proposition}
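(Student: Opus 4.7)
The plan is to deduce the equivalence of (a), (b) and (c) from the equational characterisation of biproducts given in Proposition~\ref{prop-recognise}. The key observation is that in a semiadditive category, finite product diagrams and finite coproduct diagrams are the same thing: by axiom~(c) of Definition~\ref{defn-semiadditive} together with Lemma~\ref{lem-semiadditive}, a diagram is a finite product iff it extends to a biproduct diagram $a_1 \rightleftarrows x \rightleftarrows a_2$ satisfying the equations $p_ti_t=1$, $p_ti_u=0$ for $t\neq u$, and (by Proposition~\ref{prop-sum}) $i_1p_1+i_2p_2=1$, and this is the same condition for being a finite coproduct.

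The equivalence (a)$\Leftrightarrow$(b) would then follow directly. Given a product diagram in $\CA$ with its associated biproduct data, applying $F$ yields a diagram in $\CB$ together with morphisms satisfying $Fp_t\circ Fi_t = 1$ and $Fp_t\circ Fi_u = 0$ for $t\neq u$. By Proposition~\ref{prop-recognise}(b), this is a biproduct in $\CB$ (and hence both a product and a coproduct) iff in addition $Fi_1\circ Fp_1 + Fi_2\circ Fp_2 = 1$. Thus neither (a) nor (b) is automatic from the product/coproduct-preserving description alone, but each is equivalent to $F$ sending biproduct data to biproduct data. The empty product case requires preservation of zero objects, which follows in either direction via Proposition~\ref{prop-recognise}(a): zero objects are exactly those with trivial endomorphism semigroup.

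For (a) or (b) $\Rightarrow$ (c): as just noted, $F$ then preserves biproducts. In particular it preserves zero objects, hence zero morphisms. Given $g_1,g_2\:b\to c$, recall from the proof of Proposition~\ref{prop-semiadditive-hom} that $g_1+g_2$ is defined as $\ip{g_1,g_2}\circ[1,1]$ using the biproducts $b\oplus b$ and $c\oplus c$. Since $F$ carries these biproducts to biproducts, the universal maps $[1,1]$ and $\ip{g_1,g_2}$ are sent to the corresponding universal maps in $\CB$, whence $F(g_1+g_2)=F(g_1)+F(g_2)$. Conversely, for (c)$\Rightarrow$(a),(b), a preadditive functor automatically preserves zero objects (via Proposition~\ref{prop-recognise}(a)) and preserves the three equations characterising biproducts, so by Proposition~\ref{prop-recognise}(b) it sends biproduct diagrams to biproduct diagrams, giving both product and coproduct preservation.

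The argument is entirely routine given the earlier results; there is no real obstacle. The only thing worth flagging is the need for the equation $i_1p_1+i_2p_2=1$ in the characterisation of biproducts, since this is what ties preservation of biproduct structure to preservation of the semigroup structure on hom-sets and so makes the three conditions equivalent rather than merely implications running one way.
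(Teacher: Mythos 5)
Your proof is correct and takes essentially the same approach as the paper: establish that any of the three conditions forces $F0\simeq 0$, and then push biproduct diagrams through $F$. The only stylistic difference is that you lean on the equational characterisation of biproducts in Proposition~\ref{prop-recognise} for the equivalence (a)$\Leftrightarrow$(b), whereas the paper routes that step through Lemma~\ref{lem-semiadditive} directly; both come to the same thing. One small spot to tighten: in going from ``$(Fi_1,Fi_2)$ is a coproduct'' to ``$Fi_1\circ Fp_1+Fi_2\circ Fp_2=1$'', you implicitly use the uniqueness of the projections satisfying $q_t\circ Fi_u=\delta_{tu}$ (which identifies them with $Fp_t$, using $F0=0$) together with Proposition~\ref{prop-sum}; it is worth making this chain explicit since Proposition~\ref{prop-recognise}(b) only gives the implication from the equations to the biproduct, not the converse.
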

\begin{proof}
 We first claim that any of the three conditions implies that $F$
 sends the zero object to the zero object (and thus sends zero
 morphisms to zero morphisms).  In case~(a) this holds because $0$ is
 the coproduct of the empty family, and in case~(b) because $0$ is the
 product of the empty family.  In case~(c) we note that $1=0$ as
 endomorphisms of $0$.  Any functor preserves identity maps, and $F$
 preserves zero maps by hypothesis, so $1=0$ as endomorphisms of $F0$.
 This means that the unique maps $0\to F0\to 0$ are inverse to each
 other, so $F0\simeq 0$ as claimed.

 Now suppose that~(a) holds.  Consider a product diagram
 $a_1\xla{p_1}a\xra{p_2}a_2$.  Let $a_1\xra{i_1}a\xla{i_2}a_2$ be the
 unique maps such that
 \[ p_1i_1 = 1 \hspace{3em}
    p_2i_1 = 0 \hspace{3em}
    p_1i_2 = 0 \hspace{3em}
    p_2i_2 = 1,
 \]
 so the diagram $a_1\xra{i_1}a\xla{i_2}a_2$ is a coproduct diagram by
 axiom~(c) in Definition~\ref{defn-semiadditive}.  By assumption, the
 diagram $Fa_1\xra{Fi_1}Fa\xla{Fi_2}Fa_2$ is a coproduct diagram in
 $\CB$.  We can apply Lemma~\ref{lem-semiadditive} to this diagram to see
 that $Fa_1\xla{Fp_1}Fa\xra{Fp_2}Fa_2$ is a product diagram.  This
 shows that~(a) implies~(b), and a dual argument shows that~(b)
 implies~(a).  If these conditions hold, then $F$ respects all the
 structure used to define addition, so we see that~(c) also holds.
 Conversely, suppose that~(c) holds.  Any binary product or coproduct
 diagram in $\CA$ gives rise to a diagram 
 \[ \xymatrix{
   a_1 \ar@/^1ex/[r]^{i_1} &
   x \ar@/^1ex/[l]^{p_1} \ar@/_1ex/[r]_{p_2} &
   a_2 \ar@/_1ex/[l]_{i_2}
 } \]
 with 
 \[ p_1i_1 = 1 \hspace{3em}
    p_2i_1 = 0 \hspace{3em}
    p_1i_2 = 0 \hspace{3em}
    p_2i_2 = 1 \hspace{3em}
    i_1p_1+i_2p_2 = 1
 \]
 as in Proposition~\ref{prop-recognise}.  Applying $F$ gives a diagram
 in $\CB$ with the same properties, which is therefore both a product
 and a coproduct.  This shows that~(c) implies~(a) and~(b).
\end{proof}

\subsection{Modules}

\begin{definition}
 Let $\CA$ be a small semiadditive category.  An \emph{$\CA$-module} is a
 product-preserving functor $M\:\CA\to\Sets$.  We write $\CM(\CA)$ for
 the category of $\CA$-modules.  If $f\:a\to b$ is a morphism in $\CA$
 and $M$ is an $\CA$-module, we write $f_*$ for the induced map
 $M(a)\to M(b)$.
\end{definition}

We next show that if $M$ is a $\CA$-module, then the sets $M(a)$
have canonical structures as semigroups.  The cleanest way to
formulate this is as follows.

\begin{definition}
 Let $\CA$ be a small preadditive category.  We write $\CM'(\CA)$ for
 the category of preadditive functors from $M\:\CA\to\Semigroups$.
\end{definition}

\begin{proposition}
 The forgetful functor $U\:\Semigroups\to\Sets$ induces an isomorphism
 of categories $U_*\:\CM'(\CA)\to\CM(\CA)$.
\end{proposition}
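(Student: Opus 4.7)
The plan is to construct an inverse functor $V\:\CM(\CA)\to\CM'(\CA)$ and verify the two composites are identities. First I would verify that $U_*$ is well-defined: if $\tilde M\:\CA\to\Semigroups$ is preadditive, then it carries the biproduct diagrams of $\CA$ (which are simultaneously products and coproducts by Proposition~\ref{prop-recognise}) to biproduct diagrams in $\Semigroups$, hence in particular to products; composing with $U$, which preserves products, gives a product-preserving functor $\CA\to\Sets$.

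Next I would construct $V$. Given $M\in\CM(\CA)$ and an object $a\in\CA$, the product-preservation gives a bijection $(p_{1*},p_{2*})\:M(a\oplus a)\to M(a)\tm M(a)$. Combined with the fold map $\nabla_a=\ip{1_a,1_a}\:a\oplus a\to a$, this defines addition on $M(a)$ by
\[ x+y \;=\; \nabla_{a*}\bigl((p_{1*},p_{2*})^{-1}(x,y)\bigr). \]
The identity element is the image of the unique element of $M(0)$ under the zero morphism $0\to a$. I would then check the semigroup axioms: commutativity uses that the swap automorphism of $a\oplus a$ satisfies $\nabla_a\circ\mathrm{swap}=\nabla_a$; associativity follows from the analogous computation with $a\oplus a\oplus a$ and the triple fold, using product-preservation at the three-fold biproduct; and the unit axiom follows from the identities $\nabla_a\circ[1_a,0_a]=1_a=\nabla_a\circ[0_a,1_a]$, which are instances of Proposition~\ref{prop-sum} combined with the fact that $0\to a$ is the zero morphism.

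Then I would check that for every $f\:a\to b$ in $\CA$, the map $f_*\:M(a)\to M(b)$ is a semigroup homomorphism. This is naturality applied to the commuting square $\nabla_b\circ(f\oplus f)=f\circ\nabla_a$ and to the fact that $f$ respects the zero morphism; since $M$ preserves products, the bijection identifying $M(a\oplus a)$ with $M(a)\tm M(a)$ is natural in $a$, so this reduces to a direct diagram chase. Bilinearity of composition in $\CA$ together with naturality will also show that the addition is preserved in the second variable, so $V(M)$ is genuinely a preadditive functor $\CA\to\Semigroups$.

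Finally I would verify the two composites. The equality $U_*V=1_{\CM(\CA)}$ is immediate because $V$ adds algebraic structure to the sets $M(a)$ without altering the underlying functor. For $VU_*=1_{\CM'(\CA)}$, I need to show that given $\tilde M\in\CM'(\CA)$, the semigroup structure on $\tilde M(a)$ produced by $V$ coincides with the original one. The main obstacle, and the key computation, is this last step: using the biproduct relations $p_{i*}\iota_{j*}=\delta_{ij}\cdot 1$ from Proposition~\ref{prop-recognise} together with the additivity $\iota_{1*}p_{1*}+\iota_{2*}p_{2*}=1$ from Proposition~\ref{prop-sum}, the element $(p_{1*},p_{2*})^{-1}(x,y)\in\tilde M(a\oplus a)$ equals $\iota_{1*}(x)+\iota_{2*}(y)$, and then the preadditivity of $\tilde M$ gives
\[ \nabla_{a*}\bigl(\iota_{1*}(x)+\iota_{2*}(y)\bigr)
   = (\nabla_a\iota_1)_*(x)+(\nabla_a\iota_2)_*(y) = x+y, \]
recovering the original addition. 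Dually the identity element matches because $\tilde M(0)$ is the zero object of $\Semigroups$ by Proposition~\ref{prop-recognise}(a). This completes the isomorphism.
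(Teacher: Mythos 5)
Your construction matches the paper's: define $V$ on objects by using the biproduct diagram for $a\oplus a$ and the fold map $\ip{1,1}\colon a\oplus a\to a$ to give $M(a)$ a semigroup structure, then recover the original structure on a preadditive functor $\tilde M$ via the identity $(p_{1*},p_{2*})^{-1}(x,y)=(i_1)_*(x)+(i_2)_*(y)$ in $\tilde M(a\oplus a)$ and the preadditivity of $\tilde M$. The details you fill in — commutativity via the swap of $a\oplus a$, associativity via the threefold biproduct, the unit via the zero morphism — are exactly what the paper gestures at when it says the argument is ``similar to Proposition~\ref{prop-semiadditive-hom}.''

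There is one genuine, if small, gap. You announce that $V$ is an inverse \emph{functor}, but you only define it on objects: you never check that $V$ sends a morphism $\alpha\colon M\to N$ of $\CM(\CA)$ (a natural transformation of product-preserving $\Sets$-valued functors) to a morphism of $\CM'(\CA)$, i.e.\ that each component $\alpha_a\colon M(a)\to N(a)$ is automatically a homomorphism for the semigroup structures you have just constructed. Without this, you have only a bijection on objects, not an isomorphism of categories. The fix is the same type of diagram chase you use to show $f_*$ is a homomorphism: run naturality of $\alpha$ against the span $M(a)\tm M(a)\xleftarrow{\simeq}M(a\oplus a)\xrightarrow{s_*}M(a)$, with vertical maps $\alpha_a\tm\alpha_a$, $\alpha_{a\oplus a}$, $\alpha_a$; the paper spells this out. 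You should add that step to complete the proof.
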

\begin{proof}
 First, recall that products of semigroups are just constructed
 by giving the product set a suitable semigroup structure, which means
 that $U$ presevres products, so $U_*$ gives a functor from
 $\CM'(\CA)$ to $\CM(\CA)$ as indicated.  

 Now suppose we have $M\in\CM(\CA)$.  Consider an object $a\in\CA$.
 As $M$ preserves products we see that $M(0)$ is a singleton.  We
 also have a unique map $0\:0\to a$, giving a map $0_*\:M(0)\to M(a)$;
 we again write $0$ for the unique element in the image of this map.

 Now construct a biproduct diagram 
 \[ \xymatrix{
   a \ar@/^1ex/[r]^{i_1} &
   a\oplus a \ar@/^1ex/[l]^{p_1} \ar@/_1ex/[r]_{p_2} &
   a \ar@/_1ex/[l]_{i_2}
 } \]
 as before.  Put $s=p_1+p_2=\ip{1,1}\:a\oplus a\to a$.  By hypothesis,
 the map $((p_1)_*,(p_2)_*)\:M(a\oplus a)\to M(a)^2$ is a bijection.
 Given $m_1,m_2\in M(a)$ we let $m\in M(a\oplus a)$ be the unique
 element with $(p_t)_*(m)=m_t$ for $t=1,2$, then we define
 $m_1+m_2=s_*(m)\in M(a)$.   By an argument similar to that for
 Proposition~\ref{prop-semiadditive-hom}, this gives an semigroup
 structure on $M(a)$.  Now let $f\:a\to b$ be a morphism in $\CA$.  By
 considering the diagram
 \[ \xymatrix{
  M(a)\tm M(a) \ar[d]_{f_*\tm f_*} &&
  M(a\oplus a)
    \ar[ll]^\simeq_{((p_1)_*,(p_2)_*)}
    \ar[rr]^{s_*}
    \ar[d]^{(f\oplus f)_*} &&
  M(a) \ar[d]^{f_*} \\
  M(b)\tm M(b) &&
  M(b\oplus b)
    \ar[ll]_\simeq^{((p_1)_*,(p_2)_*)}
    \ar[rr]_{s_*} &&
  M(b)
 } \]
 we see that $f_*(m_1+m_2)=f_*(m_1)+f_*(m_2)$, so $f_*$ is a
 homomorphism.  (As we are dealing with semigroups rather than groups we
 need to check separately that $f_*(0)=0$, but this is easy.)  We can
 thus use these structures to regard $M$ as a functor $\CA\to\Semigroups$,
 which preserves products and so is semiadditive.  In other words, we have
 an object in $\CM'(\CA)$, which we call $FM$.

 Now suppose we have a natural transformation $\al\:M\to N$ between
 product-preserving functors $\CA\to\Sets$.  By considering the
 diagram 
 \[ \xymatrix{
  M(a)\tm M(a) \ar[d]_{\al_a\tm\al_a} &&
  M(a\oplus a)
    \ar[ll]^\simeq_{((p_1)_*,(p_2)_*)}
    \ar[rr]^{s_*}
    \ar[d]^{\al_{a\oplus a}} &&
  M(a) \ar[d]^{\al_a} \\
  N(a)\tm N(a) &&
  N(a\oplus a)
    \ar[ll]_\simeq^{((p_1)_*,(p_2)_*)}
    \ar[rr]_{s_*} &&
  N(a)
 } \]
 we see that $\al_a(m_1+m_2)=\al_a(m_1)+\al_a(m_2)$.  We also have
 $\al_a(0)=0$, so $\al_a$ is a homomorphism.  Using this, we see that
 our construction actually gives a functor $F\:\CM(\CA)\to\CM'(\CA)$.
 It is clear by construction that the composite
 $U_*\circ F\:\CM(\CA)\to\CM(\CA)$ is equal (not just isomorphic) to
 the identity.

 In the opposite direction, suppose we have an object
 $M'\in\CM'(\CA)$.  Consider an object $a\in\CA$ and elements
 $m_1,m_2\in M'(a)$.  As $M'(a\oplus a)$ has a specified semigroup
 structure, it is meaningful to define
 $m=(i_1)_*(m_1)+(i_2)_*(m_2)\in M'(a\oplus a)$.  As $M'$ is a functor
 with values in $\Semigroups$ we see that $(p_t)_*\:M'(a\oplus a)\to M'(a)$
 is a homomorphism, and using this it follows that $(p_t)_*(m)=m_t$,
 so we have the same element $m$ as discussed previously.  Similarly,
 the map $s_*\:M'(a\oplus a)\to M'(a)$ is a homomorphism, and
 $si_1=si_2=1$.  It follows that $s_*(m)=m_1+m_2$.  This means that
 the originally given semigroup structure on $M'(a)$ is the same as the
 one constructed by the functor $F$, so $M'=FU_*(M')$.  Thus, the
 functors $F$ and $U_*$ are mutually inverse.
\end{proof}

\begin{proposition}\label{prop-res-equiv}
 Let $\CA$ be a small semiadditive category, and let $\CA_0$ be full
 subcategory such that every object in $\CA$ can be expressed as a
 coproduct of some finite family of objects in $\CA_0$.  Then the
 restriction functor $\res\:\CM'(\CA)\to\CM'(\CA_0)$ is an equivalence of
 categories. 
\end{proposition}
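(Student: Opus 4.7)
The plan is to construct an explicit quasi-inverse $E\:\CM'(\CA_0)\to\CM'(\CA)$ by extending a preadditive functor on $\CA_0$ via the biproduct decompositions available in $\CA$. First I would fix, for each object $a\in\CA$, a biproduct decomposition $a=\bigoplus_{t\in I_a}a_t$ with $a_t\in\CA_0$, together with the associated inclusion/projection morphisms $i^a_t\:a_t\to a$ and $p^a_t\:a\to a_t$ satisfying $p^a_si^a_t=\dl_{st}$ and $\sum_ti^a_tp^a_t=1_a$ (when $a\in\CA_0$ we take the trivial one-summand decomposition, so $i=p=1$). Given $M_0\in\CM'(\CA_0)$, I would define $EM_0(a)=\prod_tM_0(a_t)$. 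For a morphism $f\:a\to b$ in $\CA$, its ``matrix entries'' are $f_{st}=p^b_t\circ f\circ i^a_s\:a_s\to b_t$ in $\CA_0$, and I would set
\[
EM_0(f)(m)_t=\sum_s(f_{st})_*(m_s).
\]

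Next I would verify that $EM_0\:\CA\to\Semigroups$ is a preadditive functor. Preservation of identities follows from $p^a_si^a_t=\dl_{st}$. Compatibility with composition reduces to the matrix identity $(gf)_{su}=\sum_t g_{tu}\circ f_{st}$, which follows from $\sum_ti^b_tp^b_t=1_b$ together with bilinearity of composition in $\CA$; applying $M_0$ (which is preadditive on $\CA_0$) then yields $(EM_0(g)\circ EM_0(f))=EM_0(gf)$. Preadditivity of $EM_0$ on morphism sets follows immediately from the additive formula for the matrix entries. The naturality of this construction in $M_0$ gives a functor $E\:\CM'(\CA_0)\to\CM'(\CA)$. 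I would also check that $EM_0$ preserves finite products: the canonical projections from $EM_0(a\oplus a')$ down to $EM_0(a)\tm EM_0(a')$ are induced by the inclusion of summands, and the matrix formula together with the uniqueness of biproduct decompositions shows these are bijections.

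For the natural isomorphism $\res\circ E\simeq\id$, the trivial choice of decomposition for $a\in\CA_0$ makes $EM_0(a)=M_0(a)$ strictly, and on morphisms of $\CA_0$ the matrix formula collapses to $EM_0(f)=f_*$, so $\res E=\id$ on the nose. For $E\circ\res\simeq\id$, given $M\in\CM'(\CA)$ I would define a natural transformation $\eta_M\:M\to E(\res M)$ by
\[
(\eta_M)_a=\bigl((p^a_t)_*\bigr)_t\:M(a)\to\prod_tM(a_t).
\]
Because $M$ is preadditive and the chosen decomposition is a product diagram, this map is an isomorphism; naturality in $a$ follows from a short diagram chase using the matrix description of morphisms in $\CA$, and naturality in $M$ is immediate.

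The main obstacle is the functoriality check for $EM_0$: one must confirm that the ``matrix multiplication'' formula $(gf)_{su}=\sum_tg_{tu}\circ f_{st}$ holds in $\CA$, and that applying $M_0$ turns it into the correct identity in $\Semigroups$. The first part is where semiadditivity is essential, via the identity $\sum_ti^b_tp^b_t=1_b$ from Proposition~\ref{prop-sum}; the second part uses that $M_0$ is a preadditive functor on $\CA_0$, so it respects both composition and the sum of morphisms $a_s\to b_u$ in $\CA_0$. Everything else is routine bookkeeping once these two points are nailed down.
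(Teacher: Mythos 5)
Your proof is correct, but it takes a genuinely different route from the paper's. You construct an explicit quasi-inverse $E$ by fixing a biproduct decomposition $a=\bigoplus_t a_t$ for every object and defining $EM_0$ via matrix calculus, then check $\res E=\id$ strictly and exhibit $E\res\simeq\id$. The paper instead verifies the three properties of an equivalence separately: faithfulness and fullness are checked directly using the decomposition and the relation $\sum_t i_tp_t=1$, and essential surjectivity is established by producing a choice-free preimage $M(a)=\CM'(\CA_0)(K_a,M_0)$ where $K_a(a')=\CA(a,a')$ — in effect a restricted-Yoneda or Kan-extension construction. Your approach buys a concrete, strictly section-retraction picture ($\res E=\id$ on the nose), at the cost of making an arbitrary choice of decomposition for each object; the paper's approach avoids all such choices but only produces the equivalence up to natural isomorphism. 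One small point you could tighten: for the claim that $EM_0$ preserves finite products, rather than invoking ``uniqueness of biproduct decompositions'' it is cleaner to observe that $\Semigroups$ is semiadditive, $EM_0$ is preadditive, and (by the proposition in the appendix characterising semiadditive functors) a preadditive functor between semiadditive categories automatically preserves finite products — so product preservation comes for free once functoriality and preadditivity of $EM_0$ are in hand.
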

\begin{proof}
 We first claim that $\res$ is faithful.  Consider a pair of morphisms
 $\al,\bt\:M\to N$ in $\CM'(\CA)$ with $\res(\al)=\res(\bt)$, and an
 object $a\in\CA$; we must show that $\al_a=\bt_a\:M(a)\to N(a)$.  By
 hypothesis we can find objects $a_1,\dotsc,a_n\in\CA_0$ and morphisms
 $a_t\xra{i_t}a\xra{p_u}a_u$ as in Remark~\ref{rem-finite-coproducts},
 so $p_ui_u=1$ and $p_ui_t=0$ for $u\neq t$ and $\sum_ti_tp_t=1$.  We
 have seen that $N$ preserves products, and the maps $p_t$ give a
 product diagram, so it will suffice to show that
 $N(p_t)\circ\al_a=N(p_t)\circ\bt_a$ for all $t$.  By naturality we
 have $N(p_t)\circ\al_a=\al_{a_t}\circ M(p_t)$ and similarly for
 $\bt$.  However, as $a_t\in\CA_0$ and $\res(\al)=\res(\bt)$ we have
 $\al_{a_t}=\bt_{a_t}$, and the claim follows.

 We next claim that $\res$ is full.  To see this, consider a morphism
 $\al_0\:\res(M)\to\res(N)$.  Fix an object $a\in\CA$.  We claim that
 there is a unique map $\al_a\:M(a)\to N(a)$ such that for all
 $a'\in\CA_0$ and $f\:a\to a'$, the diagram 
 \[ \xymatrix{
     M(a) \ar[r]^{\al_a} \ar[d]_{M(f)} & 
     N(a) \ar[d]^{N(f)} \\
     M(a') \ar[r]_{(\al_0)_{a'}} & N(a')
    }
 \]
 commutes.  To see this, we choose objects $a_t\in\CA_0$ and morphisms
 $i_t$ and $p_t$ as before.  As the maps $N(p_t)$ give a product
 diagram, we see that there is a unique map $\al_a\:M(a)\to N(a)$
 with $N(p_t)\circ\al_a=(\al_0)_{a_t}\circ M(p_t)$ for all $t$.  Now
 consider an arbitrary object $a'\in\CA_0$ and a morphism
 $f\:a\to a'$.  Recall that $1_a=\sum_ti_tp_t$, so
 $f=\sum_t(fi_t)p_t$.  Now $fi_t\:a_t\to a'$ is a morphism in $\CA_0$,
 so $(\al_0)_{a'}M(f_{it})=N(fi_t)(\al_0)_{a_t}$.  Using this we get 
 \begin{align*}
  (\al_0)_{a'}M(f)
   &= \sum_t (\al_0)_{a'}\circ M(fi_t)\circ M(p_t) 
    = \sum_t N(fi_t)\circ (\al_0)_{a_t} \circ M(p_t) \\
   &= \sum_t N(fi_t)\circ N(p_t)\circ\al_a 
    = \sum_t N(fi_tp_t)\circ\al_a 
    = N(f)\circ\al_a,
 \end{align*}
 so $\al_a$ has the claimed property.  Uniqueness is clear, and
 naturality follows from uniqueness.  We thus have a morphism
 $\al\:M\to N$ with $\res(\al)=\al_0$ as required.

 Now consider an object $M_0\in\CM'(\CA_0)$.  For any $a\in\CA$ we
 have a preadditive functor $K_a\:\CA_0\to\Semigroups$ given by
 $K_a(a')=\CA(a,a')$.  This gives an object $K_a\in\CM'(\CA_0)$, and
 we define
 \[ M(a) = \CM'(\CA_0)(K_a,N_0), \]
 and note that this has a semigroup structure under pointwise
 addition.  Now $K_a$ is a preadditive contravariant functor of $a$,
 so $M$ is a preadditive covariant functor, or in other words
 $M\in\CM'(\CA)$.  It follows from the Yoneda Lemma that
 $\res(M)\simeq M_0$, so $\res$ is also essentially surjective.
\end{proof}

\subsection{Tensor products}

Let $\CA$ be a small semiadditive category, and suppose that $\CA$ has a
symmetric monoidal structure given by a functor $\ot\:\CA\tm\CA\to\CA$
with a unit object $1\in\CA$.  For typographical convenience, we will
also use the symbol $\mu$ for the functor $\ot\:\CA\tm\CA\to\CA$.  We
will assume that this functor is biadditive, which means that for
$f_1,f_2\:a\to b$ and $g_1,g_2\:c\to d$ we have 
\[ (f_1+f_2)\ot(g_1+g_2) = 
    f_1\ot g_1 + f_1\ot g_2 + f_2\ot g_1 + f_2\ot g_2 \:
     a\ot c\to b\ot d,
\]
and also $f\ot 0=0$ and $0\ot g=0$.  

\begin{definition}
 Let $M$ and $N$ be $\CA$-modules.  We define $M*N\:\CA\tm\CA\to\Sets$
 by $(M*N)(a,b)=M(a)\tm N(b)$.  We then put $M\btm N=\colim_\mu(M*N)$
 (the left Kan extension of $M*N$ along $\mu$), so
 $M\btm N\:\CA\to\Sets$.
\end{definition}

We can make this more explicit by recalling the standard construction
of Kan extensions as colimits over comma categories.  Write 
$Q=M\btm N$ for brevity.  Consider a morphism $f\:u\ot v\to a$ in
$\CA$ together with elements $m\in M(u)$ and $n\in N(v)$.  These data
give an element $\tht(f,m,n)\in Q(a)$, and all elements of $Q(a)$
arise in this way.  Now suppose we have maps $r\:u'\to u$ and
$s\:v'\to v$, and elements $m'\in M(u')$ and $n'\in N(v')$, such that
$r_*(m')=m$ and $s_*(n')=n$.  In this context we have
\[ \tht(f,m,n) = \tht(f,r_*(m'),s_*(n')) = 
    \tht(f\circ(r\ot s),m',n') \in Q(a).
\]
Moreover, all identities between elements of $Q(a)$ are consequences
of identities of this type.  For any map $g\:a\to b$, the induced map
$g_*\:Q(a)\to Q(b)$ is just $g_*(\tht(f,m,n))=\tht(gf,m,n)$.  

\begin{proposition}
 The functor $M\btm N\:\CA\to\Sets$ preserves products (and so is an
 $\CA$-module). 
\end{proposition}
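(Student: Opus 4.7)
The plan is to exhibit an explicit inverse to the canonical projection
$$\pi\:(M\btm N)(a\oplus b) \to (M\btm N)(a)\tm(M\btm N)(b),$$
using the biadditivity of $\otimes$ and the explicit description of the Kan extension. Given representatives $\xi_a=\tht(f_a,m_a,n_a)$ with $f_a\:u_a\ot v_a\to a$ and $\xi_b=\tht(f_b,m_b,n_b)$ with $f_b\:u_b\ot v_b\to b$, put $u=u_a\oplus u_b$ and $v=v_a\oplus v_b$. Biadditivity of $\otimes$ gives a canonical biproduct decomposition $u\ot v=(u_a\ot v_a)\oplus(u_a\ot v_b)\oplus(u_b\ot v_a)\oplus(u_b\ot v_b)$. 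Define $f\:u\ot v\to a\oplus b$ to equal $\iota_af_a$ on the first summand, $\iota_bf_b$ on the last, and zero on the two cross summands (with $\iota_a,\iota_b$ the coproduct inclusions into $a\oplus b$). Since $M$ and $N$ preserve products, set $m=[m_a,m_b]\in M(u)=M(u_a)\tm M(u_b)$ and $n=[n_a,n_b]\in N(v)$, and define $\rho(\xi_a,\xi_b)=\tht(f,m,n)$.

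Verifying $\pi\circ\rho=1$ is the easy direction: using biadditivity of $\otimes$, the composite $p_a\circ f\:u\ot v\to a$ factors as $f_a\circ(\pi^u_a\ot\pi^v_a)$, where $\pi^u_a\:u\to u_a$ and $\pi^v_a\:v\to v_a$ are the product projections. The basic Kan-extension relation $\tht(g\circ(r\ot s),m',n')=\tht(g,r_*m',s_*n')$ then gives $\tht(p_af,m,n)=\tht(f_a,(\pi^u_a)_*m,(\pi^v_a)_*n)=\tht(f_a,m_a,n_a)=\xi_a$, as required.

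For the other direction, $\rho\circ\pi=1$, the key is the diagonal maps $\Dl_u\:u\to u\oplus u$ and $\Dl_v\:v\to v\oplus v$. Starting from an arbitrary $\xi=\tht(f,m,n)\in(M\btm N)(a\oplus b)$, one checks that the $f_{\text{new}}$ built by $\rho$ from $\pi_a\xi$ and $\pi_b\xi$ satisfies $f_{\text{new}}\circ(\Dl_u\ot\Dl_v)=\iota_ap_af+\iota_bp_bf$. By Proposition~\ref{prop-sum} applied to $a\oplus b$, this equals $f$, and since $(\Dl_u)_*m=[m,m]=m_{\text{new}}$ and $(\Dl_v)_*n=[n,n]=n_{\text{new}}$, the Kan relation again yields $\tht(f_{\text{new}},m_{\text{new}},n_{\text{new}})=\tht(f,m,n)=\xi$. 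For the empty-product case one shows that any $\tht(f,m,n)\in(M\btm N)(0)$ equals $\tht(1_0,*,*)$ by factoring $f$ as $1_0\circ(!\ot !)$ through the unique maps $!\:u\to 0$ and $!\:v\to 0$.

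The main obstacle is the well-definedness of $\rho$: the representative $(f_a,m_a,n_a)$ of $\xi_a$ is only unique up to the equivalence relation generated by the basic Kan moves, so one must check that replacing $(f_a,m_a,n_a)$ by an equivalent triple does not change $\tht(f,m,n)$. It suffices to handle a single basic move, say $(f_a,r_*m_a',s_*n_a')\sim(f_a\circ(r\ot s),m_a',n_a')$ for $r\:u_a'\to u_a$, $s\:v_a'\to v_a$. The strategy is to set $R=r\oplus 1_{u_b}\:u_a'\oplus u_b\to u$ and $S=s\oplus 1_{v_b}\:v_a'\oplus v_b\to v$, and use biadditivity of $\otimes$ to verify that $f\circ(R\ot S)$ is exactly the $f'$ produced by $\rho$ for the smaller representatives, while $R_*m'=m$ and $S_*n'=n$. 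A single application of the Kan relation then yields equality. The bookkeeping is routine but does rely essentially on biadditivity, since otherwise $u\ot v$ would not split into the four-way biproduct making all the formulae work.
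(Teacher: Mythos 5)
Your proof is correct and follows essentially the same route as the paper: you build the same representative $(f,[m_a,m_b],[n_a,n_b])$ over $(u_a\oplus u_b)\ot(v_a\oplus v_b)$, check well-definedness via the moves $r\oplus 1$, $s\oplus 1$, verify $\pi\rho=1$ by projecting, and verify $\rho\pi=1$ using the diagonal maps and $\iota_ap_a+\iota_bp_b=1$. The only cosmetic differences are that you phrase things via the explicit four-way biadditive decomposition of $u\ot v$ where the paper appeals to the universal product property, and that you handle binary and nullary products where the paper treats a general finite family at once.
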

\begin{proof}
 Consider a finite family of objects $(a_t)_{t\in T}$.  Put
 $a=\bigoplus_ta_t$ and let $a_t\xra{i_t}a\xra{p_t}a_t$ be the usual
 maps.  Let $\phi\:Q(a)\to\prod_tQ(a_t)$ be the map whose $t$'th
 component is $(p_t)_*\:Q(a)\to Q(a_t)$.  The claim is that $\phi$ is
 a bijection.  

 Suppose we have maps $f_t\:u_t\ot v_t\to a_t$ and elements
 $m_t\in M(u_t)$ and $n_t\in N(v_t)$ giving elements
 $q_t=\tht(f_t,m_t,n_t)\in Q(a_t)$.  Put $u=\bigoplus_tu_t$ and
 $v=\bigoplus_tv_t$.  By the product property of $a$, there is a
 unique map $f\:u\ot v\to a$ such that the diagram
 \[ \xymatrix{
  u\ot v \ar[r]^{f} \ar[d]_{p_t\ot p_t} & a \ar[d]^{p_t} \\
  u_t\ot v_t \ar[r]_{f_t} & a_t 
 } \]
 commutes for all $t$.  As the functor $M$ preserves products, there
 is a unique element $m\in M(u)$ with $(p_t)_*(m)=m_t\in M(u_t)$ for all
 $t$.  Similarly, there is a unique element $n\in N(v)$ with
 $(p_t)_*(n)=n_t\in N(v_t)$ for all $t$.  Now put
 $q=\tht(f,m,n)\in Q(a)$.  We have
 \[ (p_t)_*(q)=\tht(p_tf,m,n)=
     \tht(f_t\circ(p_t\ot p_t),m,n) = 
     \tht(f_t,(p_t)_*(m),(p_t)_*(n)) = 
     \tht(f_t,m_t,n_t) = q_t,
 \]
 so $\phi(q)=(q_t)_{t\in T}$.  This proves that $\phi$ is surjective.

 The element $q$ here depends on the maps $f_t$ and the elements $m_t$
 and $n_t$, so we can define 
 \[ \psi_0((f_t)_{t\in T},(m_t)_{t\in T},(n_t)_{t\in T}) = q. \]
 We next claim that this map $\psi_0$ is compatible with the defining
 relations for $Q$, so it induces a map $\psi\:\prod_tQ(a_t)\to Q(a)$
 with $\psi((q_t)_{t\in T})=q$.  To see this, suppose we have maps
 $r_t\:u'_t\to u_t$ and $s_t\:v'_t\to v_t$, together with elements
 $m'_t\in M(u'_t)$ and $n'_t\in N(v'_t)$ satisfying
 $(r_t)_*(m'_t)=m_t$ and $(s_t)_*(n'_t)=n_t$.  This means 
 that $q_t=\tht(f_t\circ(r_t\ot s_t),m'_t,n'_t)$.  Put
 \[ q' =
     \psi_0((f_t\circ(r_t\ot s_t))_{t\in T},
            (m'_t)_{t\in T},(n'_t)_{t\in T});
 \]
 we need to show that $q'=q$.  For this, we put $u'=\bigoplus_tu'_t$
 and $v'=\bigoplus_tv'_t$, and let $m'$ and $n'$ be the evident
 elements of $M(u')$ and $N(v')$.  By definition, we have
 $q'=\tht(g,m',n')$, where $g\:u'\ot v'\to a$ is the unique map such
 that $p_t\circ g=f_t\circ(r_t\ot s_t)\circ(p_t\ot p_t)$ for all $t$.
 By inspecting the diagram
 \[ \xymatrix{
  u'\ot v' \ar[r]^{r\ot s} \ar[d]_{p_t\ot p_t} &
  u\ot v \ar[r]^f \ar[d]_{p_t\ot p_t} &
  a \ar[d]^{p_t} \\
  u'_t\ot v'_t \ar[r]_{r_t\ot s_t} &
  u_t\ot v_t \ar[r]_{f_t} &
  a_t
 } \]
 we see that $g=f\circ(r\ot s)$, so $q'=\tht(f\circ(r\ot s),m',n')$.
 By the defining relations for $Q(a)$, this is the same as
 $\tht(f,r_*(m'),s_*(n'))$.  It is straightforward to check that
 $r_*(m')=m$ and $s_*(n')=n$, so $q'=q$ as claimed.  It follows easily
 that there is a well-defined map $\psi\:\prod_tQ(a_t)\to Q(a)$ as
 described previously, and that $\phi\psi=1\:Q(a)\to Q(a)$.

 In the opposite direction, consider an arbitrary element
 $q\in Q(a)$, where $a=\bigoplus_ta_t$ as before.  We can represent
 $q$ as $\tht(f,m,n)$ for some morphism $f\:u\ot v\to a$ and elements
 $m\in M(u)$ and $n\in N(v)$.  This means that
 $\phi(q)=(\tht(\pi_tf,m,n))_{t\in T}$.  Put $u'=\bigoplus_{t\in T}u$
 and $v'=\bigoplus_{t\in T}v$.  Let $m'\in M(u')$ be the unique
 element such that $(p_t)_*(u')=u$ for all $t$, and similarly for
 $n'\in N(v')$.  By the product property of $a$, there is a unique map
 $g\:u'\ot v'\to a$ such that the following diagram commutes for all
 $t$: 
 \[ \xymatrix{
  u'\ot v' \ar[rr]^g \ar[d]_{p_t\ot p_t} & & 
  a \ar[d]^{p_t} \\
  u\ot v \ar[r]_f &
  a \ar[r]_{p_t} &
  a_t.
 } \]
 From the definitions, we see that $\psi\phi(q)=\tht(g,m',n')$.  Now
 let $d\:u\to u'$ be the diagonal map, so $p_td=1\:u\to u$ for all
 $t$.  Similarly, let $e\:v\to v'$ be the diagonal map.  One can then
 check that $m'=d_*(m)$ and $n'=e_*(n)$, and also that
 $g\circ(d\ot e)=f$.  From the defining relations fr $Q$ we therefore
 have
 \[ \psi\phi(q) = \tht(g,d_*(m),e_*(n)) =
     \tht(g\circ(d\ot e),m,n) = \tht(f,m,n) = q.
 \]
 Thus, $\psi$ is the required inverse for $\phi$.
\end{proof}

\begin{construction}
 Let $N$ be an $\CA$-module, and $u$ an object of $\CA$.  We write
 $T_uN$ for the composite functor
 \[ \CA \xra{u\ot(-)} \CA \xra{N} \Sets. \]
 As both $u\ot(-)$ and $N$ preserve products, the same is true of
 $T_uN$, so $T_uN$ is again an $\CA$-module.  It is formal to check
 that the assignment $u\mapsto T_uN$ gives an additive (and therefore
 product-preserving) functor $\CA\to\Mod(\CA)$.  Now let $M$ be another
 $\CA$-module.  We define $\uHom(M,N)\:\CA\to\Sets$ by 
 \[ \uHom(M,N)(u) = \Mod(\CA)(M,T_uN). \]
 It is again easy to see that this preserves products, so it is itself
 an $\CA$-module.
\end{construction}

\begin{proposition}\label{prop-box-adjoint}
 For all $\CA$-modules $L$, $M$ and $N$ there are natural isomorphisms 
 \[ \Mod(\CA)(L,\uHom(M,N)) \simeq \Mod(\CA)(L\btm M,N). \]
\end{proposition}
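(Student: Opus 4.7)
The plan is to establish the adjunction in two moves: first use the defining universal property of $M\btm N$ as a left Kan extension to re-express the right-hand side as a set of natural transformations out of $L*M$, and then use a currying argument to identify this with the left-hand side.

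More precisely, $M\btm N$ is by definition the left Kan extension of $M*N\:\CA\tm\CA\to\Sets$ along $\mu\:\CA\tm\CA\to\CA$.  The universal property of left Kan extensions thus gives a natural bijection between morphisms $L\btm M\to N$ in the category of all functors $\CA\to\Sets$ and natural transformations $\phi\:L*M\to N\circ\mu$ of functors on $\CA\tm\CA$.  Since $\Mod(\CA)$ is a full subcategory of the functor category (and both $L\btm M$ and $N$ land in $\Mod(\CA)$), no information is lost, so
\[ \Mod(\CA)(L\btm M,N) \simeq \mathrm{Nat}_{\CA\tm\CA}(L*M,\;N\circ\mu). \]
An element on the right is a system of maps $\phi_{a,b}\:L(a)\tm M(b)\to N(a\ot b)$ natural in each of $a$ and $b$ separately (which, since the source and target are bifunctors, is equivalent to joint naturality in $(a,b)$).

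Next I would show, purely formally, that such bi-natural systems are in bijection with morphisms $L\to\uHom(M,N)$ in $\Mod(\CA)$.  Given $\phi$, define $\widetilde\phi\:L\to\uHom(M,N)$ by declaring, for each $a\in\CA$ and each $\ell\in L(a)$, that $\widetilde\phi_a(\ell)\:M\to T_aN$ has components $\widetilde\phi_a(\ell)_b=\phi_{a,b}(\ell,-)\:M(b)\to N(a\ot b)$.  The naturality of $\phi$ in the $b$-variable says exactly that $\widetilde\phi_a(\ell)$ is a morphism in $\Mod(\CA)$, and the naturality of $\phi$ in the $a$-variable says exactly that $\widetilde\phi$ is natural in $a$.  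Conversely, a morphism $\psi\:L\to\uHom(M,N)$ yields, for every $a,b$ and every pair $(\ell,m)\in L(a)\tm M(b)$, the element $\psi_a(\ell)_b(m)\in N(a\ot b)$, and chasing through the two layers of naturality shows that this assembles into a bi-natural system.  These two constructions are visibly inverse to one another.

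Composing the two bijections gives the desired natural isomorphism.  The main obstacle, such as it is, is purely bookkeeping: verifying that ``naturality in $b$ of $\widetilde\phi_a(\ell)$'' corresponds precisely to ``$\widetilde\phi_a(\ell)$ being a morphism of $\CA$-modules'' and that ``naturality in $a$ of $\widetilde\phi$'' corresponds to ``$\widetilde\phi$ being a natural transformation $L\to\uHom(M,N)$.''  No issue of product-preservation arises beyond what was already checked (namely that $L\btm M$ and $\uHom(M,N)$ are product-preserving), and no additive or semiadditive structure on the morphism sets needs to be invoked — the argument is the standard Day-convolution adjunction, adapted to the semiadditive setting only through the observation that both sides live naturally in $\Mod(\CA)$.
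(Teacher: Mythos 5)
Your proof is correct and takes essentially the same approach as the paper: the paper also composes the currying bijection $\Mod(\CA)(L,\uHom(M,N)) \simeq \mathrm{Nat}(L*M,\mu^*N)$ with the Kan-extension universal property, just presented in the opposite order. Your explicit note that the Kan-extension universal property is stated for arbitrary functors $\CA\to\Sets$ and that fullness of $\Mod(\CA)$ lets one pass freely to the subcategory is a small point the paper leaves implicit, but it is not a different route.
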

\begin{proof}
 A morphism $\al\:L\to\uHom(M,N)$ consists of maps
 $\al_u\:L(u)\to\Mod(\CA)(M,T_uN)$ for all $u$ that are natural in
 $u$.  Thus, for each $r\in L(u)$ we have $\al_u(r)\:M\to T_uN$,
 consisting of maps 
 \[ \al_u(r)_v\:M(v)\to T_uN(v) = N(u\ot v) \]
 that are natural in $v$.  Recall that we have functors
 $L*M,\mu^*N\:\CA\tm\CA\to\Sets$ given by $(L*M)(u,v)=L(u)\tm N(v)$
 and $(\mu^*N)(u,v)=N(u\ot v)$.  We define $\al^\#\:L*M\to\mu^*N$ by 
 \[ \al^\#_{u,v}(r,s) = \al_u(r)_v(s). \]
 It is straightforward to check that this construction is bijective.
 On the other hand, by the definition of Kan extensions, we see that
 natural maps $L*M\to\mu^*N$ biject with morphisms
 $L\btm M\to N$.
\end{proof}

\begin{proposition}\label{prop-box-symmetric}
 The functor $\btm$ gives a symmetric monoidal structure on
 $\Mod(\CA)$.  The unit object is the functor $I(a)=\CA(1,a)$, where
 $1$ is the unit object in $\CA$.
\end{proposition}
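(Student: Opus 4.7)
The plan is to establish the three ingredients of a symmetric monoidal structure---a unit isomorphism, a symmetry, and an associator---by reducing each to the corresponding property of $\ot$ on $\CA$ via the adjunction of Proposition~\ref{prop-box-adjoint} and the Yoneda lemma. The pentagon and triangle coherence diagrams will then follow automatically because the coherence isomorphisms come from corresponding isomorphisms in $\CA$.

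I would first establish the unit isomorphism $I \btm M \simeq M$. By the Yoneda lemma applied to the representable functor $I = \CA(1,-)$, we have $\Mod(\CA)(I,N) \simeq N(1)$ for every $\CA$-module $N$. Since $T_1 N(v) = N(1 \ot v) \simeq N(v)$ via the unit isomorphism of $\ot$ in $\CA$, Proposition~\ref{prop-box-adjoint} then gives
\[ \Mod(\CA)(I \btm M,\; N) \;\simeq\; \Mod(\CA)(I,\; \uHom(M,N)) \;\simeq\; \uHom(M,N)(1) \;=\; \Mod(\CA)(M,\; T_1 N) \;\simeq\; \Mod(\CA)(M,N), \]
naturally in $N$. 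A second application of Yoneda yields the required natural isomorphism $I \btm M \simeq M$.

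For symmetry and associativity I would exploit the universal property. Unwinding the definition of $\btm$ as a left Kan extension, a morphism $M \btm N \to P$ corresponds to a natural transformation $M * N \to \mu^* P$ of functors $\CA \tm \CA \to \Sets$, i.e.\ a family of maps $M(u) \tm N(v) \to P(u \ot v)$ natural in $(u,v)$. The symmetry isomorphism $u \ot v \simeq v \ot u$ in $\CA$, together with the twist on sets, gives a natural bijection between such families and the corresponding families $N(v) \tm M(u) \to P(v \ot u)$, which represent morphisms $N \btm M \to P$; Yoneda then produces $M \btm N \simeq N \btm M$. Iterating the adjunction of Proposition~\ref{prop-box-adjoint} shows that morphisms $(L \btm M) \btm N \to P$ correspond to natural systems of maps $L(u_1) \tm M(u_2) \tm N(u_3) \to P((u_1 \ot u_2) \ot u_3)$, and symmetrically for the other bracketing with target $P(u_1 \ot (u_2 \ot u_3))$. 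The associator of $\ot$ in $\CA$ converts one family into the other, yielding the associativity isomorphism.

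The pentagon and triangle axioms then translate, under these Yoneda identifications, into the corresponding coherence diagrams for $\ot$ in $\CA$ applied to iterated tensors of objects, where they hold by the assumed symmetric monoidal structure on $\CA$. The main technical obstacle is the iterated Kan extension step in the associativity argument: one must check that $(L \btm M) \btm N$ is canonically isomorphic to the single left Kan extension of $L * M * N : \CA^3 \to \Sets$ along the triple tensor $\CA^3 \to \CA$, and likewise for the other bracketing, so that the bijection really is induced by the associator of $\ot$. This is the standard fact that left Kan extensions compose, but applying it here requires either working through the comma-category description of the colimit twice (paralleling the explicit description given after the definition of $\btm$) or invoking the general compatibility of Kan extensions with composition of functors; once this is in place, symmetry, associativity, and all coherence identities drop out directly from their counterparts in $\CA$.
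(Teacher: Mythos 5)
Your proof takes essentially the same route as the paper: you establish the unit via Yoneda and the $\uHom$-adjunction, the symmetry via the twist on $M*N$, and the associativity by showing that both bracketings of the triple box product represent the same functor of natural maps on $\CA^3$ by iterating Proposition~\ref{prop-box-adjoint}, which is exactly the paper's argument with its auxiliary functor $T(P)$. The ``technical obstacle'' you raise at the end is not actually a separate step: the iterated adjunction computation you have already described is precisely what exhibits both bracketings as left Kan extensions of $L*M*N$ along the two iterated-tensor functors, so there is nothing further to verify about Kan extensions composing.
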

\begin{proof}
 First, let $\tau\:a\ot b\to b\ot a$ be the symmetry isomorphism for
 $\CA$.  For modules $M,N$ we then have an obvious isomorphism
 $(M*N)\circ\tau\simeq N*M$, and using standard properties of Kan
 extensions this gives an isomorphism
 $M\btm N\simeq N\btm M$.

 Next, the Yoneda lemma gives
 \[ \uHom(I,N)(u)=\Mod(\CA)(I,T_uN)=T_uN(1)=N(u\ot 1)=N(u) \]
 for all $u$, so $\uHom(I,N)=N$.  Now take $M=I$ in
 Proposition~\ref{prop-box-adjoint} to get
 $\Mod(\CA)(L,N)\simeq\Mod(\CA)(L\btm I,N)$.  This is natural in
 $L$ and $N$ and so gives $L\btm I\simeq L$.

 Now suppose we have three modules $L$, $M$ and $N$.  We would like to
 give a natural isomorphism $(L\btm M)\btm N\simeq L\btm(M\btm N)$.
 For any module $P$, we let $T(P)$ denote the set of natural maps 
 \[ L(u)\tm M(v) \tm N(w) \to P(u\ot v\ot w) \]
 (of functors $\CA^3\to\Sets$).  It will suffice to show that the
 functor $T$ is represented by both $(L\btm M)\btm N$ and
 $L\btm(M\btm N)$.  Proposition~\ref{prop-box-adjoint} gives 
 \[ \Mod(\CA)((L\btm M)\btm N,P) \simeq
     \Mod(\CA)(L\btm M,\uHom(N,P)) \simeq 
      [\CA^2,\Sets](L*M,\mu^*\uHom(N,P)).
 \]
 This is the set of maps $L(u)\tm M(v)\to\uHom(N,P)(u\ot v)$ that are
 natural in $u$ and $v$.  After filling in the definition of $\uHom$
 this becomes the set of maps $L(u)\tm M(v)\tm N(w)\to P(u\ot v\ot w)$
 that are natural in all variables, or in other words $T(P)$, as
 required.  A similar argument covers the case of $L\btm(M\btm N)$.
 We leave further quaestions of naturality and coherence to the
 reader. 
\end{proof}

\begin{proposition}\label{prop-box-representable}
 Let $H_a\:\CA\to\Sets$ be the functor represented by $a$.  Then $H_a$
 preserves products, so it can be regarded as an $\CA$-module.
 Moreover, there are natural isomorphisms
 $H_a\btm H_b\simeq H_{a\ot b}$ and
 $\uHom(H_a,M)\simeq T_aM$ for all $M$.
\end{proposition}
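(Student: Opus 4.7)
My plan is to deduce all three claims from the Yoneda lemma together with the adjunction in Proposition~\ref{prop-box-adjoint}. First, to see that $H_a$ preserves products, note that for any finite family $(x_t)_{t\in T}$ with biproduct $\bigoplus_tx_t$, the product property in $\CA$ gives a natural bijection $\CA(a,\bigoplus_tx_t)\simeq\prod_t\CA(a,x_t)$, i.e.\ $H_a(\bigoplus_tx_t)\simeq\prod_tH_a(x_t)$. Hence $H_a\in\Mod(\CA)$, and we may feed it through the machinery of $\btm$ and $\uHom$.

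Next I would establish the identification $\uHom(H_a,M)\simeq T_aM$. Here the key point is that the ordinary (set-valued) Yoneda lemma applied to $\CA$ gives, for any functor $F\:\CA\to\Sets$, a natural bijection $[\CA,\Sets](H_a,F)\simeq F(a)$; restricting to $F\in\Mod(\CA)$, this is still $\Mod(\CA)(H_a,F)\simeq F(a)$. Taking $F=T_uM$ therefore yields
\[
\uHom(H_a,M)(u)=\Mod(\CA)(H_a,T_uM)\simeq T_uM(a)=M(u\ot a)\simeq M(a\ot u)=T_aM(u),
\]
where the last isomorphism uses the symmetry of $\ot$ in $\CA$. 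Naturality in $u$ follows from naturality of Yoneda and of the symmetry, and checking that this isomorphism is actually a morphism of $\CA$-modules (rather than just a natural bijection of sets) comes for free since both sides are product-preserving functors of $u$.

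Finally, I would deduce $H_a\btm H_b\simeq H_{a\ot b}$ by a double application of Yoneda to $\Mod(\CA)$. Concretely, for any $P\in\Mod(\CA)$, Proposition~\ref{prop-box-adjoint} and the identification just proved give
\[
\Mod(\CA)(H_a\btm H_b,P)\simeq\Mod(\CA)(H_a,\uHom(H_b,P))\simeq\Mod(\CA)(H_a,T_bP)\simeq T_bP(a)=P(b\ot a),
\]
and the symmetry of $\ot$ turns this into $P(a\ot b)\simeq\Mod(\CA)(H_{a\ot b},P)$. The whole chain is natural in $P$, so the Yoneda lemma applied inside $\Mod(\CA)$ produces the desired isomorphism $H_a\btm H_b\simeq H_{a\ot b}$.

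The argument is essentially formal and the only minor obstacle is bookkeeping of naturality: one must check that the chain of bijections in the last paragraph is genuinely natural in $P$ as a morphism in $\Mod(\CA)$, not merely pointwise, so that Yoneda can be invoked. This reduces to observing that the $\btm$-$\uHom$ adjunction is natural in all three variables (which is built into Proposition~\ref{prop-box-adjoint}) and that the Yoneda isomorphism $\Mod(\CA)(H_a,F)\simeq F(a)$ is natural in $F$, both of which are standard.
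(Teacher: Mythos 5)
Your proof is correct and follows essentially the same route as the paper: both compute $\uHom(H_a,M)(u)$ via the Yoneda lemma applied to $T_uM$, then deduce $H_a\btm H_b\simeq H_{a\ot b}$ by combining Proposition~\ref{prop-box-adjoint} with the Yoneda lemma in $\Mod(\CA)$. The only difference is that you explicitly invoke the symmetry of $\ot$ to pass between $M(u\ot a)$ and $M(a\ot u)$, whereas the paper elides this step, so your write-up is marginally more careful on that point.
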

\begin{proof}
 It is tautological that $H_a$ preserves products.  There are natural
 identifications 
 \[ \uHom(H_a,M)(b) = \Mod(\CA)(H_a,T_bM) =
     (T_bM)(a) = M(a\ot b) = (T_aM)(b),
 \]
 which gives $\uHom(H_a,M)=T_aM$.  This in turn gives natural
 isomorphisms  
 \begin{align*}
  \Mod(\CA)(H_a\ot H_b,M)
   &= \Mod(\CA)(H_a,\uHom(H_b,M)) 
    = \Mod(\CA)(H_a,T_bM) \\
   &= (T_bM)(a) = M(a\ot b) = \Mod(\CA)(H_{a\ot b},M).
 \end{align*}
 By the Yoneda Lemma, this gives $H_a\ot H_b\simeq H_{a\ot b}$.
\end{proof}

\renewcommand{\refname}{Bibliography}

\begin{bibdiv}
\begin{biblist}
\bib{bo:hcaii}{book}{
  author={Borceux, Francis},
  title={Handbook of categorical algebra. 2},
  series={Encyclopedia of Mathematics and its Applications},
  volume={51},
  note={Categories and structures},
  publisher={Cambridge University Press},
  place={Cambridge},
  date={1994},
  pages={xviii+443},
  isbn={0-521-44179-X},
  review={\MR {1313497 (96g:18001b)}},
}

\bib{bovo:hia}{book}{
  author={Boardman, J. M.},
  author={Vogt, R. M.},
  title={Homotopy invariant algebraic structures on topological spaces},
  series={Lecture Notes in Mathematics, Vol. 347},
  publisher={Springer-Verlag},
  place={Berlin},
  date={1973},
  pages={x+257},
  review={\MR {0420609 (54 \#8623a)}},
}

\bib{br:wve}{article}{
  author={Brun, M.},
  title={Witt vectors and equivariant ring spectra applied to cobordism},
  journal={Proc. Lond. Math. Soc. (3)},
  volume={94},
  date={2007},
  number={2},
  pages={351--385},
  issn={0024-6115},
  review={\MR {2308231 (2008a:55008)}},
  doi={10.1112/plms/pdl010},
}

\bib{br:wvt}{article}{
  author={Brun, Morten},
  title={Witt vectors and Tambara functors},
  journal={Adv. Math.},
  volume={193},
  date={2005},
  number={2},
  pages={233--256},
  issn={0001-8708},
  review={\MR {2136887 (2005m:13034)}},
}

\bib{cr:ati}{article}{
  title={Algebraic theories and $(\infty ,1)$-categories},
  author={James Cranch},
  eprint={arXiv:1011.3243v1 [math.AT]},
}

\bib{da:ccf}{article}{
  author={Day, Brian},
  title={On closed categories of functors},
  conference={ title={Reports of the Midwest Category Seminar, IV}, },
  book={ series={Lecture Notes in Mathematics, Vol. 137}, publisher={Springer}, place={Berlin}, },
  date={1970},
  pages={1--38},
  review={\MR {0272852 (42 \#7733)}},
}

\bib{dr:orr}{article}{
  author={Dress, Andreas},
  title={Operations in representation rings},
  conference={ title={Representation theory of finite groups and related topics (Proc. Sympos. Pure Math., Vol. XXI, Univ. Wisconsin, Madison, Wis., 1970)}, },
  book={ publisher={Amer. Math. Soc.}, place={Providence, R.I.}, },
  date={1971},
  pages={39--45},
  review={\MR {0325740 (48 \#4086)}},
}

\bib{drsi:brp}{article}{
  author={Dress, Andreas W. M.},
  author={Siebeneicher, Christian},
  title={The Burnside ring of profinite groups and the Witt vector construction},
  journal={Adv. in Math.},
  volume={70},
  date={1988},
  number={1},
  pages={87--132},
  issn={0001-8708},
  review={\MR {947758 (89m:20025)}},
}

\bib{el:bri}{article}{
  author={Elliott, Jesse},
  title={Binomial rings, integer-valued polynomials, and $\lambda $-rings},
  journal={J. Pure Appl. Algebra},
  volume={207},
  date={2006},
  number={1},
  pages={165--185},
  issn={0022-4049},
  review={\MR {2244389 (2007f:13031)}},
}

\bib{el:cwb}{article}{
  author={Elliott, Jesse},
  title={Constructing Witt-Burnside rings},
  journal={Adv. Math.},
  volume={203},
  date={2006},
  number={1},
  pages={319--363},
  issn={0001-8708},
  review={\MR {2231048}},
}

\bib{jo:qck}{article}{
  author={Joyal, A.},
  title={Quasi-categories and Kan complexes},
  note={Special volume celebrating the 70th birthday of Professor Max Kelly},
  journal={J. Pure Appl. Algebra},
  volume={175},
  date={2002},
  number={1-3},
  pages={207--222},
  issn={0022-4049},
  review={\MR {1935979 (2003h:55026)}},
  doi={10.1016/S0022-4049(02)00135-4},
}

\bib{lu:htt}{book}{
  author={Lurie, Jacob},
  title={Higher topos theory},
  series={Annals of Mathematics Studies},
  volume={170},
  publisher={Princeton University Press},
  place={Princeton, NJ},
  date={2009},
  pages={xviii+925},
  isbn={978-0-691-14049-0},
  isbn={0-691-14049-9},
  review={\MR {2522659 (2010j:18001)}},
}

\bib{ma:cwm}{book}{
  author={MacLane, Saunders},
  title={Categories for the working mathematician},
  series={Graduate Texts in Mathematics},
  publisher={Springer--Verlag},
  date={1971},
  volume={5},
}

\bib{mama:eos}{article}{
  author={Mandell, Michael~A.},
  author={May, J.~Peter},
  title={Equivariant orthogonal spectra and $S$-modules},
  date={2002},
  journal={Mem. Amer. Math. Soc.},
  volume={159},
  pages={108},
}

\bib{na:btt}{article}{
  title={Biset transformations of Tambara functors},
  author={Nakaoka, Hiroyuki},
  eprint={arXiv:1105:0714},
}

\bib{na:fsm}{article}{
  title={On the fractions of semi-Mackey and Tambara functors},
  author={Nakaoka, Hiroyuki},
  eprint={arXiv:1103:3991},
}

\bib{na:gdc}{article}{
  title={A generalization of The Dress construction for a Tambara functor, and polynomial Tambara functors},
  author={Nakaoka, Hiroyuki},
  eprint={arXiv:1012:1911},
}

\bib{na:itf}{article}{
  title={Ideals of Tambara functors},
  author={Nakaoka, Hiroyuki},
  eprint={arXiv:1101:5982},
}

\bib{na:tfp}{article}{
  author={Nakaoka, Hiroyuki},
  title={Tambara functors on profinite groups and generalized Burnside functors},
  journal={Comm. Algebra},
  volume={37},
  date={2009},
  number={9},
  pages={3095--3151},
  issn={0092-7872},
  review={\MR {2554193 (2011b:19002)}},
  doi={10.1080/00927870902747605},
}

\bib{na:tmf}{article}{
  author={Nakaoka, Hiroyuki},
  title={Tambarization of a Mackey functor and its application to the Witt-Burnside construction},
  journal={Adv. Math.},
  volume={227},
  date={2011},
  number={5},
  pages={2107--2143},
  issn={0001-8708},
  review={\MR {2803797}},
  doi={10.1016/j.aim.2011.04.015},
}

\bib{st:cop}{article}{
  author={Steiner, R.},
  title={A canonical operad pair},
  date={1979},
  journal={Mathematical Proceedings of the Cambridge Philosophical Society},
  volume={86},
  pages={443\ndash 449},
}

\bib{ta:mt}{article}{
  author={Tambara, D.},
  title={On multiplicative transfer},
  journal={Comm. Algebra},
  volume={21},
  date={1993},
  number={4},
  pages={1393--1420},
  issn={0092-7872},
  review={\MR {1209937 (94a:19002)}},
}

\bib{we:tcs}{article}{
  author={Webb, Peter},
  title={Two classifications of simple Mackey functors with applications to group cohomology and the decomposition of classifying spaces},
  journal={J. Pure Appl. Algebra},
  volume={88},
  date={1993},
  number={1-3},
  pages={265--304},
  issn={0022-4049},
  review={\MR {1233331 (94f:20101)}},
  doi={10.1016/0022-4049(93)90030-W},
}

\bib{nlab:lt}{article}{
  author={nLab contributors},
  title={Lawvere theory},
  eprint={http://ncatlab.org/nlab/show/Lawvere+theory},
}

\bib{nlab:bc}{article}{
  author={nLab contributors},
  title={Bicategory},
  eprint={http://ncatlab.org/nlab/show/bicategory},
}

%\bibselect{../../BiBTeX/refs,../../BiBTeX/myrefs}
\end{biblist}
\end{bibdiv}

\end{document}